\newlength\DX
\newlength\DY
\newtheorem{thm}{Theorem}[section]
\newtheorem{lem}[thm]{Lemma}
\newtheorem{prop}[thm]{Proposition}
\newtheorem{cor}[thm]{Corollary}
\theoremstyle{definition}
\newtheorem{defn}[thm]{Definition}
\newtheorem*{claim}{Claim}
\theoremstyle{remark}
\newtheorem{example}[thm]{Example}
\newtheorem{rem}[thm]{Remark}
\numberwithin{equation}{section} 
\numberwithin{figure}{section}
\numberwithin{table}{section}
\let\oldtocsection=\tocsection
\let\oldtocsubsection=\tocsubsection
\let\oldtocsubsubsection=\tocsubsubsection
\renewcommand{\tocsection}[2]{\hspace{-1.2em}\oldtocsection{#1}{#2}}
\renewcommand{\tocsubsection}[2]{\hspace{-.2em}\oldtocsubsection{#1}{#2}}
\renewcommand{\tocsubsubsection}[2]{\hspace{0.8em}\oldtocsubsubsection{#1}{#2}}
\DeclareRobustCommand{\gobblefive}[5]{}
\newcommand*{\SkipTocEntry}{\addtocontents{toc}{\gobblefive}}
\renewcommand\subsubsection{\@startsection{subsubsection}{3}%
  \z@{.5\linespacing\@plus.7\linespacing}{-.5em}%
  {\normalfont\bfseries}}
\newcommand{\Vol}{\mathrm{Vol}}
\newcommand{\V}{\mathsf{V}}
\newcommand{\intr}{\mathop{\mathrm{int}}}
\newcommand{\supp}{\mathop{\mathrm{supp}}}
\newcommand{\sspan}{\mathop{\mathrm{span}}}
\newcommand{\proj}{\bm{\mathsf{P}}}
\newcommand{\wto}{\stackrel{w}{\to}}
\newcommand{\relint}{\mathop{\mathrm{relint}}}
\newcommand{\aff}{\mathop{\mathrm{aff}}}
\newcommand{\A}{\mathrm{A}}
\newcommand{\h}{\mathrm{h}}
\begin{document}

\title[Extremals of the Alexandrov-Fenchel Inequality]{The Extremals
of the Alexandrov-Fenchel Inequality for Convex Polytopes}

\author{Yair Shenfeld}
\address{Department of Mathematics, Massachusetts Institute of Technology, 
Cambridge, MA, USA}
\email{shenfeld@mit.edu}

\author{Ramon van Handel}
\address{Fine Hall 207, Princeton University, Princeton, NJ 
08544, USA}
\email{rvan@math.princeton.edu}

\begin{abstract}
The Alexandrov-Fenchel inequality, a far-reaching generalization of the 
classical isoperimetric inequality to arbitrary mixed volumes, lies at the 
heart of convex geometry. The characterization of its extremal bodies is a 
long-standing open problem that dates back to Alexandrov's original 1937 
paper. The known extremals already form a very rich family, and even 
the fundamental conjectures on their general structure, due to Schneider, 
are incomplete. In this paper, we completely settle the extremals of the 
Alexandrov-Fenchel inequality for convex polytopes. In particular, we show 
that the extremals arise from the combination of three distinct 
mechanisms: translation, support, and dimensionality. The characterization 
of these mechanisms requires the development of a diverse range of 
techniques that shed new light on the geometry of mixed volumes of 
nonsmooth convex bodies. Our main result extends further beyond polytopes 
in a number of ways, including to the setting of quermassintegrals of 
arbitrary convex bodies. As an application of our main result, we settle 
a question of Stanley on the extremal behavior of certain log-concave 
sequences that arise in the combinatorics of partially ordered sets.
\end{abstract}

\subjclass[2010]{52A39; 
                 52A40; 
		 52B05; 
		 05B25} 

\keywords{Mixed volumes; Alexandrov-Fenchel inequality; convex polytopes; 
extremum problems in geometry and combinatorics}

\vspace*{-.01cm}
\maketitle

\thispagestyle{empty}

\vspace*{-.2cm}
\setcounter{tocdepth}{1}
{\small\tableofcontents}

\section{Introduction}
\label{sec:intro}

\subsection{The Alexandrov-Fenchel inequality and the extremal problem}

Let $C_1,\ldots,C_m$ be convex bodies (that is, nonempty compact convex 
sets) in $\mathbb{R}^n$. One of the most basic facts of convex geometry, 
due to Minkowski \cite{Min03}, is that the volume of convex bodies behaves 
as a homogeneous polynomial under addition
$\lambda C+\mu C':=\{\lambda x+\mu y:x\in C,y\in C'\}$: that is,
for all $\lambda_1,\ldots,\lambda_m\ge 0$
\begin{equation}
\label{eq:volpoly}
	\Vol_n(\lambda_1C_1+\cdots+\lambda_mC_m)
	= \sum_{i_1,\ldots,i_n=1}^m
	\V_n(C_{i_1},\ldots,C_{i_n})\,
	\lambda_{i_1}\cdots\lambda_{i_n}.
\end{equation}
The coefficients $\V_n(C_{i_1},\ldots,C_{i_n})$ of this polynomial, called 
\emph{mixed volumes}, form a large family of natural geometric parameters 
associated to convex bodies. For example, the special cases 
$\V_n(C,\ldots,C,B,\ldots,B)$, called quermassintegrals, already capture 
familiar notions such as the volume, surface area, and mean width of $C$, 
and the average volume of the projections of $C$ onto a random 
$k$-dimensional subspace.\footnote{%
	Throughout this paper $B$ denotes the 
	Euclidean unit ball in $\mathbb{R}^n$.}
In view of these and numerous other important examples, mixed volumes play 
a central role in convex geometry \cite{BF87,BZ88,San93,Sch14}.

When the convex bodies are polytopes, mixed volumes may also be viewed as 
belonging to combinatorial geometry. In this setting, striking connections 
arise between the theory of mixed volumes and other areas of mathematics. 
For example, in algebraic geometry, mixed volumes compute the number of 
solutions to systems of polynomial equations \cite[\S 27]{BZ88} and 
intersection numbers of divisors on toric varieties \cite{Ful93,Ew96}; and 
in combinatorics, mixed volumes compute quantities associated to objects 
such as matroids, partial orders, and permanents \cite{Sta81,Huh18}.

Given the central nature of mixed volumes, it is natural to expect that 
inequalities between mixed volumes capture important mathematical 
phenomena. The most fundamental result of this kind is the 
Alexandrov-Fenchel inequality, which expresses the fact that mixed volumes 
are log-concave.

\begin{thm}[Alexandrov-Fenchel inequality]
\label{thm:af}
We have
$$
	\V_n(K,L,C_1,\ldots,C_{n-2})^2 \ge
	\V_n(K,K,C_1,\ldots,C_{n-2}) \, \V_n(L,L,C_1,\ldots,C_{n-2})
$$
for any convex bodies $K,L,C_1,\ldots,C_{n-2}$ in $\mathbb{R}^n$.
\end{thm}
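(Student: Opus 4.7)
The plan is to follow Alexandrov's polytope approach: reduce the inequality, via approximation, to a statement about the signature of a symmetric bilinear form on the support vectors of strongly isomorphic simple polytopes, and establish this signature by induction on the dimension $n$ combined with a homotopy argument.

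By continuity of mixed volumes in the Hausdorff metric and density of strongly isomorphic simple polytopes among convex bodies, I would first reduce to the case where $K$, $L$, and each $C_i$ are strongly isomorphic simple polytopes sharing a common normal fan with facet unit normals $u_1,\ldots,u_N$. Any polytope with this fan is then determined by its support numbers $h=(h_1,\ldots,h_N)\in\mathbb{R}^N$, and Minkowski's formula expresses $\V_n(P_1,\ldots,P_n)$ as a symmetric multilinear form in these support vectors whose coefficients are themselves lower-dimensional mixed volumes of facets.

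Fixing $C_1,\ldots,C_{n-2}$ and setting $B(h,k):=\V_n(h,k,C_1,\ldots,C_{n-2})$, Theorem~\ref{thm:af} becomes the reverse Cauchy-Schwarz statement $B(h,k)^2\ge B(h,h)\,B(k,k)$ on the support vectors of polytopes with the chosen fan. A standard spectral argument shows this is equivalent to $B$ having Lorentzian signature modulo its trivial (translation) kernel, i.e. exactly one positive eigenvalue. I would prove the latter by induction on $n$, with the base case $n=2$ being Minkowski's second inequality, a direct consequence of Brunn-Minkowski.

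For the inductive step, I would continuously deform $(C_1,\ldots,C_{n-2})$ to a reference configuration in which the signature is directly computable — for instance, all bodies equal to a Euclidean ball, where spherical harmonics diagonalize $B$, or all equal to a common simplex. Since the eigenvalues of $B$ depend continuously on the $C_i$, the number of positive eigenvalues can change only when an eigenvalue crosses zero, that is, when $B$ acquires a new kernel direction. Ruling out such crossings — equivalently, characterizing the kernel of $B$ for arbitrary reference bodies — is the main technical obstacle. The natural attack is to express $B(h,h)$ as a sum over facets of lower-dimensional quadratic forms of the same Alexandrov-Fenchel type and invoke the inductive hypothesis facet by facet, showing that any nontrivial kernel element must descend from a global translation and thus does not contribute a new positive eigenvalue. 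The delicate point, which I expect to be genuinely hard, is controlling degenerate facets on which the inductive hypothesis applies only in a weakened form; this is precisely the source of the rich extremal structure that the paper then sets out to classify.
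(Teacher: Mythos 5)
The paper does not prove Theorem~\ref{thm:af}: it is the classical Alexandrov--Fenchel inequality, cited from Minkowski \cite{Min03} and Alexandrov \cite{Ale37,Ale96}, and the paper's entire contribution lies in characterizing its extremals, not in establishing the inequality itself. Your proposal is therefore not being measured against an internal proof; it is a sketch of Alexandrov's original polytope argument (see also \cite[\S 7.3]{Sch14}), which is a legitimate standard route to the theorem.

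Two corrections to the sketch. First, a small inconsistency: once you have reduced to strongly isomorphic simple polytopes with a common normal fan, the reference configuration in your homotopy must itself be a simple polytope of that type --- typically $C_1=\cdots=C_{n-2}=P$ for a fixed simple polytope $P$, collapsing the reference form to the Minkowski case and ultimately to $n=2$. Deforming to a Euclidean ball and diagonalizing by spherical harmonics belongs to Alexandrov's \emph{other} proof, the one via a second-order elliptic operator on $S^{n-1}$ for smooth bodies, and does not sit inside the polytope framework you set up. Second, and more substantively, your closing sentence misplaces the difficulty: in the strongly isomorphic simple-polytope setting there are \emph{no} degenerate facets --- eliminating them is precisely what the reduction buys you --- and the facet-by-facet induction combined with Weyl's trick (cf.\ Lemma~\ref{lem:weyl} and Remark~\ref{rem:weyl}) shows that the kernel of $B$ consists \emph{exactly} of the linear functions for every configuration in the homotopy class, which is what prevents an eigenvalue from crossing zero. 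The rich extremal structure you allude to appears only in the approximation limit, when strong isomorphism is lost, and is consequently invisible in the polytope proof of the inequality; that invisibility is the very reason the extremal problem is a separate and much harder question, and why the paper develops the local Alexandrov--Fenchel inequality (Theorem~\ref{thm:localaf}) as a genuinely new tool.
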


Theorem \ref{thm:af} was first proved by Minkowski in 1903 in dimension 
$n=3$ \cite{Min03}, and in full generality by Alexandrov in 1937 
\cite{Ale37,Ale96}. (Fenchel independently announced a proof \cite{Fen36}, 
but it was never published.) It lies at the heart of many applications of 
mixed volumes in convexity and in other areas of mathematics. 
This paper is concerned with a classical open problem surrounding 
the Alexandrov-Fenchel inequality that dates back to Alexandrov's original 
paper \cite[p.\ 80]{Ale96}.

To provide context for the problem studied in this paper, let us 
recall the original setting of Minkowski \cite{Min03}. 
Minkowski viewed Theorem \ref{thm:af} as a far-reaching generalization of 
the isoperimetric inequality between volume and surface area, 
which are merely two special cases of mixed volumes. For example, the 
special case
$$
	\V_3(B,C,C)^2 \ge \V_3(C,C,C)\,\V_3(B,B,C)
$$
states that the surface area of a three-dimensional convex body $C$ is 
lower bounded by the product of its volume and mean width, a kind of 
isoperimetric inequality involving three geometric parameters. From this 
viewpoint, a complete understanding of Theorem \ref{thm:af} should capture 
not only the inequality but also the associated extremum problem: which 
bodies minimize surface area when the volume and mean width are fixed? 
This question is equivalent to the study of the cases of equality in the 
above inequality. Remarkably, it turns out that the extremals in this 
example possess highly unusual properties: they consist of cap bodies 
(``spiky balls'') which are both non-unique and non-smooth, in sharp 
contrast with the situation in the classical isoperimetric problem (cf.\ 
\cite{SvH19} and the references therein).

The above example suggests that the extremum problems associated to more 
general cases of the Alexandrov-Fenchel inequality are likely to possess a 
rich and intricate structure. The problem of characterizing these 
extremals was raised in the original papers of Minkowski \cite{Min03} and 
Alexandrov \cite{Ale37}, but progress toward the resolution of this 
problem has proved to be elusive. None of the known proofs of the 
Alexandrov-Fenchel inequality provides information on its cases of 
equality. The geometric proofs (cf.\ \cite{Ale96,SvH18}) impose 
restrictions, such as smooth bodies or polytopes with identical face 
directions, under which only trivial extremals arise, and deduce the 
general result by approximation; nontrivial extremals arise only in the 
limit, and are thus invisible in the proofs of the inequality. The 
algebraic proofs (cf.\ \cite{BZ88,Ful93}) perform a reduction to a 
certain (non-toric)
algebraic surface, which causes the convex geometric structure of 
the problem to be lost.

It was long believed that the extremals of the Alexandrov-Fenchel 
inequality are too numerous to admit a meaningful geometric 
characterization, cf.\ \cite[\S 20.5]{BZ88} or \cite[p.\ 248]{Fav33}. 
However, detailed conjectures on the structure of the extremals 
(attributed in part to Loritz) were published in 1985 by Schneider 
\cite{Sch85}, breathing new life into the problem. Schneider's conjectures 
need not hold when some of the bodies have empty interior \cite{Ew88}, and 
no conjectures have been formulated to date about this setting (which, as 
we will see, is of special importance in applications). However, the 
validity of Schneider's conjectures for full-dimensional bodies has 
remained open, except in a few special cases that are reviewed in \cite[\S 
7.6]{Sch14}, \cite{San93}. Very recently, significant new progress was 
made in \cite{SvH19}, which enabled the proof of Schneider's conjectures 
in the case that dates back to Minkowski \cite{Min03}. The general case is 
however much richer, and entirely new ideas are needed.

\subsection{Main results}

In this paper, we completely settle the extremal problem \emph{in the 
combinatorial setting}. Our main result characterizes all equality cases
$$
	\V_n(K,L,P_1,\ldots,P_{n-2})^2 =
	\V_n(K,K,P_1,\ldots,P_{n-2})\,
	\V_n(L,L,P_1,\ldots,P_{n-2})
$$
when $P_1,\ldots,P_{n-2}$ are arbitrary convex polytopes in 
$\mathbb{R}^n$ and $K,L$ are convex bodies. The characterization of the 
extremal bodies is described in section \ref{sec:three}. In particular, we 
will show that the extremals of the Alexandrov-Fenchel inequality arise 
from the combination of three distinct mechanisms: translation, support, 
and dimensionality. The first two mechanisms were anticipated by 
Schneider's conjectures, while the third is responsible for the new 
extremals that arise when the polytopes $P_i$ may have empty interior. 
The proof of our main result (Theorem \ref{thm:main}), which is contained 
in sections \ref{sec:matrix}--\ref{sec:proofmain}, will in fact give 
considerably more precise information on the structure of the extremals 
than is provided by the characterization in section \ref{sec:three}; 
the most detailed form of our main result
will be formulated in section \ref{sec:unique}.

Aside from its intrinsic place in the foundation of convex geometry, the 
problem of characterizing the extremals of the Alexandrov-Fenchel 
inequality may be thought of in a broader context: the limited progress on 
this problem to date stems from major gaps in the understanding of the 
geometry of mixed volumes of non-smooth convex bodies. The fundamental 
issues that arise are both of a combinatorial and of an analytic nature, 
as we will explain presently.

As will become clear in section \ref{sec:three}, the extremals of the 
Alexandrov-Fenchel inequality are controlled by the boundary structure of 
the bodies $C_1,\ldots,C_{n-2}$ in Theorem \ref{thm:af}. In the case that 
was settled in \cite{SvH19}, only the boundary structure of a single body 
plays a role. In general, however, each of the bodies $C_1,\ldots,C_{n-2}$ 
has an arbitrary boundary structure, and the interactions between the 
different bodies conspire to give rise to the extremals. This interaction 
already arises in its full complexity in the combinatorial setting 
considered in this paper. In settling the problem, we develop a theory 
that explains these interactions: this includes, among other ingredients, 
a local Alexandrov-Fenchel inequality for mixed area measures, strong 
gluing principles for projections from limited data, and new geometric 
structures (``propellers'') of mixed area measures of bodies with empty 
interior. An overview of the proof of our main result will be 
given in section \ref{sec:overview}.

The main contribution of this paper is the complete solution of these 
combinatorial aspects of the problem. In contrast, the obstacle to going 
beyond polytopes stems from unresolved analytic problems in the theory of 
mixed volumes, which are largely independent of the problems studied in 
this paper. These analytic problems arise because the boundary of a 
general convex body may be almost arbitrarily irregular (for example, 
consider the convex hull of an arbitrary closed subset of the unit 
sphere), so that mixed volumes of general convex bodies give rise to 
analytic objects that live on highly irregular sets. The treatment of 
general bodies therefore requires the development of an appropriate 
functional-analytic framework, which has only been partially accomplished 
to date \cite{SvH19} (see section~\ref{sec:discussion} for discussion). 
The main ideas of this paper are not specific to polytopes, however, and 
may be expected to apply more generally when placed in an suitable 
analytic framework.


\subsection{Extensions and applications}

While this paper is primarily concerned with the combinatorial setting, 
our methods already admit a number of extensions beyond the setting of 
convex polytopes. In particular, we will show in section 
\ref{sec:extensions} that our main result extends to the setting where the 
convex bodies $C_1,\ldots,C_{n-2}$ in Theorem~\ref{thm:af} are a 
combination of polytopes, zonoids, and smooth bodies. By combining the 
present methods with \cite{SvH19}, we will also fully characterize the 
extremals of the Alexandrov-Fenchel inequality for quermassintegrals of 
arbitrary convex bodies, a special case that arises frequently in 
applications.

Section \ref{sec:stanley} develops an application in combinatorics. It was 
noticed long ago that various combinatorially defined sequences $(N_i)$ 
appear to be log-concave, that is, they satisfy $N_i^2\ge N_{i-1}N_{i+1}$. 
Such phenomena have received much attention in recent years \cite{Huh18}. 
One of the earliest advances in this area is due to Stanley \cite{Sta81}, 
who observed that if one can represent the relevant combinatorial 
quantities in terms of mixed volumes, log-concavity is explained by the 
Alexandrov-Fenchel inequality. Stanley further raises the following 
question: in cases where $(N_i)$ is log-concave, can one characterize the 
associated extremum problem, that is, explain what combinatorial objects 
achieve equality $N_i^2=N_{i-1}N_{i+1}$? As an illustration of our main 
result, we will settle this problem in one of the settings considered by 
Stanley, where $N_i$ is the number of linear extensions of a partially 
ordered set for which a distinguished element has rank $i$. Such extremal 
problems appear to be inaccessible by currently known methods of 
enumerative or algebraic combinatorics. This example highlights the 
significance of the questions considered in this paper to extremal 
problems in other areas of mathematics, and hints at the 
possibility that the structures developed here might have analogues 
outside convexity; a brief discussion of algebraic analogues of our 
results is given in section \ref{sec:discussion}.

Let us note that, far from being esoteric, it is precisely the case of 
convex bodies with empty interior (which is not covered by previous 
conjectures) that arises in combinatorial applications \cite{Sta81}. This 
reinforces the importance of a complete characterization of the extremals, 
whose formulation we turn to presently.

\section{Three extremal mechanisms}
\label{sec:three}

The aim of this section is to formulate and explain the main result of 
this paper. We first recall some key facts on mixed volumes and mixed 
area measures. We will subsequently describe three distinct mechanisms 
that give rise to extremals of the Alexandrov-Fenchel inequality, and 
state our main result. Here and throughout the paper, our standard 
reference on convexity is the monograph \cite{Sch14}.

\subsection{Basic facts}
\label{sec:basic}

\subsubsection{Convex bodies, mixed volumes, mixed area measures}

Fix $n\ge 3$. A \emph{convex body} is a nonempty compact 
convex set in $\mathbb{R}^n$. A (convex) \emph{polytope} is the 
convex hull of a finite number of points.

To each convex body $K$, we associate its 
\emph{support function}
$$
	h_K(u) := \sup_{y\in K} \langle y,u\rangle.
$$
We think of $h_K$ either as a function on $S^{n-1}$ or as a 
$1$-homogeneous function on $\mathbb{R}^n$.
Geometrically, if $u\in S^{n-1}$, then $h_K(u)$ is the (signed) distance 
to the origin of the supporting hyperplane of $K$ with outer normal $u$; 
thus $h_K:S^{n-1}\to\mathbb{R}$ uniquely determines $K$, as any convex 
body is the intersection of its supporting halfspaces. The key property of 
support functions is that they behave naturally under addition, that is, 
$h_{\lambda K+\mu L}=\lambda h_K+\mu h_L$ for any bodies $K,L$ and 
$\lambda,\mu\ge 0$.

The \emph{mixed volume} $\V_n(C_1,\ldots,C_n)$ of $n$ convex bodies 
$C_1,\ldots,C_n$ in $\mathbb{R}^n$ is defined by \eqref{eq:volpoly}. Mixed 
volumes are nonnegative, and are symmetric and multilinear in 
their arguments. Moreover, there exists a nonnegative measure 
$S_{C_1,\ldots,C_{n-1}}$ on $S^{n-1}$, called the \emph{mixed area 
measure} of $C_1,\ldots,C_{n-1}$, such that
\begin{equation}
\label{eq:mixvolarea}
	\V_n(K,C_1,\ldots,C_{n-1}) =
	\frac{1}{n}\int h_K(u) \,S_{C_1,\ldots,C_{n-1}}(du).
\end{equation}
Like mixed volume, $S_{C_1,\ldots,C_{n-1}}$ is symmetric and multilinear
in $C_1,\ldots,C_{n-1}$.

Consider a function $f=h_K-h_L$ that is a difference of support functions.
As mixed volumes and mixed area measures are multilinear as functions of 
the underlying bodies (and hence of their support functions), we may 
uniquely extend their definitions to differences of support functions
\cite[\S 5.2]{Sch14}. That is, we will write
\begin{align*}
	\V_n(f,C_1,\ldots,C_{n-1}) & :=
	\V_n(K,C_1,\ldots,C_{n-1}) - 
	\V_n(L,C_1,\ldots,C_{n-1}), \\
	S_{f,C_1,\ldots,C_{n-2}} & :=
	S_{K,C_1,\ldots,C_{n-2}} -
	S_{L,C_1,\ldots,C_{n-2}}.
\end{align*}
We may analogously define $\V_n(f,g,C_1,\ldots,C_{n-2})$ when $f,g$ are 
differences of support functions, etc. The extended definitions are still 
symmetric and multilinear, but are not necessarily nonnegative. 
Differences of support functions form a large class of functions 
on $S^{n-1}$: in particular, we have the following \cite[Lemma 1.7.8]{Sch14}.

\begin{lem}
\label{lem:c2}
Any $f\in C^2(S^{n-1})$ is a difference of support functions.
\end{lem}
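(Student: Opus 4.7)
The plan is to write $f = h_K - h_L$ for suitable convex bodies $K,L$ by exploiting the fact that support functions are exactly the $1$-homogeneous convex functions on $\mathbb{R}^n$. Concretely, I would first extend $f$ to a $1$-homogeneous function $\tilde f$ on $\mathbb{R}^n$ by setting $\tilde f(x) := |x|\,f(x/|x|)$ for $x\neq 0$ and $\tilde f(0):=0$; since $f\in C^2(S^{n-1})$, this extension lies in $C^2(\mathbb{R}^n\setminus\{0\})$. The idea is then to perturb $\tilde f$ by a large multiple of the Euclidean norm so as to make it convex, since $|\cdot|=h_B$ is itself the support function of the unit ball.

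The key analytic input is the characterization of convexity for a $1$-homogeneous $C^2$ function $g$ on $\mathbb{R}^n\setminus\{0\}$: differentiating the homogeneity relation shows that $(\nabla^2 g)(u)\,u=0$ for all $u\neq 0$, so the radial direction always lies in the kernel, and $g$ is convex if and only if $\nabla^2 g(u)\succeq 0$ on the tangent space $u^\perp$ for every $u\in S^{n-1}$. Applied to $|\cdot|$, this gives $\nabla^2 |\cdot|(u)= I-uu^T$ on $S^{n-1}$, i.e.\ the orthogonal projection onto $u^\perp$. Thus adding $c|\cdot|$ to $\tilde f$ adds $c$ times the identity on the tangent direction to the Hessian. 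Since $f\in C^2(S^{n-1})$, the tangential part of $\nabla^2\tilde f(u)$ is a continuous function of $u\in S^{n-1}$, hence bounded below; choosing $c>0$ larger than the absolute value of the most negative eigenvalue makes $\nabla^2(\tilde f+c|\cdot|)$ positive semidefinite on $u^\perp$ everywhere on $S^{n-1}$, and by $1$-homogeneity on all of $\mathbb{R}^n\setminus\{0\}$.

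Thus $g:=\tilde f+c|\cdot|$ is a $1$-homogeneous, continuous, convex function on $\mathbb{R}^n$, and is therefore sublinear. Any such function is the support function of the convex body $K:=\{y\in\mathbb{R}^n:\langle y,u\rangle\le g(u)\text{ for all }u\}$, so $g=h_K$. Since $c|\cdot|=h_{cB}$, restricting to $S^{n-1}$ gives $f=h_K-h_{cB}$, as required. I do not expect any serious obstacle here: the only step needing care is the tangential-Hessian computation and the verification that a $1$-homogeneous convex function is a support function, both of which are standard facts from \cite[\S 1]{Sch14}.
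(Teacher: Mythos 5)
Your proof is correct and is essentially the standard argument: the paper itself does not supply a proof of this lemma, citing instead \cite[Lemma~1.7.8]{Sch14}, and Schneider's proof there proceeds exactly as you describe, by adding a sufficiently large multiple of $h_B=|\cdot|$ to the $1$-homogeneous extension of $f$ and using compactness of $S^{n-1}$ together with continuity of the second derivatives to make the tangential Hessian positive semidefinite. One step you pass over quickly is the inference from positive semidefiniteness of $\nabla^2(\tilde f+c|\cdot|)$ on $\mathbb{R}^n\setminus\{0\}$ to convexity on all of $\mathbb{R}^n$ (segments through the origin are not covered by the local Hessian condition); for $n\ge 2$ this is handled by perturbing such a segment off the origin and using continuity of $g$ at $0$, and this point is folded into the standard results of \cite[\S 1.7]{Sch14} that you invoke, so it is not a real gap.
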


\subsubsection{Positivity}

While mixed volumes and mixed area measures of convex bodies are always 
nonnegative, they need not be strictly positive. Positivity of mixed 
volumes and mixed area measures will play an important role throughout 
this paper. We presently state two key facts in this direction. First, we 
recall that positivity of mixed volumes is characterized by 
dimensionality conditions \cite[Theorem 5.1.8]{Sch14}.
Throughout this paper, we denote by $[n]:=\{1,\ldots,n\}$.

\begin{lem}
\label{lem:dim}
For convex bodies $C_1,\ldots,C_n$ in $\mathbb{R}^n$, the following are
equivalent:
\begin{enumerate}[a.]
\itemsep\abovedisplayskip
\item $\V_n(C_1,\ldots,C_n)>0$.
\item There are segments $I_i\subseteq C_i$, $i\in[n]$ with
linearly independent directions.
\item $\dim(C_{i_1}+\cdots+C_{i_k})\ge k$ for all 
$k\in[n]$, $1\le i_1<\cdots<i_k\le n$.
\end{enumerate}
\end{lem}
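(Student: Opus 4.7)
My plan is to establish the equivalence via the cycle (b) $\Rightarrow$ (a) $\Rightarrow$ (c) $\Rightarrow$ (b).

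For (b) $\Rightarrow$ (a), I would use translation invariance to reduce to $[0, v_i] \subseteq C_i$ with $v_1,\ldots,v_n$ linearly independent, and then monotonicity of mixed volume in each argument to obtain $\V_n(C_1, \ldots, C_n) \ge \V_n([0, v_1], \ldots, [0, v_n])$. The latter is evaluated directly: since the $v_i$ are linearly independent, $\sum_i t_i [0, v_i]$ is the parallelepiped spanned by the $t_i v_i$, with volume $|\det(v_1, \ldots, v_n)|\, t_1 \cdots t_n$. Comparison with \eqref{eq:volpoly} gives $\V_n([0,v_1], \ldots, [0,v_n]) = |\det(v_1, \ldots, v_n)|/n! > 0$.

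For (a) $\Rightarrow$ (c) I would argue by contrapositive. Assume $\dim(C_{i_1} + \cdots + C_{i_k}) \le k - 1$ for some $I = \{i_1, \ldots, i_k\} \subseteq [n]$. After translating each $C_{i_j}$ (which preserves mixed volumes), all of them lie in a common linear subspace $V$ of dimension $k - 1$. Setting $A = \sum_{j \in I} C_j \subseteq V$ and $B = \sum_{j \notin I} C_j$, Fubini along the projection $\pi : \mathbb{R}^n \to \mathbb{R}^n / V$ (noting $\pi(sA + B) = \pi(B)$ since $sA \subseteq V$) yields
$$\Vol_n(sA + B) = \int_{\pi(B)} \Vol_V\bigl(sA + (B \cap \pi^{-1}(y)) - y_*\bigr)\, dy,$$
where $y_*$ is any lift of $y$. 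Since $\dim V = k - 1$, Minkowski's polynomial theorem applied inside $V$ bounds the integrand's degree in $s$ by $k - 1$, and hence so for $\Vol_n(sA + B)$. By the two-body Minkowski expansion in $\mathbb{R}^n$, the coefficient of $s^k$ is $\binom{n}{k} \V_n(A[k], B[n - k])$, which must therefore vanish; multilinear expansion in the underlying $C_j$'s writes it as a sum of nonnegative mixed volumes, each a permutation of $\V_n(C_1, \ldots, C_n)$, forcing $\V_n(C_1, \ldots, C_n) = 0$.

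For (c) $\Rightarrow$ (b), let $V_i = \aff(C_i) - \aff(C_i)$ denote the linear direction subspace of $\aff(C_i)$. Since $\aff$ commutes with Minkowski sums, condition (c) translates to $\dim(\sum_{i \in I} V_i) \ge |I|$ for all $I \subseteq [n]$, which is precisely the hypothesis of Rado's transversal theorem. I would prove this by induction on $n$: for any ``tight'' $I' \subseteq \{2, \ldots, n\}$ with $\dim(\sum_{i \in I'} V_i) = |I'|$, the intersection $V_1 \cap \sum_{i \in I'} V_i$ is a proper subspace of $V_1$, because $\dim(V_1 + \sum_{i \in I'} V_i) = \dim(\sum_{i \in I' \cup \{1\}} V_i) \ge |I'| + 1$. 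Since a real vector space is never a finite union of proper subspaces, I may choose $v_1 \in V_1$ outside all these intersections; passing to the quotient $\mathbb{R}^n/\sspan(v_1)$ then preserves the dimension hypothesis for $\bar V_2, \ldots, \bar V_n$, and the induction closes. Finally, because each $C_i$ has nonempty relative interior in $\aff(C_i)$, a sufficiently small rescaling of each resulting $v_i$ fits a segment $[c_i, c_i + v_i] \subseteq C_i$ while preserving linear independence. The delicate step is precisely this ``tightness'' analysis, which guarantees that quotienting by $\sspan(v_1)$ does not collapse any already-saturated dimension constraint; by comparison, the slicing argument for (a) $\Rightarrow$ (c) is more mechanical once one recognizes that a $(k-1)$-dimensional subspace restricts the Minkowski polynomial to degree at most $k - 1$ in $s$.
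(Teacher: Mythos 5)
The paper itself does not prove this lemma --- it cites it directly from Schneider's monograph (Theorem 5.1.8 of [Sch14]), so there is no in-paper argument to compare against. Your proposal is a correct and essentially self-contained proof, and it follows the same architecture as the standard one in Schneider: (b)\,$\Rightarrow$\,(a) by monotonicity after reducing to a parallelotope; (a)\,$\Rightarrow$\,(c) by observing that if $A=\sum_{j\in I}C_j$ lives in a $(k-1)$-dimensional flat then $\Vol_n(sA+B)$ has degree $\le k-1$ in $s$, killing the coefficient $\binom{n}{k}\V_n(A[k],B[n-k])$, whose multilinear expansion contains $\V_n(C_1,\ldots,C_n)$ as a nonnegative summand; and (c)\,$\Rightarrow$\,(b) via Rado's transversal theorem for linear subspaces. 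All three steps check out: in particular, your induction for the Rado step correctly distinguishes the tight sets $I'$ (where $v_1$ must be chosen outside $V_1\cap\sum_{i\in I'}V_i$, a proper subspace because adjoining $V_1$ must raise the dimension) from the slack sets (which can absorb the drop of one dimension in the quotient), and the final rescaling of the $v_i$ to fit inside $C_i$ is legitimate since each $C_i$ has nonempty relative interior in its affine hull. One small remark on (a)\,$\Rightarrow$\,(c): the Fubini computation is fine, but it is worth noting that the cleanest way to justify the degree bound is simply that $\Vol_n(sA+B)$ is a priori a polynomial in $s$ of degree $\le n$, and is also $O(s^{k-1})$ as $s\to\infty$ because $sA+B$ has bounded extent transverse to $V$ and grows at rate $O(s)$ only in the $(k-1)$-dimensional slice directions; this avoids any measurability bookkeeping for the fiber integrand.
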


Similarly, the mixed area measure $S_{C_1,\ldots,C_{n-1}}$ need not be 
supported on the entire sphere $S^{n-1}$. Unlike the positivity of mixed 
volumes, the problem of characterizing geometrically the support of mixed 
area measures of arbitrary convex bodies is not yet fully settled, cf.\ 
\cite[Conjecture 7.6.14]{Sch14}. However, for the present purposes we 
require only the following special case. For 
any vector $u\in\mathbb{R}^n$, let 
\begin{equation}
\label{eq:facedef}
	F(K,u) := \{x\in K: \langle u,x\rangle = h_K(u)\}
\end{equation}
be the unique face of $K$ with outer normal direction $u$. The following 
result states that when $P_1,\ldots,P_{n-2}$ are polytopes, the support of 
the mixed area measure $S_{B,P_1,\ldots,P_{n-2}}$ is characterized by 
dimensionality conditions on faces of $P_1,\ldots,P_{n-2}$.
This result is essentially known; we will provide a proof in section 
\ref{sec:qgraph}.

\begin{lem}
\label{lem:supp}
Let $P_1,\ldots,P_{n-2}$ be any convex polytopes in $\mathbb{R}^n$, and 
let $u\in S^{n-1}$. Then the following conditions are equivalent:
\vspace{.5\abovedisplayskip}
\begin{enumerate}[a.]
\itemsep\abovedisplayskip
\item $u\in\supp S_{B,P_1,\ldots,P_{n-2}}$.
\item There are segments $I_i\subseteq F(P_i,u)$, $i\in [n-2]$
with linearly independent directions.
\item $\dim(F(P_{i_1},u)+\cdots+F(P_{i_k},u))\ge k$ for all
$k\in[n-2]$, $1\le i_1<\cdots<i_k\le n-2$.
\end{enumerate}
\vspace{.5\abovedisplayskip}
When a--c hold, $u\in S^{n-1}$ is called a 
\emph{$(B,P_1,\ldots,P_{n-2})$-extreme} 
normal direction.
\end{lem}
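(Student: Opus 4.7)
My plan is to prove the two equivalences separately. The equivalence $(b)\Leftrightarrow(c)$ is purely linear-algebraic: let $V_i\subseteq u^\perp$ denote the linear subspace parallel to the affine hull of $F(P_i,u)$. Condition (b) asserts the existence of linearly independent representatives $v_i\in V_i$, while condition (c) translates, via $\dim(F(P_{i_1},u)+\cdots+F(P_{i_k},u))=\dim(V_{i_1}+\cdots+V_{i_k})$, into the Hall-type dimension bound $\dim(V_{i_1}+\cdots+V_{i_k})\geq k$ for every $1\leq i_1<\cdots<i_k\leq n-2$. Their equivalence is Rado's theorem on linear matroids (the matroid-union analog of Hall's marriage theorem).

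For $(a)\Leftrightarrow(b)$, I would reduce to the known support theorem for mixed area measures of polytopes (see, e.g., \cite[Thm.~4.5.3]{Sch14}): for polytopes $Q_1,\ldots,Q_{n-1}$, one has $u\in\supp S_{Q_1,\ldots,Q_{n-1}}$ if and only if there exist linearly independent segments $I_i\subseteq F(Q_i,u)$. Approximating $B$ by polytopes $B_k\to B$ yields weak convergence $S_{B_k,P_1,\ldots,P_{n-2}}\to S_{B,P_1,\ldots,P_{n-2}}$. At a facet normal $u$ of $B_k$, the face $F(B_k,u)$ is $(n-1)$-dimensional and provides a segment in any direction of $u^\perp$, so the $(B_k,P_1,\ldots,P_{n-2})$-extremality condition at $u$ reduces precisely to (b) for the $P_i$'s at $u$. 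For the direction $(a)\Rightarrow(b)$: if (b) fails at $u_0$, then since $F(P_i,u)\subseteq F(P_i,u_0)$ for $u$ near $u_0$, (b) also fails in a neighborhood of $u_0$, so no mass is contributed there. For $(b)\Rightarrow(a)$: condition (b) is constant on the relative interior of each cell of the common normal fan of $P_1,\ldots,P_{n-2}$, so (b) holds throughout the relative interior of the cell $\tau_0$ containing $u_0$; a suitable choice of $B_k$ ensures that positive mass accumulates in every neighborhood of $u_0$, which by weak convergence gives $u_0\in\supp S_{B,P_1,\ldots,P_{n-2}}$.

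The main obstacle is making the implication $(b)\Rightarrow(a)$ quantitative. The cell $\tau_0$ typically has positive codimension on $S^{n-1}$ when (b) holds (since condition (b) forces the faces $F(P_i,u_0)$ to have positive dimension, ruling out the top-dimensional cells where all faces are vertices), so facet normals of a generic polytope approximation $B_k$ miss $\tau_0$ entirely; the required mass contributions must instead come from lower-dimensional strata of the normal fan of $B_k+P_1+\cdots+P_{n-2}$. Controlling these contributions as the facets of $B_k$ shrink in the limit is the technical crux. A conceptually cleaner alternative avoids this combinatorial bookkeeping via the Cauchy-Kubota projection formula, which represents $S_{B,P_1,\ldots,P_{n-2}}$ as an average over $v\in S^{n-1}$ of the mixed area measures in the hyperplanes $v^\perp$ of the projections $P_i|v^\perp$; the equivalence with (b) then follows from a generic-position argument showing that orthogonal projection to $v^\perp$ preserves linear independence of segment directions for almost every $v$ precisely when (b) holds.
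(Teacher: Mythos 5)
Your equivalence $(b)\Leftrightarrow(c)$ by Rado's theorem is exactly the linear-algebraic content of Lemma~\ref{lem:dim} applied to the bodies $F(P_1,u),\ldots,F(P_{n-2},u)$, and your argument for $(a)\Rightarrow(b)$ (faces can only shrink under perturbation, so failure of $(b)$ is an open condition, and any segment condition for $(B_k,\mathcal P)$ at $u$ implies the segment condition for $\mathcal P$ alone) is sound. The gap is in $(b)\Rightarrow(a)$, and you have correctly diagnosed it but not closed it.

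The approximation route $B_k\to B$ does not straightforwardly work, and for a reason sharper than the one you cite. Even if you arrange a facet of $B_k$ with outer normal exactly $u_0$, so that $u_0$ is an atom of $S_{B_k,\mathcal P}$, the weight of that atom is $\V_{n-1}(F(B_k,u_0),F(P_1,u_0),\ldots,F(P_{n-2},u_0))$, and since $F(B_k,u_0)$ is a facet of $B_k$ whose diameter tends to $0$, this weight vanishes as $k\to\infty$. To get a positive limit you must sum over the many atoms accumulating near $u_0$ and show the total mass in a neighbourhood survives; this amounts to re-deriving the explicit arc-length representation of $S_{B,\mathcal P}$ along the edges of the normal fan (Lemma~\ref{lem:sbp} in the paper), which is substantial. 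Your Cauchy--Kubota alternative has a similar hole: when $(b)$ holds at $u_0$, the segment $[0,v]$ contributes an atom of $S_{[0,v],\mathcal P}$ at $u_0$ only when $v\perp u_0$, and $\{v\in S^{n-1}: v\perp u_0\}$ has measure zero. For $v$ merely near $u_0^\perp$, the atom moves away from $u_0$ and one needs quantitative control; ``almost every $v$'' is the wrong quantifier here, and no generic-position argument about linear independence salvages it.

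The paper avoids all of this with one observation you are missing, namely Lemma~\ref{lem:maxsupp}: $\supp S_{M,C_1,\ldots,C_{n-2}}\subseteq\supp S_{B,C_1,\ldots,C_{n-2}}$ for \emph{every} convex body $M$. Granting this, take any polytope $Q$ having a facet with outer normal $u_0$. Condition $(b)$ (equivalently $(c)$), together with $\dim F(Q,u_0)=n-1$, gives via Lemma~\ref{lem:dim} that $\V_{n-1}(F(Q,u_0),F(P_1,u_0),\ldots,F(P_{n-2},u_0))>0$, so $u_0\in\supp S_{Q,\mathcal P}$ by Lemma~\ref{lem:mapoly}, and then $u_0\in\supp S_{B,\mathcal P}$ by Lemma~\ref{lem:maxsupp}. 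No approximation, no small-atom accounting. I recommend you adopt this route for $(b)\Rightarrow(a)$; the rest of your proposal can stand.
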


The appearance the Euclidean ball $B$ in Lemma \ref{lem:supp} may appear 
rather arbitrary: we did not assume $B$ appears as one of the bodies in 
Theorem \ref{thm:af}. Its significance is that the associated mixed area 
measure has maximal support \cite[Lemma 7.6.15]{Sch14} (an alternative 
proof may be given along the lines of Lemma \ref{lem:lowmaxsupp} below).

\begin{lem}
\label{lem:maxsupp}
For any convex bodies $M,C_1,\ldots,C_{n-2}$, we have
$$
	\supp S_{M,C_1,\ldots,C_{n-2}}\subseteq
	\supp S_{B,C_1,\ldots,C_{n-2}}.
$$
\end{lem}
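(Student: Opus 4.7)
The plan is to pass to polytope approximations, exploit the explicit combinatorial description of the support of a mixed area measure in the polytope case, and transfer the inclusion to general bodies by weak convergence. First, approximate $M$ and each $C_i$ by polytopes $M^{(\nu)}\to M$ and $C_i^{(\nu)}\to C_i$ in Hausdorff distance; by multilinearity and the continuity of mixed volumes, the mixed area measures converge weakly: $S_{M^{(\nu)},C_1^{(\nu)},\ldots,C_{n-2}^{(\nu)}}\to S_{M,C_1,\ldots,C_{n-2}}$ and $S_{B,C_1^{(\nu)},\ldots,C_{n-2}^{(\nu)}}\to S_{B,C_1,\ldots,C_{n-2}}$.

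In the all-polytope case, the same normal-fan analysis underlying Lemma~\ref{lem:supp} provides an analogous characterization for $\supp S_{Q,P_1,\ldots,P_{n-2}}$: $u$ lies in the support iff there exist $n-1$ linearly independent segments $I_0\subseteq F(Q,u)$ and $I_i\subseteq F(P_i,u)$, $i\in[n-2]$. Restricting attention to $I_1,\ldots,I_{n-2}$, these $n-2$ linearly independent segments already witness condition~(b) of Lemma~\ref{lem:supp}, so $u\in\supp S_{B,P_1,\ldots,P_{n-2}}$. This establishes $\supp S_{Q,P_1,\ldots,P_{n-2}}\subseteq\supp S_{B,P_1,\ldots,P_{n-2}}$ whenever $Q$ and the $P_i$ are all polytopes.

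To pass to general bodies, fix $u_0\in\supp S_{M,C_1,\ldots,C_{n-2}}$. Applying the Portmanteau theorem to a shrinking sequence of open neighborhoods of $u_0$ and diagonalizing yields $u_\nu\to u_0$ with $u_\nu\in\supp S_{M^{(\nu)},C_1^{(\nu)},\ldots,C_{n-2}^{(\nu)}}$; the polytope case gives $u_\nu\in\supp S_{B,C_1^{(\nu)},\ldots,C_{n-2}^{(\nu)}}$, and Lemma~\ref{lem:supp} produces linearly independent segments $I_i^{(\nu)}\subseteq F(C_i^{(\nu)},u_\nu)$ for $i\in[n-2]$. Using the Blaschke selection theorem and compactness of the Grassmannian of $(n-2)$-dimensional subspaces, one may pass to a subsequence along which $I_i^{(\nu)}\to I_i$ in Hausdorff distance and the spans of their directions converge; upper semicontinuity of the face map $(C,u)\mapsto F(C,u)$ under Hausdorff convergence then yields $I_i\subseteq F(C_i,u_0)$ with linearly independent directions.

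The hard part is the final conclusion $u_0\in\supp S_{B,C_1,\ldots,C_{n-2}}$ from these witness segments, since Lemma~\ref{lem:supp} is stated only for polytope $C_i$. This will be handled by the locality of mixed area measures: the value of $S_{B,C_1,\ldots,C_{n-2}}$ in a neighborhood of $u_0$ depends on each $C_i$ only through the local behavior of $h_{C_i}$ near $u_0$, allowing one to locally replace each $C_i$ by a polytope whose face at $u_0$ contains $I_i$ without altering the measure near $u_0$, and thereby reduce once more to the polytope case.
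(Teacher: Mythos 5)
Your approach is genuinely different from the paper's, but it has two serious gaps, both concentrated in the second half of the argument.

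\textbf{Step 5 cannot be carried out when the $C_i$ are not polytopes.} You extract witness segments $I_i^{(\nu)}\subseteq F(C_i^{(\nu)},u_\nu)$ with linearly independent directions and attempt to pass to the limit. But those segments need not have length bounded away from zero: if $C_i$ is smooth and strictly convex, $F(C_i,u_0)$ is a single point, the polytopal faces $F(C_i^{(\nu)},u_\nu)$ shrink to that point, and every segment inside them degenerates. Upper semicontinuity of the face map only controls accumulation points of $\bigcup_\nu F(C_i^{(\nu)},u_\nu)$, not the persistence of a direction. So in the limit you would be staring at a singleton face, with no segment of positive length, and the whole mechanism of Lemma~\ref{lem:supp} (which is a statement about \emph{faces}) simply has no content there. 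This is not a peripheral case: for smooth $C_i$ the measure $S_{B,\mathcal{C}}$ has full support, so the lemma is true, but your argument produces nothing.

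\textbf{Step 6 is a genuine gap, and the ``locality'' idea does not close it.} Even granting a segment $I_i\subseteq F(C_i,u_0)$ for each $i$ with linearly independent directions, the conclusion $u_0\in\supp S_{B,C_1,\ldots,C_{n-2}}$ does not follow from Lemma~\ref{lem:supp}, which is proved \emph{only} for polytope reference bodies; for general bodies the geometric characterization of $\supp S_{B,\mathcal{C}}$ is precisely what is not known (the paper flags this explicitly, citing Schneider's Conjecture~7.6.14). The locality you invoke is also not the right principle: mixed area measures are not determined near $u_0$ by the behavior of $h_{C_i}$ near $u_0$ in a way that lets you swap $C_i$ for a polytope with $I_i$ in its face ``without altering the measure near $u_0$'' --- in general no polytope's support function agrees with $h_{C_i}$ on an open set around $u_0$, and you have no mechanism to compare $S_{B,\tilde P_1,\ldots,\tilde P_{n-2}}$ with $S_{B,C_1,\ldots,C_{n-2}}$ locally. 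As written, this final step is hand-waving on a question that is at least as hard as the lemma.

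The paper's route avoids both problems by \emph{never approximating the $C_i$ at all}. One shows directly that for smooth $K$ one has the domination $S_{K,\mathcal{C}}\le \|\nabla^2 h_K\|_\infty\, S_{B,\mathcal{C}}$ as measures (this is the argument indicated ``along the lines of Lemma~\ref{lem:lowmaxsupp}''), which gives $\supp S_{K,\mathcal{C}}\subseteq\supp S_{B,\mathcal{C}}$ immediately; then one approximates $M$ by smooth bodies and uses weak convergence of $S_{K^{(l)},\mathcal{C}}\wto S_{M,\mathcal{C}}$, noting that supports are closed under weak limits when all approximating measures live in a fixed closed set. Crucially, $\mathcal{C}$ is held fixed throughout, so the subtle and in general unresolved structure of $\supp S_{B,\mathcal{C}}$ for non-polytopal $\mathcal{C}$ never enters. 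Your polytope-approximation strategy, by changing all $n-1$ bodies simultaneously, forces you through exactly that unresolved structure.
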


Let us note that Lemma \ref{lem:maxsupp} remains valid if $B$ is replaced 
by any sufficiently smooth convex body; there is nothing uniquely
special about $B$. However, the choice of Euclidean ball will prove to be 
particularly convenient in our proofs.

\subsubsection{Equality}

We finally recall a basic fact about equality in the 
Alexandrov-Fenchel 
inequality. It is evident that there is equality in Theorem \ref{thm:af} 
if and only if the difference between the left- and right-hand sides of 
the inequality is minimized. The first-order optimality condition 
associated to this minimum problem gives rise to an equivalent formulation 
of the equality cases of the Alexandrov-Fenchel inequality, due to 
Alexandrov \cite[p.\ 80]{Ale96} (cf.\ section \ref{sec:3af} 
or \cite[Theorem 7.4.2]{Sch14}).

\begin{lem}
\label{lem:simpleeq}
Let $K,L,C_1,\ldots,C_{n-2}$ be convex bodies in $\mathbb{R}^n$ such that
$$
	\V_n(K,L,C_1,\ldots,C_{n-2})>0.
$$
Then the following are equivalent:
\begin{enumerate}[a.]
\itemsep\abovedisplayskip
\item $\V_n(K,L,C_1,\ldots,C_{n-2})^2=
\V_n(K,K,C_1,\ldots,C_{n-2})\,
\V_n(L,L,C_1,\ldots,C_{n-2})$.
\item $S_{h_K-ah_L,C_1,\ldots,C_{n-2}}=0$ for some $a>0$.
\end{enumerate}
\end{lem}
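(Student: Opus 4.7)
The plan is to treat the two directions separately. The direction (b) $\Rightarrow$ (a) is essentially algebraic: assuming $S_{h_K-ah_L,C_1,\ldots,C_{n-2}}=0$, I would integrate $h_K$ and $h_L$ against this vanishing measure using the identity \eqref{eq:mixvolarea} and multilinearity, yielding the two identities $\V_n(K,K,C_1,\ldots,C_{n-2})=a\V_n(K,L,C_1,\ldots,C_{n-2})$ and $\V_n(K,L,C_1,\ldots,C_{n-2})=a\V_n(L,L,C_1,\ldots,C_{n-2})$. Multiplying and cancelling the factor $\V_n(K,L,C_1,\ldots,C_{n-2})>0$ produces the Alexandrov-Fenchel equality.

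For (a) $\Rightarrow$ (b), the first observation is that the equality $\V_n(K,L,\cdot)^2=\V_n(K,K,\cdot)\V_n(L,L,\cdot)$ together with $\V_n(K,L,\cdot)>0$ forces both $\V_n(K,K,\cdot)$ and $\V_n(L,L,\cdot)$ to be strictly positive, so $a:=\V_n(K,L,C_1,\ldots,C_{n-2})/\V_n(L,L,C_1,\ldots,C_{n-2})>0$ is well defined and satisfies $\V_n(K,K,\cdot)=a\V_n(K,L,\cdot)$. The main step is a first-order optimality argument. For an arbitrary convex body $M$, I would apply Theorem \ref{thm:af} to the perturbed pair $(K+\varepsilon M,L)$ to obtain a polynomial inequality in $\varepsilon\ge 0$ whose constant term vanishes by the equality case. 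Taking the derivative at $\varepsilon=0^+$ and substituting the identities above yields
\begin{equation*}
\V_n(K,L,\cdot)\,\V_n(M,L,\cdot)\ge \V_n(K,M,\cdot)\,\V_n(L,L,\cdot),
\end{equation*}
which, after dividing by $\V_n(L,L,\cdot)>0$ and using \eqref{eq:mixvolarea}, translates into $\int h_M\,dS_{h_K-ah_L,C_1,\ldots,C_{n-2}}\le 0$. A symmetric perturbation of the second argument $(K,L+\varepsilon M)$ gives the reverse inequality, hence equality.

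To conclude, since $M$ was arbitrary, the signed measure $\mu:=S_{h_K-ah_L,C_1,\ldots,C_{n-2}}$ satisfies $\int h_M\,d\mu=0$ for every convex body $M$. By linearity this extends to every difference of support functions, and by Lemma \ref{lem:c2} every $C^2$ function on $S^{n-1}$ is such a difference. Since $C^2(S^{n-1})$ is uniformly dense in $C(S^{n-1})$, the finite signed measure $\mu$ annihilates all continuous test functions and must vanish.

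The only delicate point is the first-order derivative step: one must check that the $O(\varepsilon^2)$ remainder genuinely has magnitude bounded uniformly in $\varepsilon$ near $0$, but this is automatic because the mixed volumes in the perturbation are polynomial in $\varepsilon$ by the defining identity \eqref{eq:volpoly}. No other delicate estimates appear; the argument rests entirely on the variational content of the Alexandrov-Fenchel inequality at equality together with the density provided by Lemma \ref{lem:c2}.
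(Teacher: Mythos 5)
Your argument is correct: the $(b)\Rightarrow(a)$ direction matches the paper verbatim, and the $(a)\Rightarrow(b)$ direction is a valid first-order optimality argument that does the job. The paper reaches the same conclusion by a structurally similar but differently organized variational argument. Specifically, the paper first passes through Lemma~\ref{lem:3af} to extend the Alexandrov--Fenchel inequality to differences of support functions (which requires a smoothing/approximation step), and then in Lemma~\ref{lem:afeq} perturbs the single function slot $f\mapsto f+\lambda(g-ah_L)$ along a curve constrained to preserve $\V_n(\,\cdot\,,L,\mathcal C)=0$; the constrained quadratic $\varphi(\lambda)$ then has an interior maximum at $0$, so $\varphi'(0)=0$ gives the vanishing directly. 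You instead perturb $K$ and $L$ separately by a convex body $M$ and apply Theorem~\ref{thm:af} only to the resulting pairs of \emph{genuine} convex bodies, obtaining the two one-sided inequalities $\int h_M\,dS_{h_K-ah_L,\mathcal C}\le 0$ and $\ge 0$. The trade-off: your route avoids the extension of the Alexandrov--Fenchel inequality beyond convex bodies (and hence the approximation argument behind Lemma~\ref{lem:3af}), using only Theorem~\ref{thm:af} as stated plus Lemma~\ref{lem:c2} at the density step; the paper's route is less self-contained here but factors out Lemma~\ref{lem:afeq}, which it reuses in several later places (e.g.\ Corollary~\ref{cor:superinduct} and Lemma~\ref{lem:critinduct}). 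One cosmetic remark: the inequality $\V_n(K,L,\cdot)\V_n(M,L,\cdot)\ge\V_n(K,M,\cdot)\V_n(L,L,\cdot)$ obtained from $\psi'(0^+)\ge 0$ does not itself use the identities you derived earlier; those only enter when you rewrite it as a statement about $S_{h_K-ah_L,\mathcal C}$, so the phrase ``substituting the identities above yields'' slightly misplaces where they are used.
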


Let us emphasize that this result provides essentially no information on 
the geometry of the extremal bodies $K,L,C_1,\ldots,C_{n-2}$: it is merely 
a reformulation of the equality condition. The main problem that will be 
addressed in this paper is to develop a geometric characterization of the 
extremals.

\begin{rem}
\label{rem:triv}
When $\V_n(K,L,C_1,\ldots,C_{n-2})=0$, there is automatically
equality in Theorem \ref{thm:af}. These \emph{trivial} equality cases
are fully characterized by Lemma \ref{lem:dim}. Nontrivial equality 
cases arise only when $\V_n(K,L,C_1,\ldots,C_{n-2})>0$, as is assumed in
Lemma \ref{lem:simpleeq}. This is the setting that will 
concern us in the rest of this paper.
\end{rem}

\subsection{Extremal mechanisms}

What convex bodies yield equality in Theorem \ref{thm:af}? We will now 
describe three mechanisms that yield extremals of the Alexandrov-Fenchel 
inequality, each capturing a different geometric phenomenon: translation 
(section \ref{sec:mechtrans}), support (section \ref{sec:mechsupp}), and 
dimensionality (section \ref{sec:mechdim}).

It is important to note that the bodies $K,L$ and $C_1,\ldots,C_{n-2}$ 
play very different roles in Theorem \ref{thm:af}: $K,L$ vary, while 
$C_1,\ldots,C_{n-2}$ are the same in each term. We therefore consider the 
reference bodies $C_1,\ldots,C_{n-2}$ as fixed, and aim to characterize 
which $K,L$ yield equality in Theorem \ref{thm:af}. By 
Lemma~\ref{lem:simpleeq}, the problem can be formulated equivalently as 
follows: \emph{given $C_1,\ldots,C_{n-2}$, we aim to characterize what 
differences of support functions $f$ satisfy 
$S_{f,C_1,\ldots,C_{n-2}}=0$.}

\subsubsection{Translation}
\label{sec:mechtrans}

The simplest mechanism for equality in Theorem \ref{thm:af}
stems from the most basic invariance property of mixed volumes:
as volume is translation-invariant, \eqref{eq:volpoly} implies that 
mixed volumes are as well, that is,
$$
	\V_n(K,C_1,\ldots,C_{n-1}) =
	\V_n(K+v,C_1,\ldots,C_{n-1})
$$
for all $v\in\mathbb{R}^n$. In terms of support functions, we have
$h_{K+v}(u)=h_K(u)+\langle v,u\rangle$, that is, the support function of a 
convex body and its translate differ by a linear function. This gives rise 
to the following equality case.

\begin{lem}
\label{lem:lineq}
$S_{f,C_1,\ldots,C_{n-2}}=0$ whenever $f=\langle v,\cdot\rangle$
is a linear function.
\end{lem}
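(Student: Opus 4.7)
The plan is to exploit the fact that a linear function is itself the support function of a single point, and then combine this with the dimensionality criterion for vanishing of mixed volumes (Lemma~\ref{lem:dim}).

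First, I would observe that $f(u) = \langle v, u \rangle$ is exactly the support function of the singleton body $\{v\}$, since $h_{\{v\}}(u) = \sup_{y \in \{v\}} \langle y, u \rangle = \langle v, u \rangle$. Because the zero function is the support function of $\{0\}$, the decomposition $f = h_{\{v\}} - h_{\{0\}}$ expresses $f$ as a difference of support functions, so by the extended definition of mixed area measures,
\[
S_{f, C_1, \ldots, C_{n-2}} = S_{\{v\}, C_1, \ldots, C_{n-2}} - S_{\{0\}, C_1, \ldots, C_{n-2}}.
\]

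The key step is to show that each of the two nonnegative measures on the right-hand side is the zero measure. For this I would substitute $K = B$ into the identity \eqref{eq:mixvolarea}; since $h_B \equiv 1$, the total mass of $S_{\{v\}, C_1, \ldots, C_{n-2}}$ equals $n\,\V_n(B, \{v\}, C_1, \ldots, C_{n-2})$. By Lemma~\ref{lem:dim}, this mixed volume vanishes, because the singleton $\{v\}$ contains no nondegenerate segment and therefore violates condition (b) of that lemma. A nonnegative measure with zero total mass is the zero measure, so $S_{\{v\}, C_1, \ldots, C_{n-2}} = 0$; the identical argument yields $S_{\{0\}, C_1, \ldots, C_{n-2}} = 0$, and subtraction completes the proof.

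I do not expect a genuine obstacle here: the assertion is essentially a formal consequence of the positivity characterization in Lemma~\ref{lem:dim}, packaged through the Minkowski integral representation~\eqref{eq:mixvolarea}. The only minor subtlety is the recognition of $\langle v, \cdot \rangle$ as a support function, which is immediate from the definition of $h_K$.
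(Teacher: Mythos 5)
Your proof is correct, but it takes a genuinely different route from the paper. The paper writes $f = h_{K+v}-h_K$ for an arbitrary body $K$ and appeals to translation-invariance of mixed volumes: $\V_n(g,f,\mathcal{C})=0$ for every difference of support functions $g$, hence for every $g\in C^2(S^{n-1})$ by Lemma~\ref{lem:c2}, which forces $S_{f,\mathcal{C}}=0$. You instead choose the decomposition $f = h_{\{v\}}-h_{\{0\}}$ and argue that each of the two nonnegative measures $S_{\{v\},\mathcal{C}}$ and $S_{\{0\},\mathcal{C}}$ is identically zero because its total mass is $n\,\V_n(B,\{\cdot\},\mathcal{C})$, which vanishes by the dimensionality criterion of Lemma~\ref{lem:dim} (a singleton contributes no segment). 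Your argument is somewhat more self-contained — it avoids the density step via Lemma~\ref{lem:c2} and never invokes translation-invariance — and it is airtight. The paper's route, on the other hand, is chosen precisely to exhibit the translation mechanism it is naming in this section: the proof exposes \emph{why} the extremal exists (translation-invariance of volume), whereas your proof shows the same identity by squashing it under the dimensionality criterion, which is really the theme of the third mechanism (section~\ref{sec:mechdim}) rather than the first. Both are fine mathematics; the paper's is conceptually aligned with the surrounding narrative, yours is the more bare-hands computation.
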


\begin{proof}
Let $f=\langle v,\cdot\rangle$ be any linear function. Then 
$f=h_{K+v}-h_K$ for any convex body $K$. Therefore, by 
translation-invariance of mixed volumes,
$$
	\frac{1}{n}\int g \,dS_{f,C_1,\ldots,C_{n-2}} =
	\V_n(g,f,C_1,\ldots,C_{n-2}) = 0
$$
for any difference of support 
functions $g$, and thus \emph{a fortiori} for any $g\in C^2(S^{n-1})$
by Lemma \ref{lem:c2}.
It follows immediately that $S_{f,C_1,\ldots,C_{n-2}}=0$.
\end{proof}

Lemma \ref{lem:lineq} and Lemma \ref{lem:simpleeq} imply, for example, 
that equality occurs in the Alexandrov-Fenchel inequality whenever 
$h_K-ah_L=\langle v,\cdot\rangle$ for some $a>0$ and $v\in\mathbb{R}^n$, 
which simply means that $K=aL+v$ (that is, $K$ and $L$ are homothetic). Of 
course, this also follows immediately from Theorem \ref{thm:af}.

\subsubsection{Support}
\label{sec:mechsupp}

A much more subtle invariance property of mixed volumes stems from the 
fact that mixed area measures need not be supported on the entire sphere 
$S^{n-1}$. Indeed, it follows immediately from \eqref{eq:mixvolarea} that
$$
	\V_n(K,C_1,\ldots,C_{n-1})=
	\V_n(L,C_1,\ldots,C_{n-1})
$$
whenever
$$
	h_K(u)=h_L(u)\mbox{ for all }u\in\supp S_{C_1,\ldots,C_{n-1}}.
$$
That this phenomenon gives rise to new extremals of the 
Alexandrov-Fenchel inequality dates back essentially to the work of 
Minkowski, and has been put forward systematically by Schneider. Let us 
give a precise formulation \cite[p.\ 430]{Sch14}.

\begin{lem}
\label{lem:suppeq}
$S_{f,C_1,\ldots,C_{n-2}}=0$ whenever $f(u)=0$ for all
$u\in\supp S_{B,C_1,\ldots,C_{n-2}}$.
\end{lem}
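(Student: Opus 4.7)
The plan is to mimic the proof of Lemma \ref{lem:lineq}: show that $S_{f,C_1,\ldots,C_{n-2}}$ integrates every $C^2$ test function to zero, and conclude by the Riesz representation theorem (using Lemma \ref{lem:c2} to reduce $C^2$ functions to differences of support functions).

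Concretely, I would first observe that by Lemma \ref{lem:c2} it suffices to verify
$$
\frac{1}{n}\int g\,dS_{f,C_1,\ldots,C_{n-2}} = \V_n(g,f,C_1,\ldots,C_{n-2}) = 0
$$
for every difference of support functions $g$. Using that the extended mixed volume is symmetric and multilinear in its arguments (as recalled in section \ref{sec:basic}), I would rewrite this quantity in the dual form
$$
\V_n(g,f,C_1,\ldots,C_{n-2}) = \frac{1}{n}\int f\,dS_{g,C_1,\ldots,C_{n-2}},
$$
moving the vanishing function $f$ against a signed measure built from $g$.

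Next, I would control the support of this signed measure. Writing $g=h_K-h_L$ for convex bodies $K,L$, multilinearity gives
$$
S_{g,C_1,\ldots,C_{n-2}} = S_{K,C_1,\ldots,C_{n-2}} - S_{L,C_1,\ldots,C_{n-2}},
$$
and Lemma \ref{lem:maxsupp} applied to each term shows that both nonnegative measures, hence their difference, are supported inside $\supp S_{B,C_1,\ldots,C_{n-2}}$. Since by hypothesis $f$ vanishes on this set, the integral against $S_{g,C_1,\ldots,C_{n-2}}$ is zero. This yields $\int g\,dS_{f,C_1,\ldots,C_{n-2}}=0$ for every difference of support functions $g$, hence for every $g\in C^2(S^{n-1})$ by Lemma \ref{lem:c2}. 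Since $C^2(S^{n-1})$ is dense in $C(S^{n-1})$ and the signed measure $S_{f,C_1,\ldots,C_{n-2}}$ is finite, this forces $S_{f,C_1,\ldots,C_{n-2}}=0$.

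There is no real obstacle here; every ingredient has been assembled in section \ref{sec:basic}. The only point requiring care is that $f$ and $g$ are only \emph{differences} of support functions, so the mixed area measure $S_{g,C_1,\ldots,C_{n-2}}$ is signed; the argument must use Lemma \ref{lem:maxsupp} separately on the positive and negative parts to conclude that the total variation of $S_{g,C_1,\ldots,C_{n-2}}$ is carried by $\supp S_{B,C_1,\ldots,C_{n-2}}$, so that ``$f=0$ on this support'' genuinely kills the integral.
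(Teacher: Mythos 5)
Your proposal is correct and follows essentially the same route as the paper's proof: move $f$ against $S_{g,\mathcal{C}}$ by symmetry of mixed volumes, control the support of $S_{g,\mathcal{C}}$ via Lemma \ref{lem:maxsupp}, and conclude by testing against all $C^2$ functions through Lemma \ref{lem:c2}. Your explicit remark about treating the two nonnegative parts $S_{K,\mathcal{C}}$ and $S_{L,\mathcal{C}}$ separately is exactly what the paper's one-line support inclusion is silently doing.
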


\begin{proof}
Suppose $f$ vanishes on $\supp S_{B,C_1,\ldots,C_{n-2}}$.
Then
$$
	\frac{1}{n}\int g\,dS_{f,C_1,\ldots,C_{n-2}} =
	\V_n(g,f,C_1,\ldots,C_{n-2})=
	\frac{1}{n}\int f\,dS_{g,C_1,\ldots,C_{n-2}} = 0,
$$
for any difference of support functions $g$,
where we used the symmetry of mixed volumes 
and that $\supp S_{g,C_1,\ldots,C_{n-2}} \subseteq
\supp S_{B,C_1,\ldots,C_{n-2}}$ by Lemma 
\ref{lem:maxsupp}.
The conclusion follows as we may choose any $g\in C^2(S^{n-1})$
by Lemma \ref{lem:c2}.
\end{proof}

In the case that $C_1,\ldots,C_{n-2}$ are polytopes, we have given a 
geometric characterization of the support of $S_{B,C_1,\ldots,C_{n-2}}$ in 
Lemma \ref{lem:supp}. This yields a fully geometric interpretation of the 
situation described by Lemma \ref{lem:suppeq}: that $f=h_K-h_L$ vanishes 
on $\supp S_{B,C_1,\ldots,C_{n-2}}$ means precisely that the convex bodies 
$K$ and $L$ have the same supporting hyperplanes in all 
$(B,C_1,\ldots,C_{n-2})$-extreme normal directions.

\begin{example}
\label{ex:cap}
Let $C=[0,1]^3$ be a cube in $\mathbb{R}^3$, and let the bodies $K$ 
and $L$ be 
derived from $C$ by slicing off some of its corners. This construction is 
illustrated in Figure \ref{fig:cap}. We claim that $h_K-h_L$ vanishes on 
$\supp S_{B,C}$, so that in particular
$$
	\V_3(K,L,C)^2 = \V_3(K,K,C)\,\V_3(L,L,C)
$$
in this example by Lemmas \ref{lem:suppeq} and \ref{lem:simpleeq}.
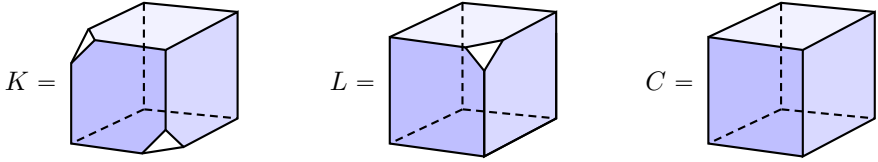
\begin{figure}
\centering
\begin{tikzpicture}[scale=.5]
\begin{scope}[xshift=-17cm]
\draw (-8.6,-.25) node[left] {$K=$};
\fill[blue!15] (-5.95,.55) -- (-4.03,1.55) -- 
(-4.03,-1.25) -- (-5.47,-2) -- (-5.95,-1.55) -- (-5.95,.55);
\fill[blue!25] (-5.95,.55) -- (-7.84,.81) -- (-8.47,.20) -- 
(-8.47,-1.91) -- (-6.58,-2.17) -- (-5.95,-1.55) -- (-5.95,.55);
\fill[blue!5] (-7.84,.81) -- (-8,1.1) -- (-6.53,1.79) -- (-4.03,1.55) --
(-5.95,.55) -- (-7.84,.81);
\draw[thick] (-5.95,.55) -- (-4.03,1.55) -- (-4.03,-1.25) --
(-5.47,-2) -- (-5.95,-1.55) -- (-5.95,.55);
\draw[thick] (-5.95,.55) -- (-7.84,.81) -- (-8.47,.20) -- (-8.47,-1.91) 
-- (-6.58,-2.17) -- (-5.95,-1.55);
\draw[thick] (-6.58,-2.17) -- (-5.47,-2);
\draw[thick] (-8.01,1.1) -- (-8.47,.20);
\draw[thick] (-7.84,.81) -- (-8,1.1) -- (-6.53,1.79) -- (-4.03,1.55);
\draw[thick,densely dashed] (-6.53,1.79) -- (-6.53,-1.01) --
(-4.03,-1.25);
\draw[thick,densely dashed] (-6.53,-1.01) -- (-8.47,-1.91);
\end{scope}
\begin{scope}[xshift=-8.5cm]
\draw (-8.6,-.25) node[left] {$L=$};
\fill[blue!15] (-5.95,0) -- (-5.45,.81) -- (-4.03,1.55) -- 
(-4.03,-1.25) -- (-5.95,-2.25) -- (-5.95,0);
\fill[blue!25] (-6.454,.618) -- (-8.47,.89) -- 
(-8.47,-1.91) -- (-5.95,-2.25) -- (-5.95,0);
\fill[blue!5] (-8.47,.89) -- (-6.53,1.79) -- 
(-4.03,1.55) -- (-5.45,.81) -- (-6.454,.618) -- (-8.47,.89);
\draw[thick] (-4.03,1) -- (-4.03,-1.25) --
(-5.95,-2.25) -- (-5.95,0);
\draw[thick] (-5.95,0) -- (-5.45,.81) -- (-4.03,1.55) -- (-4.03,-1.25) -- 
(-5.95,-2.25) -- (-5.95,0) -- (-6.454,.618) -- (-5.45,.81);
\draw[thick] (-6.454,.618) -- (-8.47,.89) -- (-8.47,-1.91) --
(-5.95,-2.25);
\draw[thick] (-8.47,.89) -- (-6.53,1.79) -- (-4.03,1.55);
\draw[thick,densely dashed] (-6.53,1.79) -- (-6.53,-1.01) --
(-4.03,-1.25);
\draw[thick,densely dashed] (-6.53,-1.01) -- (-8.47,-1.91);
\end{scope}
\draw (-8.6,-.25) node[left] {$C=$};
\fill[blue!15] (-5.95,.55) -- (-4.03,1.55) -- 
(-4.03,-1.25) -- (-5.95,-2.25) -- (-5.95,.55);
\fill[blue!25] (-5.95,.55) -- (-8.47,.89) -- 
(-8.47,-1.91) -- (-5.95,-2.25) -- (-5.95,.55);
\fill[blue!5] (-8.47,.89) -- (-6.53,1.79) -- 
(-4.03,1.55) -- (-5.95,.55) -- (-8.47,.89);
\draw[thick] (-5.95,.55) -- (-4.03,1.55) -- (-4.03,-1.25) --
(-5.95,-2.25) -- (-5.95,.55);
\draw[thick] (-5.95,.55) -- (-8.47,.89) -- (-8.47,-1.91) --
(-5.95,-2.25);
\draw[thick] (-8.47,.89) -- (-6.53,1.79) -- (-4.03,1.55);
\draw[thick,densely dashed] (-6.53,1.79) -- (-6.53,-1.01) --
(-4.03,-1.25);
\draw[thick,densely dashed] (-6.53,-1.01) -- (-8.47,-1.91);
\end{tikzpicture}
\caption{Example of an equality case described by Lemma \ref{lem:suppeq}.
\label{fig:cap}}
\end{figure}

To verify the claim, note that by part \emph{c} of Lemma \ref{lem:supp}, 
we have $u\in\supp S_{B,C}$ if and only if $u$ is a normal direction of a 
face of $C$ of dimension at least one, that is, if $u$ is the outer normal 
of a supporting hyperplane of one of the edges of the unit cube. But it is 
readily seen in Figure \ref{fig:cap} that any such hyperplane also 
supports both $K$ and $L$, so that $h_K(u)=h_L(u)$ for every $u\in\supp 
S_{B,C}$. There are of course many other directions in which the 
supporting hyperplanes of $K,L$ differ, but these are all normal to a 
corner of the cube $C$ and are therefore not in $\supp S_{B,C}$.
\end{example}

\subsubsection{Dimensionality}
\label{sec:mechdim}

We now describe yet another mechanism that gives rise to extremals of the 
Alexandrov-Fenchel inequality, which arises from the fact that mixed 
volumes may vanish for dimensionality reasons (Lemma \ref{lem:dim}). 
To make this idea precise, we introduce the following definition;
recall that we are interested in extremals for given reference bodies 
$\mathcal{C}:=(C_1,\ldots,C_{n-2})$.

\begin{defn}
\label{defn:deg}
Let $(M,N)$ be a pair of convex bodies, and let $f:S^{n-1}\to\mathbb{R}$.
\begin{enumerate}[a.]
\itemsep\abovedisplayskip
\item $(M,N)$ is called a \emph{$\mathcal{C}$-degenerate pair} if 
$M$ is not a translate of $N$,
\begin{align}
\label{eq:degp1}
	&\V_n(M,N,C_1,\ldots,C_{n-2})=0, \\
\label{eq:degp2}
\mbox{and}\quad
	&\V_n(M,B,C_1,\ldots,C_{n-2})=\V_n(N,B,C_1,\ldots,C_{n-2}).
\end{align}
\item $f$ is a \emph{$\mathcal{C}$-degenerate function} if
$f=h_M-h_N$ for some $\mathcal{C}$-degenerate pair $(M,N)$.
\end{enumerate}
\end{defn}

By Lemma \ref{lem:dim}, condition \eqref{eq:degp1} is of a purely 
geometric nature: it is characterized by the dimensions of the 
relevant bodies. Condition \eqref{eq:degp2} should be viewed 
merely as a normalization; for any pair $(M,N)$ satisfying the first 
condition, the second condition can always be made to hold by rescaling 
$M$ or $N$. We assume $M$ is not a translate of $N$ to exclude
the trivial case that $f=h_M-h_N$ is a linear function.

\begin{lem}
\label{lem:degeq}
$S_{f,C_1,\ldots,C_{n-2}}=0$ whenever $f$ is a $\mathcal{C}$-degenerate
function.
\end{lem}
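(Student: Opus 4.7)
\emph{Plan.} I would reduce Lemma \ref{lem:degeq} to the equality case of Alexandrov-Fenchel (Lemma \ref{lem:simpleeq}) by passing from the possibly low-dimensional pair $(M,N)$ to the enlarged pair $\tilde M := M + B$, $\tilde N := N + B$. Since $h_{\tilde M} - h_{\tilde N} = h_M - h_N = f$, the signed measure $S_{f,\mathcal{C}}$ coincides with $S_{\tilde M,\mathcal{C}} - S_{\tilde N,\mathcal{C}}$, so any conclusion about the enlarged pair transfers directly to $f$. Writing $v := \V_n(M,B,\mathcal{C}) = \V_n(N,B,\mathcal{C})$ (using \eqref{eq:degp2}) and $w := \V_n(B,B,\mathcal{C})$, multilinearity yields $\V_n(\tilde M,\tilde N,\mathcal{C}) = 2v + w$ and $\V_n(\tilde M,\tilde M,\mathcal{C}) = \V_n(M,M,\mathcal{C}) + 2v + w$, with an analogous formula for $\tilde N$. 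Substituting $\V_n(M,M,\mathcal{C})\,\V_n(N,N,\mathcal{C}) = 0$ (which follows from AF applied to the null pair $(M,N)$) into the AF discrepancy for $(\tilde M, \tilde N)$ gives
\[
\V_n(\tilde M,\tilde N,\mathcal{C})^2 - \V_n(\tilde M,\tilde M,\mathcal{C})\,\V_n(\tilde N,\tilde N,\mathcal{C}) = -\bigl(\V_n(M,M,\mathcal{C}) + \V_n(N,N,\mathcal{C})\bigr)(2v+w).
\]
Alexandrov-Fenchel forces the left-hand side to be nonnegative, while nonnegativity of the individual factors forces the right-hand side to be nonpositive. Hence $(\V_n(M,M,\mathcal{C}) + \V_n(N,N,\mathcal{C}))(2v+w) = 0$, which splits into two subcases.

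\emph{Main subcase.} If $2v+w > 0$, then $\V_n(M,M,\mathcal{C}) = \V_n(N,N,\mathcal{C}) = 0$, and the three quadratic mixed volumes of $\tilde M$ and $\tilde N$ all coincide with the positive value $2v+w$. Thus $(\tilde M, \tilde N)$ achieves equality in AF with positive mixed volume, so Lemma \ref{lem:simpleeq} gives $S_{h_{\tilde M} - a h_{\tilde N},\mathcal{C}} = 0$ for some $a > 0$. Integrating the constant $h_B \equiv 1$ against this vanishing measure and using $\V_n(B,\tilde M,\mathcal{C}) = \V_n(B,\tilde N,\mathcal{C}) = v + w > 0$ pins down $a = 1$, and since $h_{\tilde M} - h_{\tilde N} = f$, this yields $S_{f,\mathcal{C}} = 0$.

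\emph{Degenerate subcase.} If $2v+w = 0$, then $w = \V_n(B,B,\mathcal{C}) = 0$. By Lemma \ref{lem:dim} this reflects a dimensional deficiency purely among $C_1,\ldots,C_{n-2}$, which the same lemma inherits to give $\V_n(K,K,\mathcal{C}) = 0$ for every convex body $K$. Consequently $\V_n(h_K - h_L, h_K - h_L, \mathcal{C}) = -2\V_n(K,L,\mathcal{C}) \le 0$, that is, the bilinear form $\V_n(\cdot,\cdot,\mathcal{C})$ is negative semi-definite on differences of support functions. Since $\V_n(f,f,\mathcal{C}) = \V_n(M,M,\mathcal{C}) + \V_n(N,N,\mathcal{C}) = 0$, Cauchy-Schwarz for this form yields $\V_n(f,g,\mathcal{C}) = 0$ for every difference $g$ of support functions, and Lemma \ref{lem:c2} then gives $S_{f,\mathcal{C}} = 0$.

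\emph{Main obstacle.} The key thing to notice is that enlargement by $B$ is exactly the right perturbation: condition \eqref{eq:degp2} is what collapses the three enlarged mixed volumes to a common value, and the constancy $h_B \equiv 1$ on $S^{n-1}$ both normalizes the free scalar in Lemma \ref{lem:simpleeq} to $a = 1$ and connects directly to $\V_n(\cdot, B, \mathcal{C})$. The secondary obstacle is the degenerate subcase: when $\V_n(B, B, \mathcal{C}) = 0$ no enlargement can rescue positivity, so one must recognize that the ambient Lorentzian structure has collapsed to a merely negative semi-definite form and invoke Cauchy-Schwarz instead of Lemma \ref{lem:simpleeq}.
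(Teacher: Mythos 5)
Your proposal is correct and takes essentially the same route as the paper: both proofs enlarge the pair to $K=M+B$, $L=N+B$ (your $\tilde M,\tilde N$), use \eqref{eq:degp1}--\eqref{eq:degp2} together with the Alexandrov--Fenchel inequality to force equality for $(K,L)$, and then fix the scalar $a$ in Lemma~\ref{lem:simpleeq} to $a=1$ by integrating against $h_B\equiv 1$. The one genuine point of difference is the degenerate subcase: the paper observes that $\V_n(B,B,\mathcal{C})=0$ forces $S_{B,\mathcal{C}}=0$ and finishes via the support inclusion of Lemma~\ref{lem:maxsupp}, while you propagate the vanishing through Lemma~\ref{lem:dim} to conclude that $\V_n(\,\cdot\,,\,\cdot\,,\mathcal{C})$ vanishes identically on pairs of convex bodies and then invoke Cauchy--Schwarz (correct, but more than is needed --- in this case the form is actually zero, not merely semidefinite). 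Both routes are valid; the paper's is marginally shorter, and yours has the small virtue of staying purely algebraic rather than appealing to the support statement of Lemma~\ref{lem:maxsupp}. A final small remark: your dichotomy on $2v+w$ coincides with the paper's on $w=\V_n(B,B,\mathcal{C})$, since $w=0$ already forces $v=0$ by Lemma~\ref{lem:dim}, so the split can be stated directly in terms of $\V_n(B,B,\mathcal{C})$.
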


\begin{proof}
Let $(M,N)$ be a $\mathcal{C}$-degenerate pair. The main observation is 
that we obtain equality in Theorem \ref{thm:af} for $K=B+M$ and 
$L=B+N$. Indeed, as
\begin{align*}
	\V_n(K,K,C_1,\ldots,C_{n-2}) &=
	\V_n(K,L,C_1,\ldots,C_{n-2}) +
	\V_n(M,M,C_1,\ldots,C_{n-2}),\\
	\V_n(L,L,C_1,\ldots,C_{n-2}) &=
	\V_n(K,L,C_1,\ldots,C_{n-2}) +
	\V_n(N,N,C_1,\ldots,C_{n-2})
\end{align*}
by \eqref{eq:degp1} and \eqref{eq:degp2}, we obtain
$$
	\V_n(K,L,C_1,\ldots,C_{n-2})^2 \le
	\V_n(K,K,C_1,\ldots,C_{n-2})\,
	\V_n(L,L,C_1,\ldots,C_{n-2}).
$$
As the reverse inequality
holds by Theorem \ref{thm:af}, we must in fact have equality.

Now note that if $\V_n(B,B,C_1,\ldots,C_{n-2})>0$, then
$S_{h_K-ah_L,C_1,\ldots,C_{n-2}}=0$ for some $a$ by
Lemma \ref{lem:simpleeq}. Integrating 
against $h_B$ and applying \eqref{eq:mixvolarea} and \eqref{eq:degp2} 
yields $a=1$. Thus $S_{f,C_1,\ldots,C_{n-2}}=0$ for 
$f=h_K-h_L=h_M-h_N$.

If $\V_n(B,B,C_1,\ldots,C_{n-2})=0$, however, then we have
$S_{B,C_1,\ldots,C_{n-2}}=0$ by \eqref{eq:mixvolarea} as $h_B=1$
on $S^{n-1}$. Thus in this case $S_{f,C_1,\ldots,C_{n-2}}=0$
for any $f$ by Lemma \ref{lem:maxsupp}.
\end{proof}

The geometric phenomena captured by Lemmas \ref{lem:suppeq} and 
\ref{lem:degeq} are quite different: the former captures the 
facial structure of the bodies in $\mathcal{C}$, while the latter 
captures the dimensions of the bodies. Let us illustrate 
the distinction in a concrete example.

\begin{example}
\label{ex:deg}
Let $C_1=[0,1]^4$ be a cube in $\mathbb{R}^4$, and let
$C_2=[0,e_1]+[0,e_2]$ be a two-dimensional square in the plane spanned by 
the first two coordinate directions $e_1,e_2$. Let
$M=[0,e_1]$ and $N=[0,e_2]$ be segments in the same plane.

We claim that $(M,N)$ is a degenerate pair. Indeed, as $\dim(M+N+C_2)=2$, 
Lemma \ref{lem:dim} verifies \eqref{eq:degp1}. On the other hand, it is 
clear that \eqref{eq:degp2} must hold, as this example is symmetric 
under exchanging the $e_1$ and $e_2$ directions.
This gives rise, for example, to the following equality case  
of the Alexandrov-Fenchel inequality: if we choose $K=C_1+M$ and 
$L=C_1+N$, then
Lemmas \ref{lem:degeq} and \ref{lem:simpleeq} yield
$$
        \V_4(K,L,C_1,C_2)^2 = \V_4(K,K,C_1,C_2)\,\V_4(L,L,C_1,C_2).
$$

We now aim to show that the present example cannot be explained by a 
combination of Lemmas \ref{lem:lineq} and \ref{lem:suppeq}, confirming that 
Lemma \ref{lem:degeq} captures a genuinely distinct phenomenon. That is, 
we aim to show that $f=h_M-h_N$ does not coincide with a linear 
function on the support of $S_{B,C_1,C_2}$. To this end, note that
$$
	f(u) = \max(u_1,0) - \max(u_2,0).
$$
On the other hand, for any $u\in S^3\cap\mathrm{span}\{e_1,e_3\}$ we have 
$\dim F(C_1,u)\ge 2$ and $\dim F(C_2,u)\ge 1$, so that 
$S^3\cap\mathrm{span}\{e_1,e_3\}\subset \supp S_{B,C_1,C_2}$ by Lemma 
\ref{lem:supp}. Thus $f$ cannot coincide with any linear 
function on $\supp S_{B,C_1,C_2}$, as the restriction of $f$ to the unit 
circle in $\mathrm{span}\{e_1,e_3\}$ is not a smooth function.
\end{example}

\subsection{Main result}
\label{sec:main}

In the previous section, we described three distinct mechanisms for 
equality in the Alexandrov-Fenchel inequality in Lemmas \ref{lem:lineq}, 
\ref{lem:suppeq}, and \ref{lem:degeq}. However, these three mechanisms may 
all appear simultaneously by linearity: if $S_{f,C_1,\ldots,C_{n-2}}=0$ 
and $S_{g,C_1,\ldots,C_{n-2}}=0$, then $S_{f+g,C_1,\ldots,C_{n-2}}=0$ as 
well. Thus any linear combination of the functions that appear in Lemmas 
\ref{lem:lineq}, \ref{lem:suppeq}, and \ref{lem:degeq} gives rise to an 
extremal case of the Alexandrov-Fenchel inequality.

As no other mechanism for equality is known, one may conjecture that these 
are the \emph{only} extremal cases of the Alexandrov-Fenchel inequality. 
The main result of this paper is a complete proof of this conjecture in 
the combinatorial setting. In geometric terms, we prove the following. 
(Recall that $\mathcal{P}$-degenerate pairs and $(B,\mathcal{P})$-extreme 
directions are defined in Definition \ref{defn:deg} and 
Lemma~\ref{lem:supp}.)

\begin{thm}
\label{thm:main}
Let $\mathcal{P}:=(P_1,\ldots,P_{n-2})$ be polytopes in 
$\mathbb{R}^n$, and let $K,L$ be convex bodies such that
$\V_n(K,L,P_1,\ldots,P_{n-2})>0$.\footnote{%
	As was noted in Remark \ref{rem:triv}, the trivial extremals
	$\V_n(K,L,P_1,\ldots,P_{n-2})=0$ are already fully characterized 
	geometrically by Lemma \ref{lem:dim}, so we do not consider them
	further.}
Then 
$$
	\V_n(K,L,P_1,\ldots,P_{n-2})^2=
	\V_n(K,K,P_1,\ldots,P_{n-2})\,
	\V_n(L,L,P_1,\ldots,P_{n-2})
$$
if and only if there exist $a>0$, $v\in\mathbb{R}^n$, and 
a number $0\le m<\infty$
of $\mathcal{P}$-degenerate pairs $(M_1,N_1),\ldots,(M_m,N_m)$, so that
$K+N_1+\cdots+N_m$ and $aL+v+M_1+\cdots+M_m$ have the same supporting 
hyperplanes in all $(B,\mathcal{P})$-extreme normal directions.
\end{thm}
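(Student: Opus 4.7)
The ``if'' direction is routine and follows by combining the three extremal mechanisms. Suppose the stated conditions hold with bodies $M_i,N_i$, scalar $a$, and vector $v$. Then the function
$$
g := h_K + \sum_{i=1}^m h_{N_i} - ah_L - \langle v,\cdot\rangle - \sum_{i=1}^m h_{M_i}
$$
vanishes on $\supp S_{B,\mathcal{P}}$ by hypothesis, and rearrangement gives $h_K - ah_L = \langle v,\cdot\rangle + \sum_{i=1}^m (h_{M_i}-h_{N_i}) + g$. Applying Lemmas \ref{lem:lineq}, \ref{lem:suppeq}, and \ref{lem:degeq} term by term yields $S_{h_K-ah_L,P_1,\ldots,P_{n-2}}=0$, and Lemma \ref{lem:simpleeq} then delivers the equality.

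The serious content lies in the ``only if'' direction. Starting from equality, Lemma \ref{lem:simpleeq} provides $a>0$ with $S_{f,\mathcal{P}}=0$, where $f=h_K-ah_L$, so the goal is to exhibit a decomposition of $f$ modulo $\supp S_{B,\mathcal{P}}$ into a linear part plus finitely many support-function differences $h_{M_i}-h_{N_i}$ of $\mathcal{P}$-degenerate pairs. The natural plan is induction on the ambient dimension $n$ (with base cases handled directly since Theorem~\ref{thm:af} in small dimensions has trivial extremal structure), coupled with a localization at each $(B,\mathcal{P})$-extreme direction $u\in\supp S_{B,\mathcal{P}}$. The idea is that for such a direction $u$ the mixed area measure $S_{f,\mathcal{P}}$ has, near $u$, a disintegration that reads off the $(n-1)$-dimensional mixed area measure of the faces $F(K,u),F(L,u),F(P_i,u)$ inside $u^\perp$. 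A \emph{local} version of the Alexandrov--Fenchel equality condition then transfers the $n$-dimensional vanishing $S_{f,\mathcal{P}}=0$ into an $(n-1)$-dimensional equality case to which the inductive hypothesis applies, yielding a local decomposition of $f$ at $u$ in terms of a linear function plus faces of degenerate pairs living in $u^\perp$.

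The next step is a \emph{gluing} step: to assemble the local decompositions produced at the various extreme directions $u$ into global bodies $M_i,N_i\subset\mathbb{R}^n$ whose faces $F(M_i,u),F(N_i,u)$ reproduce the local degenerate data, and whose sum $\sum h_{M_i}-h_{N_i}$ (together with a global linear function) agrees with $f$ on $\supp S_{B,\mathcal{P}}$. This requires a compatibility analysis of the local pieces across incident extreme directions, and a combinatorial reconstruction principle that determines a convex body from partial data on its supporting hyperplanes; one must verify that the local degeneracy conditions \eqref{eq:degp1}--\eqref{eq:degp2}, which are dimensional in nature via Lemma \ref{lem:dim}, lift from the face level to genuine $\mathcal{P}$-degenerate pairs in $\mathbb{R}^n$.

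The principal obstacle, and what distinguishes the problem from the Minkowski-type situation settled in \cite{SvH19}, is the simultaneous presence of several polytopes $P_i$ with possibly empty interior, which is exactly the regime where the new dimensionality mechanism of Lemma~\ref{lem:degeq} is needed. The interaction between distinct $P_i$ at a given extreme direction creates a branching combinatorial structure (for which a ``propeller'' framework is suggested in the introduction) that the inductive/local machinery must track carefully; in particular, one must separate, at each $u$, the contribution of extremals coming from genuine face support (feeding into $g$) from those coming from face-level degeneracy (feeding into the $(M_i,N_i)$). Getting this separation canonical enough to glue globally, while respecting the normalization \eqref{eq:degp2} and the requirement that each $M_i$ not be a translate of $N_i$, is expected to be the technical heart of the argument.
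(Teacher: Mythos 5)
Your high-level blueprint --- induction on the dimension, a local Alexandrov--Fenchel mechanism to pass extremality down, a gluing step, and a propeller framework lurking in the background --- coincides with the paper's architecture, but it glosses over the one step without which the induction never gets started. You invoke ``a local version of the Alexandrov--Fenchel equality condition'' to transfer $S_{f,\mathcal{P}}=0$ to an $(n-1)$-dimensional equality case. The naive such statement, $S_{f,\mathcal{P}}=0\Rightarrow S_{f,f,\mathcal{P}_{\backslash r}}\le 0$, is \emph{false} precisely when nontrivial extremals exist (section \ref{sec:outlinelocalaf}: a body in dimension $n\ge 3$ whose hyperplane projections are pairwise homothetic must itself be a homothet, so no nontrivial extremal can project to equality cases in every hyperplane). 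The central new device of the paper is Theorem \ref{thm:localaf}: since $S_{f,\mathcal{P}}$ is unchanged if $f$ is altered off $\supp S_{B,\mathcal{P}}$, one shows that \emph{some} such modification $g$ does satisfy $S_{g,g,\mathcal{P}_{\backslash r}}\le 0$. Establishing this requires the finite-dimensional reduction via a polytope $Q$ strongly isomorphic to a simple background polytope (Corollary \ref{cor:augfindim}) and a Fredholm-alternative argument whose dual condition is itself deduced from the Alexandrov--Fenchel equality case (Proposition \ref{prop:comblin}). Your sketch shows no awareness of this obstruction or its resolution; as written the localization step simply does not produce an $(n-1)$-dimensional equality case.

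Two further mismatches are worth flagging. First, the paper localizes by projecting onto hyperplanes $u^\perp$ via Corollary \ref{cor:segproj}, with $u$ ranging over almost all of $S^{n-1}\cap\mathcal{L}_r$, not by examining faces at extreme directions as your wording suggests: the face identity $S_{f,\mathcal{P}}(\{u\})=\V_{n-1}(f^u,\mathcal{P}^u)=0$ gives one scalar per $u$, not a two-condition equality case of Alexandrov--Fenchel in any $(n-1)$-dimensional space. Second, the degenerate pairs are not recovered by some free-form ``combinatorial reconstruction principle'': the paper stratifies into supercritical, critical, and subcritical regimes (Definitions \ref{defn:supercrit}, \ref{defn:crit}, \ref{defn:subcrit}); in the supercritical case degenerate pairs simply do not exist (Lemma \ref{lem:supercrit}); in the critical case each degenerate pair is attached to a maximal critical set, and Panov's lemma (Lemma \ref{lem:panov}, Corollary \ref{cor:panov}) forces these maximal sets to be disjoint, which is what bounds the number $m$ and permits the decoupling of different degenerate contributions via the propeller structure (Theorem \ref{thm:propeller}, Propositions \ref{prop:proplin} and \ref{prop:decouplingmagic}). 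None of this bookkeeping appears in your proposal, and without it the gluing step is unmoored.
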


The \emph{if} direction of Theorem \ref{thm:main} follows from Lemmas 
\ref{lem:simpleeq}, \ref{lem:lineq}, \ref{lem:suppeq}, and 
\ref{lem:degeq}, so it is the \emph{only if} part that requires proof. 
Some key ideas in the proof are described in section \ref{sec:overview}; 
the proof itself is contained in sections 
\ref{sec:matrix}--\ref{sec:proofmain}.

Schneider has conjectured \cite{Sch85} that equality in the 
Alexandrov-Fenchel inequality holds if and only if $K$ and $aL+v$ have the 
same supporting hyperplanes in all $(B,\mathcal{P})$-extreme normal 
directions. That this is not always the case was illustrated in Example 
\ref{ex:deg} (the existence of counterexamples was first noted in 
\cite{Ew88}). Nonetheless, no counterexample has been found to Schneider's 
conjecture in the case where all bodies in $\mathcal{P}$ are 
full-dimensional. This suggests that in the full-dimensional situation, 
degenerate pairs may not exist. Not only does this turn out to be
the case, but in fact a much weaker condition suffices.

\begin{defn}
\label{defn:supercrit}
A collection of convex bodies $\mathcal{C}=(C_1,\ldots,C_{n-2})$ is 
\emph{supercritical} if $\dim(C_{i_1}+\cdots+C_{i_k})\ge k+2$
for all $k\in[n-2]$, $1\le i_1<\cdots<i_k\le n-2$.
\end{defn}

\begin{lem}
\label{lem:supercrit}
If $\mathcal{C}$ is supercritical, $\mathcal{C}$-degenerate 
functions do not exist.
\end{lem}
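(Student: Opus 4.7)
The plan is to suppose that a $\mathcal{C}$-degenerate pair $(M,N)$ exists and derive a contradiction via Lemma~\ref{lem:dim}. The leverage comes from supercriticality: every nonempty $I\subseteq[n-2]$ contributes a $+2$ slack to $\dim(\sum_{i\in I}C_i)$, which is more than either of $M,N$ can ``absorb'' inside a vanishing mixed-volume witness. (Below $\sum T$ denotes the Minkowski sum of the elements of a finite set $T$ of bodies.)

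As a first reduction, I would show that $\V_n(M,B,\mathcal{C})>0$ and $\V_n(N,B,\mathcal{C})>0$. If $\V_n(M,B,\mathcal{C})=0$, then \eqref{eq:degp2} gives $\V_n(N,B,\mathcal{C})=0$; applying Lemma~\ref{lem:dim} to $(M,B,C_1,\ldots,C_{n-2})$, a witness subset $T$ cannot contain $B$ (since $\dim B=n$) and cannot lie in $\mathcal{C}$ (by supercriticality). If $T=\{M\}\cup T'$ with $\varnothing\ne T'\subseteq\mathcal{C}$, supercriticality applied to $T'$ yields $\dim\sum T\ge|T'|+2>|T|$, contradicting the witness property. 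Hence $T=\{M\}$, forcing $M$ to be a point; by symmetry so is $N$, making $M,N$ translates---contradicting the definition of a degenerate pair. In particular $\dim M,\dim N\ge 1$.

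Next I apply the same recipe to \eqref{eq:degp1}. A Lemma~\ref{lem:dim} witness is now a subset $S\subseteq\{M,N\}\cup\mathcal{C}$ with $\dim\sum S<|S|$. Supercriticality rules out $S\subseteq\mathcal{C}$ as well as $S=\{M\}\cup S'$ or $\{N\}\cup S'$ with $\varnothing\ne S'\subseteq\mathcal{C}$; the singletons $S=\{M\},\{N\}$ are excluded by $\dim M,\dim N\ge 1$; and if $\{M,N\}\subseteq S$ with $|S|\ge 3$, applying supercriticality to $S':=S\setminus\{M,N\}$ gives $\dim\sum S\ge|S'|+2=|S|$, again contradicting the witness property. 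The only surviving case is $S=\{M,N\}$, forcing $\dim(M+N)\le 1$.

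Combined with $\dim M,\dim N\ge 1$, this means $M$ and $N$ are nondegenerate segments parallel to a common direction $v\in S^{n-1}$, say translates of $[0,\ell_M v]$ and $[0,\ell_N v]$ with $\ell_M,\ell_N>0$. Multilinearity and translation invariance then give $\V_n(M,B,\mathcal{C})=\ell_M\alpha$ and $\V_n(N,B,\mathcal{C})=\ell_N\alpha$ where $\alpha:=\V_n([0,v],B,\mathcal{C})$; the positivity from the first step forces $\alpha>0$, so \eqref{eq:degp2} yields $\ell_M=\ell_N$, making $M$ a translate of $N$---contradiction. I expect the main obstacle to be the casework in the middle step: one must verify that the $+2$ slack from supercriticality \emph{exactly} neutralizes each admissible vanishing witness once $M$ or $N$ is added, and that the normalization \eqref{eq:degp2} is what excludes the residual point cases which the dimensional criterion alone cannot rule out.
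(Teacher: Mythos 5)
Your proof is correct and follows essentially the same route as the paper's: apply Lemma~\ref{lem:dim} together with the $+2$ slack from supercriticality to conditions \eqref{eq:degp1} and \eqref{eq:degp2}, reduce $M,N$ to points or parallel segments, and then use the normalization \eqref{eq:degp2} to force them to be translates. The paper merely compresses the casework (deriving ``$\dim M=0$, $\dim N=0$, or $\dim(M+N)\le 1$'' in one step) and orders the two reductions differently, but the ingredients and logic are identical.
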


\begin{proof}
Suppose $(M,N)$ is a $\mathcal{C}$-degenerate pair. By Lemma \ref{lem:dim} 
and the supercriticality assumption, \eqref{eq:degp1} implies that 
$\dim(M)=0$, $\dim(N)=0$, or $\dim(M+N)\le 1$.

Assume first that $\dim(M)=0$. Then $\V_n(N,B,C_1,\ldots,C_{n-2})=0$ by 
\eqref{eq:degp2}. But then by Lemma \ref{lem:dim} and the 
supercriticality assumption, $\dim(N)=0$ as well.

Thus there are two possibilities: $\dim(M)=\dim(N)=0$, or 
$\dim(M)=\dim(N)=\dim(N+M)=1$. In the first case $M$ and $N$ are 
singletons, while in the second case $M$ and $N$ are segments with 
parallel directions. Moreover, in the latter case 
$\V_n(N,B,C_1,\ldots,C_{n-2})>0$ by Lemma \ref{lem:dim} and the 
supercriticality assumption, so \eqref{eq:degp2} implies that $M$ and $N$ 
have the same length. Thus in either case $M$ and $N$ are translates of 
one another, which violates the definition of a degenerate pair.
\end{proof}

In other words, Lemma \ref{lem:supercrit} yields:

\begin{cor}
\label{cor:schneider}
Let $\mathcal{P}:=(P_1,\ldots,P_{n-2})$ be a supercritical collection of 
polytopes in $\mathbb{R}^n$, and let $K,L$ be convex bodies such that
$\V_n(K,L,P_1,\ldots,P_{n-2})>0$.
Then
$$
	\V_n(K,L,P_1,\ldots,P_{n-2})^2=
	\V_n(K,K,P_1,\ldots,P_{n-2})\,
	\V_n(L,L,P_1,\ldots,P_{n-2})
$$
if and only if there exist $a>0$ and $v\in\mathbb{R}^n$ so that
$K$ and $aL+v$ have the same supporting 
hyperplanes in all $(B,\mathcal{P})$-extreme normal directions.
\end{cor}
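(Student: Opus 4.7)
The plan is to obtain Corollary \ref{cor:schneider} as an immediate consequence of Theorem \ref{thm:main} combined with Lemma \ref{lem:supercrit}; essentially no new work is required.

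For the \emph{if} direction, I would observe that the hypothesis --- that $K$ and $aL+v$ have the same supporting hyperplanes in every $(B,\mathcal{P})$-extreme normal direction --- is exactly the special case of the hypothesis of Theorem \ref{thm:main} with $m=0$ (no $\mathcal{P}$-degenerate pairs present). That direction of Theorem \ref{thm:main} was already established via Lemmas \ref{lem:simpleeq}, \ref{lem:lineq}, and \ref{lem:suppeq}, so equality in the Alexandrov-Fenchel inequality follows at once.

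For the \emph{only if} direction, I would apply Theorem \ref{thm:main} to obtain $a>0$, $v\in\mathbb{R}^n$, and some $0\le m<\infty$ $\mathcal{P}$-degenerate pairs $(M_1,N_1),\ldots,(M_m,N_m)$ for which $K+N_1+\cdots+N_m$ and $aL+v+M_1+\cdots+M_m$ share supporting hyperplanes at every $(B,\mathcal{P})$-extreme normal direction. The decisive step is then to invoke Lemma \ref{lem:supercrit}: the supercriticality of $\mathcal{P}$ rules out the existence of any $\mathcal{P}$-degenerate function, hence of any $\mathcal{P}$-degenerate pair (a pair $(M,N)$ with $h_M=h_N$ would force $M$ to be a translate of $N$, which is excluded by Definition \ref{defn:deg}). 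Therefore $m=0$ is forced, and the conclusion that $K$ and $aL+v$ have matching supporting hyperplanes in all $(B,\mathcal{P})$-extreme normal directions is immediate.

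Since all of the substantive content --- the structure theorem for extremals and the supercriticality argument --- is absorbed entirely into Theorem \ref{thm:main} and Lemma \ref{lem:supercrit} (whose own proof was just given via Lemma \ref{lem:dim} and the dimension bound $\dim(C_{i_1}+\cdots+C_{i_k})\ge k+2$), there is no genuine obstacle in the corollary itself; it is simply the combination of the two preceding results, and confirms that Schneider's original conjecture is correct whenever the reference bodies are jointly large enough that dimensionality-driven extremals cannot arise.
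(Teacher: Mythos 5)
Your deduction --- apply Theorem \ref{thm:main}, then invoke Lemma \ref{lem:supercrit} to kill all $\mathcal{P}$-degenerate pairs so that $m=0$ is forced --- is logically valid, and it matches how the paper \emph{motivates} the corollary in the text just preceding it. But it cannot serve as the paper's proof, because it is circular within the paper's logical development. Look at the proof of Theorem \ref{thm:main} in section \ref{sec:proofmain}: when $\mathcal{P}$ is supercritical, the \emph{only if} direction there is disposed of by citing Corollary \ref{cor:schneider} itself (``the conclusion follows immediately from Corollary \ref{cor:schneider} \ldots and Lemma \ref{lem:supercrit}''). So Corollary \ref{cor:schneider} is a load-bearing input to Theorem \ref{thm:main}, not a downstream consequence of it. Deducing the corollary from the theorem would therefore be a loop.

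The paper's actual proof proceeds the other way. It first restates the corollary as Theorem \ref{thm:schneider} --- $S_{f,\mathcal{P}}=0$ if and only if $f$ agrees with a linear function on $\supp S_{B,\mathcal{P}}$, for supercritical $\mathcal{P}$ --- and proves that statement self-containedly by induction on dimension in sections \ref{sec:matrix}--\ref{sec:supercrit} (via the local Alexandrov--Fenchel inequality and the gluing argument). Corollary \ref{cor:schneider} then follows from Theorem \ref{thm:schneider} by translating the equality condition through Lemma \ref{lem:simpleeq}: equality holds iff $S_{h_K-ah_L,\mathcal{P}}=0$ for some $a>0$, and Theorem \ref{thm:schneider} says precisely that this means $h_K-ah_L$ is linear on $\supp S_{B,\mathcal{P}}$, i.e.\ $K$ and $aL+v$ share supporting hyperplanes in $(B,\mathcal{P})$-extreme directions. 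To repair your argument you would need an independent proof of the supercritical case of Theorem \ref{thm:main} that does not route through this corollary --- which is exactly the content of Theorem \ref{thm:schneider}.
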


Corollary \ref{cor:schneider} highlights that even though Theorem 
\ref{thm:main} provides a complete characterization of the extremals of 
the Alexandrov-Fenchel inequality for arbitrary polytopes $\mathcal{P}$, 
its formulation leaves key questions open: it does not explain how many 
degenerate pairs can appear, what they look like, or whether the 
decomposition into degenerate pairs is unique. A complete understanding of 
these questions will emerge from the proof of Theorem \ref{thm:main}. As 
the requisite notions will only be introduced as we progress 
through the proof, we postpone formulating the definitive form of our main 
result until section \ref{sec:unique}.

While we have presented Corollary \ref{cor:schneider} as a special case of 
Theorem \ref{thm:main}, the supercritical case will prove to be 
fundamental to the proof. We will first give a self-contained proof of 
Corollary \ref{cor:schneider} in sections 
\ref{sec:matrix}--\ref{sec:supercrit}, and then characterize the 
degenerate equality cases in sections \ref{sec:panov}--\ref{sec:proofmain} 
by a separate argument that requires the introduction of additional 
techniques. In particular, the proof of Corollary \ref{cor:schneider} may 
be read independently from the rest of the paper.

\subsection{Prior work}

Let us briefly review what was known prior to this paper. Three cases of 
Corollary \ref{cor:schneider} were previously verified: when $\mathcal{P}$ 
consists of strongly isomorphic simple polytopes \cite[Theorem 
7.6.21]{Sch14}, when $P_1=\cdots=P_{n-2}$ \cite{Sch94b,SvH19} (in this 
case $P_i$ need not be simple), or when $\mathcal{P}$ consists 
of full-dimensional zonotopes and $K,L$ are symmetric \cite{Sch88}. All 
these results make crucial use of the special features that appear in 
these settings. In addition, one very special example of a degenerate 
equality case was previously known, when all the bodies $\mathcal{P}$ lie 
in a hyperplane \cite{ET94,Sch94}. This example sheds little light on more 
general cases, however, as it is essentially amenable to explicit 
computation, cf.\ \cite[\S 8]{SvH19}.

The characterization of lower-dimensional extremals in terms of degenerate 
pairs was conjectured by the authors during initial work on this paper. We 
subsequently realized, however, that an analogous phenomenon appears in 
work of Panov \cite{Pan85} on Alexandrov's mixed discriminant inequality, 
which may be viewed as an analogue of the Alexandrov-Fenchel inequality in 
linear algebra. Despite tantalizing similarities between these 
inequalities, the main feature the Alexandrov-Fenchel inequality does not 
arise here: dimensionality is the only extremal mechanism in the mixed 
discriminant inequality, while the central difficulty in the analysis of 
the Alexandrov-Fenchel inequality stems from degeneration of the support 
of mixed area measures. While most of our analysis has little in common 
with \cite{Pan85}, we will use a basic lemma of \cite{Pan85} to organize 
the collection of degenerate pairs (Lemma~\ref{lem:panov}).

\section{Preliminaries}
\label{sec:prelim}

The aim of this section is to recall some general background from convex 
geometry that will be needed in the remainder of the paper.

The following conventions will be in force throughout the paper.
We always denote by $B$ the Euclidean unit ball 
in $\mathbb{R}^n$.
For any collection $\mathcal{C}:=(C_1,\ldots,C_{n-2})$ of convex bodies in 
$\mathbb{R}^n$, we will often use the abbreviated notation
$$
	\V_n(K,L,\mathcal{C}) :=
	\V_n(K,L,C_1,\ldots,C_{n-2}),\qquad
	S_{L,\mathcal{C}}:=S_{L,C_1,\ldots,C_{n-2}}.
$$
For $I\subseteq[n-2]$, we denote by $\mathcal{C}_I:=(C_i)_{i\in 
I}$ and by $\mathcal{C}_{\backslash I}:=(C_i)_{i\in [n-2]\backslash I}$.

We will also encounter mixed volumes of convex bodies $C_1,\ldots,C_m$
that lie in a subspace $E\subset\mathbb{R}^n$ with $\dim(E)=m$. Such 
mixed volumes will be denoted as $\V_E(C_1,\ldots,C_m)$, or as
$\V_m(C_1,\ldots,C_m)$ when the subspace is clear from context.

\subsection{Mixed volumes and mixed area measures}

Mixed volumes and mixed area measures were introduced in section 
\ref{sec:basic}. For future reference, we begin by spelling out their 
basic properties more carefully.

Mixed volumes are defined by \eqref{eq:volpoly}. They satisfy the 
following \cite[\S 5.1]{Sch14}. (Here and in the sequel, we use the 
notation $\llbracket A\rrbracket := \sqrt{\det A^*A}$ for a linear map
$A$.)

\begin{lem}
\label{lem:mvprop}
Let $C,C',C_1,\ldots,C_n$ be convex bodies in $\mathbb{R}^n$.
\begin{enumerate}[a.]
\item $\V_n(C,\ldots,C) = \Vol_n(C)$.
\item $\V_n(C_1,\ldots,C_n)$ is symmetric and multilinear in its 
arguments.
\item $\V_n(C_1,\ldots,C_n)\ge 0$.
\item $\V_n(C,C_2,\ldots,C_n)\ge \V_n(C',C_2,\ldots,C_n)$ if
$C\supseteq C'$.
\item $\V_n(C_1,\ldots,C_n)$ is invariant under translation $C_i\leftarrow
C_i+v_i$ for $v_i\in\mathbb{R}^n$.
\item $\V_n(AC_1,\ldots,AC_n)=\llbracket A\rrbracket\,
\V_n(C_1,\ldots,C_n)$ for any linear map $A:\mathbb{R}^n\to\mathbb{R}^n$.
\end{enumerate}
\end{lem}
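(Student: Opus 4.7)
The plan is to derive each of (a)--(f) from the defining polynomial identity \eqref{eq:volpoly} together with standard properties of Lebesgue volume, and for (c) and (d) from the integral representation \eqref{eq:mixvolarea}. The engine behind (a), (b), (e), (f) is the uniqueness of the symmetric multilinear coefficients in the expansion \eqref{eq:volpoly}: once we produce two polynomial identities in $(\lambda_1,\ldots,\lambda_m)$, matching coefficients transfers the corresponding identity to the mixed volumes.

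For (a), setting $m=1$ and $C_1=C$ in \eqref{eq:volpoly} gives $\Vol_n(\lambda C)= \V_n(C,\ldots,C)\,\lambda^n$, while $\Vol_n(\lambda C)=\lambda^n\Vol_n(C)$ by homogeneity of volume. For (b), symmetry is built into \eqref{eq:volpoly} since the index tuples $(i_1,\ldots,i_n)$ range over all orderings; multilinearity follows by substituting $C_1=\alpha C_1'+\beta C_1''$ with $\alpha,\beta\ge 0$, using $\lambda_1(\alpha C_1'+\beta C_1'')=(\alpha\lambda_1)C_1'+(\beta\lambda_1)C_1''$ on the left and matching coefficients of $\lambda_1\lambda_{i_2}\cdots\lambda_{i_n}$ on the right. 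For (e), I apply \eqref{eq:volpoly} to the translated bodies and note
$$\Vol_n\bigl(\textstyle\sum_i\lambda_i(C_i+v_i)\bigr)=\Vol_n\bigl(\textstyle\sum_i\lambda_iC_i+\sum_i\lambda_iv_i\bigr)=\Vol_n\bigl(\textstyle\sum_i\lambda_iC_i\bigr)$$
by translation invariance of Lebesgue measure; comparing coefficients yields the claim. For (f), $\sum_i\lambda_iAC_i=A\bigl(\sum_i\lambda_iC_i\bigr)$ together with $\Vol_n(AK)=\llbracket A\rrbracket\,\Vol_n(K)$ gives two polynomials differing by the constant factor $\llbracket A\rrbracket$, and coefficient comparison produces (f).

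For (c) and (d) I would invoke \eqref{eq:mixvolarea} together with the stated nonnegativity of $S_{C_2,\ldots,C_n}$. Property (d) is immediate: if $C\supseteq C'$ then $h_C\ge h_{C'}$ pointwise, so
$$\V_n(C,C_2,\ldots,C_n)-\V_n(C',C_2,\ldots,C_n)=\frac{1}{n}\int(h_C-h_{C'})\,dS_{C_2,\ldots,C_n}\ge 0.$$
For (c), use the translation invariance (e) already established to translate $C_n$ so that it contains the origin; then $h_{C_n}\ge 0$ on $S^{n-1}$, and \eqref{eq:mixvolarea} expresses $\V_n(C_1,\ldots,C_n)$ as the integral of a nonnegative function against a nonnegative measure.

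The main obstacle is not in any one manipulation above but in the fact that the nonnegativity of the mixed area measure $S_{C_1,\ldots,C_{n-1}}$ itself silently underwrites both (c) and (d); its construction, typically effected by polytope approximation and induction on dimension, is a nontrivial result. Within the scope of this lemma, however, the paper takes this as part of the stated background in \eqref{eq:mixvolarea}, so the proofs of all six items reduce to the polynomial coefficient extractions and the two short integral estimates sketched above.
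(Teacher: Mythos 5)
The paper does not prove this lemma; it is stated as standard background with a citation to \cite[\S 5.1]{Sch14}, so there is no in-paper proof to compare against. Your derivation is correct and is essentially the standard one: parts (a), (b), (e), (f) follow by producing two expressions for the same volume polynomial and matching the (uniquely determined) symmetric coefficients in \eqref{eq:volpoly}, and parts (c), (d) follow from the integral representation \eqref{eq:mixvolarea} once you translate the body being integrated to contain the origin and invoke nonnegativity of the mixed area measure. Two small points worth tidying: in (c) the formula \eqref{eq:mixvolarea} integrates the support function of the \emph{first} argument, so you should translate $C_1$ (or first invoke the symmetry from (b)) rather than $C_n$; and the coefficient-matching step for multilinearity is cleanest if you compare the coefficient of the square-free monomial $\lambda\lambda_2\cdots\lambda_n$, since for repeated indices one must track multinomial multiplicities. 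You are also right that the nonnegativity of $S_{C_1,\ldots,C_{n-1}}$ (Lemma \ref{lem:maprop}(b)) is the nontrivial input that makes (c) and (d) short; the paper also treats that as a given from \cite{Sch14}.
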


The identity \eqref{eq:mixvolarea} may be viewed as the definition of 
mixed area measures. The following basic properties are analogous to those 
of mixed volumes \cite[\S 5.1]{Sch14}.

\begin{lem}
\label{lem:maprop}
Let $C,C_1,\ldots,C_{n-1}$ be convex bodies in $\mathbb{R}^n$.
\begin{enumerate}[a.]
\item $S_{C_1,\ldots,C_{n-1}}$
is symmetric and multilinear in its arguments.
\item $S_{C_1,\ldots,C_{n-1}}\ge 0$.
\item $S_{C_1,\ldots,C_{n-1}}$ is invariant under translation $C_i
\leftarrow C_i+v_i$.
\item $\int \langle v,x\rangle\,S_{C_1,\ldots,C_{n-1}}(dx)=0$
for all $v\in\mathbb{R}^n$.
\end{enumerate}
\end{lem}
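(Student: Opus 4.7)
The plan is to transfer each statement directly from the corresponding property of mixed volumes in Lemma \ref{lem:mvprop}, via the defining integral identity \eqref{eq:mixvolarea}. My common tool would be the following uniqueness principle, which I would establish first: any signed Borel measure $\mu$ on $S^{n-1}$ satisfying $\int h_K\,d\mu = 0$ for every convex body $K$ must vanish. Indeed, by linearity $\mu$ annihilates every difference of support functions, which by Lemma \ref{lem:c2} includes all of $C^2(S^{n-1})$, a uniformly dense subset of $C(S^{n-1})$; hence $\mu$ annihilates all continuous functions and vanishes.

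With this in hand, parts (a) and (c) would be immediate formal transfers. For symmetry, I would combine symmetry of $\V_n$ in its last $n-1$ arguments (Lemma \ref{lem:mvprop}(b)) with \eqref{eq:mixvolarea} to observe that, for any permutation $\sigma$, the signed measure $S_{C_{\sigma(1)},\ldots,C_{\sigma(n-1)}} - S_{C_1,\ldots,C_{n-1}}$ is annihilated by every support function, and so must be zero by the uniqueness principle. Multilinearity in each argument follows in exactly the same way from the multilinearity of $\V_n$, and for (c), translation invariance of $\V_n$ (Lemma \ref{lem:mvprop}(e)) yields $S_{C_1+v_1,C_2,\ldots,C_{n-1}} = S_{C_1,\ldots,C_{n-1}}$ by the same argument. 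For (d), I would test \eqref{eq:mixvolarea} on the singleton $K=\{v\}$: then $h_K(u) = \langle v,u\rangle$, and since $K$ contains no segment, the dimension condition of Lemma \ref{lem:dim}(b) fails (taking $C_n:=K$), so $\V_n(K,C_1,\ldots,C_{n-1}) = 0$ and the claim follows.

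The main obstacle is part (b), the positivity of $S_{\mathcal{C}}$, which does not reduce to a formal transfer and instead requires a monotonicity-plus-approximation argument. Given any $f \in C^2(S^{n-1})$ with $f \ge 0$, I would use Lemma \ref{lem:c2} to write $f = h_K - h_L$ for convex bodies $K,L$. Since $h_K \ge h_L$ pointwise and every convex body is the intersection of its supporting halfspaces, this forces $L \subseteq K$. Monotonicity of mixed volumes (Lemma \ref{lem:mvprop}(d)) then gives $\V_n(K,\mathcal{C}) \ge \V_n(L,\mathcal{C})$, and substituting into \eqref{eq:mixvolarea} yields $\int f\,dS_{\mathcal{C}} \ge 0$. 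Since non-negative $C^2$ functions are uniformly dense in the non-negative continuous functions on $S^{n-1}$, I would extend this inequality by approximation to all continuous $f \ge 0$, which together with the existence of $S_{\mathcal{C}}$ as a Borel measure establishes that it is non-negative.
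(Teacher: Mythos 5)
The paper does not prove Lemma \ref{lem:maprop}; it is stated as a standard fact with a citation to Schneider's monograph, so there is no in-paper argument to compare against. Your self-contained derivation is correct: the uniqueness principle (a signed measure annihilating all support functions annihilates $C^2(S^{n-1})$ by Lemma \ref{lem:c2}, hence all of $C(S^{n-1})$, hence is zero) is the right tool, and it cleanly transfers symmetry, multilinearity, and translation invariance from the corresponding properties of $\V_n$ via \eqref{eq:mixvolarea}. For part (d), testing against the point body $K=\{v\}$ and invoking $\V_n(\{v\},C_1,\ldots,C_{n-1})=0$ (which follows from Lemma \ref{lem:dim}(b), as a point contains no segment) is exactly the right move. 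For part (b), your argument that $f\ge 0$ in $C^2$ gives $f=h_K-h_L$ with $L\subseteq K$ (since $h_L\le h_K$ pointwise and a convex body is the intersection of its supporting halfspaces), hence $\int f\,dS_{\mathcal{C}}\ge 0$ by monotonicity of mixed volumes, is valid; one small remark is that the paper already builds nonnegativity of $S_{\mathcal{C}}$ into the definition following \eqref{eq:mixvolarea}, so your argument can be read as showing that nonnegativity is \emph{forced} by \eqref{eq:mixvolarea} together with monotonicity of $\V_n$, rather than an independent hypothesis.
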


We now recall the basic continuity property of mixed volumes and mixed 
area measures. Recall that convex bodies $C^{(l)}$ converge to a convex 
body $C$ in the sense of Hausdorff convergence if and only if 
$\|h_{C^{(l)}}-h_C\|_\infty\to 0$, cf.\ \cite[Lemma 1.8.14]{Sch14}. 
Then we have the following result \cite[pp.\ 280--281]{Sch14}.

\begin{lem}
\label{lem:cont}
Suppose that $C_1^{(l)},\ldots,C_n^{(l)}$ are convex bodies in 
$\mathbb{R}^n$ such that $C_i^{(l)}\to C_i$ as $l\to\infty$ in the sense 
of Hausdorff convergence. Then
$$
        \V_n(C_1^{(l)},\ldots,C_n^{(l)})\to
        \V_n(C_1,\ldots,C_n),\qquad
        S_{C_1^{(l)},\ldots,C_{n-1}^{(l)}}\wto
        S_{C_1,\ldots,C_{n-1}}
$$
as $l\to\infty$, where the limit of measures is in the sense of weak 
convergence.
\end{lem}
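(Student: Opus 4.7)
\medskip
\noindent\emph{Proof plan.}
The two statements are somewhat independent, and I would establish them in order: first the convergence of the scalar mixed volumes, then use this together with \eqref{eq:mixvolarea} to bootstrap to weak convergence of the mixed area measures.

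For the mixed volumes, my plan is to use polarization. Expanding \eqref{eq:volpoly} with $m=n$ and extracting the coefficient of $\lambda_1\cdots\lambda_n$ by inclusion-exclusion gives a formula of the shape
$$
n!\,\V_n(C_1,\ldots,C_n) = \sum_{\emptyset\ne J\subseteq[n]} (-1)^{n-|J|}\,\Vol_n\!\Big(\textstyle\sum_{j\in J}C_j\Big).
$$
Now Minkowski addition $(K,L)\mapsto K+L$ is jointly Hausdorff-continuous (directly from $h_{K+L}=h_K+h_L$ and the characterization of Hausdorff convergence via $\|h_{K^{(l)}}-h_K\|_\infty\to 0$), so each partial sum $\sum_{j\in J}C_j^{(l)}$ converges to $\sum_{j\in J}C_j$. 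Lebesgue measure is Hausdorff-continuous on convex bodies: if $d_H(C',C)\le\varepsilon$ then $C'\subseteq C+\varepsilon B$ and $C\subseteq C'+\varepsilon B$, and $\Vol_n((C+\varepsilon B)\setminus C)\to 0$ as $\varepsilon\to 0$ by standard arguments (or by Minkowski's polynomial expansion for $\Vol_n(C+\varepsilon B)$). Applying continuity term by term in the polarization formula yields $\V_n(C_1^{(l)},\ldots,C_n^{(l)})\to\V_n(C_1,\ldots,C_n)$.

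For the mixed area measures, write $\mu_l:=S_{C_1^{(l)},\ldots,C_{n-1}^{(l)}}$ and $\mu:=S_{C_1,\ldots,C_{n-1}}$. By \eqref{eq:mixvolarea}, for any convex body $K$,
$$
\int h_K\,d\mu_l = n\,\V_n(K,C_1^{(l)},\ldots,C_{n-1}^{(l)}) \longrightarrow n\,\V_n(K,C_1,\ldots,C_{n-1}) = \int h_K\,d\mu
$$
by the first part. By linearity, the same holds for any difference $f=h_K-h_L$, and hence, by Lemma \ref{lem:c2}, for every $f\in C^2(S^{n-1})$. Taking $K=B$ (so $h_K\equiv 1$) gives $\mu_l(S^{n-1})\to\mu(S^{n-1})$, so the total masses are uniformly bounded by some $M<\infty$. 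For arbitrary $f\in C(S^{n-1})$ and $\varepsilon>0$, choose $g\in C^2(S^{n-1})$ with $\|f-g\|_\infty<\varepsilon$ (possible by Stone-Weierstrass, or by standard mollification on the sphere); then
$$
\Big|\textstyle\int f\,d\mu_l - \int f\,d\mu\Big| \le 2M\varepsilon + \Big|\int g\,d\mu_l - \int g\,d\mu\Big|,
$$
whose right-hand side tends to $2M\varepsilon$ as $l\to\infty$ by the $C^2$ case. Since $\varepsilon$ is arbitrary, $\mu_l\wto\mu$.

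The main obstacle is not really conceptual — both parts are standard — but rather identifying the right bridge between the two conclusions. The bridge is the identity \eqref{eq:mixvolarea} combined with Lemma \ref{lem:c2}: the former promotes scalar convergence into convergence tested against support functions, while the latter promotes convergence against support functions into convergence against a dense subset of $C(S^{n-1})$. Without Lemma \ref{lem:c2} one would need a direct density argument for differences of support functions inside $C(S^{n-1})$, which is true but less efficient. Everything else is routine: continuity of Minkowski addition, continuity of Lebesgue measure on convex bodies, and the standard $\varepsilon/3$-type argument for promoting weak convergence on a uniformly dense subclass of $C(S^{n-1})$ to all continuous functions, given a uniform bound on total masses.
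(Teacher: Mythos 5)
Your proof is correct, and since the paper simply cites Schneider's monograph for this lemma rather than giving its own argument, the natural comparison is with that reference: your route (polarization plus Hausdorff continuity of Minkowski sum and volume for the mixed-volume part; then testing against differences of support functions, using Lemma \ref{lem:c2} for density in $C^2$, and a uniform total-mass bound to upgrade to full weak convergence for the mixed area measures) is precisely the standard argument found there. Nothing to flag.
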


In the case that all the convex bodies are polytopes, mixed area measures 
take a particularly simple form \cite[p.\ 279]{Sch14}.

\begin{lem}
\label{lem:mapoly}
Let $P_1,\ldots,P_{n-1}$ be polytopes in $\mathbb{R}^n$. Then 
$S_{P_1,\ldots,P_{n-1}}$ is atomic, that is, 
$\supp S_{P_1,\ldots,P_{n-1}} =
\{u\in S^{n-1}:S_{P_1,\ldots,P_{n-1}}(\{u\})>0\}$,
with
$$
	S_{P_1,\ldots,P_{n-1}}(\{u\}) =
	\V_{n-1}(F(P_1,u),\ldots,F(P_{n-1},u)).
$$
\end{lem}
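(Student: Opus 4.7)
The plan is to reduce the lemma to the classical single-polytope surface area formula $S_{Q,\ldots,Q}=\sum_u\Vol_{n-1}(F(Q,u))\,\delta_u$ (sum over facet normals of $Q$), and then to lift this to the mixed case via the multilinearity of mixed area measures. The key trick is to compute $S_{Q_t,\ldots,Q_t}$ for the Minkowski combination $Q_t := t_1P_1+\cdots+t_{n-1}P_{n-1}$, $t_i\ge 0$, in two different ways and compare polynomial coefficients in $t$.

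On one hand, by the symmetry and multilinearity of mixed area measures in their arguments (Lemma \ref{lem:maprop}a),
\[
S_{Q_t,\ldots,Q_t} = \sum_{i_1,\ldots,i_{n-1}=1}^{n-1} t_{i_1}\cdots t_{i_{n-1}}\,S_{P_{i_1},\ldots,P_{i_{n-1}}}.
\]
On the other hand, applying the single-polytope formula to $Q_t$,
\[
S_{Q_t,\ldots,Q_t} = \sum_{u\in\mathcal{N}}\Vol_{n-1}(F(Q_t,u))\,\delta_u,
\]
where $\mathcal{N}\subset S^{n-1}$ is the finite set of facet normals of $P_1+\cdots+P_{n-1}$; this set contains every facet normal of $Q_t$, and for $u\in\mathcal{N}$ that is not a facet normal of $Q_t$, $F(Q_t,u)$ has dimension less than $n-1$ so $\Vol_{n-1}$ vanishes. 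Since $F(Q_t,u)=\sum_i t_iF(P_i,u)$ and the faces $F(P_i,u)$ all share the common outer normal $u$, their translates lie in a common $(n-1)$-dimensional hyperplane, and Minkowski's identity \eqref{eq:volpoly} applied in this hyperplane gives
\[
\Vol_{n-1}(F(Q_t,u)) = \sum_{i_1,\ldots,i_{n-1}=1}^{n-1}\V_{n-1}(F(P_{i_1},u),\ldots,F(P_{i_{n-1}},u))\,t_{i_1}\cdots t_{i_{n-1}}.
\]
Equating the two expressions as polynomials in $t$ with measure-valued coefficients and extracting the coefficient of $t_1t_2\cdots t_{n-1}$, the factor $(n-1)!$ from permutations of $\{1,\ldots,n-1\}$ is absorbed on both sides by the symmetry of $S_{P_{i_1},\ldots,P_{i_{n-1}}}$ and of $\V_{n-1}(F(P_{i_1},u),\ldots,F(P_{i_{n-1}},u))$, yielding
\[
S_{P_1,\ldots,P_{n-1}} \;=\; \sum_{u\in\mathcal{N}}\V_{n-1}(F(P_1,u),\ldots,F(P_{n-1},u))\,\delta_u.
\]
This finite nonnegative combination of point masses is precisely the claimed atomic formula, and automatically gives $\supp S_{P_1,\ldots,P_{n-1}}=\{u\in S^{n-1}:S_{P_1,\ldots,P_{n-1}}(\{u\})>0\}$.

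The only substantive ingredient is the single-polytope surface area formula, which I would regard as a classical input: decomposing $Q$ into pyramids over its facets from an interior point, the pyramid over the facet with outer normal $u$ has $n$-volume $\tfrac{1}{n}h_Q(u)\,\Vol_{n-1}(F(Q,u))$; summing yields $\Vol_n(Q)=\tfrac{1}{n}\sum_u h_Q(u)\Vol_{n-1}(F(Q,u))$, and comparison with \eqref{eq:mixvolarea} identifies $S_{Q,\ldots,Q}$. With this input in hand, the remaining argument is entirely formal, so there is no real obstacle; the one point to handle carefully is the factorial bookkeeping when matching coefficients of $t_1t_2\cdots t_{n-1}$ on the two sides.
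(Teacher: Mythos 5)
The paper does not give a proof of Lemma~\ref{lem:mapoly}; it cites it to Schneider's monograph (p.~279), so there is no internal argument to compare against. Your polarization argument is the standard route and its main thrust is correct: applying the single-polytope surface area formula to $Q_t=\sum_i t_iP_i$, expanding one side via the multilinearity and symmetry of $S$ (Lemma~\ref{lem:maprop}) and the other via the Minkowski polynomial \eqref{eq:volpoly} computed in $u^\perp$, and extracting the $t_1\cdots t_{n-1}$ coefficient (where $(n-1)!$ cancels on both sides) yields the stated formula. The restriction to the finite set $\mathcal N$ of facet normals of $\sum_iP_i$ is also justified, since $F(Q_t,u)=\sum_it_iF(P_i,u)$ by Lemma~\ref{lem:faces}, so $\dim F(Q_t,u)=n-1$ forces $\dim F(\sum_iP_i,u)=n-1$.

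The one genuine gap is in your sketch of the ``classical input''. The pyramid decomposition gives only the scalar identity $\Vol_n(Q)=\tfrac1n\sum_uh_Q(u)\Vol_{n-1}(F(Q,u))$, and plugging $K=Q$ into \eqref{eq:mixvolarea} then shows that $S_{Q,\ldots,Q}$ and $\sum_u\Vol_{n-1}(F(Q,u))\,\delta_u$ have the same integral against the single function $h_Q$; that does not identify the measure. To conclude $S_{Q,\ldots,Q}=\sum_u\Vol_{n-1}(F(Q,u))\,\delta_u$ one needs the first-variation identity $\V_n(K,Q,\ldots,Q)=\tfrac1n\sum_uh_K(u)\Vol_{n-1}(F(Q,u))$ for \emph{every} convex body $K$ (for instance by decomposing $(Q+\varepsilon K)\setminus Q$ into near-prisms over the facets of $Q$ and differentiating $\Vol_n(Q+\varepsilon K)$ at $\varepsilon=0$), after which the measure is pinned down because differences of support functions are dense in $C(S^{n-1})$ (Lemma~\ref{lem:c2}); or, equivalently, one can define $S_{Q,\ldots,Q}$ directly as the Gauss-map pushforward of boundary surface measure, which makes the polytope formula a tautology. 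Either fix is classical, but the one-line justification you wrote does not establish it.
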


\begin{rem}
In Lemma \ref{lem:mapoly} we have made a slight abuse of notation: the 
faces $F(P_i,u)$, $i=1,\ldots,n-1$ need not lie in a single 
$(n-1)$-dimensional subspace. However, by definition all these faces have 
$u$ as a normal direction, so that each face may be translated to lie in 
$u^\perp$. We implicitly define $\V_{n-1}(F(P_1,u),\ldots,F(P_{n-1},u))$ 
as the mixed volume in $u^\perp$ of the translated faces; this
convenient notation is consistent with the translation-invariance of mixed 
volumes.
\end{rem}

As the faces $F(P,u)$ play a fundamental role in what follows, let us 
briefly recall at this stage some associated notions. A \emph{facet} of a 
convex body $C$ in $\mathbb{R}^n$ is an $(n-1)$-dimensional face of $C$. 
We recall that every polytope has a finite number of facets. We also 
recall the following basic property \cite[\S 1.7]{Sch14}.

\begin{lem}
\label{lem:faces}
Let $C,C'$ be any convex bodies in $\mathbb{R}^n$ and $u,x\in\mathbb{R}^n$.
Then
$$
	h_{F(C,u)}(x)=\nabla_x h_C(u),
$$
where $\nabla_x$ denotes the directional derivative in direction $x$.
In particular,
$$
	F(C+C',u)=F(C,u)+F(C',u).
$$
\end{lem}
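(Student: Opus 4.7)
The plan is to prove the directional derivative identity $h_{F(C,u)}(x)=\nabla_x h_C(u)$ first, and then derive the Minkowski additivity of faces as an immediate corollary by using the fact that the support function determines a convex body.

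For the main identity, the easy direction is $h_{F(C,u)}(x)\le \nabla_x h_C(u)$. For any $y\in F(C,u)$ and $t>0$, the definition of the support function gives
\begin{equation*}
h_C(u+tx)\ge \langle y,u+tx\rangle = h_C(u)+t\langle y,x\rangle,
\end{equation*}
since $\langle y,u\rangle=h_C(u)$. Dividing by $t$ and letting $t\downarrow 0$ yields $\nabla_x h_C(u)\ge \langle y,x\rangle$, and taking the supremum over $y\in F(C,u)$ gives the bound. (Note that $h_C$ is convex and $1$-homogeneous, so its one-sided directional derivatives at $u$ exist; I would interpret $\nabla_x h_C(u)$ as the right-hand derivative along $x$.)

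For the reverse inequality, I would use a compactness argument. Pick $y_t\in C$ attaining $h_C(u+tx)=\langle y_t,u+tx\rangle$, which exists because $C$ is compact. By compactness of $C$, choose a sequence $t_k\downarrow 0$ with $y_{t_k}\to y^*\in C$. Passing to the limit in $h_C(u+t_k x)=\langle y_{t_k},u\rangle+t_k\langle y_{t_k},x\rangle$ and using continuity of $h_C$ gives $\langle y^*,u\rangle=h_C(u)$, so $y^*\in F(C,u)$. On the other hand, $\langle y_{t_k},u\rangle\le h_C(u)$ forces
\begin{equation*}
\frac{h_C(u+t_k x)-h_C(u)}{t_k}\le \langle y_{t_k},x\rangle \to \langle y^*,x\rangle \le h_{F(C,u)}(x),
\end{equation*}
and the left-hand side tends to $\nabla_x h_C(u)$. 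Combining the two bounds proves the identity.

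The Minkowski additivity of faces follows at once. Since $h_{C+C'}=h_C+h_{C'}$, the main identity gives $h_{F(C+C',u)}(x)=\nabla_x h_{C+C'}(u) = \nabla_x h_C(u)+\nabla_x h_{C'}(u) = h_{F(C,u)}(x)+h_{F(C',u)}(x)=h_{F(C,u)+F(C',u)}(x)$ for every $x$, and two convex bodies with equal support functions coincide. The only real subtlety is ensuring that the directional derivative is interpreted as one-sided (which is what is needed throughout the paper, since support functions are merely sublinear, not smooth); this is what requires the compactness argument in the second step rather than a routine differentiation. There is no genuine obstacle here, as the whole statement is a direct consequence of the identification $\partial h_C(u)=F(C,u)$ between the subdifferential of the support function and the face, but I would present it self-contained as above.
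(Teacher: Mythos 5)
Your proof is correct: both inequalities are established properly, with the compactness/limit argument handling the harder direction, and the deduction of face additivity from additivity of support functions and of one-sided directional derivatives is sound. The paper does not give a proof at all for this lemma---it is stated as a recollection with a citation to Schneider \cite[\S 1.7]{Sch14}---and what you have written is a correct, self-contained reconstruction of the standard argument (which is essentially the identification $\partial h_C(u)=F(C,u)$ found in Schneider's book), so there is nothing to compare approach-wise.
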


Consequently, we may observe that the mixed area measure in Lemma 
\ref{lem:mapoly} is in fact supported on a finite number of points. 
Indeed, Lemmas \ref{lem:mapoly} and \ref{lem:dim} imply that every 
$u\in\supp S_{P_1,\ldots,P_{n-1}}$ must satisfy $\dim 
F(P_1+\cdots+P_{n-1},u)\ge n-1$, that is, each such $u$ must be a facet 
normal of $P_1+\cdots+P_{n-1}$. As the Minkowski sum of polytopes is a 
polytope, $\supp S_{P_1,\ldots,P_{n-1}}$ must be finite.

Finally, the following basic property of faces will be useful. Here and in 
the sequel, we denote by $\proj_E$ the orthogonal projection onto
a subspace $E$ of $\mathbb{R}^n$.

\begin{lem}
\label{lem:faceproj}
For any convex body $C$ in $\mathbb{R}^n$, linear subspace 
$E\subseteq\mathbb{R}^n$, and $u\in\mathbb{R}^n$,
$$
        F(\proj_EC,u) = \proj_E F(C,\proj_Eu).
$$
\end{lem}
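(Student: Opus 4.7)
The statement is essentially a bookkeeping identity about how support and projection interact, so my plan is a short direct calculation. The key observation is the decomposition $\langle u,y\rangle = \langle \proj_E u, y\rangle$ whenever $y\in E$, which lets one freely replace $u$ by $\proj_E u$ once the ambient set lies in $E$.

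First I would reduce the left-hand side. Since $\proj_E C\subseteq E$, every $y\in\proj_E C$ satisfies $\langle u,y\rangle=\langle \proj_E u,y\rangle$, so the maximizers of $\langle u,\cdot\rangle$ and $\langle \proj_E u,\cdot\rangle$ over $\proj_E C$ coincide, that is, $F(\proj_E C,u)=F(\proj_E C,\proj_E u)$. Next I would compute the support function
$$h_{\proj_E C}(\proj_E u)=\sup_{x\in C}\langle \proj_E u,\proj_E x\rangle=\sup_{x\in C}\langle \proj_E u,x\rangle=h_C(\proj_E u),$$
using once more that $\proj_E u\in E$ so $\langle \proj_E u,\proj_E x\rangle=\langle \proj_E u,x\rangle$.

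Combining these, a point $y\in\proj_E C$ lies in $F(\proj_E C,u)$ iff it can be written as $y=\proj_E x$ for some $x\in C$ with $\langle \proj_E u,\proj_E x\rangle = h_{\proj_E C}(\proj_E u)$, which by the previous step is equivalent to $\langle \proj_E u,x\rangle = h_C(\proj_E u)$, i.e.\ $x\in F(C,\proj_E u)$. Thus $F(\proj_E C,u)=\proj_E F(C,\proj_E u)$, as required.

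There is no real obstacle here; the only subtle point to be careful about is that the maximizing $x\in C$ need not be unique, but the argument quantifies over all preimages of $y$ under $\proj_E$, so one does not need a canonical choice. Everything else is just bilinearity of the inner product combined with the defining properties of $h$ and $F$.
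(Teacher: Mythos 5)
Your proof is correct, but it takes a genuinely different route from the paper's. The paper proves the identity entirely at the level of support functions: it invokes Lemma~\ref{lem:faces}, which gives $h_{F(C,u)}(x)=\nabla_x h_C(u)$, and then chains together
$h_{F(\proj_EC,u)}(x)=\nabla_x h_{\proj_EC}(u)=\nabla_{\proj_E x}h_C(\proj_Eu)=h_{F(C,\proj_Eu)}(\proj_E x)=h_{\proj_E F(C,\proj_Eu)}(x)$,
so the whole argument never mentions a single point of the bodies. You instead unwind the definition \eqref{eq:facedef} of $F(K,u)$ as a set of maximizers and show the two sets coincide directly, using the two elementary facts that $\langle u,y\rangle=\langle \proj_Eu,y\rangle$ for $y\in E$ and that $h_{\proj_E C}(\proj_E u)=h_C(\proj_E u)$. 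Your approach is more elementary since it avoids the directional-derivative characterization of face support functions; the paper's is shorter once Lemma~\ref{lem:faces} is in hand and fits the overall strategy of manipulating support functions throughout. Your remark about non-uniqueness of the maximizing $x$ is handled correctly: since all preimages of a given $y\in\proj_E C$ have the same inner product with $\proj_E u$, membership of $y$ in the face is equivalent to membership of any one of its preimages in $F(C,\proj_E u)$, which is exactly what the set equality requires.
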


\begin{proof} 
Using Lemma \ref{lem:faces}, we can compute
\begin{align*}
        h_{F(\proj_EC,u)}(x) &=
        \nabla_x h_{\proj_EC}(u) =
        \nabla_{\proj_E x}h_C(\proj_Eu) 
	\\ &=
        h_{F(C,\proj_Eu)}(\proj_E x) =
        h_{\proj_E F(C,\proj_Eu)}(x)
\end{align*}
for every $x\in\mathbb{R}^n$, where we used $h_{\proj_E C}(u)=h_C(\proj_E u)$.
\end{proof}

\subsection{Projection formulae}

The relation between mixed volumes of convex bodies and their projections 
will play a recurring role in this paper. The following result captures 
this connection in a general setting \cite[Theorem 5.3.1]{Sch14}.

\begin{lem}
\label{lem:proj}
Let $E$ be an $m$-dimensional subspace of 
$\mathbb{R}^n$, let $C_1,\ldots,C_m$ be convex bodies in $E$,
and let $C_{m+1},\ldots,C_n$ be convex bodies in $\mathbb{R}^n$. Then
$$
	{n\choose m}
	\V_n(C_1,\ldots,C_n)
	=
	\V_E(C_1,\ldots,C_m)\,
	\V_{E^\perp}(\proj_{E^\perp}C_{m+1},\ldots,\proj_{E^\perp}C_n).
$$
\end{lem}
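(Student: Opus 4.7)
The plan is to identify the coefficient of a specific monomial in the volume polynomial on both sides. For parameters $\lambda_1,\ldots,\lambda_m,\mu_{m+1},\ldots,\mu_n\ge 0$, set $K:=\lambda_1 C_1+\cdots+\lambda_m C_m\subseteq E$ and $L:=\mu_{m+1}C_{m+1}+\cdots+\mu_n C_n$. By \eqref{eq:volpoly}, the coefficient of $\lambda_1\cdots\lambda_m\mu_{m+1}\cdots\mu_n$ in $\Vol_n(K+L)$, regarded as a polynomial in these $n$ parameters, equals $n!\,\V_n(C_1,\ldots,C_n)$. So the claimed formula will follow once I show this same coefficient equals $m!(n-m)!\,\V_E(C_1,\ldots,C_m)\,\V_{E^\perp}(\proj_{E^\perp}C_{m+1},\ldots,\proj_{E^\perp}C_n)$, and then divide by $m!(n-m)!={n\choose m}^{-1}n!$.

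The key step is a Fubini/Cavalieri decomposition adapted to the fact that $K\subseteq E$. For every $y\in E^\perp$, the slice $(K+L)\cap(E+y)$, translated back to $E$, equals $K+L_y$, where $L_y:=\{x\in E:x+y\in L\}$; this slice is nonempty exactly when $y\in\proj_{E^\perp}L$. Hence
$$
\Vol_n(K+L) \;=\; \int_{\proj_{E^\perp}L}\Vol_E(K+L_y)\,dy.
$$
Viewed as a polynomial in $\lambda_1,\ldots,\lambda_m$, the integrand $\Vol_E(L_y+\lambda_1C_1+\cdots+\lambda_m C_m)$ has total degree $m$, and \eqref{eq:volpoly} applied inside $E$ identifies the coefficient of the top monomial $\lambda_1\cdots\lambda_m$ as $m!\,\V_E(C_1,\ldots,C_m)$ — crucially independent of $y$, since $L_y$ only feeds into monomials of degree strictly less than $m$ in $\lambda$. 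Extracting that coefficient from the Fubini identity yields
$$
[\lambda_1\cdots\lambda_m]\,\Vol_n(K+L) \;=\; m!\,\V_E(C_1,\ldots,C_m)\cdot\Vol_{E^\perp}\!\bigl(\proj_{E^\perp}L\bigr).
$$

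Since $\proj_{E^\perp}L=\mu_{m+1}\proj_{E^\perp}C_{m+1}+\cdots+\mu_n\proj_{E^\perp}C_n$, a second application of \eqref{eq:volpoly}, now inside $E^\perp$, identifies the coefficient of $\mu_{m+1}\cdots\mu_n$ in $\Vol_{E^\perp}(\proj_{E^\perp}L)$ as $(n-m)!\,\V_{E^\perp}(\proj_{E^\perp}C_{m+1},\ldots,\proj_{E^\perp}C_n)$. Combining the two coefficient extractions, comparing with $n!\,\V_n(C_1,\ldots,C_n)$, and dividing by $m!(n-m)!$ yields the identity. The step that requires the most care is the Fubini slicing — specifically, verifying that the slice of $K+L$ at level $y$ is exactly $K+L_y$ (which relies essentially on $K\subseteq E$) and that $y\mapsto\Vol_E(K+L_y)$ is integrable; everything after that is routine polynomial bookkeeping.
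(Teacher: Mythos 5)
Your argument is correct, and the Fubini/slicing decomposition with coefficient extraction is essentially the standard proof of Schneider's Theorem~5.3.1, which the paper cites without reproducing. In particular the key observations — that $(K+L)\cap(E+y)$ translates to $K+L_y$ precisely because $K\subseteq E$, that the top $\lambda$-coefficient $m!\,\V_E(C_1,\ldots,C_m)$ of $\Vol_E(K+L_y)$ is independent of the summand $L_y$ whenever $L_y\ne\varnothing$, and the subsequent $\mu$-coefficient extraction from $\Vol_{E^\perp}(\proj_{E^\perp}L)$ — are sound and give the claimed identity after dividing by $m!(n-m)!$.
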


We will use Lemma \ref{lem:proj} in its full force many times. The special 
case $m=1$ is particularly important, however, so we highlight it 
separately.

\begin{cor}
\label{cor:segproj}
Let $C_1,\ldots,C_{n-1}$ be convex bodies in $\mathbb{R}^n$, and
let $u\in S^{n-1}$. Then
$$
	n\,\V_n([0,u],C_1,\ldots,C_{n-1}) =
	\V_{n-1}(\proj_{u^\perp}C_1,\ldots,\proj_{u^\perp}C_{n-1}).
$$
\end{cor}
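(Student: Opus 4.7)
The plan is to apply Lemma \ref{lem:proj} in the special case $m=1$ with the subspace $E := \sspan\{u\}$. Since $u \in S^{n-1}$ is a unit vector, $E$ is one-dimensional, $E^\perp = u^\perp$, and the segment $[0,u]$ lies in $E$.

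Invoking Lemma \ref{lem:proj} with this choice (treating $[0,u]$ as the sole body in $E$ and $C_1,\ldots,C_{n-1}$ as the $n-1$ bodies in $\mathbb{R}^n$), I obtain
$$
\binom{n}{1}\V_n([0,u],C_1,\ldots,C_{n-1}) = \V_E([0,u])\,\V_{u^\perp}(\proj_{u^\perp}C_1,\ldots,\proj_{u^\perp}C_{n-1}).
$$
Since $\binom{n}{1}=n$, it only remains to observe that $\V_E([0,u])=\Vol_1([0,u])=|u|=1$ by part \emph{a} of Lemma \ref{lem:mvprop}, which yields the desired identity. There is no real obstacle here; the corollary is merely the explicit specialization of the general projection formula to a single segment.
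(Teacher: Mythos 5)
Your proposal is correct and coincides with the paper's intention: the corollary is stated precisely as the highlighted $m=1$ specialization of Lemma \ref{lem:proj}, and the only computation needed is $\V_E([0,u])=\Vol_1([0,u])=1$ for $u\in S^{n-1}$, exactly as you observe.
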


When combined with Corollary \ref{cor:segproj}, the following observation 
expresses certain $n$-dimensional mixed volumes in terms of 
$(n-1)$-dimensional projections.

\begin{lem}
\label{lem:intseg}
Let $C_1,\ldots,C_{n-1}$ be convex bodies in $\mathbb{R}^n$. Then
$$
	\int_{S^{n-1}}
	\V_n([0,u],C_1,\ldots,C_{n-1}) \,\omega(du)
	=
	\kappa_{n-1}\V_n(B,C_1,\ldots,C_{n-1}),
$$
where $\omega$ denotes the Lebesgue measure on $S^{n-1}$
and $\kappa_{n-1}$ denotes the volume of the Euclidean unit ball in 
$\mathbb{R}^{n-1}$.
\end{lem}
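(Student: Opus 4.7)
The natural plan is to expand the inner mixed volume via the integral representation \eqref{eq:mixvolarea} and swap the order of integration. Since the support function of the segment $[0,u]$ is $h_{[0,u]}(v) = \max(\langle u,v\rangle,0)$, formula \eqref{eq:mixvolarea} gives
$$
	\V_n([0,u],C_1,\ldots,C_{n-1}) =
	\frac{1}{n}\int_{S^{n-1}} \max(\langle u,v\rangle,0)\,
	S_{C_1,\ldots,C_{n-1}}(dv)
$$
for every $u \in S^{n-1}$. Integrating against $\omega(du)$ and applying Fubini yields
$$
	\int_{S^{n-1}} \V_n([0,u],C_1,\ldots,C_{n-1})\,\omega(du)
	= \frac{1}{n}\int_{S^{n-1}} \Phi(v)\, S_{C_1,\ldots,C_{n-1}}(dv),
$$
where $\Phi(v) := \int_{S^{n-1}} \max(\langle u,v\rangle,0)\,\omega(du)$.

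The next step is to evaluate $\Phi$. By rotational invariance of $\omega$, the value of $\Phi(v)$ depends only on $|v|$; since $S_{C_1,\ldots,C_{n-1}}$ is supported on $S^{n-1}$, it suffices to compute $\Phi$ at a single unit vector. Taking $v = e_n$ and parametrizing $u = (\sin\theta\,\omega',\cos\theta)$ with $\theta\in[0,\pi/2]$ and $\omega'\in S^{n-2}$, one finds
$$
	\Phi(e_n) = |S^{n-2}|\int_0^{\pi/2} \cos\theta\,\sin^{n-2}\theta\,d\theta
	= \frac{|S^{n-2}|}{n-1} = \kappa_{n-1},
$$
using the standard identity $|S^{n-2}| = (n-1)\kappa_{n-1}$.

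Substituting $\Phi \equiv \kappa_{n-1}$ on $S^{n-1}$ back above gives
$$
	\int_{S^{n-1}} \V_n([0,u],C_1,\ldots,C_{n-1})\,\omega(du)
	= \frac{\kappa_{n-1}}{n}\, S_{C_1,\ldots,C_{n-1}}(S^{n-1}).
$$
The right-hand side of the lemma is recovered by applying \eqref{eq:mixvolarea} once more with $K=B$: since $h_B \equiv 1$ on $S^{n-1}$, one has $\V_n(B,C_1,\ldots,C_{n-1}) = \frac{1}{n} S_{C_1,\ldots,C_{n-1}}(S^{n-1})$, which completes the argument. There is no real obstacle here: the only non-formal input is the spherical integral $\Phi(v) = \kappa_{n-1}$, which is a routine beta-function calculation.
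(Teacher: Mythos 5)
Your argument is correct and follows essentially the same route as the paper: the paper's one-line proof is precisely your computation, reading the identity $\int h_{[0,u]}(x)\,\omega(du) = \int \langle u,x\rangle_+\,\omega(du) = \kappa_{n-1}\,h_B(x)$ into \eqref{eq:mixvolarea}, with Fubini and the beta-integral left implicit. You have merely unpacked the spherical integral and the application of \eqref{eq:mixvolarea} at both ends, which is exactly what the paper's terse proof compresses.
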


\begin{proof}
Apply \eqref{eq:mixvolarea} and
$\int h_{[0,u]}(x)\,\omega(du) = \int 
\langle u,x\rangle_+\,\omega(du) = \kappa_{n-1}\,h_B(x)$.
\end{proof}

\subsection{Alexandrov-Fenchel inequality and equality}
\label{sec:3af}

The classical formulation of the Alexandrov-Fenchel inequality given in 
Theorem \ref{thm:af} is not the most general one: as was emphasized by 
Alexandrov \cite{Ale37,Ale96}, the convex body $K$ may be replaced by any 
difference of support functions $f$. We will often require this more 
general inequality and its equality cases. We presently make precise the 
connection between these formulations. The results of this section could 
be deduced from \cite[\S 7.4]{Sch14}, but we find it more insightful to 
give direct proofs.

We begin by spelling out three equivalent formulations of
Theorem \ref{thm:af}.

\begin{lem}
\label{lem:3af}
Let $\mathcal{C}=(C_1,\ldots,C_{n-2})$ be convex bodies in 
$\mathbb{R}^n$. The following are three equivalent formulations of the 
Alexandrov-Fenchel inequality:
\begin{enumerate}[a.]
\item For any convex bodies $K,L$,
$$
	\V_n(K,L,\mathcal{C})^2 \ge \V_n(K,K,\mathcal{C})\,
	\V_n(L,L,\mathcal{C}).
$$
\item For any difference of support functions $g$ and convex body
$L$,
$$
	\V_n(g,L,\mathcal{C})^2 \ge \V_n(g,g,\mathcal{C})\,
	\V_n(L,L,\mathcal{C}).
$$
\item 
For any difference of support functions $f$ 
and convex body $L$ with $\V_n(L,L,\mathcal{C})>0$,
$$
	\V_n(f,L,\mathcal{C})=0
	\quad\mbox{implies}\quad
	\V_n(f,f,\mathcal{C})\le 0.
$$
\end{enumerate}
Moreover, if $\V_n(L,L,\mathcal{C})>0$, then equality holds in part b
if and only if there exists $a\in\mathbb{R}$ such that equality holds in 
part c with $f=g-ah_L$.
\end{lem}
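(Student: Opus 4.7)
The plan is to prove the chain of implications (b) $\Rightarrow$ (a), (a) $\Rightarrow$ (c), and (c) $\Rightarrow$ (b), from which the moreover clause will emerge as a byproduct. The first implication is immediate upon taking $g=h_K$ in (b).

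The key step will be (a) $\Rightarrow$ (c). Given a difference $f=h_K-h_{K'}$ with $\V_n(f,L,\mathcal{C})=0$---equivalently $\V_n(K,L,\mathcal{C})=\V_n(K',L,\mathcal{C})$---and $\V_n(L,L,\mathcal{C})>0$, my plan is to apply (a) to the perturbed pair $K_s:=K+sL$ and $K'_s:=K'+sL$ for $s\ge 0$, both of which are convex bodies with $h_{K_s}-h_{K'_s}=f$ for every $s$. Expanding the three mixed volumes $\V_n(K_s,K_s,\mathcal{C})$, $\V_n(K_s,K'_s,\mathcal{C})$, $\V_n(K'_s,K'_s,\mathcal{C})$ by multilinearity, the hypothesis $\V_n(K,L,\mathcal{C})=\V_n(K',L,\mathcal{C})$ should force all three to acquire the \emph{same} $s$-dependent part $u(s):=2s\,\V_n(K,L,\mathcal{C})+s^2\,\V_n(L,L,\mathcal{C})$, so that, writing $a,b,c$ for the three values at $s=0$, the three quantities become $a+u(s)$, $b+u(s)$, $c+u(s)$. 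Plugging these into (a), the $u^2$ terms cancel and I expect to arrive at the clean identity
$$
\V_n(K_s,K'_s,\mathcal{C})^2-\V_n(K_s,K_s,\mathcal{C})\,\V_n(K'_s,K'_s,\mathcal{C})=(b^2-ac)-\V_n(f,f,\mathcal{C})\,u(s).
$$
The inequality (a) forces this to be $\ge 0$ for every $s\ge 0$; since $\V_n(L,L,\mathcal{C})>0$ makes $u(s)\to\infty$, this is only possible if $\V_n(f,f,\mathcal{C})\le 0$, which is (c). The main obstacle is locating this perturbation: it is precisely the hypothesis $\V_n(K,L,\mathcal{C})=\V_n(K',L,\mathcal{C})$ that produces the cancellation above, without which the right-hand side would carry an unsigned remainder that prevents one from reading off the sign of $\V_n(f,f,\mathcal{C})$.

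The implication (c) $\Rightarrow$ (b) will then be a routine orthogonalization against $h_L$. Given a difference $g$ and a body $L$ with $\V_n(L,L,\mathcal{C})>0$, I will set $a:=\V_n(g,L,\mathcal{C})/\V_n(L,L,\mathcal{C})$ and $f:=g-a\,h_L$; by construction $\V_n(f,L,\mathcal{C})=0$, and bilinearity gives $\V_n(f,f,\mathcal{C})=\V_n(g,g,\mathcal{C})-\V_n(g,L,\mathcal{C})^2/\V_n(L,L,\mathcal{C})$. Applying (c) and clearing denominators yields (b). The case $\V_n(L,L,\mathcal{C})=0$ of (b) is trivial, since its right-hand side vanishes while the left-hand side is a square. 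Finally, the moreover clause is immediate from this derivation: $a$ is the \emph{unique} scalar making $\V_n(f,L,\mathcal{C})=0$, and the computation above shows that equality in (b) is equivalent to $\V_n(f,f,\mathcal{C})=0$, i.e., to equality in (c) for this choice of $f$.
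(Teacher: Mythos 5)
Your proof is correct, and it follows a genuinely different route from the paper's. The paper's chain is $b\Rightarrow a$, $b\Rightarrow c$, $c\Rightarrow b$ (all easy), with the nontrivial step being $a\Rightarrow b$: there one first handles the special case $g=h_K-ah_L$ (where the $ah_L$ term cancels from both sides of $b$), and then passes to a general difference of support functions $g$ via a smoothing result of \cite{SvH18} together with approximation. You instead close the cycle via $a\Rightarrow c\Rightarrow b$, replacing the approximation step with the perturbation $K_s=K+sL$, $K'_s=K'+sL$. Writing $f=h_K-h_{K'}$, the hypothesis $\V_n(f,L,\mathcal{C})=0$ makes the $s$-dependent parts of $\V_n(K_s,K_s,\mathcal{C})$, $\V_n(K_s,K'_s,\mathcal{C})$, and $\V_n(K'_s,K'_s,\mathcal{C})$ coincide, and the coefficient of $u(s)$ in the resulting identity is exactly $-\V_n(f,f,\mathcal{C})$; since part $a$ forces the expression to stay nonnegative as $u(s)\to\infty$ (using $\V_n(L,L,\mathcal{C})>0$), the conclusion $\V_n(f,f,\mathcal{C})\le 0$ drops out. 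This argument is purely multilinear-algebraic, requires no regularity or approximation, and does not invoke any result beyond what is already in the statement; in that sense your route is more self-contained than the paper's. The $c\Rightarrow b$ step and the moreover clause (uniqueness of $a$ from $\V_n(f,L,\mathcal{C})=0$, and the equivalence via $\V_n(f,f,\mathcal{C})=\V_n(g,g,\mathcal{C})-\V_n(g,L,\mathcal{C})^2/\V_n(L,L,\mathcal{C})$) match the paper's reasoning exactly.
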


\begin{proof}
The implications $b\Rightarrow a$, $b\Rightarrow c$, and $c\Rightarrow b$ 
follow readily by choosing, respectively, 
$g=h_K$, $g=f$, and $f=g-ah_L$ with 
$a=\V_n(g,L,\mathcal{C})/\V_n(L,L,\mathcal{C})$ (we may assume 
$\V_n(L,L,\mathcal{C})>0$ in the latter case, as otherwise $b$ is 
trivial.)

To prove $a\Rightarrow b$, note first that if $g=h_K-ah_L$ for some 
$a\in\mathbb{R}$, the $ah_L$ term cancels on both sides of the inequality 
in $b$ by expanding the square, so that $a\Rightarrow b$ follows 
trivially. But if $g$ and $L$ are sufficiently smooth, then we may always 
write $g=h_K-ah_L$ for some $a>0$ and convex body $K$ \cite[Corollary 
2.2]{SvH18}; thus the implication $a\Rightarrow b$ follows under 
smoothness assumptions, and consequently in general by a standard 
approximation argument \cite[\S 3.4]{Sch14}.

Finally, suppose $\V_n(L,L,\mathcal{C})>0$.
Then it is immediate that $b$ holds with equality if and only if
$c$ holds with equality for $f=g-ah_L$ with 
$a=\V_n(g,L,\mathcal{C})/\V_n(L,L,\mathcal{C})$.
It remains to note that if $c$ holds with equality with
$f=g-ah_L$ for some $a\in\mathbb{R}$, then it follows from
$\V_n(f,L,\mathcal{C})=0$ that necessarily
$a=\V_n(g,L,\mathcal{C})/\V_n(L,L,\mathcal{C})$.
\end{proof}

In view of Lemma \ref{lem:3af}, to study the equality cases of the 
Alexandrov-Fenchel inequality it suffices to consider the formulation of 
part $c$ of Lemma \ref{lem:3af}. We presently reformulate the equality 
condition
\begin{equation}
\label{eq:eq3af}
	\V_n(f,L,\mathcal{C})=0\quad\mbox{and}\quad
	\V_n(f,f,\mathcal{C})=0
\end{equation}
using the first-order condition of optimality, following 
\cite[p.\ 80]{Ale96}.
For future reference, we consider a slightly more general situation
than arises in Lemma \ref{lem:3af}.

\begin{lem}
\label{lem:afeq}
Let $f$ be a difference of support functions, and let
$L$ and $\mathcal{C}=(C_1,\ldots,C_{n-2})$ be convex bodies in 
$\mathbb{R}^n$.
\begin{enumerate}[a.]
\itemsep\abovedisplayskip
\item Suppose $\V_n(L,L,\mathcal{C})>0$. Then \eqref{eq:eq3af} holds
if and only if $S_{f,\mathcal{C}}=0$.
\item Suppose $\V_n(L,L,\mathcal{C})=0$ and $S_{L,\mathcal{C}}\ne 0$.
Then \eqref{eq:eq3af} holds
if and only if there exists $a\in\mathbb{R}$ such that
$S_{f-ah_L,\mathcal{C}}=0$.
\end{enumerate}
\end{lem}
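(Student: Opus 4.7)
The plan is to prove part (a) via a first-order optimality argument based on Lemma \ref{lem:3af}(c), and then deduce part (b) from part (a) by perturbing $L$ to $L+B$.

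For part (a), one direction is immediate: if $S_{f,\mathcal{C}}=0$, then integrating $h_L$ and (any support-function representation of) $f$ against this zero measure via \eqref{eq:mixvolarea} yields both equalities in \eqref{eq:eq3af}. For the converse, suppose \eqref{eq:eq3af} holds. Given any difference of support functions $g$, set $g_0 := g - \lambda h_L$ with $\lambda := \V_n(g,L,\mathcal{C})/\V_n(L,L,\mathcal{C})$ (legitimate since $\V_n(L,L,\mathcal{C})>0$), so that $\V_n(g_0,L,\mathcal{C})=0$. The perturbation $f + tg_0$ then satisfies $\V_n(f+tg_0,L,\mathcal{C})=0$ for every $t\in\mathbb{R}$, so Lemma \ref{lem:3af}(c) forces
$$
\V_n(f+tg_0,f+tg_0,\mathcal{C}) \le 0
\quad\text{for all } t\in\mathbb{R}.
$$
Expanding the quadratic in $t$ and using $\V_n(f,f,\mathcal{C})=0$ shows that the linear term must vanish, which gives $\V_n(f,g_0,\mathcal{C})=0$ and hence, by $\V_n(f,L,\mathcal{C})=0$, also $\V_n(f,g,\mathcal{C})=0$. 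Since every $g\in C^2(S^{n-1})$ is a difference of support functions (Lemma \ref{lem:c2}), this identity reads $\int g\,dS_{f,\mathcal{C}}=0$ for all $g\in C^2(S^{n-1})$, so $S_{f,\mathcal{C}}=0$.

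For part (b), the easy direction is a two-line calculation: if $S_{f-ah_L,\mathcal{C}}=0$, integrating $h_L$ and $f$ against it and using $\V_n(L,L,\mathcal{C})=0$ recovers \eqref{eq:eq3af}. For the converse, the plan is to reduce to part (a) via the perturbation $L' := L+B$. The assumption $S_{L,\mathcal{C}}\ne 0$ gives $\V_n(L,B,\mathcal{C}) = \frac{1}{n}S_{L,\mathcal{C}}(S^{n-1})>0$, and hence $\V_n(L',L',\mathcal{C})>0$. Choose
$$
a := \frac{\V_n(f,B,\mathcal{C})}{\V_n(L,B,\mathcal{C})},
$$
and set $f' := f - ah_L$. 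A short bilinear expansion, using $\V_n(f,L,\mathcal{C})=0$, $\V_n(f,f,\mathcal{C})=0$, and $\V_n(L,L,\mathcal{C})=0$, shows that $\V_n(f',L',\mathcal{C})=0$ and $\V_n(f',f',\mathcal{C})=0$. Part (a) applied to $f'$ with the reference body $L'$ then yields $S_{f-ah_L,\mathcal{C}}=S_{f',\mathcal{C}}=0$.

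The only genuinely non-routine step is the first-order optimality argument in part (a): it extracts the strong conclusion $S_{f,\mathcal{C}}=0$ (information on the whole sphere) from the scalar equation $\V_n(f,f,\mathcal{C})=0$, by propagating the Alexandrov-Fenchel inequality along the one-parameter family $f+tg_0$. Once this is in place, part (b) is a natural perturbation; the ball $B$ is the right choice precisely because $S_{L,\mathcal{C}}(S^{n-1})$ provides the positivity needed to make $L'$ satisfy the hypothesis of part (a).
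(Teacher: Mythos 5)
Your argument is correct and follows essentially the same route as the paper. Part (a) is the identical first-order optimality argument: you perturb along $f+tg_0$ with $g_0$ chosen so that $\V_n(g_0,L,\mathcal{C})=0$, and extract $\V_n(f,g,\mathcal{C})=0$ from the vanishing derivative. For part (b), the paper applies part (a) with the reference body $B$ and the same $a=\V_n(f,B,\mathcal{C})/\V_n(L,B,\mathcal{C})$; you use $L'=L+B$ instead of $B$, but since $\V_n(f',L,\mathcal{C})=0$ already, the two orthogonality conditions $\V_n(f',L',\mathcal{C})=0$ and $\V_n(f',B,\mathcal{C})=0$ coincide, so the choices are interchangeable. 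A purely cosmetic difference.
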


\begin{proof}
We first prove part $a$. If $S_{f,\mathcal{C}}=0$, then
$\int h_L dS_{f,\mathcal{C}}=\int f dS_{f,\mathcal{C}}=0$ and
\eqref{eq:mixvolarea} yields \eqref{eq:eq3af}. Conversely, suppose 
\eqref{eq:eq3af} holds, and let $g$ be any difference of support 
functions. As $\V_n(L,L,\mathcal{C})>0$, we can choose $a$ so 
that $\V_n(g-ah_L,L,\mathcal{C})=0$. Then
$$
	\varphi(\lambda) := 
	\V_n(f+\lambda[g-ah_L],f+\lambda[g-ah_L],\mathcal{C})
$$
satisfies $\varphi(\lambda)\le 0$ by Lemma 
\ref{lem:3af}($c$) and $\varphi(0)=0$ by \eqref{eq:eq3af}. Thus
$\varphi$ is a quadratic function with maximum at $0$,
so $\varphi'(0)=0$. Using $\V_n(f,L,\mathcal{C})=0$,
this yields
$$
	0 = \V_n(g,f,\mathcal{C}) =
	\frac{1}{n}\int g\,dS_{f,\mathcal{C}}.
$$
As we may choose $g$ to be any $C^2$ function by Lemma \ref{lem:c2}, we 
have $S_{f,\mathcal{C}}=0$.

We now prove part $b$. If $S_{f-ah_L,\mathcal{C}}=0$, then 
$n\V_n(f,L,\mathcal{C})=\int h_L dS_{f-ah_L,\mathcal{C}}=0$ as 
$\V_n(L,L,\mathcal{C})=0$; consequently, $n\V_n(f,f,\mathcal{C})=\int 
f dS_{f-ah_L,\mathcal{C}}=0$, proving \eqref{eq:eq3af}.
Conversely, suppose \eqref{eq:eq3af} holds. As $S_{L,\mathcal{C}}\ne 0$ we 
have $\V_n(B,L,\mathcal{C})>0$. Therefore:
\vspace{.5\abovedisplayskip}
\begin{enumerate}[$\bullet$]
\itemsep\abovedisplayskip
\item
We may choose $a\in\mathbb{R}$ so that $\V_n(f-ah_L,B,\mathcal{C})=0$.
\item
$\V_n(f-ah_L,f-ah_L,\mathcal{C})=0$ by \eqref{eq:eq3af} and 
$\V_n(L,L,\mathcal{C})=0$.
\item
$\V_n(B,B,\mathcal{C})>0$ as
$\V_n(B,L,\mathcal{C})>0$ and $L\subseteq cB$ for some $c>0$.
\end{enumerate}
\vspace{.5\abovedisplayskip}
We can now apply part $a$ with $L\leftarrow B$, 
$f\leftarrow f-ah_L$ to conclude. 
\end{proof}

For completeness, we conclude with a proof of Lemma \ref{lem:simpleeq}.

\begin{proof}[Proof of Lemma \ref{lem:simpleeq}]
Let $K,L$ and $\mathcal{C}=(C_1,\ldots,C_{n-2})$ be as in the statement of 
Lemma \ref{lem:simpleeq}. To prove $b\Rightarrow a$, it suffices to note that
integrating condition $b$ against $h_K$ and $h_L$
yields  
$\V_n(K,K,\mathcal{C})=a\V_n(K,L,\mathcal{C})=a^2\V_n(L,L,\mathcal{C})$
by \eqref{eq:mixvolarea}.
To prove $a\Rightarrow b$, 
note that the assumption $\V_n(K,L,\mathcal{C})>0$ and condition $a$ 
imply $\V_n(L,L,\mathcal{C})>0$. Thus Lemmas \ref{lem:3af} and 
\ref{lem:afeq} imply $S_{h_K-ah_L,\mathcal{C}}=0$ for some 
$a\in\mathbb{R}$. But integrating against $h_L$ yields
$\V_n(K,L,\mathcal{C})=a\V_n(L,L,\mathcal{C})$ by \eqref{eq:mixvolarea}, 
so $a>0$.
\end{proof}

\section{Overview of the proof}
\label{sec:overview}

The main result of this paper, Theorem \ref{thm:main}, is 
proved in sections \ref{sec:matrix}--\ref{sec:proofmain} below. Before 
we proceed to the details, however, we aim to give a 
high-level overview of the proof in order to help the
reader navigate the following sections.
At the most basic level, the proof proceeds by induction on the dimension 
$n$. The argument splits into two parts that require 
completely different ideas and techniques.

Throughout the proof of Theorem \ref{thm:main}, we will fix $n\ge 3$ and 
polytopes $\mathcal{P}=(P_1,\ldots,P_{n-2})$ in $\mathbb{R}^n$. Let us 
introduce at the outset a minimal dimensionality condition that will be 
assumed throughout most of this paper.

\begin{defn}
\label{defn:crit}
A collection of convex bodies $\mathcal{C}=(C_1,\ldots,C_{n-2})$ is 
\emph{critical} if $\dim(C_{i_1}+\cdots+C_{i_k})\ge k+1$
for all $k\in[n-2]$, $1\le i_1<\cdots<i_k\le n-2$.
\end{defn}

Note that if there exist $i_1<\cdots<i_k$ with 
$\dim(P_{i_1}+\cdots+P_{i_k})\le k$, then the bodies 
$(P_{i_1},\ldots,P_{i_k})$ factor on both sides of the Alexandrov-Fenchel 
inequality by Lemma~\ref{lem:proj}, and the problem reduces to a 
lower-dimensional one. For this reason, we may focus our attention on the 
case that $\mathcal{P}$ is critical, and the remaining cases will be 
easily dispensed with at the very end of the proof.

\subsection{The local Alexandrov-Fenchel inequality}
\label{sec:outlinelocalaf}

In order to perform induction on the dimension, we must understand how the 
extremals of the Alexandrov-Fenchel inequality in dimensions $n$ and $n-1$ 
are related. The purpose of the first part of the proof of Theorem 
\ref{thm:main} is to make this connection. To explain how this is done, we 
begin by discussing an apparently unrelated question.

In view of their definition \eqref{eq:mixvolarea}, it is natural to think 
of mixed area measures as local analogues of mixed volumes: they 
describe the behavior of mixed volumes in different normal directions.
The analogy is even more explicit in the polytope case, cf.\ Lemma 
\ref{lem:mapoly}. One might therefore wonder whether there exists an 
analogue of the Alexandrov-Fenchel inequality for mixed area measures. This 
question makes little sense in the formulation of Theorem \ref{thm:af}, 
of course, as one cannot square a measure. However, the question can be 
meaningfully formulated in the form of Lemma \ref{lem:3af}($c$): 
given convex bodies $L,C_1,\ldots,C_{n-3}$, is it true that
\begin{equation}
\label{eq:localaf}
	S_{f,L,C_1,\ldots,C_{n-3}} = 0 \quad 
	\stackrel{?}{\Longrightarrow}
	\quad
	S_{f,f,C_1,\ldots,C_{n-3}} \le 0
\end{equation}
for any difference of support functions $f$? We will refer to any 
statement of the form \eqref{eq:localaf} as a \emph{local 
Alexandrov-Fenchel inequality}.

Let us first explain why such an inequality would enable an 
induction argument, at least in the full-dimensional case. To this end, 
we make a simple observation.

\begin{lem}
\label{lem:weyl}
Let $\mathcal{C}=(C_1,\ldots,C_{n-2})$ be convex bodies in $\mathbb{R}^n$,
let $r\in[n-2]$, and suppose $C_r$ is full-dimensional.
If $S_{f,\mathcal{C}}=0$ and $S_{f,f,\mathcal{C}_{\backslash r}}\le 0$, 
then $S_{f,f,\mathcal{C}_{\backslash r}}=0$.
\end{lem}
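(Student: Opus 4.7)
The plan is to use the symmetry of mixed area measures to convert the hypothesis $S_{f,\mathcal{C}}=0$ into an integral identity against the support function $h_{C_r}$, and then to exploit the full-dimensionality of $C_r$ to force the nonpositive signed measure $S_{f,f,\mathcal{C}_{\backslash r}}$ to vanish.

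Concretely, by symmetry and multilinearity of mixed area measures together with \eqref{eq:mixvolarea},
\begin{equation*}
\int h_{C_r}\,dS_{f,f,\mathcal{C}_{\backslash r}} \;=\; n\,\V_n(C_r,f,f,\mathcal{C}_{\backslash r}) \;=\; n\,\V_n(f,f,\mathcal{C}) \;=\; \int f\,dS_{f,\mathcal{C}} \;=\; 0,
\end{equation*}
where the final equality uses the hypothesis. Since $C_r$ is full-dimensional, and since mixed volumes as well as mixed area measures are translation-invariant in each argument (Lemmas \ref{lem:mvprop}(e) and \ref{lem:maprop}(c)), while $C_r$ does not appear in $\mathcal{C}_{\backslash r}$ at all, I am free to translate $C_r$ so that the origin lies in its interior without affecting the identity just derived. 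After this translation $h_{C_r}$ is strictly positive on the compact sphere $S^{n-1}$, hence bounded below by some constant $c>0$.

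Finally, the hypothesis $S_{f,f,\mathcal{C}_{\backslash r}}\leq 0$ says that $\mu:=-S_{f,f,\mathcal{C}_{\backslash r}}$ is a nonnegative Borel measure on $S^{n-1}$. The identity above gives $\int h_{C_r}\,d\mu=0$, and combined with $h_{C_r}\geq c>0$ this forces $\mu(S^{n-1})=0$, so $\mu\equiv 0$ and hence $S_{f,f,\mathcal{C}_{\backslash r}}=0$. The argument is short and presents no genuine obstacle; the only substantive point is that full-dimensionality of $C_r$ is exactly the property that allows a translate of $h_{C_r}$ to be strictly positive, which is what lets us upgrade ``the nonpositive measure integrates to zero against $h_{C_r}$'' to ``the measure vanishes.''
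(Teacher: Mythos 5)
Your proof is correct and follows essentially the same approach as the paper's: use symmetry of mixed volumes to rewrite $\int f\,dS_{f,\mathcal{C}}=0$ as $\int h_{C_r}\,dS_{f,f,\mathcal{C}_{\backslash r}}=0$, then translate $C_r$ so $h_{C_r}>0$ and use the sign condition on $S_{f,f,\mathcal{C}_{\backslash r}}$. The paper performs the translation first and then derives the identity, which is marginally cleaner than translating after the fact, but this is only a cosmetic difference.
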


\begin{proof}
By translation-invariance we may assume $0\in\intr C_r$, so 
that $h_{C_r}>0$. Now note that as $S_{f,\mathcal{C}}=0$, using 
\eqref{eq:mixvolarea} and the symmetry of mixed volumes yields
$$
	0 =
	\int f \,dS_{f,\mathcal{C}}
	=
	\int h_{C_r} \,dS_{f,f,\mathcal{C}_{\backslash r}}.
$$
The conclusion follows as
$S_{f,f,\mathcal{C}_{\backslash r}}\le 0$ and $h_{C_r}>0$.
\end{proof}

Now suppose we have equality in Theorem \ref{thm:af}, and assume for 
simplicity that $C_r$ is full-dimensional for some $r\in[n-2]$.
Then by Lemma \ref{lem:simpleeq}, we have
$$
	S_{f,\mathcal{C}}=0\quad
	\mbox{with}\quad f=h_K-ah_L
$$
for some $a>0$. If
the local Alexandrov-Fenchel inequality \eqref{eq:localaf} were to hold, 
we would obtain $S_{f,f,\mathcal{C}_{\backslash r}}\le 0$, and thus
$S_{f,f,\mathcal{C}_{\backslash r}}=0$ by Lemma \ref{lem:weyl}.
Integrating both measures against $h_{[0,u]}$ (for any $u\in S^{n-1}$) 
yields, by \eqref{eq:mixvolarea} and Corollary \ref{cor:segproj},
$$
	\V_{n-1}(\proj_{u^\perp}f,\proj_{u^\perp}\mathcal{C})=0
	\qquad\mbox{and}\qquad
	\V_{n-1}(\proj_{u^\perp}f,\proj_{u^\perp}f,
	\proj_{u^\perp}\mathcal{C}_{\backslash r})=0,
$$
where $\proj_Ef:=h_{\proj_EK}-ah_{\proj_EL}$ and 
$\proj_E\mathcal{C}:=(\proj_EC_1,\ldots,\proj_EC_{n-2})$. But the latter 
is nothing other than an equality case \eqref{eq:eq3af} of the 
Alexandrov-Fenchel inequality in $u^\perp$. Thus \emph{a local 
Alexandrov-Fenchel inequality would imply that extremality for the 
Alexandrov-Fenchel inequality in dimension $n$ is inherited by projection 
onto any $(n-1)$-dimensional subspace}, opening the door to induction.

Unfortunately, it turns out that this approach breaks down precisely when 
the Alexandrov-Fenchel inequality has nontrivial extremals. That the above 
conclusion must fail in this case is immediately evident from the 
classical fact that equality $\V_2(K,L)^2=\V_2(K,K)\,\V_2(L,L)$ holds in 
dimension $n=2$ if and only if $K,L$ are homothetic (cf.\ Remark 
\ref{rem:twodex} and the proof of Theorem \ref{thm:schneider}). Thus it 
cannot be the case that the projections of a nontrivial equality case of 
the Alexandrov-Fenchel inequality in dimension $n\ge 3$ yield equality in 
dimension $2$, as convex bodies in dimension $n\ge 3$ whose projections 
onto every hyperplane are homothetic must themselves be homothetic 
\cite{Sus32} (this is illustrated, for example, by Figure \ref{fig:cap}). 
In particular, it follows that the validity of the local 
Alexendrov-Fenchel inequality \eqref{eq:localaf} is contradicted by the 
presence of nontrivial extremals.

At first sight, the failure of \eqref{eq:localaf} appears to render the 
above approach useless for the study of the extremals. Remarkably, 
however, this turns out not to be the case. Recall that by Lemma 
\ref{lem:suppeq}, the measure $S_{f,\mathcal{C}}$ is unchanged if we 
modify $f$ outside the support of $S_{B,\mathcal{C}}$; in particular, as 
we characterize extremals only up to $S_{B,\mathcal{C}}$-a.e.\ 
equivalence, we are free to modify $f$ outside $\supp S_{B,\mathcal{C}}$ 
in the proof. On the other hand, the same property does \emph{not} hold 
for $S_{f,f,\mathcal{C}}$: this measure may change drastically if we 
modify $f$ outside the support of $S_{B,\mathcal{C}}$. One of the central 
ideas of this paper is that we can exploit the resulting degrees of 
freedom to force the validity of \eqref{eq:localaf}. More precisely, we 
will prove the following.

\begin{thm}[Local Alexandrov-Fenchel inequality]
\label{thm:localaf}
Let $\mathcal{P}=(P_1,\ldots,P_{n-2})$ be a critical collection of
polytopes in $\mathbb{R}^n$, and fix $r\in[n-2]$. Then for any difference 
of support functions $f$ so that
$S_{f,\mathcal{P}}=0$, there exists a difference of support functions 
$g$ so that $S_{g,\mathcal{P}}=0$, $S_{g,g,\mathcal{P}_{\backslash r}}\le 0$,
and $g(x)=f(x)$ for all $x\in\supp S_{B,\mathcal{P}}$.
\end{thm}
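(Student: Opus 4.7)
The plan is to invoke classical Alexandrov--Fenchel with $P_r$ playing the role of the positive direction in the associated quadratic form, and then upgrade the resulting integrated bound to a measure inequality through a careful choice of $g$. The first two conclusions of the theorem are essentially free: any $g$ agreeing with $f$ on $\supp S_{B,\mathcal{P}}$ satisfies $S_{g,\mathcal{P}} = S_{f,\mathcal{P}} = 0$ by Lemma \ref{lem:suppeq}, so the entire content lies in the measure inequality $S_{g,g,\mathcal{P}_{\setminus r}}\le 0$.

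The first substantive step is the \emph{integrated} version of this inequality, which I claim holds for \emph{any} $g$ with $S_{g,\mathcal{P}}=0$. Indeed, for any convex body $L\subset\mathbb{R}^n$,
\[
  \V_n(g, P_r, L, \mathcal{P}_{\setminus r}) \;=\; \V_n(g, L, \mathcal{P}) \;=\; \tfrac{1}{n}\!\int h_L\,dS_{g,\mathcal{P}} \;=\; 0.
\]
Applying Lemma \ref{lem:3af}(c) with reference $\mathcal{C}=(L,\mathcal{P}_{\setminus r})$ and test body $P_r$ then yields $\V_n(g,g,L,\mathcal{P}_{\setminus r})\le 0$ whenever $\V_n(P_r,P_r,L,\mathcal{P}_{\setminus r})>0$. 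A short check with Lemma \ref{lem:dim} shows that the criticality of $\mathcal{P}$ forces $\V_n(P_r,P_r,B,\mathcal{P}_{\setminus r})>0$: subsets avoiding $B$ are handled by criticality (two copies of $P_r$ contribute the same as one copy, so a $k$-subset containing both reduces to a $(k{-}1)$-subset of $\mathcal{P}$, which has dimension $\ge k$), while subsets containing $B$ are automatically full-dimensional. Replacing $L$ by $L+\epsilon B$ and invoking continuity (Lemma \ref{lem:cont}) then extends the bound to every convex body $L$, giving $\int h_L\,dS_{g,g,\mathcal{P}_{\setminus r}}\le 0$ universally.

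Upgrading this integrated bound to a \emph{measure} inequality is where the freedom to modify $f$ off $\supp S_{B,\mathcal{P}}$ becomes essential, since integrating against support functions cannot isolate individual atoms and cancellation between positive and negative atoms is invisible to the integrated test. The plan is to construct $g$ as (a limit of) differences of polytope support functions matching $f$ on $\supp S_{B,\mathcal{P}}$, rendering $S_{g,g,\mathcal{P}_{\setminus r}}$ atomic (Lemma \ref{lem:mapoly}). Each atom at $u$ then equals $\V_{n-1}(F(g,u),F(g,u),F(\mathcal{P}_{\setminus r},u))$ in $u^\perp$, and would be controlled by an $(n-1)$-dimensional Alexandrov--Fenchel in $u^\perp$ with test face $F(P_r,u)$; the hypothesis $\V_{n-1}(F(g,u),F(P_r,u),F(\mathcal{P}_{\setminus r},u))=S_{g,\mathcal{P}}(\{u\})=0$ is already in hand. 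The hard part will be engineering this polyhedral extension of $f$ so that (i) $S_{g,\mathcal{P}}=0$ is preserved under the extension and (ii) the $(n-1)$-dimensional AF positivity $\V_{n-1}(F(P_r,u),F(P_r,u),F(\mathcal{P}_{\setminus r},u))>0$ can be arranged at every atom of $S_{g,g,\mathcal{P}_{\setminus r}}$. Note that criticality of $\mathcal{P}$ combined with Lemma \ref{lem:supp} only delivers $\dim(F(P_r,u)+\sum_{j} F(P_{i_j},u))\ge k+1$, whereas the pointwise AF needs one more dimension; closing this gap---likely by a perturbation $F(P_r,u)\rightsquigarrow F(P_r,u)+\epsilon B_{u^\perp}$ at each tight direction, followed by continuity, together with a combinatorial choice of extension adapted to the facial structure of $\mathcal{P}$ on $\supp S_{B,\mathcal{P}_{\setminus r}}\setminus\supp S_{B,\mathcal{P}}$---is expected to be the main technical obstacle.
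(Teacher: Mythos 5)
Your opening observations are correct and well-organized: the condition $S_{g,\mathcal{P}}=0$ is indeed free for any $g$ agreeing with $f$ on $\supp S_{B,\mathcal{P}}$, and the integrated inequality $\int h_L\,dS_{g,g,\mathcal{P}_{\backslash r}}\le 0$ does hold for every such $g$ by exactly the Alexandrov--Fenchel argument you give (your dimension-count for $\V_n(P_r,P_r,B,\mathcal{P}_{\backslash r})>0$ is right, and so is the $L\rightsquigarrow L+\epsilon B$ limiting argument). You also correctly diagnose why this integrated bound is \emph{not} the measure inequality, and why the freedom off $\supp S_{B,\mathcal{P}}$ must be exploited. But here is the problem: precisely because the integrated inequality holds for \emph{every} admissible $g$, it gives no information whatsoever about how to choose $g$, so everything after your second paragraph is where the actual theorem has to be proved, and you stop with ``is expected to be the main technical obstacle.'' There is no proof here.

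The missing content is substantial. After the polyhedral reduction (the paper does this by replacing $f$ with $h_Q - a h_P$ for $Q$ strongly isomorphic to a fixed simple background polytope $P = P_0 + \sum P_i$, so all relevant mixed area measures are supported on the finite set of facet normals $\{u_i\}$ of $P$), one observes that your pointwise AF argument handles the \emph{active} vertices $u_i \in \supp S_{B,\mathcal{P}}$ automatically (Lemma~\ref{lem:localafactive}); your worry about the dimension deficiency there is misplaced because when $\V_{n-1}(P_r^i,P_r^i,\mathcal{P}^i_{\backslash r})=0$ one can substitute the full facet $P^i$ as the positive direction, not a perturbed $F(P_r,u)+\epsilon B_{u^\perp}$ --- your proposed perturbation would destroy the linear hypothesis $\V_{n-1}(g^i, F(P_r,u)+\epsilon B, \cdot)=0$ that drives the quadratic bound. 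The hard part is the \emph{inactive} vertices $u_i\notin\supp S_{B,\mathcal{P}}$, where $g(u_i)$ is free. The paper's key moves are: (a) replace the quadratic target $\V_{n-1}(g^i,g^i,\mathcal{P}^i_{\backslash r})\le 0$ at inactive vertices by the \emph{linear} system $\V_{n-1}(g^i,P^i,\mathcal{P}^i_{\backslash r})=0$, since the latter implies the former by AF (Proposition~\ref{prop:comblin} and its use in Theorem~\ref{thm:comblocalaf}); (b) prove that this linear system in the unknowns $g(u_i)$, $i\notin V$, always has a solution, via the Fredholm alternative applied to the Alexandrov matrix $\bar\A$ built from $(P,\mathcal{P}_{\backslash r})$; and (c) --- the decisive and unexpected step --- verify the dual solvability condition by \emph{invoking the Alexandrov--Fenchel equality characterization a second time} (Lemma~\ref{lem:afeq}), which forces $\mathrm{P}_{V^c}w \in \ker\bar\A$ for any $w$ in the kernel of the restricted Schur block. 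None of this is present or hinted at in your sketch, and ``a combinatorial choice of extension adapted to the facial structure'' does not constitute an argument for why solvability cannot fail for some bad configuration of $\mathcal{P}$.
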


The proof of \ref{thm:localaf} is the main part of this paper in which we 
exploit the assumption that the reference bodies are polytopes (see 
section \ref{sec:discussion} for discussion). The simplification provided 
by this setting is that it enables us to reduce Theorem \ref{thm:localaf} 
to a finite-dimensional problem, which will be accomplished in sections 
\ref{sec:matrix}--\ref{sec:augment} by adapting ideas from Alexandrov's 
original proof of the Alexandrov-Fenchel inequality using strongly 
isomorphic polytopes \cite{Ale37} to the setting of arbitrary polytopes. 
It should be emphasized, however, that this reduction is merely a 
technical device: the entire difficulty of the proof lies in section 
\ref{sec:localaf}, where we prove the existence of a function $g$ with the 
requisite properties. We will ultimately reduce this problem to a system 
of linear equations, and the heart of the matter is to rule out the 
presence of degeneracies that would obstruct the existence of a solution.

\begin{rem}
\label{rem:weyl}
The simple argument in the proof of Lemma \ref{lem:weyl} is due to Weyl 
\cite{Wey17}. It is used in classical proofs of the Alexandrov-Fenchel 
inequality precisely to \emph{rule out} the existence of nontrivial 
extremals; see, e.g., \cite[p.\ 110]{BF87} or \cite[p.\ 396]{Sch14}.
It therefore appears rather surprising that such an argument provides a 
starting point for the study of nontrivial extremals. That this is in fact 
the case relies crucially on Theorem \ref{thm:localaf}, which is a central 
new idea of this paper that opens the door to the analysis of the 
extremals by induction on the dimension.

A different induction argument was exploited by Schneider \cite{Sch88} to 
investigate extremals of the Alexandrov-Fenchel inequality for zonoids, 
that is, limits of Minkowski sums of segments. In this setting, the 
relation between the extremals and their projections arises from Corollary 
\ref{cor:segproj}, but this appears as a very special property of this 
class of bodies. A notable aspect of our approach is that we are able 
to perform induction by projection in the absence of such special 
structure.
\end{rem}

\subsection{The gluing argument}

Once we have shown that extremality is preserved by projection onto 
hyperplanes, we must combine the information contained in the 
$(n-1)$-dimensional projections in order to characterize the 
$n$-dimensional extremals. This is the purpose of the second part of the 
proof of Theorem \ref{thm:main}.

At first sight, it seems evident that we may reconstruct a convex body 
from its projections, as $h_{\proj_EK}(x)=h_K(\proj_E x)$ for all $x$ by 
the definition of support functions. Thus if the function 
$\proj_{u^\perp}f$ were known for every $u$, the function $f$ would be 
uniquely determined. The situation we encounter is much more delicate, 
however, as only very limited information about the projections will 
follow from the induction hypothesis that Theorem \ref{thm:main} holds in 
dimension $n-1$.

To illustrate the difficulty, suppose for simplicity that all polytopes
in $\mathcal{P}$ are full-dimensional, and let $f$ be an equality case of 
the Alexandrov-Fenchel inequality in dimension $n$, that is, 
$S_{f,\mathcal{P}}=0$. We aim to prove the conclusion of Corollary 
\ref{cor:schneider}, that is, there exists $s\in\mathbb{R}^n$ so that 
$f(x)=\langle s,x\rangle$ for $x\in\supp S_{B,\mathcal{P}}$.  If we assume 
Corollary \ref{cor:schneider} holds in dimension $n-1$, then Theorem 
\ref{thm:localaf} and the arguments of the previous section show that 
there exists $s(u)\in\mathbb{R}^n$ such that
$$
	f(x) = \langle s(u),x\rangle \qquad
	\mbox{for all }x\in \supp 
	S_{\proj_{u^\perp}B,\proj_{u^\perp}\mathcal{P}_{\backslash r}}
	\cap \supp S_{B,\mathcal{P}}
$$
for every $u\in S^{n-1}$.
We now face two problems: the linear function $\langle 
s(u),x\rangle$ may \emph{a priori} depend on $u$; and we have only very 
limited information for any given $u$, as $\supp
S_{\proj_{u^\perp}B,\proj_{u^\perp}\mathcal{P}_{\backslash r}}
\cap \supp S_{B,\mathcal{P}}$ may only 
cover a very small part of $S^{n-1}\cap u^\perp$. We must therefore rule 
out, for example, that $f$ is piecewise linear on disjoint parts of the 
supports of the mixed area measures that arise for different $u$.

In the supercritical case (Definition \ref{defn:supercrit}), these issues 
will be resolved in section~\ref{sec:supercrit}, where we will glue 
together the linear functions $\langle s(u),x\rangle$ to form a single 
linear function $\langle s,x\rangle$. The idea behind the gluing argument 
is to show that there is sufficient overlap between the supports of the 
measures $S_{\proj_{u^\perp}B,\proj_{u^\perp}\mathcal{P}_{\backslash r}}$ 
for different $u$ so that all the vectors $s(u)$ must be consistent with a 
single vector $s$. It will turn out that the supercriticality assumption 
is preserved by the induction, so that a self-contained proof of Corollary 
\ref{cor:schneider} will already be achieved in section 
\ref{sec:supercrit}.

To complete the proof of Theorem \ref{thm:main} it remains to consider the 
critical case, that is, when $\dim(C_{i_1}+\cdots+C_{i_k})=k+1$ for some 
\emph{critical set} $i_1<\cdots<i_k$. It is in this situation that 
nontrivial degenerate functions (Definition \ref{defn:deg}) appear. The 
problem of gluing together these degenerate functions in dimension $n-1$ 
to form degenerate functions in dimension $n$ gives rise to numerous 
complications. We begin in section \ref{sec:panov} by characterizing what 
degenerate functions look like; they will turn out to be intimately 
connected to the critical sets. In section \ref{sec:propeller}, we will 
show that in the critical case, the supports of the relevant mixed area 
measures exhibit a striking phenomenon: they form geometric structures 
that we call \emph{propellers}, which are responsible for the formation of 
the degenerate extremals. We exploit these insights in section 
\ref{sec:crit} to solve the gluing problem for degenerate functions.  The 
proof of Theorem \ref{thm:main} is finally completed in section 
\ref{sec:proofmain}.

\part{The local Alexandrov-Fenchel inequality}

\section{Polytopes, graphs, and extremals}
\label{sec:matrix}

The aim of this section is to give a concrete formulation of the equality 
condition $S_{f,\mathcal{P}}=0$ in the case that 
$\mathcal{P}=(P_1,\ldots,P_{n-2})$ are polytopes. In particular, we will 
describe the underlying combinatorial structure, and introduce the basic 
objects and notations that will be used in the following sections.

\subsection{Basic constructions}
\label{sec:background}

We fix at the outset $n\ge 3$ and an arbitrary collection of polytopes 
$\mathcal{P}=(P_1,\ldots,P_{n-2})$ in $\mathbb{R}^n$. The 
notations and definitions that are introduced below will be in force
throughout sections \ref{sec:matrix}--\ref{sec:localaf}.

\subsubsection{The background polytope}

We begin by introducing a certain background polytope $P$ that will be 
fixed throughout the following constructions.

Recall that a polytope in $\mathbb{R}^n$ is called \emph{simple} if it has 
nonempty interior and each of its extreme points meets exactly $n$ facets.

\begin{lem}
\label{lem:simple}
There exists a polytope $P_0$ in $\mathbb{R}^n$ so that
$P_0 + P_1 + \cdots + P_n$ is simple.
\end{lem}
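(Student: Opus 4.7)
The plan is a standard genericity argument. First, I would recall the classical characterizations (see \cite[\S 2.4]{Sch14}): a full-dimensional polytope $Q \subset \mathbb{R}^n$ is simple if and only if its normal fan $\mathcal{N}(Q)$ is simplicial, meaning every maximal cone is generated by exactly $n$ rays; and the normal fan of a Minkowski sum $Q_1 + \cdots + Q_k$ is the common refinement $\mathcal{N}(Q_1) \wedge \cdots \wedge \mathcal{N}(Q_k)$. The task therefore reduces to finding a full-dimensional polytope $P_0$ such that the common refinement of $\mathcal{N}(P_0), \mathcal{N}(P_1), \ldots, \mathcal{N}(P_{n-2})$ is simplicial.

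For the construction, I would take $P_0$ to be an $n$-simplex in sufficiently general position. Specifically, let $\mathcal{U} \subset \mathbb{R}^n$ denote the (finite) set of unit normal vectors to all proper faces of positive dimension of $P_1, \ldots, P_{n-2}$. I choose the $n+1$ facet outer normals $v_0, \ldots, v_n$ of $P_0$ subject to the following generic position requirement: for every nonempty $I \subseteq \{0, \ldots, n\}$ with $|I| \le n$ and every subfamily of $\mathcal{U}$ spanning a subspace $E$ of dimension $n - |I|$, the vectors $\{v_i\}_{i \in I}$ together with $E$ span all of $\mathbb{R}^n$. This is a Zariski-open dense condition on the $(n+1)$-tuple of facet normals (equivalently, on the $n+1$ vertex positions of the simplex), so such simplices exist; in particular, $P_0$ can be chosen full-dimensional, which guarantees that $P_0 + P_1 + \cdots + P_{n-2}$ is full-dimensional.

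For the verification, one considers any $u \in S^{n-1}$ lying in the relative interior of a maximal cone $\sigma$ of the common refinement. By Lemma \ref{lem:faces}, $F(P_0 + P_1 + \cdots + P_{n-2}, u) = F(P_0, u) + \sum_i F(P_i, u)$, and since $\sigma$ is maximal this sum must be a single vertex, which in turn means $\dim F(P_i, u) = 0$ for every $i \in \{0, 1, \ldots, n-2\}$. The defining rays of $\sigma$ are then precisely the outer normals of those facets of $P_0 + P_1 + \cdots + P_{n-2}$ incident to this vertex; each such facet normal arises either as some $v_j$ (from a facet of $P_0$) or as a member of $\mathcal{U}$ (from a facet of the relevant $P_i$'s). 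The generic position condition on the $v_j$'s ensures that the $n$ such normals generating $\sigma$ are linearly independent, so $\sigma$ is a simplicial cone. The refinement is thus simplicial and the sum is simple.

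The main obstacle I expect is the rigorous accounting in the verification step when some $P_i$ are lower-dimensional: in that setting the cones of $\mathcal{N}(P_i)$ have nontrivial lineality spaces coming from the directions orthogonal to $\aff(P_i)$, so the facet-normals of $P_0 + P_1 + \cdots + P_{n-2}$ are not literally the union of facet normals of the summands, but rather arise from a more delicate matching of face directions across summands. One must carefully check that the genericity condition on $P_0$ still forces any $n$ of these contributing normals, drawn from a mixture of $v_j$'s and elements of $\mathcal{U}$, to be linearly independent — this is exactly the purpose of formulating the genericity condition in the subspace-span form above, and carrying out this bookkeeping cleanly is the principal technical content of the proof.
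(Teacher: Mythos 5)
Your approach is genuinely different from the paper's. The paper invokes \cite[Lemma 2.4.14]{Sch14} directly: it sets $Q := R + P_1 + \cdots + P_{n-2}$ for any full-dimensional $R$, approximates $Q$ by a simple polytope $Q'$ whose vertex normal cones each sit inside a vertex normal cone of $Q$, observes that then $Q' + Q$ has the same vertex cone structure as $Q'$ (hence is simple), and finishes with $P_0 := Q' + R$. Your proposal instead tries to exhibit $P_0$ as a generic simplex. That is a legitimate alternative idea, but as written it has a substantive gap, precisely in the place you flag.

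The gap is that your genericity condition is neither precisely stated nor shown to imply the conclusion. Your set $\mathcal{U}$ is described as ``unit normal vectors to all proper faces of positive dimension of $P_1, \ldots, P_{n-2}$,'' but a positive-dimensional face does not have a single unit normal: it has a whole normal cone, and when $P_i$ is lower-dimensional that cone has a nontrivial lineality space. A cleaner candidate would be the set of rays of the common refinement $\mathcal{N}(P_1) \wedge \cdots \wedge \mathcal{N}(P_{n-2})$ (modulo its lineality), but even with that fix, the real work remains unaddressed: you must show that the rays spanning a vertex cone of $P_0 + P_1 + \cdots + P_{n-2}$ are exactly a subset of the $v_j$'s together with walls of $\mathcal{N}(P_1) \wedge \cdots \wedge \mathcal{N}(P_{n-2})$, and then that your subspace-span genericity condition forces any $n$ such rays arising at a single vertex to be independent. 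When the $P_i$ have empty interior, maximal cones of $\mathcal{N}(P_1) \wedge \cdots \wedge \mathcal{N}(P_{n-2})$ may themselves be far from simplicial (e.g.\ thick cones over polygons with many edges), and it is not immediate that intersecting such a cone with a small simplicial cone of $\mathcal{N}(P_0)$ produces only simplicial pieces; this is exactly where the existence and genericity of the required simplex need an argument rather than an assertion. You acknowledge this as ``the principal technical content of the proof,'' but you do not carry it out, so the proof is incomplete. The paper's route bypasses all of this by quoting a result that already packages the genericity statement in the convenient form ``each normal cone of $Q'$ is contained in a normal cone of $Q$,'' after which the simplicity of $Q' + Q$ is a one-line consequence of how normal fans refine under Minkowski addition.
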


\begin{proof}
Let $R$ be any polytope in $\mathbb{R}^n$ with nonempty interior, and
define $Q:=R+P_1+\cdots+P_n$. Then by \cite[Lemma 2.4.14]{Sch14}, there 
exists a simple polytope $Q'$ so that each normal cone of an extreme point 
of $Q'$ is contained in the normal cone of an extreme point of $Q$.
As the normal cones of $Q'+Q$ are intersections of normal cones of $Q'$ 
and of $Q$ \cite[Theorem 2.2.1]{Sch14}, it follows that the normal 
cones of the extreme points of $Q'+Q$ coincide with the normal cones of 
the extreme points of $Q'$. Thus $Q'+Q$ is also simple. The proof is 
concluded by choosing $P_0=Q'+R$.
\end{proof}

In the sequel, we fix a polytope $P_0$ as in Lemma \ref{lem:simple},
and define
$$
	P:= P_0+P_1+\cdots+P_{n-2}.
$$
We will use $P$ to construct a certain graph structure, on which the 
various objects that will be encountered in the sequel are defined.

\begin{rem}
In this section we will only use the fact that $P$ is full-dimensional. 
The reason for choosing $P$ to be simple will become apparent in section 
\ref{sec:augment}.
\end{rem}

\subsubsection{The background graph}

Let $P^1,\ldots,P^N$ be the facets of $P$. We will frequently identify
a facet $P^i$ by its index $i\in [N]$. For each $i\in[N]$, 
we denote by $u_i\in S^{n-1}$ the outer unit normal vector of facet $P^i$.

Two facets $i,j\in[N]$ of $P$ are said to be \emph{neighboring} if they 
intersect in an $(n-2)$-dimensional face of $P$. We denote the set of such 
pairs as
$$
	E_P := \{ (i,j)\in [N]^2 : \dim(P^i\cap P^j)=n-2\}.
$$
For any $i\in[N]$, we will denote by
$$
	E_P^i := \{j\in [N]:(i,j)\in E_P\}
$$
the set of facets that are neighbors of facet $i$. One should view 
$([N],E_P)$ as a graph whose vertices are facets of $P$ and whose edges 
are neighboring facets.

As $P$ is full-dimensional, the angle $\theta_{ij}$ between the vectors 
$u_i$ and $u_j$ must satisfy $0<\theta_{ij}<\pi$ for any $(i,j)\in E_P$. 
Thus there is a unique shortest geodesic in the sphere between $u_i$ and 
$u_j$, which we denote as $e_{ij}\subset S^{n-1}$; note that the length of 
$e_{ij}$ is precisely $\theta_{ij}$. Geometrically, $e_{ij}$ is precisely 
the set of outer unit normal vectors of the $(n-2)$-dimensional face 
$P^i\cap P^j$ of $P$.

We further define for 
each $(i,j)\in E_P$ a vector $v_{ij}\in S^{n-1}$ such that
$v_{ij}\perp u_i$ by
$$
	u_j =: u_i \cos\theta_{ij} + v_{ij}\sin\theta_{ij}.
$$
Then $v_{ij}$ is the unit tangent vector to $e_{ij}$ at $u_i$, 
pointing toward $u_j$. Geometrically, if we view $P^i$ as an 
$(n-1)$-dimensional convex body in $\aff P^i$, then $v_{ij}$ is precisely
the outer unit normal vector of its facet $P^i\cap P^j = F(P^i,v_{ij})$.

The above definitions are illustrated in Figure
\ref{fig:graphP}. As is evident from the figure, one may naturally
view these definitions as a geometric realization of the 
combinatorial graph $([N],E_P)$, whose vertices are the vectors 
$\{u_i\}_{i\in[N]}$ and whose edges are the geodesics 
$\{e_{ij}\}_{(i,j)\in E_P}$. We will often implicitly identify these
viewpoints: we refer to both $i\in[N]$ 
and the associated vector $u_i$ as a vertex, and to $(i,j)\in E_P$ 
and the associated geodesic $e_{ij}$ as an edge, of the graph defined by 
$P$.
\begin{figure}
\centering
\begin{tikzpicture}[scale=.8]

\shade[ball color = blue, opacity = 0.15] (1,0) circle [radius=2];

\draw[thick, densely dashed] (3,0) arc [start angle = 0, end angle = 180, 
x radius = 2, y radius = .5];

\draw[thick] (3,0) arc [start angle = 0, end angle = -180, x radius = 2, 
y radius = .5];

\draw[thick] (1,2) [rotate=90] arc [start angle = 0, end angle = 
-180, x radius = 2, y radius = -1];

\draw[thick, densely dashed] (1,2) [rotate=90] arc [start angle 
= 0, end angle = 
-180, x radius = 2, y radius = 1];

\draw[thick] (1,2) [rotate=90] arc [start angle = 0, end angle = 
-180, x radius = 2, y radius = 1.75];

\draw[thick, densely dashed] (1,2) [rotate=90] arc [start angle 
= 0, end angle = 
-180, x radius = 2, y radius = -1.75];

\draw[fill=black] (1,2) circle [radius=.07];
\draw[fill=black] (1,-2) circle [radius=.07];
\draw[fill=black] (2.74,-.24) circle [radius=.07] node[below right] 
{$u_i$};
\draw[fill=black] (-0.74,.24) circle [radius=.07];
\draw[fill=black] (.03,-.45) circle [radius=.07];
\draw[fill=black] (1.97,.45) circle [radius=.07];

\draw (-.25,-.44) node[below] {$u_j$};

\draw (1.4,-.23) node {$e_{ij}$};

\draw[thick,->] (2.74,-.25) to (2,-.6);

\draw (3.7,-1.5) node {$v_{ij}$};
\draw[->] (3.4,-1.2) to [out=130,in=-45] (2.3,-.6);



\begin{scope}[scale=.9,xshift=-1cm]

\fill[blue!15] (-5.95,.55) -- (-4.03,1.55) -- 
(-4.03,-1.25) -- (-5.95,-2.25) -- (-5.95,.55);

\fill[blue!25] (-5.95,.55) -- (-8.47,.89) -- 
(-8.47,-1.91) -- (-5.95,-2.25) -- (-5.95,.55);

\fill[blue!5] (-8.47,.89) -- (-6.53,1.79) -- 
(-4.03,1.55) -- (-5.95,.55) -- (-8.47,.89);

\draw[thick] (-5.95,.55) -- (-4.03,1.55) -- (-4.03,-1.25) --
(-5.95,-2.25) -- (-5.95,.55);

\draw[thick] (-5.95,.55) -- (-8.47,.89) -- (-8.47,-1.91) --
(-5.95,-2.25);

\draw[thick] (-8.47,.89) -- (-6.53,1.79) -- (-4.03,1.55);

\draw[thick,densely dashed] (-6.53,1.79) -- (-6.53,-1.01) --
(-4.03,-1.25);

\draw[thick,densely dashed] (-6.53,-1.01) -- (-8.47,-1.91);
\end{scope}

\begin{scope}[xshift=-1cm]
\draw (-7.7,-.25) node[left] {$P=$};

\draw (-6.4,-.6) node {$P^j$};

\draw (-4.35,-.3) node {$P^i$};

\draw[->] (-3.5,-1.5) to [out=180,in=0] (-5.2,-1.2);
\draw (-2.7,-1.5) node {$P^i\cap P^j$};

\end{scope}

\end{tikzpicture}
\caption{A polytope $P$ in $\mathbb{R}^3$ and the associated geometric 
graph.\label{fig:graphP}}
\end{figure}
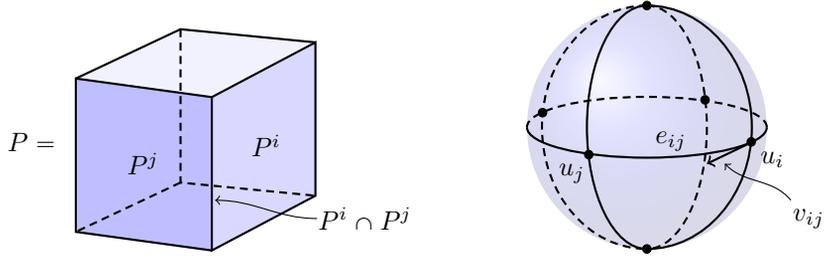

\subsubsection{Faces}

For any convex body $C$ in $\mathbb{R}^n$ and $i\in[N]$, $j\in E_P^i$,
we will denote
$$
	C^i := F(C,u_i),\qquad \quad
	C^{ij} := F(C^i,v_{ij}).
$$
We will frequently write $\mathcal{P}^i:=(P_1^i,\ldots,P_{n-2}^i)$ and
$\mathcal{P}^{ij}:=(P_1^{ij},\ldots,P_{n-2}^{ij})$, and 
analogously for other collections of bodies.

The notation $C^i$ is consistent with the notation $P^i$ for the facets of 
$P$, and we have $P^{ij}=P^i\cap P^j$. In particular, it follows from
Lemma \ref{lem:faces} that we can express the facets and $(n-2)$-faces of 
$P$ in terms of faces of the polytopes $P_r$ as
$$
	P^i = P^i_0 + \cdots + P^i_{n-2}\,\qquad\quad
	P^{ij} = P^{ij}_0 + \cdots + P^{ij}_{n-2}.
$$
In the sequel, we will apply these and similar 
consequences of the linearity of faces under Minkowski addition (Lemma 
\ref{lem:faces}) without further comment.

Note that $P^i_r$ and $P^{ij}_r$ are faces of the polytope $P_r$ by 
definition. However, in contrast to the analogous faces of $P$, it is not 
necessarily the case that $P_r^i$ is a facet and $P_r^{ij}$ is an 
$(n-2)$-face of $P_r$. Nonetheless, the following lemma shows that 
the normal cone of the face $P_r^{ij}$ of $P_r$ always contains $e_{ij}$.
In particular, it follows that $P_r^{ij}=P_r^{ji}$, which is not entirely 
obvious from the definition.

\begin{lem}
\label{lem:newfaceij}
For every $r$, $i\in[N]$, $j\in E_P^i$, and 
$u\in\relint e_{ij}$, we have
$$
	P_r^{ij} = F(P_r,u).
$$
\end{lem}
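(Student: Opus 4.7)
The plan is to exploit the Minkowski decomposition $P = P_0 + P_1 + \cdots + P_{n-2}$ together with the defining property of $e_{ij}$: by construction, $\relint e_{ij}$ is the set of outer unit normals of the $(n-2)$-dimensional face $P^{ij}$ of $P$, so $F(P, u) = P^{ij}$ for every $u \in \relint e_{ij}$. Two applications of Lemma \ref{lem:faces} then yield
$$
	\sum_{s=0}^{n-2} F(P_s, u) \;=\; F(P, u) \;=\; P^{ij} \;=\; \sum_{s=0}^{n-2} P_s^{ij},
$$
so we obtain two Minkowski decompositions of $P^{ij}$ into faces of the individual $P_s$. The whole strategy is to argue that these decompositions must agree summand by summand, that is $F(P_s, u) = P_s^{ij}$ for every $s$, which in particular proves the lemma at the fixed index $s = r$.

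For the main step, I would first establish the inclusion $P_s^{ij} \subseteq F(P_s, u)$ for every $s$. Parametrize $u = \cos\alpha\, u_i + \sin\alpha\, v_{ij}$ with $\alpha \in (0, \theta_{ij})$. Any $y \in P_s^{ij} = F(F(P_s, u_i), v_{ij})$ satisfies $\langle u_i, y\rangle = h_{P_s}(u_i)$ and $\langle v_{ij}, y\rangle = h_{P_s^i}(v_{ij})$, so
$$
	\langle u, y\rangle \;=\; \cos\alpha\, h_{P_s}(u_i) + \sin\alpha\, h_{P_s^i}(v_{ij}) \;\le\; h_{P_s}(u).
$$
Running exactly the same computation with $P$ in place of $P_s$, and using that every $y \in P^{ij} = F(P, u)$ realizes $h_P(u)$, produces the identity $h_P(u) = \cos\alpha\, h_P(u_i) + \sin\alpha\, h_{P^i}(v_{ij})$. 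Since $h_P = \sum_s h_{P_s}$ and $h_{P^i} = \sum_s h_{P_s^i}$ by linearity of the support function under Minkowski sums, summing the inequalities above over $s$ turns the aggregate inequality into an equality; hence each of the nonnegative differences $h_{P_s}(u) - \cos\alpha\, h_{P_s}(u_i) - \sin\alpha\, h_{P_s^i}(v_{ij})$ must individually vanish. This means every $y \in P_s^{ij}$ attains $h_{P_s}(u)$, i.e., $P_s^{ij} \subseteq F(P_s, u)$.

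The final step is purely formal: if $A_s \subseteq B_s$ are convex bodies with $\sum_s A_s = \sum_s B_s$, then the pointwise identity $\sum_s (h_{B_s} - h_{A_s}) = 0$ with each summand nonnegative forces $h_{A_s} = h_{B_s}$ for every $s$, and therefore $A_s = B_s$. Applied with $A_s = P_s^{ij}$ and $B_s = F(P_s, u)$, this gives $F(P_s, u) = P_s^{ij}$ for every $s$, concluding the proof. I do not foresee any real obstacle here: the argument is bookkeeping with the explicit parametrization of $e_{ij}$ along with the elementary uniqueness of Minkowski decompositions; the only essential input is that the Minkowski sum $P_0 + \cdots + P_{n-2} = P$ is full-dimensional, so that $P^{ij}$ really is an $(n-2)$-face with $\relint e_{ij}$ as its relative interior of outer normals.
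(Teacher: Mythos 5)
Your proof is correct, but it takes a genuinely different route from the one in the paper. The paper's argument is structural: it invokes the fact that each normal cone of the Minkowski sum $P=P_0+\cdots+P_{n-2}$ is contained in a normal cone of each summand $P_r$ (Schneider's Theorem 2.2.1), which gives $F(P_r,u)\subseteq P_r^i$ and hence $F(P_r,u)=F(P_r^i,u)$, and then applies Lemma~\ref{lem:faceproj} to identify $F(P_r^i,u)$ with $F(P_r^i,v_{ij})=P_r^{ij}$. Your argument instead avoids the normal-fan coarsening result entirely: you exploit the angle parametrization of $e_{ij}$ to derive the support-function identity $h_P(u)=\cos\alpha\,h_P(u_i)+\sin\alpha\,h_{P^i}(v_{ij})$, note that each summand satisfies the same relation as a $\ge$ inequality, and conclude from additivity of $h$ under Minkowski sums (Lemma~\ref{lem:faces}) that all $n-1$ nonnegative gaps must vanish; the final Minkowski-decomposition step is then the same ``aggregate equality of nonnegative terms'' trick applied once more. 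Both proofs are short; the paper's buys a cleaner conceptual picture via the normal fan at the price of citing an external structural theorem, while yours is self-contained modulo Lemma~\ref{lem:faces} but relies on explicit trigonometric bookkeeping. One small aside: you are careful to establish $P_s^{ij}\subseteq F(P_s,u)$ before invoking the ``sum of inclusions with equal totals'' step — this is essential, since uniqueness of Minkowski decompositions fails in general and the inclusion is exactly what makes the summand-by-summand comparison legitimate.
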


\begin{proof}
Recall that $e_{ij}$ is the set of outer unit normal vectors of
the face $P^i\cap P^j$ of $P$. But as any normal cone of a Minkowski sum
$P=P_0+\cdots+P_{n-2}$ of polytopes is contained in some normal cone of 
$P_r$ for each $r$ \cite[Theorem 2.2.1]{Sch14}, it follows that
$F(P_r,u)=F(P_r,v)\subseteq F(P_r,w)$ for all $u,v\in\relint e_{ij}$ and
$w\in e_{ij}$.

Choosing $w=u_i$, it follows that $F(P_r,u)\subseteq F(P_r,u_i)=P_r^i$.   
Thus $F(P_r,u)$ must be a face of $P_r^i$ that has $u$ as an outer normal
vector, so that $F(P_r,u)\subseteq F(P_r^i,u)$. On the other hand, as 
$F(P_r^i,u)$ is a face of $P_r$ with outer normal vector $u$, we must have
$F(P_r^i,u)\subseteq F(P_r,u)$ as well. Thus we have shown
$F(P_r,u)=F(P_r^i,u)$.

To conclude, note that as
$u\in\relint e_{ij}$, we may write $u=au_i+bu_j$ for some $a,b>0$,
so that $\proj_{u_i^\perp}u=cv_{ij}$ with $c=b\sin\theta_{ij}>0$.
It therefore follows from Lemma \ref{lem:faceproj} that
$F(P_r^i,u)=F(P_r^i,v_{ij})=P_r^{ij}$, concluding the proof.
\end{proof}

\subsection{The quantum graph}
\label{sec:qgraph}

The aim of this section is to describe the structure of the mixed area 
measure $S_{B,\mathcal{P}}$; this will be used in the next section to 
describe the extremal functions $f$ such that $S_{f,\mathcal{P}}=0$. It 
turns out that these objects are supported on a certain subgraph of 
the background graph defined by $P$ in the previous section. We will rely 
on the formulation developed in \cite{SvH19}, where the construction that 
arises here was called the ``quantum graph''.
Related representations of mixed volumes and mixed area measures
may be found in \cite{Ew88} and \cite[p.\ 437]{Sch14}.

Let us begin by describing the measure $S_{B,\mathcal{P}}$.

\begin{lem}
\label{lem:sbp}
For every continuous function $f:S^{n-1}\to\mathbb{R}$, we have
$$
	\int f\,dS_{B,\mathcal{P}} =
	\frac{1}{n-1}
	\sum_{(i,j)\in E_P:i<j}
	\V_{n-2}(P_1^{ij},\ldots,P_{n-2}^{ij})
	\int_{e_{ij}} f\,d\mathcal{H}^1,
$$
where $\mathcal{H}^1$ is the $1$-dimensional Hausdorff measure.
\end{lem}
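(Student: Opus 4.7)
The strategy is polytopal approximation. Let $B^{(l)}$ be polytopes in general position with respect to $P$ (so that no facet normal of $B^{(l)}$ lies on the boundary of any normal cone of $P$) converging to $B$ in Hausdorff distance; by Lemma~\ref{lem:cont}, $S_{B^{(l)},\mathcal{P}}\wto S_{B,\mathcal{P}}$. For each $l$, Lemma~\ref{lem:mapoly} expresses $S_{B^{(l)},\mathcal{P}}$ as a finite sum of atoms of mass $\V_{n-1}(F(B^{(l)},u),F(P_1,u),\ldots,F(P_{n-2},u))$ (taken in $u^\perp$) at the facet normals $u$ of $B^{(l)}+P$. It therefore suffices to evaluate these atoms, organize them according to the normal cone of $P$ containing $u$, and pass to the limit.

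If $u$ lies in the relative interior of a normal cone of $P$ of dimension $d$, then in general position $\dim F(B^{(l)},u)=d-1$. Two extreme cases contribute nothing. For $d=1$, i.e.\ $u=u_i$, the face $F(B^{(l)},u_i)$ is a single vertex, and Lemma~\ref{lem:dim} kills the atom because the $n-2$ faces $P_r^i$ cannot furnish $n-1$ linearly independent segments. For $d\ge 3$, applying Lemma~\ref{lem:proj} in $u^\perp$ with $E$ chosen to contain each $F(P_r,u)$ after translation (so $\dim E=n-d$) factorizes the atom; in the second factor, all but one of the bodies are contained in $E$ and hence project to points in $E^\perp\cap u^\perp$, leaving a mixed volume of one body together with $d-2$ points in a space of dimension $d-1\ge 2$, which is zero by multilinearity of volume.

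Thus only $u\in\relint e_{ij}$ with $(i,j)\in E_P$ contribute. By Lemma~\ref{lem:newfaceij}, $F(P_r,u)=P_r^{ij}$ lies in $E^{ij}:=\sspan\{u_i,u_j\}^\perp\subset u^\perp$, while $F(B^{(l)},u)$ is an edge of $B^{(l)}$ with direction $\tau$. Applying Lemma~\ref{lem:proj} in $u^\perp$ with $E=E^{ij}$ (now with exactly $n-2$ bodies $P_r^{ij}$ in $E$ and the single segment outside) computes the atom mass as
\[
	\frac{1}{n-1}\V_{n-2}(P_1^{ij},\ldots,P_{n-2}^{ij})\,|\langle\tau,v_{ij}'(u)\rangle|,
\]
where $v_{ij}'(u)$ is the unit tangent to $e_{ij}$ at $u$ in the $2$-plane $\Pi_{ij}:=\sspan\{u_i,u_j\}$. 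Summing over the contributing atoms yields
\[
	\int f\,dS_{B^{(l)},\mathcal{P}} = \frac{1}{n-1}\sum_{(i,j)\in E_P,\,i<j}\V_{n-2}(P_1^{ij},\ldots,P_{n-2}^{ij})\sum_e f(u_e)\,|\langle\tau_e,v_{ij}'(u_e)\rangle|,
\]
the inner sum ranging over edges $e$ of $B^{(l)}$ whose normal cone crosses $\relint e_{ij}$ at the point $u_e$.

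It remains to take $l\to\infty$. Projecting onto $\Pi_{ij}$, Lemma~\ref{lem:faceproj} identifies each contributing edge of $B^{(l)}$ bijectively with an edge of the convex polygon $\proj_{\Pi_{ij}}B^{(l)}\subset\Pi_{ij}$ whose outer normal lies in $\relint e_{ij}$, and $|\langle\tau_e,v_{ij}'(u_e)\rangle|=|\proj_{\Pi_{ij}}\tau_e|$ is precisely the length of that polygonal edge. Since $\proj_{\Pi_{ij}}B^{(l)}\to B\cap\Pi_{ij}$ in Hausdorff distance and the boundary arcs with normals in $\relint e_{ij}$ converge to $e_{ij}$, the inner sum is a weighted Riemann sum converging to $\int_{e_{ij}}f\,d\mathcal{H}^1$ for continuous $f$, which assembles into the claimed identity. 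The main obstacle throughout is the dimensional bookkeeping that forces the $d\ge 3$ atoms to vanish, thereby localizing $S_{B,\mathcal{P}}$ onto the one-dimensional arcs $e_{ij}$; once that is in hand, the rest is a direct application of the projection formula together with a standard polygonal approximation argument.
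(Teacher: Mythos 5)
The paper disposes of this lemma with a one-line citation to Remark 5.11 of \cite{SvH19}, where the explicit edge-graph representation of $S_{B,\mathcal{P}}$ is derived from the Hilbert-space framework built in that paper. Your argument is a self-contained alternative via polytopal approximation of $B$, and it is correct. The key steps all hold up: atoms over normal cones of $P$ of dimension $d\ge 3$ vanish unconditionally because after applying Lemma~\ref{lem:proj} the complementary $(d-1)$-dimensional mixed volume contains $d-2\ge 1$ point bodies; the $d=1$ atoms vanish in general position since $F(B^{(l)},u_i)$ is a vertex; and the $d=2$ atoms factor as $\tfrac{1}{n-1}\omega_{ij}$ times a projection length which is precisely the edge length of the polygon $\proj_{\Pi_{ij}}B^{(l)}$, whose surface-area measure converges weakly to $\mathcal{H}^1$ on the unit circle of $\Pi_{ij}$. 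This route is more elementary and makes the ``quantum graph'' structure of $S_{B,\mathcal{P}}$ visible from scratch, without passing through the operator-theoretic machinery of \cite{SvH19}, at the cost of a polytopal approximation and some care with genericity. Two small points to tighten: (i) the general-position condition actually needed for $d=1$ is that each $u_i$ lie in the interior of a full-dimensional normal cone of $B^{(l)}$ (your stated condition, that facet normals of $B^{(l)}$ avoid the boundaries of normal cones of $P$, handles a different degeneracy; both are generic and easy to arrange); (ii) when restricting the weak limit $S_{\proj_{\Pi_{ij}}B^{(l)}}\wto \mathcal{H}^1|_{S^1\cap\Pi_{ij}}$ to $\relint e_{ij}$, one should observe that $\{u_i,u_j\}$ has $\mathcal{H}^1$-measure zero, so the open arc is a continuity set for the limit and the restriction commutes with the weak limit.
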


\begin{proof}
This is an immediate consequence of \cite[Remark 5.11]{SvH19}.
\end{proof}

Lemma \ref{lem:sbp} shows that $S_{B,\mathcal{P}}$ is supported on the 
edges $\{e_{ij}\}_{(i,j)\in E_P}$ of the geometric graph defined in the 
previous section. However, not every edge appears in the support: some of 
the weights $\V_{n-2}(\mathcal{P}^{ij})$ may be zero. Thus the collection 
of reference polytopes $\mathcal{P}$ defines a subgraph of the graph 
defined by $P$. Let us define some notation to describe this subgraph. In 
the sequel, we will write
$$
	\omega_{ij} := \V_{n-2}(P_1^{ij},\ldots,P_{n-2}^{ij}).
$$
The \emph{active edges} of the graph defined by $\mathcal{P}$ are
$$
	E := \{ (i,j)\in E_P : \omega_{ij}>0\}.
$$
Similarly, the \emph{active vertices} of the graph defined by 
$\mathcal{P}$ are
$$
	V := \{ i\in[N]: \textstyle{\sum_{j\in[N]}} \omega_{ij}>0\},
$$
that is, $i\in V$ when $i$ is incident to at least one
active edge $(i,j)\in E$. Denote by 
$$
	E^i:=\{j\in V:(i,j)\in E\}
$$
the neighbors of $i\in V$ in the graph defined by $\mathcal{P}$.

We can now characterize the support of $S_{B,\mathcal{P}}$
as announced in Lemma \ref{lem:supp}.

\begin{lem}
\label{lem:suppsbp}
The following are equivalent for any $u\in S^{n-1}$:
\begin{enumerate}[a.]
\itemsep\abovedisplayskip
\item $u\in \supp S_{B,\mathcal{P}}$.
\item $u\in e_{ij}$ for some $(i,j)\in E$.
\item There are segments $I_i\subseteq F(P_i,u)$, $i\in [n-2]$
with linearly independent directions.
\item $\dim(F(P_{i_1},u)+\cdots+F(P_{i_k},u))\ge k$ for all
$k\in[n-2]$, $1\le i_1<\cdots<i_k\le n-2$.
\end{enumerate}
\end{lem}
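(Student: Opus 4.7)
The plan is to establish the chain (a) $\Leftrightarrow$ (b) $\Leftrightarrow$ (c) $\Leftrightarrow$ (d). The equivalence (a) $\Leftrightarrow$ (b) is immediate from Lemma \ref{lem:sbp}: that result presents $S_{B,\mathcal{P}}$ as a finite sum of weighted one-dimensional Hausdorff measures on the closed arcs $e_{ij}$ with weights $\omega_{ij}$, so its support is the (finite, hence closed) union of those $e_{ij}$ with $\omega_{ij}>0$, namely over $(i,j)\in E$. The equivalence (c) $\Leftrightarrow$ (d) is a general matroid-union / transversal statement about convex bodies that already underlies the equivalence (b) $\Leftrightarrow$ (c) of Lemma \ref{lem:dim}, applied here to the family of faces $(F(P_r,u))_{r\in[n-2]}$ (the number of bodies and the ambient dimension being immaterial for this purely linear-algebraic fact). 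The substance of the lemma is therefore the equivalence (b) $\Leftrightarrow$ (c).

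For (b) $\Rightarrow$ (c), suppose $u\in e_{ij}$ with $(i,j)\in E$. Then $\omega_{ij}=\V_{n-2}(P_1^{ij},\ldots,P_{n-2}^{ij})>0$, so Lemma \ref{lem:dim} applied in the $(n-2)$-dimensional subspace $\{u_i,v_{ij}\}^\perp$ where the $P_r^{ij}$ live produces segments $J_r\subseteq P_r^{ij}$ with linearly independent directions; and $P_r^{ij}\subseteq F(P_r,u)$ for every $u\in e_{ij}$, by Lemma \ref{lem:newfaceij} on the relative interior and by $P_r^{ij}\subseteq P_r^i=F(P_r,u_i)$ at the endpoints.

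For (c) $\Rightarrow$ (b) I would exploit the simple background polytope $P$. Condition (d) at $u$ with $k=n-2$ yields $\dim F(P,u)\geq n-2$, while $F(P,u)$ lies in a translate of $u^\perp$ so $\dim F(P,u)\leq n-1$. In the easy case $\dim F(P,u)=n-2$, simplicity of $P$ forces $F(P,u)=P^{ij}$ for some $(i,j)\in E_P$ and $u\in\relint e_{ij}$; Lemma \ref{lem:newfaceij} identifies $F(P_r,u)=P_r^{ij}$, and Lemma \ref{lem:dim} converts the segments from (c) into $\omega_{ij}>0$, whence $(i,j)\in E$ and $u\in e_{ij}$.

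The real obstacle is the remaining case $\dim F(P,u)=n-1$, where $u=u_i$ for some facet index $i$ and we must produce a neighbor $j\in E_P^i$ with $\omega_{ij}>0$. I would work inside $u_i^\perp$: the $(n-1)$-dimensional polytope $P^i$ is simple with facets $P^{ij}$ for $j\in E_P^i$, and condition (c) at $u_i$ forces, via the dimensional criterion for positive mixed volumes, $\V_{n-1}(B',P_1^i,\ldots,P_{n-2}^i)>0$ inside $u_i^\perp$ (with $B':=B\cap u_i^\perp$). Integrating against $h_{B'}\equiv 1$ in \eqref{eq:mixvolarea} shows that the mixed area measure $S_{P_1^i,\ldots,P_{n-2}^i}$ on $S^{n-1}\cap u_i^\perp$ is nonzero, so Lemma \ref{lem:mapoly} yields an atom $v$ with $\V_{n-2}(F(P_1^i,v),\ldots,F(P_{n-2}^i,v))>0$. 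The key observation, which resolves the obstacle, is that such $v$ must automatically be a facet direction of the full polytope $P^i$: the sum $\sum_{r=1}^{n-2}F(P_r^i,v)$ already has dimension $n-2$ inside the $(n-2)$-dimensional subspace $\{u_i,v\}^\perp$, and since $F(P_0^i,v)$ lies in this same subspace, $\dim F(P^i,v)=n-2$, so $F(P^i,v)$ is a facet of $P^i$ and $v=v_{ij}$ for some $j\in E_P^i$. Then $P_r^{ij}=F(P_r^i,v)$ and $\omega_{ij}>0$, so $(i,j)\in E$ and $u=u_i\in e_{ij}$, as required.
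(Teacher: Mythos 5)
Your proof is correct, and the substance of the hard direction differs from the paper's. The paper closes the equivalence by the cycle $b\Rightarrow c\Rightarrow d\Rightarrow a$, and for $d\Rightarrow a$ it gives a very short argument: pick any auxiliary polytope $Q$ with a facet normal to $u$, observe that $d$ and Lemma \ref{lem:dim} force $\V_{n-1}(F(Q,u),F(P_1,u),\ldots,F(P_{n-2},u))>0$, then conclude $u\in\supp S_{Q,\mathcal P}\subseteq\supp S_{B,\mathcal P}$ via Lemma \ref{lem:mapoly} and the max-support Lemma \ref{lem:maxsupp}. You instead prove $c\Rightarrow b$ directly by a case analysis on $\dim F(P,u)\in\{n-2,n-1\}$, using the simplicity of the background polytope to identify the relevant edge $e_{ij}$, and in the facet case you push the dimension count down into $u_i^\perp$ and locate a suitable atom of $S_{P_1^i,\ldots,P_{n-2}^i}$. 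You also treat $c\Leftrightarrow d$ as a self-contained Rado/transversal fact, whereas the paper never proves it independently but obtains it from the cycle. Both routes are valid; the paper's is shorter because Lemma \ref{lem:maxsupp} already encodes the heavy lifting, while yours is more explicit about where in the graph structure the direction $u$ actually sits, which arguably gives better intuition but at greater length.
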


\begin{proof}
That $a\Leftrightarrow b$ is immediate by Lemma \ref{lem:sbp}, while
$c\Rightarrow d$ is trivial.

We now prove $b\Rightarrow c$. Suppose first that $u\in\relint e_{ij}$ for 
$(i,j)\in E$. Then $\V_{n-2}(F(P_1,u),\ldots,F(P_{n-2},u))>0$ by the 
definition of $E$ and Lemma \ref{lem:newfaceij}, which implies $c$ by 
Lemma \ref{lem:dim}. It remains to consider the case where $u=u_i$ for 
some $i\in V$, so that $F(P_r,u)=P_r^i$. But by the definition of $V$, 
there exists $j$ so that $\V_{n-2}(P_1^{ij},\ldots,P_{n-2}^{ij})>0$,
so $c$ follows by Lemma \ref{lem:dim} and $P_r^{ij}\subseteq P_r^i$.

It remains to prove $d\Rightarrow a$. To this end, suppose that $d$
holds, and let $Q$ be any polytope in $\mathbb{R}^n$ that has a facet with 
outer normal direction $u$. Then by Lemma \ref{lem:dim}, we have
$\V_{n-1}(F(Q,u),F(P_1,u),\ldots,F(P_{n-2},u))>0$. Thus $a$ follows
as $u\in\supp S_{Q,\mathcal{P}}\subseteq \supp S_{B,\mathcal{P}}$ by
Lemmas \ref{lem:mapoly} and \ref{lem:maxsupp}.
\end{proof}

We now provide a useful description of the active vertices.

\begin{lem}
\label{lem:vertex}
Let $i\in[N]$. Then the following hold:
\begin{enumerate}[a.]
\itemsep\abovedisplayskip
\item $i\in V$ if and only 
if $\V_{n-1}(P^i,P_1^i,\ldots,P_{n-2}^i)>0$.
\item If $i\not\in V$, then
$\V_{n-1}(Q^i,P_1^i,\ldots,P_{n-2}^i)=0$ for every polytope $Q$.
\end{enumerate}
\end{lem}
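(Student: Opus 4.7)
The plan is to derive, in a single stroke, a facet-sum formula for the $(n-1)$-dimensional mixed volume $\V_{n-1}(K,P_1^i,\ldots,P_{n-2}^i)$ valid for $K\in\{P^i,Q^i\}$, from which both parts follow. After translating each body into $u_i^\perp$ (permitted by translation-invariance, Lemma \ref{lem:mvprop}), the target identity is
$$
(n-1)\,\V_{n-1}(K,P_1^i,\ldots,P_{n-2}^i) \;=\; \sum_{j\in E_P^i} h_K(v_{ij})\,\omega_{ij}. \qquad(\ast)
$$
Once $(\ast)$ is established, part \emph{b} is immediate, since $i\notin V$ means $\omega_{ij}=0$ for every $j\in E_P^i$. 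Part \emph{a} follows by additionally translating so that $0\in\intr P^i$ in $u_i^\perp$: then $h_{P^i}(v_{ij})>0$ for each $j\in E_P^i$ because $v_{ij}$ is the outer unit normal of the nonempty facet $P^{ij}$ of $P^i$, so $(\ast)$ is strictly positive if and only if some $\omega_{ij}>0$, i.e.\ $i\in V$.

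To prove $(\ast)$, I apply the $(n-1)$-dimensional analogue of \eqref{eq:mixvolarea} inside $u_i^\perp$, which expresses the left-hand side as $\int h_K\, dS_{P_1^i,\ldots,P_{n-2}^i}$, with the mixed area measure supported on $u_i^\perp\cap S^{n-1}$. By Lemma \ref{lem:mapoly} this measure is atomic with $S_{P_1^i,\ldots,P_{n-2}^i}(\{v\})=\V_{n-2}(F(P_1^i,v),\ldots,F(P_{n-2}^i,v))$. The heart of the proof is to show that only the directions $v=v_{ij}$ ($j\in E_P^i$) carry mass, with weight $\omega_{ij}$. Suppose $v\in u_i^\perp\cap S^{n-1}$ gives a positive atom. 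By Lemma \ref{lem:dim} the faces $F(P_r^i,v)$ contain segments $I_r$ with linearly independent directions spanning an $(n-2)$-dimensional subspace of $u_i^\perp$. Since $P^i=P_0^i+\cdots+P_{n-2}^i$, Lemma \ref{lem:faces} yields $F(P^i,v)=\sum_r F(P_r^i,v)$, hence $\dim F(P^i,v)\ge n-2$. On the other hand $v\ne 0$, so $F(P^i,v)$ is a proper face of the $(n-1)$-dimensional polytope $P^i$ and has dimension at most $n-2$; therefore $F(P^i,v)$ is a facet of $P^i$. As the facets of $P^i$ are exactly the $P^{ij}$ for $j\in E_P^i$, with outer normals $v_{ij}$, it follows that $v=v_{ij}$.

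It remains to compute the weight at $v_{ij}$. For small $\epsilon>0$ the normalized vector $(u_i+\epsilon v_{ij})/\|u_i+\epsilon v_{ij}\|$ lies in $\relint e_{ij}$, and the standard face-of-a-face identity $F(P_r, u_i+\epsilon v_{ij}) = F(P_r^i, v_{ij})$ combined with Lemma \ref{lem:newfaceij} gives $F(P_r^i,v_{ij})=P_r^{ij}$. Thus $S_{P_1^i,\ldots,P_{n-2}^i}(\{v_{ij}\})=\V_{n-2}(P_1^{ij},\ldots,P_{n-2}^{ij})=\omega_{ij}$, completing $(\ast)$. The main obstacle is precisely the support characterization: one must rule out extraneous atoms at directions $v$ outside $\{v_{ij}\}_{j\in E_P^i}$. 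This is handled by the dimension bound $\dim F(P^i,v)\ge n-2$ together with the properness of $F(P^i,v)$, which forces it to be a facet of $P^i$; no simplicity hypothesis on $P$ is required at this step, only that $P$ (and hence $P^i$) is full-dimensional in its affine hull.
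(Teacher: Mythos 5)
Your proof is correct and follows essentially the same path as the paper: translate into $u_i^\perp$, use Lemma \ref{lem:mapoly} to express $\V_{n-1}(K,\mathcal{P}^i)$ as a finite sum $\frac{1}{n-1}\sum_{j\in E_P^i}h_K(v_{ij})\,\omega_{ij}$ over the facet normals of $P^i$, then read off part $b$ from $\omega_{ij}\equiv 0$ and part $a$ from $0\in\intr P^i$. The only difference is cosmetic: you spell out the dimension argument ($\dim F(P^i,v)\ge n-2$ plus properness) that confines the atoms to $\{v_{ij}\}$, where the paper leaves this implicit, and your appeal to Lemma \ref{lem:newfaceij} to identify $F(P_r^i,v_{ij})$ with $P_r^{ij}$ is superfluous since that is the definition of $P_r^{ij}$.
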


\begin{proof}
We may assume without loss of generality (by translation) that 
$P_1^i,\ldots,P_{n-2}^i$ are convex bodies in $u_i^\perp$ and that $0\in 
\relint P^i$. As the facet 
normals of $P^i$ in $u_i^\perp$ are precisely $\{v_{ij}\}_{j\in E_P}$, it 
follows from Lemma \ref{lem:mapoly} that the mixed area measure 
$S_{P_1^i,\ldots,P_{n-2}^i}$ (computed in $u_i^\perp$) is supported on
$\{v_{ij}\}_{j\in E_P}$, and that
$$
	S_{P_1^i,\ldots,P_{n-2}^i}(\{v_{ij}\}) =
	\V_{n-2}(F(P_1^i,v_{ij}),\ldots,F(P_{n-2}^i,v_{ij})) =
	\omega_{ij}.
$$
Thus \eqref{eq:mixvolarea} implies
$$
	\V_{n-1}(Q^i,P_1^i,\ldots,P_{n-2}^i) =
	\frac{1}{n-1}
	\sum_{j\in E_P^i}
	h_{Q^i}(v_{ij})\,\omega_{ij}
$$
for any polytope $Q$, from which part $b$ follows immediately. To prove 
part $a$, recall that $P^i$ is a facet of $P$ by definition, so our 
assumptions imply that $P^i$ is a 
full-dimensional polytope in $u_i^\perp$ containing the origin in its 
interior. Thus $h_{P^i}(v_{ij})>0$ for all $j\in E_P^i$, and the 
conclusion of part $a$ follows.
\end{proof}

\subsection{The Alexandrov matrix}

The aim of this section is to give a combinatorial description of the 
equality cases of the Alexandrov-Fenchel inequality: we will show that the 
equality condition $S_{f,\mathcal{P}}=0$ can be equivalently formulated in 
terms of separate conditions on the edges and vertices of the graph 
defined by $\mathcal{P}$. Such a characterization appears in the
proof of \cite[Theorem 7.6.21]{Sch14} for the special case that
$P_1,\ldots,P_{n-2}$ are strongly isomorphic, which we do not assume here.

Define a symmetric matrix $\A\in\mathbb{R}^{N\times N}$ by
$$
	\A_{ij} :=
	1_{(i,j)\in E_P}\,\omega_{ij}\csc\theta_{ij}
	-1_{i=j}\sum_{k\in E^i_P}
	\omega_{ik}\cot\theta_{ik}.
$$
We will refer to $\A$ as the \emph{Alexandrov matrix}, as a special case 
of this matrix arises in a much more restrictive setting (of strongly 
isomorphic polytopes) in Alexandrov's original proof of the 
Alexandrov-Fenchel inequality \cite{Ale37}. We now show that 
$S_{f,\mathcal{P}}=0$ is equivalent to two conditions: $f$ is (piecewise)
linear on each edge in $E$, and the values of $f$ on the vertices define 
a vector in the kernel of $\A$.

\begin{prop}
\label{prop:qgraph}
Let $f:S^{n-1}\to\mathbb{R}$ be a difference of support functions.
Then $S_{f,\mathcal{P}}=0$ if and only if the following two conditions 
both hold:
\begin{enumerate}[1.]
\item For every $(i,j)\in E$, there exists
$t_{ij}\in\mathbb{R}^n$ such that
$f(x)=\langle t_{ij},x\rangle$ for $x\in e_{ij}$.
\item The vector $z:=(f(u_i))_{i\in[N]}\in \mathbb{R}^N$
satisfies $z\in\ker\A$.
\end{enumerate}
\end{prop}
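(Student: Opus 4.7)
My plan is to decompose $S_{f,\mathcal{P}}$ along the graph structure of its support. By Lemmas \ref{lem:maxsupp} and \ref{lem:suppsbp}, $\supp S_{f,\mathcal{P}}\subseteq \bigcup_{(i,j)\in E} e_{ij}$, and this $1$-dimensional graph naturally splits into the relative interiors of the edges together with the active vertices $\{u_i:i\in V\}$. Since by Lemma \ref{lem:suppeq} the measure $S_{f,\mathcal{P}}$ depends only on $f|_{\supp S_{B,\mathcal{P}}}$, it suffices to compute the restriction of $S_{f,\mathcal{P}}$ to each piece, express both in terms of the values of $f$ on the graph, and demand that each vanish. The two conditions in the proposition will emerge as the corresponding constraints.

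For the edge pieces, fix $(i,j)\in E$. By Lemma \ref{lem:newfaceij}, $F(P_r,u)=P_r^{ij}$ is constant as $u$ varies over $\relint e_{ij}$, so the transverse behavior of the mixed area measure is entirely captured by $\omega_{ij}=\V_{n-2}(\mathcal{P}^{ij})$. Combining polytope approximations (via Lemma \ref{lem:mapoly}) with the projection formula (Lemma \ref{lem:proj}) applied to the orthogonal splitting $\mathbb{R}^n=\sspan\{u_i,u_j\}\oplus\{u_i,u_j\}^\perp$, I expect to obtain the distributional identity
\[
  S_{f,\mathcal{P}}|_{\relint e_{ij}} \;=\; \frac{\omega_{ij}}{n-1}\,\bigl(\psi_{ij}''(\theta)+\psi_{ij}(\theta)\bigr)\,d\theta,
\]
where $u(\theta):=u_i\cos\theta+v_{ij}\sin\theta$ parametrizes $e_{ij}$ by arc length and $\psi_{ij}(\theta):=f(u(\theta))$. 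The ODE $\psi''+\psi=0$ on $(0,\theta_{ij})$ holds iff $\psi(\theta)=a\cos\theta+b\sin\theta$, equivalently iff $f$ restricted to $e_{ij}$ agrees with some linear function $\langle t_{ij},\cdot\rangle$ on $\mathbb{R}^n$. Thus vanishing of the edge part of $S_{f,\mathcal{P}}$ is precisely Condition~1.

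For the vertex atoms, assume Condition 1. Applying Lemma \ref{lem:vertex} to $f=h_K-h_L$, together with the identity $h_{F(K,u_i)}(v_{ij})=\nabla_{v_{ij}} h_K(u_i)$ from Lemma \ref{lem:faces}, yields
\[
  S_{f,\mathcal{P}}(\{u_i\}) \;=\; \frac{1}{n-1}\sum_{j\in E_P^i}\omega_{ij}\,\nabla_{v_{ij}} f(u_i).
\]
Under Condition 1, $f$ agrees with the linear function $\langle t_{ij},\cdot\rangle$ along direction $v_{ij}$ near $u_i$, so a short computation using $1$-homogeneity gives $\nabla_{v_{ij}} f(u_i)=\psi_{ij}'(0)=f(u_j)\csc\theta_{ij}-f(u_i)\cot\theta_{ij}$. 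Substituting,
\[
  S_{f,\mathcal{P}}(\{u_i\}) \;=\; \frac{1}{n-1}(\A z)_i,
\]
which vanishes for every $i\in[N]$ iff $z\in\ker\A$. (For $i\notin V$ all $\omega_{ij}$ with $j\in E_P^i$ vanish, so the $i$-th row of $\A$ is identically zero, consistent with the atom being automatically zero by Lemma \ref{lem:vertex}$b$.) This is Condition~2. The converse direction ($S_{f,\mathcal{P}}=0$ under Conditions 1 and 2) follows by reading the formulas in the reverse order.

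The hardest step is the rigorous derivation of the edge density formula, because the second derivative $\psi_{ij}''$ only makes sense distributionally for arbitrary differences of support functions $f$. I would first verify the identity for $f=h_Q$ with $Q$ a polytope, where the atoms of $S_{f,\mathcal{P}}$ on $\relint e_{ij}$ can be computed directly from Lemma \ref{lem:mapoly} and then summed into the distributional density using the decomposition of Lemma \ref{lem:proj} in the $2$-dimensional plane $\sspan\{u_i,u_j\}$. The general case then follows by smooth approximation via Lemma \ref{lem:c2} and weak continuity of mixed area measures (Lemma \ref{lem:cont}). A further bookkeeping point is that the ``bulk'' edge contribution and the ``boundary'' vertex contribution must be kept cleanly separated, so that Conditions 1 and 2 appear as independent statements rather than a single coupled identity.
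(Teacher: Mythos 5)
Your decomposition into edge densities and vertex atoms is precisely the geometric alternative the paper alludes to in the remark following Proposition~\ref{prop:qgraph}: conditions~1 and~2 (piecewise linearity on active edges, vertex balance) appear in \cite[(7.177)--(7.178)]{Sch14}, and the final conversion to $z\in\ker\A$ is identical to the paper's. The paper instead imports the edge/vertex decomposition of $S_{f,\mathcal{P}}$ as a black box from \cite[Theorems 5.1 and 5.7, Remark 5.11]{SvH19}, which encodes it as a quantum-graph Laplacian acting on $f$. Your route buys self-containedness at the price of re-deriving that structure. Your edge density $\frac{\omega_{ij}}{n-1}(\psi_{ij}''+\psi_{ij})\,d\theta$ and vertex atom $\frac{1}{n-1}\sum_j\omega_{ij}\nabla_{v_{ij}}f(u_i)$ are both correct and mutually consistent: integrating the bilinear form $\int g\,dS_{f,\mathcal{P}}$ against the edge density by parts recovers exactly those atoms as boundary terms.

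Two genuine gaps remain. First, the approximation plan for the edge density does not work as stated: restriction of a measure to the open arc $\relint e_{ij}$ is not weakly continuous, so verifying the identity for polytopal $K,L$ and passing to the limit via Lemma~\ref{lem:cont} cannot by itself yield the general case---you would have to pair against $C^2$ test functions supported in $\relint e_{ij}$ and control mass escaping to the endpoints. Nor does the local factorization of $S_{f,\mathcal{P}}$ near an open edge follow directly from the mixed-volume projection formula (Lemma~\ref{lem:proj}) or from the propeller theorem (Theorem~\ref{thm:propeller}), since the bodies $P_r$ do not themselves lie in $\sspan\{u_i,u_j\}$; only their faces $P_r^{ij}$ do, so a dedicated local argument (or the import from \cite{SvH19}) is needed. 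This is the content of the ``hardest step'' you flag, and it is genuinely hard. Second, your vertex atom computation rests on Lemma~\ref{lem:mapoly} (via the calculation inside Lemma~\ref{lem:vertex}), which only treats polytopes; for general $K,L$ the identity $S_{f,\mathcal{P}}(\{u_i\})=\V_{n-1}(h_{K^i}-h_{L^i},\mathcal{P}^i)$ is not established in the paper. Since at that point you already assume Condition~1, the fix is to first replace $f$ by $h_Q-ah_P$ with $Q$ strongly isomorphic to $P$ agreeing with $f$ on $\supp S_{B,\mathcal{P}}$ (using Lemmas~\ref{lem:generate} and~\ref{lem:strlin}, which are independent of Proposition~\ref{prop:qgraph}), invoke Lemma~\ref{lem:suppeq} so that $S_{f,\mathcal{P}}=S_{h_Q-ah_P,\mathcal{P}}$, and only then apply Lemma~\ref{lem:mapoly}. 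With that substitution, the rest of your calculation, including $\nabla_{v_{ij}}f(u_i)=(D_iz)_j$ and the identification with $(\A z)_i$, is exactly the paper's.
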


\begin{proof}
That $S_{f,\mathcal{P}}=0$ may be equivalently stated as
$\V_n(g,f,\mathcal{P})=0$ for every difference of support functions $g$.
By \cite[Theorem 5.1]{SvH19}, this is equivalent to the statement that $f$ 
lies in the kernel of the self-adjoint operator defined in
\cite[Theorem 5.7 and Remark 5.11]{SvH19}, which is characterized by the 
following two conditions:
\begin{enumerate}[1.]
\itemsep\abovedisplayskip
\item $f$ is (piecewise) linear on each edge $e_{ij}$ for $(i,j)\in E$.
\item $f$ satisfies
$$
	\sum_{j\in E^i_P} \omega_{ij} \nabla_{v_{ij}}f(u_i)=0
	\quad\mbox{for every }i\in V.
$$
\end{enumerate}
It remains to show that the second condition is equivalent to
$z\in\ker\A$. To this end, let us parametrize the edge
$e_{ij}$ as
$$
	e_{ij}=\{x(\theta):0\le\theta\le\theta_{ij}\},\qquad
	x(\theta):=u_i\cos\theta+v_{ij}\sin\theta.
$$
By the first condition we can write $f(x)=\langle t,x\rangle$ on $e_{ij}$
for some vector $t$, so 
\begin{align*}
	f(x(\theta)) &=
	\langle t,u_i\rangle \cos\theta + 
	\langle t,v_{ij}\rangle \sin\theta
	\\ &=
	f(u_i) \cos\theta + 
	\frac{f(u_j) - f(u_i) \cos\theta_{ij}}{\sin\theta_{ij}}
	\,\sin\theta.
\end{align*}
Consequently
$$
	\nabla_{v_{ij}}f(u_i) =
	\frac{d}{d\theta}f(x(\theta))\bigg|_{\theta=0} =
	\frac{f(u_j) - f(u_i) \cos\theta_{ij}}{\sin\theta_{ij}}.
$$
Thus the second condition may be expressed equivalently as
$$
	0=\sum_{j\in E^i_P} \omega_{ij} \frac{f(u_j) - f(u_i) 
	\cos\theta_{ij}}{\sin\theta_{ij}} = (\A z)_i
	\quad\mbox{for all }i\in V.
$$
But $(\A z)_i=0$ always holds for $i\not\in 
V$ by the definition of $V$, so we have shown that the second condition 
above is equivalent to $z\in\ker\A$. 
\end{proof}

\begin{rem}
Instead of using the analytic theory of \cite{SvH19} as we have done here, 
one can give a more geometric proof by adapting the first part of the 
proof of \cite[Theorem 7.6.21]{Sch14} to the present setting. Conditions 
$1$ and $2$ in the proof of Proposition \ref{prop:qgraph} appear in 
\cite{Sch14} as (7.177) and (7.178), respectively.
\end{rem}

Let us emphasize that the $i$th row and column of $\A$ are zero for every 
$i\not\in V$. Thus the values $f(u_i)$ for $i\not\in V$ never actually 
appear in Proposition \ref{prop:qgraph}. This simply reflects the fact 
that $\{u_i\}_{i\not\in V}$ lie outside the support of 
$S_{B,\mathcal{P}}$, so these points play no role in the equality 
condition. Recall, however, that our ultimate aim is to prove the local 
Alexandrov-Fenchel inequality of Theorem \ref{thm:localaf}, in which 
points outside the support of $S_{B,\mathcal{P}}$ play a crucial role. We 
therefore resist the temptation to simply remove the zero rows and columns 
from the definition of $\A$ at this stage.

\section{Finite-dimensional reduction}
\label{sec:augment}

The previous section introduced a combinatorial formulation of the 
equality condition $S_{f,\mathcal{P}}=0$. In particular, Proposition 
\ref{prop:qgraph} shows that an extremal function $f$ is fully specified 
by its values $f(u_i)$ on the vertices $u_i$ of the graph defined by 
$\mathcal{P}$: its values on the rest of the support of 
$S_{B,\mathcal{P}}$ are then uniquely determined by linearity. In order to 
prove the local Alexandrov-Fenchel inequality, however, we will also need 
to reason about the measure $S_{f,f,\mathcal{P}_{\backslash r}}$, and 
there is no reason to expect that only the directions $\{u_i\}_{i\in[N]}$ 
will appear in its description.

The aim of this section is to introduce a basic geometric construction 
that will enable us to surmount this issue. This construction will 
simultaneously serve two purposes: it will enable us to reduce the local 
Alexandrov-Fenchel inequality to a finite-dimensional problem, and it will 
furnish the objects that appear in Proposition \ref{prop:qgraph} with a 
geometric interpretation that will be key to their analysis.

In this section, all assumptions and definitions of 
section \ref{sec:matrix} will be in force.

\subsection{Strongly isomorphic polytopes}

We begin by recalling the definition.

\begin{defn}
Polytopes $Q,Q'$ are said to be \emph{strongly isomorphic} if
$$
	\dim F(Q,u) = \dim F(Q',u)\quad\mbox{for all }u\in S^{n-1}.
$$
\end{defn}

The key feature of strongly isomorphic polytopes $Q,Q'$ is that they have 
identical facial structures: there is a bijection between the faces of $Q$ 
and $Q'$ such that each pair of identified faces has the same normal cone 
\cite[\S 2.4]{Sch14}. Consequently, no new faces are created when one 
takes Minkowski sums of strongly isomorphic polytopes. Let us record this 
basic fact for future reference \cite[Corollary 2.4.12]{Sch14}.

\begin{lem}
\label{lem:strongiso}
Let $Q,Q'$ be polytopes. Then all the polytopes $\lambda Q+\lambda' Q'$ 
with $\lambda,\lambda'>0$ are strongly isomorphic. If $Q,Q'$ are themselves 
strongly isomorphic, then all the polytopes $\lambda Q+\lambda' Q'$ with 
$\lambda,\lambda'\ge 0$ are strongly isomorphic. 
\end{lem}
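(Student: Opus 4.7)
The plan is to reduce both statements to the face identity
$$F(\lambda Q + \lambda' Q', u) = \lambda F(Q,u) + \lambda' F(Q',u)$$
provided by Lemma \ref{lem:faces}, and then analyze how the dimension of the right-hand side depends on $(\lambda,\lambda')$ for each fixed $u \in S^{n-1}$.

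For the first statement, I would use that the direction subspace of $\aff(A+B)$ for convex sets $A,B$ depends only on the direction subspaces of $\aff A$ and $\aff B$, and is therefore invariant under rescaling either summand by a positive constant. Applied to $A = F(Q,u)$ and $B = F(Q',u)$, this gives for all $\lambda, \lambda' > 0$
$$\dim F(\lambda Q + \lambda' Q', u) = \dim\bigl(\lambda F(Q,u) + \lambda' F(Q',u)\bigr) = \dim\bigl(F(Q,u) + F(Q',u)\bigr),$$
a quantity independent of $\lambda$ and $\lambda'$. Hence any two polytopes of the form $\lambda Q + \lambda' Q'$ with $\lambda, \lambda' > 0$ have the same face dimension in every direction $u$, and are therefore strongly isomorphic.

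For the second statement, in view of the first part it suffices to accommodate the boundary cases $\lambda = 0$ or $\lambda' = 0$, which amounts to showing that when $Q,Q'$ are themselves strongly isomorphic,
$$\dim\bigl(F(Q,u) + F(Q',u)\bigr) = \dim F(Q,u) = \dim F(Q',u) \qquad \text{for every } u \in S^{n-1}.$$
This in turn follows once I know that the affine hulls of the corresponding faces $F(Q,u)$ and $F(Q',u)$ have the same direction subspace, since then their Minkowski sum has the same direction subspace as either one. The direction subspace of $\aff F(Q,u)$ equals the orthogonal complement of the linear span of the normal cone of $Q$ at that face (and similarly for $Q'$), so it is enough to show that the normal fans of $Q$ and $Q'$ coincide.

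The main obstacle is precisely this last step: the passage from the weak-looking hypothesis ``$\dim F(Q,u) = \dim F(Q',u)$ for all $u$'' to equality of normal fans. The idea is a descending induction on face dimension: the set $\{u \in S^{n-1} : \dim F(Q,u) \le k\}$ is closed and is partitioned by the relative interiors of those normal cones of $Q$ whose associated faces have dimension at most $k$, and one recovers the full normal fan by peeling off the strata of each dimension in turn. Rather than reconstruct this combinatorial argument in detail, I would invoke the standard treatment in \cite[\S 2.4]{Sch14}, from which Lemma \ref{lem:strongiso} is an immediate consequence of the face formula above.
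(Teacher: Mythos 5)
The paper does not give its own proof of this lemma; it is stated as a known fact and cited to \cite[Corollary 2.4.12]{Sch14}. Your proposal is a correct expansion of that citation: the reduction via $F(\lambda Q+\lambda'Q',u)=\lambda F(Q,u)+\lambda'F(Q',u)$ is clean, and the first statement indeed follows immediately because positive scaling of summands does not change the direction subspace of the affine hull of a Minkowski sum. For the second statement you correctly identify the crux — passing from ``same face dimensions for every $u$'' (the paper's definition of strong isomorphism) to ``same normal fans,'' which is what makes $\dim(F(Q,u)+F(Q',u))=\dim F(Q,u)$ hold — and defer it to \cite[\S 2.4]{Sch14}. Since that is precisely the ingredient behind Corollary 2.4.12, your argument does not replace the cited dependency so much as make it explicit, which is consistent with how the paper treats the lemma; just note that the case $\lambda=\lambda'=0$ is implicitly excluded (a point is not strongly isomorphic to $Q$ unless $Q$ is a point).
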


The following simple observation will play an important role in the 
sequel.

\begin{lem}
\label{lem:strlin}
Let $Q$ be a polytope that is strongly isomorphic to $P$. Then 
for every $(i,j)\in E_P$, 
there exists $t_{ij}\in\mathbb{R}^n$ such that
$h_Q(x)=\langle t_{ij},x\rangle$ for $x\in e_{ij}$.
\end{lem}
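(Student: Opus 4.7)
The plan is to exploit strong isomorphism to show that $F(Q,u)$ is constant as $u$ ranges over $\relint e_{ij}$, equal to some fixed face $Q^{ij}$ of $Q$; the required vector $t_{ij}$ can then simply be taken to be any point of $Q^{ij}$.

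By construction (see the paragraph introducing $e_{ij}$ in section \ref{sec:background}), $e_{ij}$ is the set of outer unit normal vectors to the $(n-2)$-face $P^i\cap P^j$ of $P$, so that $F(P,u) = P^i\cap P^j$ for every $u \in \relint e_{ij}$. The strong isomorphism hypothesis gives $\dim F(Q,u) = \dim F(P,u)$ for all $u \in S^{n-1}$. As recalled in the paragraph preceding Lemma \ref{lem:strongiso}, this implies that the faces of $P$ and $Q$ are in bijective correspondence with identical normal cones; consequently there exists a single face $Q^{ij}$ of $Q$ such that $F(Q,u) = Q^{ij}$ for every $u \in \relint e_{ij}$.

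Now fix any $t_{ij} \in Q^{ij}$. The defining property \eqref{eq:facedef} of a face then yields $h_Q(u) = \langle u, t_{ij}\rangle$ for every $u \in \relint e_{ij}$, and since both sides are continuous on $S^{n-1}$ while $\relint e_{ij}$ is dense in the geodesic arc $e_{ij}$, this identity extends to all $x \in e_{ij}$. The only non-routine ingredient is the passage from equality of face dimensions in every direction to equality of normal fans, but this is a standard fact about strongly isomorphic polytopes and is precisely the content of the remark cited above, so the proof is essentially immediate from the setup.
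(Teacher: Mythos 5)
Your proposal is correct and follows essentially the same argument as the paper: identify $F(Q,u)$ as a constant face $Q^{ij}$ for $u\in\relint e_{ij}$ via the strong-isomorphism correspondence of normal cones, pick $t_{ij}\in Q^{ij}$, invoke \eqref{eq:facedef}, and extend to the endpoints of $e_{ij}$ by continuity. The paper's proof is just slightly more terse in stating the normal-cone correspondence.
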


\begin{proof}
Let $(i,j)\in E_P$. As $Q$ and $P$ are strongly isomorphic, 
$e_{ij}$ is the set of unit normal vectors to the face $Q^{ij}$ of $Q$.
Thus $Q^{ij}=F(Q,u)$ for any $u\in \relint 
e_{ij}$. If we therefore fix any $t_{ij}\in Q^{ij}$, then $h_Q(u)=\langle 
t_{ij},u\rangle$ for all $u\in \relint e_{ij}$ by \eqref{eq:facedef}, and 
the conclusion extends to the endpoints of $e_{ij}$ by continuity.
\end{proof}

The significance of Lemma \ref{lem:strlin} is immediately evident from 
Proposition \ref{prop:qgraph}: when $Q$ is strongly isomorphic to $P$, the 
function $f=h_Q-h_P$ automatically satisfies the piecewise linearity 
condition that characterizes the extremals of the Alexandrov-Fenchel 
inequality on the edges of the graph defined by $\mathcal{P}$ (note that 
as $P$ is strongly isomorphic to itself, Lemma \ref{lem:strlin} also 
applies to $Q=P$). We will shortly prove a strong converse to this 
statement: any extremal function $f$ of the Alexandrov-Fenchel inequality 
may be represented in such a form.

\subsection{Support vectors}
\label{sec:suppvec}

As is already anticipated by Proposition \ref{prop:qgraph}, we will 
frequently work with the restriction of support functions of convex bodies 
to the finite collection of directions $\{u_i\}_{i\in[N]}$. It will 
be convenient to introduce the following notation: for any 
convex body $C$ in $\mathbb{R}^n$, we define its \emph{support vector}
$\h_C\in\mathbb{R}^N$ by
$$
	(\h_C)_i := h_C(u_i),\qquad i\in[N].
$$
The following result shows that any vector $z\in\mathbb{R}^N$ can be 
expressed in terms of the support vector of a polytope that is strongly 
isomorphic to $P$. It is here that we make crucial use of the fact that
$P$ was chosen to be a simple polytope.

\begin{lem}
\label{lem:generate}
For any vector $z\in\mathbb{R}^N$, there exists a polytope $Q$ that 
is strongly isomorphic to $P$ and a scalar $a\in\mathbb{R}$ such that 
$z = \h_Q-a\h_P$.
\end{lem}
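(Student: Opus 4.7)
The plan is to exploit the fact, unused up to this point, that the background polytope $P$ was chosen in Lemma \ref{lem:simple} to be simple. For a simple polytope the combinatorial structure is stable under small perturbations of the facet distances. More precisely, writing
\[
Q(y) := \bigcap_{i=1}^N \{x \in \mathbb{R}^n : \langle u_i, x\rangle \le y_i\}
\]
for $y\in\mathbb{R}^N$, the standard stability theory of simple polytopes (see e.g.\ \cite[\S 2.4]{Sch14}) yields an open neighborhood $U$ of $\h_P$ in $\mathbb{R}^N$ such that for every $y\in U$ the polytope $Q(y)$ is strongly isomorphic to $P$ and satisfies $h_{Q(y)}(u_i)=y_i$ for all $i\in[N]$. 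The geometric reason is that each vertex of $P$ lies at the transverse intersection of exactly $n$ facet hyperplanes, so the vertices of $Q(y)$ depend continuously on $y$ near $\h_P$ and no combinatorial change occurs for small perturbations.

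Granted this input, the lemma follows by a one-line rescaling argument. Given $z\in\mathbb{R}^N$, choose $a>0$ large enough that $\h_P + a^{-1}z \in U$, which is possible because $U$ is an open neighborhood of $\h_P$. Set $Q_0 := Q(\h_P + a^{-1}z)$, which is strongly isomorphic to $P$ with $\h_{Q_0}=\h_P+a^{-1}z$. Now let $Q := aQ_0$: positive scaling preserves the dimension of each face, so $Q$ is still strongly isomorphic to $P$, and support functions scale linearly, giving $\h_Q = a\h_P + z$. Rearranging yields $z=\h_Q - a\h_P$ as required.

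Thus the only non-routine ingredient is the openness of $U$, i.e.\ the existence of a full-dimensional family of polytopes strongly isomorphic to $P$ obtained by moving the facet hyperplanes independently. I do not expect any genuine obstacle; the role of this lemma is simply to promote the combinatorial data $z\in\mathbb{R}^N$ (which is what appears in Proposition \ref{prop:qgraph} through the Alexandrov matrix $\A$) back into honest geometric data, namely a difference of support functions of strongly isomorphic polytopes. This is precisely why the background polytope in section \ref{sec:background} was required to be simple rather than merely full-dimensional, and it is the mechanism by which the finite-dimensional reduction announced at the start of section \ref{sec:augment} will proceed.
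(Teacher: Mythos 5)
Your proof is correct and is essentially the same as the paper's: the paper also invokes the stability of simple polytopes under small perturbations of the facet distances (via \cite[Lemma 2.4.13]{Sch14}), forms the perturbed polytope $Q_y$ with support vector $\h_P+y$, and then rescales by a suitably large $a$ to land in the admissible neighborhood. The only cosmetic difference is that you parametrize the defining inequalities directly by $y_i$ rather than by $h_P(u_i)+y_i$.
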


\begin{proof}
For any $y\in\mathbb{R}^N$, define 
$$
	Q_y := \bigcap_{i\in[N]} \{x\in\mathbb{R}^n:
	\langle u_i,x\rangle \le h_P(u_i)+y_i\}.
$$
As $P$ is a simple polytope, it follows from
\cite[Lemma 2.4.13]{Sch14} that there exists $\varepsilon>0$ such that 
$Q_y$ is strongly isomorphic to $P$ whenever $\|y\|_\infty\le\varepsilon$.
In particular, we then have $\h_{Q_y}=\h_P+y$ as $Q_y$ and $P$ have 
the same facet normals.
The conclusion follows by choosing
$Q:=a Q_{z/a}$ with $a:=\varepsilon^{-1}(1+\|z\|_\infty)$. 
\end{proof}

We can now explain a key implication of the above construction: it 
enables us to modify any equality case of the Alexandrov-Fenchel 
inequality outside the support of $S_{B,\mathcal{P}}$ in such a way that 
the relevant mixed area measures are supported in the finite set 
$\{u_i\}_{i\in [N]}$. It is by virtue of this procedure that we will be 
able to reduce Theorem \ref{thm:localaf} to a finite-dimensional problem.

\begin{cor}
\label{cor:augfindim}
Let $f$ be a difference of support functions so that
$S_{f,\mathcal{P}}=0$. Then there is a polytope $Q$ that is strongly 
isomorphic to $P$ and $a\in\mathbb{R}$ so that
$g=h_Q-ah_P$ satisfies
$g=f$ $S_{B,\mathcal{P}}$-a.e., $S_{g,\mathcal{P}}=0$, and
$S_{g,g,\mathcal{P}_{\backslash r}}$ is 
supported on $\{u_i\}_{i\in[N]}$ for all $r$.
\end{cor}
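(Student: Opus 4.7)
The plan is to combine the combinatorial description of extremals in Proposition \ref{prop:qgraph} with the realization of arbitrary vectors in $\mathbb{R}^N$ as support vectors (Lemma \ref{lem:generate}). Starting from $f$ with $S_{f,\mathcal{P}}=0$, Proposition \ref{prop:qgraph} tells us that $z := (f(u_i))_{i\in[N]}$ lies in $\ker\A$ and that $f$ is linear on each edge $e_{ij}$ with $(i,j)\in E$. Applying Lemma \ref{lem:generate} to $z$ produces a polytope $Q$ strongly isomorphic to $P$ and a scalar $a\in\mathbb{R}$ with $\h_Q - a\h_P = z$. I then set $g := h_Q - ah_P$, so that $g(u_i) = z_i = f(u_i)$ for every $i\in[N]$ by construction.

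Two of the three required conclusions follow almost immediately. For $S_{g,\mathcal{P}}=0$: by Lemma \ref{lem:strlin} applied to both $Q$ and $P$ (each strongly isomorphic to $P$), the function $g$ is linear on every $e_{ij}$ with $(i,j)\in E_P$, and a fortiori on every edge in $E$. Combined with $(g(u_i))_{i\in[N]} = z \in \ker\A$, the converse direction of Proposition \ref{prop:qgraph} yields $S_{g,\mathcal{P}}=0$. For the agreement on $\supp S_{B,\mathcal{P}}$: both $f$ and $g$ are linear on each arc $e_{ij}$ with $(i,j)\in E$ and agree at the two endpoints $u_i$ and $u_j$. Under the parameterization $\theta\mapsto u_i\cos\theta + v_{ij}\sin\theta$ used in the proof of Proposition \ref{prop:qgraph}, any function of the form $\langle t,\cdot\rangle$ restricted to $e_{ij}$ is determined by its values at the two endpoints, so $f = g$ pointwise on each such arc. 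Since $\supp S_{B,\mathcal{P}} = \bigcup_{(i,j)\in E,\, i<j} e_{ij}$ by Lemma \ref{lem:suppsbp}, this gives $f = g$ on $\supp S_{B,\mathcal{P}}$.

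The main content is the support claim for $S_{g,g,\mathcal{P}_{\backslash r}}$. Expanding $g = h_Q - ah_P$ by the bilinearity of mixed area measures, it suffices to show that each of $S_{Q,Q,\mathcal{P}_{\backslash r}}$, $S_{Q,P,\mathcal{P}_{\backslash r}}$, and $S_{P,P,\mathcal{P}_{\backslash r}}$ is supported on $\{u_i\}_{i\in[N]}$. By Lemma \ref{lem:mapoly} and the discussion immediately following it, each such measure is supported on the facet normals of the associated Minkowski sum. The key structural point is that since $Q$ is strongly isomorphic to $P = P_0 + P_1 + \cdots + P_{n-2}$, it shares the same normal fan as $P$; since the normal fan of a Minkowski sum is the common refinement of the fans of the summands and is invariant under positive scaling of summands, every Minkowski sum formed from positive multiples of $Q$, $P$, and any subcollection of $\{P_1,\ldots,P_{n-2}\}$ has the same normal fan as $P$. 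In particular the three sums of interest all have facet normals equal to $\{u_i\}_{i\in[N]}$. I expect no substantive obstacle in this argument; the conceptually important point is that the construction precisely exploits the freedom to modify $f$ outside $\supp S_{B,\mathcal{P}}$ (by passing to $g = h_Q - ah_P$) in order to enforce the support property, and this freedom is bought by $P$ being simple, which is exactly what makes Lemma \ref{lem:generate} available.
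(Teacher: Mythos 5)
Your proof is correct and follows essentially the same route as the paper's: take $Q,a$ from Lemma \ref{lem:generate} realizing the support vector $z$, verify agreement with $f$ on $\supp S_{B,\mathcal{P}}$ via the piecewise linearity from Proposition \ref{prop:qgraph} and Lemma \ref{lem:strlin}, and conclude the support containment from Lemma \ref{lem:strongiso} and Lemma \ref{lem:mapoly}. The only (inessential) deviations are that the paper fixes $z_i=f(u_i)$ only for $i\in V$, leaving $z_i$ for $i\notin V$ free — this freedom is irrelevant for the corollary itself but is the point of the construction when it is reused in Theorem \ref{thm:localaf} — and that it deduces $S_{g,\mathcal{P}}=0$ from $f=g$ $S_{B,\mathcal{P}}$-a.e.\ via Lemma \ref{lem:suppeq} rather than rerunning Proposition \ref{prop:qgraph} in the converse direction.
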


\begin{proof}
Choose any $z\in\mathbb{R}^N$ such that $z_i=f(u_i)$ for 
$i\in V$. Applying Lemma~\ref{lem:generate}, we find a polytope $Q$ that
is strongly isomorphic to $P$ and $a\in\mathbb{R}$ so that $g=h_Q-ah_P$ 
satisfies $g(u_i)=f(u_i)$ for all $i\in V$. Moreover, $f$ is linear on 
$e_{ij}$ for every $(i,j)\in E$ by Proposition \ref{prop:qgraph}, while $g$ 
satisfies the same property by Lemma \ref{lem:strlin}. Thus $f=g$ 
$S_{B,\mathcal{P}}$-a.e.\ by Lemma \ref{lem:suppsbp}. That 
$S_{g,\mathcal{P}}=S_{f,\mathcal{P}}=0$ now follows by 
Lemma~\ref{lem:suppeq}. Finally, as the facet normals of $Q+P$ are 
$\{u_i\}_{i\in[N]}$ by Lemma \ref{lem:strongiso}, we can conclude that 
$S_{g,g,\mathcal{P}_{\backslash r}}$ is supported in this set for any $r$ 
by Lemma \ref{lem:mapoly}. 
\end{proof}

It should be emphasized that Corollary \ref{cor:augfindim} does not in 
itself capture any aspect of the phenomenon described by Theorem 
\ref{thm:localaf}: it merely reduces the problem to a finite universe 
$\{u_i\}_{i\in[N]}$ of normal directions, but does not otherwise guarantee 
any properties of the measure $S_{g,g,\mathcal{P}_{\backslash r}}$. On the 
other hand, we have considerable freedom in the construction of $g$ in 
Corollary \ref{cor:augfindim}: we have only specified $g(u_i)$ for $i\in 
V$ in the proof, and we are therefore free to choose arbitrary values of 
$g(u_i)$ for $i\not\in V$. What we must show in the proof of the local 
Alexandrov-Fenchel inequality is that there exists a choice of the latter 
values which ensures that $S_{g,g,\mathcal{P}_{\backslash r}}\le 0$.

\subsection{The Alexandrov matrix revisited}

We now show that strongly isomorphic polytopes enable us to furnish 
the Alexandrov matrix of Proposition~\ref{prop:qgraph} with a 
geometric interpretation. To this end, it will be useful to introduce the 
following notation. For any $i\in[N]$, define a linear map 
$D_i:\mathbb{R}^N\to\mathbb{R}^{E^i_P}$ by 
$$
	(D_iz)_j := z_j \csc\theta_{ij} - z_i \cot\theta_{ij},\qquad
	j\in E_P^i,~i\in[N],~z\in\mathbb{R}^N.
$$
The significance of this definition is the following.

\begin{lem}
\label{lem:facevec}
Let $Q$ be a polytope that is strongly
isomorphic to $P$. Then
$$
	(D_i\h_Q)_j = h_{Q^i}(v_{ij})\qquad
	\mbox{for all }(i,j)\in E_P.
$$
\end{lem}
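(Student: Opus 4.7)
The plan is to reduce the identity directly to the directional derivative formula for support functions of faces (Lemma \ref{lem:faces}) combined with the piecewise-linearity of $h_Q$ along the geodesic $e_{ij}$, which is guaranteed by the strong isomorphism with $P$ (Lemma \ref{lem:strlin}).

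First, I would invoke Lemma \ref{lem:faces} to rewrite the right-hand side as a directional derivative:
\[
  h_{Q^i}(v_{ij}) \;=\; h_{F(Q,u_i)}(v_{ij}) \;=\; \nabla_{v_{ij}} h_Q(u_i).
\]
So the task is to show that this directional derivative equals $h_Q(u_j)\csc\theta_{ij} - h_Q(u_i)\cot\theta_{ij}$.

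Next, I would parametrize the geodesic $e_{ij}$ exactly as in Proposition \ref{prop:qgraph}: write $x(\theta) = u_i\cos\theta + v_{ij}\sin\theta$ for $0\le\theta\le\theta_{ij}$, so that $x(0)=u_i$ and $x(\theta_{ij})=u_j$. Since $Q$ is strongly isomorphic to $P$ and $(i,j)\in E_P$, Lemma \ref{lem:strlin} supplies a vector $t_{ij}$ with $h_Q(x)=\langle t_{ij},x\rangle$ for all $x\in e_{ij}$. Consequently
\[
  h_Q(x(\theta)) \;=\; h_Q(u_i)\cos\theta + \langle t_{ij},v_{ij}\rangle\sin\theta,
\]
and evaluating at $\theta=\theta_{ij}$ and solving yields
\[
  \langle t_{ij},v_{ij}\rangle \;=\; \frac{h_Q(u_j)-h_Q(u_i)\cos\theta_{ij}}{\sin\theta_{ij}} \;=\; h_Q(u_j)\csc\theta_{ij} - h_Q(u_i)\cot\theta_{ij}.
\]

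Finally, differentiating the parametric expression at $\theta=0$ gives $\nabla_{v_{ij}}h_Q(u_i)=\langle t_{ij},v_{ij}\rangle$, which matches $(D_i \h_Q)_j$ by the definition of $D_i$. Combining with the first step completes the proof.

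There is essentially no obstacle here: all ingredients have been prepared in the preceding lemmas, and the argument is a direct calculation that already appeared inside the proof of Proposition \ref{prop:qgraph}. The only minor care is noting that we are free to use Lemma \ref{lem:strlin} because $Q$ is strongly isomorphic to $P$ and $(i,j)\in E_P$ (not just $(i,j)\in E$), which is exactly the hypothesis of the lemma being proved.
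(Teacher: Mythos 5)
Your proof is correct, and it takes a slightly different route from the paper's. The paper argues directly: it fixes a point $x\in Q^i\cap Q^j$, observes (using strong isomorphism to conclude $F(Q^i,v_{ij})=Q^i\cap Q^j$) that $\langle x,u_i\rangle=h_Q(u_i)$, $\langle x,u_j\rangle=h_Q(u_j)$, and $\langle x,v_{ij}\rangle=h_{Q^i}(v_{ij})$, and then simply takes inner products with $x$ in the defining relation $u_j=u_i\cos\theta_{ij}+v_{ij}\sin\theta_{ij}$. You instead enter through Lemma \ref{lem:faces}, rewriting $h_{Q^i}(v_{ij})$ as a directional derivative, and then extract that derivative from the linear parametrization of $h_Q$ along $e_{ij}$ furnished by Lemma \ref{lem:strlin}. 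The two arguments encode the same geometric fact (that the facial structure of $Q$ mirrors that of $P$, so some point of $Q^{ij}$ simultaneously realizes $h_Q$ in the directions $u_i$, $u_j$, $v_{ij}$), and they are comparable in length; the paper's version is a little more self-contained as a single algebraic step, while yours has the merit of making visible the link to the derivative computation already carried out in the proof of Proposition \ref{prop:qgraph}. The one place you pass over quickly is the identification $\nabla_{v_{ij}}h_Q(u_i)=\frac{d}{d\theta}h_Q(x(\theta))\big|_{\theta=0}$ (the straight-line one-sided derivative versus the geodesic derivative); this is justified by the $1$-homogeneity of $h_Q$ and the fact that $h_Q$ agrees with $\langle t_{ij},\cdot\rangle$ on the whole cone over $e_{ij}$, and the paper uses the same identification without comment inside Proposition \ref{prop:qgraph}, so this is not a real gap.
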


\begin{proof}
Fix $(i,j)\in E_P$. As $Q$ is strongly isomorphic to $P$, it must be the 
case that $F(Q^i,v_{ij})=Q^i\cap Q^j$.
Thus for fixed $x\in Q^i\cap Q^j$, we have
$$
	\langle x,u_i\rangle = h_Q(u_i),\qquad
	\langle x,u_j\rangle = h_Q(u_j),\qquad
	\langle x,v_{ij}\rangle = h_{Q^i}(v_{ij}).
$$
Taking the inner product with $x$ in the definition of $v_{ij}$ yields
$$
	h_Q(u_j) = h_Q(u_i)\cos\theta_{ij} +
	h_{Q^i}(v_{ij})\sin\theta_{ij},
$$
and the conclusion follows by rearranging this expression.
\end{proof}

As a consequence, we obtain the following geometric interpretation.

\begin{cor}
\label{cor:augprobrep}
Let $Q$ be strongly isomorphic to $P$ and let 
$a\in\mathbb{R}$. Denote
$$
	z := \h_Q-a\h_P,\qquad f := h_Q-ah_P,\qquad
	f^i := h_{Q^i}-ah_{P^i}.
$$
Then for every $i\in[N]$
$$
	(\A z)_i = 
	(n-1)\,\V_{n-1}(f^i,P^i_1,\ldots,P^i_{n-2}),
$$
and for any convex body $C$
$$
	\langle \h_C,\A z\rangle = 
	n(n-1)\,\V_n(C,f,P_1,\ldots,P_{n-2}).
$$
\end{cor}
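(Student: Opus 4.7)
The plan is to prove both identities by direct computation — first the local identity $(\A z)_i = (n-1)\V_{n-1}(f^i,\mathcal{P}^i)$, then the global one by summing against $h_C(u_i)$.

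\emph{The local identity.} Unpacking the definition of $\A$ yields
$$(\A z)_i = \sum_{j\in E_P^i}\omega_{ij}(D_iz)_j.$$
Since $z = \h_Q - a\h_P$ and both $Q$ and $P$ are strongly isomorphic to $P$, two applications of Lemma \ref{lem:facevec} and linearity give $(D_iz)_j = h_{Q^i}(v_{ij}) - ah_{P^i}(v_{ij}) = f^i(v_{ij})$. Viewing $P_1^i,\ldots,P_{n-2}^i$ as polytopes in $u_i^\perp$, I would show via Lemma \ref{lem:mapoly} that their mixed area measure in $u_i^\perp$ is $\sum_{j\in E_P^i}\omega_{ij}\delta_{v_{ij}}$. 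The atom values follow from $F(P_r^i,v_{ij}) = P_r^{ij}$, while the containment of the support in $\{v_{ij}\}_{j\in E_P}$ needs a short dimensionality check: the facet normals of $P^i$ in $u_i^\perp$ are exactly $\{v_{ij}\}_{j\in E_P}$, and for $v$ outside this set Lemma \ref{lem:faces} combined with $\dim\sum_r F(P_r^i,v) \le \dim F(P^i,v) < n-2$ and Lemma \ref{lem:dim} force the $(n-2)$-dimensional mixed volume of the faces to vanish. Applying \eqref{eq:mixvolarea} in dimension $n-1$ then yields the local identity.

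\emph{The global identity.} Multiplying by $h_C(u_i)$ and summing over $i$ reduces the second claim to
$$n\V_n(C,f,\mathcal{P}) = \sum_{i\in[N]} h_C(u_i)\V_{n-1}(f^i,\mathcal{P}^i).$$
By multilinearity in $f$, it suffices to verify this formula with $f$ replaced by $h_P$ or by $h_Q$. The case $f = h_P$ is immediate from \eqref{eq:mixvolarea} and Lemma \ref{lem:mapoly}, since $P + P_1 + \cdots + P_{n-2}$ is strongly isomorphic to $P$ by Lemma \ref{lem:strongiso} and hence has facet normals exactly $\{u_i\}_{i\in[N]}$. The case $f = h_Q$ is the one subtle point: $Q + P_1 + \cdots + P_{n-2}$ need not have facet normals in $\{u_i\}_{i\in[N]}$, so I would sidestep this by the multilinearity identity
$$\V_n(C,Q,\mathcal{P}) = \V_n(C,Q+P_0,\mathcal{P}) - \V_n(C,P_0,\mathcal{P}),$$
noting that $(Q+P_0) + P_1 + \cdots + P_{n-2} = Q+P$ is strongly isomorphic to $P$ (Lemma \ref{lem:strongiso}), so both terms on the right do admit the required facet expansion. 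Subtracting and invoking multilinearity of $\V_{n-1}$ in its first argument reassembles the desired formula for $\V_n(C,Q,\mathcal{P})$.

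There is no serious obstacle; the proof is a bookkeeping exercise. The one nontrivial move is the reduction just described for $\V_n(C,Q,\mathcal{P})$, and it is precisely to enable this kind of reduction that the auxiliary polytope $P_0$ was introduced at the outset of section \ref{sec:background}.
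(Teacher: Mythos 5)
Your proof is correct, and for the local identity it coincides with the paper's argument (the paper also observes $\omega_{ij}=S_{P_1^i,\ldots,P_{n-2}^i}(\{v_{ij}\})$ and combines Lemma \ref{lem:facevec} with Lemma \ref{lem:mapoly} and \eqref{eq:mixvolarea} in $u_i^\perp$). For the global identity your route differs from the paper's, though the divergence is driven by a worry that is in fact unfounded: since $Q$ is strongly isomorphic to $P=P_0+P_1+\cdots+P_{n-2}$, the normal fan of $Q$ already refines the normal fan of each $P_r$, so $Q+P_1+\cdots+P_{n-2}$ has the \emph{same} normal fan as $P$ and its facet normals are exactly $\{u_i\}_{i\in[N]}$. (Equivalently, $Q+P_1+\cdots+P_{n-2}$ is a Minkowski summand of $Q+P$, and the paper's appeal to ``the facet normals of $Q+P$ are $\{u_i\}$'' already forces the summand's facet normals to lie among them.) The paper therefore computes $\langle\h_C,\A z\rangle$ directly as $(n-1)\int h_C\,dS_{f,\mathcal{P}}=n(n-1)\V_n(C,f,\mathcal{P})$ in one step. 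Your detour --- writing $h_Q=h_{Q+P_0}-h_{P_0}$ so that both $(Q+P_0)+P_1+\cdots+P_{n-2}=Q+P$ and $P_0+P_1+\cdots+P_{n-2}=P$ are manifestly strongly isomorphic to $P$, then reassembling via multilinearity of $\V_{n-1}$ in its first argument using $(Q+P_0)^i=Q^i+P_0^i$ --- is logically sound and has the virtue of never needing the normal-fan observation; it buys self-containedness at the cost of one extra decomposition. Either approach is fine.
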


\begin{proof}
It was shown in the proof of Lemma \ref{lem:vertex} that
$\omega_{ij}=S_{P_1^i,\ldots,P_{n-2}^i}(\{v_{ij}\})$. We may therefore 
rewrite the definition of $\A$ as
$$
	(\A z)_i =
	\sum_{j\in E^i_P} 
	(D_iz)_j\omega_{ij} =
	\sum_{j\in E^i_P}
	f^i(v_{ij})\,S_{P_1^i,\ldots,P_{n-2}^i}(\{v_{ij}\})
$$
using Lemma \ref{lem:facevec}.
But as $\{v_{ij}\}_{j\in E^i_P}$ are the facet normals of $P^i$ in $\aff 
P^i$, we obtain
$$
	(\A z)_i =
	\int f^i \, dS_{P_1^i,\ldots,P_{n-2}^i} =
	(n-1)\,\V_{n-1}(f^i,P_1^i,\ldots,P_{n-2}^i)
$$
by 
Lemma \ref{lem:mapoly} and \eqref{eq:mixvolarea}.
Now note that as $Q$ is strongly isomorphic to $P$, the facet normals of 
$Q+P$ are 
$\{u_i\}_{i\in[N]}$ by Lemma \ref{lem:strongiso}. Thus 
\begin{align*}
	\langle \h_C,\A z\rangle &=
	(n-1)
	\sum_{i\in[N]} h_C(u_i)\,\V_{n-1}(f^i,P_1^i,\ldots,P_{n-2}^i)
	\\ &
	=(n-1) \int h_C \, dS_{f,P_1,\ldots,P_{n-2}}
	= n(n-1)\,\V_n(C,f,P_1,\ldots,P_{n-2})
\end{align*}
by Lemma \ref{lem:mapoly} and \eqref{eq:mixvolarea}.
\end{proof}

\section{Proof of the local Alexandrov-Fenchel inequality}
\label{sec:localaf}

Now that the requisite machinery is in place, we proceed to the main part 
of the proof of Theorem \ref{thm:localaf}. Throughout this section, all 
assumptions and definitions of sections \ref{sec:matrix} and 
\ref{sec:augment} will be assumed without further comment.

Let us begin by reformulating Theorem \ref{thm:localaf} in a more 
combinatorial manner.

\begin{thm}
\label{thm:comblocalaf}
Assume that $\mathcal{P}=(P_1,\ldots,P_{n-2})$ is a critical collection
of polytopes.
Fix $r\in[n-2]$ and $z\in\ker\A$. Then there exist a polytope $Q$ that 
is strongly isomorphic to $P$ and $a\in\mathbb{R}$ such that the following 
hold:
\begin{enumerate}[1.]
\itemsep\abovedisplayskip
\item $(\h_Q-a\h_P)_i=z_i$ for every $i\in V$.
\item $\V_{n-1}(h_{Q^i}-ah_{P^i},h_{Q^i}-ah_{P^i},\mathcal{P}_{\backslash 
r}^i)\le 0$ for every $i\in[N]$.
\end{enumerate}
\end{thm}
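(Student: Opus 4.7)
The plan is to exploit the finite-dimensional reduction of Section~\ref{sec:augment} together with the Alexandrov-Fenchel inequality applied locally in dimension $n-1$ at each facet of the background polytope $P$, using the freedom available at vertices $i\notin V$ as tuning parameters.

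First I would invoke Lemma~\ref{lem:generate} to express any extension $z'\in\mathbb{R}^N$ of $z$ (with $z'_i=z_i$ for $i\in V$ and $z'_i$ free for $i\notin V$) as $z'=\h_Q-a\h_P$ for some polytope $Q$ strongly isomorphic to $P$ and some $a\in\mathbb{R}$. Since $\omega_{ij}=0$ whenever $j\notin V$, the rows and columns of $\A$ indexed by $i\notin V$ vanish, so the hypothesis $z\in\ker\A$ upgrades for free to $\A z'=0$ for every such extension. Corollary~\ref{cor:augprobrep} then gives the orthogonality
\[
	\V_{n-1}(f^i,P_r^i,\mathcal{P}^i_{\backslash r})=0 \qquad\text{for every } i\in[N],
\]
where $f^i:=h_{Q^i}-ah_{P^i}$ is a difference of support functions in $u_i^\perp$. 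Condition~1 of the theorem is automatic by construction, so only condition~2 remains.

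The strategy for condition~2 is to apply the Alexandrov-Fenchel inequality in dimension $n-1$ (Lemma~\ref{lem:3af}($c$)) separately in each subspace $u_i^\perp$. If one can produce a convex body $L^i$ in $u_i^\perp$ with
\[
	\V_{n-1}(f^i,L^i,\mathcal{P}^i_{\backslash r})=0 \quad\text{and}\quad \V_{n-1}(L^i,L^i,\mathcal{P}^i_{\backslash r})>0,
\]
then $\V_{n-1}(f^i,f^i,\mathcal{P}^i_{\backslash r})\le 0$ follows immediately. At every facet $i$ for which $\V_{n-1}(P_r^i,P_r^i,\mathcal{P}^i_{\backslash r})>0$ the natural choice $L^i=P_r^i$ does the job in combination with the orthogonality established above.

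The main obstacle lies at the degenerate facets where $\V_{n-1}(P_r^i,P_r^i,\mathcal{P}^i_{\backslash r})=0$, which can occur (for instance if $\dim P_r^i<2$) even under the criticality of $\mathcal{P}$ in $\mathbb{R}^n$, since criticality of the bodies in $\mathbb{R}^n$ does not prevent face degeneracies in $u_i^\perp$. Here I would replace $P_r^i$ by a perturbation $L^i_\epsilon:=P_r^i+\epsilon R^i$ where $R^i$ is an auxiliary convex body in $u_i^\perp$ (built from $P_0^i$ and/or the other faces $P_s^i$, $s\ne r$) chosen so that $\V_{n-1}(L^i_\epsilon,L^i_\epsilon,\mathcal{P}^i_{\backslash r})>0$; criticality of $\mathcal{P}$ together with the dimensional data of $\mathcal{P}^i_{\backslash r}$ is what guarantees such an $R^i$ exists. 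The orthogonality $\V_{n-1}(f^i,L^i_\epsilon,\mathcal{P}^i_{\backslash r})=0$ then reduces, using the already-established orthogonality against $P_r^i$, to the additional linear constraint $\V_{n-1}(f^i,R^i,\mathcal{P}^i_{\backslash r})=0$, which is a linear condition on the free parameters $\{z'_j\}_{j\notin V}$.

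The entire theorem thus reduces to solving a finite system of linear equations in the free parameters $\{z'_j\}_{j\notin V}$, obtained by collecting one additional orthogonality constraint at each degenerate facet. The heart of the proof --- and by far the hardest step I expect --- is to establish the \emph{solvability} of this linear system by ruling out the presence of obstructing degeneracies. Schematically, one must show that the linear map sending the free parameters $\{z'_j\}_{j\notin V}$ to the vector of orthogonality defects at the degenerate facets is surjective onto its target. The decisive inputs are the simpleness of $P$ (which is what makes Lemma~\ref{lem:generate} give complete freedom in the parametrization) and the criticality of $\mathcal{P}$ (which controls the dimensions $\dim P_s^i$ and thereby forces enough independent tuning directions to be available). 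This is the step whose hardness the authors flag in the outline, and where the careful combinatorial analysis of the graph $([N],E_P)$ developed in Sections~\ref{sec:matrix}--\ref{sec:augment} is indispensable.
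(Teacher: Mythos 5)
Your high-level plan---reduce to a finite linear system in the free parameters $\{z'_j\}_{j\notin V}$, with the Alexandrov--Fenchel inequality furnishing the passage from linear orthogonality to the quadratic inequality---matches the paper's, and you correctly identify solvability of that system as the crux. But there is a genuine gap in how you set up the constraints, and it comes from missing the observation that makes the dimension count work out.

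You propose to impose one extra constraint at each facet where $\V_{n-1}(P_r^i,P_r^i,\mathcal{P}^i_{\backslash r})=0$. Such degenerate facets can lie in $V$ (for instance when $P_r^i$ is a segment yet $\mathcal{P}^i$ is still nondegenerate), so your plan adds constraints at some $i\in V$ even though the available free parameters number exactly $N-|V|$; nothing aligns the two counts. The paper never imposes constraints at $i\in V$, because Lemma~\ref{lem:localafactive} shows the needed quadratic inequality there is \emph{automatic} given $\h_Q-a\h_P\in\ker\A$---degenerate or not. The trick is to replace $f^i$ by $f^i-bh_{P^i_r}$ (which has the same square against $\mathcal{P}^i_{\backslash r}$ since both cross terms vanish), pick $b$ to make it orthogonal to the full-dimensional facet $P^i$, and apply Alexandrov--Fenchel against $P^i$, feasible since $\V_{n-1}(P^i,\mathcal{P}^i)>0$ for $i\in V$ (Lemma~\ref{lem:vertex}). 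This costs no free parameters. After that observation, the paper's linear system (Proposition~\ref{prop:comblin}) imposes exactly one constraint per $i\notin V$, namely $\V_{n-1}(h_{Q^i}-ah_{P^i},P^i,\mathcal{P}^i_{\backslash r})=0$, with the canonical test body $P^i$ rather than $P^i_r$ or a perturbation of it. Using $P^i$ is essential: it is always full-dimensional in $u_i^\perp$, and when $\V_{n-1}(P^i,P^i,\mathcal{P}^i_{\backslash r})=0$ one concludes by Lemma~\ref{lem:dim} that $\V_{n-1}(f^i,f^i,\mathcal{P}^i_{\backslash r})=0$ trivially. Your auxiliary body $R^i$ with $\V_{n-1}(R^i,R^i,\mathcal{P}^i_{\backslash r})>0$ need not exist at such facets, so the criticality-based existence claim fails there. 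Finally, the solvability argument in the paper is sharper than a generic surjectivity claim: it rewrites the system using the Alexandrov matrix $\bar\A$ of $(P,\mathcal{P}_{\backslash r})$ and verifies the Fredholm dual condition $\mathrm{P}_{V^c}w\in\ker\bar\A$ for $w\in\ker\mathrm{P}_{V^c}\bar\A\mathrm{P}_{V^c}$, which is itself derived from the equality case (Lemma~\ref{lem:afeq}) of the Alexandrov--Fenchel inequality for $(P,\mathcal{P}_{\backslash r})$, with criticality entering precisely to guarantee $\V_n(P_r,P_r,P,\mathcal{P}_{\backslash r})>0$.
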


With this result in hand, the conclusion of Theorem \ref{thm:localaf} 
follows readily:

\begin{proof}[Proof of Theorem \ref{thm:localaf}]
Fix $r\in[n-2]$ and a difference of support functions $f$ so that
$S_{f,\mathcal{P}}=0$. Then $z:=(f(u_i))_{i\in[N]}$ satisfies
$z\in\ker\A$ by Proposition \ref{prop:qgraph}. We can therefore apply 
Theorem \ref{thm:comblocalaf} to construct an associated polytope $Q$
and $a\in\mathbb{R}$. We claim that $g:=h_Q-ah_P$ satisfies the 
conclusion of Theorem \ref{thm:localaf}. 

To show this, note first that it follows exactly as in 
the proof of Corollary \ref{cor:augfindim} that
$g=f$ $S_{B,\mathcal{P}}$-a.e., that $S_{g,\mathcal{P}}=0$, and that
$S_{g,g,\mathcal{P}_{\backslash r}}$ is supported on 
$\{u_i\}_{i\in[N]}$. On the other hand, Lemma \ref{lem:mapoly} implies 
that
$$
	S_{g,g,\mathcal{P}_{\backslash r}}(\{u_i\}) =
	\V_{n-1}(h_{Q^i}-ah_{P^i},h_{Q^i}-ah_{P^i},
	\mathcal{P}_{\backslash r}^i).
$$
Thus the second property of Theorem \ref{thm:comblocalaf} implies 
$S_{g,g,\mathcal{P}_{\backslash r}}\le 0$.
\end{proof}

The rest of this section is devoted to the proof of Theorem 
\ref{thm:comblocalaf}. The proof consists of two parts. First, we will 
show that the second property of Theorem \ref{thm:comblocalaf} holds 
automatically for $i\in V$ by the Alexandrov-Fenchel inequality. We then 
show that $Q,a$ can be chosen in such a way that this property holds
also for $i\not\in V$.

\subsection{The active vertices}

The first observation of the proof of Theorem \ref{thm:comblocalaf} is 
that its second condition is automatically satisfied for the active 
vertices $i\in V$ whenever the first condition is satisfied, regardless of 
how $Q$ is chosen.

\begin{lem}
\label{lem:localafactive}
Let $Q$ be a polytope that is strongly isomorphic to $P$ and let
$a\in\mathbb{R}$. Suppose that
$\h_Q-a\h_P\in\ker\A$. Then for any $r\in[n-2]$, we have
$$
	\V_{n-1}(h_{Q^i}-ah_{P^i},h_{Q^i}-ah_{P^i},\mathcal{P}^i_{\backslash r})
	\le 0\quad\mbox{for all }i\in V.
$$
\end{lem}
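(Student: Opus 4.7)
The plan is to deduce the desired inequality from the generalized Alexandrov--Fenchel inequality in $u_i^\perp$, i.e., from Lemma \ref{lem:3af}(b) applied to the difference of support functions $f^i := h_{Q^i} - a h_{P^i}$ inside the hyperplane $u_i^\perp$. The hypothesis $\h_Q - a\h_P \in \ker \A$, via Corollary \ref{cor:augprobrep}, will produce the orthogonality relation $\V_{n-1}(f^i, P_r^i, \mathcal{P}_{\backslash r}^i) = 0$; this would let us invoke Lemma \ref{lem:3af}(c) directly at $L = P_r^i$ if $\V_{n-1}(P_r^i, P_r^i, \mathcal{P}_{\backslash r}^i)$ were strictly positive. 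The main obstacle is that this latter positivity may fail even when $i\in V$ (for instance if $P_r^i$ is a lower-dimensional face), so I will perturb $P_r^i$ by $\epsilon B_i$, where $B_i$ denotes the Euclidean unit ball in $u_i^\perp$, and pass to the limit $\epsilon \to 0^+$. The limit argument succeeds precisely because the hypothesis $i\in V$ forces $\V_{n-1}(P_r^i, B_i, \mathcal{P}_{\backslash r}^i) > 0$, which controls the rate at which the perturbed reference quantity degenerates.

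First, I extract the orthogonality. Setting $z := \h_Q - a\h_P$ and evaluating Corollary \ref{cor:augprobrep} at row $i$ gives $(\A z)_i = (n-1)\V_{n-1}(f^i, P_1^i, \ldots, P_{n-2}^i)$; since $z\in\ker\A$, relabeling of arguments yields $\V_{n-1}(f^i, P_r^i, \mathcal{P}_{\backslash r}^i) = 0$. Next, I establish the dimensional positivity needed in $u_i^\perp$. Lemma \ref{lem:vertex}(a) applied to $i\in V$ gives $\V_{n-1}(P^i, P_1^i, \ldots, P_{n-2}^i) > 0$, and since $P^i$ is full-dimensional in $u_i^\perp$, Lemma \ref{lem:dim} implies that every $k$-subcollection of the bodies $P_1^i, \ldots, P_{n-2}^i$ has sum-dimension at least $k$ in $u_i^\perp$. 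Combining this with the full-dimensionality of $B_i$ and of $L_\epsilon := P_r^i + \epsilon B_i$ (for $\epsilon > 0$), another application of Lemma \ref{lem:dim} yields both $\V_{n-1}(P_r^i, B_i, \mathcal{P}_{\backslash r}^i) > 0$ and $\V_{n-1}(L_\epsilon, L_\epsilon, \mathcal{P}_{\backslash r}^i) > 0$ for every $\epsilon > 0$.

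With these ingredients in place, Lemma \ref{lem:3af}(b) inside $u_i^\perp$, applied with $g = f^i$ and $L = L_\epsilon$, yields
$$\V_{n-1}(f^i, L_\epsilon, \mathcal{P}_{\backslash r}^i)^2 \ge \V_{n-1}(f^i, f^i, \mathcal{P}_{\backslash r}^i) \cdot \V_{n-1}(L_\epsilon, L_\epsilon, \mathcal{P}_{\backslash r}^i).$$
By multilinearity and the orthogonality established above, the left-hand side reduces to $\epsilon^2 \V_{n-1}(f^i, B_i, \mathcal{P}_{\backslash r}^i)^2$, while expanding the right-hand factor gives
$$\V_{n-1}(L_\epsilon, L_\epsilon, \mathcal{P}_{\backslash r}^i) = \V_{n-1}(P_r^i, P_r^i, \mathcal{P}_{\backslash r}^i) + 2\epsilon\, \V_{n-1}(P_r^i, B_i, \mathcal{P}_{\backslash r}^i) + \epsilon^2 \V_{n-1}(B_i, B_i, \mathcal{P}_{\backslash r}^i),$$
whose linear coefficient is strictly positive by the previous paragraph. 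Dividing the Alexandrov--Fenchel inequality by the positive quantity $\V_{n-1}(L_\epsilon, L_\epsilon, \mathcal{P}_{\backslash r}^i)$, the right-hand side becomes of order $O(\epsilon)$ as $\epsilon \to 0^+$, and letting $\epsilon \to 0^+$ produces the required bound $\V_{n-1}(f^i, f^i, \mathcal{P}_{\backslash r}^i) \le 0$.
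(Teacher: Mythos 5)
Your proposal is correct, and it takes a genuinely different route from the paper's proof in the degenerate case. The paper's argument, which mirrors the technique of part $b$ of Lemma \ref{lem:afeq}, handles the case $\V_{n-1}(P_r^i,P_r^i,\mathcal{P}_{\backslash r}^i)=0$ by subtracting a linear perturbation $bh_{P_r^i}$ from $f^i$ (with $b$ chosen so that $\V_{n-1}(f^i-bh_{P_r^i},P^i,\mathcal{P}_{\backslash r}^i)=0$) and then applying the Alexandrov--Fenchel inequality with the always full-dimensional body $P^i$, using Lemma \ref{lem:vertex} to secure $\V_{n-1}(P^i,P^i,\mathcal{P}_{\backslash r}^i)>0$; the orthogonality relations then make the cross terms in the expansion vanish. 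You instead perturb the reference body itself, replacing $P_r^i$ by $P_r^i+\epsilon B_i$, and take $\epsilon\to 0^+$, using Lemma \ref{lem:vertex} (together with Lemma \ref{lem:dim}) to guarantee that the linear coefficient $\V_{n-1}(P_r^i,B_i,\mathcal{P}_{\backslash r}^i)$ is strictly positive, which controls the degeneracy rate and makes the right-hand side tend to zero. Both proofs hinge on exactly the same two ingredients (the orthogonality from Corollary \ref{cor:augprobrep} and the positivity from Lemma \ref{lem:vertex}(a)), and both are about the same length. Your approach has the minor aesthetic advantage of handling the two cases ($\V_{n-1}(P_r^i,P_r^i,\mathcal{P}_{\backslash r}^i)$ zero or positive) in a single uniform limit argument, whereas the paper separates them; the paper's approach has the advantage of staying consistent with the perturbation technique already deployed in Lemma \ref{lem:afeq}.
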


\begin{proof}
By Corollary \ref{cor:augprobrep}, the assumption
$\h_Q-a\h_P\in\ker\A$ implies
$$
	\V_{n-1}(h_{Q^i}-ah_{P^i},\mathcal{P}^i)=0\quad
	\mbox{for all }i\in[N].
$$
By the Alexandrov-Fenchel inequality
$$
	0 
	\ge 
	\V_{n-1}(h_{Q^i}-ah_{P^i},h_{Q^i}-ah_{P^i},
	\mathcal{P}^i_{\backslash r}) \,
	\V_{n-1}(P_r^i,P_r^i,\mathcal{P}^i_{\backslash r}).
$$
The conclusion follows immediately for any $i$ such that
$\V_{n-1}(P_r^i,P_r^i,\mathcal{P}^i_{\backslash r})>0$.

Now suppose $i\in V$ but $\V_{n-1}(P_r^i,P_r^i,\mathcal{P}^i_{\backslash 
r})=0$. Then we can argue in a similar manner as in the proof of the 
second part of Lemma \ref{lem:afeq}. As $i\in V$, Lemma \ref{lem:vertex}
states that $\V_{n-1}(P^i,P_r^i,\mathcal{P}^i_{\backslash r})>0$.
Thus we may choose $b\in\mathbb{R}$ so that
$$
	\V_{n-1}(h_{Q^i}-ah_{P^i}-bh_{P^i_r},P^i,
	\mathcal{P}^i_{\backslash r})=0.
$$
The Alexandrov-Fenchel inequality now yields
$$
	0 \ge
	\V_{n-1}(h_{Q^i}-ah_{P^i}-bh_{P^i_r},
	h_{Q^i}-ah_{P^i}-bh_{P^i_r},
        \mathcal{P}^i_{\backslash r})\,
	\V_{n-1}(P^i,P^i,\mathcal{P}^i_{\backslash r}).
$$
But $i\in V$ implies
$\V_{n-1}(P^i,P^i,\mathcal{P}^i_{\backslash r}) \ge
\V_{n-1}(P^i,\mathcal{P}^i)>0$ by Lemma \ref{lem:vertex}. Thus
\begin{align*}
	0 &\ge
	\V_{n-1}(h_{Q^i}-ah_{P^i}-bh_{P^i_r},
	h_{Q^i}-ah_{P^i}-bh_{P^i_r},
        \mathcal{P}^i_{\backslash r}) \\
	&=\V_{n-1}(h_{Q^i}-ah_{P^i},
	h_{Q^i}-ah_{P^i},
        \mathcal{P}^i_{\backslash r}),
\end{align*}
where we used that 
$\V_{n-1}(h_{Q^i}-ah_{P^i},\mathcal{P}^i)=
\V_{n-1}(P^i_r,P^i_r,\mathcal{P}^i_{\backslash r})=0$.
\end{proof}

Now consider the setting of Theorem \ref{thm:comblocalaf} for a given
$z\in\ker\A$. As the $i$th row and column of 
$\A$ are zero for $i\not\in V$, we have $z'\in\ker\A$ 
whenever $z_i=z_i'$ for $i\in V$. To any such choice of $z'$, we can 
apply Lemma \ref{lem:generate} to obtain a polytope $Q$ that is strongly
isomorphic to $P$ 
and $a\in\mathbb{R}$ so that $z'=\h_Q-a\h_P$. Then:
\begin{enumerate}[1.]
\itemsep\abovedisplayskip
\item $(\h_Q-a\h_P)_i=z_i$ for every $i\in V$ (as $z_i=z_i'$ for $i\in 
V$).
\item $\V_{n-1}(h_{Q^i}-ah_{P^i},h_{Q^i}-ah_{P^i},\mathcal{P}_{\backslash
r}^i)\le 0$ for every $i\in V$ (by Lemma \ref{lem:localafactive}).
\end{enumerate}
Thus the only part of the proof of Theorem \ref{thm:comblocalaf} that 
remains is to ensure that the second condition holds for $i\not\in V$. On 
the other hand, in the above construction, the choice of $z_i'$ for 
$i\not\in V$ is completely arbitrary.

The present discussion provides us with a key intuition about why the 
local Alexandrov-Fenchel inequality has any hope of being true: we aim to 
satisfy $N-|V|$ nontrivial equations, but we are free to choose $N-|V|$ 
parameters. In other words, \emph{the number of degrees of freedom equals 
the number of equations we aim to satisfy}. This fact is not at all 
obvious from the formulation of Theorem \ref{thm:localaf}.

On the other hand, this idea alone cannot suffice to complete the proof: 
it is possible that the system of equations we aim to solve is degenerate, 
in which case no solution may exist. It is far from obvious, \emph{a 
priori}, why this situation cannot occur for some special choices of 
polytopes: had that been the case, there would have likely existed 
additional extremals of the Alexandrov-Fenchel inequality beyond the ones 
discussed in section \ref{sec:three}. The main difficulty in the remainder 
of the proof of Theorem \ref{thm:comblocalaf} is to rule out the existence 
of such degeneracies.

\subsection{Reduction to a linear system}

As was explained above, we now aim to choose the polytope $Q$ in such a 
way that the second condition of Theorem \ref{thm:comblocalaf} holds for 
$i\not\in V$. In essence, this requires us to find a solution to a system 
of quadratic inequalities. The manipulation of these inequalities is 
somewhat awkward, however, so we begin by introducing a simplification: we 
will reduce the problem to solving a system of linear equations, which are 
formulated in the following result.

\begin{prop}
\label{prop:comblin}
Assume that $\mathcal{P}=(P_1,\ldots,P_{n-2})$ is a critical collection
of polytopes.
Fix $r\in[n-2]$ and $z\in\mathbb{R}^N$. Then there exist a polytope $Q$ 
that is strongly isomorphic to $P$ and $a\in\mathbb{R}$ such that the 
following hold:
\begin{enumerate}[1.]
\itemsep\abovedisplayskip
\item $(\h_Q-a\h_P)_i=z_i$ for every $i\in V$.
\item $\V_{n-1}(h_{Q^i}-ah_{P^i},P^i,\mathcal{P}_{\backslash 
r}^i)=0$ for every $i\not\in V$.
\end{enumerate}
\end{prop}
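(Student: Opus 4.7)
The plan is to recast the second condition of the proposition as a linear system via an auxiliary Alexandrov matrix, reduce its solvability to a Fredholm-type kernel condition, and resolve that kernel condition by showing every candidate kernel element in fact represents an extremal of an auxiliary Alexandrov--Fenchel equality.

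Let $\mathcal{P}' := (P_1, \ldots, P_{r-1}, P, P_{r+1}, \ldots, P_{n-2})$ be obtained from $\mathcal{P}$ by replacing $P_r$ with the background polytope $P$, and let $\A'$ denote the Alexandrov matrix of $\mathcal{P}'$. By Lemma \ref{lem:generate}, pairs $(Q,a)$ with $Q$ strongly isomorphic to $P$ parametrize arbitrary vectors $y := \h_Q - a\h_P \in \mathbb{R}^N$; Corollary \ref{cor:augprobrep}, applied to $\mathcal{P}'$ in place of $\mathcal{P}$, then yields $(\A' y)_i = (n-1)\,\V_{n-1}(h_{Q^i} - ah_{P^i}, P^i, \mathcal{P}^i_{\backslash r})$ for every $i \in [N]$. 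Thus the proposition reduces to the linear-algebraic claim that for every $z \in \mathbb{R}^N$ there exists $y \in \mathbb{R}^N$ with $y_i = z_i$ for $i \in V$ and $(\A' y)_i = 0$ for $i \notin V$. Since this is a square system and $\A'$ is symmetric, the Fredholm alternative reduces solvability for all $z$ to the homogeneous kernel condition: any $\tilde w \in \mathbb{R}^N$ with $\tilde w_V = 0$ and $(\A' \tilde w)_{V^c} = 0$ must lie in $\ker \A'$.

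Fix such a $\tilde w$ and, using Lemma \ref{lem:generate} again, write $\tilde w = \h_Q - a\h_P$ and set $g := h_Q - ah_P$. By Lemma \ref{lem:strlin} the function $g$ is linear on every edge of the background graph, and since it vanishes at every active vertex $u_i$ with $i \in V$, linearity forces $g \equiv 0$ on every active edge, hence on $\supp S_{B,\mathcal{P}}$ by Lemma \ref{lem:suppsbp}. Lemma \ref{lem:suppeq} then yields $S_{g,\mathcal{P}} = 0$. My aim is to upgrade this to $S_{g,\mathcal{P}'} = 0$, which via Proposition \ref{prop:qgraph} applied to $\mathcal{P}'$ is equivalent to $\tilde w \in \ker \A'$. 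Criticality of $\mathcal{P}$ combined with the full-dimensionality of $P$ gives $\V_n(P_r,P_r,\mathcal{P}') > 0$ by a routine check of Lemma \ref{lem:dim} (subsets of the argument list containing $P$ are full-dimensional, while subsets avoiding $P$ lie in $\mathcal{P}$ with possible repetition of $P_r$ and are controlled by the critical dimensionality), so Lemma \ref{lem:afeq}(a) with $L := P_r$ reduces the task to verifying the two identities $\V_n(g,P_r,\mathcal{P}') = 0$ and $\V_n(g,g,\mathcal{P}') = 0$.

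The second identity is the easier one, and is the heart of the reason the Fredholm condition succeeds: since $\supp S_{g,\mathcal{P}'} \subseteq \{u_i\}_{i \in [N]}$ by Lemma \ref{lem:mapoly}, one computes $n\,\V_n(g,g,\mathcal{P}') = \int g\, dS_{g,\mathcal{P}'} = \sum_i g(u_i)\, S_{g,\mathcal{P}'}(\{u_i\})$, and every summand vanishes via the dichotomy $g(u_i) = \tilde w_i = 0$ for $i \in V$ versus $S_{g,\mathcal{P}'}(\{u_i\}) = (\A' \tilde w)_i/(n-1) = 0$ for $i \in V^c$. The first identity is the main obstacle, and I plan to handle it by expanding $h_P = \sum_{k=0}^{n-2} h_{P_k}$ and using multilinearity to get $\V_n(g,P_r,\mathcal{P}') = \sum_k \V_n(g,P_r,P_k,\mathcal{P}_{\backslash r})$; comparing the multiset of bodies in each summand with that of $\V_n(g,P_k,\mathcal{P})$ (or $\V_n(g,P_0,\mathcal{P})$ when $k=0$) and invoking the symmetry of mixed volumes shows equality, and $S_{g,\mathcal{P}} = 0$ makes every such quantity vanish. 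Combining these, Lemma \ref{lem:afeq}(a) delivers $S_{g,\mathcal{P}'} = 0$, equivalently $\tilde w \in \ker \A'$, closing the Fredholm loop and completing the proof.
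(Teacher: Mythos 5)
Your proof is correct and follows essentially the same route as the paper: the same auxiliary Alexandrov matrix for $(P,\mathcal{P}_{\backslash r})$, the same Fredholm reduction to the kernel condition, and the same use of Lemma~\ref{lem:afeq}(a) with $L=P_r$. The one cosmetic difference is in how the auxiliary equality case is verified: the paper computes $\V_n(g,P_r,P,\mathcal{P}_{\backslash r})=0$ directly by splitting the facet sum over $V$ and $V^c$ and invoking Lemma~\ref{lem:vertex}, whereas you first observe that $g$ vanishes on $\supp S_{B,\mathcal{P}}$ (so $S_{g,\mathcal{P}}=0$) and then deduce it by the multiset symmetry $\V_n(g,P_r,P,\mathcal{P}_{\backslash r})=\V_n(g,P,\mathcal{P})$ — two equivalent packagings of the same dichotomy, each relying on the fact that active edges have both endpoints active.
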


Proposition \ref{prop:comblin} will be proved in the next section. Before 
we do so, let us show that it implies Theorem \ref{thm:comblocalaf}. As in 
Lemma \ref{lem:localafactive}, the transition from linear equations to 
quadratic inequalities is a consequence of the Alexandrov-Fenchel 
inequality.

\begin{proof}[Proof of Theorem \ref{thm:comblocalaf}]
Fix $r\in[n-2]$ and $z\in\ker\A$, and construct the polytope $Q$ as in 
Proposition \ref{prop:comblin}. Then the first condition of Theorem 
\ref{thm:comblocalaf} holds by construction. Moreover, as the $i$th column 
of $\A$ vanishes for $i\not\in V$, it follows that $\h_Q-a\h_P\in\ker\A$.
Thus the second condition of Theorem \ref{thm:comblocalaf} holds for $i\in 
V$ by Lemma \ref{lem:localafactive}.

Now let $i\not\in V$. Then by Proposition \ref{prop:comblin} and
the Alexandrov-Fenchel inequality
\begin{align*}
	0 &= \V_{n-1}(h_{Q^i}-ah_{P^i},P^i,\mathcal{P}_{\backslash r}^i)^2 
	\\ &\ge
	\V_{n-1}(h_{Q^i}-ah_{P^i},
	h_{Q^i}-ah_{P^i},
	\mathcal{P}_{\backslash r}^i)\,
	\V_{n-1}(P^i,P^i,
	\mathcal{P}_{\backslash r}^i).
\end{align*}
Thus the second condition of Theorem
\ref{thm:comblocalaf} holds provided
$\V_{n-1}(P^i,P^i,\mathcal{P}_{\backslash r}^i)>0$.

It remains to consider $i\in[N]$ such that
$\V_{n-1}(P^i,P^i,\mathcal{P}_{\backslash r}^i)=0$. By definition, $P^i$ 
are the facets of $P$, so $\dim P^i=n-1$. It therefore
follows from Lemma \ref{lem:dim} that
$\V_{n-1}(K^i,L^i,\mathcal{P}_{\backslash r}^i)=0$ for any convex bodies 
$K,L$. In particular, for such $i$
$$
	\V_{n-1}(h_{Q^i}-ah_{P^i},
	h_{Q^i}-ah_{P^i},
	\mathcal{P}_{\backslash r}^i) = 0.
$$
Thus the second condition of Theorem \ref{thm:comblocalaf}
is established for every $i\in[N]$.
\end{proof}

To clarify the computations in the next section, let us further express 
the linear system of Proposition \ref{prop:comblin} explicitly in a 
finite-dimensional form. To this end, we would like to represent the mixed 
volume $\V_{n-1}(h_{Q^i}-ah_{P^i},P^i,\mathcal{P}^i_{\backslash r})$ in 
terms of an Alexandrov matrix. In the present setting, however, the 
reference body $P_r$ has been replaced by $P$, so that Corollary 
\ref{cor:augprobrep} does not directly apply.

Note, however, that $P+\sum_{i\not\in r}P_i$ is strongly isomorphic to $P$ 
by Lemma \ref{lem:strongiso}. Therefore, if we replace the reference 
bodies $\mathcal{P}$ by $(P,\mathcal{P}_{\backslash r})$, then the 
background graph defined in section \ref{sec:background} remains 
unchanged, and all the subsequent constructions in sections 
\ref{sec:matrix}--\ref{sec:augment} extend \emph{verbatim} to this setting 
up to a change of notation. In particular, if we define the Alexandrov 
matrix associated to $(P,\mathcal{P}_{\backslash r})$ as
$$
	\bar\A_{ij} :=
	1_{(i,j)\in E_P}\,
	\V_{n-2}(P^{ij},\mathcal{P}^{ij}_{\backslash r})\csc\theta_{ij}
	-1_{i=j}\sum_{k\in E^i_P}
	\V_{n-2}(P^{ik},\mathcal{P}^{ik}_{\backslash r})
	\cot\theta_{ik},
$$
then Corollary \ref{cor:augprobrep} extends immediately to show that
whenever $z'=\h_Q-a\h_P$ for a polytope $Q$ that is strongly isomorphic to 
$P$ and $a\in\mathbb{R}$, we have
\begin{align*}
	&(\bar\A z')_i = 
	(n-1)\,\V_{n-1}(h_{Q^i}-ah_{P^i},P^i,\mathcal{P}^i_{\backslash r}),
	\\ &
	\langle \h_C,\bar\A z'\rangle = 
	n(n-1)\,\V_n(C,h_Q-ah_P,P,\mathcal{P}_{\backslash r})
\end{align*}
for any $i\in [N]$ and convex body $C$. 
If we can therefore show that 
the linear system 
\begin{equation}
\label{eq:localafsystem}
	\left\{
	\begin{aligned}
	z_i' &= z_i &&\mbox{for }i\in V,\\
	(\bar\A z')_i &= 0 &&\mbox{for }i\not\in V
	\end{aligned}
	\right.
\end{equation}
has a solution $z'\in\mathbb{R}^N$, the proof of Proposition
\ref{prop:comblin} would follow from Lemma \ref{lem:generate}.

\subsection{The Fredholm alternative}

We are now ready to complete the proof of Proposition \ref{prop:comblin}. 
To show that the linear system \eqref{eq:localafsystem} has a solution, we 
will verify the dual condition provided by the Fredholm alternative
$\mathop{\mathrm{ran}}\mathrm{M}=(\ker\mathrm{M}^*)^\perp$ of 
linear algebra. Surprisingly, it will turn out that the validity of this 
dual condition is itself a consequence of the equality condition of the 
Alexandrov-Fenchel inequality.

\begin{proof}[Proof of Proposition \ref{prop:comblin}]
We fix $r\in[n-2]$ and $z\in\mathbb{R}^N$ throughout the proof. Let us 
begin by rewriting the linear system \eqref{eq:localafsystem} as a single 
equation.
Let $V^c := [N]\backslash V$, and denote by $\mathrm{P}_V$ and 
$\mathrm{P}_{V^c}$ the orthogonal projections onto the subspaces of 
vectors supported on the coordinates $V$ and $V^c$, respectively. Then 
clearly \eqref{eq:localafsystem} has a solution $z'\in\mathbb{R}^N$ if and 
only if there exists $y\in\mathbb{R}^N$ such that
\begin{equation}
\label{eq:primal}
	\mathrm{P}_{V^c}\bar\A\mathrm{P}_{V^c}y = 
	-\mathrm{P}_{V^c}\bar\A\mathrm{P}_Vz
\end{equation}
(as then $z'=\mathrm{P}_{V^c}y+\mathrm{P}_Vz$ is a solution to 
\eqref{eq:localafsystem}).

To show there exists a solution to \eqref{eq:primal}, we will prove the 
following claim:
\begin{equation}
\label{eq:fred}
	\mathrm{P}_{V^c}w\in \ker\bar\A \quad
	\text{for every}\quad
	w\in\ker \mathrm{P}_{V^c}\bar\A\mathrm{P}_{V^c}.
\end{equation}
Let us first argue that this suffices to conclude the proof.
If \eqref{eq:fred} holds, then we clearly have
$\langle w,\mathrm{P}_{V^c}\bar\A\mathrm{P}_Vz\rangle =
\langle \bar\A\mathrm{P}_{V^c}w,\mathrm{P}_Vz\rangle = 0$ for every
$w\in \ker \mathrm{P}_{V^c}\bar\A\mathrm{P}_{V^c}$. The latter is 
precisely the dual condition for the existence of a solution $y$ to 
\eqref{eq:primal}. It therefore follows that there exists 
$z'\in\mathbb{R}^N$ satisfying \eqref{eq:localafsystem}, and the proof of 
Proposition \ref{prop:comblin} is concluded as explained at the end of the 
previous section.

It therefore remains to prove \eqref{eq:fred}. To this end, let us fix 
$w\in \ker \mathrm{P}_{V^c}\bar\A\mathrm{P}_{V^c}$. By Lemma 
\ref{lem:generate}, there exists a polytope $R$ that is strongly 
isomorphic to $P$ and $b\in\mathbb{R}$ such that
$\mathrm{P}_{V^c}w = \h_R-b\h_P$. We can therefore compute
$$
	\V_n(h_R-bh_P,h_R-bh_P,P,\mathcal{P}_{\backslash r}) =
	\frac{\langle \mathrm{P}_{V^c}w,\bar\A\mathrm{P}_{V^c}w\rangle}{n(n-1)} = 0.
$$
On the other hand, we have
\begin{align*}
	\V_n(h_R-bh_P,P_r,P,\mathcal{P}_{\backslash r}) &=
	\frac{1}{n}\sum_{i\in[N]}
	(\h_R-b\h_P)_i\,\V_{n-1}(P_r^i,P^i,\mathcal{P}_{\backslash r}^i)
	\\
	&=
	\frac{1}{n}\sum_{i\in V^c}
	(\h_R-b\h_P)_i\,\V_{n-1}(P^i,\mathcal{P}^i) = 0,
\end{align*}
where the first equality follows from Lemma \ref{lem:mapoly} and 
\eqref{eq:mixvolarea}, the second equality follows as
$(\h_R-b\h_P)_i=(\mathrm{P}_{V^c}w)_i=0$ for $i\in V$, and the third 
equality follows as $\V_{n-1}(P^i,\mathcal{P}^i)=0$ for $i\in V^c$ by
Lemma \ref{lem:vertex}. Finally, we have
$$
	\V_n(P_r,P_r,P,\mathcal{P}_{\backslash r})>0
$$
using that $\mathcal{P}$ is critical (Definition 
\ref{defn:crit}) and Lemma \ref{lem:dim}. Thus Lemma 
\ref{lem:afeq} yields
$$
	0 =S_{h_R-bh_P,P,\mathcal{P}_{\backslash r}}(\{u_i\}) =
	\V_{n-1}(h_{R^i}-bh_{P^i},P^i,\mathcal{P}_{\backslash r}^i) =
	\frac{(\bar\A\mathrm{P}_{V^c}w)_i}{n-1}
$$
for every $i\in[N]$, where we used Lemma \ref{lem:mapoly} in the second 
equality. In other words, we have shown that 
$\mathrm{P}_{V^c}w\in\ker\bar\A$, concluding the proof of 
\eqref{eq:fred}.
\end{proof}

\begin{rem}
Let us emphasize that the definition of the matrix $\bar\A$ depends on the 
choice of $r$, so that the polytope $Q$ and $a\in\mathbb{R}$ that are 
constructed in the proof of Proposition \ref{prop:comblin} will generally 
depend on $r$. This will not be a problem for our purposes, however, 
as we will fix $r$ when we implement the induction 
argument.
\end{rem}

\part{Gluing}

\section{The supercritical case}
\label{sec:supercrit}

The aim of this section is to complete our characterization of the 
extremals of the Alexandrov-Fenchel inequality in the supercritical case 
(Definition \ref{defn:supercrit}).

\begin{thm}
\label{thm:schneider}
Let $\mathcal{P}:=(P_1,\ldots,P_{n-2})$ be a supercritical collection of
polytopes in $\mathbb{R}^n$ {\textup(}$n\ge 2${\textup)}.
For any difference of support functions 
$f:S^{n-1}\to\mathbb{R}$, we have $S_{f,\mathcal{P}}=0$ if and only if
there exists $s\in\mathbb{R}^n$ so that
$f(x)=\langle s,x\rangle$ for all $x\in\supp S_{B,\mathcal{P}}$.
\end{thm}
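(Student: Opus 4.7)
The plan is to induct on the ambient dimension $n$, with base case $n=2$ handled by the classical Minkowski uniqueness theorem: when $\mathcal{P}$ is empty (so supercriticality is vacuous) and $\supp S_B=S^1$, the condition $S_f=0$ on $S^1$ forces the difference of support functions $f$ to coincide with a linear function.

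For the inductive step, I would first apply Theorem \ref{thm:localaf} to replace $f$ by a difference of support functions $g$ satisfying $g=f$ on $\supp S_{B,\mathcal{P}}$, $S_{g,\mathcal{P}}=0$, and $S_{g,g,\mathcal{P}_{\backslash r}}\le 0$ for some chosen $r\in[n-2]$; since $g=f$ on $\supp S_{B,\mathcal{P}}$, the desired conclusion for $f$ is equivalent to the analogous statement for $g$. Next I would upgrade the sign inequality to $S_{g,g,\mathcal{P}_{\backslash r}}=0$ via a Weyl-type argument: $S_{g,\mathcal{P}}=0$ together with symmetry of mixed volumes gives $\int h_{P_r}\,dS_{g,g,\mathcal{P}_{\backslash r}}=0$, and after translating $P_r$ so $0\in\relint P_r$ one has $h_{P_r}\ge 0$ with vanishing set $(\aff P_r)^\perp\cap S^{n-1}$, of codimension at least $3$ by the supercritical bound $\dim P_r\ge 3$. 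Nonpositivity then forces $S_{g,g,\mathcal{P}_{\backslash r}}$ to live on this low-codimension subsphere, and coupled with the containment $\supp S_{g,g,\mathcal{P}_{\backslash r}}\subseteq\supp S_{B,B,\mathcal{P}_{\backslash r}}$ (iterated Lemma \ref{lem:maxsupp}) and the residual supercriticality of $\mathcal{P}_{\backslash r}$, this should rule out any nontrivial concentration and yield $S_{g,g,\mathcal{P}_{\backslash r}}=0$.

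Once both $S_{g,\mathcal{P}}=0$ and $S_{g,g,\mathcal{P}_{\backslash r}}=0$ are in hand, I would integrate each against $h_{[0,u]}$ for $u\in S^{n-1}$ and invoke Corollary \ref{cor:segproj} to obtain, inside $u^\perp\cong\mathbb{R}^{n-1}$,
\[
\V_{n-1}(\proj_{u^\perp}g,\proj_{u^\perp}\mathcal{P})=0,\qquad
\V_{n-1}(\proj_{u^\perp}g,\proj_{u^\perp}g,\proj_{u^\perp}\mathcal{P}_{\backslash r})=0.
\]
This is precisely the equality condition \eqref{eq:eq3af} in $\mathbb{R}^{n-1}$, so Lemma \ref{lem:afeq}(a) converts it into $S_{\proj_{u^\perp}g,\proj_{u^\perp}\mathcal{P}_{\backslash r}}=0$ in $u^\perp$ provided $\proj_{u^\perp}P_r$ is sufficiently nondegenerate. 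Applying the inductive hypothesis in dimension $n-1$ (which requires $\proj_{u^\perp}\mathcal{P}_{\backslash r}$ to be supercritical in $u^\perp$) produces a vector $s(u)\in u^\perp$ such that $\proj_{u^\perp}g=\langle s(u),\cdot\rangle$ on $\supp S_{\proj_{u^\perp}B,\proj_{u^\perp}\mathcal{P}_{\backslash r}}$; using $h_{\proj_{u^\perp}K}(x)=h_K(x)$ for $x\perp u$, one concludes that $g(x)=\langle s(u),x\rangle$ at every $x\perp u$ in the relevant support.

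The hardest step is the gluing: assembling the linear pieces $\langle s(u),\cdot\rangle$ indexed by varying $u$ into a single global linear function on $\supp S_{B,\mathcal{P}}$. Projection reduces dimension by at most one, so the strict inequalities $\dim(P_{i_1}+\cdots+P_{i_k})\ge k+2$ defining supercriticality in $\mathbb{R}^n$ only give $\ge k+1$ in generic $u^\perp$, and the inductive hypothesis fails to apply uniformly. I would therefore restrict attention to a dense set of good directions $u$ transverse to each of the finitely many affine hulls of sub-sums of $\mathcal{P}_{\backslash r}$, for which all strict inequalities are preserved, and then show that the supports $\supp S_{\proj_{u^\perp}B,\proj_{u^\perp}\mathcal{P}_{\backslash r}}$ overlap sufficiently as $u$ ranges over this good set to force the vectors $s(u)$ to arise from a single $s\in\mathbb{R}^n$. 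The slack of $2$ rather than $1$ in supercriticality is precisely the geometric room that makes both the existence of good directions and the overlap argument work, whereas in the merely critical case this slack is gone and genuinely new extremals (the degenerate pairs of Definition \ref{defn:deg}) appear.
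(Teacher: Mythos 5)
Your overall architecture matches the paper's proof of Theorem~\ref{thm:schneider} almost exactly: apply Theorem~\ref{thm:localaf} to find a corrected representative $g$ that agrees with $f$ on $\supp S_{B,\mathcal{P}}$, upgrade the sign information to equalities of projected mixed volumes, reduce to the induction hypothesis in hyperplanes $u^\perp$, and glue the resulting linear functions $\langle s(u),\cdot\rangle$ into a single $\langle s,\cdot\rangle$ using overlap of the supports $\supp S_{[0,u],B,\mathcal{P}_{\backslash r}}$. That is precisely the content of Lemmas~\ref{lem:superpproj}, \ref{lem:superumeasure1}, \ref{lem:supercommon}, \ref{lem:superconst}, and \ref{lem:superglue}.

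There is, however, a genuine error in the second step. You assert that $S_{g,g,\mathcal{P}_{\backslash r}}\le 0$ together with the containment $\supp S_{g,g,\mathcal{P}_{\backslash r}}\subseteq\supp S_{B,B,\mathcal{P}_{\backslash r}}$ and the supercriticality of $\mathcal{P}_{\backslash r}$ forces $S_{g,g,\mathcal{P}_{\backslash r}}=0$. This is not justified and is false in general. The Weyl argument only shows that the nonpositive measure $S_{g,g,\mathcal{P}_{\backslash r}}$ is carried by $\{h_{P_r}=0\}=(\aff P_r)^\perp\cap S^{n-1}$. But $\supp S_{B,B,\mathcal{P}_{\backslash r}}$ is characterized by dimensionality conditions on the faces of $P_i$ for $i\ne r$ only, and nothing in the supercriticality hypothesis prevents it from meeting $(\aff P_r)^\perp$ when $P_r$ is lower-dimensional (which is exactly the regime in which the supercritical case is nontrivial — otherwise $P_r$ is full-dimensional and Lemma~\ref{lem:weyl} applies directly). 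There is simply no constraint linking the support of a measure that does not involve $P_r$ to the affine hull of $P_r$.

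The fix — which is what the paper actually does — is to settle for the weaker conclusion. From $1_{h_{P_r}>0}\,dS_{g,g,\mathcal{P}_{\backslash r}}=0$ and $\varepsilon[0,u]\subseteq P_r$ for $u\in S^{n-1}\cap\sspan P_r$, one only obtains
$$\V_{n-1}\bigl(\proj_{u^\perp}g,\proj_{u^\perp}g,\proj_{u^\perp}\mathcal{P}_{\backslash r}\bigr)=0\quad\text{for }u\in S^{n-1}\cap\sspan P_r,$$
not for all $u\in S^{n-1}$. This is the precise form of Lemma~\ref{lem:superpproj}. The induction and the gluing then have to be run over a full-measure subset $U\subset S^{n-1}\cap\sspan P_r$, and here is where $\dim P_r\ge 3$ (from supercriticality) is used not as you describe but rather to guarantee that $S^{n-1}\cap\sspan P_r$ is a sphere of dimension at least $2$, giving enough linearly independent directions to force the gluing (Lemmas~\ref{lem:supercommon}--\ref{lem:superglue}). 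Your subsequent steps should go through once you restrict the set of directions $u$ accordingly and drop the false intermediate claim $S_{g,g,\mathcal{P}_{\backslash r}}=0$.
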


Let us note that Theorem \ref{thm:schneider} is simply a reformulation of 
Corollary \ref{cor:schneider}.

\begin{proof}[Proof of Corollary \ref{cor:schneider}]
By Lemma \ref{lem:simpleeq}, the equality condition in 
Corollary \ref{cor:schneider} holds if and only if there exists $a>0$ such 
that $S_{f,\mathcal{P}}=0$ for $f=h_K-ah_L$. The conclusion now follows
immediately from Theorem \ref{thm:schneider}.
\end{proof}

\begin{rem}
\label{rem:twodex}
We fixed at the beginning of this paper (section \ref{sec:basic}) $n\ge 3$,
which has been assumed throughout without further comment.
In dimension $n=2$, the collection $\mathcal{P}$ is empty and the 
Alexandrov-Fenchel inequality reduces to Minkowski's first inequality 
\cite[Theorem 7.2.1]{Sch14} whose equality cases are elementary. The 
case $n=2$ does play a role in this paper, however, as it will be used as the 
base case for our induction arguments. For this reason, we have formulated 
Theorem \ref{thm:schneider} for $n\ge 2$. Note that the $n=2$ case is 
always supercritical by definition.
\end{rem}

Most of this section will be devoted to the proof of the induction step.
We therefore fix 
until further notice $n\ge 3$ and a supercritical collection of 
polytopes in $\mathbb{R}^n$. By translation-invariance of mixed area 
measures, the equality condition $S_{f,\mathcal{P}}=0$ is invariant under 
translation of the polytopes in $\mathcal{P}$, so there is no loss of 
generality in assuming that \emph{$P_i$ contains the origin in its 
relative interior for every $i\in[n-2]$}. Consequently, if we define for 
every $\alpha\subseteq[n-2]$ the linear space
$$
	\mathcal{L}_\alpha := \sspan\{P_i:i\in \alpha\} =
	\sspan{\textstyle \sum_{i\in\alpha}P_i},
$$
then $\dim \sum_{i\in\alpha}P_i=\dim\mathcal{L}_\alpha$
for any $\alpha\subseteq[n-2]$.
We will denote by $B_\alpha$ the Euclidean unit ball in 
$\mathcal{L}_\alpha$, and we write 
$\mathcal{L}_r := \mathcal{L}_{\{r\}}$, $B_r:=B_{\{r\}}$.

The above assumptions and notation will be assumed in the sequel without 
further comment. In particular, note that the supercriticality assumption 
may now be formulated as $\dim\mathcal{L}_\alpha\ge|\alpha|+2$ for every 
$\alpha\subseteq[n-2]$, $\alpha\ne\varnothing$. Let us also note the 
simple identity 
$\mathcal{L}_{\alpha\cup\beta}=\mathcal{L}_\alpha+\mathcal{L}_\beta$
that will be used many times.

\subsection{The induction hypothesis}

The proof of Theorem \ref{thm:schneider} proceeds by induction on $n$: in 
the induction step, we will assume the theorem has been proved in 
dimension $n-1$, and deduce its validity in dimension $n$. The aim of this 
section is to formulate the resulting induction hypothesis.
To this end, let us begin by stating a consequence of the
local Alexandrov-Fenchel inequality.

\begin{lem}
\label{lem:superpproj}
Fix $r\in[n-2]$ and a difference of support functions 
$f$ with $S_{f,\mathcal{P}}=0$. Then there exists a difference of 
support functions $g$ with the following properties:
\vspace{.5\abovedisplayskip}
\begin{enumerate}[1.]
\itemsep\abovedisplayskip
\item $g(x)=f(x)$ for all $x\in\supp S_{B,\mathcal{P}}$.
\item $\V_{n-1}(\proj_{u^\perp}g,\proj_{u^\perp}P_r,
\proj_{u^\perp}\mathcal{P}_{\backslash r})=0$
for all $u\in S^{n-1}$.
\item $\V_{n-1}(\proj_{u^\perp}g,\proj_{u^\perp}g,
\proj_{u^\perp}\mathcal{P}_{\backslash r})=0$ for all
$u\in S^{n-1}\cap\mathcal{L}_r$.
\item $\V_{n-1}(\proj_{u^\perp}P_r,\proj_{u^\perp}P_r,
\proj_{u^\perp}\mathcal{P}_{\backslash r})>0$ 
for all $u\in S^{n-1}$.
\end{enumerate}
\vspace{.5\abovedisplayskip}
Here the projections $\proj_{u^\perp}g$, 
$\proj_{u^\perp}\mathcal{P}_{\backslash r}$
are as defined in section \ref{sec:outlinelocalaf}.
\end{lem}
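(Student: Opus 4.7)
The plan is to obtain $g$ as a direct application of the local Alexandrov--Fenchel inequality (Theorem~\ref{thm:localaf}), which furnishes a difference of support functions $g$ with $S_{g,\mathcal{P}}=0$, $S_{g,g,\mathcal{P}_{\backslash r}}\le 0$, and $g=f$ on $\supp S_{B,\mathcal{P}}$. Property~1 is then immediate. The remaining task is to convert these two global identities into the projection-level conditions in properties~2--4, for which the main device is Corollary~\ref{cor:segproj}: it rewrites a mixed volume over $u^\perp$ as $n$ times an ambient mixed volume involving the segment $[0,u]$.

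For property~2, I would combine Corollary~\ref{cor:segproj} with \eqref{eq:mixvolarea} to get
\[
	\V_{n-1}(\proj_{u^\perp}g,\proj_{u^\perp}P_r,\proj_{u^\perp}\mathcal{P}_{\backslash r})
	= n\V_n([0,u],g,\mathcal{P})
	= \int h_{[0,u]}\,dS_{g,\mathcal{P}} = 0.
\]
Property~4 reduces via the same corollary to showing $\V_n([0,u],P_r,P_r,\mathcal{P}_{\backslash r})>0$, which I verify by Lemma~\ref{lem:dim}. For any subtuple of size $k$ drawn from $\{[0,u],P_r,P_r\}\cup\mathcal{P}_{\backslash r}$, let $\ell\in\{0,1\}$ be the number of copies of $[0,u]$ used, let $m\in\{0,1,2\}$ be the number of copies of $P_r$ used, and let $\alpha\subseteq[n-2]\setminus\{r\}$ index the remaining bodies, so $k=\ell+m+|\alpha|$. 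Setting $\beta=\alpha\cup\{r\}$ if $m\ge 1$ and $\beta=\alpha$ otherwise, the dimension of the sum of the subtuple is at least $\dim\mathcal{L}_\beta$. Supercriticality gives $\dim\mathcal{L}_\beta\ge|\beta|+2$ when $\beta\neq\emptyset$; in every case this lower bound dominates $k$, and the trivial cases $\beta=\emptyset$ are handled directly. Hence Lemma~\ref{lem:dim} yields property~4.

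The nontrivial step is property~3. Since $0\in\relint P_r$, the function $h_{P_r}\ge 0$ vanishes on $S^{n-1}$ exactly on $S^{n-1}\cap\mathcal{L}_r^\perp$. Using symmetry and the multilinear extension of mixed volumes to differences of support functions, together with \eqref{eq:mixvolarea},
\[
	\int h_{P_r}\,dS_{g,g,\mathcal{P}_{\backslash r}}
	= n\V_n(P_r,g,g,\mathcal{P}_{\backslash r})
	= n\V_n(g,g,\mathcal{P})
	= \int g\,dS_{g,\mathcal{P}} = 0.
\]
Since $S_{g,g,\mathcal{P}_{\backslash r}}\le 0$ and $h_{P_r}$ is strictly positive off $\mathcal{L}_r^\perp$, this forces the total-variation measure $|S_{g,g,\mathcal{P}_{\backslash r}}|$ to be supported in $S^{n-1}\cap\mathcal{L}_r^\perp$. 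But for any $u\in S^{n-1}\cap\mathcal{L}_r$ the function $h_{[0,u]}(x)=\langle u,x\rangle_+$ vanishes on $\mathcal{L}_r^\perp$, so another application of Corollary~\ref{cor:segproj} and \eqref{eq:mixvolarea} gives
\[
	\V_{n-1}(\proj_{u^\perp}g,\proj_{u^\perp}g,\proj_{u^\perp}\mathcal{P}_{\backslash r})
	= \int h_{[0,u]}\,dS_{g,g,\mathcal{P}_{\backslash r}} = 0,
\]
which is property~3. The only genuine argument beyond invoking Theorem~\ref{thm:localaf} is the localization of the support of $S_{g,g,\mathcal{P}_{\backslash r}}$ to $\mathcal{L}_r^\perp$; this may be read as a refinement of the Weyl argument of Lemma~\ref{lem:weyl} in which the possibly lower-dimensional body $P_r$ replaces a full-dimensional reference body.
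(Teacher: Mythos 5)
Your proposal is correct and follows essentially the same route as the paper: invoke Theorem~\ref{thm:localaf} to produce $g$, read off property~1, use Corollary~\ref{cor:segproj} with \eqref{eq:mixvolarea} for property~2, use nonnegativity of $h_{P_r}$ together with $S_{g,g,\mathcal{P}_{\backslash r}}\le 0$ and $\int h_{P_r}\,dS_{g,g,\mathcal{P}_{\backslash r}}=0$ to localize the support and deduce property~3, and use supercriticality with Lemma~\ref{lem:dim} for property~4. The differences are purely cosmetic: you run the property~4 dimension count via $\V_n([0,u],P_r,P_r,\mathcal{P}_{\backslash r})$ in $\mathbb{R}^n$ whereas the paper works directly with the projected bodies in $u^\perp$, and for property~3 you phrase the localization as ``$|S_{g,g,\mathcal{P}_{\backslash r}}|$ supported in $\mathcal{L}_r^\perp$'' while the paper uses the pointwise bound $\varepsilon h_{[0,u]}\le h_{P_r}$ --- both encode the same fact.
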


\begin{proof}
By Theorem \ref{thm:localaf},
there exists $g=f$ $S_{B,\mathcal{P}}$-a.e.\ such that
$S_{g,\mathcal{P}}=0$ and $S_{g,g,\mathcal{P}_{\backslash r}}\le 0$.
Let us check that each of the claimed properties holds for $g$.
The first property holds by construction. To prove the second property, 
note that 
$$
	0 = \int h_{[0,u]} \,dS_{g,\mathcal{P}} =
	\V_{n-1}(\proj_{u^\perp}g,\proj_{u^\perp}P_r,
	\proj_{u^\perp}\mathcal{P}_{\backslash r}),
$$
where we have used Corollary \ref{cor:segproj} and \eqref{eq:mixvolarea}.

The third property is analogous to Lemma \ref{lem:weyl}, but in the 
present case we cannot assume that $P_r$ is full-dimensional. We first 
note that as $S_{g,\mathcal{P}}=0$, we have
$$
	0 = \int g\,dS_{g,\mathcal{P}} =
	\int h_{P_r}\,dS_{g,g,\mathcal{P}_{\backslash r}}
$$
using \eqref{eq:mixvolarea} and the symmetry of mixed volumes. On the 
other hand, as $S_{g,g,\mathcal{P}_{\backslash r}}\le 0$ by construction,
it follows that $1_{h_{P_r}>0}dS_{g,g,\mathcal{P}_{\backslash r}}=0$. 
Now note that as we assumed $0\in\relint P_r$, there exists 
$\varepsilon>0$ so 
that $\varepsilon[0,u]\subseteq P_r$ for every
$u\in S^{n-1}\cap\mathcal{L}_r$. In particular, this implies
$\varepsilon h_{[0,u]}\le h_{P_r}$
and thus $\{x:h_{[0,u]}(x)>0\} 
\subseteq \{x:h_{P_r}(x)>0\}$
whenever $u\in S^{n-1}\cap\mathcal{L}_r$.
We can therefore conclude that
$$
	0 = \int h_{[0,u]}\,dS_{g,g,\mathcal{P}_{\backslash r}} =
	\V_{n-1}(\proj_{u^\perp}g,\proj_{u^\perp}g,
	\proj_{u^\perp}\mathcal{P}_{\backslash r})
$$
for any $u\in S^{n-1}\cap\mathcal{L}_r$
using Corollary \ref{cor:segproj} and \eqref{eq:mixvolarea}.

It remains to verify the fourth property, which is a consequence of the 
supercriticality assumption. As
$\dim (\sum_{i\in\alpha}P_i)\ge|\alpha|+2$ for all
$\alpha\ne\varnothing$, it follows readily that
$\dim(\sum_{i\in\alpha}\proj_{u^\perp}P_i) \ge |\alpha|+1$ for
$\alpha\ne\varnothing$, and thus also
$\dim(\proj_{u^\perp}P_r + \sum_{i\in\alpha}\proj_{u^\perp}P_i) \ge 
|\alpha|+1$ for all $\alpha$. The fourth property now 
follows from Lemma \ref{lem:dim}.
\end{proof}

From now on, we will fix $r\in[n-2]$ and a difference of support functions 
$f$ with $S_{f,\mathcal{P}}=0$, and construct the difference of support 
functions $g$ as in Lemma \ref{lem:superpproj}. In particular, Lemma 
\ref{lem:superpproj} ensures that the projection $\proj_{u^\perp}g$ yields 
an equality case \eqref{eq:eq3af} of the Alexandrov-Fenchel inequality in 
dimension $n-1$ for any $u\in S^{n-1}\cap\mathcal{L}_r$. If we now assume 
that the conclusion of Theorem \ref{thm:schneider} is valid in dimension 
$n-1$, this will yield an explicit characterization of $\proj_{u^\perp}g$ 
that will serve as the induction hypothesis for the proof of Theorem 
\ref{thm:schneider} in dimension $n$.

Theorem \ref{thm:schneider} is only valid, however, if the 
supercriticality assumption is satisfied. In order to implement the above 
program, we must therefore show that the supercriticality assumption on 
$\mathcal{P}$ is inherited by $\proj_{u^\perp}\mathcal{P}_{\backslash r}$. 
We will presently show that this is in fact the case for almost all 
directions $u$, which will suffice for our purposes. More precisely, let 
us define the sets
$$
	N := \bigcup_{\substack{%
	\alpha\subseteq[n-2]\backslash\{r\}: \\
	\dim\mathcal{L}_\alpha=|\alpha|+2}}
	S^{n-1}\cap\mathcal{L}_r\cap\mathcal{L}_\alpha,
	\qquad\quad
	U := (S^{n-1}\cap\mathcal{L}_r)\backslash N.
$$
Then we have the following lemma. Here and in the remainder of this paper, 
we will frequently use the following simple linear algebra fact without 
further comment: for any linear subspace $E\subseteq\mathbb{R}^n$ and 
$u\in S^{n-1}$, we have $\dim(\proj_{u^\perp}E)=\dim E$ if $u\not\in E$, 
whereas $\dim(\proj_{u^\perp}E)=\dim E-1$ if $u\in E$.

\begin{lem}
\label{lem:superumeasure1}
The following hold:
\begin{enumerate}[a.]
\itemsep\abovedisplayskip
\item $\proj_{u^\perp}\mathcal{P}_{\backslash r}$ is supercritical
for every $u\in U$.
\item $U$ has full measure with respect to the
uniform measure on $S^{n-1}\cap\mathcal{L}_r$.
\end{enumerate}
\end{lem}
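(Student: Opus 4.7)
The plan is to verify both parts by elementary dimension-counting directly from the supercriticality hypothesis on $\mathcal{P}$. The set $N$ has evidently been designed to exclude precisely the ``bad'' directions for which part (a) would fail, so the only real work is to check this matches up, and then to note that each piece of $N$ is a proper subsphere.

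For part (a), fix $u\in U$. The claim is that $\dim\proj_{u^\perp}\mathcal{L}_\alpha\ge|\alpha|+2$ for every nonempty $\alpha\subseteq[n-2]\setminus\{r\}$, which by Lemma~\ref{lem:dim} is exactly the supercriticality of $\proj_{u^\perp}\mathcal{P}_{\backslash r}$ inside $u^\perp$. I would split into two subcases according to whether $u\in\mathcal{L}_\alpha$. If $u\notin\mathcal{L}_\alpha$, then $\dim\proj_{u^\perp}\mathcal{L}_\alpha=\dim\mathcal{L}_\alpha\ge|\alpha|+2$ directly from supercriticality of $\mathcal{P}$. If $u\in\mathcal{L}_\alpha$, the projection loses exactly one dimension, so I need the sharper inequality $\dim\mathcal{L}_\alpha\ge|\alpha|+3$. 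If instead $\dim\mathcal{L}_\alpha=|\alpha|+2$, then $u\in S^{n-1}\cap\mathcal{L}_r\cap\mathcal{L}_\alpha$ would place $u$ in $N$, contradicting $u\in U$. This is the only case that requires the definition of $N$.

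For part (b), since $N$ is a finite union over $\alpha$, it suffices to prove that each piece $S^{n-1}\cap\mathcal{L}_r\cap\mathcal{L}_\alpha$ (with $\dim\mathcal{L}_\alpha=|\alpha|+2$ and $r\notin\alpha$) has measure zero in $S^{n-1}\cap\mathcal{L}_r$. For this I would show $\mathcal{L}_r\not\subseteq\mathcal{L}_\alpha$: indeed, if $\mathcal{L}_r\subseteq\mathcal{L}_\alpha$ then $\mathcal{L}_{\alpha\cup\{r\}}=\mathcal{L}_\alpha+\mathcal{L}_r=\mathcal{L}_\alpha$ has dimension $|\alpha|+2=|\alpha\cup\{r\}|+1$, which contradicts supercriticality of $\mathcal{P}$ applied to the nonempty set $\alpha\cup\{r\}$. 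Therefore $\mathcal{L}_r\cap\mathcal{L}_\alpha$ is a proper subspace of $\mathcal{L}_r$, its intersection with $S^{n-1}$ is a subsphere of strictly lower dimension, and hence has measure zero in the uniform measure on $S^{n-1}\cap\mathcal{L}_r$.

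There is no serious obstacle: both parts are bookkeeping around the definitions, and the set $N$ has been arranged so that the one delicate subcase in part (a)---when projecting in a direction lying inside $\mathcal{L}_\alpha$ for a ``tight'' $\alpha$---is exactly excluded. The one point worth emphasizing is that the relevant supercriticality inequality needed in part (b) must be invoked for $\alpha\cup\{r\}$, not for $\alpha$ itself; this is what forces the definition of $N$ to range over $\alpha\subseteq[n-2]\setminus\{r\}$ rather than over all of $[n-2]$.
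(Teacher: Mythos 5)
Your proposal is correct and follows essentially the same reasoning as the paper: for part (a) a dichotomy according to whether $u$ lies in $\mathcal{L}_\alpha$, using that a ``tight'' $\alpha$ with $u\in\mathcal{L}_\alpha$ is exactly what $N$ excludes (the paper states the same thing contrapositively), and for part (b) the identical computation showing $\mathcal{L}_r\not\subseteq\mathcal{L}_\alpha$ by applying supercriticality to $\alpha\cup\{r\}$. One small citation note: the equivalence you use, $\dim\proj_{u^\perp}\mathcal{L}_\alpha\ge|\alpha|+2$ for all nonempty $\alpha\subseteq[n-2]\backslash\{r\}$ being the same as supercriticality, comes straight from Definition~\ref{defn:supercrit} (together with $P_i$ containing the origin in its relative interior), not from Lemma~\ref{lem:dim}.
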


\begin{proof}
To prove part $a$, consider $u\in S^{n-1}\cap\mathcal{L}_r$ such that 
$\proj_{u^\perp}\mathcal{P}_{\backslash r}$ is not supercritical. Then 
$\dim(\sum_{i\in\alpha}\proj_{u^\perp}P_i)<|\alpha|+2$ for some 
$\alpha\subseteq[n-2]\backslash\{r\}$, $\alpha\ne\varnothing$.
On the other hand, as $\mathcal{P}$ is supercritical, we have
$\dim(\sum_{i\in\alpha}P_i)\ge|\alpha|+2$. By the above linear algebra 
fact, this can only occur
if $\dim(\sum_{i\in\alpha}P_i)=|\alpha|+2$ and 
$u\in\mathcal{L}_\alpha$, so that $u\in N$. Thus if $u\in 
U$, then $\proj_{u^\perp}\mathcal{P}_{\backslash r}$ must be 
supercritical.

To prove part $b$, it suffices to show that $N$ is the intersection of
$S^{n-1}\cap\mathcal{L}_r$ with hyperplanes of codimension at least 
one. That is, for any $\alpha\subseteq[n-2]\backslash\{r\}$ such that
$\dim\mathcal{L}_\alpha=|\alpha|+2$, we claim that
$\dim(\mathcal{L}_\alpha\cap\mathcal{L}_r)<\dim\mathcal{L}_r$. Indeed, 
if this is not the case, we must have $\mathcal{L}_r\subseteq
\mathcal{L}_\alpha$. But that would imply that
$\dim\mathcal{L}_{\alpha\cup\{r\}}=\dim\mathcal{L}_\alpha=|\alpha|+2$, 
contradicting the supercriticality assumption on $\mathcal{P}$.
\end{proof}

Combining the above observations, we can now formally state the induction 
hypothesis (recall that $r,f,g$ have been fixed in the remainder of this 
section).

\begin{cor}
\label{cor:superinduct}
Suppose that Theorem \ref{thm:schneider} has been proved in dimension 
$n-1$. Then for any $u\in U$, there exists $s(u)\in u^\perp$ such that
$$
	g(x) = \langle s(u),x\rangle \quad
	\mbox{for all }x\in\supp S_{[0,u],B,\mathcal{P}_{\backslash r}}.
$$
\end{cor}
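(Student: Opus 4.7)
Fix $u \in U$; the strategy is to apply the inductive hypothesis (Theorem \ref{thm:schneider} in dimension $n-1$) inside the hyperplane $u^\perp$, and then to transfer the resulting identity back to $\mathbb{R}^n$.

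First I would verify the hypotheses of the inductive step. By Lemma \ref{lem:superumeasure1}(a), the collection $\proj_{u^\perp}\mathcal{P}_{\backslash r}$ is supercritical in $u^\perp$ (this is exactly why $u$ was restricted to $U$). Combining parts (2)--(4) of Lemma \ref{lem:superpproj}, Lemma \ref{lem:afeq}(a) applied in $u^\perp$ with $L := \proj_{u^\perp}P_r$ (whose squared mixed volume is positive by (4)) gives
$$S_{\proj_{u^\perp}g,\,\proj_{u^\perp}\mathcal{P}_{\backslash r}} \;=\; 0 \qquad\text{in } u^\perp.$$
Since $\proj_{u^\perp}g$ is a difference of support functions on $u^\perp$, the induction hypothesis produces some $s(u)\in u^\perp$ with
$$\proj_{u^\perp}g(x) \;=\; \langle s(u),x\rangle \qquad\text{for all } x \in \supp S_{B',\,\proj_{u^\perp}\mathcal{P}_{\backslash r}},$$
where $B'$ denotes the Euclidean unit ball of $u^\perp$.

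It then remains to identify supports and to replace $\proj_{u^\perp}g$ by $g$. From \eqref{eq:mixvolarea}, Corollary \ref{cor:segproj}, and the symmetry of mixed volumes, one has for every convex body $K$,
$$\int h_K\,dS_{[0,u],B,\mathcal{P}_{\backslash r}} \;=\; \V_{n-1}(\proj_{u^\perp}K,\,B',\,\proj_{u^\perp}\mathcal{P}_{\backslash r}) \;=\; \frac{1}{n-1}\int h_K\,dS_{B',\,\proj_{u^\perp}\mathcal{P}_{\backslash r}},$$
using $h_{\proj_{u^\perp}K}=h_K|_{u^\perp}$. This identifies $S_{[0,u],B,\mathcal{P}_{\backslash r}}$ with $\tfrac{1}{n-1}\,S_{B',\,\proj_{u^\perp}\mathcal{P}_{\backslash r}}$ as measures on $u^\perp\cap S^{n-1}$, and in particular shows that $S_{[0,u],B,\mathcal{P}_{\backslash r}}$ is concentrated there; hence the two supports agree. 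Since moreover $\proj_{u^\perp}x=x$ yields $\proj_{u^\perp}g(x)=g(x)$ for $x\in u^\perp$, the claimed identity $g(x)=\langle s(u),x\rangle$ on $\supp S_{[0,u],B,\mathcal{P}_{\backslash r}}$ follows.

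Nothing here is really difficult: all the substantive work has already been packaged into Lemmas \ref{lem:superpproj} and \ref{lem:superumeasure1}, which in turn rest on the local Alexandrov--Fenchel inequality (Theorem \ref{thm:localaf}). The only mild bookkeeping point is the support identification between the $n$- and $(n-1)$-dimensional mixed area measures, and that drops out directly from Corollary \ref{cor:segproj} as displayed above.
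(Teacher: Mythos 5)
Your proposal is correct and follows essentially the same route as the paper's proof: apply Lemma \ref{lem:superpproj} and Lemma \ref{lem:afeq}(a) in $u^\perp$ to get $S_{\proj_{u^\perp}g,\proj_{u^\perp}\mathcal{P}_{\backslash r}}=0$, invoke the supercriticality of $\proj_{u^\perp}\mathcal{P}_{\backslash r}$ from Lemma \ref{lem:superumeasure1}, apply Theorem \ref{thm:schneider} inductively in $u^\perp$, and then transfer back to $\mathbb{R}^n$ using $\proj_{u^\perp}g(x)=g(x)$ for $x\in u^\perp$ and the support identification from Corollary \ref{cor:segproj}. The extra bookkeeping you spell out for the measure identification is exactly the content of Remark \ref{rem:lowdimma} in the paper.
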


\begin{proof}
Applying Lemma \ref{lem:afeq} in $u^\perp$ and Lemma
\ref{lem:superpproj}, we obtain 
$S_{\proj_{u^\perp}g,\proj_{u^\perp}\mathcal{P}_{\backslash r}}=0$.
As $\proj_{u^\perp}\mathcal{P}_{\backslash r}$ is supercritical by Lemma 
\ref{lem:superumeasure1}, applying Theorem
\ref{thm:schneider} in $u^\perp$ yields
$$
	\proj_{u^\perp}g(x) = \langle s(u),x\rangle \quad
        \mbox{for all }x\in\supp S_{\proj_{u^\perp}B,\proj_{u^\perp}
        \mathcal{P}_{\backslash r}}.
$$
But as 
$\proj_{u^\perp}g(x)=g(\proj_{u^\perp}x)$ and as
$S_{\proj_{u^\perp}B,\proj_{u^\perp}\mathcal{P}_{\backslash r}}$ is 
supported in $u^\perp$ by definition, we may remove
$\proj_{u^\perp}$ on the left-hand side. The conclusion now follows 
as $\supp S_{\proj_{u^\perp}B,\proj_{u^\perp}
\mathcal{P}_{\backslash r}}=\supp S_{[0,u],B,\mathcal{P}_{\backslash r}}$
by Corollary \ref{cor:segproj}
(see Remark \ref{rem:lowdimma} below).
\end{proof}

\begin{rem}
\label{rem:lowdimma}
In the proof of Corollary \ref{cor:superinduct}, we encountered a mixed 
area measure of the form $S_{\proj_{u^\perp}C_1,\ldots,\proj_{u^\perp}C_{n-2}}$ 
for convex bodies $C_1,\ldots,C_{n-2}$ in $\mathbb{R}^n$. By convention, 
this notation will be taken to mean that 
$\proj_{u^\perp}C_1,\ldots,\proj_{u^\perp}C_{n-2}$ are viewed as convex 
bodies in $u^\perp$, and that the mixed area measure is computed in this 
space. Even though we 
do not specify explicitly in the notation in which space the mixed area 
measure is computed, this will always be clear from context. For example, 
note that the collection 
$\proj_{u^\perp}C_1,\ldots,\proj_{u^\perp}C_{n-2}$ consists of $n-2$ 
bodies, so its mixed area measure only makes sense in an 
$(n-1)$-dimensional space.

Projected mixed area measures may be equivalently expressed as mixed area 
measures in $\mathbb{R}^n$ by Corollary \ref{cor:segproj}. Indeed, note 
that
$$
	\int h\,dS_{[0,u],C_1,\ldots,C_{n-2}} =
	\frac{1}{n-1}\int 
	h\,dS_{\proj_{u^\perp}C_1,\ldots,\proj_{u^\perp}C_{n-2}}
$$
for 
any convex bodies $C_1,\ldots,C_{n-2}$ in $\mathbb{R}^n$
and any difference of support functions $h$
by Corollary \ref{cor:segproj} and \eqref{eq:mixvolarea}, where we used 
again that $\proj_{u^\perp}h(x)=h(\proj_{u^\perp}x)$. As we may choose $h$ 
to be any $C^2$ function by Lemma \ref{lem:c2}, it follows that
$$
	(n-1)\,
	S_{[0,u],C_1,\ldots,C_{n-2}} =
	S_{\proj_{u^\perp}C_1,\ldots,\proj_{u^\perp}C_{n-2}}.
$$
This is, of course, the direct counterpart of 
Corollary \ref{cor:segproj} for mixed area measures. Let us note,
in particular, that $\supp S_{[0,u],C_1,\ldots,C_{n-2}} \subset u^\perp$.
\end{rem}

\subsection{The gluing argument}

We now aim to show that the induction hypothesis of Corollary 
\ref{cor:superinduct} implies the conclusion of Theorem 
\ref{thm:schneider} in dimension $n$. To this end, the main issue we 
encounter is to show that $s(u)$ may be replaced by a single vector 
$s\in\mathbb{R}^n$ that is independent of $u$. That is, we must ``glue'' 
together the linear functions obtained for different $u\in U$ to obtain a 
single linear function.

As a first step, we observe that supports of the measures
$S_{[0,u],B,\mathcal{P}_{\backslash r}}$ for 
different $u\in U$ have a small but nontrivial overlap.

\begin{lem}
\label{lem:supercommon}
Let $u,v\in U$ be linearly independent. Then
$$
	\supp S_{[0,u],[0,v],\mathcal{P}_{\backslash r}}
	\subseteq
	\supp S_{[0,u],B,\mathcal{P}_{\backslash r}}
	\cap
	\supp S_{[0,v],B,\mathcal{P}_{\backslash r}},
$$
and
$$
	\sspan\supp S_{[0,u],[0,v],\mathcal{P}_{\backslash r}} = \{u,v\}^\perp.
$$
\end{lem}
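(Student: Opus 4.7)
I would prove the two statements separately.

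For the first inclusion, the plan is to apply Lemma \ref{lem:maxsupp} twice. Taking $M=[0,v]$ together with the remaining bodies $([0,u],\mathcal{P}_{\backslash r})$ and using the symmetry of mixed area measures, Lemma \ref{lem:maxsupp} yields $\supp S_{[0,u],[0,v],\mathcal{P}_{\backslash r}}\subseteq\supp S_{[0,u],B,\mathcal{P}_{\backslash r}}$; the symmetric inclusion follows by exchanging the roles of $u$ and $v$, and intersecting the two gives the claim.

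For the span identity, the plan is to reduce to a mixed-volume positivity statement on $E:=\{u,v\}^\perp$. Applying Remark \ref{rem:lowdimma} twice --- first in $\mathbb{R}^n$ to peel off the factor $[0,u]$ by projecting to $u^\perp$, and then, after extracting the positive scalar $c:=\|\proj_{u^\perp}v\|$ via multilinearity, in $u^\perp$ to peel off $[0,\proj_{u^\perp}v/c]$ by projecting inside $u^\perp$ to $E$ --- one obtains
$$
(n-1)(n-2)\,S_{[0,u],[0,v],\mathcal{P}_{\backslash r}} = c\,S_{\proj_E\mathcal{P}_{\backslash r}}
$$
as measures in $E$. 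Since $u,v$ are linearly independent, $c>0$, so the two sides have identical support and hence identical span. The right-hand side is a mixed area measure in the $(n-2)$-dimensional space $E$; by \eqref{eq:mixvolarea}, its support spans $E$ if and only if $\V_E([0,w],\proj_E\mathcal{P}_{\backslash r})>0$ for every $w\in E\setminus\{0\}$, which I would verify via Lemma \ref{lem:dim} in $E$.

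The crux is to check the dimensional criterion of Lemma \ref{lem:dim}: the key condition is that, for every nonempty $\alpha\subseteq[n-2]\setminus\{r\}$, $\dim([0,w]+\proj_E\mathcal{L}_\alpha)\geq|\alpha|+1$. I would establish the stronger inequality $\dim\proj_E\mathcal{L}_\alpha\geq|\alpha|+1$, which forces this (as well as the version without $[0,w]$) to hold independently of the choice of $w$. Since $E^\perp=\sspan\{u,v\}$ has dimension $2$, one has
$$
\dim\proj_E\mathcal{L}_\alpha = \dim\mathcal{L}_\alpha - \dim\bigl(\mathcal{L}_\alpha\cap\sspan\{u,v\}\bigr),
$$
and supercriticality gives $\dim\mathcal{L}_\alpha\geq|\alpha|+2$. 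When $\dim\mathcal{L}_\alpha\geq|\alpha|+3$ the desired bound is immediate. The hard case --- which I expect to be the main obstacle --- is the minimal case $\dim\mathcal{L}_\alpha=|\alpha|+2$, where the bound would degrade to $|\alpha|$ unless $\dim(\mathcal{L}_\alpha\cap\sspan\{u,v\})\leq 1$. Here the hypothesis $u,v\in U$ is decisive: it forbids both $u\in\mathcal{L}_\alpha$ and $v\in\mathcal{L}_\alpha$, so $\sspan\{u,v\}\not\subseteq\mathcal{L}_\alpha$, yielding the required bound. This is precisely the role for which the set $U$ was designed.
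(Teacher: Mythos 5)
Your proof is correct and essentially follows the paper's route. The first inclusion is exactly the paper's one-line appeal to Lemma \ref{lem:maxsupp} (with the symmetry of mixed area measures), spelled out. For the span identity, the paper argues by contradiction in $\mathbb{R}^n$: if the support lay in $w^\perp$ for some $w\in S^{n-1}\cap\{u,v\}^\perp$, then $\V_n([0,w],[0,u],[0,v],\mathcal{P}_{\backslash r})=0$, contradicting Lemma \ref{lem:dim}, with supercriticality and the definition of $U$ supplying $\dim(\sum_{i\in\alpha}P_i)\ge|\alpha|+2$ and $\dim([0,v]+\sum_{i\in\alpha}P_i)\ge|\alpha|+3$. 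You instead first compute $S_{[0,u],[0,v],\mathcal{P}_{\backslash r}}$ explicitly as $c\,S_{\proj_E\mathcal{P}_{\backslash r}}$ on $E:=\{u,v\}^\perp$ via a double application of Remark \ref{rem:lowdimma}, and then verify the criticality of $\proj_E\mathcal{P}_{\backslash r}$ in $E$ directly. The two formulations are interchangeable by Lemma \ref{lem:proj} (which factors $\V_{E^\perp}([0,u],[0,v])$ off the front), and the dimension count is the same; your version has the modest advantage of exhibiting the projected mixed area measure explicitly, which makes clear that the supports for different pairs $(u,v)$ with the same $E$ coincide, while the paper's is marginally shorter. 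One small remark: the estimate $\dim(\mathcal{L}_\alpha\cap\sspan\{u,v\})\le 1$ in the hard case actually only needs one of $u\notin\mathcal{L}_\alpha$, $v\notin\mathcal{L}_\alpha$ (indeed the paper's version only invokes $v\in U$); invoking both is harmless but not necessary.
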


\begin{proof}
The first claim is immediate by Lemma \ref{lem:maxsupp}.
To prove the second claim, note first that
$\sspan\supp S_{[0,u],[0,v],\mathcal{P}_{\backslash 
r}}\subseteq\{u,v\}^\perp$ by Remark \ref{rem:lowdimma}.
Now suppose the inclusion is strict. Then $\supp 
S_{[0,u],[0,v],\mathcal{P}_{\backslash r}}\subset w^\perp$ for 
some $w\in S^{n-1}\cap\{u,v\}^\perp$, so
$$
	0 = 
	\int \langle w,x\rangle_+ \,
	S_{[0,u],[0,v],\mathcal{P}_{\backslash r}}(dx)
	= n
	\,\V_n([0,w],[0,u],[0,v],\mathcal{P}_{\backslash r})
$$
using $h_{[0,w]}(x)=\langle w,x\rangle_+$ and
\eqref{eq:mixvolarea}. Now note that 
$$
	\dim({\textstyle\sum_{i\in\alpha}P_i})\ge|\alpha|+2
	\qquad\mbox{and}\qquad
	\dim([0,v]+{\textstyle\sum_{i\in\alpha}P_i})\ge|\alpha|+3
$$
for every $\alpha\subseteq[n-2]\backslash\{r\}$, $\alpha\ne\varnothing$ by 
the supercriticality assumption and the definition of $U$. As $u,v,w$ are 
linearly independent, it follows from Lemma 
\ref{lem:dim} that $\V_n([0,w],[0,u],[0,v],\mathcal{P}_{\backslash r})>0$,
which entails a contradiction.
\end{proof}

We can now conclude the following.

\begin{cor}
\label{cor:superauv}
Suppose the conclusion of Corollary \ref{cor:superinduct} holds. Then
there exists a function $a:U\times U\to\mathbb{R}$ such that
$s(u)-s(v)=a(u,v)u-a(v,u)v$ whenever $u,v\in U$ are linearly independent.
\end{cor}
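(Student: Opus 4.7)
The plan is to pin down $s(u)-s(v)$ by exploiting the small but nontrivial overlap between the supports on which $s(u)$ and $s(v)$ are defined, using Lemma~\ref{lem:supercommon}.

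Fix $u,v\in U$ linearly independent. By Lemma~\ref{lem:supercommon}, we have
$$
\supp S_{[0,u],[0,v],\mathcal{P}_{\backslash r}}
\subseteq
\supp S_{[0,u],B,\mathcal{P}_{\backslash r}}
\cap
\supp S_{[0,v],B,\mathcal{P}_{\backslash r}},
$$
so that on this common support, applying the conclusion of Corollary~\ref{cor:superinduct} both with direction $u$ and with direction $v$ yields
$$
\langle s(u),x\rangle = g(x) = \langle s(v),x\rangle,
$$
i.e.\ $\langle s(u)-s(v),x\rangle = 0$ for every $x\in\supp S_{[0,u],[0,v],\mathcal{P}_{\backslash r}}$. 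But the second part of Lemma~\ref{lem:supercommon} asserts that this support spans $\{u,v\}^\perp$, so
$$
s(u)-s(v) \in (\{u,v\}^\perp)^\perp = \sspan\{u,v\}.
$$

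Since $u,v$ are linearly independent, there exist unique scalars $\alpha,\beta\in\mathbb{R}$ with $s(u)-s(v) = \alpha u + \beta v$. Define
$$
a(u,v) := \alpha, \qquad a(v,u) := -\beta,
$$
and extend $a$ arbitrarily on the diagonal (where $u=v$ or the pair is linearly dependent, both sides will be zero modulo an obvious choice). Then by construction $s(u)-s(v) = a(u,v)\,u - a(v,u)\,v$.

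It remains only to check that this definition is internally consistent under swapping $u$ and $v$. Applying the same procedure to the ordered pair $(v,u)$, one writes $s(v)-s(u) = \alpha' v + \beta' u$, where necessarily $\alpha' = -\beta$ and $\beta' = -\alpha$ by uniqueness of the decomposition. Hence $a(v,u) = \alpha' = -\beta$ and $a(u,v) = -\beta' = \alpha$, matching the values above. Thus $a:U\times U\to\mathbb{R}$ is well defined and satisfies the stated identity. There is no serious obstacle here; the content is entirely carried by the overlap statement in Lemma~\ref{lem:supercommon}, which is already in place.
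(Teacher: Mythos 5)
Your proof is correct and follows essentially the same approach as the paper: overlap via Lemma~\ref{lem:supercommon} gives $s(u)-s(v)\in\sspan\{u,v\}$, and the antisymmetry under $u\leftrightarrow v$ forces the coefficient of $v$ to equal $-a(v,u)$. The only cosmetic difference is that the paper introduces two functions $a,b$ for the two coefficients and then derives $b(u,v)=-a(v,u)$, whereas you define $a$ directly and verify consistency; these are the same argument.
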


\begin{proof}
Let $u,v\in U$ be linearly independent. 
By Corollary \ref{cor:superinduct} and
Lemma \ref{lem:supercommon}, 
$$
	\langle s(u),x\rangle = g(x) =
	\langle s(v),x\rangle\quad\mbox{for all }
	x\in \supp S_{[0,u],[0,v],\mathcal{P}_{\backslash r}}.
$$
Thus Lemma \ref{lem:supercommon} implies 
$s(u)-s(v)\perp \{u,v\}^\perp$, so that
$$
	s(u)-s(v) = a(u,v)u + b(u,v)v
$$
for some functions $a,b$. But exchanging 
the roles of $u,v$, we obtain
$$
	a(u,v)u + b(u,v)v =
	s(u)-s(v) = - (s(v)-s(u)) =
	-a(v,u)v-b(v,u)u,
$$
which implies
$b(u,v)=-a(v,u)$ as $u,v$ are linearly independent.
\end{proof}

Next, we show that the function $a(u,v)$ may be chosen to be
independent of $v$.

\begin{lem}
\label{lem:superconst}
Suppose the conclusion of Corollary \ref{cor:superinduct} holds,
and let $v,w\in U$ be linearly independent. Then there is a function 
$b:U\to\mathbb{R}$ such that the function $u\mapsto s(u)-b(u)u$ is 
constant on $U\backslash \sspan\{v,w\}$.
\end{lem}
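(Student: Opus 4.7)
The plan is to exploit Corollary \ref{cor:superauv} with three carefully chosen linearly independent vectors in order to extract algebraic constraints on the function $a(\cdot,\cdot)$, from which the claimed constancy will fall out by substitution.

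First I would fix an arbitrary $u\in U\setminus\sspan\{v,w\}$. Since $v,w$ are linearly independent by hypothesis and $u\notin\sspan\{v,w\}$, the triple $\{u,v,w\}$ is linearly independent. Applying Corollary \ref{cor:superauv} to the three pairs $(u,v)$, $(u,w)$, and $(v,w)$ produces the three vector identities
\begin{align*}
s(u)-s(v) &= a(u,v)u - a(v,u)v,\\
s(u)-s(w) &= a(u,w)u - a(w,u)w,\\
s(v)-s(w) &= a(v,w)v - a(w,v)w.
\end{align*}
Subtracting the second identity from the first gives $s(w)-s(v) = [a(u,v)-a(u,w)]u - a(v,u)v + a(w,u)w$, while the third identity (after a sign change) gives $s(w)-s(v) = -a(v,w)v + a(w,v)w$. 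Equating these two expansions in the basis $\{u,v,w\}$ yields $a(u,v)=a(u,w)$ and $a(v,u)=a(v,w)$ (the third equation $a(w,u)=a(w,v)$ is also obtained but will not be needed). The crucial consequence is that $a(v,u)$ is a single scalar independent of the choice of $u\in U\setminus\sspan\{v,w\}$.

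Next I would define $b(u):=a(u,v)$ for $u\in U\setminus\sspan\{v,w\}$, and extend $b$ arbitrarily (say by zero) to the remaining points of $U$, producing a function $b:U\to\mathbb{R}$. Substituting into the first identity above and using $a(v,u)=a(v,w)$ gives
\[
s(u)-b(u)u = s(v) - a(v,u)v = s(v) - a(v,w)v
\]
for every $u\in U\setminus\sspan\{v,w\}$. Since the right-hand side does not involve $u$, the map $u\mapsto s(u)-b(u)u$ is constant on $U\setminus\sspan\{v,w\}$, which is the assertion of the lemma.

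There is no substantive obstacle in this argument; it is pure linear algebra once Corollary \ref{cor:superauv} is in hand. The only point that requires attention is the verification that $U\setminus\sspan\{v,w\}$ is the correct domain on which the computation goes through, and this is exactly the condition that guarantees $u,v,w$ are linearly independent so that coefficients in the basis $\{u,v,w\}$ can be matched. The real work lies earlier (in Theorem \ref{thm:localaf} and Lemma \ref{lem:supercommon}, which feed into Corollary \ref{cor:superauv}); the present lemma is the clean algebraic step that sets up the final gluing argument in which one will choose a third auxiliary direction to eliminate the $\sspan\{v,w\}$ ambiguity and produce a single vector $s\in\mathbb{R}^n$.
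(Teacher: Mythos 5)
Your proof is correct and follows essentially the same route as the paper's: the paper also derives the exchange property $a(u,v)=a(u,w)$ for linearly independent triples (via the cyclic sum $0 = (s(u)-s(v))+(s(v)-s(w))+(s(w)-s(u))$, which is equivalent to your coefficient-matching), defines $b(u):=a(u,v)$, and then exhibits the constant value of $s(u)-b(u)u$ via Corollary \ref{cor:superauv}. The only cosmetic difference is that the paper's final constant is expressed as $s(w)-b(w)w$ rather than your $s(v)-a(v,w)v$; the substance is identical.
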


\begin{proof}
Let the function $a$ be as in Corollary \ref{cor:superauv}. Consider first 
any linearly independent $u,v,w\in U$. Then we obtain by Corollary 
\ref{cor:superauv}
\begin{align*}
	0 &= s(u)-s(v) + s(v)-s(w) + s(w)-s(u) \\
	&= (a(u,v)-a(u,w))u+(a(v,w)-a(v,u))v + 
	(a(w,u)-a(w,v))w.
\end{align*}
Thus $a(u,v)=a(u,w)$ by linear independence of $u,v,w$.

Let us now fix any linearly independent $v,w\in U$, and let
$b(u):=a(u,v)$ for $u\in U$. As $u,v,w$ are linearly independent for any
$u\in U\backslash \sspan\{v,w\}$, we have $b(u)=a(u,v)=a(u,w)$ and
$b(w)=a(w,v)=a(w,u)$ for all such $u$. Therefore
$$
	s(u)-b(u)u =
	s(w) + s(u)-s(w) - b(u)u 
	= s(w) -b(w)w
$$
for every $u\in U\backslash \sspan\{v,w\}$ by Corollary 
\ref{cor:superauv}.
\end{proof}

Putting together the preceding arguments, we obtain the following.

\begin{lem}
\label{lem:superglue}
Suppose that Theorem \ref{thm:schneider} has been proved in dimension 
$n-1$. Then there exists $s\in\mathbb{R}^n$ such that
$$
	g(x)=\langle s,x\rangle \quad\mbox{for all }x\in\supp
	S_{B,B_r,\mathcal{P}_{\backslash r}}.
$$
\end{lem}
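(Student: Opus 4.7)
The plan is to combine the vectors $s(u)$ from Corollary \ref{cor:superinduct} into a single vector $s$ independent of $u$, and then to glue the resulting identity from $\bigcup_{u\in U}\supp S_{[0,u],B,\mathcal{P}_{\backslash r}}$ up to all of $\supp S_{B,B_r,\mathcal{P}_{\backslash r}}$ by a density argument. The key fact enabling both steps is that supercriticality forces $\dim\mathcal{L}_r\ge 3$.

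First I would fix two pairs $(v_1,w_1),(v_2,w_2)$ of linearly independent vectors in $U$ with distinct spans; such pairs exist since $U$ has full measure in $S^{n-1}\cap\mathcal{L}_r$ by Lemma \ref{lem:superumeasure1} and $\dim\mathcal{L}_r\ge 3$. Lemma \ref{lem:superconst} then furnishes functions $b_k\colon U\to\mathbb{R}$ and constant vectors $s_k\in\mathbb{R}^n$ with $s(u)=s_k+b_k(u)\,u$ for every $u\in U\setminus\sspan\{v_k,w_k\}$, $k=1,2$. For any $u\in U$ outside both spans this gives $s_1-s_2=(b_2(u)-b_1(u))\,u$; choosing two such $u$ that are linearly independent (possible by $\dim\mathcal{L}_r\ge 3$ and the density of $U$) forces $s_1=s_2=:s$. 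Since every $u\in U$ avoids at least one of the two spans, we obtain a single $s\in\mathbb{R}^n$ and a scalar $b(u)\in\mathbb{R}$ with $s(u)=s+b(u)\,u$ for every $u\in U$. As $\supp S_{[0,u],B,\mathcal{P}_{\backslash r}}\subseteq u^\perp$ by Remark \ref{rem:lowdimma}, Corollary \ref{cor:superinduct} then yields
\[
 g(x)=\langle s(u),x\rangle=\langle s,x\rangle+b(u)\langle u,x\rangle=\langle s,x\rangle
\]
for every $u\in U$ and $x\in\supp S_{[0,u],B,\mathcal{P}_{\backslash r}}$.

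To extend this identity to all of $\supp S_{B,B_r,\mathcal{P}_{\backslash r}}$, I would adapt Lemma \ref{lem:intseg} to the subspace $\mathcal{L}_r$: by rotational symmetry $\int_{S^{n-1}\cap\mathcal{L}_r}\langle u,x\rangle_+\,d\sigma_r(u)$ is a constant multiple of $h_{B_r}(x)$, where $\sigma_r$ denotes the uniform measure on $S^{n-1}\cap\mathcal{L}_r$, and together with \eqref{eq:mixvolarea} this yields the measure identity
\[
 \int_{S^{n-1}\cap\mathcal{L}_r} S_{[0,u],B,\mathcal{P}_{\backslash r}}\,d\sigma_r(u)=c\,S_{B_r,B,\mathcal{P}_{\backslash r}}
\]
for some $c>0$. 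Since $N$ is $\sigma_r$-null by Lemma \ref{lem:superumeasure1}, the integral may be restricted to $U$ without change. Consequently, for every $x_0\in\supp S_{B,B_r,\mathcal{P}_{\backslash r}}$ and every neighborhood $O$ of $x_0$ there exists $u\in U$ with $S_{[0,u],B,\mathcal{P}_{\backslash r}}(O)>0$, and hence a point $x\in O\cap\supp S_{[0,u],B,\mathcal{P}_{\backslash r}}$ on which $g(x)=\langle s,x\rangle$. Letting $O$ shrink to $\{x_0\}$ produces a sequence $x_k\to x_0$ along which $g$ and $\langle s,\cdot\rangle$ agree, and continuity of $g$ (as a difference of support functions) finishes the argument.

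The main obstacle is the first step: $s(u)$ is determined only modulo a radial multiple of $u$, and $U$ is dense but not open, so the identifications of the constants coming from two different applications of Lemma \ref{lem:superconst} must be carried out with some care. The condition $\dim\mathcal{L}_r\ge 3$ arising from supercriticality is precisely what provides enough linearly independent directions in $U\setminus(\sspan\{v_1,w_1\}\cup\sspan\{v_2,w_2\})$ to pin down a single $s$.
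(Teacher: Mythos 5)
Your proof is correct and follows essentially the same route as the paper: Corollary \ref{cor:superinduct} and Lemma \ref{lem:superconst} to produce a single $s$, then the integral identity $\int S_{[0,u],B,\mathcal{P}_{\backslash r}}\,d\sigma_r(u)=c\,S_{B_r,B,\mathcal{P}_{\backslash r}}$ to pass from individual slices $u^\perp$ to the full support. The only cosmetic differences are that the paper is content to pin $s(u)-b(u)u$ down on the full-measure set $U\setminus\sspan\{v,w\}$ for a single pair (which already suffices for the integration) rather than on all of $U$ via two pairs, and concludes by integrating $|g-\langle s,\cdot\rangle|$ directly to zero rather than by your point-approximation argument; both variants are valid and rely on the same use of $\dim\mathcal{L}_r\ge 3$ and continuity of $g$.
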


\begin{proof}
We begin by noting that $\dim P_r\ge 3$ by the supercriticality 
assumption. Therefore, as $U$ has full measure in 
$S^{n-1}\cap\mathcal{L}_r$ by Lemma \ref{lem:superumeasure1}, we may 
choose linearly independent $v,w\in U$. Moreover, as $\dim P_r\ge 3$ and 
$\dim\sspan\{u,v\}=2$, it follows that $U\backslash\sspan\{v,w\}$ still 
has full measure.

By Corollary \ref{cor:superinduct} and Lemma \ref{lem:superconst},
there exists a function $b:U\to\mathbb{R}$ and $s\in\mathbb{R}^n$ so that
$s(u)-b(u)u=s$ for all $u\in U\backslash\sspan\{v,w\}$. Thus
Corollary \ref{cor:superinduct} yields
$$
	g(x)=\langle s,x\rangle\quad\mbox{for all }x\in
	\supp S_{[0,u],B,\mathcal{P}_{\backslash r}}
	\mbox{ and }
	u\in U\backslash\sspan\{v,w\},
$$
where
we used that $\langle s,x\rangle = \langle s(u),x\rangle$ for 
$x\in u^\perp$.

Now note that it follows as in the proof of Lemma \ref{lem:intseg} and 
Remark \ref{rem:lowdimma} that $\int S_{[0,u],B,\mathcal{P}_{\backslash 
r}}\,\omega_r(du) = \kappa_{\dim P_r-1}\,S_{B_r,B,\mathcal{P}_{\backslash
r}}$, where $\omega_r$ denotes the uniform measure on   
$S^{n-1}\cap\mathcal{L}_r$. As $U\backslash\sspan\{v,w\}$ has full 
$\omega_r$-measure, we can compute
\begin{align*}
	0 &=
	\int_{U\backslash\sspan\{v,w\}}
	\bigg(\int |g(x)-\langle s,x\rangle|
	\,S_{[0,u],B,\mathcal{P}_{\backslash r}}(dx)\bigg)\,\omega_r(du)
	\\
	&= \kappa_{\dim P_r-1}
	\int |g(x)-\langle s,x\rangle|\,
	S_{B_r,B,\mathcal{P}_{\backslash r}}(dx).
\end{align*}
The conclusion follows by the continuity of $g(x)-\langle s,x\rangle$.
\end{proof}

We have now almost concluded the induction step in the proof of Theorem 
\ref{thm:schneider}, but there is a remaining subtlety: in Lemma 
\ref{lem:superglue} we have shown that $g(x)=\langle s,x\rangle$ for 
$x\in\supp S_{B,B_r,\mathcal{P}_{\backslash r}}$, while the conclusion of 
Theorem \ref{thm:schneider} states that this holds for $x\in\supp 
S_{B,P_r,\mathcal{P}_{\backslash r}}$. That the latter follows from the 
former is an immediate consequence of the following lower-dimensional 
analogue of Lemma \ref{lem:maxsupp}.

\begin{lem}
\label{lem:lowmaxsupp}
For any convex bodies $\mathcal{C}=(C_1,\dots,C_{n-2})$ in $\mathbb{R}^n$,
we have
$$
	\supp S_{P_r,\mathcal{C}}\subseteq\supp S_{B_r,\mathcal{C}}.
$$
\end{lem}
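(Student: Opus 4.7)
The plan is to combine a subspace analogue of Lemma~\ref{lem:intseg} with a face-dimension argument. First, I would establish the identity
\[
\kappa_{m-1}\,S_{B_r,\mathcal{C}}\;=\;\int_{S^{n-1}\cap\mathcal{L}_r}S_{[0,u],\mathcal{C}}\,\omega_r(du),
\]
where $m:=\dim\mathcal{L}_r$ and $\omega_r$ is the uniform measure on $S^{n-1}\cap\mathcal{L}_r$. This is immediate from the identity $\int_{S^{n-1}\cap\mathcal{L}_r}h_{[0,u]}(x)\,\omega_r(du)=\kappa_{m-1}\,h_{B_r}(x)$ (the same computation used in the proof of Lemma~\ref{lem:superglue}) together with multilinearity of mixed area measures and \eqref{eq:mixvolarea}. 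Combined with weak continuity of $u\mapsto S_{[0,u],\mathcal{C}}$ and the Portmanteau theorem for open sets, this yields $\supp S_{[0,u'],\mathcal{C}}\subseteq\supp S_{B_r,\mathcal{C}}$ for every $u'\in S^{n-1}\cap\mathcal{L}_r$: if $x_0\in\supp S_{[0,u'],\mathcal{C}}$ and $V\ni x_0$ is open, then $S_{[0,u],\mathcal{C}}(V)>0$ for $u$ in a neighborhood of $u'$ by lower semicontinuity, so a set of positive $\omega_r$-measure contributes to $S_{B_r,\mathcal{C}}(V)$.

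The core geometric step, which I first treat in the polytope setting obtained via Lemma~\ref{lem:cont}, is: if $u_0\in\supp S_{P_r,\mathcal{C}}$, then Lemma~\ref{lem:mapoly} combined with Lemma~\ref{lem:dim} applied in $u_0^\perp$ produces linearly independent directions $d_r,d_1,\ldots,d_{n-2}$ of segments in $F(P_r,u_0),F(C_1,u_0),\ldots,F(C_{n-2},u_0)$. The key observation is that $F(P_r,u_0)\subseteq P_r\subseteq\mathcal{L}_r$ forces $d_r\in\mathcal{L}_r\cap u_0^\perp$. Setting $u':=d_r/\|d_r\|\in S^{n-1}\cap\mathcal{L}_r\cap u_0^\perp$, we have $F([0,u'],u_0)=[0,u']$ by $u'\perp u_0$, so Lemma~\ref{lem:dim} gives $\V_{n-1}([0,u'],F(C_1,u_0),\ldots,F(C_{n-2},u_0))>0$, and a second application of Lemma~\ref{lem:mapoly} yields $u_0\in\supp S_{[0,u'],\mathcal{C}}$. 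The first step then concludes $u_0\in\supp S_{B_r,\mathcal{C}}$.

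The main obstacle is transferring the conclusion from polytope $\mathcal{C}$ back to arbitrary convex bodies, since support inclusion need not survive weak limits of mixed area measures. My plan is to propagate the auxiliary direction $u'$ through a polytope approximation $\mathcal{C}^{(\ell)}\to\mathcal{C}$: starting from $u_0\in\supp S_{P_r,\mathcal{C}}$, weak convergence of $S_{P_r,\mathcal{C}^{(\ell)}}$ together with Portmanteau yields points $u_\ell\to u_0$ with $u_\ell\in\supp S_{P_r,\mathcal{C}^{(\ell)}}$, each associated via the polytope case with a direction $u_\ell'\in S^{n-1}\cap\mathcal{L}_r\cap u_\ell^\perp$ arising from a segment in $F(P_r,u_\ell)$. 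Compactness of $S^{n-1}\cap\mathcal{L}_r$ delivers a subsequential limit $u_0'\in S^{n-1}\cap\mathcal{L}_r\cap u_0^\perp$. The delicate point is preserving the witnessing positivity $\V_{n-1}([0,u_\ell'],F(C_1^{(\ell)},u_\ell),\ldots,F(C_{n-2}^{(\ell)},u_\ell))>0$ in the limit; I would handle this via the dimensional characterization of Lemma~\ref{lem:dim}, tracking the linearly independent segment directions under Hausdorff continuity of faces $F(P_r,u)$ (which, since $P_r$ is a fixed polytope, is locally controlled by the normal-cone decomposition of $P_r$) to produce linearly independent segments in the limiting faces, whence $u_0\in\supp S_{[0,u_0'],\mathcal{C}}\subseteq\supp S_{B_r,\mathcal{C}}$.
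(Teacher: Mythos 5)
Your first two steps are sound: the integral representation $S_{B_r,\mathcal{C}}\propto\int_{S^{n-1}\cap\mathcal{L}_r}S_{[0,u],\mathcal{C}}\,\omega_r(du)$ goes through exactly as in the proof of Lemma~\ref{lem:superglue} (test against $C^2$ functions, cf.\ Remark~\ref{rem:lowdimma}), and the polytope case correctly exploits that $F(P_r,u_0)\subseteq P_r\subseteq\mathcal{L}_r$ lies in $u_0^\perp+F(P_r,u_0)$, so the segment direction lands in $S^{n-1}\cap\mathcal{L}_r\cap u_0^\perp$. But the approximation step at the end is where the real difficulty lives, and your plan does not close it. Support inclusion is not stable under simultaneous weak convergence of both sides: from $u_\ell\in\supp S_{[0,u_\ell'],\mathcal{C}^{(\ell)}}$ with $u_\ell\to u_0$, $u_\ell'\to u_0'$, $\mathcal{C}^{(\ell)}\to\mathcal{C}$, one cannot deduce $u_0\in\supp S_{[0,u_0'],\mathcal{C}}$ --- the mass near $u_\ell$ may simply vanish in the limit, as in $\mu_\ell=\tfrac1\ell\delta_{1/\ell}+(1-\tfrac1\ell)\delta_1\wto\delta_1$ with $0\in\supp\mu_\ell$ for all $\ell$ but $0\notin\supp\delta_1$. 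Your proposal to track linearly independent segments hits the same wall: the faces of the fixed polytope $P_r$ are indeed controlled by its normal fan, but the segments $I_i^{(\ell)}\subset F(C_i^{(\ell)},u_\ell)$ of the \emph{other} bodies may degenerate to points, and the map $(C,u)\mapsto F(C,u)$ has no useful continuity for general convex bodies, so nothing places segments in the limiting faces $F(C_i,u_0)$. A clean geometric characterization of $\supp S_{P_r,\mathcal{C}}$ for general $\mathcal{C}$ is exactly what is open in Schneider's Conjecture 7.6.14, so an argument relying on it for the approximants and hoping it survives the limit cannot succeed without new input.

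The paper's proof evades this by keeping $\mathcal{C}$ fixed throughout and approximating only in the $P_r$ slot, which it can afford because the key estimate is analytic rather than geometric: by \cite[Lemma 5.4]{SvH19}, $\int h\,dS_{K,\mathcal{C}}\le\|\nabla^2 h_K\|_\infty\int h\,dS_{B,\mathcal{C}}$ for $C^2$ bodies $K$ and nonnegative differences of support functions $h$, \emph{uniformly in $\mathcal{C}$}. Substituting $\mathcal{C}\leftarrow\Pi_\varepsilon^{-1}\mathcal{C}$ with $\Pi_\varepsilon=\proj_{\mathcal{L}_r}+\varepsilon\proj_{\mathcal{L}_r^\perp}$ and sending $\varepsilon\to0$ replaces $B$ by $B_r$ and $K$ by $\proj_{\mathcal{L}_r}K$, giving $\supp S_{\proj_{\mathcal{L}_r}K,\mathcal{C}}\subseteq\supp S_{B_r,\mathcal{C}}$ for every $C^2$ body $K$ and arbitrary fixed $\mathcal{C}$. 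Approximating $P_r$ by smooth bodies $K^{(\ell)}$ in that one slot, each measure $S_{\proj_{\mathcal{L}_r}K^{(\ell)},\mathcal{C}}$ is supported in the \emph{fixed} closed set $\supp S_{B_r,\mathcal{C}}$, so the weak limit $S_{P_r,\mathcal{C}}$ is as well. To rescue your approach you would need either a way to run your geometric argument for general $\mathcal{C}$ without polytopal approximation, or a quantitative one-sided comparison (like the one from \cite{SvH19}) that is stable under the limit you wish to take; neither appears in the proposal.
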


\begin{proof}
Let $K$ be any convex body in $\mathbb{R}^n$ such that $h_K$ is a $C^2$ 
function on $S^{n-1}$. It is shown in \cite[Lemma 5.4]{SvH19} that we
have
$$
	\int h\,dS_{K,\mathcal{C}} \le
	\|\nabla^2 h_K\|_{L^\infty(S^{n-1})}
	\int h\,dS_{B,\mathcal{C}}
$$
for any difference of support functions $h:S^{n-1}\to\mathbb{R}_+$.
Let us now define $\Pi_\varepsilon := \proj_{\mathcal{L}_r} +
\varepsilon\proj_{\mathcal{L}_r^\perp}$. Replacing
$\mathcal{C}\leftarrow\Pi_\varepsilon^{-1}\mathcal{C}$ 
and $h\leftarrow h\circ\Pi_\varepsilon^{-1}$ in the above inequality
yields
$$
	\int h\,dS_{\Pi_\varepsilon K,\mathcal{C}} \le
	\|\nabla^2 h_K\|_{L^\infty(S^{n-1})}
	\int h\,dS_{\Pi_\varepsilon B,\mathcal{C}},
$$
where we have used \eqref{eq:mixvolarea} and part $f$ of 
Lemma \ref{lem:mvprop}.
Letting $\varepsilon\to 0$ yields
$$
	\int h\,dS_{\proj_{\mathcal{L}_r}K,\mathcal{C}} \le
	\|\nabla^2 h_K\|_{L^\infty(S^{n-1})}
	\int h\,dS_{B_r,\mathcal{C}}
$$
by Lemma \ref{lem:cont}. In particular, using Lemma \ref{lem:c2},
this implies that
$$
	\supp S_{\proj_{\mathcal{L}_r}K,\mathcal{C}} \subseteq
	\supp S_{B_r,\mathcal{C}}
$$
for any convex body $K$ in $\mathbb{R}^n$ such that
$h_K$ is $C^2$ on $S^{n-1}$.

By a classical approximation argument \cite[Theorem 3.4.1]{Sch14}, we can 
find a sequence of convex bodies $K^{(l)}$ so that $h_{K^{(l)}}$ is $C^2$ 
for each $l$, and $K^{(l)}\to P_r$ in Hausdorff distance. Thus 
$S_{\proj_{\mathcal{L}_r}K^{(l)},\mathcal{C}}\wto S_{P_r,\mathcal{C}}$ by 
Lemma \ref{lem:cont}. But as each 
$S_{\proj_{\mathcal{L}_r}K^{(l)},\mathcal{C}}$ is supported in $\supp 
S_{B_r,\mathcal{C}}$, this must be the case for the limiting measure as 
well.
\end{proof}

We can now conclude the proof of Theorem \ref{thm:schneider}.

\begin{proof}[Proof of Theorem \ref{thm:schneider}]
The \emph{if} direction of Theorem \ref{thm:schneider} follows directly 
from Lemmas \ref{lem:lineq} and \ref{lem:suppeq}, so it suffices to 
consider the \emph{only if} direction.

Suppose first that Theorem \ref{thm:schneider} has been proved in dimension 
$n-1$ for some $n\ge 3$. Then we claim that Theorem \ref{thm:schneider} 
holds also in dimension $n$. Indeed, let $f:S^{n-1}\to\mathbb{R}$ be a 
difference of support functions such 
that $S_{f,\mathcal{P}}=0$, and let $g$ be the function
constructed in Lemma \ref{lem:superpproj} (for any $r\in[n-2]$ 
that is fixed throughout the proof).
By Lemmas \ref{lem:superglue} and \ref{lem:lowmaxsupp}, there exists
$s\in\mathbb{R}^n$ so that
$$
	g(x)=\langle s,x\rangle \quad\mbox{for all }x\in \supp
	S_{B,\mathcal{P}}.
$$
The claim follows as $f(x)=g(x)$ for all $x\in\supp 
S_{B,\mathcal{P}}$ by Lemma \ref{lem:superpproj}.

It remains to prove the base case $n=2$. More precisely, we claim the 
following: for any difference of support functions $f:S^1\to\mathbb{R}$ 
such that $S_f=0$, there must exist $s\in\mathbb{R}^2$ so that 
$f(x)=\langle s,x\rangle$ for all $x\in S^1$. This is a classical fact; 
for example, it may be deduced from the equality case of the 
Brunn-Minkowski inequality as in \cite[Theorem 7.2.1]{Sch14}. Let us give 
another proof here in order to illustrate a method that will be used again 
in section \ref{sec:proplin} in an essential manner.

Suppose $f$ does not satisfy $f=\langle s,\cdot\rangle$ for any $s$.
Then the Hahn-Banach theorem implies \cite[Corollary IV.3.15]{Con90} that 
there is a finite signed measure $\sigma$ on $S^1$ so that
$$
	\int f\,d\sigma > 0\qquad\mbox{and}\qquad
	\int x\,\sigma(dx)=0.
$$
Let $\sigma=\sigma^+-\sigma^-$ be the Hahn-Jordan decomposition of 
$\sigma$, and 
let $m:=\int x\,\sigma^\pm(dx)$ and
$\mu^\pm := \sigma^{\pm} + \|m\|\delta_{-m/\|m\|} + S_B$. Then $\mu^\pm$ 
are nonnegative measures on $S^1$ so that $\int x\,\mu^\pm(dx)=0$ 
and $\sspan\supp \mu^{\pm}=\mathbb{R}^2$. By the Minkowski existence 
theorem
\cite[Theorem 8.2.2]{Sch14}, there exist convex bodies $C^\pm$ in 
$\mathbb{R}^2$ so that $\mu^\pm = S_{C^\pm}$. But then we obtain
using \eqref{eq:mixvolarea} and the symmetry of mixed volumes
$$
	\int f\,d\sigma = 
	\int f\,dS_{C^+}-\int f\,dS_{C^-} =
	\int h_{C^+}\,dS_f - \int h_{C^-}\,dS_f = 0,
$$
which entails the desired contradiction.
\end{proof}

\begin{rem}
Let us highlight a surprising aspect of the proof of Theorem 
\ref{thm:schneider}. By Lemma 
\ref{lem:suppeq}, the equality condition $S_{f,\mathcal{P}}=0$ can only
determine $f$ on the support of $S_{B,\mathcal{P}}$. However, in Lemma 
\ref{lem:superglue} we have characterized the function $g$ on the support 
of $S_{B,B_r,\mathcal{P}_{\backslash r}}$. The latter set
is often much larger than the former. For example, if 
$P_1=\cdots=P_{n-2}=P$ is 
a full-dimensional polytope, then $\supp S_{B,\mathcal{P}}$ is the set of 
normal directions of $(n-2)$-dimensional faces of $P$, but $\supp 
S_{B,B,\mathcal{P}_{\backslash r}}$ is the set of normal directions of 
$(n-3)$-dimensional faces of $P$ (cf.\ \cite[Theorem 4.5.3]{Sch14}).

Nonetheless, there is no contradiction, as Theorem \ref{thm:localaf} only 
ensures that $f=g$ on the smaller set $\supp S_{B,\mathcal{P}}$. The 
phenomenon exhibited here should be viewed as another manifestation of the 
fact that the local Alexandrov-Fenchel inequality fixes many degrees of 
freedom of the extremal functions. 
\end{rem}

\section{Structure of critical sets}
\label{sec:panov}

We now turn to the study of the extremals of the Alexandrov-Fenchel 
inequality in the critical case (Definition \ref{defn:crit}).
The new feature that arises when $\mathcal{P}$ is critical is the 
appearance of $\mathcal{P}$-degenerate functions (Definition 
\ref{defn:deg}). Their analysis requires several new 
ideas, whose development will occupy us throughout sections 
\ref{sec:panov}--\ref{sec:crit}.

The definition of the critical case differs from the supercritical case 
only in that there may now exist indices $i_1<\cdots<i_k$ so that 
$\dim(P_{i_1}+\cdots+P_{i_k})=k+1$. Such \emph{critical sets} of indices 
will prove to be intimately connected to the structure of 
$\mathcal{P}$-degenerate pairs and functions. For example, we will show 
that for any $\mathcal{P}$-degenerate pair $(M,N)$, the bodies $M,N$ must 
be contained (up to translation) in the affine hull of 
$P_{i_1}+\cdots+P_{i_k}$ for some critical set $i_1<\cdots<i_k$.

In this section, we begin the analysis of the critical case by obtaining a 
classification of the critical sets, which will be used to give an 
explicit description of the structure of $\mathcal{P}$-degenerate 
functions. In section \ref{sec:propeller}, we undertake a detailed 
study of the geometric structure of critical mixed area measures. These 
results will be employed in section \ref{sec:crit} to prove Theorem 
\ref{thm:main} in the critical case.

Throughout this section, we fix $n\ge 3$ and a critical collection 
$\mathcal{P}=(P_1,\ldots,P_{n-2})$ of polytopes in $\mathbb{R}^n$. As in 
section \ref{sec:supercrit}, we will assume without loss of generality 
that $P_i$ contains the origin in its relative interior for every 
$i\in[n-2]$, and we define the spaces $\mathcal{L}_\alpha$ and  
balls $B_\alpha$ as in the supercritical case. The criticality 
assumption may then be formulated as $\dim\mathcal{L}_\alpha\ge|\alpha|+1$ 
for every $\alpha\subseteq[n-2]$, $\alpha\ne\varnothing$.

\subsection{Critical sets}

The following definition will play a central role in the sequel.

\begin{defn}
\label{defn:critset}
Let $\mathcal{C}=(C_1,\ldots,C_m)$ be any collection of convex bodies.
\vspace{.5\abovedisplayskip}
\begin{enumerate}[a.]
\itemsep\abovedisplayskip
\item $\alpha\subseteq[m]$ is called \emph{$\mathcal{C}$-critical} if
$\dim(\sum_{i\in\alpha}C_i)=|\alpha|+1$.
\item $\alpha\subseteq[m]$ is called \emph{$\mathcal{C}$-maximal} if it is 
$\mathcal{C}$-critical, and there is no $\mathcal{C}$-critical set 
$\beta\supsetneq\alpha$.
\end{enumerate}
\vspace{.5\abovedisplayskip}
A $\mathcal{P}$-critical ($\mathcal{P}$-maximal) set
$\alpha\subseteq[n-2]$ will simply be called
\emph{critical} (\emph{maximal}).
\end{defn}

The analysis of degenerate functions will be greatly facilitated by the 
fact that the family of critical sets is organized in a very simple 
manner. The following lemma and its corollary are due to Panov 
\cite[Lemma 6]{Pan85}.

\begin{lem}
\label{lem:panov}
Let $\alpha,\alpha'$ be critical sets. If 
$\alpha\cap\alpha'\ne\varnothing$, then $\alpha\cup\alpha'$ is a critical 
set.
\end{lem}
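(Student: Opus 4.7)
The plan is to prove this by a direct dimension count using the identities for linear spans introduced in the setup. Since we normalized so that $0 \in \relint P_i$ for each $i$, criticality of $\alpha$ is equivalent to $\dim \mathcal{L}_\alpha = |\alpha| + 1$, and the critical lower bound from Definition \ref{defn:crit} reads $\dim \mathcal{L}_\beta \ge |\beta| + 1$ for every nonempty $\beta \subseteq [n-2]$. The central identity we will use is $\mathcal{L}_{\alpha \cup \alpha'} = \mathcal{L}_\alpha + \mathcal{L}_{\alpha'}$, together with the obvious inclusion $\mathcal{L}_{\alpha \cap \alpha'} \subseteq \mathcal{L}_\alpha \cap \mathcal{L}_{\alpha'}$.

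First I would apply the standard linear-algebra dimension formula
\[
\dim(\mathcal{L}_\alpha + \mathcal{L}_{\alpha'}) = \dim \mathcal{L}_\alpha + \dim \mathcal{L}_{\alpha'} - \dim(\mathcal{L}_\alpha \cap \mathcal{L}_{\alpha'}).
\]
By criticality of $\alpha$ and $\alpha'$, the first two terms on the right equal $|\alpha| + 1$ and $|\alpha'| + 1$. For the intersection term, the inclusion $\mathcal{L}_{\alpha \cap \alpha'} \subseteq \mathcal{L}_\alpha \cap \mathcal{L}_{\alpha'}$ combined with the hypothesis $\alpha \cap \alpha' \neq \varnothing$ and the criticality bound for $\mathcal{P}$ gives $\dim(\mathcal{L}_\alpha \cap \mathcal{L}_{\alpha'}) \ge |\alpha \cap \alpha'| + 1$. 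Substituting into the dimension formula and using $|\alpha| + |\alpha'| - |\alpha \cap \alpha'| = |\alpha \cup \alpha'|$ yields
\[
\dim \mathcal{L}_{\alpha \cup \alpha'} \le |\alpha \cup \alpha'| + 1.
\]

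The matching lower bound $\dim \mathcal{L}_{\alpha \cup \alpha'} \ge |\alpha \cup \alpha'| + 1$ is exactly the criticality assumption on $\mathcal{P}$ applied to the nonempty set $\alpha \cup \alpha'$. Combining the two inequalities gives $\dim \mathcal{L}_{\alpha \cup \alpha'} = |\alpha \cup \alpha'| + 1$, which is the definition of $\alpha \cup \alpha'$ being critical.

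There is essentially no obstacle here: the proof is a two-line dimension count, and the only subtlety is recognizing that the role of the hypothesis $\alpha \cap \alpha' \neq \varnothing$ is precisely to make the global criticality bound applicable to $\mathcal{L}_{\alpha \cap \alpha'}$, providing the crucial lower bound on $\dim(\mathcal{L}_\alpha \cap \mathcal{L}_{\alpha'})$ that would otherwise be absent. Without a common index, one has no lower bound on the intersection and the argument breaks down, which is why the disjoint case is excluded from the statement.
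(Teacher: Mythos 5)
Your proof is correct and follows precisely the same argument as the paper: the dimension formula $\dim(\mathcal{L}_\alpha+\mathcal{L}_{\alpha'})=\dim\mathcal{L}_\alpha+\dim\mathcal{L}_{\alpha'}-\dim(\mathcal{L}_\alpha\cap\mathcal{L}_{\alpha'})$, the inclusion $\mathcal{L}_{\alpha\cap\alpha'}\subseteq\mathcal{L}_\alpha\cap\mathcal{L}_{\alpha'}$, and the criticality lower bounds on $\mathcal{L}_{\alpha\cup\alpha'}$ and $\mathcal{L}_{\alpha\cap\alpha'}$ are exactly the ingredients used there. Your closing observation about the role of $\alpha\cap\alpha'\ne\varnothing$ is also the reason the paper's chain of inequalities works.
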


\begin{proof}
For any $\beta,\beta'\subset[n-2]$, we have
$\mathcal{L}_{\beta\cup\beta'}=\mathcal{L}_\beta+\mathcal{L}_{\beta'}$
and $\mathcal{L}_{\beta\cap\beta'}\subseteq
\mathcal{L}_{\beta}\cap\mathcal{L}_{\beta'}$
by the definition of $\mathcal{L}_\beta$.
On the other hand, as we assumed $\mathcal{P}$ is critical and
$\alpha\cap\alpha'\ne\varnothing$, we have
$\dim\mathcal{L}_{\alpha\cup\alpha'}\ge |\alpha\cup\alpha'|+1$
and
$\dim\mathcal{L}_{\alpha\cap\alpha'}\ge |\alpha\cap\alpha'|+1$.
Therefore 
\begin{align*}
	|\alpha\cup\alpha'|+1 &\le
	\dim\mathcal{L}_{\alpha\cup\alpha'} 
	= 
	\dim \mathcal{L}_\alpha + \dim\mathcal{L}_{\alpha'}
	-\dim(\mathcal{L}_\alpha\cap\mathcal{L}_{\alpha'}) \\
	&\le
	\dim \mathcal{L}_\alpha + \dim\mathcal{L}_{\alpha'}
	-\dim\mathcal{L}_{\alpha\cap\alpha'} \\
	&\le
	(|\alpha|+1) + (|\alpha'|+1)
	- (|\alpha\cap\alpha'|+1)
	\\ &= |\alpha\cup\alpha'|+1,
\end{align*}
where we used that $\dim \mathcal{L}_\alpha=|\alpha|+1$ and
$\dim\mathcal{L}_{\alpha'}=|\alpha'|+1$ as $\alpha,\alpha'$ are 
critical.
It follows that $\dim\mathcal{L}_{\alpha\cup\alpha'}=
|\alpha\cup\alpha'|+1$, so $\alpha\cup\alpha'$ is critical.
\end{proof}

The key consequence of Lemma \ref{lem:panov} is that distinct 
\emph{maximal} sets $\alpha,\alpha'$ must be disjoint. This structure is 
also reflected in the associated linear spaces: if $\alpha,\alpha'$ are 
distinct maximal sets, then $\mathcal{L}_\alpha,\mathcal{L}_{\alpha'}$
are linearly independent.

\begin{cor}
\label{cor:panov}
Let $\alpha\ne\alpha'$ be maximal sets. 
Then $\alpha\cap\alpha'=\varnothing$ and
$\mathcal{L}_\alpha\cap\mathcal{L}_{\alpha'}=\{0\}$.
\end{cor}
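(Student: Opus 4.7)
The plan is to deduce both claims from Lemma \ref{lem:panov} by a short argument, together with a dimension count of the type used in the proof of Lemma \ref{lem:panov} itself.

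For the first claim, I would argue by contradiction. Suppose $\alpha\cap\alpha'\ne\varnothing$. Then Lemma \ref{lem:panov} asserts that $\alpha\cup\alpha'$ is critical. Since $\alpha\ne\alpha'$, at least one of the strict inclusions $\alpha\subsetneq\alpha\cup\alpha'$ or $\alpha'\subsetneq\alpha\cup\alpha'$ holds, which contradicts the maximality of whichever set is strictly contained. Hence $\alpha\cap\alpha'=\varnothing$.

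For the second claim, I would again argue by contradiction: suppose $\mathcal{L}_\alpha\cap\mathcal{L}_{\alpha'}\ne\{0\}$. Note first that $\alpha$ and $\alpha'$ are each nonempty, since the empty set satisfies $\dim\mathcal{L}_\varnothing=0<1=|\varnothing|+1$ and so is not critical, hence not maximal. Now by the first claim $\alpha\cap\alpha'=\varnothing$, so $|\alpha\cup\alpha'|=|\alpha|+|\alpha'|$. Using $\mathcal{L}_{\alpha\cup\alpha'}=\mathcal{L}_\alpha+\mathcal{L}_{\alpha'}$ together with criticality of $\alpha$ and $\alpha'$,
$$
\dim\mathcal{L}_{\alpha\cup\alpha'}
=\dim\mathcal{L}_\alpha+\dim\mathcal{L}_{\alpha'}-\dim(\mathcal{L}_\alpha\cap\mathcal{L}_{\alpha'})
\le (|\alpha|+1)+(|\alpha'|+1)-1=|\alpha\cup\alpha'|+1.
$$
Combined with the criticality hypothesis on $\mathcal{P}$, which gives $\dim\mathcal{L}_{\alpha\cup\alpha'}\ge|\alpha\cup\alpha'|+1$, this forces equality, so $\alpha\cup\alpha'$ is itself critical. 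But since $\alpha,\alpha'$ are disjoint and nonempty, $\alpha\subsetneq\alpha\cup\alpha'$, contradicting the maximality of $\alpha$.

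There is really no serious obstacle here: the content of the corollary is entirely contained in Lemma \ref{lem:panov} and in an inclusion-exclusion inequality for dimensions. The one small point to watch is the nonemptiness of maximal sets, which I would dispatch at the outset as indicated above; without it the strict inclusion needed to contradict maximality in the second part would not be automatic.
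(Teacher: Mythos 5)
Your proof is correct and essentially follows the paper's argument: disjointness via Lemma \ref{lem:panov} and maximality, then the dimension formula $\dim(\mathcal{L}_\alpha+\mathcal{L}_{\alpha'})=\dim\mathcal{L}_\alpha+\dim\mathcal{L}_{\alpha'}-\dim(\mathcal{L}_\alpha\cap\mathcal{L}_{\alpha'})$ combined with criticality of $\mathcal{P}$. The only cosmetic difference is that for the second claim the paper argues directly — once $\alpha\cup\alpha'$ is known not to be critical, criticality of $\mathcal{P}$ gives $\dim\mathcal{L}_{\alpha\cup\alpha'}\ge|\alpha\cup\alpha'|+2$, which pins the intersection to zero — whereas you run the same computation as a contradiction, deriving from a nontrivial intersection that $\alpha\cup\alpha'$ would be critical; these are trivially equivalent.
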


\begin{proof}
Let $\alpha,\alpha'$ be distinct maximal sets. Then
$\alpha\cup\alpha'$ cannot be a critical set:
as either $\alpha\cup\alpha'\supsetneq\alpha$ or   
$\alpha\cup\alpha'\supsetneq\alpha'$, this would contradict maximality of 
$\alpha,\alpha'$. Thus $\alpha\cap\alpha'=\varnothing$, as otherwise
$\alpha\cup\alpha'$ would be a 
critical set by Lemma \ref{lem:panov}.

Now note that as $\alpha\cup\alpha'$ is not a critical set and
$\mathcal{P}$ is critical, we have
$$
	|\alpha\cup\alpha'|+2 \le
	\dim\mathcal{L}_{\alpha\cup\alpha'}
	\le
	\dim\mathcal{L}_\alpha+\dim\mathcal{L}_{\alpha'}
	= |\alpha|+|\alpha'|+2 = |\alpha\cup\alpha'|+2,
$$
where we used that $\alpha,\alpha'$ are critical sets and
$\alpha\cap\alpha'=\varnothing$. Thus
$$
	\dim(\mathcal{L}_\alpha\cap\mathcal{L}_{\alpha'}) =
	\dim\mathcal{L}_{\alpha\cup\alpha'} -
	\dim\mathcal{L}_\alpha -
	\dim\mathcal{L}_{\alpha'} = 0,
$$
completing the proof.
\end{proof}

In view of Corollary \ref{cor:panov}, we obtain the following picture. 
Associated to the critical collection $\mathcal{P}$ of polytopes is its 
collection $\{\alpha_1,\ldots,\alpha_\ell\}$ of disjoint maximal sets. Any 
critical set $\beta$ is contained in exactly one of the maximal sets 
$\alpha_i$. Moreover, the linear spaces $\mathcal{L}_{\alpha_i}$ are 
pairwise (but not jointly) linearly independent. The same properties
extend \emph{verbatim} to any critical collection $\mathcal{C}$ of convex 
bodies.

Let us finally record a simple observation.

\begin{lem}
\label{lem:critinc}
Let $\alpha\subseteq[n-2]$ be a critical set and 
$\beta\subseteq[n-2]$ be arbitrary.
Then 
$$
	\mathcal{L}_\beta\subseteq\mathcal{L}_\alpha
	\quad\mbox{if and only if}\quad
	\beta\subseteq\alpha.
$$
\end{lem}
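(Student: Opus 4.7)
The \emph{if} direction is immediate from the definition $\mathcal{L}_\gamma = \sspan\{P_i : i\in\gamma\}$: if $\beta\subseteq\alpha$ then every generator of $\mathcal{L}_\beta$ is a generator of $\mathcal{L}_\alpha$, hence $\mathcal{L}_\beta\subseteq\mathcal{L}_\alpha$. So the content lies in the \emph{only if} direction.

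The plan for the converse is to exploit the criticality hypothesis through a standard dimension count applied to $\alpha\cup\beta$. Assume $\mathcal{L}_\beta\subseteq\mathcal{L}_\alpha$. The case $\beta=\varnothing$ is trivial, so assume $\beta\ne\varnothing$; since $\alpha$ is critical it is nonempty as well (the empty set fails $\dim\{0\}=|\varnothing|+1$). Then using the identity $\mathcal{L}_{\alpha\cup\beta}=\mathcal{L}_\alpha+\mathcal{L}_\beta$ recorded just before Definition~\ref{defn:critset}, the hypothesis gives
$$
\mathcal{L}_{\alpha\cup\beta}=\mathcal{L}_\alpha+\mathcal{L}_\beta=\mathcal{L}_\alpha,
$$
so $\dim\mathcal{L}_{\alpha\cup\beta}=\dim\mathcal{L}_\alpha=|\alpha|+1$, where the last equality uses that $\alpha$ is critical.

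On the other hand, since $\mathcal{P}$ is critical and $\alpha\cup\beta\ne\varnothing$, the criticality condition (stated as $\dim\mathcal{L}_\gamma\ge|\gamma|+1$ for every nonempty $\gamma\subseteq[n-2]$) applied to $\gamma=\alpha\cup\beta$ yields $\dim\mathcal{L}_{\alpha\cup\beta}\ge|\alpha\cup\beta|+1$. Combining the two displays gives $|\alpha\cup\beta|+1\le|\alpha|+1$, i.e.\ $|\alpha\cup\beta|\le|\alpha|$. Since $\alpha\subseteq\alpha\cup\beta$, this forces $\alpha\cup\beta=\alpha$, which is exactly $\beta\subseteq\alpha$.

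There is no real obstacle here — the entire argument is a two-line dimension count, and the only thing to be careful about is the vacuous case $\beta=\varnothing$ (handled at the outset) and the fact that the criticality hypothesis requires the set it is applied to be nonempty (ensured because $\alpha\subseteq\alpha\cup\beta$ and $\alpha\ne\varnothing$). The statement is essentially the assertion that, among collections of bodies whose spans have the minimal possible dimension allowed by criticality, inclusions of spans and inclusions of index sets are equivalent; this is precisely the rigidity that will make the organization of critical and maximal sets (via Lemma~\ref{lem:panov} and Corollary~\ref{cor:panov}) behave like a partition.
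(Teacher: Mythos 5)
Your proof is correct and is essentially the same argument as the paper's: both sides of the equivalence are settled by the single chain $|\alpha|+1 \le |\alpha\cup\beta|+1 \le \dim\mathcal{L}_{\alpha\cup\beta} = \dim\mathcal{L}_\alpha = |\alpha|+1$, using (in order) criticality of $\mathcal{P}$ applied to $\alpha\cup\beta$, the hypothesis $\mathcal{L}_\beta\subseteq\mathcal{L}_\alpha$, and criticality of $\alpha$. The only cosmetic difference is that you spell out the trivial case $\beta=\varnothing$ separately, whereas the paper's chain handles it silently (since $\alpha\cup\beta\supseteq\alpha\neq\varnothing$ the criticality hypothesis still applies).
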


\begin{proof}
If $\beta\subseteq\alpha$, then
$\mathcal{L}_\beta\subseteq\mathcal{L}_\alpha$ by definition.
Conversely, if $\mathcal{L}_\beta\subseteq\mathcal{L}_\alpha$, then
$$
	|\alpha|+1\le |\alpha\cup\beta|+1
	\le\dim\mathcal{L}_{\alpha\cup\beta} =
	\dim\mathcal{L}_\alpha = |\alpha|+1,
$$
where we used that $\mathcal{P}$ is critical,
that $\mathcal{L}_\beta\subseteq\mathcal{L}_\alpha$, and
that $\alpha$ is a critical set, respectively. Thus 
$|\alpha|=|\alpha\cup\beta|$, which implies
$\beta\subseteq\alpha$.
\end{proof}

\subsection{Degenerate pairs and functions}
\label{sec:deg}

We now use the above classification of critical sets to obtain a better 
understanding of Definition \ref{defn:deg}. For simplicity, 
$\mathcal{P}$-degenerate pairs and functions will henceforth be called 
\emph{degenerate pairs} and \emph{degenerate functions}, respectively.
However, the same structure will apply \emph{verbatim} to 
$\mathcal{C}$-degenerate pairs and functions for any critical collection 
$\mathcal{C}$ of convex bodies.

Let us begin by introducing a more precise definition.

\begin{defn}
\label{defn:maxdeg}
Let $\alpha$ be a maximal set and $M,N$ be convex bodies
in $\mathbb{R}^n$.
\begin{enumerate}[a.]
\itemsep\abovedisplayskip
\item $(M,N)$ is called an \emph{$\alpha$-degenerate pair} if
$$
	M,N\subset\mathcal{L}_\alpha\qquad
	\mbox{and}\qquad
	\V_{\mathcal{L}_\alpha}(M,\mathcal{P}_\alpha)=
	\V_{\mathcal{L}_\alpha}(N,\mathcal{P}_\alpha).
$$
\item A function $f:S^{n-1}\to\mathbb{R}$ is called an 
\emph{$\alpha$-degenerate 
function} if $f=h_M-h_N$ for some $\alpha$-degenerate pair $(M,N)$.
\end{enumerate}
If $\mathcal{C}$ is a critical collection of convex bodies and
$\alpha$ is $\mathcal{C}$-maximal, the analogous definitions will be
referred to as $(\mathcal{C},\alpha)$-degenerate pairs and functions.
\end{defn}

As a first step towards understanding Definition \ref{defn:maxdeg}, let us 
note for any maximal (hence also critical) set $\alpha$, we have 
$\dim\mathcal{L}_\alpha=|\alpha|+1$ and $P_i\subset\mathcal{L}_\alpha$ for 
every $i\in\alpha$. Thus the mixed volume 
$\V_{\mathcal{L}_\alpha}(M,\mathcal{P}_\alpha)$ is indeed well defined: 
this is the mixed volume of $|\alpha|+1$ convex bodies in the 
$(|\alpha|+1)$-dimensional space $\mathcal{L}_\alpha$.

We will now show that in the present setting ($\mathcal{P}$ is 
critical), any degenerate pair 
or function in the sense of Definition \ref{defn:deg} is in fact an 
$\alpha$-degenerate pair or function up to translation. In other words, 
degenerate pairs must always be contained in translates of
$\mathcal{L}_\alpha$ for some maximal set $\alpha$, which provides an 
explicit geometric description of the dimensionality property that is 
implicit in Definition \ref{defn:deg}.

\begin{lem}
\label{lem:twodegs}
$(M,N)$ is a degenerate pair 
if and only if $M$ is not a translate of $N$ and
$(M+v,N+w)$ is an $\alpha$-degenerate pair for some maximal 
set $\alpha$ and $v,w\in\mathbb{R}^n$. Thus $f$ is a degenerate function 
if and only if $f$ is nonlinear and
$f-\langle v,\cdot\rangle$ is an $\alpha$-degenerate 
function for some maximal set $\alpha$ and $v\in\mathbb{R}^n$.
\end{lem}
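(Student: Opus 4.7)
The plan is to extract a critical set from the vanishing \eqref{eq:degp1} via Lemma~\ref{lem:dim}, ascend to the unique maximal set containing it using Corollary~\ref{cor:panov}, and translate the normalization \eqref{eq:degp2} into the equal mixed volumes condition of Definition~\ref{defn:maxdeg} using the projection formula (Lemma~\ref{lem:proj}).

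For the implication ($\Leftarrow$), suppose $(M+v,N+w)$ is $\alpha$-degenerate. By translation invariance (Lemma~\ref{lem:mvprop}(e)) I may replace $M,N$ by $M'=M+v$, $N'=N+w$. The $|\alpha|+2$ bodies $M',N',\{P_i\}_{i\in\alpha}$ all lie in $\mathcal{L}_\alpha$, which has dimension $|\alpha|+1$, so Lemma~\ref{lem:dim} yields $\V_n(M',N',\mathcal{P})=0$. Applying Lemma~\ref{lem:proj} with $E=\mathcal{L}_\alpha$ factors
\[
\binom{n}{|\alpha|+1}\V_n(M',B,\mathcal{P})=\V_E(M',\mathcal{P}_\alpha)\cdot\V_{E^\perp}(\proj_{E^\perp}B,\proj_{E^\perp}\mathcal{P}_{\backslash\alpha}),
\]
and likewise for $N'$; the hypothesis $\V_E(M',\mathcal{P}_\alpha)=\V_E(N',\mathcal{P}_\alpha)$ then yields \eqref{eq:degp2}.

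For ($\Rightarrow$), suppose $(M,N)$ is $\mathcal{P}$-degenerate. Lemma~\ref{lem:dim} applied to \eqref{eq:degp1} produces a subcollection of $\{M,N\}\cup\mathcal{P}$ violating the dimension bound; criticality of $\mathcal{P}$ forces this subcollection to involve $M$ or $N$. If it involves only $M$ and some $\mathcal{P}_\beta$ with $\beta\neq\varnothing$, then $|\beta|\ge\dim(M+\sum_{i\in\beta}P_i)\ge\dim\sum_{i\in\beta}P_i\ge|\beta|+1$ by criticality, a contradiction; hence $\beta=\varnothing$ and $M$ is a point, making $\V_n(M,B,\mathcal{P})=0$. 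Then \eqref{eq:degp2} gives $\V_n(N,B,\mathcal{P})=0$, and the same argument (with $B$ excluded from any failing subcollection since it is full-dimensional) forces $N$ to be a point too, contradicting that $M,N$ are not translates. The symmetric case for $N$ alone is identical. Hence the failing subcollection contains both $M$ and $N$, yielding $\beta\subseteq[n-2]$ with $\dim(M+N+\sum_{i\in\beta}P_i)\le|\beta|+1$; by criticality this is an equality, so $\beta$ is critical and $M+N$ lies in a translate of $\mathcal{L}_\beta$. Picking $m_0\in M$, $n_0\in N$, we obtain $(M-m_0)+(N-n_0)\subset\mathcal{L}_\beta$, and since both summands contain the origin each lies in $\mathcal{L}_\beta$. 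Setting $v=-m_0$, $w=-n_0$, and letting $\alpha$ be the unique maximal set containing $\beta$ (Corollary~\ref{cor:panov}), we get $M+v,N+w\subset\mathcal{L}_\alpha$. Applying the projection formula as above together with \eqref{eq:degp2} and translation invariance then gives
\[
\V_E(M+v,\mathcal{P}_\alpha)\cdot\Lambda=\V_E(N+w,\mathcal{P}_\alpha)\cdot\Lambda
\]
with $E=\mathcal{L}_\alpha$ and $\Lambda:=\V_{E^\perp}(\proj_{E^\perp}B,\proj_{E^\perp}\mathcal{P}_{\backslash\alpha})$, so the $\alpha$-degeneracy equality follows once $\Lambda>0$.

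The main obstacle is verifying $\Lambda>0$, which is where maximality of $\alpha$ (as opposed to mere criticality) plays an essential role. For any nonempty $\gamma\subseteq[n-2]\backslash\alpha$, the set $\alpha\cup\gamma$ strictly contains $\alpha$, hence by maximality is not critical, giving $\dim\mathcal{L}_{\alpha\cup\gamma}\ge|\alpha|+|\gamma|+2$ and therefore $\dim\proj_{E^\perp}\sum_{i\in\gamma}P_i\ge|\gamma|+1$. Since $\proj_{E^\perp}B$ is the full unit ball of $E^\perp$, Lemma~\ref{lem:dim} applied in $E^\perp$ delivers $\Lambda>0$. The function-level equivalence follows immediately by writing $h_{M+v}=h_M+\langle v,\cdot\rangle$ and invoking the elementary observation that $h_M-h_N$ is linear if and only if $M$ is a translate of $N$.
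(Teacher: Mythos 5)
Your overall route mirrors the paper's: extract a critical set from the failure of condition $c$ of Lemma~\ref{lem:dim}, upgrade to a maximal set via Corollary~\ref{cor:panov}, and convert the normalization \eqref{eq:degp2} to the $\alpha$-degeneracy condition through the projection formula of Lemma~\ref{lem:proj}. The $\Leftarrow$ direction and the projection-formula bookkeeping are fine.

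There is, however, a genuine gap in the $\Rightarrow$ direction. After you rule out failing subcollections of type $\{M\}\cup\mathcal{P}_\beta$ and $\{N\}\cup\mathcal{P}_\beta$, you conclude that the failing subcollection has the form $\{M,N\}\cup\mathcal{P}_\beta$ and assert ``by criticality this is an equality, so $\beta$ is critical and $M+N$ lies in a translate of $\mathcal{L}_\beta$.'' That inference uses $\dim\bigl(\sum_{i\in\beta}P_i\bigr)\ge|\beta|+1$, which is available only for $\beta\ne\varnothing$; when $\beta=\varnothing$ it reads $\dim(M+N)\le 1$, and $\varnothing$ is not a critical set, so there is no $\mathcal{L}_\beta$ to work with. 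This case genuinely arises (for instance, whenever $\mathcal{P}$ is supercritical no critical set exists, and yet your argument must still produce a contradiction). You have already shown that $M$ and $N$ each have dimension at least $1$, so the residual case is $\dim(M+N)=1$, i.e., $M,N$ are parallel segments. That case must be killed by invoking \eqref{eq:degp2} once more: since $\V_n(N,B,\mathcal{P})>0$ here by criticality, the two equal mixed volumes force $M$ and $N$ to be segments of equal length, hence translates, contradicting the definition of a degenerate pair. This is exactly what the paper does.

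One further remark: your claim that maximality of $\alpha$ is ``essential'' for $\Lambda>0$ is an overstatement. Writing $\Lambda=\V_{E^\perp}(\proj_{E^\perp}B,\proj_{E^\perp}\mathcal{P}_{\backslash\alpha})$ and applying the projection formula to $\V_n(B_\alpha,B,\mathcal{P})$, one sees that criticality of $\mathcal{P}$ alone already gives $\Lambda>0$; the paper verifies this without appealing to maximality of $\alpha$. Your stronger hypothesis still makes the step correct, but the remark about where maximality ``plays an essential role'' is misleading.
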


\begin{proof}
We begin by noting that for any critical set $\alpha$ and
convex body $K\subset\mathcal{L}_\alpha$, 
Lemma \ref{lem:proj} implies the projection formula
$$
        {n\choose |\alpha|+1}\,
        \V_n(K,B,\mathcal{P})
        =
        \V_{\mathcal{L}_\alpha}(K,\mathcal{P}_\alpha)\,
        \V_{\mathcal{L}_\alpha^\perp}(
	\proj_{\mathcal{L}_\alpha^\perp}B,
	\proj_{\mathcal{L}_\alpha^\perp}\mathcal{P}_{\backslash\alpha}).
$$
Moreover, as $\V_n(B_\alpha,B,\mathcal{P})>0$
by Lemma \ref{lem:dim} and the assumption that $\mathcal{P}$ is critical, 
it follows that
$\V_{\mathcal{L}_\alpha^\perp}(\proj_{\mathcal{L}_\alpha^\perp}B,
\proj_{\mathcal{L}_\alpha^\perp}\mathcal{P}_{\backslash\alpha})>0$.

Consider first an $\alpha$-degenerate pair $(M,N)$ for some maximal set 
$\alpha$, where $M,N$ are not translates. We claim that $(M,N)$ is a 
degenerate pair. Indeed, condition \eqref{eq:degp1} follows from Lemma 
\ref{lem:dim} as 
$\dim(M+N+\sum_{i\in\alpha}P_i)=\dim\mathcal{L}_\alpha=|\alpha|+1$, while 
condition \eqref{eq:degp2} follows from the projection formula and 
Definition \ref{defn:maxdeg}.

Now consider a degenerate pair $(M,N)$. As $M$ is not a translate of $N$, 
at least one of $M,N$ must have nonzero dimension. But as $\mathcal{P}$ is 
critical, $\V_n(K,B,\mathcal{P})>0$ whenever $\dim K\ge 1$ by Lemma 
\ref{lem:dim}. Thus \eqref{eq:degp2} implies that $\dim(M)\ge 1$ and 
$\dim(N)\ge 1$. On the other hand, it cannot be the case that 
$\dim(M+N)=1$. Indeed, if that were the case, then $M,N$ must be segements 
with parallel directions; moreover, \eqref{eq:degp2} then implies that 
$M,N$ have equal length, so that $M,N$ are translates. This case is 
therefore ruled out by the definition of a degenerate pair.

We have now shown that any degenerate pair $(M,N)$ must satisfy
$$
	\dim(M)\ge 1,\qquad
	\dim(N)\ge 1,\qquad
	\dim(M+N)\ge 2.
$$
Together with the assumption that $\mathcal{P}$ is critical, it follows
from Lemma \ref{lem:dim} and \eqref{eq:degp1} that there
must exist $\alpha'\subseteq[n-2]$, $\alpha'\ne\varnothing$ such that
$$
	\dim(M+N+{\textstyle\sum_{i\in\alpha'}P_i})\le|\alpha'|+1.
$$
On the other hand, as $\mathcal{P}$ is critical we have 
$\dim(\sum_{i\in\alpha'}P_i)\ge|\alpha'|+1$. The only way this can happen 
is 
if $\dim\mathcal{L}_{\alpha'}=\dim(\sum_{i\in\alpha'}P_i)=|\alpha'|+1$ 
(that is, $\alpha'$ is critical) and there exist $v,w\in\mathbb{R}^n$ so 
that $M+v$ and $N+w$ lie in $\mathcal{L}_{\alpha'}$.

Now let $\alpha$ be the maximal set containing $\alpha'$. 
Then $M,N\subset\mathcal{L}_{\alpha'}\subseteq\mathcal{L}_\alpha$. 
Moreover, by the projection formula, the normalization condition of 
Definition \ref{defn:maxdeg} follows from \eqref{eq:degp2}.
Thus we have shown that $(M+v,N+w)$ is an $\alpha$-degenerate pair.

Finally, the equivalence between degenerate and $\alpha$-degenerate 
functions is an immediate consequence of the corresponding equivalence for 
pairs.
\end{proof}

Lemma \ref{lem:twodegs} explains the basic structure of the extremals of 
the Alexandrov-Fenchel inequality that appears in Theorem \ref{thm:main}. 
Note that for a given maximal set $\alpha$, any linear combination of 
$\alpha$-degenerate functions is again an $\alpha$-degenerate function by 
definition. On the other hand, if $f$ is an $\alpha$-degenerate function 
and $f'$ is an $\alpha'$-degenerate function for distinct maximal sets 
$\alpha,\alpha'$, then linear combinations of $f,f'$ need not be 
degenerate. Each maximal set $\alpha$ will therefore give rise to (at 
most) one $\alpha$-degenerate pair in the statement of Theorem 
\ref{thm:main}.

\subsection{An intrinsic description}

So far we have defined degenerate functions as differences of support 
functions of degenerate pairs of convex bodies. However, in the proof of 
Theorem \ref{thm:main}, it will be necessary to construct degenerate 
functions directly by gluing together lower-dimensional degenerate 
functions. To this end, we now introduce a more intrinsic perspective on 
degenerate functions that does not require the auxiliary construction of a 
degenerate pair.

Before we proceed, we state a variant of the projection 
formula of Lemma \ref{lem:proj} in terms of mixed area measures, which 
will be needed below.

\begin{lem}
\label{lem:maproj}
Let $C_1,\ldots,C_{n-1}$ be convex bodies in $\mathbb{R}^n$, and
suppose that $C_1,\ldots,C_k$ lie in a subspace $E$ with 
$\dim E=k+1$. Then
$$
	{n-1\choose k} \int \varphi(\proj_E x)
	\,S_{C_1,\ldots,C_{n-1}}(dx) =
	\V_{E^\perp}(\proj_{E^\perp}C_{k+1},\ldots,\proj_{E^\perp}C_n)
	\int \varphi\,dS_{C_1,\ldots,C_k}
$$
for any $1$-homogeneous function $\varphi:E\to\mathbb{R}$
that is $S_{C_1,\ldots,C_k}$-integrable.
\end{lem}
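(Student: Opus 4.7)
The plan is to establish the identity first in the special case $\varphi = h_M$ for a convex body $M\subset E$, and then extend to arbitrary integrable 1-homogeneous $\varphi$ by linearity and approximation. The key observation driving the special case is that since $M\subset E$, we have $h_M(y)=h_M(\proj_E y)$ for every $y\in\mathbb{R}^n$ (the $E^\perp$-component of $y$ is annihilated in the pairing with any $m\in M\subset E$). This lets us rewrite the left-hand side, via \eqref{eq:mixvolarea}, as
\begin{equation*}
\binom{n-1}{k}\int h_M(\proj_E x)\,S_{C_1,\ldots,C_{n-1}}(dx)
= \binom{n-1}{k}\int h_M\,dS_{C_1,\ldots,C_{n-1}}
= n\binom{n-1}{k}\,\V_n(M,C_1,\ldots,C_{n-1}).
\end{equation*}

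Now I would apply the projection formula of Lemma \ref{lem:proj} to the subspace $E$: since $M,C_1,\ldots,C_k$ all lie in $E$ and $\dim E=k+1$,
\begin{equation*}
\binom{n}{k+1}\V_n(M,C_1,\ldots,C_{n-1})
= \V_E(M,C_1,\ldots,C_k)\,\V_{E^\perp}(\proj_{E^\perp}C_{k+1},\ldots,\proj_{E^\perp}C_{n-1}),
\end{equation*}
and \eqref{eq:mixvolarea} applied in $E$ gives $\V_E(M,C_1,\ldots,C_k)=\frac{1}{k+1}\int h_M\,dS_{C_1,\ldots,C_k}$. Combining these with the elementary identity $n\binom{n-1}{k}/\binom{n}{k+1}=k+1$ matches the right-hand side exactly. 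This settles the identity for $\varphi=h_M$, and hence, by bilinearity of mixed volumes and measures, for every $\varphi$ that is a difference of support functions of convex bodies in $E$.

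For the extension to general $\varphi$, note that any 1-homogeneous function on $E$ is determined by its restriction to $S^{n-1}\cap E$. By Lemma \ref{lem:c2} (applied in $E$), every $C^2$ function on $S^{n-1}\cap E$ arises as such a difference of support functions, so the identity holds for all $C^2$ functions. Both sides define bounded linear functionals on $C(S^{n-1}\cap E)$ — on the left one uses $|\proj_E x|\le 1$ for $x\in S^{n-1}$ together with $\varphi(\proj_E x)=|\proj_E x|\,\varphi(\proj_E x/|\proj_E x|)$, with the locus $\{\proj_E x=0\}$ contributing nothing since $\varphi(0)=0$ by 1-homogeneity. Density of $C^2$ in $C$ together with Riesz representation then promotes the identity to equality of the two associated finite signed Borel measures on $S^{n-1}\cap E$, which in turn extends the formula to all $S_{C_1,\ldots,C_k}$-integrable 1-homogeneous $\varphi$ by a standard approximation. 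The only substantive step is the combinatorial bookkeeping in the first paragraph; no new ideas beyond Lemma \ref{lem:proj}, \eqref{eq:mixvolarea}, and Lemma \ref{lem:c2} are required.
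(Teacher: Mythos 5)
Your proof is correct and takes essentially the same route as the paper's: reduce to $\varphi\in C^2(S^{n-1}\cap E)$ via Lemma~\ref{lem:c2}, verify the identity there by combining $h_M(x)=h_M(\proj_Ex)$, \eqref{eq:mixvolarea}, and the projection formula of Lemma~\ref{lem:proj} (your combinatorial constant $n\binom{n-1}{k}/\binom{n}{k+1}=k+1$ is right), and then extend to general $1$-homogeneous integrable $\varphi$ by a density/measure-identification argument, which is precisely what the paper does by identifying the pushforward measure $\mu\circ\iota^{-1}$.
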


\begin{proof}
Suppose first that the restriction of $\varphi$ to $S^{n-1}\cap E$ is a 
$C^2$ function. Then we may write $\varphi=h_K-h_L$ for convex bodies 
$K,L$ in $E$ by Lemma \ref{lem:c2}. Moreover, by the definition of support 
functions, $\varphi(\proj_Ex)=h_K(x)-h_L(x)$ for any $x\in S^{n-1}$ as 
$K,L\subset E$. The conclusion now follows from Lemma \ref{lem:proj} and 
\eqref{eq:mixvolarea}.

Now define the map $\iota:S^{n-1}\backslash E^\perp\to S^{n-1}\cap E$ as 
$\iota(x):=\proj_Ex/\|\proj_Ex\|$. By $1$-homogeneity of $\varphi$, the 
identity in the statement of the lemma may be written as
$$
	{n-1\choose k} \int \varphi\circ\iota ~ d\mu =
	\V_{E^\perp}(\proj_{E^\perp}C_{k+1},\ldots,\proj_{E^\perp}C_n)
	\int \varphi\,dS_{C_1,\ldots,C_k},
$$
where the measure $\mu(dx):=\|\proj_E x\|\,S_{C_1,\ldots,C_{n-1}}(dx)$
is supported on $S^{n-1}\backslash E^\perp$. As we have shown this 
identity holds for any $\varphi$ of class $C^2$, it follows that
\begin{equation}
\label{eq:pullback}
	{n-1\choose k}\, \mu\circ\iota^{-1} = 
	\V_{E^\perp}(\proj_{E^\perp}C_{k+1},\ldots,\proj_{E^\perp}C_n)
	\,S_{C_1,\ldots,C_k}
\end{equation}
as measures on $S^{n-1}\cap E$. The conclusion follows for any integrable
$1$-homogeneous function $\varphi:E\to\mathbb{R}$ by integrating this 
identity.
\end{proof}

\begin{rem}
\label{rem:raycont}
Suppose $C_1,\ldots,C_k$ are polytopes in Lemma \ref{lem:maproj}.
Then $S_{C_1,\ldots,C_k}$ has finite support by Lemma 
\ref{lem:mapoly}. Thus \eqref{eq:pullback} shows that the measure
$S_{C_1,\ldots,C_{n-1}}\circ\proj_E^{-1}$ is supported on a finite union
of rays emanating from the origin with directions in 
$\supp S_{C_1,\ldots,C_k}$. We now observe that any $1$-homogeneous 
function $\varphi$ is continuous on such a set: it is linear on each ray 
and zero at the origin. This implies that in the polytope setting, the
function $x\mapsto\varphi(\proj_E x)$ is continuous on
$\supp S_{C_1,\ldots,C_{n-1}}$ for any $1$-homogeneous function $\varphi$.
This observation will be used below.
\end{rem}

We can now introduce the main idea of this section: 
$\alpha$-degenerate functions may be intrinsically described in terms of 
$1$-homogeneous functions on $\mathcal{L}_\alpha$.

\begin{lem}
\label{lem:deghomog}
Let $\alpha$ be a maximal set.
\begin{enumerate}[a.]
\itemsep\abovedisplayskip
\item
For any $\alpha$-degenerate function $f$, there exists a 
$1$-homogeneous 
function $\varphi:\mathcal{L}_\alpha\to\mathbb{R}$ with
$\int \varphi\,dS_{\mathcal{P}_\alpha}=0$ so
that $f(x)=\varphi(\proj_{\mathcal{L}_\alpha}x)$ for all $x\in S^{n-1}$.
\item
For any $1$-homogeneous function $\varphi:\mathcal{L}_\alpha\to\mathbb{R}$ 
with $\int \varphi\,dS_{\mathcal{P}_\alpha}=0$, there exists an
$\alpha$-degenerate function $f$
so that $f(x)=\varphi(\proj_{\mathcal{L}_\alpha}x)$ for all
$x\in\supp S_{B,\mathcal{P}}$.
\end{enumerate}
\end{lem}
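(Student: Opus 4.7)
The plan is to handle the two directions separately: (a) is a direct computation using that $M,N\subset\mathcal{L}_\alpha$, while (b) requires an interpolation construction of convex bodies in $\mathcal{L}_\alpha$ realizing prescribed values on a finite set, after which the pullback formula from the proof of Lemma \ref{lem:maproj} will match the two sides on $\supp S_{B,\mathcal{P}}$.

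For part (a), given an $\alpha$-degenerate pair $(M,N)$ and $f=h_M-h_N$, I would define $\varphi := (h_M - h_N)|_{\mathcal{L}_\alpha}$, which is $1$-homogeneous on $\mathcal{L}_\alpha$. Since $M,N\subset\mathcal{L}_\alpha$, for any $y\in M$ and $x\in\mathbb{R}^n$ one has $\langle y,x\rangle = \langle y,\proj_{\mathcal{L}_\alpha}x\rangle$, so $h_M(x) = h_M(\proj_{\mathcal{L}_\alpha}x)$ and similarly for $N$; hence $f(x)=\varphi(\proj_{\mathcal{L}_\alpha}x)$ on all of $S^{n-1}$. The normalization $\int\varphi\,dS_{\mathcal{P}_\alpha}=0$ then follows from \eqref{eq:mixvolarea} applied inside $\mathcal{L}_\alpha$ together with the defining equality $\V_{\mathcal{L}_\alpha}(M,\mathcal{P}_\alpha)=\V_{\mathcal{L}_\alpha}(N,\mathcal{P}_\alpha)$.

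For part (b), the crucial structural observation is that since $\mathcal{P}_\alpha$ consists of $|\alpha|$ polytopes in the $(|\alpha|+1)$-dimensional space $\mathcal{L}_\alpha$, the mixed area measure $S_{\mathcal{P}_\alpha}$ computed inside $\mathcal{L}_\alpha$ is atomic and supported on a finite set $\{v_1,\ldots,v_m\}\subset S^{n-1}\cap\mathcal{L}_\alpha$ by Lemma \ref{lem:mapoly}. I would therefore extend the restriction of $\varphi$ to $\{v_1,\ldots,v_m\}$ to an arbitrary $C^2$ function $\tilde\varphi$ on $S^{n-1}\cap\mathcal{L}_\alpha$, and then apply Lemma \ref{lem:c2} inside $\mathcal{L}_\alpha$ to write $\tilde\varphi=h_M-h_N$ for convex bodies $M,N\subset\mathcal{L}_\alpha$. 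The pair $(M,N)$ is then $\alpha$-degenerate: \eqref{eq:mixvolarea} in $\mathcal{L}_\alpha$ yields $\V_{\mathcal{L}_\alpha}(M,\mathcal{P}_\alpha)-\V_{\mathcal{L}_\alpha}(N,\mathcal{P}_\alpha)=\frac{1}{|\alpha|+1}\int\tilde\varphi\,dS_{\mathcal{P}_\alpha}$, and the right-hand side equals $\frac{1}{|\alpha|+1}\int\varphi\,dS_{\mathcal{P}_\alpha}=0$ because $\tilde\varphi=\varphi$ on the (finite) support of $S_{\mathcal{P}_\alpha}$.

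The main obstacle is the last step: verifying that $f:=h_M-h_N$ actually satisfies $f(x)=\varphi(\proj_{\mathcal{L}_\alpha}x)$ on all of $\supp S_{B,\mathcal{P}}$, despite $\tilde\varphi$ being pinned down only on the finite set $\supp S_{\mathcal{P}_\alpha}$. For this I would invoke the pullback identity \eqref{eq:pullback} from the proof of Lemma \ref{lem:maproj}, applied with $E=\mathcal{L}_\alpha$ and the ordered collection $(\mathcal{P}_\alpha,B,\mathcal{P}_{\backslash\alpha})$: it expresses the push-forward of $\|\proj_{\mathcal{L}_\alpha}x\|\,S_{B,\mathcal{P}}(dx)$ under the radial projection $\iota:x\mapsto\proj_{\mathcal{L}_\alpha}x/\|\proj_{\mathcal{L}_\alpha}x\|$ as a positive multiple of $S_{\mathcal{P}_\alpha}$, with the positivity of the scalar factor following from criticality of $\mathcal{P}$ via Lemma \ref{lem:dim}. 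Comparing supports gives $\iota(\supp S_{B,\mathcal{P}}\setminus\mathcal{L}_\alpha^\perp)\subseteq\supp S_{\mathcal{P}_\alpha}$. For $x\in\supp S_{B,\mathcal{P}}$ with $\proj_{\mathcal{L}_\alpha}x\ne 0$, the $1$-homogeneity of $\tilde\varphi$ together with $\tilde\varphi=\varphi$ on $\supp S_{\mathcal{P}_\alpha}$ then yields $f(x)=\tilde\varphi(\proj_{\mathcal{L}_\alpha}x)=\varphi(\proj_{\mathcal{L}_\alpha}x)$, and for $x\in\supp S_{B,\mathcal{P}}\cap\mathcal{L}_\alpha^\perp$ both sides vanish since $M,N\subset\mathcal{L}_\alpha$.
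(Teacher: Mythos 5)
Your proof is correct and follows essentially the same route as the paper: the same restriction argument for part (a), and for part (b) the same use of finiteness of $\supp S_{\mathcal{P}_\alpha}$ plus $C^2$ interpolation via Lemma \ref{lem:c2} to build an $\alpha$-degenerate pair, with the pullback formula \eqref{eq:pullback} serving in place of the paper's invocation of Lemma \ref{lem:maproj} together with Remark \ref{rem:raycont} to force $f=\varphi\circ\proj_{\mathcal{L}_\alpha}$ on $\supp S_{B,\mathcal{P}}$. The only slight difference is that your support-comparison step needs positivity of $\V_{\mathcal{L}_\alpha^\perp}(\proj_{\mathcal{L}_\alpha^\perp}B,\proj_{\mathcal{L}_\alpha^\perp}\mathcal{P}_{\backslash\alpha})$, which you correctly note follows from criticality via Lemmas \ref{lem:proj} and \ref{lem:dim}, whereas the paper's formulation obtains $\int|f(x)-\varphi(\proj_{\mathcal{L}_\alpha}x)|\,S_{B,\mathcal{P}}(dx)=0$ directly and sidesteps needing that positivity.
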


\begin{proof}
To prove part $a$, write $f=h_M-h_N$ for some $\alpha$-degenerate pair 
$(M,N)$. As $M,N\subset\mathcal{L}_\alpha$, we have 
$h_M(x)=h_M(\proj_{\mathcal{L}_\alpha}x)$ and 
$h_N(x)=h_N(\proj_{\mathcal{L}_\alpha}x)$ for all $x\in S^{n-1}$ by the 
definition of support functions. Now define $\varphi$ to be the 
restriction of $h_M-h_N$ to $\mathcal{L}_\alpha$. Then $\varphi$ 
is $1$-homogeneous, $f(x)=\varphi(\proj_{\mathcal{L}_\alpha}x)$ for 
all $x\in S^{n-1}$, and
$$
	\frac{1}{|\alpha|+1}\int 
	\varphi\,dS_{\mathcal{P}_\alpha}= 
	\V_{\mathcal{L}_\alpha}(M,\mathcal{P}_\alpha)-
	\V_{\mathcal{L}_\alpha}(N,\mathcal{P}_\alpha)=0
$$
by \eqref{eq:mixvolarea} and the definition of an $\alpha$-critical pair.

The same argument would apply \emph{verbatim} in the converse direction if 
$\varphi$ can be written as a difference of support functions. This is not 
clear, however, as we did not make any regularity assumption on $\varphi$. 
To work around this issue, we will exploit that $\mathcal{P}$ are 
polytopes to create a modification of $\varphi$ with the requisite 
property.

More precisely, part $b$ is proved as follows. As 
$\mathcal{P}$ are polytopes, $\supp S_{\mathcal{P}_\alpha}$ is a finite 
subset of $S^{n-1}\cap\mathcal{L}_\alpha$ by Lemma \ref{lem:mapoly}. Thus 
we can choose a $C^2$ function 
$\eta:S^{n-1}\cap\mathcal{L}_\alpha\to\mathbb{R}$ so that 
$\varphi(x)=\eta(x)$ for all $x\in\supp S_{\mathcal{P}_\alpha}$. 
By Lemma \ref{lem:c2}, there exist convex bodies 
$M,N\subset\mathcal{L}_\alpha$ so that
$\eta(x)=h_M(x)-h_N(x)$ for all $x\in S^{n-1}\cap\mathcal{L}_\alpha$.
We claim that $f:=h_M-h_N$ has the properties 
stated in part $b$.
Indeed, note that
$$
	\V_{\mathcal{L}_\alpha}(M,\mathcal{P}_\alpha)-
	\V_{\mathcal{L}_\alpha}(N,\mathcal{P}_\alpha) =
	\frac{1}{|\alpha|+1}\int 
	f\,dS_{\mathcal{P}_\alpha}=
	\frac{1}{|\alpha|+1}\int 
	\varphi\,dS_{\mathcal{P}_\alpha}= 0,
$$
where we used \eqref{eq:mixvolarea} in the first equality and
$f=\varphi$ $S_{\mathcal{P}_\alpha}$-a.e.\ in the second equality.
Thus $(M,N)$ is an $\alpha$-degenerate pair and $f$ is an 
$\alpha$-degenerate function. On the other hand,
as $f=\varphi$ on $\supp S_{\mathcal{P}_\alpha}$, we obtain
\begin{align*}
	0 &= \V_{\mathcal{L}_\alpha^\perp}(
	\proj_{\mathcal{L}_\alpha^\perp}B,
	\proj_{\mathcal{L}_\alpha^\perp}\mathcal{P}_{\backslash\alpha})
	\int |f-\varphi|\,dS_{\mathcal{P}_\alpha}
	\\ &=
	{n-1\choose |\alpha|}\int
	|f(x)
	-\varphi(\proj_{\mathcal{L}_\alpha}x)|
	\,S_{B,\mathcal{P}}(dx)
\end{align*}
by Lemma \ref{lem:maproj}, where we used that 
$f(x)=f(\proj_{\mathcal{L}_\alpha}x)$ as $M,N\subset\mathcal{L}_\alpha$.
Thus $f(x)=\varphi(\proj_{\mathcal{L}_\alpha}x)$ for all $x\in\supp 
S_{B,\mathcal{P}}$ by Remark \ref{rem:raycont}, completing the proof.
\end{proof}

\section{Propeller geometry}
\label{sec:propeller}

We have seen in the previous section that the appearance of degenerate 
functions is intimately connected to the critical sets of the reference 
bodies $\mathcal{P}$. In this section, we will develop a new geometric 
phenomenon that explains the origin of this behavior: we will show that 
the supports of critical mixed area measures exhibit certain geometric 
structures that we call \emph{propellers}, in view of their resemblance to 
the propeller of a Mississippi steamboat. These propellers will play a 
crucial role in the proof of Theorem \ref{thm:main} in the critical case.

This section is organized as follows. We first introduce the propeller 
structure in section \ref{sec:propthm}. 
In the proof of Theorem \ref{thm:main}, this structure will be exploited 
in two nontrivial ways: to glue together lower-dimensional degenerate 
functions, and to decouple the contributions arising from distinct maximal 
sets. We develop both these methods in an abstract setting in sections 
\ref{sec:propglue} and \ref{sec:proplin}, respectively. While the basic 
principles can be understood independently of the rest of the paper, their 
power will become clear when they are applied in section \ref{sec:crit}.

\subsection{The propeller}
\label{sec:propthm}

The following theorem describes the propeller structure.

\begin{thm}
\label{thm:propeller}
Let $C_1,\ldots,C_{n-1}$ be convex bodies in $\mathbb{R}^n$, and suppose 
$C_1,\ldots,C_k$ lie in a subspace $E$ with $\dim E=k+1$. 
Define the space $F_z := \sspan\{E^\perp,z\}$ and
halfspace $F_z^+ := \{x\in F_z:\langle z,x\rangle>0\}$
for $z\in E$. Then
$$
	\supp S_{C_1,\ldots,C_{n-1}} \subset
	E^\perp \cup \bigcup_{z\in\supp S_{C_1,\ldots,C_k}} F_z^+,
$$
and
\begin{equation}
\label{eq:propeller}
\begin{aligned}
	& {n-1\choose k} \int f(x)\,1_{x\not\in E^\perp}\,
	S_{C_1,\ldots,C_{n-1}}(dx) \\
	&=
	\int\bigg(
	\int f(x)\,1_{\langle z,x\rangle>0}\,
	S_{\proj_{F_z}C_{k+1},\ldots,\proj_{F_z}C_{n-1}}(dx)
	\bigg)S_{C_1,\ldots,C_k}(dz)
\end{aligned}
\end{equation}
for any bounded measurable function $f:S^{n-1}\to\mathbb{R}$.
\end{thm}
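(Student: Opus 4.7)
The plan is to split Theorem~\ref{thm:propeller} into its support inclusion and the disintegration identity \eqref{eq:propeller}, handling the latter first for polytopes and then extending by approximation.

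For the support claim, take $x\in\supp S_{C_1,\ldots,C_{n-1}}\setminus E^\perp$ and put $z:=\proj_E x/\|\proj_E x\|\in S^{n-1}\cap E$. The pullback identity \eqref{eq:pullback} behind Lemma~\ref{lem:maproj} forces $z\in\supp S_{C_1,\ldots,C_k}$, and $x=\|\proj_E x\|z+\proj_{E^\perp}x\in\sspan\{E^\perp,z\}=F_z$ with $\langle z,x\rangle=\|\proj_E x\|>0$, so $x\in F_z^+$.

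For the identity I first assume that $C_1,\ldots,C_{n-1}$ are polytopes, so that $S_{C_1,\ldots,C_{n-1}}$ is a finite sum of atoms by Lemma~\ref{lem:mapoly}. Fix an atom $u\notin E^\perp$ and set $z=z_u:=\proj_E u/\|\proj_E u\|$; since $C_j\subset E$ for $j\le k$ the support functions $h_{C_j}$ factor through $\proj_E$, so $F(C_j,u)=F(C_j,z)$ for $j\le k$ and these $k$ faces sit (up to translation) in the $k$-dimensional subspace $u^\perp\cap E=z^\perp\cap E$ of $u^\perp$. A short linear-algebra calculation exploiting $u\in F_z$ and $E^\perp\subset F_z$ identifies the orthogonal complement of this subspace within $u^\perp$ as $u^\perp\cap F_z$. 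Applying Lemma~\ref{lem:proj} inside $u^\perp$ then yields
\begin{align*}
{n-1\choose k}\V_{n-1}\bigl(F(C_1,u),\ldots,F(C_{n-1},u)\bigr)
&=\V_{z^\perp\cap E}\bigl(F(C_1,z),\ldots,F(C_k,z)\bigr)\\
&\quad\cdot\V_{u^\perp\cap F_z}\bigl(\proj_{u^\perp\cap F_z}F(C_{k+1},u),\ldots,\proj_{u^\perp\cap F_z}F(C_{n-1},u)\bigr).
\end{align*}
Lemma~\ref{lem:mapoly} applied in $E$ identifies the first factor as $S_{C_1,\ldots,C_k}(\{z\})$, while Lemma~\ref{lem:faceproj} combined with translation invariance turns the second factor into $S_{\proj_{F_z}C_{k+1},\ldots,\proj_{F_z}C_{n-1}}(\{u\})$. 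Multiplying by $f(u)$ and regrouping the finite sum over $u\notin E^\perp$ as a sum over $z\in\supp S_{C_1,\ldots,C_k}$ followed by a sum over $u\in F_z^+$ reproduces \eqref{eq:propeller}.

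To pass from polytopes to arbitrary convex bodies, I would approximate each $C_i$ in Hausdorff distance by polytopes $C_i^{(l)}$. Writing the RHS as $\int\Psi(z)\,dS_{C_1,\ldots,C_k}(z)$ with $\Psi(z):=\int f(x)\,1_{\langle z,x\rangle>0}\,dS_{\proj_{F_z}C_{k+1},\ldots,\proj_{F_z}C_{n-1}}(x)$, Lemma~\ref{lem:cont} together with continuity of $z\mapsto\proj_{F_z}$ would transfer the identity for bounded continuous $f$; once \eqref{eq:propeller} holds for such $f$ it is an identity of signed measures on $S^{n-1}$, hence holds for any bounded measurable $f$. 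I expect the main obstacle to be this approximation step: the discontinuous cutoffs $1_{x\notin E^\perp}$ on the LHS and $1_{\langle z,x\rangle>0}$ on the RHS interact badly with weak convergence of mixed area measures, so one must smooth each into a continuous function and verify that the buffer-zone contributions cancel compatibly on both sides, using that the LHS only suppresses $E^\perp$-atoms while the RHS suppresses the equator $\{\langle z,x\rangle=0\}$ of each fiber $F_z$ and that these two suppressions mesh exactly via the weight factorization above.
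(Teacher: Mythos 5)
Your Step 1 (the polytope computation) is essentially identical to the paper's: fix an atom $u\notin E^\perp$, set $z=\proj_Eu/\|\proj_Eu\|$, observe $F(C_j,u)=F(C_j,z)$ for $j\le k$ lies in a translate of $z^\perp\cap E$, and apply Lemma~\ref{lem:proj} inside $u^\perp$ with orthogonal decomposition $u^\perp=(z^\perp\cap E)\oplus(u^\perp\cap F_z)$, then identify the two factors via Lemmas~\ref{lem:mapoly} and \ref{lem:faceproj}. Your support argument via the pullback identity \eqref{eq:pullback} of Lemma~\ref{lem:maproj} is a legitimate alternative to the paper's (which simply reads the support inclusion off \eqref{eq:propeller}), and is fine since that lemma is established independently earlier in the paper.

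The gap is in the approximation step. You correctly diagnose that the discontinuous cutoffs $1_{x\notin E^\perp}$ and $1_{\langle z,x\rangle>0}$ obstruct a direct weak-convergence argument, but the remedy you propose --- ``smooth each into a continuous function and verify that buffer-zone contributions cancel compatibly'' --- is not a proof, and as stated is genuinely problematic: when $C_i^{(l)}\to C_i$, the approximating measures $S_{C_1^{(l)},\ldots,C_{n-1}^{(l)}}$ may carry atoms that accumulate near $E^\perp$ in the limit, so the contribution of an $\varepsilon$-collar around $E^\perp$ need not vanish uniformly in $l$ as $\varepsilon\to 0$, and the ``cancellation'' you invoke would have to be quantified, not just asserted. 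The paper avoids this entirely with an exact algebraic reformulation rather than a smoothing: multiply the identity through by the weight $\|\proj_Ex\|$, which is continuous, vanishes on $E^\perp$, and --- crucially --- equals $|\langle z,x\rangle|$ on each blade $F_z$ where the inner measure on the right-hand side is supported. This converts \eqref{eq:propeller} into the equivalent identity \eqref{eq:contpropeller} with continuous integrands $g(x)\|\proj_Ex\|$ and $g(x)\langle z,x\rangle_+$ (no error term at all), which passes to the weak limit by Lemma~\ref{lem:cont}. One then recovers \eqref{eq:propeller} for arbitrary bounded measurable $f$ by substituting $g(x)=f(x)\,1_{x\notin E^\perp}\,\|\proj_Ex\|^{-1}$. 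You hint at the relation $\|\proj_Ex\|=|\langle z,x\rangle|$ on $F_z$ in your last sentence, but you do not exploit it as the paper does; without this exact substitution your extension to general bodies is incomplete.
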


Informally, Theorem \ref{thm:propeller} states that 
$S_{C_1,\ldots,C_{n-1}}$ is supported in a union of halfspaces 
(``blades'') centered around $E^\perp$ with orthogonal direction in 
$\supp S_{C_1,\ldots,C_k}\subset E$. Moreover, $S_{C_1,\ldots,C_{n-1}}$ 
agrees on each blade with the mixed area measure of the projections of 
$C_{k+1},\ldots,C_{n-1}$ onto the subspace in which that blade lies.
\begin{figure}
\centering
\begin{tikzpicture}

\node[inner sep=0pt] (0,0) {\includegraphics{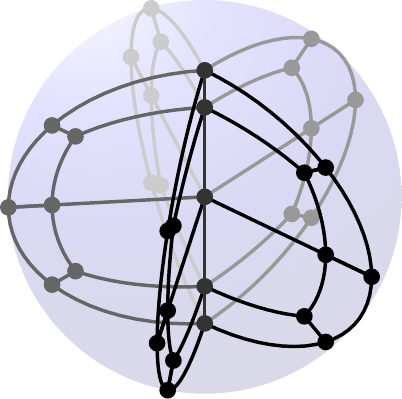}};

\draw[thick,->] (1.959,-0.894) to (2.6,-1.2);
\draw (2.8,-1.275) node {$z$};

\draw[thick,->] (0.0355,1.5) to (0.0355,2.25) node[above] {$E^\perp$};

\draw[densely dashed,thick,->] (2.5,0.5) to[out=180,in=20] (1.72,-0.23);
\draw (2.45,0.48) node[right] {$F_z^+\cap\supp S_{B,C_1,C_2}$};

\draw (-2.45,0.48) node[left] {\phantom{$F_z^+\cap\supp S_{B,C_1,C_2}$}};

\end{tikzpicture}

\caption{Illustration of a propeller structure in $\mathbb{R}^4$.\label{fig:propeller}}
\end{figure}

\begin{example}
The propeller structure is illustrated in Figure \ref{fig:propeller}. Here 
$C_1,C_2$ are polytopes in $\mathbb{R}^4$, where $C_1$ is a 
pentagon contained in the plane $E=\sspan\{e_1,e_2\}$ spanned by the first 
two coordinate directions, and $C_2$ is a full-dimensional polytope (in 
the figure, we chose the Minkowski sum of a cube and an octahedron). We 
have visualized the support of $S_{B,C_1,C_2}$ by projecting it onto 
$\sspan\{e_1,e_2,e_3\}$, which yields a geometric graph in the unit ball 
of $\mathbb{R}^3$ (cf.\ section \ref{sec:qgraph}). The propeller structure 
is immediately evident in the picture: the ``blades'' of the propeller lie
in the halfspaces $F_z^+$, while the ``shaft'' of the propeller lies in 
$E^\perp$. There are five blades, corresponding to the five facet normals
of the pentagon $C_1$.
\end{example}

\begin{example}
\label{ex:ewald}
Suppose $C_1,\ldots,C_{n-2}$ all lie in a subspace $E=w^\perp$. Then we 
may apply Theorem \ref{thm:propeller} with $k=n-1$ to investigate the
measure $S_{B,C_1,\ldots,C_{n-2}}$. In this case $S^{n-1}\cap F_z^+$ is 
merely a semicircular arc from $w$ to $-w$ passing through $z$, and
$S_{\proj_{F_z}B}$ is the uniform measure on this arc. The 
propeller then takes the explicit ``striped watermelon'' form that was 
studied in \cite[\S 8]{SvH19} and implicitly in \cite{Sch94}.
\end{example}

Let us now discuss the interaction between degenerate functions and the 
propeller structure. In the setting of Theorem \ref{thm:propeller}, 
a degenerate function may be expressed as $f(x)=\varphi(\proj_Ex)$ for a 
$1$-homogeneous function $\varphi:E\to\mathbb{R}$ (cf.\ Lemma 
\ref{lem:deghomog}). Now note that for any $z\in S^{n-1}\cap E$, we have 
$\proj_E F_z=\sspan\{z\}$ by  definition, so that 
$\proj_Ex=\langle x,z\rangle z$ for any $x\in F_z$. We therefore obtain
$$
	f(x) = \varphi(z)\langle z,x\rangle
	\quad\mbox{for all }x\in F_z^+,~z\in S^{n-1}\cap E
$$
by the homogeneity of $\varphi$. On the other hand, clearly
$f(x)=0$ for $x\in E^\perp$. Thus we have shown that \emph{a 
degenerate function is linear on each blade of the propeller, and vanishes 
on the shaft}. This provides a geometric explanation for why degenerate 
functions are extremals of the Alexandrov-Fenchel inequality: the 
conditions of Proposition \ref{prop:qgraph} are satisfied for degenerate 
functions precisely because the propeller structure creates a geometric 
mechanism for this to happen.

\begin{rem}
The propeller structure was already hinted at by the observation of Remark
\ref{rem:raycont} in the previous section: it is evident from 
the propeller structure that the projection of $\supp 
S_{C_1,\ldots,C_{n-1}}$ on $E$ is supported on rays emanating from the 
origin in the directions in $\supp S_{C_1,\ldots,C_k}$.
More generally, the reader may verify that Lemma 
\ref{lem:maproj} can be deduced directly from Theorem 
\ref{thm:propeller} and Corollary \ref{cor:segproj}.
\end{rem}

\begin{rem}
In Theorem \ref{thm:propeller} we only considered the effect of a single 
critical set on the geometry of the mixed area measure. However, many 
critical sets may coexist for the same collection of bodies: this is not 
ruled out by the assumptions of Theorem \ref{thm:propeller}, where we 
singled out one critical set for analysis. When distinct maximal sets are 
present, the geometry of the mixed area measure will feature several 
propellers that are superimposed in different directions. Such 
``propellers within propellers'' are hard to visualize, and we will not 
attempt to do so. Nonetheless, this situation must be addressed in the 
proof of Theorem \ref{thm:main}, which will be done using a technique that 
is developed in section \ref{sec:proplin} below.
\end{rem}

We now turn to the proof of Theorem \ref{thm:propeller}.

\begin{proof}[Proof of Theorem \ref{thm:propeller}]
The statement about the support of $S_{C_1,\ldots,C_{n-1}}$ follows 
immediately from \eqref{eq:propeller}. We will first prove 
\eqref{eq:propeller} in the case that $C_1,\ldots,C_{n-1}$ are polytopes, 
and then derive the general case by approximation.

\vspace{\abovedisplayskip}

\textbf{Step 1}.
Suppose that $C_1,\ldots,C_{n-1}$ are polytopes. Fix any
$x\in S^{n-1}\backslash E^\perp$ and let $z:=\proj_Ex/\|\proj_Ex\|$.
Then $x\in F_z$ by definition.
As $C_1+\cdots+C_k\subset E$, we have
$$
        F(C_1+\cdots+C_k,x) =
        F(C_1+\cdots+C_k,z)
        \subset
        az+E\cap z^\perp
$$
for some constant $a$ by Lemma \ref{lem:faceproj}.
In particular, $\dim F(C_1+\cdots+C_k,x)\le k$. We can therefore write
using Lemmas \ref{lem:mapoly} and \ref{lem:proj}
\begin{align*}
        &{n-1\choose k}
        S_{C_1,\ldots,C_{n-1}}(\{x\}) 
        = {n-1\choose k}\V_{n-1}(F(C_1,x),\ldots,F(C_{n-1},x))
\\      &=\V_{E\cap z^\perp}(F(C_1,z),\ldots,F(C_k,z))\,
        \V_{F_z}(\proj_{F_z}F(C_{k+1},x),\ldots,\proj_{F_z}F(C_{n-1},x))
\phantom{\bigg(}
\\	&=
        S_{C_1,\ldots,C_k}(\{z\})\,
        S_{\proj_{F_z}C_{k+1},\ldots,\proj_{F_z}C_{n-1}}(\{x\}),
\end{align*}
where we used in the last line that
$\proj_{F_z}F(C_i,x)=F(\proj_{F_z}C_i,x)$ by Lemma \ref{lem:faceproj}.

Now note that for any $u\in E$, we have $x\in F_u$ if and only if $u=z$
or $u=-z$. In particular, as $\langle z,x\rangle>0$ and
$\supp S_{\proj_{F_u}C_{k+1},\ldots,\proj_{F_u}C_{n-1}}\subset F_u$,
we have
$$
	1_{\langle u,x\rangle>0}\,
        S_{\proj_{F_u}C_{k+1},\ldots,\proj_{F_u}C_{n-1}}(\{x\}) =0
	\quad
	\mbox{for all }u\in E,~u\ne z.
$$
We therefore obtain 
$$
        {n-1\choose k} S_{C_1,\ldots,C_{n-1}}(\{x\}) = 
	\sum_{u} 
        1_{\langle u,x\rangle>0}\,
        S_{\proj_{F_u}C_{k+1},\ldots,\proj_{F_u}C_{n-1}}(\{x\})\,
        S_{C_1,\ldots,C_k}(\{u\}).
$$
As this identity holds for any $x\in S^{n-1}\backslash 
E^\perp$, \eqref{eq:propeller} follows
from Lemma \ref{lem:mapoly}.

\vspace{\abovedisplayskip}

\textbf{Step 2.} We now aim to show that \eqref{eq:propeller} remains 
valid when $C_1,\ldots,C_{n-1}$ are arbitrary convex bodies. We first 
claim that the result is equivalent 
to
\begin{equation}
\label{eq:contpropeller}
\begin{aligned}
        & {n-1\choose k}
        \int g(x)\,\|\proj_Ex\|\, S_{C_1,\ldots,C_{n-1}}(dx) 
        \\ &
        =\int \bigg(
	\int g(x)\,\langle z,x\rangle_+ \,
	S_{\proj_{F_z}C_{k+1},\ldots,
        \proj_{F_z}C_{n-1}}(dx)
	\bigg)
	S_{C_1,\ldots,C_k}(dz)
\end{aligned}
\end{equation}
for every continuous function $g:S^{n-1}\to\mathbb{R}$. That 
\eqref{eq:propeller} 
implies \eqref{eq:contpropeller} follows by choosing 
$f(x)=g(x)\|\proj_Ex\|$ and using that $\|\proj_Ex\|=|\langle z,x\rangle|$ 
on $F_z$. Conversely, suppose \eqref{eq:contpropeller} holds; by a 
standard approximation argument, it extends to any nonnegative measurable 
function $g$. Choosing $g(x)=f(x) 1_{x\not\in E^\perp}\|\proj_Ex\|^{-1}$ 
yields \eqref{eq:propeller} for nonnegative $f$, and the conclusion 
follows by linearity.

The advantage of \eqref{eq:contpropeller} is that the integrands are 
continuous, so we may use weak convergence. Fix a continuous 
function $g:S^{n-1}\to\mathbb{R}$, and choose polytopes 
$C_1^{(l)},\ldots,C_{n-1}^{(l)}$ so that $C_1^{(l)},\ldots,C_k^{(l)} 
\subset E$ and $C_r^{(l)}\to C_r$ in Hausdorff distance for all $r$ (the 
existence of such approximations is elementary \cite[Theorem 
1.8.16]{Sch14}). We have already shown in the first part of the proof that 
\eqref{eq:contpropeller} holds for the polytopes 
$C_1^{(l)},\ldots,C_{n-1}^{(l)}$. We would like to show the identity 
remains valid as $l\to\infty$. As mixed area measures are continuous by 
Lemma \ref{lem:cont}, it suffices by a standard weak convergence argument 
\cite[Theorem 4.27]{Kal02} to show that
\begin{align*}
	&\int g(x)\,\langle z_l,x\rangle_+ \,
	S_{\proj_{F_{z_l}}C_{k+1}^{(l)},\ldots,
        \proj_{F_{z_l}}C_{n-1}^{(l)}}(dx)
	\\ &\qquad\xrightarrow{l\to\infty}
	\int g(x)\,\langle z,x\rangle_+ \,
	S_{\proj_{F_z}C_{k+1},\ldots,
        \proj_{F_z}C_{n-1}}(dx)
\end{align*}
for any sequence $z_l\to z\in S^{n-1}\cap E$. But this follows readily
from Lemma \ref{lem:cont} as
\begin{align*}
	\|h_{\proj_{F_{z_l}}C_r^{(l)}}-h_{\proj_{F_z}C_r}\|_\infty
	&\le
	\|h_{\proj_{F_{z_l}}C_r^{(l)}}-h_{\proj_{F_{z_l}}C_r}\|_\infty +
	\|h_{\proj_{F_{z_l}}C_r}-h_{\proj_{F_z}C_r}\|_\infty
\\
	&\le
	\|h_{C_r^{(l)}}-h_{C_r}\|_\infty +
	\|h_{\proj_{F_{z_l}}C_r}-h_{\proj_{F_z}C_r}\|_\infty
	\xrightarrow{l\to\infty}0
\end{align*}
(that is, $\proj_{F_{z_l}}C_r^{(l)}\to \proj_{F_z}C_r$ in Hausdorff 
distance) and $\|\langle z_l,\cdot\rangle- \langle 
z,\cdot\rangle\|_\infty\to 0$.
\end{proof}

\subsection{The gluing principle}
\label{sec:propglue}

One of the difficulties we will encounter in the proof of Theorem 
\ref{thm:main} is that we must glue together degenerate functions 
in $(n-1)$-dimensional hyperplanes to form a degenerate function in 
dimension $n$. This will be accomplished using the following application 
of the propeller structure.

\begin{lem}
\label{lem:propglue}
Let $C_{k+1},\ldots,C_{n-1}$ be convex bodies in $\mathbb{R}^n$, and let
$E$ be a subspace of dimension $\dim E=k+1$. Assume that
$$
	\V_{E^\perp}(\proj_{E^\perp}C_{k+1},\ldots,\proj_{E^\perp}C_{n-1})>0.
$$
Then for any $1$-homogeneous function $h:\mathbb{R}^n\to\mathbb{R}$, 
there exists a $1$-homogeneous function $\varphi:E\to\mathbb{R}$ with the 
following property:
for every collection of convex bodies $K_1,\ldots,K_k$ in $E$ such that
we have
$$
	h(x)=\tilde\varphi(\proj_Ex)\quad\mbox{for all }x\in\supp
	S_{K_1,\ldots,K_k,C_{k+1},\ldots,C_{n-1}}
$$
for some $1$-homogeneous function 
$\tilde\varphi:E\to\mathbb{R}$,
we have in fact
$$
	h(x)=\varphi(\proj_Ex)\quad\mbox{for all }x\in\supp
        S_{K_1,\ldots,K_k,C_{k+1},\ldots,C_{n-1}}.
$$
\end{lem}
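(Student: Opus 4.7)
The plan is to construct $\varphi$ intrinsically from $h$ and $C_{k+1},\ldots,C_{n-1}$ via the propeller structure of Theorem~\ref{thm:propeller}, so that independence from $K_1,\ldots,K_k$ is built in. For each unit vector $z\in S^{n-1}\cap E$, introduce the blade measure
\[
\mu_z:=1_{\{\langle z,\cdot\rangle>0\}}\,S_{\proj_{F_z}C_{k+1},\ldots,\proj_{F_z}C_{n-1}},
\]
which depends only on $h$ and $C_{k+1},\ldots,C_{n-1}$. Applying Lemma~\ref{lem:proj} inside $F_z$ to the segment $[0,z]$, and using that $F_z\cap z^\perp=E^\perp$ and $\proj_{E^\perp}\circ\proj_{F_z}=\proj_{E^\perp}$, yields
\[
\int\langle z,x\rangle\,d\mu_z(x)=\V_{E^\perp}(\proj_{E^\perp}C_{k+1},\ldots,\proj_{E^\perp}C_{n-1})>0,
\]
so $\supp\mu_z$ always contains a point with $\langle z,x\rangle>0$. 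Call $z$ \emph{$h$-consistent} when there is a (necessarily unique) constant $c_z\in\mathbb{R}$ with $h(x)=c_z\langle z,x\rangle$ for every $x\in\supp\mu_z$; set $\varphi_0(z):=c_z$ in that case and $\varphi_0(z):=0$ otherwise, and extend $\varphi_0$ from $S^{n-1}\cap E$ to a $1$-homogeneous function $\varphi\colon E\to\mathbb{R}$.

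Suppose now that $K_1,\ldots,K_k\subset E$ and some $1$-homogeneous $\tilde\varphi$ satisfy the hypothesis. Theorem~\ref{thm:propeller} gives
\[
\supp S_{K_1,\ldots,K_k,C_{k+1},\ldots,C_{n-1}}\subseteq E^\perp\cup\bigcup_{z\in\supp S_{K_1,\ldots,K_k}}F_z^+,
\]
and on $E^\perp$ both $h$ and $\varphi(\proj_E\cdot)$ vanish by $1$-homogeneity. The central claim is that for every unit vector $z\in\supp S_{K_1,\ldots,K_k}$,
\[
\supp\mu_z\subseteq\supp S_{K_1,\ldots,K_k,C_{k+1},\ldots,C_{n-1}}.
\]
Granting this, the hypothesis reads $h(x)=\tilde\varphi(\proj_E x)=\tilde\varphi(z)\langle z,x\rangle$ on $\supp\mu_z$ (using $\proj_E x=\langle z,x\rangle z$ on $F_z^+$ and $1$-homogeneity of $\tilde\varphi$), so $z$ is $h$-consistent with $\varphi_0(z)=\tilde\varphi(z)$. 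Consequently, on every blade of the support, $\varphi(\proj_E x)=\varphi_0(z)\langle z,x\rangle=\tilde\varphi(z)\langle z,x\rangle=h(x)$, which finishes the proof.

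The central claim will be proved via the identity \eqref{eq:propeller} and a weak-continuity argument. Fix $z_0\in\supp S_{K_1,\ldots,K_k}$ and $y_0\in\supp\mu_{z_0}$, and choose an open neighborhood $A$ of $y_0$ in $S^{n-1}$ with $\inf_{x\in\bar A}\langle z_0,x\rangle>0$, which is possible since $\langle z_0,y_0\rangle>0$. As $z$ varies in $S^{n-1}\cap E$, the bodies $\proj_{F_z}C_i$ vary continuously in Hausdorff distance, so by Lemma~\ref{lem:cont} the measures $\nu_z:=S_{\proj_{F_z}C_{k+1},\ldots,\proj_{F_z}C_{n-1}}$ are weakly continuous in $z$. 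For $z$ sufficiently close to $z_0$ one has $\langle z,x\rangle>0$ on all of $\bar A$, so $\mu_z(A)=\nu_z(A)$; the Portmanteau theorem then gives $\liminf_{z\to z_0}\mu_z(A)\ge\nu_{z_0}(A)=\mu_{z_0}(A)>0$, so $\mu_z(A)$ stays bounded below on a neighborhood $U$ of $z_0$. Combining $S_{K_1,\ldots,K_k}(U)>0$ with \eqref{eq:propeller} yields
\[
\tbinom{n-1}{k}S_{K_1,\ldots,K_k,C_{k+1},\ldots,C_{n-1}}(A)\ge\int_U\mu_z(A)\,dS_{K_1,\ldots,K_k}(z)>0,
\]
proving $y_0\in\supp S_{K_1,\ldots,K_k,C_{k+1},\ldots,C_{n-1}}$. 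The main obstacle is this weak-continuity step, since the $\mu_z$ live on moving subspaces $F_z$ and carry the discontinuous cutoff $1_{\{\langle z,\cdot\rangle>0\}}$; separating the test set $A$ from $E^\perp$ circumvents the discontinuity, while the positivity hypothesis guarantees that $\supp\mu_z$ meets $F_z^+$ for every $z$, which is what makes $\varphi_0$ well-defined in the first place.
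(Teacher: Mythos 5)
Your proof is correct and follows essentially the same route as the paper's: define $\varphi$ intrinsically from $h$ through the blade measures (the paper takes $\varphi(z)$ to be the integral of $h$ against the blade measure at $z$ normalized by $\V_{E^\perp}(\proj_{E^\perp}C_{k+1},\ldots,\proj_{E^\perp}C_{n-1})$, which agrees with your $\varphi_0$ at every $h$-consistent $z$), and then use Theorem~\ref{thm:propeller} to reduce to showing $\tilde\varphi=\varphi$ on $\supp S_{K_1,\ldots,K_k}$. The one place you go further than the paper is your ``central claim'' that $\supp\mu_z\subseteq\supp S_{K_1,\ldots,K_k,C_{k+1},\ldots,C_{n-1}}$ for every $z\in\supp S_{K_1,\ldots,K_k}$; the paper states this consequence as following ``by Theorem~\ref{thm:propeller}'' without spelling out the weak-continuity upgrade from a.e.~$z$ to all $z$ in the support, which your Portmanteau argument supplies (the requisite continuity of $z\mapsto\nu_z$ is, up to the moving-subspace bookkeeping, the same fact established in Step~2 of the paper's proof of Theorem~\ref{thm:propeller}).
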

\medskip

The point of Lemma \ref{lem:propglue} is that the function 
$\tilde\varphi$ depends on the choice of bodies $K_1,\ldots,K_k$, 
while $\varphi$ does not. Thus $\varphi$ may be viewed as having ``glued 
together'' the functions $\tilde\varphi$ over all choices of 
$K_1,\ldots,K_k$ in $E$ for which $\tilde\varphi$ exists. In the 
proof of Theorem \ref{thm:main}, $\tilde\varphi$ 
will be degenerate functions of the $(n-1)$-dimensional projections,
and $\varphi$ will be the degenerate function in dimension $n$.

\begin{proof}[Proof of Lemma \ref{lem:propglue}]
Throughout the proof we adopt the same notation as in Theorem 
\ref{thm:propeller}. First, we define $\varphi(z)$ for $z\in S^{n-1}\cap 
E$ as
$$
	\varphi(z) :=
	\frac{\int h(x)\,1_{\langle z,x\rangle>0}\,
	S_{\proj_{F_z}C_{k+1},\ldots,\proj_{F_z}C_{n-1}}(dx)}{
	\V_{E^\perp}(\proj_{E^\perp}C_{k+1},\ldots,\proj_{E^\perp}C_{n-1})
	}.
$$
We may extend $\varphi:E\to\mathbb{R}$ to a $1$-homogeneous function
as $\varphi(z):=\|z\|\varphi(z/\|z\|)$ for $z\in E\backslash\{0\}$.
We now show that $\varphi$ satisfies the requisite property.

To this end, let
$\tilde\varphi:E\to\mathbb{R}$ be any $1$-homogeneous function, 
and fix any $z\in S^{n-1}\cap E$. As
$\proj_Ex = \langle x,z\rangle z$ for any $x\in F_z$, we obtain
$$
	\tilde\varphi(\proj_E x) =
	\tilde\varphi(z) \langle z,x\rangle 
	\qquad\mbox{for any }x\in F_z^+.
$$
Integrating this identity against
$1_{\langle z,x\rangle>0}\,S_{\proj_{F_z}C_{k+1},\ldots,\proj_{F_z}C_{n-1}}(dx)$ 
yields
\begin{equation}
\label{eq:condex}
\begin{aligned}
	&\int
	\tilde\varphi(\proj_E x)
	\,1_{\langle z,x\rangle>0}\,
	S_{\proj_{F_z}C_{k+1},\ldots,\proj_{F_z}C_{n-1}}(dx)
	\\
	&=
	\tilde\varphi(z)
	\int
	\langle z,x\rangle_+
	\,S_{\proj_{F_z}C_{k+1},\ldots,\proj_{F_z}C_{n-1}}(dx)
	\\ 
	&=
	\tilde\varphi(z)
	\,\V_{E^\perp}(\proj_{E^\perp}C_{k+1},
	\ldots,\proj_{E^\perp}C_{n-1}),
\end{aligned}
\end{equation}
where we used Corollary \ref{cor:segproj} in the last line.

Now let $K_1,\ldots,K_k$ be convex bodies in $E$ such that
$$
	h(x)=\tilde\varphi(\proj_Ex)\quad\mbox{for all }x\in\supp
	S_{K_1,\ldots,K_k,C_{k+1},\ldots,C_{n-1}}.
$$
Then we also have
$$
	h(x)=\tilde\varphi(\proj_Ex)\quad\mbox{for all }x\in
	\supp(1_{\langle z,\cdot\rangle>0}
	\,dS_{\proj_{F_z}C_{k+1},\ldots,\proj_{F_z}C_{n-1}})
$$
for any $z\in\supp S_{K_1,\ldots,K_k}$ by Theorem \ref{thm:propeller}.
Thus
$$
	\tilde\varphi(z) =
	\varphi(z)
	\quad\mbox{for all }z\in\supp S_{K_1,\ldots,K_k}
$$
by \eqref{eq:condex} and the definition of $\varphi$. But then we may also 
conclude that
$$
	h(x)=\tilde\varphi(\proj_Ex)=
	\varphi(\proj_Ex)
	\quad\mbox{for all }x\in\supp
        S_{K_1,\ldots,K_k,C_{k+1},\ldots,C_{n-1}},
$$
because $\proj_Ex\in \|\proj_Ex\|\supp S_{K_1,\ldots,K_k}$ for every
$x\in \supp S_{K_1,\ldots,K_k,C_{k+1},\ldots,C_{n-1}}$ by Theorem 
\ref{thm:propeller}, and as 
$\tilde\varphi$ and $\varphi$ are both $1$-homogeneous.
\end{proof}

\subsection{Linear relations}
\label{sec:proplin}

Lemma \ref{lem:propglue} is only applicable when a single degenerate 
function appears. In general there may be multiple degenerate functions 
corresponding to different maximal sets, and we will need a way to 
decouple their analysis. The technique that will be used for this purpose 
is developed in this section. The utility of the following result will be 
far from obvious at this point, but we will see in section 
\ref{sec:decouplingarg} that it plays a key role in our proofs.

Unlike the other results of this section, we formulate the following 
result only for polytopes, which will suffice for our purposes. The 
polytope assumption is convenient in the proof, but does not appear to be 
of fundamental importance.

\begin{prop}
\label{prop:proplin}
Let $C_1,\ldots,C_{n-1}$ be polytopes in $\mathbb{R}^n$. Suppose that
$C_1,\ldots,C_k$ lie in a subspace $E$ with $\dim E=k+1$
and satisfy the criticality condition of Definition \ref{defn:crit}.
Let $h:S^{n-1}\to\mathbb{R}$ be a function such that
$$
	h(x)=0\quad\mbox{for all }x\in E^\perp\cap 
	\supp S_{C_1,\ldots,C_{n-1}},
$$
and such that
$$
	\int h\,dS_{Q,\ldots,Q,C_{k+1},\ldots,C_{n-1}}=0
$$
for every full-dimensional polytope $Q$ in $E$. Then there exists
$w\in E$ so that
$$
	\int h(x)\,1_{\langle z,x\rangle>0}\,
	S_{\proj_{F_z}C_{k+1},\ldots,\proj_{F_z}C_{n-1}}(dx)
	=\langle w,z\rangle
$$
for all $z\in S^{n-1}\cap E$,
where $F_z$ is as defined in Theorem \ref{thm:propeller}.
\end{prop}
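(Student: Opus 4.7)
The strategy is to use Theorem \ref{thm:propeller} to convert the hypothesis into the vanishing of $\int \psi\,dS_Q$ for every full-dimensional polytope $Q\subset E$, where $\psi$ is the function that should eventually agree with $\langle w,\cdot\rangle$. Applying Theorem \ref{thm:propeller} with $k$ copies of $Q$ in place of $C_1,\ldots,C_k$ yields
\begin{equation*}
\binom{n-1}{k}\int h(x)\,1_{x\notin E^\perp}\,dS_{Q,\ldots,Q,C_{k+1},\ldots,C_{n-1}}(x) \;=\; \int \psi(z)\,dS_Q(z),
\end{equation*}
where $S_Q$ denotes the surface area measure of $Q$ in $E$ (i.e.\ $S_{Q,\ldots,Q}$ with $k$ copies in the $(k+1)$-dimensional space $E$). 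Since condition~2 of the proposition makes the full integral $\int h\,dS_{Q,\ldots,Q,C_{k+1},\ldots,C_{n-1}}$ vanish, the plan reduces to proving that the shaft contribution $\int h\,1_{x\in E^\perp}\,dS_{Q,\ldots,Q,C_{k+1},\ldots,C_{n-1}}$ also vanishes for every such $Q$.

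For the shaft vanishing I will show
\begin{equation*}
\supp S_{Q,\ldots,Q,C_{k+1},\ldots,C_{n-1}}\cap E^\perp \;\subseteq\; \supp S_{C_1,\ldots,C_{n-1}}\cap E^\perp,
\end{equation*}
after which condition~1 kills $h$ on the former set. For any $x\in E^\perp$ the inclusions $C_i,Q\subseteq E\perp x$ force $F(C_i,x)=C_i$ and $F(Q,x)=Q$, so by Lemma~\ref{lem:mapoly} the two atomic weights are $(n-1)$-dimensional mixed volumes in $x^\perp$ involving either $(Q,\ldots,Q)$ or $(C_1,\ldots,C_k)$ together with the shared faces $F(C_j,x)$ for $j>k$. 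Positivity of the former implies positivity of the latter by the following dimension computation with Lemma~\ref{lem:dim}: for any subset $\alpha=\alpha_1\sqcup\alpha_2$ with $\alpha_1\subseteq[k]$ nonempty, combining the criticality bound $\dim\sspan\sum_{i\in\alpha_1}C_i\geq|\alpha_1|+1$ with the $Q$-case condition at the enlarged subset $[k]\sqcup\alpha_2$ (which yields $\dim\sspan\sum_{j\in\alpha_2}F(C_j,x)-\dim(E\cap\sspan\sum_{j\in\alpha_2}F(C_j,x))\geq|\alpha_2|-1$) gives the $C$-case dimension inequality $\dim(\sspan\sum_{i\in\alpha_1}C_i+\sspan\sum_{j\in\alpha_2}F(C_j,x))\geq|\alpha_1|+|\alpha_2|$ at $\alpha$, where the key step is $A\cap B\subseteq E\cap B$ for $A:=\sspan\sum_{i\in\alpha_1}C_i\subseteq E$.

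Once $\int \psi\,dS_Q=0$ is known for every full-dimensional polytope $Q\subset E$, the conclusion follows by a classical duality argument modeled on the $n=2$ proof of Theorem \ref{thm:schneider}. Fix an atomic positive measure $\mu_0$ on $S^{n-1}\cap E$ with vanishing first moment and $\sspan\supp\mu_0=E$ (for instance, the surface area measure of a simplex in $E$). For any finite atomic signed measure $\sigma$ on $S^{n-1}\cap E$ with $\int x\,d\sigma=0$, the Hahn--Jordan decomposition $\sigma=\sigma^+-\sigma^-$ and the augmentation $\tilde\sigma^\pm:=\sigma^\pm+\|m\|\delta_{-m/\|m\|}+\mu_0$, where $m:=\int x\,d\sigma^\pm$, yield positive atomic measures with vanishing first moment and full-dimensional support. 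By Minkowski's existence theorem \cite[Theorem 8.2.2]{Sch14} they correspond to full-dimensional polytopes $Q^\pm\subset E$, so $\int\psi\,d\sigma=\int\psi\,dS_{Q^+}-\int\psi\,dS_{Q^-}=0$. Choosing a basis $v_1,\ldots,v_{k+1}$ of $E$ lying on $S^{n-1}$ and the unique $w\in E$ with $\langle w,v_i\rangle=\psi(v_i)$, any $z=\sum c_i v_i\in S^{n-1}\cap E$ then satisfies $\psi(z)=\langle w,z\rangle$ upon applying the vanishing to $\sigma=\delta_z-\sum c_i\delta_{v_i}$. The main obstacle is the dimension-counting argument of the second paragraph: balancing the strictly weaker dimension bounds available from criticality of $C_1,\ldots,C_k$ against the stronger $Q$-case conditions requires the careful transfer via the enlarged subset $[k]\sqcup\alpha_2$.
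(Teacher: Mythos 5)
Your proposal is correct and follows the same three-step architecture as the paper's proof: apply Theorem~\ref{thm:propeller} to express $\int\varrho\,dS_{Q,\ldots,Q}$ as $\binom{n-1}{k}\int h\,1_{x\notin E^\perp}\,dS_{Q,\ldots,Q,C_{k+1},\ldots,C_{n-1}}$; show this last integral vanishes by controlling the shaft $E^\perp$; and conclude by the Minkowski-existence duality argument (which is Corollary~\ref{cor:minkdual} in the paper, and your finite-atom variant of it is equivalent).

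The one place where you depart from the paper is the shaft inclusion $E^\perp\cap\supp S_{Q,\ldots,Q,C_{k+1},\ldots,C_{n-1}}\subseteq E^\perp\cap\supp S_{C_1,\ldots,C_{n-1}}$, which the paper isolates as Lemma~\ref{lem:perpsupp}. The paper deduces it from the segment characterization of Lemma~\ref{lem:dim}(b) together with the projection formula of Lemma~\ref{lem:proj}: it fixes segments $I_1,\ldots,I_k\subseteq Q$ and $I_r\subseteq F(C_r,u)$ with positive mixed volume, factors out $G=H^\perp\cap u^\perp$, deduces that $\proj_G|_E$ has one-dimensional kernel, and then trades the ``$+1$'' from criticality against the ``$-1$'' from this kernel. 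Your argument achieves the same trade-off purely through the dimensional characterization of Lemma~\ref{lem:dim}(c): for $\alpha=\alpha_1\sqcup\alpha_2$ with $\alpha_1\neq\varnothing$, criticality gives $\dim A\geq|\alpha_1|+1$ for $A=\sspan\sum_{i\in\alpha_1}C_i\subseteq E$, the $Q$-condition at $[k]\sqcup\alpha_2$ gives $\dim B-\dim(E\cap B)\geq|\alpha_2|-1$ for $B=\sspan\sum_{j\in\alpha_2}F(C_j,x)$, and $A\cap B\subseteq E\cap B$ closes the gap. This is a valid and somewhat more elementary route to the same inclusion, avoiding the projection formula entirely; the paper's version buys a slightly more geometric picture (the one-dimensional kernel of $\proj_G|_E$), while yours is a shorter linear-algebra computation. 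You should also record the case $\alpha_1=\varnothing$, which follows directly from the $Q$-condition at $\alpha_2$ alone, and note that the paper proves the stronger equality of supports while you prove (correctly) only the inclusion actually needed.
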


The reader should keep in mind the case where $h$ is a degenerate function 
corresponding to a critical set disjoint from $[k]$.
Then the bodies $C_1,\ldots,C_k$ factor out of the integral $\int 
h\,dS_{C_1,\ldots,C_{n-1}}$ by Lemma \ref{lem:maproj}, and may thus be 
replaced by any other body $Q$ in $E$. This motivates the assumption of 
Proposition \ref{prop:proplin}. The conclusion of Proposition 
\ref{prop:proplin} then states that the average of $h$ over each blade of 
the propeller generated by $C_1,\ldots,C_k$ must be linearly related 
across the blades. This is not at all clear from Theorem 
\ref{thm:propeller}, which specifies the mixed area measure on each blade 
but does not explain the relations between different blades.

The proof of Proposition \ref{prop:proplin} is based on a duality
argument that is similar to the one used at the end of the proof of 
Theorem \ref{thm:schneider}. Let us begin by formulating a simple 
consequence of the Minkowski existence theorem.

\begin{lem}
\label{lem:minkexsigned}
Let $\sigma$ be a signed measure on $S^{n-1}\cap E$ that is supported on a 
finite number of points and satisfies
$\int x\,\sigma(dx)=0$. Then there exist full-dimensional 
polytopes $Q,Q'$ in $E$ so that 
$\sigma=S_{Q,\ldots,Q}-S_{Q',\ldots,Q'}$.
\end{lem}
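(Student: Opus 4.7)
The plan is to adapt the argument used in the proof of Theorem \ref{thm:schneider} in the base case $n=2$, with the auxiliary measure $S_B$ replaced by a suitable finitely supported nonnegative measure so that the resulting measures are polytope surface area measures (rather than surface area measures of arbitrary convex bodies). The key input will be the Minkowski existence theorem in $(k+1)$-dimensional space, in the polytope version: any nonnegative finitely supported measure $\mu$ on $S^{n-1}\cap E$ with $\int x\,\mu(dx)=0$ and $\sspan\supp\mu=E$ equals the surface area measure $S_{Q,\ldots,Q}$ of a unique (up to translation) full-dimensional polytope $Q\subset E$ \cite[Theorem 8.2.2]{Sch14}.

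First I would apply the Hahn-Jordan decomposition to write $\sigma=\sigma^+-\sigma^-$ with $\sigma^\pm$ nonnegative, finitely supported measures on $S^{n-1}\cap E$. Since $\int x\,\sigma(dx)=0$, we have $m:=\int x\,\sigma^+(dx)=\int x\,\sigma^-(dx)\in E$. Next I would construct an auxiliary discrete nonnegative measure $\rho$ that simultaneously corrects the barycenter and forces full-dimensional support. Fix an orthonormal basis $e_1,\ldots,e_{k+1}$ of $E$, so that $\{\pm e_i\}\subset S^{n-1}\cap E$, and set
$$
\rho := \sum_{i=1}^{k+1}\bigl(a_i\,\delta_{e_i}+b_i\,\delta_{-e_i}\bigr),
$$
where $a_i,b_i>0$ are chosen so that $a_i-b_i=-\langle m,e_i\rangle$ (for example $a_i=1+\max(0,-\langle m,e_i\rangle)$ and $b_i=a_i+\langle m,e_i\rangle$). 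Then $\rho$ is finitely supported and nonnegative, with $\int x\,\rho(dx)=-m$ and $\sspan\supp\rho=E$.

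Define $\mu^\pm := \sigma^\pm + \rho$. By construction, $\mu^\pm$ are nonnegative, finitely supported measures on $S^{n-1}\cap E$ with $\int x\,\mu^\pm(dx)=0$ and $\sspan\supp\mu^\pm=E$. Applying the Minkowski existence theorem in $E$ yields full-dimensional polytopes $Q,Q'\subset E$ with $S_{Q,\ldots,Q}=\mu^+$ and $S_{Q',\ldots,Q'}=\mu^-$ (where the surface area measures are computed in $E$, i.e.\ repeating $Q$ resp.\ $Q'$ a total of $k=\dim E-1$ times). Then
$$
\sigma = \mu^+ - \mu^- = S_{Q,\ldots,Q} - S_{Q',\ldots,Q'},
$$
as required. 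There is no real obstacle here: the construction of $\rho$ is essentially forced, and the only thing to verify is that $\mu^\pm$ meet the hypotheses of the Minkowski existence theorem, which is immediate from the choice of $\rho$.
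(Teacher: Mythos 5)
Your proof is correct, and the overall strategy — Hahn–Jordan decomposition followed by adding a common finitely supported nonnegative measure to $\sigma^\pm$ to force barycenter zero and full-dimensional support, then invoking the Minkowski existence theorem for polytopes — is exactly what the paper does. The only (immaterial) difference is the auxiliary measure: the paper uses $\|m\|\delta_{-m/\|m\|}+S_{R,\ldots,R}$ for a full-dimensional polytope $R\subset E$, while you hand-build a discrete measure supported on $\{\pm e_i\}$ that does both jobs at once; either choice works, and since your measure is finitely supported you should cite the polytope version of Minkowski's theorem (\cite[Theorem 8.2.1]{Sch14}) rather than the general one.
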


\begin{proof}
Let $\sigma=\sigma^+-\sigma^-$ be the Hahn-Jordan decomposition of 
$\sigma$ and 
$m:=\int x\,\sigma^\pm(dx)$.
Let $R$ be any full-dimensional polytope in $E$, and define
$$
	\mu^{\pm}:=\sigma^\pm + \|m\|\delta_{-m/\|m\|} + S_{R,\ldots,R}.
$$
Then $\mu^\pm$ are finitely supported measures, $\int 
x\,d\mu^{\pm}=0$, and $\sspan\supp\mu^\pm=E$. The 
Minkowski existence theorem \cite[Theorem 8.2.1]{Sch14} therefore yields
the existence of full-dimensional polytopes $Q,Q'$ in $E$ so that
$\mu^+=S_{Q,\ldots,Q}$ and $\mu^-=S_{Q',\ldots,Q'}$. The conclusion 
now follows as $\sigma=\mu^+-\mu^-$.
\end{proof}

We will exploit Lemma \ref{lem:minkexsigned} through a duality argument.

\begin{cor}
\label{cor:minkdual}
Let $\varrho:S^{n-1}\cap E\to\mathbb{R}$ be any function such that
$\int \varrho\,dS_{Q,\ldots,Q} = 0$
for every full-dimensional polytope $Q$ in $E$. Then 
$\varrho=\langle w,\cdot\rangle$ for some $w\in E$.
\end{cor}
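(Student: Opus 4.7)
\textbf{Proof plan for Corollary \ref{cor:minkdual}.}

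The plan is to dualize the hypothesis using Lemma \ref{lem:minkexsigned} to reduce the problem to an elementary linear-algebraic statement. First, I will show that for every finitely supported signed measure $\sigma$ on $S^{n-1}\cap E$ with $\int x\,\sigma(dx)=0$, the hypothesis forces $\int\varrho\,d\sigma=0$. Indeed, Lemma \ref{lem:minkexsigned} provides full-dimensional polytopes $Q,Q'$ in $E$ with $\sigma=S_{Q,\ldots,Q}-S_{Q',\ldots,Q'}$, and integrating $\varrho$ against both sides and using the hypothesis gives the vanishing.

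Rephrasing this pointwise, for any finitely many points $z_1,\dots,z_m\in S^{n-1}\cap E$ and scalars $c_1,\dots,c_m\in\mathbb{R}$,
\[
\sum_{i=1}^m c_i z_i = 0 \quad\Longrightarrow\quad \sum_{i=1}^m c_i \varrho(z_i)=0.
\]
Equivalently, the linear functional $c\mapsto\sum c_i\varrho(z_i)$ on $\mathbb{R}^m$ vanishes on the kernel of the map $c\mapsto\sum c_i z_i\in E$, and hence factors through the latter: there exists $w=w(z_1,\dots,z_m)\in E$ such that $\varrho(z_i)=\langle w,z_i\rangle$ for every $i$.

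To finish, I will remove the dependence of $w$ on the chosen points. Pick a basis $z_1,\dots,z_{k+1}$ of $E$ with $z_i\in S^{n-1}\cap E$ (possible since $\dim E=k+1$). Apply the preceding step to this basis to obtain a vector $w\in E$, which is uniquely determined by the conditions $\varrho(z_i)=\langle w,z_i\rangle$ on the basis. Now for an arbitrary $z\in S^{n-1}\cap E$, apply the step again to the enlarged collection $\{z_1,\dots,z_{k+1},z\}$: this gives some $w'\in E$ with $\varrho(z_i)=\langle w',z_i\rangle$ for $i\le k+1$ and $\varrho(z)=\langle w',z\rangle$. Uniqueness on the basis forces $w'=w$, whence $\varrho(z)=\langle w,z\rangle$ for every $z\in S^{n-1}\cap E$, as required.

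The proof is essentially formal once Lemma \ref{lem:minkexsigned} is in hand; there is no serious obstacle, the only mild subtlety being the passage from the ``finite-set'' conclusion to a single globally valid vector $w$, which is handled by the basis-uniqueness argument in the last paragraph.
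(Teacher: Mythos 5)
Your proposal is correct and follows essentially the same route as the paper: both reduce to a finite-set claim via Lemma \ref{lem:minkexsigned} and then patch the local vectors together with the same basis-uniqueness argument. The only cosmetic difference is that you phrase the finite-dimensional step as the functional $c\mapsto\sum c_i\varrho(z_i)$ factoring through $c\mapsto\sum c_i z_i$, whereas the paper argues by contraposition via Hahn--Banach separation; these are the same linear-algebra fact.
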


\begin{proof}
We first claim that for any finite set $\Omega\subset S^{n-1}\cap E$, 
there exists $w_\Omega\in E$ so that $\varrho = \langle 
w_\Omega,\cdot\rangle$ on $\Omega$. Indeed, suppose this is not the 
case; then by the Hahn-Banach theorem, there is a 
signed measure with support in $\Omega$ so that
$$
	\int\varrho \,d\sigma>0\qquad
	\mbox{and}\qquad
	\int x\,\sigma(dx)=0.
$$
This is contradicted by Lemma \ref{lem:minkexsigned} and the assumption.

Now let $w := w_{\{v_1,\ldots,v_{k+1}\}}$, where
$\{v_1,\ldots,v_{k+1}\}$ is a basis of $E$. Then we have
$$
	\langle w,v_i\rangle = \varrho(v_i) =
	\langle w_{\{x,v_1,\ldots,v_{k+1}\}},v_i\rangle
$$
for every $x\in S^{n-1}\cap E$ and $i$. Thus
$w_{\{x,v_1,\ldots,v_{k+1}\}} =w$, so that
$$
	\varrho(x) = \langle w_{\{x,v_1,\ldots,v_{k+1}\}},x\rangle
	=\langle w,x\rangle
$$
for every $x\in S^{n-1}\cap E$.
\end{proof}

\begin{rem}
The reason for the somewhat roundabout finite-dimensional argument is that
we did not assume any regularity (for example, continuity) of $\varrho$, 
so the Hahn-Banach theorem cannot be applied directly in infinite 
dimension.
\end{rem}

We now formulate a useful consequence of the criticality condition. It is 
this part of the proof that is facilitated by the polytope assumption.

\begin{lem}
\label{lem:perpsupp}
Let $C_1,\ldots,C_{n-1}$ be polytopes in $\mathbb{R}^n$. Suppose that
$C_1,\ldots,C_k$ lie in a subspace $E$ with $\dim E=k+1$
and satisfy the criticality condition of Definition~\ref{defn:crit}.
Then
for any full-dimensional polytope $Q$ in $E$, we have
$$
	E^\perp\cap \supp S_{C_1,\ldots,C_{n-1}} =
	E^\perp\cap \supp S_{Q,\ldots,Q,C_{k+1},\ldots,C_{n-1}}.
$$
\end{lem}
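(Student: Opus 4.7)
The plan is to fix $u \in E^\perp \cap S^{n-1}$ and reduce both support conditions to a single mixed volume positivity in $u^\perp$. Since all bodies are polytopes, Lemma~\ref{lem:mapoly} represents each side as an atomic mass; and since $u \perp E$, we have $F(C_i,u) = C_i$ for $i \leq k$ and $F(Q,u) = Q$ (every point of a body in $E$ achieves the supremum zero of $\langle u,\cdot\rangle$). Writing $F_j := F(C_j,u)$ for $j > k$, the claim reduces to
$$\V_{n-1}(C_1,\ldots,C_k,F_{k+1},\ldots,F_{n-1}) > 0 \iff \V_{n-1}(Q,\ldots,Q,F_{k+1},\ldots,F_{n-1}) > 0,$$
both mixed volumes being computed in $u^\perp$.

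The forward implication is a straightforward monotonicity argument. Full-dimensionality of $Q$ in $E$, together with translation invariance (Lemma~\ref{lem:mvprop}(e)), yields constants $c_i > 0$ such that each $C_i$, after translation, is contained in $c_i Q$. Then monotonicity (Lemma~\ref{lem:mvprop}(d)) and multilinearity give
$$\V_{n-1}(C_1,\ldots,C_k,F_{k+1},\ldots,F_{n-1}) \leq c_1 \cdots c_k\, \V_{n-1}(Q,\ldots,Q,F_{k+1},\ldots,F_{n-1}).$$
For the converse, I would unpack positivity on the $Q$-side via Lemma~\ref{lem:dim}: choosing the subfamily consisting of all $k$ copies of $Q$ together with any $\beta' \subseteq \{k+1,\ldots,n-1\}$ gives $\dim(E + \sum_{j \in \beta'} F_j) \geq k + |\beta'|$, equivalently $\dim \proj_{E^\perp} \sum_{j \in \beta'} F_j \geq |\beta'| - 1$; choosing no copies of $Q$ together with any nonempty $\beta'$ gives $\dim \sum_{j \in \beta'} F_j \geq |\beta'|$.

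To verify condition~(c) of Lemma~\ref{lem:dim} on the $C$-side, I fix nonempty $\alpha \subseteq [k]$ and $\beta' \subseteq \{k+1,\ldots,n-1\}$ and apply the dimension identity $\dim(A+B) = \dim A + \dim \proj_{U^\perp} B$ with $U := \sspan(A - A)$, taking $A := \sum_{i \in \alpha} C_i$ and $B := \sum_{j \in \beta'} F_j$. Criticality of $(C_1,\ldots,C_k)$ yields $\dim A \geq |\alpha| + 1$, and since $U \subseteq E$ we have $\dim \proj_{U^\perp} B \geq \dim \proj_{E^\perp} B \geq |\beta'| - 1$, summing to $|\alpha| + |\beta'|$. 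The remaining case $\alpha = \emptyset$, $\beta' \neq \emptyset$ is the second consequence of the $Q$-side positivity recorded above. Lemma~\ref{lem:dim} then delivers positivity on the $C$-side, completing the equivalence. The main obstacle is precisely this dimension-counting step: the $+1$ afforded by criticality must exactly cancel the $-1$ loss incurred by projecting the $F$-contribution onto $E^\perp$, and it is the clean matching of these two that makes the reverse implication work.
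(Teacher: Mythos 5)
Your proof is correct. Both you and the paper reduce, via Lemma~\ref{lem:mapoly}, to showing that for fixed $u\in E^\perp$ the two mixed volumes in $u^\perp$ are simultaneously positive, and both handle the easy direction by exploiting full-dimensionality of $Q$ in $E$ (you do it by monotonicity, the paper by Lemma~\ref{lem:dim}; either works). For the harder converse the paper takes a different but conceptually aligned route: it invokes Lemma~\ref{lem:dim}(b) on the $Q$-side to extract segments $I_1,\ldots,I_{n-1}$ with linearly independent directions, applies the projection formula Lemma~\ref{lem:proj} with $H=\mathop{\mathrm{span}}$(directions of $I_{k+1},\ldots,I_{n-1})$ and $G=H^\perp\cap u^\perp$, deduces $\dim\proj_G Q=k$ so that $\proj_G|_E$ has a one-dimensional kernel, and concludes from criticality that $\proj_G C_1,\ldots,\proj_G C_k$ remains subcritical in the $k$-dimensional space $G$; a second use of Lemma~\ref{lem:proj} and monotonicity then finishes. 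You instead bypass both the segment extraction and the mixed-volume projection formula by verifying Lemma~\ref{lem:dim}(c) directly and subfamily-by-subfamily, using the dimension identity $\dim(A+B)=\dim A+\dim\proj_{U^\perp}B$ to decouple the $C$-contribution (which gains a $+1$ from criticality) from the $F$-contribution (which loses a $-1$ under $\proj_{E^\perp}$, read off from $Q$-side positivity). The arithmetic is the same in both---criticality's $+1$ exactly absorbs projection's $-1$---but your argument is a touch more elementary, needing no auxiliary subspace $G$ and no Lemma~\ref{lem:proj}, at the price of having to run the dimension count over each pair $(\alpha,\beta')$; the paper's single-$G$ argument packages the same cancellation geometrically.
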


\begin{proof}
Fix $u\in E^\perp$.
As $C_1,\ldots,C_k,Q\subset E$, Lemma \ref{lem:mapoly} yields
\begin{align*}
	S_{C_1,\ldots,C_{n-1}}(\{u\}) &=
	\V_{u^\perp}(C_1,\ldots,C_k,F(C_{k+1},u),\ldots,
	F(C_{n-1},u)),\\
	S_{Q,\ldots,Q,C_{k+1},\ldots,C_{n-1}}(\{u\}) &=
	\V_{u^\perp}(Q,\ldots,Q,F(C_{k+1},u),\ldots,
	F(C_{n-1},u)).
\end{align*}
Thus $S_{C_1,\ldots,C_{n-1}}(\{u\})>0$ implies 
$S_{Q,\ldots,Q,C_{k+1},\ldots,C_{n-1}}(\{u\})>0$ by Lemma \ref{lem:dim},
as $Q$ is full-dimensional. It remains to prove the converse implication.

To this end, suppose $S_{Q,\ldots,Q,C_{k+1},\ldots,C_{n-1}}(\{u\})>0$.
Then by Lemma \ref{lem:dim}, there exist segments $I_1,\ldots,I_k\subseteq
Q$ and $I_r\subseteq F(C_r,u)$ for $r=k+1,\ldots,n-1$ so that
$$
	\V_{u^\perp}(I_1,\ldots,I_{n-1})>0.
$$
Thus Lemma \ref{lem:proj} yields
$$
	0 <
	{n-1\choose k} \V_{u^\perp}(Q,\ldots,Q,I_{k+1},\ldots,I_{n-1})
	=
	\Vol_G(\proj_{G}Q)\,
	\V_H(I_{k+1},\ldots,I_{n-1}),
$$
where $H$ is the linear span of the directions of the segments
$I_{k+1},\ldots,I_{n-1}$ and $G:=H^\perp\cap u^\perp$.
This implies that $\dim(\proj_{G}Q)=k$.
But as $\dim(Q)=k+1$ by assumption, the map $\proj_{G}|_E$ must have 
a one-dimensional kernel. Therefore
$$
	\dim\Bigg(\sum_{i\in\alpha}\proj_{G}C_i\Bigg)
	\ge
	\dim\Bigg(\sum_{i\in\alpha}C_i\Bigg)-1\ge|\alpha|
	\quad\mbox{for all }\alpha\subseteq[k],
$$
where we used the criticality assumption in the second inequality.
Therefore
\begin{align*}
	0 &<
	\V_{G}(\proj_{G}C_1,\ldots,
	\proj_{G}C_k)\,\V_H(I_{k+1},\ldots,I_{n-1}) \\
	&=
	{n-1 \choose k}
	\V_{u^\perp}(C_1,\ldots,C_k,I_{k+1},\ldots,I_{n-1}) \\
	&\le
	{n-1 \choose k}
	\V_{u^\perp}(C_1,\ldots,C_k,F(C_{k+1},u),\ldots,F(C_{n-1},u))
\end{align*}
by Lemmas \ref{lem:dim} and \ref{lem:proj}. Thus we have shown that
$S_{Q,\ldots,Q,C_{k+1},\ldots,C_{n-1}}(\{u\})>0$ implies
$S_{C_1,\ldots,C_{n-1}}(\{u\})>0$, completing the proof.
\end{proof}

We can now complete the proof of Proposition \ref{prop:proplin}.

\begin{proof}[Proof of Proposition \ref{prop:proplin}]
Define the function $\varrho:S^{n-1}\cap E\to\mathbb{R}$ as
$$
	\varrho(z) :=
	\int h(x)\,1_{\langle z,x\rangle>0}\,
	S_{\proj_{F_z}C_{k+1},\ldots,\proj_{F_z}C_{n-1}}(dx).
$$
Then we have for any convex body $Q$ in $E$
$$
	\int \varrho\,dS_{Q,\ldots,Q} =
	{n-1\choose k}
	\int h(x)\,1_{x\not\in E^\perp}\,
	S_{Q,\ldots,Q,C_{k+1},\ldots,C_{n-1}}(dx)
$$
by Theorem \ref{thm:propeller}. But by the first assumption 
on $h$ and Lemma \ref{lem:perpsupp}, we have
$h(x)=0$ for $x\in E^\perp\cap\supp 
S_{Q,\ldots,Q,C_{k+1},\ldots,C_{n-1}}$ when $Q$ is a full-dimensional 
polytope in $E$. Thus the second assumption on $h$ 
shows that $\int \varrho\,dS_{Q,\ldots,Q} = 0$
for every full-dimensional polytope $Q$ in $E$. The conclusion follows 
from Corollary \ref{cor:minkdual}.
\end{proof}

\section{The critical case}
\label{sec:crit}

In this section, we complete the extremal characterization of the 
Alexandrov-Fenchel inequality in the critical case. More precisely, we 
will prove the following.

\begin{thm}
\label{thm:crit}
Let $\mathcal{P}=(P_1,\ldots,P_{n-2})$ be polytopes in $\mathbb{R}^n$ 
that contain the origin in their relative interior. Assume that
$\mathcal{P}$ is critical but not supercritical, and denote by 
$\alpha_0,\ldots,\alpha_\ell$ the associated maximal sets. Then for any 
difference of support functions $f:S^{n-1}\to\mathbb{R}$, we have 
$S_{f,\mathcal{P}}=0$ if and only if 
$$
	f(x) = \langle s,x\rangle + \sum_{j=0}^\ell g_j(x)
	\quad\mbox{for all }x\in\supp S_{B,\mathcal{P}}
$$
holds for some $s\in\mathbb{R}^n$ and $\alpha_j$-degenerate function 
$g_j$, $j=0,\ldots,\ell$.
\end{thm}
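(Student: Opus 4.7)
The proof proceeds by induction on the dimension $n$, assuming Theorem~\ref{thm:main} (which encompasses both Theorems~\ref{thm:schneider} and~\ref{thm:crit}) in dimension $n-1$. The \emph{if} direction is immediate from Lemmas~\ref{lem:lineq}, \ref{lem:suppeq}, and~\ref{lem:degeq}, so only the \emph{only if} direction requires work. As in section~\ref{sec:supercrit}, I will apply Theorem~\ref{thm:localaf} to replace $f$ by a function $g$ that agrees with $f$ on $\supp S_{B,\mathcal{P}}$ and satisfies $S_{g,g,\mathcal{P}_{\backslash r}}\le 0$ for a suitable $r\in[n-2]$; then, exactly as in Lemma~\ref{lem:superpproj}, $\proj_{u^\perp}g$ will realize an equality case of the Alexandrov--Fenchel inequality in $u^\perp$ for every $u\in\mathcal{L}_r\cap S^{n-1}$.

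The choice of $r$ is governed by the disjointness of maximal sets: by Corollary~\ref{cor:panov}, any maximal set $\alpha_j$ not containing $r$ satisfies $\mathcal{L}_r\cap\mathcal{L}_{\alpha_j}=\{0\}$, so for generic $u\in\mathcal{L}_r\cap S^{n-1}$ one has $u\notin\mathcal{L}_{\alpha_j}$, and hence $\alpha_j$ remains a maximal set of the projected collection $\proj_{u^\perp}\mathcal{P}_{\backslash r}$ in $u^\perp$. Invoking the induction hypothesis, I will obtain, for each such $u$, a vector $s(u)\in u^\perp$ and, for each maximal $\alpha_j$ with $r\notin\alpha_j$, a $1$-homogeneous function $\varphi_{j,u}\colon\mathcal{L}_{\alpha_j}\to\mathbb{R}$ with $\int\varphi_{j,u}\,dS_{\mathcal{P}_{\alpha_j}}=0$ (via Lemma~\ref{lem:deghomog}(a) and the fact that $\proj_{u^\perp}$ is an isomorphism on $\mathcal{L}_{\alpha_j}$), yielding a decomposition
$$
    g(x)=\langle s(u),x\rangle+\sum_{j:\,r\notin\alpha_j}\varphi_{j,u}(\proj_{\mathcal{L}_{\alpha_j}}x)
$$
on $\supp S_{[0,u],B,\mathcal{P}_{\backslash r}}$. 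The maximal set containing $r$ (if any) is recovered by repeating the whole construction with a second index $r'$ lying in a different maximal set, which is possible since distinct maximal sets are disjoint.

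The global decomposition will be assembled in two stages. For each maximal $\alpha_j$, I will apply Lemma~\ref{lem:propglue} with $E=\mathcal{L}_{\alpha_j}$ and $(C_{k+1},\ldots,C_{n-1})=(B,\mathcal{P}_{\backslash\alpha_j})$ to extract a single $u$-independent $1$-homogeneous function $\varphi_j$ on $\mathcal{L}_{\alpha_j}$ that represents the $\alpha_j$-contribution on every blade of the propeller generated by $\mathcal{P}_{\alpha_j}$. The linear term will be isolated by a supercritical-style argument mirroring Lemmas~\ref{lem:superconst} and~\ref{lem:superglue}, producing a single vector $s\in\mathbb{R}^n$. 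Lemma~\ref{lem:deghomog}(b) will then convert each $\varphi_j$ into an honest $\alpha_j$-degenerate function $g_j$, completing the required decomposition on $\supp S_{B,\mathcal{P}}$.

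The hard part will be decoupling the contributions of distinct maximal sets: the induction produces an entangled sum $\sum_j\varphi_{j,u}(\proj_{\mathcal{L}_{\alpha_j}}x)$, and one must verify that these pieces can be separated so that each $\varphi_{j,u}$ is absorbed into its own $\alpha_j$-degenerate summand. Here Proposition~\ref{prop:proplin} is essential: applied with $E=\mathcal{L}_{\alpha_j}$, $(C_1,\ldots,C_k)=\mathcal{P}_{\alpha_j}$, and with $h$ equal to $g$ minus the linear term and the contributions attributed to all other maximal sets, the proposition forces the blade-averages of $h$ over the propeller generated by $\mathcal{P}_{\alpha_j}$ to be the restriction of a linear function on $\mathcal{L}_{\alpha_j}$. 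Subtracting this linear correction yields precisely the integrability condition $\int\varphi_j\,dS_{\mathcal{P}_{\alpha_j}}=0$ needed for $\alpha_j$-degeneracy, and iterating this decoupling one maximal set at a time delivers the decomposition.
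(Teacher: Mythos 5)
Your high-level plan correctly identifies the main machinery (project via Theorem~\ref{thm:localaf}, invoke the induction hypothesis in $u^\perp$, use propeller geometry and Proposition~\ref{prop:proplin} to decouple), but there are several concrete gaps that would derail the argument as you have sketched it.

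First, your plan to recover the contribution of the maximal set containing $r$ ``by repeating the whole construction with a second index $r'$ lying in a different maximal set'' does not work. When there is only one maximal set ($\ell=0$, which the hypothesis ``critical but not supercritical'' certainly allows), there is no second maximal set to pick $r'$ from, and choosing $r'$ outside every maximal set destroys the structure Lemma~\ref{lem:umeasure1} delivers (that lemma relies on $r$ lying in a maximal set). Even when $\ell\ge 1$, the two runs of Theorem~\ref{thm:localaf} produce different modifications $g\neq g'$ of $f$, supported on different finite-dimensional ``universes,'' and reconciling the two decompositions on $\supp S_{B,\mathcal{P}}$ is itself a nontrivial gluing problem you do not address. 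The paper handles the $\gamma:=\alpha_0\ni r$ contribution inside the same pass: the induction hypothesis in $u^\perp$ produces a $(\gamma\setminus\{r\})$-degenerate function together with a term $a_u h_{\proj_{u^\perp}P_r}$ (coming from part~$b$ of Lemma~\ref{lem:afeq}), and the second claim of Lemma~\ref{lem:critinduct} combines these with a multiple of $h_{[0,u]}$ to form an honest $\gamma$-degenerate function in $\mathbb{R}^n$, with no need for a second base point.

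Second, applying Lemma~\ref{lem:propglue} with $E=\mathcal{L}_{\alpha_j}$ for $j\ne 0$ has no teeth: the gluing principle extracts a single $\varphi$ from a \emph{family} of choices of bodies $(K_1,\ldots,K_k)$ in $E$ on whose propeller blades the function is known, but for $\alpha_j$ not containing $r$ there is no $u$-varying family of bodies inside $\mathcal{L}_{\alpha_j}$ to parametrize the blades (the segments $[0,u]$ lie in $\mathcal{L}_\gamma$, not in $\mathcal{L}_{\alpha_j}$). Gluing via Lemma~\ref{lem:propglue} is only used by the paper for $E=\mathcal{L}_\gamma$, with $(K_1,\ldots,K_k)=([0,u],\mathcal{P}_{\gamma\setminus\{r\}})$. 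The ingredient you are missing for the $j\ne 0$ contributions is Lemma~\ref{lem:critcommon}: in the critical case the common set $\supp S_{P_\gamma,\mathcal{P}}$ is $u$-independent and is contained in every $\supp S_{[0,u],B,\mathcal{P}_{\setminus r}}$ and spans $\mathcal{L}_\gamma^\perp$. Once this is in hand, the decompositions for different $u$ all restrict to the \emph{same} big set, so there is nothing to glue for $j\ne 0$; one only needs to decouple the $\alpha_1,\ldots,\alpha_\ell$ contributions from one another on this fixed set, which is exactly what Proposition~\ref{prop:decouplingmagic} (built on Proposition~\ref{prop:proplin}) does. Consequently, the ``supercritical-style argument mirroring Lemmas~\ref{lem:superconst} and~\ref{lem:superglue}'' you propose for the linear term is neither what the paper does nor available here: the pairwise-overlap bookkeeping of the supercritical gluing argument does not interact correctly with the degenerate summands, and in the critical case the linear vector $s$ emerges as a by-product of the decoupling (Corollary~\ref{cor:critnogamma}) rather than from an overlap argument.
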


The proof of Theorem \ref{thm:crit} proceeds by induction on $n$. Just as 
in the supercritical case, it will turn out that the criticality 
assumption (Definition \ref{defn:crit}) is preserved by the induction. The 
induction hypothesis may therefore give rise to a supercritical case, 
which is already covered by Theorem \ref{thm:schneider}, or to a critical 
case, to which we may apply Theorem \ref{thm:crit} in lower dimension. 

The following setting will be assumed throughout this section. We fix 
$n\ge 3$ and a collection of polytopes $\mathcal{P}=(P_1,\ldots,P_{n-2})$ 
in $\mathbb{R}^n$ that contain the origin in their relative interior. We 
assume that $\mathcal{P}$ is critical but not supercritical, that is, 
there exists at least one critical set. We denote the maximal sets by 
$\alpha_0,\ldots,\alpha_\ell$. The spaces $\mathcal{L}_\alpha$ and the 
balls $B_\alpha$ are defined as in section \ref{sec:supercrit}. In 
particular, the criticality assumption may be formulated as 
$\dim\mathcal{L}_\alpha\ge|\alpha|+1$ for every $\alpha\subseteq[n-2]$, 
$\alpha\ne\varnothing$.

\subsection{The induction hypothesis}

In the induction step, we will assume that Theorem \ref{thm:crit} has been 
proved in dimension $n-1$, and deduce its validity in dimension $n$. The 
aim of this section is to formulate the resulting induction hypothesis.

As in section \ref{sec:supercrit}, we will begin by applying the local 
Alexandrov-Fenchel inequality. To this end, we must choose an index 
$r\in[n-2]$ to which Theorem \ref{thm:localaf} will be applied. Unlike in 
the supercitical case, however, we may not choose $r$ arbitrarily: the 
entire argument will be based on the fact that we will choose $r$ to lie 
inside one of the maximal sets. We therefore fix an element $r\in\alpha_0$ 
at the outset, which will be used throughout the proof without further 
comment. As $\alpha_0$ will play a special role throughout 
the proof, we will define henceforth $\gamma:=\alpha_0$ in order to 
distinguish it in the notation from the remaining maximal sets 
$\alpha_1,\ldots,\alpha_\ell$.

Our starting point is the following direct analogue of Lemma 
\ref{lem:superpproj}.

\begin{lem}
\label{lem:critproj}
Let $f$ be a difference of support functions 
such that $S_{f,\mathcal{P}}=0$. Then there exists a difference of 
support functions $g$ with the following properties:
\vspace{.5\abovedisplayskip}
\begin{enumerate}[1.]
\itemsep\abovedisplayskip
\item $g(x)=f(x)$ for all $x\in\supp S_{B,\mathcal{P}}$.
\item $\V_{n-1}(\proj_{u^\perp}g,\proj_{u^\perp}P_r,
\proj_{u^\perp}\mathcal{P}_{\backslash r})=0$
for all $u\in S^{n-1}$.
\item $\V_{n-1}(\proj_{u^\perp}g,\proj_{u^\perp}g,
\proj_{u^\perp}\mathcal{P}_{\backslash r})=0$ for all
$u\in S^{n-1}\cap\mathcal{L}_r$.
\item $S_{\proj_{u^\perp}P_r,\proj_{u^\perp}\mathcal{P}_{\backslash r}}\ne 
0$ for all $u\in S^{n-1}$.
\end{enumerate}
\end{lem}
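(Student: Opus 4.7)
The plan is to follow the proof of Lemma~\ref{lem:superpproj} almost verbatim; the construction of $g$ and verification of properties~1--3 require no new ideas, while property~4 demands a small new observation since the supercriticality argument is no longer available. Concretely, I will apply Theorem~\ref{thm:localaf} (with our fixed $r\in\gamma$) to the given $f$ to obtain a difference of support functions $g$ satisfying $g=f$ on $\supp S_{B,\mathcal{P}}$, $S_{g,\mathcal{P}}=0$, and $S_{g,g,\mathcal{P}_{\backslash r}}\le 0$. Property~1 holds by construction, and property~2 follows at once by integrating $S_{g,\mathcal{P}}=0$ against $h_{[0,u]}$ and invoking Corollary~\ref{cor:segproj} together with \eqref{eq:mixvolarea}.

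For property~3 I will rerun the argument of Lemma~\ref{lem:superpproj}: symmetry of mixed volumes gives
\[
\int h_{P_r}\,dS_{g,g,\mathcal{P}_{\backslash r}}
= \int g\,dS_{g,\mathcal{P}} = 0,
\]
and combined with $S_{g,g,\mathcal{P}_{\backslash r}}\le 0$ and $h_{P_r}\ge 0$ (as $0\in\relint P_r$) this forces $1_{h_{P_r}>0}\,dS_{g,g,\mathcal{P}_{\backslash r}}=0$. Since $0\in\relint P_r$ yields $\varepsilon[0,u]\subseteq P_r$ for every $u\in S^{n-1}\cap\mathcal{L}_r$ and some $\varepsilon>0$, one has $\{h_{[0,u]}>0\}\subseteq\{h_{P_r}>0\}$ on that set, hence $\int h_{[0,u]}\,dS_{g,g,\mathcal{P}_{\backslash r}}=0$, and Corollary~\ref{cor:segproj} converts this into property~3. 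Note that this step does not require supercriticality, only that $\mathcal{L}_r$ is at least two-dimensional, which is guaranteed by the criticality of $\mathcal{P}$.

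The main obstacle is property~4, since the supercritical proof used $\dim\mathcal{L}_\alpha\ge|\alpha|+2$ directly. Instead I will argue as follows. By Lemmas~\ref{lem:dim} and~\ref{lem:maxsupp} applied in $u^\perp$, the measure $S_{\proj_{u^\perp}P_r,\proj_{u^\perp}\mathcal{P}_{\backslash r}}$ is nonzero iff the collection $(\proj_{u^\perp}P_r,\proj_{u^\perp}\mathcal{P}_{\backslash r})$ of $n-2$ bodies in $u^\perp$ satisfies $\dim(\sum_{i\in\mathcal{S}}\proj_{u^\perp}P_i)\ge|\mathcal{S}|$ for every subset $\mathcal{S}\subseteq[n-2]$, $\mathcal{S}\ne\varnothing$. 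If $r\notin\mathcal{S}$, criticality gives $\dim\mathcal{L}_{\mathcal{S}}\ge|\mathcal{S}|+1$, so projection onto $u^\perp$ drops the dimension by at most one and leaves $\ge|\mathcal{S}|$. If $r\in\mathcal{S}$, writing $\mathcal{S}=\{r\}\cup\alpha$ with $\alpha\subseteq[n-2]\setminus\{r\}$, criticality applied to $\{r\}\cup\alpha$ gives $\dim\mathcal{L}_{\{r\}\cup\alpha}\ge|\alpha|+2=|\mathcal{S}|+1$, so the projected dimension is again $\ge|\mathcal{S}|$. Both cases hold regardless of $u\in S^{n-1}$, which establishes property~4 and completes the proof.
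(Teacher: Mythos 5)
Your proof is correct. Properties 1--3 follow the same argument as in the paper's treatment of the supercritical analogue, Lemma~\ref{lem:superpproj}, and this is exactly how the paper handles them here too. For property~4 you take a genuinely (if mildly) different route: you check the dimensionality criterion of Lemma~\ref{lem:dim} directly on the projected collection $(\proj_{u^\perp}P_r,\proj_{u^\perp}\mathcal{P}_{\backslash r})$ in $u^\perp$, using that projection onto a hyperplane drops $\dim\mathcal{L}_\mathcal{S}$ by at most one while criticality supplies exactly one unit of slack, namely $\dim\mathcal{L}_\mathcal{S}\ge|\mathcal{S}|+1$. The paper instead invokes the identity $S_{\proj_{u^\perp}P_r,\proj_{u^\perp}\mathcal{P}_{\backslash r}}=(n-1)\,S_{[0,u],\mathcal{P}}$ of Remark~\ref{rem:lowdimma}, observes that vanishing of this measure would force $\V_n([0,u],B,\mathcal{P})=0$, and then derives the same contradiction from criticality and Lemma~\ref{lem:dim} in $\mathbb{R}^n$. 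The two arguments rest on the same criticality slack and are essentially of equal weight; the paper's phrasing avoids the explicit case split on whether $r\in\mathcal{S}$ (which, as you may have noticed, is actually unnecessary in your write-up, since both cases are the single statement $\dim\mathcal{L}_\mathcal{S}\ge|\mathcal{S}|+1$), while yours stays entirely inside $u^\perp$ and avoids the appeal to Remark~\ref{rem:lowdimma}. One small citation note: Lemma~\ref{lem:maxsupp} isn't needed for your characterization of when the projected mixed area measure vanishes; \eqref{eq:mixvolarea} applied to the unit ball of $u^\perp$ together with Lemma~\ref{lem:dim} suffices.
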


\begin{proof}
By Theorem \ref{thm:localaf}, there exists $g=f$ 
$S_{B,\mathcal{P}}$-a.e.\ such that
$S_{g,\mathcal{P}}=0$ and $S_{g,g,\mathcal{P}_{\backslash r}}\le 0$.
We must show that each of the claimed properties holds for $g$.
The proof of properties $1$--$3$ is identical to the proof of these 
properties in Lemma~\ref{lem:superpproj}. 
To prove property $4$, recall that 
$S_{\proj_{u^\perp}P_r,\proj_{u^\perp}\mathcal{P}_{\backslash r}}= 
(n-1)\,S_{[0,u],\mathcal{P}}$ (cf.\ Remark \ref{rem:lowdimma}). If there 
were to exist $u\in S^{n-1}$ so that $S_{[0,u],\mathcal{P}}=0$, then 
integrating against $h_B$ and using \eqref{eq:mixvolarea} would yield 
$\V_n([0,u],B,\mathcal{P})=0$. But this contradicts the criticality 
assumption by Lemma \ref{lem:dim}, concluding the proof.
\end{proof}

We would like to exploit Lemma \ref{lem:critproj} by applying Theorem 
\ref{thm:crit} (or Theorem~\ref{thm:schneider}) in $u^\perp$. In order to 
do this, we must understand what happens to the criticality assumption 
under projection onto $u^\perp$. We will presently show that such a 
projection preserves not just the criticality assumption, but even the 
collection of maximal sets, for almost every choice of $u$. To this end, 
we define
$$
        N := \bigcup_{\substack{%
	\alpha\subseteq[n-2]\backslash\{r\}: \\ 
        \dim\mathcal{L}_\alpha\le |\alpha|+2,~\mathcal{L}_r\not\subseteq
	\mathcal{L}_\alpha}}
        S^{n-1}\cap\mathcal{L}_r\cap\mathcal{L}_\alpha,
        \qquad\quad
        U := (S^{n-1}\cap\mathcal{L}_r)\backslash N
$$
in the remainder of this section. Then we have the following.

\begin{lem}
\label{lem:umeasure1}
The following hold for every $u\in U$:
\vspace{.5\abovedisplayskip}
\begin{enumerate}[a.]
\itemsep\abovedisplayskip  
\item $\proj_{u^\perp}\mathcal{P}_{\backslash r}$ is critical.
\item The $\proj_{u^\perp}\mathcal{P}_{\backslash r}$-maximal sets
are precisely $\gamma\backslash\{r\},\alpha_1,\ldots,\alpha_\ell$.
\item The map $\proj_{u^\perp}|_{\mathcal{L}_{\alpha_i}}:
\mathcal{L}_{\alpha_i}\to\proj_{u^\perp}\mathcal{L}_{\alpha_i}$ is
a bijection for $i=1,\ldots,\ell$.
\item $U$ has full measure with respect to the
uniform measure on $S^{n-1}\cap\mathcal{L}_r$.
\end{enumerate}
\vspace{.5\abovedisplayskip}
If $\gamma=\{r\}$ is a singleton, then part $b$ should be understood to 
say that the $\proj_{u^\perp}\mathcal{P}_{\backslash r}$-maximal sets
are precisely $\alpha_1,\ldots,\alpha_\ell$.
\end{lem}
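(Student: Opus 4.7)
The plan is to establish all four claims by careful dimension-counting on the subspaces $\mathcal{L}_\alpha$, relying on two tools repeatedly. First, Corollary \ref{cor:panov} gives $\mathcal{L}_\gamma \cap \mathcal{L}_{\alpha_i} = \{0\}$ for $i \ge 1$. Second, the defining property of $U$ is that for every $\alpha \subseteq [n-2]\setminus\{r\}$ with $\dim\mathcal{L}_\alpha \le |\alpha|+2$ and $\mathcal{L}_r \not\subseteq \mathcal{L}_\alpha$, no $u \in U$ lies in $\mathcal{L}_\alpha$. Throughout I will use the standard fact that $\dim\proj_{u^\perp}E$ equals $\dim E$ if $u \notin E$ and $\dim E - 1$ if $u \in E$.

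I would dispatch parts (c) and (d) first, as they are short. For (d), $N$ is a finite union of slices $S^{n-1}\cap\mathcal{L}_r\cap\mathcal{L}_\alpha$ in which $\mathcal{L}_r \cap \mathcal{L}_\alpha$ is a proper subspace of $\mathcal{L}_r$ (precisely because $\mathcal{L}_r \not\subseteq \mathcal{L}_\alpha$), so each such slice is null on $S^{n-1}\cap\mathcal{L}_r$. For (c), since $\mathcal{L}_r \subseteq \mathcal{L}_\gamma$, Corollary \ref{cor:panov} gives $\mathcal{L}_r \cap \mathcal{L}_{\alpha_i} = \{0\}$, so every $u \in U$ (in particular $u \ne 0$) lies outside $\mathcal{L}_{\alpha_i}$; hence $\proj_{u^\perp}\!|_{\mathcal{L}_{\alpha_i}}$ is injective and maps bijectively onto its image.

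For part (a), I would argue by contradiction: assume some nonempty $\beta \subseteq [n-2]\setminus\{r\}$ has $\dim\proj_{u^\perp}\mathcal{L}_\beta \le |\beta|$. Criticality of $\mathcal{P}$ forces $\dim\mathcal{L}_\beta \ge |\beta|+1$, so the only way this can happen is if $u \in \mathcal{L}_\beta$ and $\dim\mathcal{L}_\beta = |\beta|+1$, i.e., $\beta$ is $\mathcal{P}$-critical. Then $u \in \mathcal{L}_r \cap \mathcal{L}_\beta$ with $\dim\mathcal{L}_\beta \le |\beta|+2$, so the defining property of $U$ forces $\mathcal{L}_r \subseteq \mathcal{L}_\beta$; but then $\dim\mathcal{L}_{\beta\cup\{r\}} = |\beta\cup\{r\}|$, contradicting criticality of $\mathcal{P}$.

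The main obstacle is part (b), where I must verify both $\proj$-criticality of each listed set and that these exhaust the $\proj$-maximal sets. The $\alpha_i$ case follows from (c). For $\gamma \setminus \{r\}$ (when nonempty), I expect to split into two subcases: if $\dim\mathcal{L}_{\gamma\setminus\{r\}} = |\gamma|+1$, then $\mathcal{L}_{\gamma\setminus\{r\}} = \mathcal{L}_\gamma \ni u$ and projection drops dimension by one; if $\dim\mathcal{L}_{\gamma\setminus\{r\}} = |\gamma|$, then $\mathcal{L}_r \not\subseteq \mathcal{L}_{\gamma\setminus\{r\}}$ (else the two subspaces coincide but their dimensions differ), so applying the defining property of $U$ with $\alpha = \gamma \setminus \{r\}$ yields $u \notin \mathcal{L}_{\gamma\setminus\{r\}}$ and the dimension is preserved; either way the projected dimension is $|\gamma\setminus\{r\}|+1$. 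To complete (b), given any $\proj$-critical $\beta$, I would consider whether $u \in \mathcal{L}_\beta$: if not, then $\beta$ is $\mathcal{P}$-critical, hence contained in some $\alpha_i$ or (as $r \notin \beta$) in $\gamma \setminus \{r\}$; if yes, then $\dim\mathcal{L}_\beta = |\beta|+2$, and $U$ forces $\mathcal{L}_r \subseteq \mathcal{L}_\beta$, so $\beta \cup \{r\}$ is $\mathcal{P}$-critical and thus contained in the maximal set $\gamma$, giving $\beta \subseteq \gamma \setminus \{r\}$. Combined with $\proj$-maximality of $\beta$ and the already-established $\proj$-criticality of the listed sets, $\beta$ must coincide with one of them, establishing the claimed classification.
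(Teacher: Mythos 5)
Your proof is correct, and it is built from the same dimension-counting toolkit as the paper's: the definition of the exceptional set $N$, the relation $\dim\proj_{u^\perp}E=\dim E-\mathbf{1}_{u\in E}$, and Corollary \ref{cor:panov} / Lemma \ref{lem:critinc}. The organization is slightly but genuinely different, and in two places cleaner. First, you prove part (c) directly from $\mathcal{L}_r\cap\mathcal{L}_{\alpha_i}=\{0\}$ (Corollary \ref{cor:panov} plus $\mathcal{L}_r\subseteq\mathcal{L}_\gamma$) and then use it to dispatch the $\alpha_i$ portion of (b); the paper instead derives (c) from (b), so your order of dependence is reversed. Second, you obtain maximality of the listed sets as a byproduct of the exhaustion step (every $\proj_{u^\perp}\mathcal{P}_{\backslash r}$-critical set lies inside one of the listed sets, which together with $\proj_{u^\perp}$-criticality and pairwise disjointness of the listed sets forces any $\proj_{u^\perp}$-maximal set to coincide with one of them), whereas the paper argues maximality of each listed set separately in its Claims 1 and 2. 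One small economy you could borrow from the paper: for the $\proj_{u^\perp}$-criticality of $\gamma\setminus\{r\}$, rather than splitting on $\dim\mathcal{L}_{\gamma\setminus\{r\}}$, one can sandwich
$$|\gamma\setminus\{r\}|+1 \;\le\; \dim\proj_{u^\perp}\mathcal{L}_{\gamma\setminus\{r\}} \;\le\; \dim\proj_{u^\perp}\mathcal{L}_\gamma \;=\; \dim\mathcal{L}_\gamma-1 \;=\; |\gamma|$$
using part (a) for the left inequality and $u\in\mathcal{L}_\gamma$ for the right, which gives the result in one line.
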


\begin{proof}
To prove part $a$, suppose that
$\proj_{u^\perp}\mathcal{P}_{\backslash r}$ is not critical. Then we must 
have
$\dim(\sum_{i\in\alpha}\proj_{u^\perp}P_i)<|\alpha|+1$ for some 
$\alpha\subseteq[n-2]\backslash\{r\}$, $\alpha\ne\varnothing$. But as
$\mathcal{P}$ is critical, we have 
$\dim(\sum_{i\in\alpha}P_i)\ge|\alpha|+1$.
This can happen only if
$\dim(\sum_{i\in\alpha}P_i)=|\alpha|+1$ and $u\in\mathcal{L}_\alpha$. 
Thus $\alpha$ is a critical set with
$r\not\in\alpha$, so that
$\mathcal{L}_r\not\subseteq\mathcal{L}_\alpha$ by Lemma 
\ref{lem:critinc}. Therefore $u\not\in U$ by the definition of $U$, which 
entails a contradiction.

To prove part $b$, we must prove two distinct properties: that
$\gamma\backslash\{r\},\alpha_1,\ldots,\alpha_\ell$ are
$\proj_{u^\perp}\mathcal{P}_{\backslash r}$-maximal sets, and that no 
other
$\proj_{u^\perp}\mathcal{P}_{\backslash r}$-maximal sets exist.

\begin{claim}
$\alpha_i$ is $\proj_{u^\perp}\mathcal{P}_{\backslash r}$-maximal
for all $i=1,\ldots,\ell$.
\end{claim}

\begin{proof}
Fix a maximal set $\alpha=\alpha_i$ for some $i=1,\ldots,\ell$.
Then $\alpha$ is disjoint from $\gamma\ni r$ by Corollary \ref{cor:panov}, 
so $\alpha\subseteq [n-2]\backslash\{r\}$. Therefore
$$	
	|\alpha|+1\le\dim({\textstyle\sum_{i\in\alpha}\proj_{u^\perp}P_i})\le
	\dim({\textstyle\sum_{i\in\alpha}P_i})=|\alpha|+1,
$$
where the first inequality holds 
by part $a$, and the equality holds as $\alpha$ is critical. Thus we have 
shown that $\alpha$ is $\proj_{u^\perp}\mathcal{P}_{\backslash 
r}$-critical.

To show $\alpha$ is $\proj_{u^\perp}\mathcal{P}_{\backslash r}$-maximal, 
we must show there does not exist a 
$\proj_{u^\perp}\mathcal{P}_{\backslash r}$-critical set 
$\beta\subseteq[n-2]\backslash\{r\}$ so that
$\alpha\subsetneq\beta$. Indeed, suppose such 
a set $\beta$ does exist. Then
$\dim(\sum_{i\in\beta}\proj_{u^\perp}P_i)=|\beta|+1$, so there are
two possibilities:
\begin{enumerate}[i.]
\item
$\dim(\sum_{i\in\beta}P_i)=|\beta|+1$ 
and $u\not\in\mathcal{L}_\beta$; or
\item $\dim(\sum_{i\in\beta}P_i)=|\beta|+2$ 
and $u\in\mathcal{L}_\beta$. 
\end{enumerate}
In case i, we have $\alpha\subsetneq\beta$ for a critical set $\beta$,
which contradicts the maximality of $\alpha$. On the other hand,
in case ii, we must have $\mathcal{L}_r\subseteq\mathcal{L}_\beta$ 
by the definition of $U\ni u$, so that $\dim\mathcal{L}_{\beta\cup\{r\}}=
\dim\mathcal{L}_\beta=|\beta|+2$. Thus in this case 
$\alpha\subsetneq\beta\cup\{r\}$ and $\beta\cup\{r\}$ is a critical set, 
which contradicts again the maximality of $\alpha$.
\end{proof}

\begin{claim}
If $\gamma\backslash\{r\}\ne\varnothing$, then
$\gamma\backslash\{r\}$ is $\proj_{u^\perp}\mathcal{P}_{\backslash 
r}$-maximal.
\end{claim}

\begin{proof}
As $r\in\gamma$, we have $u\in\mathcal{L}_r\subseteq\mathcal{L}_\gamma$
by the definition of $U$. Therefore
$$	
	|\gamma|=|\gamma\backslash\{r\}|+1
	\le
	\dim({\textstyle\sum_{i\in\gamma\backslash\{r\}}\proj_{u^\perp}P_i})
	\le
	\dim\mathcal{L}_\gamma-1=|\gamma|,
$$
where the first inequality holds by part $a$, the second inequality 
holds as $u\in\mathcal{L}_\gamma$, and the last equality holds as $\gamma$ 
is critical. Thus $\gamma\backslash\{r\}$ is
$\proj_{u^\perp}\mathcal{P}_{\backslash r}$-critical.

Now suppose $\gamma\backslash\{r\}\subsetneq\beta\subseteq
[n-2]\backslash\{r\}$ for some
$\proj_{u^\perp}\mathcal{P}_{\backslash r}$-critical set $\beta$. Then
one of the 
possibilities i or ii in the proof of the previous claim must hold.

In case i, let $\beta'$ be the maximal set containing the critical set
$\beta$. As $\beta\backslash\gamma\ne\varnothing$, we must have 
$\gamma\ne\beta'$. But as
$\gamma\cap\beta'\supseteq\gamma\backslash\{r\}\ne\varnothing$,
this contradicts maximality of $\gamma$ by Corollary \ref{cor:panov}.
On the other hand,
in case ii, we must have $\mathcal{L}_r\subseteq\mathcal{L}_\beta$
by the definition of $U\ni u$, so that $\dim\mathcal{L}_{\beta\cup\{r\}}=
\dim\mathcal{L}_\beta=|\beta|+2$. Thus 
$\gamma\subsetneq\beta\cup\{r\}$ and $\beta\cup\{r\}$ is a critical set,  
contradicting again the maximality of $\gamma$. 
\end{proof}

\begin{claim}
There exists no
$\proj_{u^\perp}\mathcal{P}_{\backslash r}$-critical set
$\beta\subseteq [n-2]\backslash
(\gamma\cup\alpha_1\cup\cdots\cup\alpha_\ell)$.
\end{claim}

\begin{proof}
Suppose $\beta\subseteq [n-2]\backslash
(\gamma\cup\alpha_1\cup\cdots\cup\alpha_\ell)$ is a
$\proj_{u^\perp}\mathcal{P}_{\backslash r}$-critical set.
Then one of the possibilities i or ii in the proofs of the previous claims 
must hold. Case i is impossible, as it would imply that $\beta$ is a 
critical set that is disjoint from all maximal sets.
In case ii, we must have $\mathcal{L}_r\subseteq\mathcal{L}_\beta$
by the definition of $U\ni u$, so that $\dim\mathcal{L}_{\beta\cup\{r\}}=
\dim\mathcal{L}_\beta=|\beta|+2$. 
Thus $\beta\cup\{r\}$ is a critical set containing $r$. But this would 
imply by Corollary 
\ref{cor:panov} that $\beta\cup\{r\}\subseteq\gamma$, which is impossible 
as $\beta$ and $\gamma$ are disjoint. Thus we have shown the desired 
contradiction.
\end{proof}

Now recall that distinct maximal sets must be disjoint by Corollary 
\ref{cor:panov}. Thus the combination of the above three claims
concludes the proof of part $b$.

To prove part $c$, it suffices to note that for any $i=1,\ldots,\ell$,
$$
	\dim\mathcal{L}_{\alpha_i} =
	|\alpha_i|+1
	=
	\dim\proj_{u^\perp}\mathcal{L}_{\alpha_i},
$$
where the first equality holds as $\alpha_i$ is a critical set and the 
second equality holds as $\alpha_i$ is a
$\proj_{u^\perp}\mathcal{P}_{\backslash 
r}$-critical set by part $b$. The conclusion follows immediately.

Finally, we prove part $d$. To this end, note that by definition, $N$ lies 
in a union of subspaces $\mathcal{L}_\alpha\not\supseteq\mathcal{L}_r$, 
so that $\dim(\mathcal{L}_\alpha\cap\mathcal{L}_r)<\dim\mathcal{L}_r$ for 
each of these spaces.
In other words, $N$ is the intersection of 
$S^{n-1}\cap\mathcal{L}_r$ with hyperplanes of codimension at least one, 
and is therefore a set of zero measure.
\end{proof}

In the rest of this section, we fix a difference of support functions 
$f$ with $S_{f,\mathcal{P}}=0$, and construct the difference of support 
functions $g$ as in Lemma \ref{lem:critproj}. Then Lemmas 
\ref{lem:critproj} and \ref{lem:umeasure1} ensure that the projection 
$\proj_{u^\perp}g$ yields a critical equality case \eqref{eq:eq3af} of the 
Alexandrov-Fenchel inequality in dimension $n-1$ for every $u\in U$.

We would like to combine this fact with Theorem \ref{thm:crit} in 
dimension $n-1$ to create the induction hypothesis for its proof in 
dimension $n$. In the critical case, however, there is a new subtlety: 
applying Theorem \ref{thm:crit} in $u^\perp$ yields 
$\proj_{u^\perp}\mathcal{P}_{\backslash r}$-degenerate functions, while we 
must construct $\mathcal{P}$-degenerate functions to prove Theorem 
\ref{thm:crit} in dimension $n$. We will address this problem by using 
condition $c$ of Lemma \ref{lem:umeasure1} to map between the two types of 
degenerate functions. In addition, we must handle the new term that may 
now arise from part $b$ of Lemma \ref{lem:afeq}, which did not appear in 
the supercritical case. Both issues will be resolved presently.

\begin{lem}
\label{lem:critinduct}
Suppose that Theorem \ref{thm:crit} has been proved in dimension
$n-1$. Then for every $u\in U$, we have
$$
        g(x) = \langle s(u),x\rangle+
	\sum_{j=0}^\ell g_{j,u}(x) \quad
        \mbox{for all }x\in\supp S_{[0,u],B,\mathcal{P}_{\backslash r}}
$$
for some $s(u)\in u^\perp$ and
$\alpha_j$-degenerate function $g_{j,u}$, $j=0,\ldots,\ell$.
\end{lem}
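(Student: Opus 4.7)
The plan is to apply the induction hypothesis (Theorem \ref{thm:crit} or Theorem \ref{thm:schneider}) in the hyperplane $u^\perp$ to an appropriate modification of $\proj_{u^\perp}g$, and then lift each piece of the resulting decomposition back to $\mathbb{R}^n$.

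First, using the three conclusions of Lemma \ref{lem:critproj} (the second being valid because $u \in \mathcal{L}_r$), I invoke Lemma \ref{lem:afeq} in $u^\perp$ with $L = \proj_{u^\perp}P_r$. Whether part $a$ (yielding $b = 0$) or part $b$ applies depends on whether $\V_{n-1}(\proj_{u^\perp}P_r, \proj_{u^\perp}P_r, \proj_{u^\perp}\mathcal{P}_{\backslash r})$ is positive or zero; in either case one obtains $b \in \mathbb{R}$ so that $\tilde g_u := \proj_{u^\perp}g - b\,h_{\proj_{u^\perp}P_r}$ satisfies $S_{\tilde g_u, \proj_{u^\perp}\mathcal{P}_{\backslash r}} = 0$ in $u^\perp$. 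By Lemma \ref{lem:umeasure1}, $\proj_{u^\perp}\mathcal{P}_{\backslash r}$ is critical with maximal sets $\gamma\backslash\{r\}, \alpha_1, \ldots, \alpha_\ell$ (the first omitted when $\gamma = \{r\}$), so the induction hypothesis applied in $u^\perp$ yields $s'(u) \in u^\perp$ and degenerate functions $\tilde g_{j,u}$ with
$$\tilde g_u(x) = \langle s'(u), x\rangle + \sum_{j=0}^\ell \tilde g_{j,u}(x) \quad \text{on } \supp S_{\proj_{u^\perp}B, \proj_{u^\perp}\mathcal{P}_{\backslash r}}.$$
Combining this with Remark \ref{rem:lowdimma}, the identities $g(y) = \proj_{u^\perp}g(y)$ and $h_{\proj_{u^\perp}P_r}(y) = h_{P_r}(y)$ for $y \in u^\perp$, one then recovers
$$g(y) = \langle s'(u), y\rangle + \tilde g_{0,u}(y) + b\,h_{P_r}(y) + \sum_{j=1}^\ell \tilde g_{j,u}(y) \quad \text{on } \supp S_{[0,u],B,\mathcal{P}_{\backslash r}}.$$

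For $j \geq 1$, the lifting is straightforward. Since $\alpha_j$ is disjoint from $\gamma \ni r$ by Corollary \ref{cor:panov}, the restriction $\proj_{u^\perp}|_{\mathcal{L}_{\alpha_j}}$ is a linear bijection onto $\proj_{u^\perp}\mathcal{L}_{\alpha_j}$ by Lemma \ref{lem:umeasure1}(c). Taking preimages of the bodies $\tilde M_j, \tilde N_j$ realizing $\tilde g_{j,u}$ produces bodies $M_j, N_j \subset \mathcal{L}_{\alpha_j}$ whose mixed volumes with $\mathcal{P}_{\alpha_j}$ are equal by Lemma \ref{lem:mvprop}(f); setting $g_{j,u} := h_{M_j} - h_{N_j}$ defines an $\alpha_j$-degenerate function that agrees with $\tilde g_{j,u}$ on $u^\perp$.

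The main obstacle is the $j = 0$ lift, where $\tilde g_{0,u}$ and $b\,h_{P_r}$ must be combined into a single $\gamma$-degenerate function. A dimension count (using $u \in \mathcal{L}_r \subseteq \mathcal{L}_\gamma$ together with the $\proj_{u^\perp}\mathcal{P}_{\backslash r}$-criticality of $\gamma\backslash\{r\}$) shows $\proj_{u^\perp}\mathcal{L}_{\gamma\backslash\{r\}} = \proj_{u^\perp}\mathcal{L}_\gamma = u^\perp \cap \mathcal{L}_\gamma$, so the bodies $\tilde M_0, \tilde N_0 \subset \proj_{u^\perp}\mathcal{L}_{\gamma\backslash\{r\}}$ realizing $\tilde g_{0,u}$ lift canonically as subsets of $\mathcal{L}_\gamma$. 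The plan is to seek a $\gamma$-degenerate pair of the shape $(M_0, N_0) = (\tilde M_0 + b_+ P_r, \tilde N_0 + b_- P_r)$ with $b_+, b_- \geq 0$ and $b_+ - b_- = b$: then $h_{M_0}(y) - h_{N_0}(y) = \tilde g_{0,u}(y) + b\,h_{P_r}(y)$ on $u^\perp$ is automatic, and the $\gamma$-degeneracy condition reduces by multilinearity to the single scalar identity
$$\V_{\mathcal{L}_\gamma}(\tilde M_0 - \tilde N_0, P_r, \mathcal{P}_{\gamma\backslash\{r\}}) + b\,\V_{\mathcal{L}_\gamma}(P_r, P_r, \mathcal{P}_{\gamma\backslash\{r\}}) = 0.$$
Verifying this identity is the crux of the argument: the first mixed volume must be computed by projecting $\mathcal{L}_\gamma \to u^\perp \cap \mathcal{L}_\gamma$ along $u$ (via Lemma \ref{lem:proj}), relating it to the mixed-volume equality that expresses the $(\proj_{u^\perp}\mathcal{P}_{\backslash r}, \gamma\backslash\{r\})$-degeneracy of $(\tilde M_0, \tilde N_0)$, while the second term is matched using the characterization of $b$ from Lemma \ref{lem:afeq}($b$). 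The edge case $\gamma = \{r\}$ is handled by setting $\tilde M_0 = \tilde N_0 = \{0\}$ and choosing $b_\pm$ directly. With $s(u) := s'(u)$ and $g_{0,u} := h_{M_0} - h_{N_0}$, the decomposition is complete.
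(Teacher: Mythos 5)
The first two-thirds of your argument matches the paper: the application of Lemma~\ref{lem:afeq} in $u^\perp$, invoking Theorem~\ref{thm:crit} or \ref{thm:schneider} by Lemma~\ref{lem:umeasure1}, and the lift of the $\alpha_j$-degenerate functions for $j\ge 1$ via the bijection $\proj_{u^\perp}|_{\mathcal{L}_{\alpha_j}}$ and Lemma~\ref{lem:mvprop}($f$) are exactly as in the paper's proof.

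The gap is in the $j=0$ lift. Your proposed construction $(M_0,N_0)=(\tilde M_0+b_+P_r,\ \tilde N_0+b_-P_r)$ with $b_+-b_-=b$ imposes the scalar constraint
$$\V_{\mathcal{L}_\gamma}(\tilde M_0-\tilde N_0,P_r,\mathcal{P}_{\gamma\backslash\{r\}})+b\,\V_{\mathcal{L}_\gamma}(P_r,P_r,\mathcal{P}_{\gamma\backslash\{r\}})=0,$$
which you identify as the crux but do not actually verify, and which there is no reason to expect. The scalar $b$ has already been fixed by the normalization in Lemma~\ref{lem:afeq} carried out in the $(n-1)$-dimensional ambient space $u^\perp$, so it is not a free parameter; meanwhile the degeneracy of $(\tilde M_0,\tilde N_0)$ only gives $\V_{E'}(\tilde M_0-\tilde N_0,\proj_{u^\perp}\mathcal{P}_{\gamma\backslash\{r\}})=0$ in the $|\gamma|$-dimensional space $E'=u^\perp\cap\mathcal{L}_\gamma$, a quantity living one dimension lower and featuring projected bodies rather than $P_r$ itself. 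There is no projection identity (the bodies $P_r,\mathcal{P}_{\gamma\backslash\{r\}}$ do not lie in $E'$, and $[0,u]$ does not appear in your mixed volume) linking these two conditions, so the constraint is genuinely an extra equation that will fail in general.

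The paper sidesteps this by choosing $g_{0,u}:=a_uh_{P_r}+\tilde g_{0,u}+b'h_{[0,u]}$ for a free scalar $b'$. The added term $b'h_{[0,u]}$ vanishes identically on $u^\perp$ --- so it does not disturb the identity on $\supp S_{[0,u],B,\mathcal{P}_{\backslash r}}\subset u^\perp$ --- yet $[0,u]\subset\mathcal{L}_\gamma$ (since $u\in\mathcal{L}_r\subseteq\mathcal{L}_\gamma$) and $\V_{\mathcal{L}_\gamma}([0,u],\mathcal{P}_\gamma)>0$ by criticality and Lemma~\ref{lem:dim}, so $b'$ can always be adjusted to force $\V_{\mathcal{L}_\gamma}(g_{0,u},\mathcal{P}_\gamma)=0$. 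The free degree of freedom transverse to $u^\perp$ is precisely what is missing from your construction: you try to absorb the correction with multiples of $P_r$, which changes the support function on $u^\perp$ and therefore cannot be freely varied, whereas the paper's correction is invisible on $u^\perp$.
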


\begin{proof}
Fix $u\in U$. Applying Lemma \ref{lem:afeq} in $u^\perp$ and Lemma
\ref{lem:critproj}, we find that
$$
	S_{\proj_{u^\perp}g-a_uh_{\proj_{u^\perp}P_r},
	\proj_{u^\perp}\mathcal{P}_{\backslash r}}=0
	\quad\mbox{for some }a_u\in\mathbb{R}.
$$
Now note that $\proj_{u^\perp}\mathcal{P}_{\backslash r}$ is critical
with critical sets $\gamma\backslash\{r\},\alpha_1,\ldots,\alpha_\ell$
by Lemma \ref{lem:umeasure1}. If $\ell=0$ and $\gamma=\{r\}$, 
then $\proj_{u^\perp}\mathcal{P}_{\backslash r}$ is supercritical and we 
may apply Theorem \ref{thm:schneider} in $u^\perp$; otherwise we may apply 
Theorem \ref{thm:crit} in $u^\perp$. In either case
$$
        \proj_{u^\perp}g(x) = \langle s(u),x\rangle 
	+ a_uh_{\proj_{u^\perp}P_r}(x)
	+ \sum_{j=0}^\ell \tilde g_{j,u}(x)
	\quad
        \mbox{for all }x\in\supp S_{\proj_{u^\perp}B,\proj_{u^\perp}
        \mathcal{P}_{\backslash r}}
$$
holds for some $s(u)\in u^\perp$, 
$(\proj_{u^\perp}\mathcal{P}_{\backslash r},\alpha_j)$-degenerate 
functions $\tilde g_{j,u}$, $j=1,\ldots,\ell$, and 
$(\proj_{u^\perp}\mathcal{P}_{\backslash r},
\gamma\backslash\{r\})$-degenerate function $\tilde g_{0,u}$ (if 
$\gamma=\{r\}$, we simply set $\tilde g_{0,u}=0$).
But as $S_{\proj_{u^\perp}B,\proj_{u^\perp}\mathcal{P}_{\backslash r}}$ is
supported in $u^\perp$ by definition, we may remove
$\proj_{u^\perp}$ on the left-hand side of this identity. We therefore 
obtain
\begin{equation}
\label{eq:interimcritg}
        g(x) = \langle s(u),x\rangle 
	+ a_uh_{P_r}(x)
	+ \sum_{j=0}^\ell \tilde g_{j,u}(x)
	\quad
        \mbox{for all }x\in\supp S_{[0,u],B,\mathcal{P}_{\backslash r}}
\end{equation}
as $\supp S_{\proj_{u^\perp}B,\proj_{u^\perp}
\mathcal{P}_{\backslash r}}=\supp S_{[0,u],B,\mathcal{P}_{\backslash r}}$
by Remark \ref{rem:lowdimma}.

It remains to reformulate the above identity in terms of 
$\alpha_j$-degenerate (as opposed to 
$(\proj_{u^\perp}\mathcal{P}_{\backslash r},\alpha_j)$-degenerate) 
functions. To this end, we consider two cases.

\begin{claim}
For every $j=1,\ldots,\ell$, there exists an
$\alpha_j$-degenerate function $g_{j,u}$ so that
$$
	g_{j,u}(x)=\tilde g_{j,u}(x)
	\quad\mbox{for all }x\in u^\perp.
$$ 
\end{claim}

\begin{proof}
Let $\tilde g_{j,u}=h_{\tilde M}-h_{\tilde N}$ for a 
$(\proj_{u^\perp}\mathcal{P}_{\backslash r},\alpha_j)$-degenerate pair
$(\tilde M,\tilde N)$. In particular, $\tilde M,\tilde N\subset
\proj_{u^\perp}\mathcal{L}_{\alpha_j}$. Therefore, by part $c$ of Lemma 
\ref{lem:umeasure1}, we can uniquely define convex bodies
$M,N\subset\mathcal{L}_{\alpha_j}$ so that
$\tilde M=\proj_{u^\perp}M$ and $\tilde N=\proj_{u^\perp}N$. 
We claim that the function $g_{j,u}:=h_M-h_N$ satisfies the requisite 
properties.

Indeed, note that $g_{j,u}(\proj_{u^\perp}x)=\tilde g_{j,u}(x)$ by 
construction, so $g_{j,u}(x)=\tilde g_{j,u}(x)$ for all $x\in u^\perp$.
On the other hand, note that
for any convex body $K\subset\mathcal{L}_{\alpha_j}$
$$
	\V_{\proj_{u^\perp}\mathcal{L}_{\alpha_j}}(\proj_{u^\perp}K,
	\proj_{u^\perp}\mathcal{P}_{\alpha_j}) =
	\llbracket\proj_{u^\perp}|_{\mathcal{L}_{\alpha_j}}\rrbracket\,
	\V_{\mathcal{L}_{\alpha_j}}(K,\mathcal{P}_{\alpha_j}),
$$
by part $f$ of Lemma \ref{lem:mvprop}.
As $(\tilde M,\tilde N)$ is a $(\proj_{u^\perp}\mathcal{P}_{\backslash
r},\alpha_j)$-degenerate pair, we obtain
$$
	\llbracket\proj_{u^\perp}|_{\mathcal{L}_{\alpha_j}}\rrbracket\,
        \V_{\mathcal{L}_{\alpha_j}}(h_M-h_N,\mathcal{P}_{\alpha_j})
	=
	\V_{\proj_{u^\perp}\mathcal{L}_{\alpha_j}}(
	h_{\tilde M}-h_{\tilde N},
	\proj_{u^\perp}\mathcal{P}_{\alpha_j}) = 0.
$$
As $\llbracket\proj_{u^\perp}|_{\mathcal{L}_{\alpha_j}}\rrbracket>0$ by
part $c$ of Lemma
\ref{lem:umeasure1}, and as $M,N\subset \mathcal{L}_{\alpha_j}$,
we have verified that $(M,N)$ is an $\alpha_j$-degenerate 
pair. Thus $g_{j,u}$ is an $\alpha_j$-degenerate function.
\end{proof}

\begin{claim}
There exists a $\gamma$-degenerate function $g_{0,u}$ so that
$$
	g_{0,u}(x) = a_u h_{P_r}(x) +
	\tilde g_{0,u}(x)\quad\mbox{for all }x\in u^\perp.
$$
\end{claim}

\begin{proof}
Fix $b\in\mathbb{R}$, and define $g_{0,u}$ as
$$
	g_{0,u}:= a_u h_{P_r}+\tilde g_{0,u} + b h_{[0,u]}.
$$
Clearly $h_{[0,u]}(x)=\langle u,x\rangle_+=0$ and thus
$g_{0,u}(x)=a_uh_{P_r}(x)+\tilde g_{0,u}(x)$ for $x\in u^\perp$. We claim 
$g_{0,u}$ is $\gamma$-degenerate for a suitable choice of 
$b\in\mathbb{R}$.

If $\tilde g_{0,u}\ne 0$, then $\tilde g_{0,u}=h_M-h_N$ for a 
$(\proj_{u^\perp}\mathcal{P}_{\backslash 
r},\gamma\backslash\{r\})$-degenerate pair 
$(M,N)$. As $r\in\gamma$, we have 
$u\in\mathcal{L}_r\subseteq\mathcal{L}_\gamma$ by the definition of $U$, 
so
$M,N\subset 
\proj_{u^\perp}\mathcal{L}_{\gamma\backslash\{r\}}\subset\mathcal{L}_\gamma$.
But as $r\in\gamma$, we always have 
$[0,u],P_r\subset\mathcal{L}_\gamma$. Thus $g_{0,u}$ is a 
difference of support functions of convex bodies in $\mathcal{L}_\gamma$.
On the other hand, we can choose $b\in\mathbb{R}$ so that
$$
	\V_{\mathcal{L}_\gamma}(g_{0,u},\mathcal{P}_\gamma)=
	\V_{\mathcal{L}_\gamma}(a_u h_{P_r}+\tilde g_{0,u},
	\mathcal{P}_\gamma) +
	b\, \V_{\mathcal{L}_\gamma}([0,u],\mathcal{P}_\gamma)
	=0
$$
as $\V_{\mathcal{L}_\gamma}([0,u],\mathcal{P}_\gamma)>0$ by the 
criticality assumption and Lemma \ref{lem:dim}. Thus we have shown 
that $g_{0,u}$ is $\gamma$-degenerate, completing the proof.
\end{proof}

As $\supp S_{[0,u],B,\mathcal{P}_{\backslash r}}\subset u^\perp$,
the conclusion of Lemma \ref{lem:critinduct} follows 
readily by combining the above two claims with the identity 
\eqref{eq:interimcritg}.
 \end{proof}

\subsection{The decoupling argument}
\label{sec:decouplingarg}

With Lemma \ref{lem:critinduct} in hand, the main difficulty we face is to 
remove the dependence of $s(u)$ and $g_{j,u}$ on $u$. We therefore begin, 
as in the supercritical case, by investigating the overlap between the 
supports of the measures $S_{[0,u],B,\mathcal{P}_{\backslash r}}$ for 
different $u\in U$. Surprisingly, the situation in the critical case 
proves to be completely different than in the supercritical case: the 
overlap between the supports does not depend on the choice of $u$.

In the remainder of this section, we define the polytope
$$
	P_\gamma := \sum_{i\in\gamma}P_i.
$$
We state at the outset a simple technical lemma that will be used several 
times.

\begin{lem}
\label{lem:posmixcrit}
$\V_n(K,P_\gamma,\mathcal{P})>0$ whenever
$\dim(K)\ge 1$ and $0\in K\not\subset\mathcal{L}_\gamma$.
\end{lem}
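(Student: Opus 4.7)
The plan is to verify the condition in Lemma \ref{lem:dim} applied to the collection $(K, P_\gamma, P_1, \ldots, P_{n-2})$: for every subcollection of size $k$, the sum of those bodies must have dimension at least $k$. A subcollection is specified by two indicators $\varepsilon_K, \varepsilon_\gamma \in \{0,1\}$ (whether $K$ and $P_\gamma$ are included) together with a subset $\beta \subseteq [n-2]$ (which of the $P_i$ are included), giving size $\varepsilon_K + \varepsilon_\gamma + |\beta|$. Since $0 \in K$ by hypothesis, $0 \in P_i$ by the standing assumption of this section, and hence $0 \in P_\gamma = \sum_{i \in \gamma} P_i$, every Minkowski sum in sight has its affine hull equal to the linear span, so $\dim$ of the sum equals $\dim(\sspan(K)^{\varepsilon_K} + \mathcal{L}_\gamma^{\varepsilon_\gamma} + \mathcal{L}_\beta)$ with the convention that superscript $0$ means the summand is omitted.

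I would then handle the four cases according to $(\varepsilon_K, \varepsilon_\gamma)$. The cases $(0,0)$ and $(0,1)$ reduce, via $\mathcal{L}_\gamma + \mathcal{L}_\beta = \mathcal{L}_{\gamma \cup \beta}$, directly to the criticality inequality $\dim \mathcal{L}_\alpha \ge |\alpha|+1$ (for $\alpha \neq \varnothing$). In case $(1,0)$, either $\beta = \varnothing$, in which case $\dim K \ge 1$ is exactly the hypothesis, or $\beta \neq \varnothing$, in which case $\dim \mathcal{L}_\beta \ge |\beta|+1$ gives what we need.

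The main obstacle is case $(1,1)$, where we need $\dim(\sspan(K) + \mathcal{L}_{\gamma \cup \beta}) \ge |\beta|+2$. If $K \not\subset \mathcal{L}_{\gamma \cup \beta}$ the dimension jumps by one and we are done. If $K \subset \mathcal{L}_{\gamma \cup \beta}$, the dimension equals $\dim \mathcal{L}_{\gamma \cup \beta}$; here the key dichotomy is whether $\gamma \cup \beta$ is critical. If it is not critical, then criticality of $\mathcal{P}$ forces $\dim \mathcal{L}_{\gamma \cup \beta} \ge |\gamma \cup \beta|+2 \ge |\beta|+2$. If it is critical, then $\gamma \cup \beta \supseteq \gamma$ together with the maximality of $\gamma$ forces $\gamma \cup \beta = \gamma$, so $\beta \subseteq \gamma$ and hence $\mathcal{L}_{\gamma \cup \beta} = \mathcal{L}_\gamma$; but then $K \subset \mathcal{L}_\gamma$ contradicts the hypothesis $K \not\subset \mathcal{L}_\gamma$, ruling this subcase out.

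With the dimensionality condition verified in every case, Lemma \ref{lem:dim} yields $\V_n(K, P_\gamma, \mathcal{P}) > 0$, completing the proof. The delicate point is really the interplay between the two hypotheses in case $(1,1)$: maximality of $\gamma$ handles the one arrangement in which criticality alone would be off by a unit, and the geometric hypothesis $K \not\subset \mathcal{L}_\gamma$ is precisely what excludes this final configuration.
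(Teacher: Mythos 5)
Your proof is correct and is essentially the paper's argument restated contrapositively: both reduce to verifying the dimension condition of Lemma~\ref{lem:dim}, both observe that the only potentially problematic subcollections are those containing both $K$ and $P_\gamma$, and both resolve that case by combining Lemma~\ref{lem:critinc}/maximality of $\gamma$ with the hypothesis $K\not\subset\mathcal{L}_\gamma$. The paper just phrases it as a proof by contradiction and skips the routine cases you spell out.
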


\begin{proof}
Suppose $\V_n(K,P_\gamma,\mathcal{P})=0$. As $\mathcal{P}$ is critical 
(which implies $\dim P_\gamma\ge 2$), we must have
$\dim(K+P_\gamma+\sum_{i\in\alpha}P_i)\le|\alpha|+1$ for some
$\alpha\subseteq[n-2]$ by Lemma \ref{lem:dim}.
As $\dim(\sum_{i\in\alpha}P_i)\ge|\alpha|+1$
by the criticality assumption, this can happen only if
$\alpha$ is a critical set and $K,P_\gamma\subset\mathcal{L}_\alpha$.
But $P_\gamma\subset\mathcal{L}_\alpha$ implies $\gamma\subseteq\alpha$
by Lemma \ref{lem:critinc}. Thus $\alpha=\gamma$ by the 
maximality of $\gamma$, which contradicts the assumption
$K\not\subset\mathcal{L}_\gamma$.
\end{proof}

We can now formulate the key property of  
$\supp S_{[0,u],B,\mathcal{P}_{\backslash r}}$.

\begin{lem}
\label{lem:critcommon}
For every $u\in U$, we have
$$
	\supp 
	S_{P_\gamma,\mathcal{P}}
	\subseteq \supp S_{[0,u],B,\mathcal{P}_{\backslash r}},
$$
and
$$
	\sspan\supp 
	S_{P_\gamma,\mathcal{P}} =
	\mathcal{L}_\gamma^\perp.
$$
\end{lem}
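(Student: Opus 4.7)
The plan is to separate the two assertions: first I will show that $\supp S_{P_\gamma,\mathcal{P}}\subseteq\mathcal{L}_\gamma^\perp$, which (since $u\in\mathcal{L}_r\subseteq\mathcal{L}_\gamma$ makes $\mathcal{L}_\gamma^\perp\subseteq u^\perp$) reduces the first claim to showing $x\in\supp S_{[0,u],B,\mathcal{P}_{\backslash r}}$ for each $x\in\supp S_{P_\gamma,\mathcal{P}}$; then the span claim will follow from the positivity criterion for mixed volumes.

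For the containment in $\mathcal{L}_\gamma^\perp$ I would use Lemma \ref{lem:mapoly} to compute $S_{P_\gamma,\mathcal{P}}(\{x\})=\V_{n-1}(F(P_\gamma,x),F(P_1,x),\ldots,F(P_{n-2},x))$ in $x^\perp$. If $x\notin\mathcal{L}_\gamma^\perp$, then $\mathcal{L}_\gamma\cap x^\perp$ has dimension $|\gamma|$, but the $|\gamma|+1$ faces $F(P_\gamma,x)$ and $F(P_i,x)$ for $i\in\gamma$ all have direction space inside this subspace (as $P_\gamma,P_i\subset\mathcal{L}_\gamma$ and by Lemma \ref{lem:faces}); Lemma \ref{lem:dim} then forces the mixed volume to vanish, a contradiction. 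Fix now $x\in\supp S_{P_\gamma,\mathcal{P}}\subseteq\mathcal{L}_\gamma^\perp$ and $u\in U$. Since $x\in\mathcal{L}_\gamma^\perp$, $F(P_i,x)=P_i$ for every $i\in\gamma$. Applying Lemma \ref{lem:proj} in $x^\perp$ with subspace $\mathcal{L}_\gamma$ factors the mixed volume into $\V_{\mathcal{L}_\gamma}(P_\gamma,(P_i)_{i\in\gamma})\cdot\V_W((\proj_W F(P_j,x))_{j\notin\gamma})$, where $W=\mathcal{L}_\gamma^\perp\cap x^\perp$. The first factor is positive by criticality of $\gamma$ (using Lemma \ref{lem:dim}), so by Lemma \ref{lem:dim}(b) one obtains linearly independent segments $K_j\subseteq \proj_W F(P_j,x)$ for $j\in[n-2]\setminus\gamma$. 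To conclude via Lemma \ref{lem:supp} in $u^\perp$, I must produce linearly independent segments in $\proj_{u^\perp}F(P_j,x)$ for all $j\in[n-2]\setminus\{r\}$. For $j\notin\gamma$ I lift each $K_j$ to a segment $\tilde I_j\subset F(P_j,x)$ whose $\proj_W$-image retains the direction of $K_j$, and then take $\proj_{u^\perp}\tilde I_j$; since $W\subset u^\perp$, their $W$-components are still the $K_j$ and therefore independent. For $j\in\gamma\setminus\{r\}$, the segments should sit inside $\proj_{u^\perp}P_j\subset\mathcal{L}_\gamma\cap u^\perp$.

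The existence of such segments with independent directions in $\mathcal{L}_\gamma\cap u^\perp$ reduces, via Lemma \ref{lem:dim} applied in that subspace, to the key dimensional bound $\dim\proj_{u^\perp}\mathcal{L}_S\ge |S|+1$ for every nonempty $S\subseteq[n-2]\setminus\{r\}$ and every $u\in U$. This is the main point I expect to work through carefully. It splits into cases: if $u\notin\mathcal{L}_S$ the bound is just criticality; if $u\in\mathcal{L}_S$, the definition of $U$ forces either $\dim\mathcal{L}_S>|S|+2$ (whence $\dim\proj_{u^\perp}\mathcal{L}_S=\dim\mathcal{L}_S-1\ge|S|+1$) or $\mathcal{L}_r\subseteq\mathcal{L}_S$, in which case $\mathcal{L}_{S\cup\{r\}}=\mathcal{L}_S$ and criticality applied to $S\cup\{r\}$ gives $\dim\mathcal{L}_S\ge|S|+2$ once again. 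Finally the two groups of segments are jointly independent: the first group lies in $\mathcal{L}_\gamma\cap u^\perp$ while the second group has nonzero $W\subset\mathcal{L}_\gamma^\perp$ components spanning $W$, and $\mathcal{L}_\gamma\cap u^\perp$ and $\mathcal{L}_\gamma^\perp$ are orthogonal; projecting a dependence to $\mathcal{L}_\gamma^\perp$ eliminates the first group, and independence of the $K_j$ then eliminates the second.

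For the span claim I would apply Lemma \ref{lem:posmixcrit} with $K=[0,w]$ for arbitrary $w\in\mathcal{L}_\gamma^\perp\setminus\{0\}$: the hypothesis $[0,w]\not\subset\mathcal{L}_\gamma$ holds since $w\in\mathcal{L}_\gamma^\perp\setminus\{0\}$, so $\V_n([0,w],P_\gamma,\mathcal{P})>0$, and by \eqref{eq:mixvolarea} $\int h_{[0,w]}\,dS_{P_\gamma,\mathcal{P}}>0$. Hence the support is not contained in $w^\perp$ for any nonzero $w\in\mathcal{L}_\gamma^\perp$, which together with $\supp S_{P_\gamma,\mathcal{P}}\subseteq\mathcal{L}_\gamma^\perp$ forces $\sspan\supp S_{P_\gamma,\mathcal{P}}=\mathcal{L}_\gamma^\perp$. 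The main obstacle remains the dimensional bound above: the nontrivial content of the lemma is precisely that the ``bad'' hyperplanes $\mathcal{L}_r\cap\mathcal{L}_\alpha$ (where projection drops dimension too much) are exactly the ones removed in the definition of $U$, and confirming this requires the combinatorial interplay between $\gamma$ and other critical sets encoded in Lemma \ref{lem:panov} and Corollary \ref{cor:panov}.
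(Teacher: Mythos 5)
Your argument is correct but takes a genuinely different, and considerably longer, route than the paper's. The paper exploits a projection factorization for mixed area measures: since $u\in\mathcal{L}_r\subseteq\mathcal{L}_\gamma$, the $|\gamma|+1$ bodies $[0,u],P_\gamma,\mathcal{P}_{\gamma\backslash\{r\}}$ all lie in $\mathcal{L}_\gamma$, whose dimension is exactly $|\gamma|+1$; by Lemma \ref{lem:proj} (applied at the measure level, as in Remark \ref{rem:lowdimma}) both $S_{[0,u],P_\gamma,\mathcal{P}_{\backslash r}}$ and $S_{P_\gamma,\mathcal{P}}$ are then positive scalar multiples of the \emph{same} measure $S_{\proj_{\mathcal{L}_\gamma^\perp}\mathcal{P}_{\backslash\gamma}}$, the positivity of the scalars coming from criticality via Lemma \ref{lem:dim}. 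This gives $\supp S_{P_\gamma,\mathcal{P}}=\supp S_{[0,u],P_\gamma,\mathcal{P}_{\backslash r}}$ directly, and the first claim follows from Lemma \ref{lem:maxsupp}; the second claim is handled exactly as in your paragraph via Lemma \ref{lem:posmixcrit}. Your route instead unwinds both supports into the segment criterion of Lemma \ref{lem:supp} and constructs the required $n-3$ linearly independent segments in two groups, decoupled by orthogonal projection onto $\mathcal{L}_\gamma^\perp$. This works, but note three points of sloppiness you would have to tidy: (i) you invoke Lemma \ref{lem:dim} for $|\gamma|-1$ bodies in the $|\gamma|$-dimensional space $\mathcal{L}_\gamma\cap u^\perp$, where the lemma as stated (for $n$ bodies in $\mathbb{R}^n$) does not apply verbatim --- one must pad with a full-dimensional auxiliary body or cite a Rado/Hall-type transversal argument; (ii) the dimensional bound $\dim\proj_{u^\perp}\mathcal{L}_S\ge|S|+1$ you re-derive is precisely criticality of $\proj_{u^\perp}\mathcal{P}_{\backslash r}$, which is already available as Lemma \ref{lem:umeasure1}(a) and could simply be cited (and for the step in question a bound of $|S|$ rather than $|S|+1$ would already suffice); (iii) your phrase ``their $W$-components are still the $K_j$'' should read that they have the \emph{directions} of $K_j$, which is all you use. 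The paper's factorization approach is shorter and also yields the strictly stronger information that the two supports are equal, not merely nested, which is implicitly used later in the paper.
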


\begin{proof}
We begin by noting that $\supp S_{[0,u],P_\gamma,\mathcal{P}_{\backslash 
r}}\subseteq \supp S_{[0,u],B,\mathcal{P}_{\backslash r}}$ by
Lemma \ref{lem:maxsupp}. As
$u\in\mathcal{L}_r\subseteq\mathcal{L}_\gamma$ by the definition of $U$, 
we have $[0,u],P_\gamma\subset\mathcal{L}_\gamma$, so that
$$
	\dim(\textstyle{[0,u]+P_\gamma+\sum_{i\in\gamma\backslash\{r\}}P_i})=
	\dim\mathcal{L}_\gamma=|\gamma|+1.
$$
Applying Lemma \ref{lem:proj} as in Remark \ref{rem:lowdimma} yields
\begin{align*}
	&{n-1\choose |\gamma|+1}\,
	S_{[0,u],P_\gamma,\mathcal{P}_{\backslash r}}
	=
	\V_{\mathcal{L}_\gamma}([0,u],P_\gamma,
	\mathcal{P}_{\gamma\backslash\{r\}})
	\,
	S_{\proj_{\mathcal{L}_\gamma^\perp}\mathcal{P}_{\backslash\gamma}},
	\\
	&{n-1\choose |\gamma|+1}\,
	S_{P_\gamma,\mathcal{P}}
	=
	\V_{\mathcal{L}_\gamma}(P_\gamma,
	\mathcal{P}_{\gamma})
	\,
	S_{\proj_{\mathcal{L}_\gamma^\perp}\mathcal{P}_{\backslash\gamma}}.
\end{align*}
But as $\mathcal{P}$ is 
critical, $(P_\gamma,\mathcal{P}_{\backslash r})$ is critical as well, so 
$\V_{\mathcal{L}_\gamma}([0,u],P_\gamma, 
\mathcal{P}_{\gamma\backslash\{r\}})>0$ and 
$\V_{\mathcal{L}_\gamma}(P_\gamma, \mathcal{P}_{\gamma})>0$ by Lemma 
\ref{lem:dim}. Thus we have proved the first claim.

Now note that the second identity above implies
$\sspan\supp S_{P_\gamma,\mathcal{P}}\subseteq\mathcal{L}_\gamma^\perp$. 
If the inclusion were to be strict, 
then $\supp S_{P_\gamma,\mathcal{P}}\subset w^\perp$ for some
$w\in S^{n-1}\cap\mathcal{L}_\gamma^\perp$, so
$$
        0 =
	\int \langle w,x\rangle_+\,S_{P_\gamma,\mathcal{P}}(dx) =
	n\,\V_n([0,w],P_\gamma,\mathcal{P})
$$
using $h_{[0,w]}(x)=\langle w,x\rangle_+$ and \eqref{eq:mixvolarea}. As 
$[0,w]\not\subset\mathcal{L}_\gamma$, this 
entails a contradiction by Lemma \ref{lem:posmixcrit}. Thus the second
claim is proved.
\end{proof}

Combining the above results, we conclude the following.

\begin{cor}
\label{cor:crittogether}
If the conclusion of Lemma \ref{lem:critinduct} holds, then 
for any $u,v\in U$
$$
	\langle s(u)-s(v),x\rangle +
	\sum_{j=1}^\ell \{g_{j,u}(x)-g_{j,v}(x)\} = 0
	\quad\mbox{for all }x\in\supp 
	S_{P_\gamma,\mathcal{P}}.
$$
\end{cor}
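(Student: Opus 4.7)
The plan is to apply Lemma \ref{lem:critinduct} at both $u$ and $v$, subtract the resulting identities, and exploit the geometry of $\supp S_{P_\gamma,\mathcal{P}}$ to force the $\gamma$-degenerate contribution $g_{0,u}-g_{0,v}$ to drop out.

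First, I would invoke Lemma \ref{lem:critcommon}, which supplies the key inclusion $\supp S_{P_\gamma,\mathcal{P}} \subseteq \supp S_{[0,u],B,\mathcal{P}_{\backslash r}}$ for every $u\in U$. Therefore the representation of Lemma \ref{lem:critinduct} is valid on $\supp S_{P_\gamma,\mathcal{P}}$ for both $u$ and $v$, and subtracting one from the other yields
$$
0 = \langle s(u)-s(v),x\rangle + \sum_{j=0}^\ell \{g_{j,u}(x)-g_{j,v}(x)\}
\quad\text{for all } x\in\supp S_{P_\gamma,\mathcal{P}}.
$$

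To match the statement, it remains to eliminate the $j=0$ term. For this I would apply Lemma \ref{lem:deghomog}(a): since $g_{0,u}$ and $g_{0,v}$ are $\gamma$-degenerate, there exist $1$-homogeneous functions $\varphi_{0,u},\varphi_{0,v}\colon \mathcal{L}_\gamma\to\mathbb{R}$ with $g_{0,u}(x)=\varphi_{0,u}(\proj_{\mathcal{L}_\gamma}x)$ and $g_{0,v}(x)=\varphi_{0,v}(\proj_{\mathcal{L}_\gamma}x)$ for all $x\in S^{n-1}$. Now the second assertion of Lemma \ref{lem:critcommon} says $\supp S_{P_\gamma,\mathcal{P}}\subseteq\mathcal{L}_\gamma^\perp$, so $\proj_{\mathcal{L}_\gamma}x=0$ for every such $x$. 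Since any $1$-homogeneous function vanishes at the origin, we obtain $g_{0,u}(x)=g_{0,v}(x)=0$ on $\supp S_{P_\gamma,\mathcal{P}}$, and the claim follows immediately from the displayed identity.

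There is no substantive obstacle in this argument; it is a direct combination of Lemmas \ref{lem:critinduct}, \ref{lem:critcommon}, and \ref{lem:deghomog}, and the only point that deserves note is the asymmetry between $\gamma$ and the other maximal sets: $\mathcal{L}_\gamma$ is the one space guaranteed to be orthogonal to $\supp S_{P_\gamma,\mathcal{P}}$, while for $j\ge 1$ the spaces $\mathcal{L}_{\alpha_j}$ need not be transverse to it, so the $j\ge 1$ terms must be retained. This asymmetry is precisely what makes the choice $r\in\gamma$ at the beginning of the induction step pay off here.
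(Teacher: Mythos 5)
Your argument is correct and is essentially the same as the paper's: both apply Lemma \ref{lem:critinduct} at $u$ and $v$, invoke the two claims of Lemma \ref{lem:critcommon} (the support inclusion and $\sspan\supp S_{P_\gamma,\mathcal{P}}=\mathcal{L}_\gamma^\perp$), and use Lemma \ref{lem:deghomog} to conclude that the $\gamma$-degenerate terms $g_{0,u},g_{0,v}$ vanish on $\mathcal{L}_\gamma^\perp$. Your closing remark about the asymmetry between $\gamma$ and the other maximal sets is an accurate reading of why the choice $r\in\gamma$ matters.
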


\begin{proof}
As $g_{0,u}$ is a $\gamma$-degenerate function,
$g_{0,u}(x)=g_{0,u}(\proj_{\mathcal{L}_{\gamma}}x)=0$ for all
$x\in\mathcal{L}_\gamma^\perp$ by Lemma \ref{lem:deghomog}.
The claim follows immedately from Lemmas \ref{lem:critinduct} and 
\ref{lem:critcommon}.
\end{proof}

Let us emphasize that the conclusion of Corollary \ref{cor:crittogether} 
is of a fundamentally different nature than the analogous statement in the 
supercritical case: as the mixed area measure that appears here does not 
depend on $u,v$, there is no need to ``glue'' the functions $\langle 
s(u),\cdot\rangle+\sum_{j=1}^\ell g_{j,u}$ for different $u$. (We will 
still have to solve a gluing problem for $g_{0,u}$, which is addressed in 
the next section.) The problem we face here is that Corollary 
\ref{cor:crittogether} only provides information on 
$\supp S_{P_\gamma,\mathcal{P}}$, while we must characterize these 
functions on $\supp S_{B,\mathcal{P}}$ to prove Theorem \ref{thm:crit}. 
As a first step towards this goal, let us make the following 
basic observation.

\begin{lem}
\label{lem:extend}
Let $h$ be any $\alpha_i$-degenerate function for some $i=1,\ldots,\ell$.
Then the following statements are equivalent:
\begin{enumerate}[a.]
\item $h(x)=0$ for all $x\in\supp S_{P_\gamma,\mathcal{P}}$.
\item $h(x)=0$ for all $x\in\supp S_{\mathcal{P}_{\alpha_i}}$.
\item $h(x)=0$ for all $x\in\supp S_{B,B,\mathcal{P}_{\backslash r}}$.
\end{enumerate}
\end{lem}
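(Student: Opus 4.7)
By Lemma~\ref{lem:deghomog}, write $h(x)=\varphi(\proj_{\mathcal{L}_{\alpha_i}}x)$ for some $1$-homogeneous $\varphi:\mathcal{L}_{\alpha_i}\to\mathbb{R}$. Since $i\ge 1$, Corollary~\ref{cor:panov} gives $\gamma\cap\alpha_i=\varnothing$; in particular $r\in\gamma$ implies $r\notin\alpha_i$, so $\mathcal{P}_{\alpha_i}$ appears as a sub-collection in each of the three mixed area measures $S_{P_\gamma,\mathcal{P}}$, $S_{\mathcal{P}_{\alpha_i}}$, and $S_{B,B,\mathcal{P}_{\backslash r}}$. The plan is to apply Theorem~\ref{thm:propeller} to each with $E=\mathcal{L}_{\alpha_i}$ and the first $|\alpha_i|$ bodies taken to be $\mathcal{P}_{\alpha_i}$. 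In each case the support then lies in $\mathcal{L}_{\alpha_i}^\perp\cup\bigcup_{z\in\supp S_{\mathcal{P}_{\alpha_i}}}F_z^+$, and on each blade $F_z^+$ the identity $\proj_{\mathcal{L}_{\alpha_i}}x=\langle z,x\rangle z$ together with $1$-homogeneity yields $h(x)=\varphi(z)\langle z,x\rangle$.

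I would establish the equivalences as a cycle (c)$\Rightarrow$(a)$\Rightarrow$(b)$\Rightarrow$(c). For (c)$\Rightarrow$(a), expanding $S_{P_\gamma,\mathcal{P}}=\sum_{j\in\gamma}S_{P_j,\mathcal{P}}$ by multilinearity and applying Lemma~\ref{lem:maxsupp} twice (replacing first $P_j$, then $P_r$, by $B$) yields $\supp S_{P_\gamma,\mathcal{P}}\subseteq\supp S_{B,B,\mathcal{P}_{\backslash r}}$, so (a) follows from (c). For (b)$\Rightarrow$(c), if $\varphi(z)=0$ for every $z\in\supp S_{\mathcal{P}_{\alpha_i}}$, then the blade formula shows $h$ vanishes on every $F_z^+$ and on $\mathcal{L}_{\alpha_i}^\perp$, hence on the entire propeller envelope containing $\supp S_{B,B,\mathcal{P}_{\backslash r}}$. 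The substantive implication is (a)$\Rightarrow$(b). Since the $P_j$ are polytopes, Step~1 of the proof of Theorem~\ref{thm:propeller} gives the pointwise atomic formula
\[
\binom{n-1}{|\alpha_i|}\,S_{P_\gamma,\mathcal{P}}(\{x\}) = S_{\mathcal{P}_{\alpha_i}}(\{z\})\,\mu_z(\{x\}),\qquad x\in F_z^+,
\]
where $\mu_z:=S_{\proj_{F_z}P_\gamma,\proj_{F_z}\mathcal{P}_{\backslash\alpha_i}}$. Provided $\mu_z(F_z^+)>0$ for every $z\in\supp S_{\mathcal{P}_{\alpha_i}}$, any $x\in\supp S_{P_\gamma,\mathcal{P}}\cap F_z^+$ satisfies $\varphi(z)\langle z,x\rangle=h(x)=0$ by (a), forcing $\varphi(z)=0$. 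Applying Corollary~\ref{cor:segproj} inside $F_z$ reduces positivity of $\mu_z(F_z^+)$ (equivalently, of $\int\langle z,x\rangle_+\,d\mu_z$) to positivity of the $(n-1-|\alpha_i|)$-dimensional mixed volume $\V_{\mathcal{L}_{\alpha_i}^\perp}\bigl(\proj_{\mathcal{L}_{\alpha_i}^\perp}P_\gamma,\proj_{\mathcal{L}_{\alpha_i}^\perp}\mathcal{P}_{\backslash\alpha_i}\bigr)$.

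The hard part is verifying this positivity via the dimension criterion of Lemma~\ref{lem:dim}. Subsets not involving the extra body $P_\gamma$ are handled routinely: for $\beta\subseteq[n-2]\setminus\alpha_i$, the identity $\dim\proj_{\mathcal{L}_{\alpha_i}^\perp}\mathcal{L}_\beta=\dim\mathcal{L}_{\beta\cup\alpha_i}-|\alpha_i|-1$ combined with criticality of $\mathcal{P}$ gives the required bound $\ge|\beta|$. The delicate case is a subset containing $P_\gamma$ together with $\mathcal{P}_\beta$ for some $\beta\supseteq\gamma$: here a direct count falls one dimension short, because the $P_j$ with $j\in\gamma$ are absorbed into $\mathcal{L}_\gamma\subseteq\mathcal{L}_\beta$. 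The rescue is Corollary~\ref{cor:panov}: the set $\beta\cup\alpha_i$ contains both maximal sets $\gamma$ and $\alpha_i$, so it cannot itself be critical (otherwise it would lie in a single maximal set, contradicting $\gamma\cap\alpha_i=\varnothing$), and consequently $\dim\mathcal{L}_{\beta\cup\alpha_i}\ge|\beta\cup\alpha_i|+2=|\beta|+|\alpha_i|+2$, supplying the missing dimension and completing the argument.
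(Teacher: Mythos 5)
Your argument is correct and follows the same underlying idea as the paper's proof: project along $\mathcal{L}_{\alpha_i}$ to factor the $\alpha_i$-part of each mixed area measure, then reduce the equivalences to positivity of the residual mixed volume $\V_{\mathcal{L}_{\alpha_i}^\perp}(\proj_{\mathcal{L}_{\alpha_i}^\perp}P_\gamma,\proj_{\mathcal{L}_{\alpha_i}^\perp}\mathcal{P}_{\backslash\alpha_i})$, which ultimately rests on Corollary~\ref{cor:panov} (maximality and disjointness of $\gamma$ and $\alpha_i$). The differences are organizational rather than conceptual. The paper applies Lemma~\ref{lem:maproj} directly to $|h|$, thus factoring both $\int|h|\,dS_{P_\gamma,\mathcal{P}}$ and $\int|h|\,dS_{B,B,\mathcal{P}_{\backslash r}}$ as the same $S_{\mathcal{P}_{\alpha_i}}$-integral times a positive scalar, and thereby gets all three equivalences symmetrically in two lines; you instead work pointwise with the atomic formula from Step~1 of Theorem~\ref{thm:propeller} (which is where Lemma~\ref{lem:maproj} comes from anyway) and close a cycle (c)$\Rightarrow$(a)$\Rightarrow$(b)$\Rightarrow$(c). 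For the positivity, the paper invokes Lemma~\ref{lem:posmixcrit} applied to $\V_n(B_{\alpha_i},P_\gamma,\mathcal{P})$ and then factors by Lemma~\ref{lem:proj}; your direct dimension count via Lemma~\ref{lem:dim}, including the delicate $\gamma\subseteq\beta$ case resolved by noting that $\beta\cup\alpha_i$ cannot be critical, is in fact a correct unpacking of what Lemma~\ref{lem:posmixcrit} accomplishes. Both routes are valid; the paper's is shorter because it delegates the hard work to lemmas already in place.
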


\begin{proof}
Lemmas \ref{lem:maproj} and \ref{lem:deghomog} imply that
\begin{align*}
	&{n-1\choose |\alpha_i|}
	\int |h|\,dS_{B,B,\mathcal{P}_{\backslash r}} =
	\V_{\mathcal{L}_{\alpha_i}^\perp}(\proj_{\mathcal{L}_{\alpha_i}^\perp}B,
	\proj_{\mathcal{L}_{\alpha_i}^\perp}B,
	\proj_{\mathcal{L}_{\alpha_i}^\perp}\mathcal{P}_{\backslash
	\{\alpha_i,r\}})
	\int |h| \,dS_{\mathcal{P}_{\alpha_i}},
	\\
	&{n-1\choose |\alpha_i|}
	\int |h|\,dS_{P_\gamma,\mathcal{P}} =
	\V_{\mathcal{L}_{\alpha_i}^\perp}(\proj_{\mathcal{L}_{\alpha_i}^\perp}
	P_\gamma,
	\proj_{\mathcal{L}_{\alpha_i}^\perp}\mathcal{P}_{\backslash\alpha_i})
	\int |h| \,dS_{\mathcal{P}_{\alpha_i}}.
\end{align*}
Thus the 
conclusion follows provided the two mixed volumes in the above identities
are positive. To show that this is in fact the case, we note that
$$
	\V_{\mathcal{L}_{\alpha_i}}(B_{\alpha_i},\mathcal{P}_{\alpha_i})\,
	\V_{\mathcal{L}_{\alpha_i}^\perp}(\proj_{\mathcal{L}_{\alpha_i}^\perp}
	P_\gamma,
	\proj_{\mathcal{L}_{\alpha_i}^\perp}\mathcal{P}_{\backslash\alpha_i})
	=
	{n\choose |\alpha_i|+1}
	\V_n(B_{\alpha_i},P_\gamma,\mathcal{P})
	>0
$$
by Lemmas \ref{lem:proj} and \ref{lem:posmixcrit} 
(here we used $B_{\alpha_i}\not\subset \mathcal{L}_{\gamma}$
by Corollary \ref{cor:panov}). Therefore
$$
	c\,
	\V_{\mathcal{L}_{\alpha_i}^\perp}(\proj_{\mathcal{L}_{\alpha_i}^\perp}B,
        \proj_{\mathcal{L}_{\alpha_i}^\perp}B,
        \proj_{\mathcal{L}_{\alpha_i}^\perp}\mathcal{P}_{\backslash
        \{\alpha_i,r\}})
	\ge
	\V_{\mathcal{L}_{\alpha_i}^\perp}(\proj_{\mathcal{L}_{\alpha_i}^\perp}
	P_\gamma,
	\proj_{\mathcal{L}_{\alpha_i}^\perp}\mathcal{P}_{\backslash\alpha_i})
	>0,
$$
for some $c>0$, where we used $P_\gamma\subset\mathrm{diam}(P_\gamma)B$ and
$P_r\subset \mathrm{diam}(P_r)B$.
\end{proof}

The difficulty in the application of Lemma \ref{lem:extend} is that it 
applies only to an individual $\alpha_i$-degenerate function, which is 
used crucially in its proof. On the other hand, Corollary 
\ref{cor:crittogether} involves a sum of degenerate functions for 
different sets $\alpha_i$. The problem we must now address is 
therefore to decouple the different terms of the sum in Corollary 
\ref{cor:crittogether}. This will be accomplished using Proposition 
\ref{prop:proplin}. 

\begin{prop}
\label{prop:decouplingmagic}
Let $t\in\mathbb{R}^n$, and let $f_j$ be an $\alpha_j$-degenerate function 
for every $j=1,\ldots,\ell$. Suppose that we have
\begin{equation}
\label{eq:decoupleaspt}
	\langle t,x\rangle + 
	\sum_{j=1}^\ell f_j(x)=0\quad
	\mbox{for all }x\in\supp S_{P_\gamma,\mathcal{P}}.
\end{equation}
Then for every $j=1,\ldots,\ell$,
there exists $w_j\in\mathcal{L}_{\alpha_j}$ so that
$$
	f_j(x) = \langle w_j,x\rangle\quad\mbox{for all }
	x\in\supp S_{B,B,\mathcal{P}_{\backslash r}}
$$
and $t+w_1+\cdots+w_\ell\in\mathcal{L}_\gamma$.
\end{prop}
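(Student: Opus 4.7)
The plan is to decouple the $f_j$'s one at a time by invoking Proposition \ref{prop:proplin} on the propeller associated with each maximal set $\alpha_{j^*}$. Fix $j^* \in \{1,\ldots,\ell\}$ and set $\tilde h := \langle t,\cdot\rangle + \sum_{j\ne j^*}f_j$, so that $\tilde h = -f_{j^*}$ on $\supp S_{P_\gamma,\mathcal{P}}$ by \eqref{eq:decoupleaspt}. I will apply Proposition \ref{prop:proplin} to $\tilde h$ with $E = \mathcal{L}_{\alpha_{j^*}}$, $k=|\alpha_{j^*}|$, $C_1,\ldots,C_k = \mathcal{P}_{\alpha_{j^*}}$, and remaining bodies $(C_{k+1},\ldots,C_{n-1}) = (P_\gamma,\mathcal{P}_{\backslash\alpha_{j^*}})$. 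Criticality of $\mathcal{P}_{\alpha_{j^*}}$ in $E$ is inherited from criticality of $\mathcal{P}$, and the support condition $\tilde h = 0$ on $E^\perp \cap \supp S_{\mathcal{P}_{\alpha_{j^*}},P_\gamma,\mathcal{P}_{\backslash\alpha_{j^*}}} = E^\perp \cap \supp S_{P_\gamma,\mathcal{P}}$ holds because $f_{j^*}$ vanishes on $\mathcal{L}_{\alpha_{j^*}}^\perp$ by the $1$-homogeneous representation in Lemma \ref{lem:deghomog}.

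The decisive step is verifying $\int \tilde h\,dS_{Q,\ldots,Q,P_\gamma,\mathcal{P}_{\backslash\alpha_{j^*}}} = 0$ for every full-dimensional polytope $Q$ in $E$. The $\langle t,\cdot\rangle$-term vanishes by translation-invariance (Lemma \ref{lem:maprop}(d)), while for each $j \ne j^*$ the disjointness $\alpha_j \cap \alpha_{j^*} = \varnothing$ from Corollary \ref{cor:panov} places $\mathcal{P}_{\alpha_j}$ inside $\mathcal{P}_{\backslash\alpha_{j^*}}$, so that the projection formula (Lemma \ref{lem:proj}) factorizes the corresponding mixed volume through $\V_{\mathcal{L}_{\alpha_j}}(M_j-N_j,\mathcal{P}_{\alpha_j}) = 0$, where $f_j = h_{M_j}-h_{N_j}$. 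This decoupling, driven by the disjointness of the maximal sets, is the main obstacle; it is precisely what makes Proposition \ref{prop:proplin} applicable and motivated its formulation.

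Proposition \ref{prop:proplin} then supplies $w_{j^*} \in E$ with
$$\int \tilde h(x)\,1_{\langle z,x\rangle>0}\,dS_{\proj_{F_z}P_\gamma,\proj_{F_z}\mathcal{P}_{\backslash \alpha_{j^*}}}(x) = \langle w_{j^*},z\rangle$$
for every $z \in S^{n-1}\cap E$. I will specialize to $z \in \supp S_{\mathcal{P}_{\alpha_{j^*}}}$, for which the inner measure is, by Step 1 of the proof of Theorem \ref{thm:propeller}, a positive scalar multiple of the restriction of $S_{P_\gamma,\mathcal{P}}$ to the blade $F_z^+$; on that restriction $\tilde h = -f_{j^*}$. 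Writing $f_{j^*}(x) = \varphi_{j^*}(\proj_E x)$ from Lemma \ref{lem:deghomog} and using $\proj_E x = \langle z,x\rangle z$ for $x \in F_z$ together with $1$-homogeneity, the integral reduces to $-\varphi_{j^*}(z)\cdot V$, where $V := \V_{E^\perp}(\proj_{E^\perp}P_\gamma,\proj_{E^\perp}\mathcal{P}_{\backslash \alpha_{j^*}})$ via Corollary \ref{cor:segproj} applied inside $F_z$. The projection formula for $\V_n(B_{\alpha_{j^*}},P_\gamma,\mathcal{P})$ combined with Lemma \ref{lem:posmixcrit} (noting $B_{\alpha_{j^*}}\not\subset\mathcal{L}_\gamma$ by Corollary \ref{cor:panov}) gives $V > 0$. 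Hence $\varphi_{j^*}(z) = \langle \tilde w_{j^*},z\rangle$ for $z \in \supp S_{\mathcal{P}_{\alpha_{j^*}}}$ with $\tilde w_{j^*} := -w_{j^*}/V \in \mathcal{L}_{\alpha_{j^*}}$, and by $1$-homogeneity on the whole cone $\mathbb{R}_{\ge 0}\cdot\supp S_{\mathcal{P}_{\alpha_{j^*}}}$.

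To upgrade this identity to all of $\supp S_{B,B,\mathcal{P}_{\backslash r}}$, I will apply Theorem \ref{thm:propeller} once more, this time to $S_{B,B,\mathcal{P}_{\backslash r}}$ with the same $E,k$ and $(C_{k+1},\ldots,C_{n-1}) = (B,B,\mathcal{P}_{\backslash(\alpha_{j^*}\cup\{r\})})$: every $x$ in this support either lies in $E^\perp$ (so $\proj_E x = 0$) or in some $F_z^+$ with $z\in \supp S_{\mathcal{P}_{\alpha_{j^*}}}$ (so $\proj_E x$ is a positive multiple of $z$), and in either case $f_{j^*}(x) = \varphi_{j^*}(\proj_E x) = \langle \tilde w_{j^*},\proj_E x\rangle = \langle \tilde w_{j^*},x\rangle$ since $\tilde w_{j^*} \in E$. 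Finally, the containment $\supp S_{P_\gamma,\mathcal{P}} \subset \supp S_{B,B,\mathcal{P}_{\backslash r}}$ (by iterated application of Lemma \ref{lem:maxsupp}) allows substituting these linear expressions back into \eqref{eq:decoupleaspt}, giving $\langle t + \sum_j \tilde w_j,x\rangle = 0$ on $\supp S_{P_\gamma,\mathcal{P}}$; since $\sspan\supp S_{P_\gamma,\mathcal{P}} = \mathcal{L}_\gamma^\perp$ by Lemma \ref{lem:critcommon}, this yields $t + \sum_j \tilde w_j \in \mathcal{L}_\gamma$.
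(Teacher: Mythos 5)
Your proof is correct and follows essentially the same strategy as the paper: fix $j^*$, apply Proposition~\ref{prop:proplin} to the sum $\langle t,\cdot\rangle+\sum_{j\ne j^*}f_j$ (with the vanishing-integral hypothesis verified via the disjointness of maximal sets and the factorization of $\int f_j\,dS$ through $\V_{\mathcal{L}_{\alpha_j}}(f_j,\mathcal{P}_{\alpha_j})=0$), read off linearity of $f_{j^*}$ on $\supp S_{\mathcal{P}_{\alpha_{j^*}}}$ using the propeller structure in $\mathcal{L}_{\alpha_{j^*}}$, and finish with Lemma~\ref{lem:critcommon}. The only deviation is in the final upgrade to $\supp S_{B,B,\mathcal{P}_{\backslash r}}$: the paper invokes the already-proved Lemma~\ref{lem:extend} (which packages the equivalence of vanishing on the three supports), whereas you reapply Theorem~\ref{thm:propeller} with $(B,B,\mathcal{P}_{\backslash(\alpha_{j^*}\cup\{r\})})$ as the remaining bodies and argue blade-by-blade; both are valid, and your variant is slightly more self-contained at the cost of repeating reasoning that Lemma~\ref{lem:extend} had already abstracted.
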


\begin{proof}
Fix $i\in[\ell]$ until further notice.

\medskip

\textbf{Step 1.}
We aim to apply Proposition \ref{prop:proplin} with
$(C_1,\ldots,C_{n-1})\leftarrow (P_\gamma,\mathcal{P})$,
$(C_1,\ldots,C_k)\leftarrow \mathcal{P}_{\alpha_i}$, and
$E \leftarrow \mathcal{L}_{\alpha_i}$
to the function
$$
	h(x) := \langle t,x\rangle +
	\sum_{j\ne i} f_j(x).
$$
To this end, let us verify the assumptions of Proposition 
\ref{prop:proplin} are satisfied. 
The requisite criticality assumptions follow immediately as $\mathcal{P}$ 
is critical and $\alpha_i$ is a critical set.
Now note that as $f_i$ is $\alpha_i$-degenerate, it follows
from Lemma \ref{lem:deghomog} that $f_i(x)=0$ for 
$x\in\mathcal{L}_{\alpha_i}^\perp$. Thus \eqref{eq:decoupleaspt} 
implies that
$$
	h(x)=0\quad\mbox{for all }x\in \mathcal{L}_{\alpha_i}^\perp
	\cap\supp S_{P_\gamma,\mathcal{P}}.
$$
On the other hand, note that
by Lemmas \ref{lem:maproj} and \ref{lem:deghomog}, we have
$$
	{n-1\choose |\alpha_j|}
        \int f_j\,dS_{\mathcal{Q}_{\alpha_i},P_\gamma,
	\mathcal{P}_{\backslash\alpha_i}} =
	\V_{\mathcal{L}_{\alpha_j}^\perp}(
	\proj_{\mathcal{L}_{\alpha_j}^\perp}\mathcal{Q}_{\alpha_i},
	\proj_{\mathcal{L}_{\alpha_j}^\perp}P_\gamma,
	\proj_{\mathcal{L}_{\alpha_j}^\perp}\mathcal{P}_{\backslash\{\alpha_i,
	\alpha_j\}}
	)
	\int f_j\,dS_{\mathcal{P}_{\alpha_j}}
$$
for any $j\ne i$ and convex bodies $\mathcal{Q}_{\alpha_i}=
(Q_l)_{l\in\alpha_i}$ in $\mathcal{L}_{\alpha_i}$.
But the right-hand side 
vanishes as $\V_{\mathcal{L}_{\alpha_j}}(f_j,\mathcal{P}_{\alpha_j})=0$ by 
the definition of an $\alpha_j$-degenerate function; thus
$$
	\int h\,dS_{Q,\ldots,Q,P_\gamma,
        \mathcal{P}_{\backslash\alpha_i}}=0
$$
for any full-dimensional polytope $Q$ in $\mathcal{L}_{\alpha_i}$, where
we used that the integral of the linear part of $h$ vanishes by Lemma 
\ref{lem:maprop}. The assumptions of Proposition \ref{prop:proplin} 
are therefore satisfied. Consequently, there exists 
$w_i'\in\mathcal{L}_{\alpha_i}$ so that
\begin{equation}
\label{eq:linreloutput}
	\int h(x)\,1_{\langle z,x\rangle>0}
	\,S_{\proj_{F_z}P_\gamma,
	\proj_{F_z}\mathcal{P}_{\backslash\alpha_i}}(dx)=
	\langle w_i',z\rangle
\end{equation}
for all $z\in S^{n-1}\cap\mathcal{L}_{\alpha_i}$, where
$F_z := \sspan\{z,\mathcal{L}_{\alpha_i}^\perp\}$.

\medskip

\textbf{Step 2.}
Now note that $\proj_{\mathcal{L}_{\alpha_i}}x=\langle x,z\rangle z$
for every $x\in F_z$. Thus
$$
	f_i(x) = f_i(z)\langle z,x\rangle
	\quad\mbox{for all }x\in F_z^+
$$
by Lemma \ref{lem:deghomog}. On the other hand, \eqref{eq:decoupleaspt}
and Theorem \ref{thm:propeller} imply that
$$
	h(x)+f_i(x)=0\quad
	\mbox{for all }x\in\supp (1_{\langle z,\cdot\rangle>0}\,
	dS_{\proj_{F_z}P_\gamma,
        \proj_{F_z}\mathcal{P}_{\backslash\alpha_i}})
$$
holds for every $z\in\supp S_{\mathcal{P}_{\alpha_i}}$.
Substituting these identities
in \eqref{eq:linreloutput} yields
$$
	\V_{\mathcal{L}_{\alpha_i}^\perp}(
	\proj_{\mathcal{L}_{\alpha_i}^\perp}P_\gamma,
	\proj_{\mathcal{L}_{\alpha_i}^\perp}\mathcal{P}_{\backslash\alpha_i})
	\,f_i(z) =
        -\langle w_i',z\rangle
$$
for every $z\in\supp S_{\mathcal{P}_{\alpha_i}}$, where we used
Corollary \ref{cor:segproj}. But we already showed in the proof of Lemma 
\ref{lem:extend} that $\V_{\mathcal{L}_{\alpha_i}^\perp}(
\proj_{\mathcal{L}_{\alpha_i}^\perp}P_\gamma,        
\proj_{\mathcal{L}_{\alpha_i}^\perp}\mathcal{P}_{\backslash\alpha_i})>0$.
Therefore
$$
	f_i(z) = \langle w_i,z\rangle\quad
	\mbox{for all }z\in\supp S_{\mathcal{P}_{\alpha_i}},
$$
where $w_i := -\V_{\mathcal{L}_{\alpha_i}^\perp}(
\proj_{\mathcal{L}_{\alpha_i}^\perp}P_\gamma,
\proj_{\mathcal{L}_{\alpha_i}^\perp}\mathcal{P}_{\backslash\alpha_i})^{-1}
w_i'\in\mathcal{L}_{\alpha_i}$.

\medskip

\textbf{Step 3.} As $i\in[\ell]$ was arbitrary, we have now 
constructed for every $j=1,\ldots,\ell$ a vector 
$w_j\in\mathcal{L}_{\alpha_j}$ such that
$f_j(z)-\langle w_j,z\rangle=0$ for $z\in\supp 
S_{\mathcal{P}_{\alpha_j}}$. As
$f_j-\langle w_j,\cdot\rangle$
is an $\alpha_j$-degenerate function, we conclude by
Lemma \ref{lem:extend} that
$$
	f_j(x)=\langle w_j,x\rangle
	\quad\mbox{for all }x\in\supp S_{B,B,\mathcal{P}_{\backslash r}}
	\mbox{ and }x\in\supp S_{P_\gamma,\mathcal{P}}.
$$
In particular, we obtain using \eqref{eq:decoupleaspt}
$$
	\langle t+w_1+\cdots+w_\ell,x\rangle = 0
	\quad\mbox{for all }x\in\supp S_{P_\gamma,\mathcal{P}}.
$$
Thus $\proj_{\mathcal{L}_\gamma^\perp}(t+w_1+\cdots+w_\ell)=0$
by Lemma \ref{lem:critcommon}, completing the proof.
\end{proof}

We can now put together all the ideas of this section.

\begin{cor}
\label{cor:critnogamma}
Suppose that Theorem \ref{thm:crit} has been proved in dimension $n-1$. 
Then there exist $s\in\mathbb{R}^n$, an $\alpha_j$-degenerate function 
$g_j$ for every $j=1,\ldots,\ell$, and $t(u)\in\mathcal{L}_\gamma$ for 
every $u\in U$, so that the following holds for all $u\in U$:
$$
	g(x) -
	\langle s,x\rangle -
	\sum_{j=1}^\ell g_j(x) =
	\langle t(u),x\rangle +
	g_{0,u}(x)\quad
	\mbox{for all }x\in \supp S_{[0,u],B,\mathcal{P}_{\backslash r}}.
$$
\end{cor}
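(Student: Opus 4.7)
The plan is to assemble Corollary \ref{cor:critnogamma} from three results already established in this section: Lemma \ref{lem:critinduct}, Corollary \ref{cor:crittogether}, and Proposition \ref{prop:decouplingmagic}. The strategy is to freeze the $u$-dependence by fixing any reference direction $v\in U$ (which exists, since Lemma \ref{lem:umeasure1}(d) ensures $U$ is nonempty) and defining $s:=s(v)$, $g_j:=g_{j,v}$ for $j=1,\ldots,\ell$. The discrepancies $s(u)-s$ and $g_{j,u}-g_j$ arising at other $u\in U$ will then be shown to collapse, via the decoupling proposition, into a single linear correction $\langle t(u),x\rangle$ with $t(u)\in\mathcal{L}_\gamma$; the $\gamma$-degenerate piece $g_{0,u}$ is simply kept as it stands.

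For each $u\in U$, Corollary \ref{cor:crittogether} yields
\[
\langle s(u)-s,\,x\rangle + \sum_{j=1}^\ell \bigl(g_{j,u}(x)-g_j(x)\bigr) = 0
\quad\text{for all } x\in\supp S_{P_\gamma,\mathcal{P}}.
\]
Because $\alpha_j$-degenerate functions are closed under differences (by multilinearity of mixed volumes, via Definition \ref{defn:maxdeg}), the function $g_{j,u}-g_j$ is again $\alpha_j$-degenerate for each $j=1,\ldots,\ell$. I would then apply Proposition \ref{prop:decouplingmagic} with $t:=s(u)-s$ and $f_j:=g_{j,u}-g_j$ to obtain vectors $w_j(u)\in\mathcal{L}_{\alpha_j}$ satisfying
\[
g_{j,u}(x)-g_j(x) = \langle w_j(u),x\rangle
\quad\text{for all } x\in\supp S_{B,B,\mathcal{P}_{\backslash r}},
\]
together with the containment $t(u):=s(u)-s+\sum_{j=1}^\ell w_j(u)\in\mathcal{L}_\gamma$.

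It then remains to verify the claimed identity. Starting from Lemma \ref{lem:critinduct} and subtracting $\langle s,x\rangle+\sum_{j=1}^\ell g_j(x)$ gives, on $\supp S_{[0,u],B,\mathcal{P}_{\backslash r}}$,
\[
g(x)-\langle s,x\rangle-\sum_{j=1}^\ell g_j(x)
= \langle s(u)-s,x\rangle + \sum_{j=1}^\ell \bigl(g_{j,u}(x)-g_j(x)\bigr) + g_{0,u}(x).
\]
Since $[0,u]\subset B$, Lemma \ref{lem:maxsupp} gives $\supp S_{[0,u],B,\mathcal{P}_{\backslash r}}\subseteq\supp S_{B,B,\mathcal{P}_{\backslash r}}$, so the linear representation of $g_{j,u}-g_j$ applies on this set, and the sum of the linear contributions collapses to $\langle t(u),x\rangle$. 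This is the desired identity. The genuine work has been done in Proposition \ref{prop:decouplingmagic}; the present corollary is essentially a bookkeeping assembly, and I anticipate no substantial obstacle beyond correctly matching the hypotheses of the decoupling proposition and verifying the inclusion of supports.
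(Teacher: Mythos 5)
Your proof is correct and follows essentially the same route as the paper: fix a reference $v\in U$, apply Corollary \ref{cor:crittogether} to get the identity on $\supp S_{P_\gamma,\mathcal{P}}$, invoke Proposition \ref{prop:decouplingmagic} with $t=s(u)-s$ and $f_j=g_{j,u}-g_j$, and transfer via the support inclusion from Lemma \ref{lem:maxsupp}. The only minor difference is that you spell out explicitly the (easy) fact that differences of $\alpha_j$-degenerate functions are again $\alpha_j$-degenerate, which the paper leaves implicit.
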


\begin{proof}
Fix any $v\in U$, and define $s:=s(v)$ and $g_j:=g_{j,v}$ for 
$j=1,\ldots,\ell$.
Then Lemma \ref{lem:critinduct}, Corollary \ref{cor:crittogether} and 
Proposition \ref{prop:decouplingmagic} yield 
$w_j(u)\in\mathcal{L}_{\alpha_j}$ 
so that
$$
	g_{j,u}(x)=g_j(x)+\langle w_j(u),x\rangle
	\quad\mbox{for all }x\in\supp S_{B,B,\mathcal{P}_{\backslash r}}
$$
and
$$
	t(u):=s(u)-s+w_1(u)+\cdots+w_\ell(u)\in\mathcal{L}_\gamma
$$
for every $u\in U$. As $\supp S_{[0,u],B,\mathcal{P}_{\backslash 
r}}\subseteq\supp S_{B,B,\mathcal{P}_{\backslash r}}$ for every $u$
by Lemma \ref{lem:maxsupp}, the conclusion follows immediately
from Lemma \ref{lem:critinduct}.
\end{proof}

\subsection{The gluing argument}

With Corollary \ref{cor:critnogamma} in hand, it remains to glue the 
$\gamma$-degenerate functions $g_{0,u}$ for different $u\in U$. This will 
be accomplished using Lemma \ref{lem:propglue}. We remind the 
reader that the function $f$ with $S_{f,\mathcal{P}}=0$ was fixed at the 
beginning of the proof, and that $g$ was constructed from $f$ by 
Lemma \ref{lem:critproj}.

\begin{lem}
\label{lem:crit}
Suppose that Theorem \ref{thm:crit} has been proved in dimension $n-1$. 
Then there exist $s\in\mathbb{R}^n$ and an $\alpha_j$-degenerate function 
$g_j$ for $j=0,\ldots,\ell$ so that
$$
	f(x) =
	\langle s,x\rangle +
	\sum_{j=0}^\ell g_j(x) \quad
	\mbox{for all }x\in \supp S_{B,\mathcal{P}}.
$$
\end{lem}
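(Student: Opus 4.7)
The plan is to combine Corollary~\ref{cor:critnogamma} with the gluing machinery of Section~\ref{sec:propeller} to produce a single $\gamma$-degenerate function $g_0$ that absorbs the $u$-dependent remainders $\langle t(u),\cdot\rangle + g_{0,u}$. Set
\[
    h := g - \langle s,\cdot\rangle - \sum_{j=1}^\ell g_j,
\]
a difference of support functions. Since the linear piece and each $g_j$ have vanishing $\mathcal{P}$-area measure (Lemmas~\ref{lem:lineq} and \ref{lem:degeq}) and $S_{g,\mathcal{P}}=0$, we get $S_{h,\mathcal{P}}=0$. For each $u\in U$, Corollary~\ref{cor:critnogamma} gives $h(x)=\langle t(u),x\rangle + g_{0,u}(x)$ on $\supp S_{[0,u],B,\mathcal{P}_{\backslash r}}$. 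Because $t(u)\in\mathcal{L}_\gamma$ and $g_{0,u}$ is $\gamma$-degenerate, Lemma~\ref{lem:deghomog}(a) rewrites this as $h(x)=\tilde\varphi_u(\proj_{\mathcal{L}_\gamma} x)$ for a $1$-homogeneous $\tilde\varphi_u:\mathcal{L}_\gamma\to\mathbb{R}$.

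Next I would invoke Lemma~\ref{lem:propglue} with $E=\mathcal{L}_\gamma$, taking the $k=|\gamma|$ bodies in $\mathcal{L}_\gamma$ to be $K_1=[0,u]$ and $K_i=P_i$ for $i\in\gamma\setminus\{r\}$, and the remaining bodies $(C_{k+1},\ldots,C_{n-1})=(B,(P_i)_{i\notin\gamma})$. The hypothesis $\V_{\mathcal{L}_\gamma^\perp}(\proj_{\mathcal{L}_\gamma^\perp} B,\proj_{\mathcal{L}_\gamma^\perp}\mathcal{P}_{\backslash\gamma})>0$ follows from criticality, maximality of $\gamma$, and Lemma~\ref{lem:dim}, exactly as in the projection-formula argument of Lemma~\ref{lem:twodegs}. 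The lemma then produces a single $1$-homogeneous $\varphi:\mathcal{L}_\gamma\to\mathbb{R}$, independent of $u$, such that $h(x)=\varphi(\proj_{\mathcal{L}_\gamma} x)$ on $\supp S_{[0,u],B,\mathcal{P}_{\backslash r}}$ for every $u\in U$.

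I would then verify the integral condition $\int\varphi\,dS_{\mathcal{P}_\gamma}=0$ required by Lemma~\ref{lem:deghomog}(b) to obtain an $\alpha_0$-degenerate function $g_0$ with $g_0(x)=\varphi(\proj_{\mathcal{L}_\gamma} x)$ on $\supp S_{B,\mathcal{P}}$. Applying Theorem~\ref{thm:propeller} with $(C_1,\ldots,C_k)=\mathcal{P}_\gamma$ and $(C_{k+1},\ldots,C_{n-1})=(B,\mathcal{P}_{\backslash\gamma})$ yields
\[
    \V_{\mathcal{L}_\gamma^\perp}(\proj B,\proj\mathcal{P}_{\backslash\gamma})\int\varphi\,dS_{\mathcal{P}_\gamma}
    =\binom{n-1}{|\gamma|}\int h(x)\,1_{x\notin\mathcal{L}_\gamma^\perp}\,dS_{B,\mathcal{P}}(x).
\]
Since $S_{h,\mathcal{P}}=0$, symmetry of mixed volumes gives $\int h\,dS_{B,\mathcal{P}}=n\V_n(h,B,\mathcal{P})=0$, so it suffices to show $h$ vanishes on $\mathcal{L}_\gamma^\perp\cap\supp S_{B,\mathcal{P}}$. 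This follows because $\varphi(0)=0$ forces $h=0$ on $\mathcal{L}_\gamma^\perp\cap\supp S_{[0,u],B,\mathcal{P}_{\backslash r}}$ for each $u\in U$; integrating over $u\in U$ against $\omega_r$, using the Lemma~\ref{lem:intseg}-type identity $\int_U S_{[0,u],B,\mathcal{P}_{\backslash r}}\,\omega_r(du)\propto S_{B_r,B,\mathcal{P}_{\backslash r}}$, and exploiting continuity of $h$ yields $h|_{\mathcal{L}_\gamma^\perp}=0$ on $\supp S_{B_r,B,\mathcal{P}_{\backslash r}}\supseteq\supp S_{B,\mathcal{P}}$ (Lemma~\ref{lem:lowmaxsupp}).

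The final step is to upgrade the equality $h=\varphi\circ\proj_{\mathcal{L}_\gamma}$ on each $\supp S_{[0,u],B,\mathcal{P}_{\backslash r}}$ to $h=g_0$ on all of $\supp S_{B,\mathcal{P}}$. The difference $h-g_0$ is a continuous difference of support functions with $S_{h-g_0,\mathcal{P}}=0$ (since $g_0$ is $\gamma$-degenerate), so by Proposition~\ref{prop:qgraph} it is linear on every active arc $e_{ij}\subseteq\supp S_{B,\mathcal{P}}$. On the other hand, by construction $(h-g_0)(x)=0$ on $\supp S_{[0,u],B,\mathcal{P}_{\backslash r}}\cap\supp S_{B,\mathcal{P}}$ for each $u\in U$. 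For any $x\in\relint e_{ij}$, a generic $u\in U\cap x^\perp$ (available in abundance since $\dim\mathcal{L}_r\ge 2$ and $U$ has full $\omega_r$-measure) preserves the dimensionality conditions of Lemma~\ref{lem:supp} under projection onto $u^\perp$, so $x\in\supp S_{[0,u],B,\mathcal{P}_{\backslash r}}$. Thus $h-g_0$ vanishes on a dense subset of each $e_{ij}$, and its linearity on the arc forces $h-g_0\equiv 0$ on $e_{ij}$, hence on $\supp S_{B,\mathcal{P}}=\bigcup_{(i,j)\in E}e_{ij}$. Combined with $f=g$ on $\supp S_{B,\mathcal{P}}$ from Lemma~\ref{lem:critproj}, this is the desired decomposition. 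The main obstacle is this concluding step: one must use the propeller geometry twice---once through Lemma~\ref{lem:propglue} to decouple from $u$, and once through Theorem~\ref{thm:propeller} to verify the $\int\varphi\,dS_{\mathcal{P}_\gamma}=0$ normalization---and then exploit the rigidity of piecewise-linear extremals on the background graph to close the loop.
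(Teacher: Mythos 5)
Your overall strategy matches the paper's: define $h := g - \langle s,\cdot\rangle - \sum_{j=1}^\ell g_j$, apply Lemma~\ref{lem:propglue} with $E=\mathcal{L}_\gamma$ and $(C_{k+1},\ldots,C_{n-1})=(B,\mathcal{P}_{\backslash\gamma})$, recognize that Corollary~\ref{cor:critnogamma} supplies the required $\tilde\varphi_u(\proj_{\mathcal{L}_\gamma}x)=\langle t(u),x\rangle+g_{0,u}(x)$, extract the $u$-independent $\varphi$, and convert $\varphi$ into a $\gamma$-degenerate function via Lemma~\ref{lem:deghomog}(b). All of this is correct and is the paper's Steps~1--2.

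There is a genuine gap in your final paragraph. To push $h=\varphi\circ\proj_{\mathcal{L}_\gamma}$ from the individual supports $\supp S_{[0,u],B,\mathcal{P}_{\backslash r}}$ to all of $\supp S_{B,\mathcal{P}}$, you argue pointwise: for $x\in\relint e_{ij}$, pick ``a generic $u\in U\cap x^\perp$'' and claim $x\in\supp S_{[0,u],B,\mathcal{P}_{\backslash r}}$. But criticality only gives $\dim\mathcal{L}_r\ge 2$, and when $\dim\mathcal{L}_r=2$ the set $S^{n-1}\cap\mathcal{L}_r\cap x^\perp$ is at most two antipodal points for generic $x$, so there is no room for genericity and no reason either point lies in $U$. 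Even if $\dim\mathcal{L}_r$ were larger, the assertion that a generic $u$ preserves the dimensionality conditions after projecting onto $u^\perp$ is nontrivial: if some face $F(P_i,x)$ is a segment in a tight configuration, its projection onto $u^\perp$ may collapse, and ruling this out would need an argument you have not supplied. The paper avoids the whole issue by integrating over \emph{all} of $U$ exactly as in Lemma~\ref{lem:superglue}: since $\int_U S_{[0,u],B,\mathcal{P}_{\backslash r}}\,\omega_r(du)=\kappa_{\dim P_r-1}\,S_{B_r,B,\mathcal{P}_{\backslash r}}$ and both sides of the identity are continuous on this support (for $\varphi\circ\proj_{\mathcal{L}_\gamma}$ this is Remark~\ref{rem:raycont}), the identity passes to $\supp S_{B_r,B,\mathcal{P}_{\backslash r}}$, which contains $\supp S_{B,\mathcal{P}}$ by Lemmas~\ref{lem:lowmaxsupp} and~\ref{lem:maxsupp}. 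You already invoke this averaging trick for the restriction to $\mathcal{L}_\gamma^\perp$; the same single application does the whole job.

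A related inefficiency: you verify $\int\varphi\,dS_{\mathcal{P}_\gamma}=0$ \emph{before} the extension, which forces you to reason separately about $E^\perp$ via a second appeal to Theorem~\ref{thm:propeller}. The paper first establishes the identity on $\supp S_{B,\mathcal{P}}$ (Step~2), and then the normalization of $\varphi$ is immediate from Lemma~\ref{lem:maproj} applied to $\int\varphi(\proj_{\mathcal{L}_\gamma}x)\,S_{B,\mathcal{P}}(dx)=\int f\,dS_{B,\mathcal{P}}-\int\langle s,\cdot\rangle\,dS_{B,\mathcal{P}}-\sum_j\int g_j\,dS_{B,\mathcal{P}}=0$, with the positivity of the projection-factor coming from \eqref{eq:vgamperppos}. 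No explicit decomposition into $E^\perp$ and its complement is needed.
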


\begin{proof}
We use the notations of Corollary \ref{cor:critnogamma} 
throughout the proof.

\medskip

\textbf{Step 1.}
We begin by applying Lemma \ref{lem:propglue} with 
$(C_{k+1},\ldots,C_{n-1})\leftarrow (B,\mathcal{P}_{\backslash\gamma})$ 
and $E\leftarrow\mathcal{L}_\gamma$ to the function
$$
	h(x):=
	g(x) -
	\langle s,x\rangle -
	\sum_{j=1}^\ell g_j(x).
$$
Note that
\begin{equation}
\label{eq:vgamperppos}
	\V_{\mathcal{L}_\gamma}(P_\gamma,\mathcal{P}_\gamma)\,
	\V_{\mathcal{L}_\gamma^\perp}(
	\proj_{\mathcal{L}_\gamma^\perp} B,
	\proj_{\mathcal{L}_\gamma^\perp} \mathcal{P}_{\backslash\gamma})	
	=
	{n\choose |\gamma|+1}
	\,\V_n(B,P_\gamma,\mathcal{P})>0
\end{equation}
by Lemmas \ref{lem:proj} and \ref{lem:posmixcrit}, so 
the assumptions of Lemma \ref{lem:propglue} are satisfied. Denote by
$\varphi:\mathcal{L}_\gamma\to\mathbb{R}$ the $1$-homogeneous function 
constructed from $h$ by Lemma \ref{lem:propglue}.

\medskip

\textbf{Step 2.}
Now fix any $u\in U$, and choose
$$
	\tilde\varphi(x) := \langle t(u),x\rangle+g_{0,u}(x).
$$
As $g_{0,u}$ is a $\gamma$-degenerate function, it follows from
Lemma \ref{lem:deghomog} and $t(u)\in\mathcal{L}_\gamma$ that
$\tilde\varphi(x)=\tilde\varphi(\proj_{\mathcal{L}_\gamma}x)$.
Corollary \ref{cor:critnogamma} therefore states that
$$
	h(x)=\tilde\varphi(\proj_{\mathcal{L}_\gamma}x)
	\quad\mbox{for all }x\in \supp S_{[0,u],B,\mathcal{P}_{\backslash r}}.
$$
Recalling that $[0,u]\subset\mathcal{L}_\gamma$ by the
definition of $U$, we can apply the conclusion of Lemma 
\ref{lem:propglue} with $(K_1,\ldots,K_k)\leftarrow 
([0,u],\mathcal{P}_{\gamma\backslash\{r\}})$ to obtain
$$
	g(x) =
        \langle s,x\rangle +
        \sum_{j=1}^\ell g_j(x)
	+ \varphi(\proj_{\mathcal{L}_\gamma}x)
	\quad\mbox{for all }x\in\supp S_{[0,u],B,\mathcal{P}_{\backslash r}}
$$
for any $u\in U$. Moreover, as $U$ has full measure in 
$S^{n-1}\cap\mathcal{L}_r$ by Lemma 
\ref{lem:umeasure1}, we may further integrate over $u\in U$ as in the 
proof of Lemma \ref{lem:superglue} to conclude that the previous identity
remains valid for all $x\in\supp S_{B,B_r,\mathcal{P}_{\backslash r}}$.
It follows that
\begin{equation}
\label{eq:critpenult}
	f(x) = g(x)=
        \langle s,x\rangle +
        \sum_{j=1}^\ell g_j(x)
	+ \varphi(\proj_{\mathcal{L}_\gamma}x)
	\quad\mbox{for all }x\in\supp S_{B,\mathcal{P}}
\end{equation}
by Lemma \ref{lem:lowmaxsupp} and property $1$ of Lemma 
\ref{lem:critproj}.

\medskip

\textbf{Step 3.}
It remains to show that
$\varphi(\proj_{\mathcal{L}_\gamma}x)$ defines a $\gamma$-degenerate
function. To this end,
recall that $S_{f,\mathcal{P}}=0$ by assumption, and 
$S_{g_j,\mathcal{P}}=0$ for $j=1,\ldots,\ell$ by Lemmas \ref{lem:degeq} 
and \ref{lem:twodegs}. Thus
$\int f\,dS_{B,\mathcal{P}}=\int g_j\,dS_{B,\mathcal{P}}=
\int\langle s,\cdot\rangle\,dS_{B,\mathcal{P}}=0$
by \eqref{eq:mixvolarea}, the symmetry of mixed volumes, and
Lemma \ref{lem:maprop}. We therefore have
$$
	0 = 
	{n-1\choose |\gamma|}
	\int \varphi(\proj_{\mathcal{L}_\gamma}x)\,
	S_{B,\mathcal{P}}(dx)
	=
	\V_{\mathcal{L}_\gamma^\perp}(
	\proj_{\mathcal{L}_\gamma^\perp} B,
	\proj_{\mathcal{L}_\gamma^\perp} \mathcal{P}_{\backslash\gamma})
	\int \varphi\,dS_{\mathcal{P}_\gamma}
$$
using \eqref{eq:critpenult} and
Lemma \ref{lem:maproj}. But as
$\V_{\mathcal{L}_\gamma^\perp}(\proj_{\mathcal{L}_\gamma^\perp} B, 
\proj_{\mathcal{L}_\gamma^\perp}\mathcal{P}_{\backslash\gamma})>0$
by \eqref{eq:vgamperppos}, we can
apply Lemma \ref{lem:deghomog} to construct
a $\gamma$-degenerate function $g_0$ so that 
$$
	g_0(x)=\varphi(\proj_{\mathcal{L}_\gamma}x)
	\quad\mbox{for all }x\in\supp S_{B,\mathcal{P}}.
$$
Subsituting this identity into \eqref{eq:critpenult} completes the proof.
\end{proof}

The proof of Theorem \ref{thm:crit} is now readily completed.

\begin{proof}[Proof of Theorem \ref{thm:crit}]
The \emph{if} direction is an immediate
consequence of Lemmas \ref{lem:lineq}, \ref{lem:suppeq}, \ref{lem:degeq},
and \ref{lem:twodegs}, so it suffices to consider the \emph{only if}
direction.

To this end, note first that the case $n=2$ is always supercritical, so 
Theorem~\ref{thm:crit} is trivial in this case. For the induction 
step, it remains to prove that the validity of Theorem \ref{thm:crit} in 
dimension $n-1$ implies its validity in dimension $n$ for any $n\ge 3$. 
The latter is precisely the statement of Lemma \ref{lem:crit}.
\end{proof}

\section{Proof of the main result}
\label{sec:proofmain}

The aim of this section is to complete the proof of Theorem 
\ref{thm:main}. While the results that we have proved in the supercritical 
and critial cases provide a lot more information on the extremals than can 
be read off from Theorem \ref{thm:main} (which will be described in full 
detail in section \ref{sec:unique} below), the advantage of the 
formulation of Theorem~\ref{thm:main} is that it unifies all the different 
cases that arise in our analysis in one simple and universal statement. 
What remains is to verify that all possible cases are in fact captured by 
the formulation of Theorem \ref{thm:main}.

So far we have only considered the extremals under the criticality 
assumption on $\mathcal{P}$. The remaining cases turn out to be either 
trivial, or to reduce readily to a critical case in lower dimension.
We will first investigate the latter phenomenon, and then put everything 
together to complete the proof of Theorem \ref{thm:main}.

\subsection{Subcritical sets}

Let us begin by formally defining the cases that have not yet been 
considered in the previous sections.

\begin{defn}
\label{defn:subcrit}
A collection of convex bodies $\mathcal{C}=(C_1,\ldots,C_{n-2})$ 
is said to be \emph{subcritical} if $\dim(C_{i_1}+\cdots+C_{i_k})\ge k$
for all $k\in[n-2]$, $1\le i_1<\cdots<i_k\le n-2$.
A collection of convex bodies that is not subcritical is called
\emph{null}.
\end{defn}

In view of the following lemma, the null case is trivial.

\begin{lem}
\label{lem:null}
Let $\mathcal{C}=(C_1,\ldots,C_{n-2})$ be a null collection of convex 
bodies in $\mathbb{R}^n$. Then 
$\V_n(K,L,\mathcal{C})=0$ for any convex bodies $K,L$.
\end{lem}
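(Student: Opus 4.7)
The plan is to deduce this directly from the dimensional characterization of positivity of mixed volumes (Lemma \ref{lem:dim}). Nullness of $\mathcal{C}$ means that condition $c$ of Lemma \ref{lem:dim} already fails on the subcollection $\mathcal{C}$ itself, and this failure propagates to the larger tuple $(K,L,C_1,\ldots,C_{n-2})$ regardless of what $K$ and $L$ look like.

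More concretely, I would argue as follows. By Definition \ref{defn:subcrit}, to say that $\mathcal{C}$ is null is to say that there exist $k\in[n-2]$ and indices $1\le i_1<\cdots<i_k\le n-2$ with
\[
    \dim(C_{i_1}+\cdots+C_{i_k})\le k-1.
\]
Now view $(K,L,C_1,\ldots,C_{n-2})$ as a collection of $n$ convex bodies in $\mathbb{R}^n$, and apply Lemma \ref{lem:dim} to this collection. The same $k$ indices, selected from the $C$-positions of the tuple, produce a subcollection of size $k$ whose Minkowski sum has dimension at most $k-1$, which violates condition $c$ of Lemma \ref{lem:dim}. Consequently condition $a$ of that lemma fails, i.e.\ $\V_n(K,L,C_1,\ldots,C_{n-2})=0$.

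There is no real obstacle here: the argument is essentially a one-line invocation of Lemma \ref{lem:dim}, and the lemma's role is merely to record, for later use in the proof of Theorem \ref{thm:main}, that the null case falls under the trivial extremals of Remark \ref{rem:triv} and therefore does not need to be analyzed further.
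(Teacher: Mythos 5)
Your argument is correct and is exactly the paper's approach: the paper proves this lemma with the one-line remark that it is an immediate consequence of Lemma \ref{lem:dim}, and your expanded write-up simply spells out why the failed dimension condition on $\mathcal{C}$ remains a failed condition for the enlarged tuple $(K,L,C_1,\ldots,C_{n-2})$.
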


\begin{proof}
This is an immediate consequence of Lemma \ref{lem:dim}.
\end{proof}

In the remainder of this section, we fix $n\ge 3$ and a collection 
$\mathcal{P}=(P_1,\ldots,P_{n-2})$ of convex bodies in $\mathbb{R}^n$ that 
is subcritical but not critical. We will assume that $P_i$ contains the 
origin in its relative interior for each $i\in[n-2]$, and define the 
spaces $\mathcal{L}_\alpha$ as in section \ref{sec:supercrit}. Thus the 
subcriticality assumption states that $\dim\mathcal{L}_\alpha\ge|\alpha|$ 
for every $\alpha$, and the lack of criticality implies that 
$\dim\mathcal{L}_\alpha=|\alpha|$ for at least one set $\alpha$.
In analogy with the critical case, we introduce the following 
terminology.

\begin{defn}
\label{defn:subcritset}
$\alpha\subseteq[n-2]$ is called a \emph{subcritical set}
if $\dim(\sum_{i\in\alpha}P_i)=|\alpha|$.
\end{defn}

The point of this definition is the following.

\begin{lem}
\label{lem:subcproj}
We have $\V_{\mathcal{L}_\alpha}(\mathcal{P}_\alpha)>0$ and
$$
	{n\choose|\alpha|}\,
	\V_n(K,L,\mathcal{P}) =
	\V_{\mathcal{L}_\alpha}(\mathcal{P}_\alpha)\,
	\V_{\mathcal{L}_\alpha^\perp}(
	\proj_{\mathcal{L}_\alpha^\perp}K,
	\proj_{\mathcal{L}_\alpha^\perp}L,
	\proj_{\mathcal{L}_\alpha^\perp}\mathcal{P}_{\backslash\alpha})
$$
for any convex bodies $K,L$ and subcritical set $\alpha\subseteq[n-2]$.
\end{lem}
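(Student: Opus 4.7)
The plan is to deduce both claims directly from the projection formula of Lemma \ref{lem:proj} together with the dimensional criterion of Lemma \ref{lem:dim}. The set-up is immediate: since we have assumed that each $P_i$ contains the origin in its relative interior, the affine hull of $\sum_{i\in\alpha}P_i$ coincides with $\mathcal{L}_\alpha$, so $\dim\mathcal{L}_\alpha=\dim(\sum_{i\in\alpha}P_i)=|\alpha|$ by the definition of a subcritical set. Moreover, each $P_i$ for $i\in\alpha$ lies in $\mathcal{L}_\alpha$ by construction.

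For the projection formula, I would apply Lemma \ref{lem:proj} with $E:=\mathcal{L}_\alpha$ (of dimension $m=|\alpha|$), choosing the first $m$ bodies to be $\mathcal{P}_\alpha$ and the remaining $n-m$ bodies to be $(K,L,\mathcal{P}_{\backslash\alpha})$. This yields
$$
{n\choose|\alpha|}\,\V_n(\mathcal{P}_\alpha,K,L,\mathcal{P}_{\backslash\alpha})
= \V_{\mathcal{L}_\alpha}(\mathcal{P}_\alpha)\,
\V_{\mathcal{L}_\alpha^\perp}(\proj_{\mathcal{L}_\alpha^\perp}K,\proj_{\mathcal{L}_\alpha^\perp}L,\proj_{\mathcal{L}_\alpha^\perp}\mathcal{P}_{\backslash\alpha}),
$$
and the left-hand side equals ${n\choose|\alpha|}\V_n(K,L,\mathcal{P})$ by the symmetry of mixed volumes (Lemma \ref{lem:mvprop}).

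For the positivity claim, $\V_{\mathcal{L}_\alpha}(\mathcal{P}_\alpha)$ is a top-dimensional mixed volume of $|\alpha|$ bodies inside the $|\alpha|$-dimensional space $\mathcal{L}_\alpha$. By Lemma \ref{lem:dim} applied inside $\mathcal{L}_\alpha$, this mixed volume is strictly positive if and only if $\dim(\sum_{i\in\beta}P_i)\ge|\beta|$ for every nonempty $\beta\subseteq\alpha$. But this is guaranteed by the subcriticality of $\mathcal{P}$, which postulates precisely this inequality for every $\beta\subseteq[n-2]$. Hence $\V_{\mathcal{L}_\alpha}(\mathcal{P}_\alpha)>0$, completing the proof.

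There is no real obstacle here: the subcritical hypothesis is exactly what makes $\mathcal{L}_\alpha$ the right subspace to project out, and Lemma \ref{lem:proj} does the rest. The only thing to be careful about is to verify that one is entitled to apply Lemma \ref{lem:proj} — that is, that $\mathcal{L}_\alpha$ has the claimed dimension and that the bodies $P_i$, $i\in\alpha$, genuinely lie in it — both of which follow from our standing normalization that $0\in\relint P_i$.
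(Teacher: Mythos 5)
Your proposal is correct and is essentially identical to the paper's proof: positivity follows from Lemma \ref{lem:dim} together with subcriticality, and the projection identity is a direct application of Lemma \ref{lem:proj} (after using symmetry to reorder the arguments). Nothing more is needed.
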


\begin{proof}
That $\V_{\mathcal{L}_\alpha}(\mathcal{P}_\alpha)>0$ follows from 
Lemma \ref{lem:dim} and the assumption that $\mathcal{P}$ is subcritical.
The second statement is an immediate consequence of Lemma \ref{lem:proj}.
\end{proof}

In other words, any subcritical set will factor out of all mixed volumes 
that appear in the Alexandrov-Fenchel inequality, reducing it to a 
lower-dimensional case. However, there may \emph{a priori} be many 
subcritical sets, and it is also not clear what properties 
$\proj_{\mathcal{L}_\alpha^\perp}\mathcal{P}_{\backslash\alpha}$ may have. 
The main aim of this section is to show that there is a special choice of 
subcritical set $\eta$ so that 
$\proj_{\mathcal{L}_\eta^\perp}\mathcal{P}_{\backslash\eta}$ is a 
\emph{critical} collection of convex bodies, which reduces the study of 
extremals of the Alexandrov-Fenchel inequality in the subcritical case to 
the setting of Theorem \ref{thm:crit}.

To this end, we first prove a subcritical analogue of Lemma 
\ref{lem:panov}.

\begin{lem}
\label{lem:subpanov}
Let $\alpha,\alpha'$ be subcritical sets. 
Then $\alpha\cup\alpha'$ is also a subcritical set.
\end{lem}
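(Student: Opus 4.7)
The plan is to mimic the proof of Lemma \ref{lem:panov} essentially verbatim, with the bounds shifted by one. The key identity is the linear-algebraic inclusion-exclusion
\[
\dim \mathcal{L}_{\alpha \cup \alpha'} = \dim \mathcal{L}_\alpha + \dim \mathcal{L}_{\alpha'} - \dim(\mathcal{L}_\alpha \cap \mathcal{L}_{\alpha'}),
\]
combined with the trivial inclusion $\mathcal{L}_{\alpha \cap \alpha'} \subseteq \mathcal{L}_\alpha \cap \mathcal{L}_{\alpha'}$, which gives $\dim(\mathcal{L}_\alpha \cap \mathcal{L}_{\alpha'}) \ge \dim \mathcal{L}_{\alpha \cap \alpha'}$.

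First I would chain the bounds from below: the subcriticality of $\mathcal{P}$ gives $\dim \mathcal{L}_{\alpha \cup \alpha'} \ge |\alpha \cup \alpha'|$ and $\dim \mathcal{L}_{\alpha \cap \alpha'} \ge |\alpha \cap \alpha'|$, while the subcriticality of $\alpha$ and $\alpha'$ themselves gives the exact values $\dim \mathcal{L}_\alpha = |\alpha|$ and $\dim \mathcal{L}_{\alpha'} = |\alpha'|$. Plugging these into the inclusion-exclusion yields
\[
|\alpha \cup \alpha'| \le \dim \mathcal{L}_{\alpha \cup \alpha'} \le |\alpha| + |\alpha'| - |\alpha \cap \alpha'| = |\alpha \cup \alpha'|,
\]
forcing equality throughout. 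In particular $\dim \mathcal{L}_{\alpha \cup \alpha'} = |\alpha \cup \alpha'|$, which is exactly the definition of $\alpha \cup \alpha'$ being subcritical.

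No step in this argument presents a genuine obstacle: the case $\alpha \cap \alpha' = \varnothing$ needs no separate treatment since $\mathcal{L}_\varnothing = \{0\}$ has dimension $0 = |\varnothing|$, so the lower bound $\dim(\mathcal{L}_\alpha \cap \mathcal{L}_{\alpha'}) \ge |\alpha \cap \alpha'|$ still holds trivially. The proof is really just dimension counting, and the only conceptual point is that subcriticality of $\mathcal{P}$ provides the matching \emph{upper} bound on $\dim(\mathcal{L}_\alpha \cap \mathcal{L}_{\alpha'})$ via the inclusion-exclusion formula applied to $\alpha \cup \alpha'$. This is precisely the same structural phenomenon that makes Lemma \ref{lem:panov} work in the critical case, shifted down by one dimension.
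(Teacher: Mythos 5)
Your proof is correct and follows the same route as the paper: the dimension inclusion-exclusion identity for $\mathcal{L}_{\alpha\cup\alpha'}$, the inclusion $\mathcal{L}_{\alpha\cap\alpha'}\subseteq\mathcal{L}_\alpha\cap\mathcal{L}_{\alpha'}$, and the bounds from subcriticality of $\mathcal{P}$ and of $\alpha,\alpha'$, which force equality. You also correctly flag that (unlike in Lemma \ref{lem:panov}) the case $\alpha\cap\alpha'=\varnothing$ requires no separate treatment, which is precisely the observation the paper makes.
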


\begin{proof}
As $\mathcal{P}$ is subcritical, we have
$\dim\mathcal{L}_{\alpha\cup\alpha'}\ge |\alpha\cup\alpha'|$
and $\dim\mathcal{L}_{\alpha\cap\alpha'}\ge |\alpha\cap\alpha'|$
(note that the latter holds even when $\alpha\cap\alpha'=\varnothing$,
unlike in Lemma \ref{lem:panov}). Thus
\begin{align*}
        |\alpha\cup\alpha'| &\le
        \dim\mathcal{L}_{\alpha\cup\alpha'}
        =
        \dim \mathcal{L}_\alpha + \dim\mathcal{L}_{\alpha'}
        -\dim(\mathcal{L}_\alpha\cap\mathcal{L}_{\alpha'}) \\
        &\le
        \dim \mathcal{L}_\alpha + \dim\mathcal{L}_{\alpha'}
        -\dim\mathcal{L}_{\alpha\cap\alpha'} \\
        &\le
        |\alpha| + |\alpha'|
        - |\alpha\cap\alpha'| = |\alpha\cup\alpha'|,
\end{align*}
where we used that $\dim \mathcal{L}_\alpha=|\alpha|$ and
$\dim\mathcal{L}_{\alpha'}=|\alpha'|$ as $\alpha,\alpha'$ are
subcritical.
It follows that $\dim\mathcal{L}_{\alpha\cup\alpha'}=
|\alpha\cup\alpha'|$, so $\alpha\cup\alpha'$ is subcritical.
\end{proof}

\begin{cor}
\label{cor:maxsubcrit}
There is a unique maximal subcritical set $\eta$, that is, a subcritical 
set $\eta$ such that $\alpha\subseteq\eta$ for every subcritical set
$\alpha$.
\end{cor}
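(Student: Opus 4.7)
The plan is essentially to take the union of all subcritical sets and invoke Lemma \ref{lem:subpanov} to show it is subcritical, which gives existence; uniqueness will then be automatic.

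More precisely, I would first define $\eta := \bigcup_\alpha \alpha$, where the union runs over all subcritical sets $\alpha \subseteq [n-2]$. Note this union is nonempty because $\varnothing$ is vacuously subcritical (both sides of the defining equality in Definition \ref{defn:subcritset} equal $0$), and the union is finite since there are only finitely many subsets of $[n-2]$. Write the collection of subcritical sets as $\alpha_1,\ldots,\alpha_m$, so that $\eta = \alpha_1 \cup \cdots \cup \alpha_m$.

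Next, I would show that $\eta$ itself is subcritical by induction on $m$. The base case $m=1$ is trivial, and the inductive step follows from Lemma \ref{lem:subpanov}: if $\alpha_1\cup\cdots\cup\alpha_{m-1}$ is subcritical, then its union with $\alpha_m$ is subcritical as well. This establishes existence. By construction, every subcritical set $\alpha$ satisfies $\alpha \subseteq \eta$, which is exactly the maximality property in the statement.

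Finally, for uniqueness, suppose $\eta'$ is any other subcritical set with the property that $\alpha \subseteq \eta'$ for every subcritical set $\alpha$. Applying this property of $\eta'$ to the subcritical set $\eta$ gives $\eta \subseteq \eta'$, while applying the corresponding property of $\eta$ to the subcritical set $\eta'$ gives $\eta' \subseteq \eta$. Hence $\eta = \eta'$. There is no real obstacle here; the entire content of the corollary is packaged in Lemma \ref{lem:subpanov}, and the argument is purely set-theoretic once that lemma is in hand.
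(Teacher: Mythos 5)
Your proof is correct and takes essentially the same approach as the paper: the paper's one-line proof is precisely ``take $\eta$ to be the union of all subcritical sets, which is subcritical by Lemma \ref{lem:subpanov}.'' Your added details (the finite induction, the set-theoretic uniqueness argument, the observation that $\varnothing$ is vacuously subcritical) are all fine, though the last is not needed here since the standing assumption that $\mathcal{P}$ is not critical already guarantees a nonempty subcritical set.
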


\begin{proof}
By Lemma \ref{lem:subpanov}, we may choose $\eta$ to be the 
union of all subcritical sets.
\end{proof}

We now claim that applying Lemma \ref{lem:subcproj} to the set $\eta$ of 
Corollary \ref{cor:maxsubcrit} reduces the Alexandrov-Fenchel inequlity to 
the critical case.

\begin{lem}
\label{lem:subcritreduce}
Let $\eta$ be as in Corollary \ref{cor:maxsubcrit}. Then
$\proj_{\mathcal{L}_\eta^\perp}\mathcal{P}_{\backslash\eta}$ is
critical.
\end{lem}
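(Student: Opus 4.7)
The plan is to verify the criticality condition directly by computing dimensions through orthogonal projection, using the maximality of $\eta$ as the key combinatorial input. Concretely, $\proj_{\mathcal{L}_\eta^\perp}\mathcal{P}_{\backslash\eta}$ is a collection of $n-2-|\eta|$ convex bodies in the ambient space $\mathcal{L}_\eta^\perp$ of dimension $n-|\eta|$, so criticality (Definition \ref{defn:crit}) requires that for every nonempty $\beta\subseteq[n-2]\backslash\eta$,
$$
\dim\Big({\textstyle\sum_{i\in\beta}}\,\proj_{\mathcal{L}_\eta^\perp}P_i\Big) \ge |\beta|+1.
$$
Fix such a $\beta$. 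Since $P_i$ contains the origin for each $i$, we have $\sum_{i\in\beta}\proj_{\mathcal{L}_\eta^\perp}P_i\subseteq\proj_{\mathcal{L}_\eta^\perp}\mathcal{L}_\beta$ and this inclusion spans the projected subspace, so the left-hand side equals $\dim(\proj_{\mathcal{L}_\eta^\perp}\mathcal{L}_\beta)$.

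The first step is the standard linear algebra identity $\dim(\proj_{\mathcal{L}_\eta^\perp}\mathcal{L}_\beta)=\dim(\mathcal{L}_\beta+\mathcal{L}_\eta)-\dim\mathcal{L}_\eta=\dim\mathcal{L}_{\beta\cup\eta}-\dim\mathcal{L}_\eta$, where the last equality uses $\mathcal{L}_{\beta\cup\eta}=\mathcal{L}_\beta+\mathcal{L}_\eta$. Since $\eta$ is itself subcritical, $\dim\mathcal{L}_\eta=|\eta|$, so the displayed inequality reduces to
$$
\dim\mathcal{L}_{\beta\cup\eta}\ \ge\ |\beta|+|\eta|+1\ =\ |\beta\cup\eta|+1,
$$
the last equality because $\beta$ and $\eta$ are disjoint.

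The second (and only substantive) step is to invoke the maximality of $\eta$. Because $\beta$ is nonempty and disjoint from $\eta$, the set $\beta\cup\eta$ strictly contains $\eta$, so by Corollary~\ref{cor:maxsubcrit} it cannot itself be subcritical, i.e.\ $\dim\mathcal{L}_{\beta\cup\eta}\ne|\beta\cup\eta|$. Combined with the overall subcriticality hypothesis on $\mathcal{P}$, which gives $\dim\mathcal{L}_{\beta\cup\eta}\ge|\beta\cup\eta|$, we conclude $\dim\mathcal{L}_{\beta\cup\eta}\ge|\beta\cup\eta|+1$, which is exactly what we needed.

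There is no real obstacle here: the argument is a short dimension count, and the only nontrivial ingredient, namely the existence of a unique maximum subcritical set, has already been established in Corollary~\ref{cor:maxsubcrit} via Lemma~\ref{lem:subpanov}. The main thing to be careful about is to note the disjointness $\beta\cap\eta=\varnothing$ (which is ensured by taking $\beta\subseteq[n-2]\backslash\eta$) so that $|\beta\cup\eta|=|\beta|+|\eta|$, and that $\eta$ being subcritical (as opposed to merely being contained in some subcritical set) gives the equality $\dim\mathcal{L}_\eta=|\eta|$ used to peel off the projection.
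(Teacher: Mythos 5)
Your proof is correct and uses exactly the same ingredients as the paper's: the same dimension identity $\dim(\proj_{\mathcal{L}_\eta^\perp}\mathcal{L}_\beta)=\dim\mathcal{L}_{\beta\cup\eta}-\dim\mathcal{L}_\eta$, the subcriticality of $\eta$ to get $\dim\mathcal{L}_\eta=|\eta|$, and the maximality of $\eta$ from Corollary~\ref{cor:maxsubcrit}. The only difference is cosmetic: you argue directly whereas the paper argues by contradiction (assuming some $\alpha$ violates criticality and deducing $\eta\cup\alpha$ is subcritical).
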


\begin{proof}
Suppose the conclusion is false; then there must exist
$\alpha\subseteq [n-2]\backslash\eta$, $\alpha\ne\varnothing$ such that
$\dim \proj_{\mathcal{L}_\eta^\perp}\mathcal{L}_\alpha\le |\alpha|$.
Now note that
$$
	\dim \proj_{\mathcal{L}_\eta^\perp}\mathcal{L}_\alpha =
	\dim \mathcal{L}_\alpha -
	\dim(\mathcal{L}_\alpha\cap\ker \proj_{\mathcal{L}_\eta^\perp}) =
	\dim\mathcal{L}_\alpha - 
	\dim(\mathcal{L}_\alpha\cap \mathcal{L}_\eta).
$$
Therefore
$$
	\dim\mathcal{L}_{\eta\cup\alpha} =
	\dim\mathcal{L}_\eta + \dim\mathcal{L}_\alpha -
	\dim(\mathcal{L}_\alpha\cap \mathcal{L}_\eta)
	\le |\eta| + |\alpha| = |\eta\cup\alpha|,
$$
where we used that $\eta$ is subcritical and $\eta\cap\alpha=\varnothing$.
Thus we have shown that $\eta\cup\alpha$ is subcritical, which contradicts 
the maximality of $\eta$.
\end{proof}

\subsection{Proof of Theorem \ref{thm:main}}

We are now finally ready to conclude the proof.

\begin{proof}[Proof of Theorem \ref{thm:main}]
The \emph{if} direction of Theorem \ref{thm:main} follows from Lemmas
\ref{lem:simpleeq}, \ref{lem:lineq}, \ref{lem:suppeq}, and
\ref{lem:degeq}, so only the \emph{only if} part requires proof.

By translation-invariance, we may assume without loss of generality that 
each $P_i$ contains the origin in its relative interior. The assumption 
$\V_n(K,L,\mathcal{P})>0$ implies that $\mathcal{P}$ cannot be null by 
Lemma \ref{lem:null}. We now consider the remaining cases.

Suppose first that $\mathcal{P}$ is supercritical. Then the conclusion 
follows immediately from Corollary \ref{cor:schneider} (which was proved 
in section \ref{sec:supercrit}) and Lemma \ref{lem:supercrit}.

Now suppose that $\mathcal{P}$ is critical but not supercritical, and 
denote by $\alpha_0,\ldots,\alpha_\ell$ the $\mathcal{P}$-maximal sets.
By Lemma \ref{lem:simpleeq}, the equality condition in
Theorem \ref{thm:main} implies that there exists $a>0$ such
that $S_{f,\mathcal{P}}=0$ for $f=h_K-ah_L$. Thus
$$
	h_{K + N_0 + \cdots + N_\ell}(x) =
	h_{aL+s+M_0+\cdots+M_\ell}(x)\quad\mbox{for all }x\in
	\supp S_{B,\mathcal{P}}
$$
by Theorem \ref{thm:crit},
where $s\in\mathbb{R}^n$ and $(M_j,N_j)$ is an $\alpha_j$-degenerate pair
for every $j=0,\ldots,\ell$. The conclusion follows readily from 
Lemma \ref{lem:twodegs}.

Finally, suppose that $\mathcal{P}$ is subcritical but not critical, and 
let $\eta$ be the maximal subcritical set of Corollary 
\ref{cor:maxsubcrit}. Then the assumption and equality condition of
Theorem 
\ref{thm:main} imply that
$\V_{\mathcal{L}_\eta^\perp}(
\proj_{\mathcal{L}_\eta^\perp}K,
\proj_{\mathcal{L}_\eta^\perp}L,
\proj_{\mathcal{L}_\eta^\perp}\mathcal{P}_{\backslash\eta})>0$
and
\begin{align*}
	&\V_{\mathcal{L}_\eta^\perp}(
	\proj_{\mathcal{L}_\eta^\perp}K,
	\proj_{\mathcal{L}_\eta^\perp}L,
	\proj_{\mathcal{L}_\eta^\perp}\mathcal{P}_{\backslash\eta})^2 
	=
	\\&
	\V_{\mathcal{L}_\eta^\perp}(
	\proj_{\mathcal{L}_\eta^\perp}K,
	\proj_{\mathcal{L}_\eta^\perp}K,
	\proj_{\mathcal{L}_\eta^\perp}\mathcal{P}_{\backslash\eta})
	\,
	\V_{\mathcal{L}_\eta^\perp}(
	\proj_{\mathcal{L}_\eta^\perp}L,
	\proj_{\mathcal{L}_\eta^\perp}L,
	\proj_{\mathcal{L}_\eta^\perp}\mathcal{P}_{\backslash\eta})
\end{align*}
by Lemma \ref{lem:subcproj}. As 
$\proj_{\mathcal{L}_\eta^\perp}\mathcal{P}_{\backslash\eta}$ is critical
by Lemma \ref{lem:subcritreduce}, we can apply the critical case of
Theorem \ref{thm:main} in $\mathcal{L}_\eta^\perp$ to conclude that
we have
\begin{equation}
\label{eq:mainpfeq}
	h_{K+N_0+\cdots+N_\ell}(x)
	=
	h_{aL+s+M_0+\cdots+M_\ell}(x)
	\quad\mbox{for all }
	x\in\supp S_{\proj_{\mathcal{L}_\eta^\perp}B,
	\proj_{\mathcal{L}_\eta^\perp}\mathcal{P}_{\backslash\eta}}
\end{equation}
for some $a>0$, $s\in\mathcal{L}_\eta^\perp$, and 
$\proj_{\mathcal{L}_\eta^\perp}\mathcal{P}_{\backslash\eta}$-degenerate
pairs $(M_j,N_j)$ for $j=0,\ldots,\ell$, where
we used that 
$h_{\proj_{\mathcal{L}_\eta^\perp}K}(x)=h_K(x)$ 
and $h_{\proj_{\mathcal{L}_\eta^\perp}L}(x)=h_L(x)$ 
for $x\in\mathcal{L}_\eta^\perp$. But as
\begin{equation}
\label{eq:sbpsubcrit}
	{n-1\choose|\eta|}\,
	S_{B,\mathcal{P}}
	=
	\V_{\mathcal{L}_\eta}(\mathcal{P}_\eta)\,
	S_{\proj_{\mathcal{L}_\eta^\perp}B,
	\proj_{\mathcal{L}_\eta^\perp}\mathcal{P}_{\backslash\eta}}
\end{equation}
by applying Lemma \ref{lem:subcproj} as in Remark 
\ref{rem:lowdimma}, and as $\V_{\mathcal{L}_\eta}(\mathcal{P}_\eta)>0$ by 
Lemma \ref{lem:subcproj}, it follows that
\eqref{eq:mainpfeq} remains valid for
$x\in\supp S_{B,\mathcal{P}}$. Moreover, it follows readily from Definition
\ref{defn:deg} and Lemma \ref{lem:subcproj} that any
$\proj_{\mathcal{L}_\eta^\perp}\mathcal{P}_{\backslash\eta}$-degenerate
pair is also a $\mathcal{P}$-degenerate pair. Thus the conclusion 
of Theorem \ref{thm:main} is proved.
\end{proof}

\part{Complements and applications}

\section{The extremal decomposition}
\label{sec:unique}

Theorem \ref{thm:main} gives a very general description of the extremals 
of the Alexandrov-Fenchel inequality in terms of degenerate pairs. There 
is significant redundancy in this formulation, however: the same extremal 
bodies may be decomposed into degenerate pairs in different ways. A much 
more informative description of the extremals arises as a consequence of 
the theory developed in the previous sections. The aim of the present 
section is to extract from the proof of Theorem \ref{thm:main} a 
non-redundant characterization of the extremal decomposition. This 
formulation may be viewed as the definitive form of the main result of 
this paper.

\subsection{A unique extremal characterization}

Throughout this section, we fix polytopes 
$\mathcal{P}=(P_1,\ldots,P_{n-2})$ in $\mathbb{R}^n$. By 
translation-invariance, we may assume without loss of generality that each 
$P_i$ contains the origin in its relative interior. We also assume without 
loss of generality that $\mathcal{P}$ is subcritical 
(Definition~\ref{defn:subcrit}), as otherwise the extremal problem is 
vacuous by Lemma~\ref{lem:null}.

The following structural properties of $\mathcal{P}$ were introduced in 
the previous sections. Recall that subcritical and maximal sets are 
defined in Definitions \ref{defn:subcritset} and \ref{defn:critset}.
\begin{enumerate}[$\bullet$]
\itemsep\abovedisplayskip
\item $\mathcal{P}$ has a unique maximal subcritical set 
$\eta\subseteq[n-2]$ by Corollary \ref{cor:maxsubcrit}; we define
$$
	\mathcal{L}:=\sspan\sum_{i\in\eta}P_i.
$$
If $\mathcal{P}$ is 
critical (Definition \ref{defn:crit}), then $\eta =\varnothing$ and
$\mathcal{L}=\{0\}$.
\item  The collection 
$\proj_{\mathcal{L}^\perp}\mathcal{P}_{\backslash\eta}$ is critical by 
Lemma \ref{lem:subcritreduce}. Thus by Corollary \ref{cor:panov}, there 
are $c\ge 0$ disjoint  
$\proj_{\mathcal{L}^\perp}\mathcal{P}_{\backslash\eta}$-maximal sets
$\beta_1,\ldots,\beta_c\subseteq[n-2]\backslash\eta$; we define
$$
	\mathcal{L}_j := \sspan\sum_{i\in\beta_j}P_i.
$$
If $\proj_{\mathcal{L}^\perp}\mathcal{P}_{\backslash\eta}$ is 
supercritical (Definition \ref{defn:supercrit}), then $c=0$.
\end{enumerate}
Let us now introduce the spaces of degenerate functions
(Definition \ref{defn:maxdeg})
\begin{align*}
	\mathbb{D}_j :=
	\{  f:S^{n-1}\to\mathbb{R}~:~ &f\mbox{ is a }
	(\proj_{\mathcal{L}^\perp}
	\mathcal{P}_{\backslash\eta},\beta_j)\mbox{-degenerate function},
	\\
	& f\perp \langle v,\cdot\rangle\mbox{ in }
	L^2(S_{B,\mathcal{P}})\mbox{ for every }v\in
	\proj_{\mathcal{L}^\perp}\mathcal{L}_j
	\}
\end{align*}
for $j=1,\ldots,c$. By introducing the orthogonality condition with 
respect to linear functions, we eliminate one source of redundancy: that 
part of the linear term in Theorem \ref{thm:crit} may be absorbed in the 
definitions of the degenerate functions.
We will presently show that this is in fact the \emph{only} source of 
redundancy;
once it is eliminated, the extremal decomposition is uniquely determined 
on $\supp S_{B,\mathcal{P}}$. That is, we have the following unique 
characterization of the extremals of the Alexandrov-Fenchel inequality (in 
the formulation of part $b$ of Lemma \ref{lem:simpleeq}).

\begin{thm}
\label{thm:ultimatemain}
Let $f:S^{n-1}\to\mathbb{R}$ be any difference of support functions.
Then 
$$
	S_{f,\mathcal{P}}=0
$$
holds if and only if
$$
	f(x) = \langle s,x\rangle + \sum_{j=1}^c f_j(x)
	\quad\mbox{for all }x\in\supp S_{B,\mathcal{P}}
$$
holds for some $s\in\mathcal{L}^\perp$ and $f_j\in\mathbb{D}_j$,
$j=1,\ldots,c$. Moreover, in this representation, $s$ is unique 
and $f_1,\ldots,f_c$ are uniquely 
determined on $\supp S_{B,\mathcal{P}}$.
\end{thm}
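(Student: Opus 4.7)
The plan is to start from the decomposition of Theorem \ref{thm:main}, which after reducing via Lemmas \ref{lem:subcritreduce} and \ref{lem:subcproj} and the identity \eqref{eq:sbpsubcrit} yields $s' \in \mathbb{R}^n$ and $(\proj_{\mathcal{L}^\perp}\mathcal{P}_{\backslash\eta},\beta_j)$-degenerate functions $g_j$ with $f = \langle s',\cdot\rangle + \sum_{j=1}^c g_j$ on $\supp S_{B,\mathcal{P}}$. Two refinements will produce the claimed form. First, since $\supp S_{B,\mathcal{P}} \subseteq \mathcal{L}^\perp$ by \eqref{eq:sbpsubcrit}, I may replace $s'$ by its orthogonal projection onto $\mathcal{L}^\perp$ without altering the identity on $\supp S_{B,\mathcal{P}}$. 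Second, adding $\langle v_j,\cdot\rangle$ with $v_j \in \proj_{\mathcal{L}^\perp}\mathcal{L}_j$ to $g_j$ preserves its $(\proj_{\mathcal{L}^\perp}\mathcal{P}_{\backslash\eta},\beta_j)$-degeneracy (translate one of the bodies in the degenerate pair by $v_j$ and invoke the translation-invariance of mixed volumes), so I may choose $v_j$ so that $f_j := g_j - \langle v_j,\cdot\rangle \in \mathbb{D}_j$ by finite-dimensional orthogonal projection onto the subspace $\{\langle v,\cdot\rangle : v \in \proj_{\mathcal{L}^\perp}\mathcal{L}_j\}$ in $L^2(S_{B,\mathcal{P}})$. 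The accumulated linear correction $v_1 + \cdots + v_c$ lies in $\mathcal{L}^\perp$ and is absorbed into $s$.

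\textbf{Uniqueness.} The plan is to show that if $\langle t,\cdot\rangle + \sum_j \hat f_j$ vanishes on $\supp S_{B,\mathcal{P}}$ with $t \in \mathcal{L}^\perp$ and $\hat f_j \in \mathbb{D}_j$, then $t = 0$ and each $\hat f_j$ vanishes on $\supp S_{B,\mathcal{P}}$. Passing to $\mathcal{L}^\perp$ (where the critical collection $\mathcal{Q} := \proj_{\mathcal{L}^\perp}\mathcal{P}_{\backslash\eta}$ has maximal sets $\beta_1,\ldots,\beta_c$), I will apply Proposition \ref{prop:decouplingmagic} iteratively with $\gamma = \beta_k$ for each $k$; the hypothesis is satisfied since the identity holds on $\supp S_{B,\mathcal{P}} \supseteq \supp S_{P_{\beta_k},\mathcal{P}}$ by Lemma \ref{lem:maxsupp}. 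Each application yields that the $\hat f_j$ with $j \ne k$ are linear of the form $\langle w_j^k,\cdot\rangle$ on $\supp S_{B,B,\mathcal{P}_{\backslash r}} \supseteq \supp S_{B,\mathcal{P}}$ with $w_j^k \in \proj_{\mathcal{L}^\perp}\mathcal{L}_j$. Ranging $k$ over all indices shows that each $\hat f_j$ agrees on $\supp S_{B,\mathcal{P}}$ with some $\langle w_j,\cdot\rangle$ with $w_j \in \proj_{\mathcal{L}^\perp}\mathcal{L}_j$. The orthogonality condition defining $\mathbb{D}_j$, applied to $v = w_j$, then gives $\int \langle w_j,x\rangle^2 \, S_{B,\mathcal{P}}(dx) = 0$, forcing $\hat f_j = 0$ on $\supp S_{B,\mathcal{P}}$. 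The remaining identity $\langle t,\cdot\rangle = 0$ on $\supp S_{B,\mathcal{P}}$, combined with the fact that $\supp S_{B,\mathcal{P}}$ spans $\mathcal{L}^\perp$, yields $t = 0$. The spanning claim reduces, for any $u \in \mathcal{L}^\perp\setminus\{0\}$, to the positivity of $\V_n([0,u],B,\mathcal{P})$, which by Lemmas \ref{lem:subcproj} and \ref{lem:dim} follows because $\proj_{\mathcal{L}^\perp}B$ is full-dimensional in $\mathcal{L}^\perp$.

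\textbf{Main obstacle.} The hardest step will be the iterated application of Proposition \ref{prop:decouplingmagic} with different choices of $\gamma = \beta_k$: since no maximal set is distinguished in the hypothesis of Theorem \ref{thm:ultimatemain}, one must cycle through all $k$ and reconcile the vectors $w_j^k$ produced by different applications. This reconciliation is automatic since each $\hat f_j$ is a fixed function, but requires careful bookkeeping to confirm that the linear description $\hat f_j = \langle w_j,\cdot\rangle$ holds simultaneously on all of $\supp S_{B,\mathcal{P}}$. Small-case scenarios also demand attention: when $c = 0$ the statement collapses to the conclusion of Theorem \ref{thm:schneider} applied to $\mathcal{Q}$ in $\mathcal{L}^\perp$, while for $c = 1$ Proposition \ref{prop:decouplingmagic} provides vacuous content on the sum but still delivers $t \in \proj_{\mathcal{L}^\perp}\mathcal{L}_1$, after which the orthogonality condition in $\mathbb{D}_1$ suffices to close the argument.
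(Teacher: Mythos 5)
Your proof is correct and follows essentially the same route as the paper's: reduce to the critical collection $\proj_{\mathcal{L}^\perp}\mathcal{P}_{\backslash\eta}$ in $\mathcal{L}^\perp$, apply the structured decomposition (Theorems~\ref{thm:schneider}/\ref{thm:crit}) and project out the linear parts in $L^2(S_{B,\mathcal{P}})$ for existence, then use Proposition~\ref{prop:decouplingmagic} together with the orthogonality built into $\mathbb{D}_j$ and Lemma~\ref{lem:sbpsupp} for uniqueness. The ``cycling'' you flag as the main obstacle is avoidable: a single application of Proposition~\ref{prop:decouplingmagic} with $\gamma=\beta_k$ already supplies the $k$th vector via its last conclusion $(s-s')+\sum_{j\ne k}w_j\in\proj_{\mathcal{L}^\perp}\mathcal{L}_k$, which combined with the original identity gives $\hat f_k(x)=-\langle (s-s')+\sum_{j\ne k}w_j,x\rangle$ on $\supp S_{B,\mathcal{P}}$ with the right membership, so one choice of $\gamma$ suffices for all indices (and handles $c=1$ uniformly).
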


Before we prove Theorem \ref{thm:ultimatemain}, let us first record a 
basic property. Analogous arguments appeared already several times in the 
proof of Theorem \ref{thm:main}.

\begin{lem}
\label{lem:sbpsupp}
$\sspan\supp S_{B,\mathcal{P}}=\mathcal{L}^\perp$.
\end{lem}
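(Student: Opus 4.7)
My plan is to establish the two inclusions of the stated equality separately. The inclusion $\supp S_{B,\mathcal{P}}\subseteq\mathcal{L}^\perp$ will follow directly from the projection identity \eqref{eq:sbpsubcrit} already used in the proof of Theorem \ref{thm:main}: since $\eta$ is subcritical, Lemma \ref{lem:subcproj} furnishes $\V_{\mathcal{L}}(\mathcal{P}_\eta)>0$, so $S_{B,\mathcal{P}}$ is a positive multiple of $S_{\proj_{\mathcal{L}^\perp}B,\proj_{\mathcal{L}^\perp}\mathcal{P}_{\backslash\eta}}$, and the latter is a mixed area measure computed in $\mathcal{L}^\perp$ (cf.\ Remark \ref{rem:lowdimma}).

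For the reverse inclusion, my strategy is to argue by contradiction, mirroring the technique used in Lemma \ref{lem:critcommon} and at the end of the proof of Lemma \ref{lem:supercommon}. Assuming $\sspan\supp S_{B,\mathcal{P}}\subsetneq\mathcal{L}^\perp$, I can pick a nonzero $w\in\mathcal{L}^\perp$ orthogonal in $\mathbb{R}^n$ to the support; then $h_{[0,w]}(x)=\langle w,x\rangle_+$ vanishes on $\supp S_{B,\mathcal{P}}$, so integrating against the measure and invoking \eqref{eq:mixvolarea} yields $\V_n([0,w],B,\mathcal{P})=0$. The key step is then to apply Lemma \ref{lem:dim} to this vanishing to locate a subfamily $S\subseteq\{[0,w],B,P_1,\ldots,P_{n-2}\}$ of cardinality $k$ whose sum has dimension strictly less than $k$. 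Full-dimensionality of $B$ rules out $B\in S$, and subcriticality of $\mathcal{P}$ rules out $[0,w]\notin S$ (as this would say some subcollection of $\mathcal{P}$ has deficient dimension), leaving $S=\{[0,w]\}\cup(P_i)_{i\in\alpha}$ for some $\alpha\subseteq[n-2]$ with $\dim([0,w]+\sum_{i\in\alpha}P_i)<|\alpha|+1$.

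The last step will be the dimension bookkeeping: combining this inequality with $\dim\mathcal{L}_\alpha\ge|\alpha|$ from subcriticality forces $\dim\mathcal{L}_\alpha=|\alpha|$ and $w\in\mathcal{L}_\alpha$, so $\alpha$ is a subcritical set. Maximality of $\eta$ (Corollary \ref{cor:maxsubcrit}) then gives $\alpha\subseteq\eta$ and hence $w\in\mathcal{L}_\alpha\subseteq\mathcal{L}$, contradicting $w\in\mathcal{L}^\perp\setminus\{0\}$. The main obstacle I anticipate is keeping the case analysis tidy, in particular handling uniformly the degenerate situations $\alpha=\varnothing$ or $\eta=\varnothing$ and the translation to linear spans (which is why we assumed at the outset that each $P_i$ contains the origin in its relative interior, so that $\aff\sum_{i\in\alpha}P_i=\mathcal{L}_\alpha$); none of this should cause real trouble but requires care.
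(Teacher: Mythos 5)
Your proof is correct, and it follows the same overall contradiction strategy as the paper (first inclusion via the factorization \eqref{eq:sbpsubcrit}, then assume strictness, find $w\in\mathcal{L}^\perp$ annihilating the support, conclude $\V_n([0,w],B,\mathcal{P})=0$). Where you diverge is in how you extract the contradiction from the vanishing of this mixed volume. The paper first factors the mixed volume through Lemma \ref{lem:subcproj} into $\V_{\mathcal{L}}(\mathcal{P}_\eta)\cdot\V_{\mathcal{L}^\perp}([0,w],\proj_{\mathcal{L}^\perp}B,\proj_{\mathcal{L}^\perp}\mathcal{P}_{\backslash\eta})$ and then invokes the already-established criticality of $\proj_{\mathcal{L}^\perp}\mathcal{P}_{\backslash\eta}$ (Lemma \ref{lem:subcritreduce}) together with Lemma \ref{lem:dim} to get a contradiction. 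You instead apply Lemma \ref{lem:dim} directly in $\mathbb{R}^n$ and do the dimension bookkeeping by hand, locating a subcritical set $\alpha$ with $w\in\mathcal{L}_\alpha$ and then using $\alpha\subseteq\eta$. In effect you re-derive in situ the information packaged in Lemma \ref{lem:subcritreduce}; the paper's route is slightly more modular (it reuses that lemma), while yours is slightly more self-contained. Both are fine, and your handling of the degenerate case $\alpha=\varnothing$ (which forces $w=0$) is correct.
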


\begin{proof}
We first observe that $\sspan\supp S_{B,\mathcal{P}}\subseteq
\mathcal{L}^\perp$ (this is trivial if $\eta=\varnothing$, and follows 
directly from \eqref{eq:sbpsubcrit} otherwise).
Now suppose this inclusion is strict. Then we must have
$\supp S_{B,\mathcal{P}}\subseteq w^\perp$ for some 
$w\in\mathcal{L}^\perp$, so that
$$
	0 =
	{n\choose|\eta|}
	\frac{1}{n}
	\int \langle w,x\rangle_+\,S_{B,\mathcal{P}}(dx)
	=
	\V_{\mathcal{L}}(\mathcal{P}_\eta)
	\,
	\V_{\mathcal{L}^\perp}(
	[0,w],
	\proj_{\mathcal{L}^\perp}B,
	\proj_{\mathcal{L}^\perp}\mathcal{P}_{\backslash\eta})
$$
by \eqref{eq:mixvolarea} and 
Lemma \ref{lem:subcproj} (if $\eta=\varnothing$, this expression remains 
valid with $\V_{\mathcal{L}}(\mathcal{P}_\eta)\equiv 1$). This 
contradicts 
criticality of $\proj_{\mathcal{L}^\perp}\mathcal{P}_{\backslash\eta}$ 
by Lemma \ref{lem:dim}, establishing the claim.
\end{proof}

We now turn to the proof of Thereom \ref{thm:ultimatemain}. 

\begin{proof}[Proof of Theorem \ref{thm:ultimatemain}]
We first note that $S_{f,\mathcal{P}}=0$ if and only if 
$S_{\proj_{\mathcal{L}^\perp}f,\proj_{\mathcal{L}^\perp}\mathcal{P}_{\backslash\eta}}=0$ 
by applying Lemma \ref{lem:subcproj} as in Remark \ref{rem:lowdimma}. Thus 
as $\proj_{\mathcal{L}^\perp}\mathcal{P}_{\backslash\eta}$ is critical, we 
can use either Theorem \ref{thm:schneider} or Theorem \ref{thm:crit} in 
$\mathcal{L}^\perp$ to show that $S_{f,\mathcal{P}}=0$ if and only if
$$
	f(x)=\langle \bar s,x\rangle+\sum_{j=1}^c g_j(x)\quad
	\mbox{for all }x\in\supp S_{\proj_{\mathcal{L}^\perp}B,
	\proj_{\mathcal{L}^\perp}\mathcal{P}_{\backslash\eta}}
$$
for some $\bar s\in\mathcal{L}^\perp$ and 
$(\proj_{\mathcal{L}^\perp\mathcal{P}_{\backslash\eta}},\beta_j)$-degenerate
function $g_j$, $j=1,\ldots,c$. The conclusion remains valid 
for all $x\in\supp S_{B,\mathcal{P}}$ as $\supp S_{B,\mathcal{P}}=\supp
S_{\proj_{\mathcal{L}^\perp}B, 
\proj_{\mathcal{L}^\perp}\mathcal{P}_{\backslash\eta}}$
(this is trivial when $\eta=\varnothing$, and follows from
\eqref{eq:sbpsubcrit} otherwise).

Now note that for every $j=1,\ldots,c$, there exists
$s_j\in\proj_{\mathcal{L}^\perp}\mathcal{L}_j$ so that
$$
	f_j := g_j - \langle s_j,\cdot\rangle
	~\perp~ \sspan\{\langle v,\cdot\rangle:
	v\in\proj_{\mathcal{L}^\perp}\mathcal{L}_j\}
	\quad\mbox{in }L^2(S_{B,\mathcal{P}}).
$$
Then $f_j\in\mathbb{D}_j$ by construction. Moreover, let us write
$s := \bar s + s_1 + \cdots + s_c\in\mathcal{L}^\perp$.
Then we have shown that $S_{f,\mathcal{P}}=0$ holds if and only if
$$
	f(x)=\langle s,x\rangle+\sum_{j=1}^c f_j(x)\quad
	\mbox{for all }x\in\supp S_{B,\mathcal{P}}
$$
for
some $s\in\mathcal{L}^\perp$ and $f_j\in\mathbb{D}_j$,
$j=1,\ldots,c$. 
It remains to show uniqueness.

To this end, let us suppose that 
$$
	f(x)=
	\langle s,x\rangle+\sum_{j=1}^c f_j(x)=
	\langle s',x\rangle+\sum_{j=1}^c f_j'(x)
	\quad
	\mbox{for all }x\in\supp S_{B,\mathcal{P}}
$$
holds for $s,s'\in\mathcal{L}^\perp$ and 
$f_j,f_j'\in\mathbb{D}_j$, $j=1,\ldots,c$. Then we certainly have
$$
	\langle s-s',x\rangle + 
	\sum_{j=1}^c \{f_j(x)-f_j'(x)\}=0
	\quad
	\mbox{for all }
	x\in\supp S_{B,\mathcal{P}} =
	\supp S_{\proj_{\mathcal{L}^\perp}B,
	\proj_{\mathcal{L}^\perp}\mathcal{P}_{\backslash\eta}}.
$$
Suppose first that $c\ge 1$. 
Then by Lemma \ref{lem:maxsupp}, we may apply Proposition 
\ref{prop:decouplingmagic} in $\mathcal{L}^\perp$ to show that for every 
$j=1,\ldots,c$, there exists 
$w_j\in\proj_{\mathcal{L}^\perp}\mathcal{L}_j$ so that
$$
	f_j(x)-f_j'(x) = \langle w_j,x\rangle\quad
	\mbox{for all }
	x\in\supp S_{B,\mathcal{P}}.
$$
But the definition of $\mathbb{D}_j$ then implies that $w_j=0$, so
that each $f_j$ is uniquely determined on $\supp S_{B,\mathcal{P}}$. 
Moreover, for any $c\ge 0$, we now obtain
$\langle s-s',x\rangle=0$ for all $x\in\supp S_{B,\mathcal{P}}$.
Thus $s=s'$ by Lemma \ref{lem:sbpsupp}, so that $s$ is unique as 
well.
\end{proof}

\subsection{The space of extremals}

In addition to the spaces $\mathbb{D}_j$, let us denote by
$$
	\mathbb{X} := \{ f:S^{n-1}\to\mathbb{R}~:~
	f\mbox{ is a difference of support functions such that }S_{f,\mathcal{P}}=0\}
$$
the space of extremals of the Alexandrov-Fenchel inequality, and by
$$
	\mathbb{L} := \{ f:S^{n-1}\to\mathbb{R}~:~
	f=\langle v,\cdot\rangle\mbox{ for some }v\in\mathcal{L}^\perp\}
$$
the space of linear functions. In the present section, we will view
$\mathbb{X},\mathbb{L},\mathbb{D}_j$ as subspaces of 
$L^2(S_{B,\mathcal{P}})$; in particular, we identify the elements of these 
spaces that agree $S_{B,\mathcal{P}}$-a.e. In these terms, 
we may reformulate Theorem \ref{thm:ultimatemain} as 
$$
	\mathbb{X} = \mathbb{L}\oplus\mathbb{D}_1
	\oplus\cdots\oplus\mathbb{D}_c\quad\mbox{in }L^2(S_{B,\mathcal{P}}).
$$
That is, $\mathbb{L},\mathbb{D}_1,\ldots,\mathbb{D}_c$ are linearly 
independent subspaces of $L^2(S_{B,\mathcal{P}})$ that span $\mathbb{X}$.

Despite the definitive form of this result, its continuous formulation 
belies the essentially combinatorial nature of the extremals of the 
Alexandrov-Fenchel inequality for polytopes $\mathcal{P}$. For example, 
Proposition \ref{prop:qgraph} implies that the extremals are fully 
described by the kernel of a matrix, so that $\mathbb{X}$ must be 
finite-dimensional; this fact is not evident above. To illustrate that 
this kind of information is indeed contained in the above 
characterization, let us presently compute the dimensions of the subspaces 
$\mathbb{L},\mathbb{D}_j,\mathbb{X}$ of $L^2(S_{B,\mathcal{P}})$
in terms of the geometry of $\mathcal{P}$.

\begin{prop}
\label{prop:dimu}
Let $\Omega_j:=\supp S_{\proj_{\mathcal{L}^\perp}\mathcal{P}_{\beta_j}}$.
Then
$$
	\dim \mathbb{L} = n-|\eta|,\qquad\quad
	\dim \mathbb{D}_j = |\Omega_j|-|\beta_j|-2
$$
for $j=1,\ldots,c$. In particular,
$\dim\mathbb{X}=n-|\eta| + \sum_{j=1}^c \{|\Omega_j|-|\beta_j|-2\}$.
\end{prop}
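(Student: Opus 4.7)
I will compute $\dim\mathbb{L}$ and each $\dim\mathbb{D}_j$ separately, then assemble the formula for $\dim\mathbb{X}$ using the direct sum decomposition $\mathbb{X}=\mathbb{L}\oplus\mathbb{D}_1\oplus\cdots\oplus\mathbb{D}_c$ guaranteed by Theorem~\ref{thm:ultimatemain}.

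For $\dim\mathbb{L}$: the natural surjection $\mathcal{L}^\perp\to\mathbb{L}$, $v\mapsto\langle v,\cdot\rangle$, is injective, since any $v$ in its kernel must satisfy $\langle v,\cdot\rangle=0$ on $\supp S_{B,\mathcal{P}}$ by continuity, hence $v\perp\mathcal{L}^\perp$ by Lemma~\ref{lem:sbpsupp}, forcing $v=0$ as $v\in\mathcal{L}^\perp$. Since $\eta$ is subcritical, $\dim\mathcal{L}=|\eta|$, so $\dim\mathbb{L}=n-|\eta|$.

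To compute $\dim\mathbb{D}_j$, set $E_j:=\proj_{\mathcal{L}^\perp}\mathcal{L}_j$, which has dimension $|\beta_j|+1$ since $\beta_j$ is $\proj_{\mathcal{L}^\perp}\mathcal{P}_{\backslash\eta}$-critical, and write $\mathbb{F}_j\supseteq\mathbb{D}_j$ for the full space (in $L^2(S_{B,\mathcal{P}})$) of $(\proj_{\mathcal{L}^\perp}\mathcal{P}_{\backslash\eta},\beta_j)$-degenerate functions. By Lemma~\ref{lem:deghomog}, $\mathbb{F}_j$ is parametrized by $1$-homogeneous functions $\varphi\colon E_j\to\mathbb{R}$ with $\int\varphi\,dS_{\proj_{\mathcal{L}^\perp}\mathcal{P}_{\beta_j}}=0$, via $f(x)=\varphi(\proj_{E_j}x)$ on $\supp S_{B,\mathcal{P}}$. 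The propeller structure (Theorem~\ref{thm:propeller}) places $\proj_{E_j}(\supp S_{B,\mathcal{P}})$ in $\{0\}\cup\bigcup_{z\in\Omega_j}\mathbb{R}_{>0}z$, so $f$ on $\supp S_{B,\mathcal{P}}$ depends only on $\psi:=\varphi|_{\Omega_j}$, and the integral condition reduces to the single linear relation $\sum_{z\in\Omega_j}\psi(z)S_{\proj_{\mathcal{L}^\perp}\mathcal{P}_{\beta_j}}(\{z\})=0$, giving $\dim\mathbb{F}_j=|\Omega_j|-1$ once the parametrization is shown to be injective.

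The main technical point is this injectivity --- that \emph{every} $z\in\Omega_j$ is actually attained in $\proj_{E_j}(\supp S_{B,\mathcal{P}})$, so that $\psi$ cannot vary freely off the image. I will deduce this from the pushforward identity \eqref{eq:pullback} derived in the proof of Lemma~\ref{lem:maproj}: applied in $\mathcal{L}^\perp$ with distinguished subspace $E_j$, it realizes $S_{\proj_{\mathcal{L}^\perp}\mathcal{P}_{\beta_j}}$, up to a scalar that is strictly positive by Lemma~\ref{lem:posmixcrit} in $\mathcal{L}^\perp$, as the pushforward of a positively weighted restriction of $S_{\proj_{\mathcal{L}^\perp}B,\proj_{\mathcal{L}^\perp}\mathcal{P}_{\backslash\eta}}$ under the radial map $x\mapsto\proj_{E_j}x/\|\proj_{E_j}x\|$; as $S_{B,\mathcal{P}}$ is a positive multiple of this measure by \eqref{eq:sbpsubcrit}, taking supports yields the desired surjection onto $\Omega_j$. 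Finally, each $v\in E_j$ gives a linear function $\langle v,\cdot\rangle\in\mathbb{F}_j$ (the integral condition is Lemma~\ref{lem:maprop}(d)), and $v\mapsto\langle v,\cdot\rangle$ is injective into $L^2(S_{B,\mathcal{P}})$ by the argument used for $\mathbb{L}$ (noting $E_j\subset\mathcal{L}^\perp$), so this subspace contributes dimension $|\beta_j|+1$ to $\mathbb{F}_j$; taking its orthogonal complement inside $\mathbb{F}_j$ yields $\dim\mathbb{D}_j=(|\Omega_j|-1)-(|\beta_j|+1)=|\Omega_j|-|\beta_j|-2$, and summing over $j$ together with $\dim\mathbb{L}$ yields the stated formula for $\dim\mathbb{X}$.
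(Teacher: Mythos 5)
Your proof is correct and follows essentially the same route as the paper's: you compute $\dim\mathbb{L}$ via Lemma~\ref{lem:sbpsupp}, identify the full space $\mathbb{F}_j$ of $(\proj_{\mathcal{L}^\perp}\mathcal{P}_{\backslash\eta},\beta_j)$-degenerate functions with $\mathbb{R}^{\Omega_j}$ modulo one linear constraint, and subtract off the $(|\beta_j|+1)$-dimensional space of linear functions from $E_j$. The paper packages the key identification step into Lemma~\ref{lem:finitebetaj} by computing $\int|f|\,dS_{B,\mathcal P}$ directly via Lemma~\ref{lem:maproj} rather than reasoning through the radial pushforward as you do, but \eqref{eq:pullback} and Remark~\ref{rem:raycont} show these are the same computation; both hinge on positivity of the same projected mixed volume. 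The one inaccuracy is the citation of Lemma~\ref{lem:posmixcrit} for that positivity: as stated, that lemma concerns $P_\gamma$ rather than a ball, so it does not directly apply here. The correct justification, used in the paper's Lemma~\ref{lem:finitebetaj}, is to apply Lemma~\ref{lem:proj} to factor the mixed volume in $\mathcal{L}^\perp$ and then deduce positivity of $\V_{\mathcal{L}^\perp}(\proj_{\mathcal{L}^\perp}B_j,\proj_{\mathcal{L}^\perp}B,\proj_{\mathcal{L}^\perp}\mathcal{P}_{\backslash\eta})$ from criticality via Lemma~\ref{lem:dim}. This is a citation slip, not a gap; the rest of the argument stands.
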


\begin{rem}
Note that $|\Omega_j|<\infty$ and that $\Omega_j$ may be 
computed by Lemma \ref{lem:mapoly}.
\end{rem}

To prove Proposition \ref{prop:dimu},
we begin by characterizing the affine hull of $\Omega_j$.

\begin{lem}
\label{lem:affbetaj}
$\aff\Omega_j=\proj_{\mathcal{L}^\perp}\mathcal{L}_j$.
\end{lem}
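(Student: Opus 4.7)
The plan is to establish the two inclusions separately. The direction $\aff\Omega_j\subseteq\proj_{\mathcal{L}^\perp}\mathcal{L}_j$ is immediate: the bodies in $\proj_{\mathcal{L}^\perp}\mathcal{P}_{\beta_j}$ all lie in the subspace $\proj_{\mathcal{L}^\perp}\mathcal{L}_j$, so by construction (as in Remark \ref{rem:lowdimma}) the mixed area measure $\mu:=S_{\proj_{\mathcal{L}^\perp}\mathcal{P}_{\beta_j}}$ is supported on $S^{n-1}\cap\proj_{\mathcal{L}^\perp}\mathcal{L}_j$. For the reverse inclusion I would split the argument into two steps: (i) show $0\in\aff\Omega_j$, so that $\aff\Omega_j=\sspan\Omega_j$, and (ii) show $\sspan\Omega_j=\proj_{\mathcal{L}^\perp}\mathcal{L}_j$.

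For step (i), note that $\beta_j$ being $\proj_{\mathcal{L}^\perp}\mathcal{P}_{\backslash\eta}$-critical means $\dim\bigl(\sum_{i\in\alpha}\proj_{\mathcal{L}^\perp}P_i\bigr)\ge|\alpha|+1$ for all nonempty $\alpha\subseteq\beta_j$, and the ambient subspace $\proj_{\mathcal{L}^\perp}\mathcal{L}_j$ has dimension $|\beta_j|+1$. Thus Lemma \ref{lem:dim} applied inside $\proj_{\mathcal{L}^\perp}\mathcal{L}_j$ (with the unit ball $B'$ of this subspace) gives $\V_{\proj_{\mathcal{L}^\perp}\mathcal{L}_j}(B',\proj_{\mathcal{L}^\perp}\mathcal{P}_{\beta_j})>0$, and by \eqref{eq:mixvolarea} this means $\mu(\Omega_j)>0$. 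On the other hand, Lemma \ref{lem:maprop}(d) (applied in $\proj_{\mathcal{L}^\perp}\mathcal{L}_j$) gives $\int x\,d\mu(x)=0$. Normalizing, $0$ is realized as a convex combination of points in $\Omega_j$, so $0\in\mathrm{conv}(\Omega_j)\subseteq\aff\Omega_j$, which forces $\aff\Omega_j=\sspan\Omega_j$.

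For step (ii), I would argue by contradiction, mirroring the proofs of Lemma \ref{lem:sbpsupp} and the second claim of Lemma \ref{lem:critcommon}. Suppose $\sspan\Omega_j\subsetneq\proj_{\mathcal{L}^\perp}\mathcal{L}_j$; then one can find a nonzero $w\in\proj_{\mathcal{L}^\perp}\mathcal{L}_j$ with $\Omega_j\subseteq w^\perp$. Using $h_{[0,w]}(x)=\langle w,x\rangle_+$ and integrating against $\mu$,
$$
0=\int\langle w,x\rangle_+\,d\mu(x)=(|\beta_j|+1)\,\V_{\proj_{\mathcal{L}^\perp}\mathcal{L}_j}\bigl([0,w],\proj_{\mathcal{L}^\perp}\mathcal{P}_{\beta_j}\bigr)
$$
by \eqref{eq:mixvolarea}. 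However, $[0,w]$ is a segment inside $\proj_{\mathcal{L}^\perp}\mathcal{L}_j$ with $w\ne 0$, and the criticality conditions on $\beta_j$ recalled above verify all hypotheses of Lemma \ref{lem:dim} for positivity of this mixed volume in the $(|\beta_j|+1)$-dimensional space $\proj_{\mathcal{L}^\perp}\mathcal{L}_j$. This contradiction completes step (ii).

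Nothing here is a genuine obstacle; the only mild subtlety is that the natural span-based argument needs to be upgraded to an affine-hull statement, which is exactly what the vanishing first moment in Lemma \ref{lem:maprop}(d) takes care of.
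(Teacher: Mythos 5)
Your proof is correct and follows essentially the same route as the paper: the paper also uses part $d$ of Lemma \ref{lem:maprop} to place $0$ in $\aff\Omega_j$, and for $\sspan\Omega_j=\proj_{\mathcal{L}^\perp}\mathcal{L}_j$ it simply points to ``the same argument as in the proof of Lemma \ref{lem:sbpsupp},'' which is precisely the Hahn--Banach/criticality contradiction you spell out in step (ii). The only difference is that you write out the normalization details the paper leaves implicit.
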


\begin{proof}
That $\sspan\Omega_j = \sspan\supp 
S_{\proj_{\mathcal{L}^\perp}\mathcal{P}_{\beta_j}}= 
\proj_{\mathcal{L}^\perp}\mathcal{L}_j$ follows as 
$\proj_{\mathcal{L}^\perp}\mathcal{P}_{\backslash\eta}$ is critical by 
exactly the same argument as in the proof of Lemma \ref{lem:sbpsupp}.
On the other hand, by part $d$ of Lemma \ref{lem:maprop}, there is a 
vanishing linear combination of the elements of $\Omega_j$ with positive 
coefficients, so $0\in\aff\Omega_j$. Thus
$\aff\Omega_j = \sspan\Omega_j$.
\end{proof}

Next, we observe that the support functions of convex bodies in 
$\proj_{\mathcal{L}^\perp}\mathcal{L}_j$ are uniquely determined up to 
$S_{B,\mathcal{P}}$-a.e.\ equivalence by their values on $\Omega_j$.

\begin{lem}
\label{lem:finitebetaj}
Let $f=h_M-h_N$ for convex bodies $M,N\subset 
\proj_{\mathcal{L}^\perp}\mathcal{L}_j$. Then $f=0$ 
$S_{B,\mathcal{P}}$-a.e.\ if and only if $f(x)=0$ for all $x\in\Omega_j$. 
Moreover, for any $z\in\mathbb{R}^{\Omega_j}$, there exists a function $f$ 
of this form such that $f(x)=z_x$ for $x\in\Omega_j$.
\end{lem}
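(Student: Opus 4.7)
The plan is to factor $f$ through the projection $\pi := \proj_E$ with $E := \proj_{\mathcal{L}^\perp}\mathcal{L}_j$ and reduce both assertions to statements about the finitely supported measure $S_{\proj_{\mathcal{L}^\perp}\mathcal{P}_{\beta_j}}$ on $S^{n-1}\cap E$. Since $M,N\subset E$, the definition of support functions yields $f(x)=\varphi(\pi x)$ for all $x\in\mathbb{R}^n$, where $\varphi:=f|_E$ is the difference of support functions of $M,N$ in $E$, hence a $1$-homogeneous function on $E$. The key is then to establish a weighted identity of the form
$$
\int |f|\,dS_{B,\mathcal{P}} \;=\; \kappa \int |\varphi|\,dS_{\proj_{\mathcal{L}^\perp}\mathcal{P}_{\beta_j}}
$$
with $\kappa>0$. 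Granting this, the first claim is immediate: as $\Omega_j$ is the support of the finite atomic measure $S_{\proj_{\mathcal{L}^\perp}\mathcal{P}_{\beta_j}}$ (Lemma~\ref{lem:mapoly}), vanishing of the left-hand side is equivalent to $\varphi(x)=0$ for every $x\in\Omega_j$, and since $\pi x=x$ on $\Omega_j\subset S^{n-1}\cap E$, this is the same as $f(x)=0$ on $\Omega_j$.

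To derive the identity I would first use \eqref{eq:sbpsubcrit} to replace $S_{B,\mathcal{P}}$ by a positive multiple of $S_{\proj_{\mathcal{L}^\perp}B,\proj_{\mathcal{L}^\perp}\mathcal{P}_{\backslash\eta}}$ (a measure living in $\mathcal{L}^\perp$; when $\eta=\varnothing$ the step is vacuous), and then apply Lemma~\ref{lem:maproj} inside $\mathcal{L}^\perp$, taking the distinguished subspace to be $E$ (which has dimension $|\beta_j|+1$ by Lemma~\ref{lem:affbetaj}) and the first $k=|\beta_j|$ bodies to be $\proj_{\mathcal{L}^\perp}\mathcal{P}_{\beta_j}$. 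Although Lemma~\ref{lem:maproj} is stated for $1$-homogeneous test functions, its proof establishes a genuine measure identity ($\mu\circ\iota^{-1}=\text{const}\cdot S_{C_1,\ldots,C_k}$) against which one may integrate any measurable function; integrating against $|\varphi|$ on $S^{n-1}\cap E$ and using $|\varphi|(\iota(x))\|\pi x\|=|\varphi(\pi x)|$ yields the desired identity with
$$
c_j \;:=\; \V_{F}\bigl(\proj_F\proj_{\mathcal{L}^\perp}B,\;\proj_F\proj_{\mathcal{L}^\perp}\mathcal{P}_{\backslash(\eta\cup\beta_j)}\bigr),
$$
where $F$ denotes the orthogonal complement of $E$ inside $\mathcal{L}^\perp$.

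The main step to check is the strict positivity $c_j>0$. Since $\proj_F\proj_{\mathcal{L}^\perp}B$ is the full-dimensional unit ball of $F$, Lemma~\ref{lem:dim} reduces positivity to the claim that for every $\alpha\subseteq[n-2]\setminus(\eta\cup\beta_j)$,
$$
\dim\Bigl(\sum_{i\in\alpha}\proj_F\proj_{\mathcal{L}^\perp}P_i\Bigr)\;\ge\;|\alpha|.
$$
Writing $\hat{\mathcal{L}}_\gamma := \sspan\sum_{i\in\gamma}\proj_{\mathcal{L}^\perp}P_i$ and using $\dim\proj_F X=\dim X-\dim(X\cap E)$ together with $\hat{\mathcal{L}}_{\alpha\cup\beta_j}=\hat{\mathcal{L}}_\alpha+E$, the left-hand side equals $\dim\hat{\mathcal{L}}_{\alpha\cup\beta_j}-(|\beta_j|+1)$. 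Since $\proj_{\mathcal{L}^\perp}\mathcal{P}_{\backslash\eta}$ is critical (Lemma~\ref{lem:subcritreduce}) and $\alpha\cup\beta_j$ is nonempty, this is bounded below by $|\alpha\cup\beta_j|+1-(|\beta_j|+1)=|\alpha|$, as required.

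The moreover part is essentially free: given $z\in\mathbb{R}^{\Omega_j}$, the finiteness of $\Omega_j$ permits the choice of a $C^2$ function $\varphi_0$ on $S^{n-1}\cap E$ with $\varphi_0(x)=z_x$ for $x\in\Omega_j$; Lemma~\ref{lem:c2} applied inside $E$ writes $\varphi_0=h_M-h_N$ for some convex bodies $M,N\subset E$, and the corresponding $f=h_M-h_N$ satisfies $f(x)=\varphi_0(x)=z_x$ on $\Omega_j$ since $\pi x=x$ there.
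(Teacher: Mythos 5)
Your proof is correct, and it follows the paper's overall strategy: apply \eqref{eq:sbpsubcrit} to pass from $S_{B,\mathcal{P}}$ to a measure on $\mathcal{L}^\perp$, then apply Lemma~\ref{lem:maproj} with $E=\proj_{\mathcal{L}^\perp}\mathcal{L}_j$ to factor out $S_{\proj_{\mathcal{L}^\perp}\mathcal{P}_{\beta_j}}$, establish positivity of the resulting constant, and prove the ``moreover'' clause via finiteness of $\Omega_j$ and Lemma~\ref{lem:c2}. (Also note that $|\varphi|$ is itself $1$-homogeneous, so Lemma~\ref{lem:maproj} applies to it directly without needing to unwrap the underlying measure identity.) The one place where you genuinely diverge from the paper is the positivity of the factored mixed volume $c_j=\V_F(\proj_F B,\proj_F\mathcal{P}_{\backslash(\eta\cup\beta_j)})$. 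The paper establishes this indirectly: it plugs the specific function $f=h_{\proj_{\mathcal{L}^\perp}B_j}$ into the factorization identity, observes that the left side is a positive multiple of $\V_{\mathcal{L}^\perp}(\proj_{\mathcal{L}^\perp}B_j,\proj_{\mathcal{L}^\perp}B,\proj_{\mathcal{L}^\perp}\mathcal{P}_{\backslash\eta})>0$ by criticality, and deduces that the constant must be positive. You instead verify Lemma~\ref{lem:dim}'s dimensionality condition for $c_j$ directly, computing $\dim\proj_F\hat{\mathcal{L}}_\alpha=\dim\hat{\mathcal{L}}_{\alpha\cup\beta_j}-\dim E\geq|\alpha|$ from criticality of $\proj_{\mathcal{L}^\perp}\mathcal{P}_{\backslash\eta}$. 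Both invoke the same criticality input (Lemma~\ref{lem:subcritreduce}); your version is more explicitly computational and transparent about why the constant is positive, while the paper's test-function trick is shorter and reuses the factorization identity itself. Either is perfectly valid.
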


\begin{proof} We first apply \eqref{eq:sbpsubcrit} and Lemma 
\ref{lem:maproj} to write \begin{equation} \label{eq:betaj} 
\begin{aligned}
	& {n-1\choose |\eta|}
	{n-|\eta|-1\choose |\beta_j|}
	\int |f|\,dS_{B,\mathcal{P}}
\\&	=
	{n-|\eta|-1\choose |\beta_j|}
	\V_{\mathcal{L}}(\mathcal{P}_\eta)
	\int |f|\,dS_{\proj_{\mathcal{L}^\perp}B,
	\proj_{\mathcal{L}^\perp}\mathcal{P}_{\backslash\eta}}
\\&	=
	\V_{\mathcal{L}}(\mathcal{P}_\eta)\,
	\V_{\mathcal{L}^\perp\cap\mathcal{L}_j^\perp}(
	\proj_{\mathcal{L}^\perp\cap\mathcal{L}_j^\perp}B,
	\proj_{\mathcal{L}^\perp\cap\mathcal{L}_j^\perp}
	\mathcal{P}_{\backslash\{\eta,\beta_j\}})
	\int |f|\,dS_{\proj_{\mathcal{L}^\perp}\mathcal{P}_{\beta_j}}.
\end{aligned}
\end{equation}
Now note that Lemma
\ref{lem:subcproj} implies that $\V_{\mathcal{L}}(\mathcal{P}_\eta)>0$. On 
the other hand, we have
$\V_{\mathcal{L}^\perp}(\proj_{\mathcal{L}^\perp}B_j, 
\proj_{\mathcal{L}^\perp}B, 
\proj_{\mathcal{L}^\perp}\mathcal{P}_{\backslash\eta})>0$ by Lemma 
\ref{lem:dim} as $\proj_{\mathcal{L}^\perp}\mathcal{P}_{\backslash\eta}$ 
is critical, where $B_j$ denotes the unit ball in $\mathcal{L}_j$. Thus 
applying \eqref{eq:betaj} with $f=h_{\proj_{\mathcal{L}^\perp}B_j}$ 
shows that the mixed volumes on the last line of \eqref{eq:betaj} are 
positive. We have therefore 
shown that $\int |f|\,dS_{B,\mathcal{P}}>0$ if and 
only if $\int 
|f|\,dS_{\proj_{\mathcal{L}^\perp}\mathcal{P}_{\beta_j}}>0$.

To prove the second claim, it suffices to note that as $\Omega_j$ is a 
finite set, there exists for any $z\in\mathbb{R}^{\Omega_j}$ a $C^2$ 
function $g:S^{n-1}\cap \proj_{\mathcal{L}^\perp}\mathcal{L}_j\to\mathbb{R}$ 
such that $g(x)=z_x$ for all $x\in\Omega_j$; then 
$f(x):=g(\proj_{\proj_{\mathcal{L}^\perp}\mathcal{L}_j}x)$ has the
requisite form by Lemma \ref{lem:c2}.
\end{proof}

We can now conclude the proof of Proposition \ref{prop:dimu}.

\begin{proof}[Proof of Proposition \ref{prop:dimu}]
We first note that for any $v\in\mathcal{L}^\perp$, we have
$\langle v,\cdot\rangle=0$ $S_{B,\mathcal{P}}$-a.e.\ if and only if
$v=0$ by Lemma \ref{lem:sbpsupp}. Thus $\dim\mathbb{L}=
\dim\mathcal{L}^\perp = n-|\eta|$, where we used that $\dim\mathcal{L}=
|\eta|$ as $\eta$ is a subcritical set.

Now note that by Definition \ref{defn:maxdeg}, a
$(\proj_{\mathcal{L}^\perp}\mathcal{P}_{\backslash\eta},\beta_j)$-degenerate 
function $f$ is defined by $f=h_M-h_N$ for convex bodies $M,N\subset 
\proj_{\mathcal{L}^\perp}\mathcal{L}_j$ 
satisfying one linear constraint $\int 
f\,dS_{\proj_{\mathcal{L}^\perp}\mathcal{P}_{\beta_j}}=0$.
Thus by Lemma \ref{lem:finitebetaj}, the subspce of
$L^2(S_{B,\mathcal{P}})$ defined by
$$
	\mathbb{\tilde D}_j:=\{f:S^{n-1}\to\mathbb{R}
	~:~f\mbox{ is a }
	(\proj_{\mathcal{L}^\perp}\mathcal{P}_{\backslash\eta},\beta_j)\mbox{-degenerate function}\}
$$
has dimension $\dim\mathbb{\tilde D}_j=|\Omega_j|-1$.
Moreover, it follows from Lemmas \ref{lem:affbetaj} and 
\ref{lem:finitebetaj} that
$\sspan\{\langle 
v,\cdot\rangle:v\in\proj_{\mathcal{L}^\perp}\mathcal{L}_j\}$ is a
subspace of $\mathbb{\tilde D}_j$ of dimension $\dim 
\proj_{\mathcal{L}^\perp}\mathcal{L}_j=|\beta_j|+1$, where we used
that $\beta_j$ is a 
$\proj_{\mathcal{L}^\perp}\mathcal{P}_{\backslash\eta}$-critical set.
Thus $\dim\mathbb{D}_j=|\Omega_j|-|\beta_j|-2$ by the definition of
$\mathbb{D}_j$.
The remaining claim follows as
$\mathbb{X}=\mathbb{L}\oplus\mathbb{D}_1\oplus\cdots\oplus\mathbb{D}_c$.
\end{proof}

\begin{rem}
It has been emphasized throughout this paper that the distinction between 
the supercritical and critical cases of Theorem \ref{thm:main} is that 
nonlinear extremals appear in the latter case. It is sometimes possible, 
however, that no nonlinear extremals exist 
even in the critical case, because it may be the case that 
$|\Omega_j|=|\beta_j|+2$ and thus $\mathbb{D}_j=\{0\}$ as a subspace of 
$L^2(S_{B,\mathcal{P}})$ by Proposition \ref{prop:dimu}. For example, this 
is the case when $\mathcal{P}_{\beta_j}=(Q,\ldots,Q)$, where $Q$ is a 
simplex in $\mathcal{L}^\perp$ of dimension $|\beta_j|+1$.
By the same token, however, Proposition \ref{prop:dimu} shows that this
situation can occur only in very special cases: as soon as enough
normal directions are in play, nonlinear extremals will always appear in 
the critical case.
\end{rem}

\section{Extensions to quermassintegrals, smooth bodies, and zonoids}
\label{sec:extensions}

In the proof of Theorem \ref{thm:main}, the assumption that $\mathcal{P}$ 
are polytopes was used extensively in the proof of the local 
Alexandrov-Fenchel inequality. However, most other arguments of 
this paper are not specific to polytopes. As a byproduct of our methods, 
we will presently extend our main result to more general situations.
In particular, we will characterize the extremals of Theorem \ref{thm:af} 
in the following cases:
\begin{enumerate}[$\bullet$]
\itemsep\abovedisplayskip
\item $C_1=\cdots=C_m$ is arbitrary and $C_{m+1},\ldots,C_{n-2}$ 
are smooth (Theorem~\ref{thm:quer}). In particular, this settles the case of 
quermassintegrals (Corollary~\ref{cor:quer}).
\item $\mathcal{C}$ is any combination of polytopes, smooth 
bodies, and zonoids (Theorem \ref{thm:zonoid}).
\end{enumerate}
In the interest of space, we will consider in this section only the 
supercritical case. Results may also be obtained for the critical 
case with additional work.

It should be emphasized that the analysis of this section will rely 
on the special structure of smooth bodies and zonoids; it does not 
address the main missing ingredient for extending our main results to 
general convex bodies, which is a general form of the local 
Alexandrov-Fenchel inequality (see section \ref{sec:discussion}). However, 
the results of this section further illustrate the methods developed in 
this paper, and capture a number of cases that are important in 
applications.

\subsection{A smooth local Alexandrov-Fenchel principle}

A convex body $C$ in $\mathbb{R}^n$ is called \emph{smooth} if it has a 
unique normal vector at every point of its boundary. The main observation 
behind the extension of our results to cases that include smooth bodies is 
that an analogue of the local Alexandrov-Fenchel inequality may be 
obtained in this case by a direct argument. That this situation is rather 
special will be evident from the statement of the following result: in the 
smooth case, the naive inequality \eqref{eq:localaf} holds, and none 
of the subtleties of Theorem \ref{thm:localaf} arise.

\begin{prop}
\label{prop:smoothlocal}
Let $\mathcal{C}=(C_1,\ldots,C_{n-2})$ be any convex bodies in 
$\mathbb{R}^n$
such that $\V_n(B,B,\mathcal{C})>0$, and suppose that $C_r$ is smooth for 
a given $r\in[n-2]$. Then for any difference of support functions $f$ so 
that $S_{f,\mathcal{C}}=0$, we have $S_{f,f,\mathcal{C}_{\backslash 
r}}=0$.
\end{prop}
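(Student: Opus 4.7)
The strategy is a Weyl-type argument that leverages smoothness of $C_r$ to bypass the combinatorial machinery behind Theorem~\ref{thm:localaf}. Since a lower-dimensional body admits infinitely many supporting hyperplanes at each boundary point, smoothness forces $C_r$ to be full-dimensional; after translation I may assume $0\in\intr C_r$, hence $h_{C_r}>0$ on $S^{n-1}$. Setting $\mu := S_{f,f,\mathcal{C}_{\backslash r}}$, the hypothesis $S_{f,\mathcal{C}}=0$, together with \eqref{eq:mixvolarea} and the symmetry of mixed area measures, yields
$$
\int h_{C_r}\,d\mu \,=\, n\,\V_n(f,f,\mathcal{C}) \,=\, \int f\,dS_{f,\mathcal{C}} \,=\, 0.
$$
It therefore remains to prove $\mu=0$.

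The first step is the one-sided bound $\V_n(f,f,\mathcal{C}_{\backslash r},M)\le 0$ for every convex body $M$. The hypothesis $\V_n(B,B,\mathcal{C})>0$ forces $\mathcal{C}$ to be subcritical by Lemma~\ref{lem:dim}, and combined with full-dimensionality of $C_r$ this yields $\V_n(C_r,C_r,\mathcal{C})>0$. For any convex body $K$ with $\V_n(C_r,C_r,\mathcal{C}_{\backslash r},K)>0$, the Alexandrov--Fenchel inequality applied to the collection $(\mathcal{C}_{\backslash r},K)$ gives
$$
0 \,=\, \V_n(f,C_r,\mathcal{C}_{\backslash r},K)^2 \,\ge\, \V_n(f,f,\mathcal{C}_{\backslash r},K)\,\V_n(C_r,C_r,\mathcal{C}_{\backslash r},K),
$$
since $\V_n(f,C_r,\mathcal{C}_{\backslash r},K) = \V_n(f,K,\mathcal{C}) = \tfrac{1}{n}\int h_K\,dS_{f,\mathcal{C}}=0$. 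Taking $K=C_r+\lambda M$ for $\lambda>0$ keeps the positivity condition because $\V_n(C_r,C_r,\mathcal{C}_{\backslash r},K)\ge\V_n(C_r,C_r,\mathcal{C})>0$, and multilinearity together with $\V_n(f,f,\mathcal{C}_{\backslash r},C_r)=\V_n(f,f,\mathcal{C})=0$ gives $\lambda\,\V_n(f,f,\mathcal{C}_{\backslash r},M)\le 0$, establishing the claim.

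The second step uses smoothness of $C_r$ to upgrade this one-sided bound into vanishing. The crucial input is that, when $C_r$ is smooth, any sufficiently regular function $g$ on $S^{n-1}$ admits a decomposition $g=h_K-a\,h_{C_r}$ for some $a>0$ and convex body $K$, as in \cite[Corollary~2.2]{SvH18} invoked in the proof of Lemma~\ref{lem:3af}; if the $C^1$ regularity coming from the given smoothness hypothesis falls short of what that corollary requires, one first approximates $C_r$ in Hausdorff distance by $C^\infty$-smooth bodies $C_r^{(l)}$ and passes to the limit, using that $\int h_{C_r^{(l)}}\,d\mu\to 0$ by continuity. With such a representation available, the first step yields
$$
\int g\,d\mu \,=\, \int h_K\,d\mu - a\int h_{C_r}\,d\mu \,=\, n\,\V_n(f,f,\mathcal{C}_{\backslash r},K) \,\le\, 0,
$$
and applying the same reasoning to $-g$ gives the reverse inequality. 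Hence $\int g\,d\mu=0$ for every $C^2$ function $g$ on $S^{n-1}$, and Lemma~\ref{lem:c2} together with density of $C^2(S^{n-1})$ in $C(S^{n-1})$ forces $\mu=0$. The heart of the argument is the representation principle used in this step, where smoothness of $C_r$ enters decisively and in a completely different way from the combinatorial approach of Theorem~\ref{thm:localaf} required in the polytope setting.
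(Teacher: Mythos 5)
Your first step is correct and is a genuine (and slightly different) way to get half of what is needed: from $S_{f,\mathcal{C}}=0$, perturbing $C_r$ in the one-sided direction $C_r\mapsto C_r+\lambda M$ and applying Alexandrov--Fenchel does give $\V_n(f,f,\mathcal{C}_{\backslash r},M)\le 0$ for every convex body $M$, and the Weyl computation $\int h_{C_r}\,d\mu=0$ is also correct. The gap is in the second step, specifically in the regularity needed for the decomposition $g=h_K-a\,h_{C_r}$.

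The decomposition from~\cite[Corollary 2.2]{SvH18} requires $C_r$ to be of class $C^2_+$ (restricted Hessian of $h_{C_r}$ uniformly positive definite), not merely smooth in the sense of this paper, which is only $C^1$: a unique outer normal at each boundary point. These hypotheses are genuinely different, and a $C^1$-smooth body can have vanishing curvature on a large set. Your proposed repair by Hausdorff approximation does not close the gap. If one writes $g=h_{K^{(l)}}-a^{(l)}h_{C_r^{(l)}}$ for $C^2_+$-approximants $C_r^{(l)}\to C_r$, the coefficient $a^{(l)}$ is controlled by the reciprocal of the smallest eigenvalue of the restricted Hessian of $h_{C_r^{(l)}}$, which must blow up as $l\to\infty$ (otherwise the limit $C_r$ would itself be $C^2_+$). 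One then has $\int g\,d\mu = \int h_{K^{(l)}}\,d\mu - a^{(l)}\int h_{C_r^{(l)}}\,d\mu$, where by your own Step~1 both $\int h_{K^{(l)}}\,d\mu\le 0$ and $\int h_{C_r^{(l)}}\,d\mu\le 0$. So the estimate becomes $\int g\,d\mu \le -a^{(l)}\int h_{C_r^{(l)}}\,d\mu$, and the right side is a product of a quantity tending to $+\infty$ with one tending to $0$ from below; it is nonnegative and uncontrolled, so it yields no bound at all. Continuity of $\int h_{C_r^{(l)}}\,d\mu\to 0$ does not rescue this.

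The paper's own proof is designed precisely to avoid this: Lemma~\ref{lem:parallelbody} uses Minkowski subtraction to build a genuinely two-sided family $C_r^\tau$ with $\tau\in[-\varepsilon,\varepsilon]$, and the vanishing of $\varphi'(0)$ at the interior minimum of $\varphi\ge0$ gives $\V_n(f,f,\mathcal{C}_{\backslash r},M)=0$ directly (not just $\le 0$), for all smooth $M$. The only regularity this needs of $C_r$ is what the paper's ``smooth'' gives, since Minkowski subtraction $C_r\div(-\tau)M$ is well behaved for $C^1$-smooth $C_r$ and $M$ \cite[Lemma 7.5.4]{Sch14}. If you want to keep your one-sided Step~1, you would still need a separate argument for the reverse inequality that works at the $C^1$ level; as it stands, your Step~2 tacitly upgrades the hypothesis to $C^2_+$ and the approximation does not justify that upgrade.
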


The proof is based on an idea due to Schneider, whose
basic step is the following.

\begin{lem}
\label{lem:parallelbody}
Let $C,M$ be smooth convex bodies in $\mathbb{R}^n$. Then there exist
$\varepsilon,\delta>0$ and a family 
$\{C^\tau\}_{\tau\in[-\varepsilon,\varepsilon]}$ of convex bodies in 
$\mathbb{R}^n$ so that $\|h_{C^\tau}-h_C\|_\infty\le \delta|\tau|$ and
$$
	\lim_{\tau\to 0}
	\frac{h_{C^\tau}(u)-h_C(u)}{\tau} = h_M(u)\quad\mbox{for all }u\in 
	S^{n-1}.
$$
\end{lem}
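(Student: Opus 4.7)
The plan is to construct $C^\tau$ directly via Minkowski operations. For $\tau \geq 0$, simply set
$$C^\tau := C + \tau M.$$
Since support functions are linear under Minkowski addition, $h_{C^\tau} = h_C + \tau h_M$, which immediately gives $\|h_{C^\tau} - h_C\|_\infty = \tau\|h_M\|_\infty$ and the one-sided limit $(h_{C^\tau}(u) - h_C(u))/\tau = h_M(u)$ exactly (not just in the limit). Thus the only nontrivial case is $\tau < 0$.

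For $\tau < 0$, I would define $C^\tau$ by the Minkowski difference,
$$C^\tau := C \sim |\tau|M = \{x \in \mathbb{R}^n : x + |\tau|M \subseteq C\},$$
and the goal is to show that there exists $\varepsilon > 0$ such that for all $\tau \in [-\varepsilon, 0)$, the set $C^\tau$ is a (nonempty) convex body whose support function satisfies $h_{C^\tau} = h_C - |\tau|h_M = h_C + \tau h_M$. Granted this, both conclusions follow exactly as in the nonnegative case, with $\delta = \|h_M\|_\infty$. The property we need is precisely that $|\tau|M$ is a Minkowski summand of $C$, i.e., $C = C^\tau + |\tau|M$. This is the main step of the proof, and the smoothness assumptions on $C$ and $M$ are used exactly here.

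To verify the summand property, I would use the regularity of $h_C$ and $h_M$ coming from smoothness of $C$ and $M$. Passing to a $C^2$ context (either directly if smoothness is interpreted as $C^2_+$, or after a preliminary approximation of $C$ and $M$ by bodies with $C^2_+$ support functions), one characterizes support functions of convex bodies via the positive semi-definiteness of the spherical Hessian $\nabla^2 f + f \cdot \mathrm{Id}$ on the tangent spaces to $S^{n-1}$. Since $h_C$ satisfies this with strict positivity (by smoothness), and $h_M$ is bounded in $C^2$, the perturbed function $h_C + \tau h_M$ continues to satisfy the positive semi-definite condition for $|\tau|$ at most some $\varepsilon > 0$ depending only on $C, M$. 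Hence $h_C + \tau h_M$ is the support function of some convex body, which must then coincide with $C \sim |\tau|M$.

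The main obstacle is exactly this regularity step: ensuring that the smoothness hypothesis on $C$ and $M$ in the paper's sense is enough to control the perturbation, or else reducing to it by approximation. Once the Minkowski summand property holds uniformly for $|\tau| \le \varepsilon$, the identity $h_{C^\tau} = h_C + \tau h_M$ is exact (not merely asymptotic), so both the Lipschitz bound and the derivative identity are trivial consequences, and one may even take the limit in the statement of the lemma to be attained pointwise for every $u \in S^{n-1}$ with no remainder.
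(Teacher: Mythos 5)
Your construction of $C^\tau$ (Minkowski sum for $\tau\ge 0$, Minkowski difference for $\tau<0$) is exactly the one used in the paper, and the $\tau\ge 0$ part is handled identically. The gap is in the $\tau<0$ part, where you claim that for small $|\tau|$ the body $|\tau|M$ is a Minkowski summand of $C$, so that $h_{C^\tau}=h_C+\tau h_M$ holds exactly. That summand property would indeed make the conclusion trivial, but it is \emph{not} a consequence of smoothness in the sense used in this paper. Here ``smooth'' only means that every boundary point has a unique outer normal; it does not imply that $h_C$ or $h_M$ is even $C^1$ on $S^{n-1}$ (that would require strict convexity), let alone $C^2$ with a uniformly positive spherical Hessian. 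Your argument via positive definiteness of $\nabla^2 h_C + h_C\,\mathrm{Id}$ needs precisely that $C^2_+$ regularity. In the merely-smooth setting the spherical Hessian may be only Alexandrov-a.e.\ defined and may degenerate to zero near some directions, in which case no $\tau>0$ makes $\tau M$ a summand of $C$, and $h_{C\div|\tau|M}$ is \emph{strictly less} than $h_C-|\tau|h_M$ on an open set.

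The approximation workaround you suggest does not repair this. If you replace $C,M$ by $C^2_+$ approximants $C_k,M_k$, the corresponding $\varepsilon_k$ is controlled by the smallest eigenvalue of $\nabla^2 h_{C_k}+h_{C_k}\mathrm{Id}$, which tends to zero whenever the limiting $C$ fails to be $C^2_+$; so you obtain the statement for each $C_k$ on a shrinking interval $[-\varepsilon_k,\varepsilon_k]$, with nothing surviving in the limit. The paper instead \emph{does not} attempt to get the exact identity $h_{C^\tau}=h_C+\tau h_M$. It establishes only the one-sided limit $\lim_{\tau\to 0^-}(h_{C^\tau}(u)-h_C(u))/\tau=h_M(u)$, pointwise at every $u\in S^{n-1}$, by citing Schneider \cite[Lemma 7.5.4]{Sch14}; the Lipschitz bound for $\tau<0$ is \cite[p.~425]{Sch14}. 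That differentiability statement is genuinely asymptotic, holds under the weak smoothness hypothesis used here (smoothness of $M$ is what makes the limit hold at \emph{every} $u$, not just almost every $u$), and is the real content of the lemma. You should either restrict to $C^2_+$ bodies (which would be a weaker statement than what the paper needs) or invoke a parallel-body differentiability result of this kind rather than the summand property.
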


\begin{proof}
We define $C^\tau$ as
$C^\tau := C+\tau M$ for $\tau\ge 0$ and
$C^\tau := C\div (-\tau)M$ for $\tau<0$,
where $C\div A := \{x\in\mathbb{R}^n:x+A\subseteq C\}$ denotes Minkowski 
subtraction. That $\|h_{C^\tau}-h_C\|_\infty\le 
\delta|\tau|$ is trivial for $\tau\ge 0$, and is shown in
\cite[p.\ 425]{Sch14} for $\tau<0$. The remaining statement
follows from \cite[Lemma 7.5.4]{Sch14}. 
\end{proof}

We can now conclude the proof of Proposition \ref{prop:smoothlocal}.

\begin{proof}[Proof of Proposition \ref{prop:smoothlocal}]
Let $M$ be any smooth body, and define the family $C_r^\tau$ as in
Lemma \ref{lem:parallelbody} (with $C\leftarrow C_r$).
Define the function
$$
	\varphi(\tau) :=
	\V_n(f,B,\mathcal{C}_{\backslash r},C_r^\tau)^2 -
	\V_n(f,f,\mathcal{C}_{\backslash r},C_r^\tau)\,
	\V_n(B,B,\mathcal{C}_{\backslash r},C_r^\tau).
$$
Then $\varphi(\tau)\ge 0$ by Lemma \ref{lem:3af}, and
$\varphi(0)=0$ as $C_r^0=C_r$ and $S_{f,\mathcal{C}}=0$. Thus $\tau=0$ is 
a local minimum of $\varphi$. It follows that
$$
	0=\frac{d\varphi(\tau)}{d\tau}\bigg|_{\tau=0}
	=
	-\V_n(f,f,\mathcal{C}_{\backslash r},M)\,
	\V_n(B,B,\mathcal{C}),
$$
where we used Lemma \ref{lem:parallelbody}, \eqref{eq:mixvolarea} and 
$S_{f,\mathcal{C}}=0$ to compute the derivative. As $M$ was an arbitrary 
smooth body and $\V_n(B,B,\mathcal{C})>0$, we have shown that 
$$
	0 = \V_n(f,f,\mathcal{C}_{\backslash r},g)
	= \frac{1}{n}\int g\,dS_{f,f,\mathcal{C}_{\backslash r}}
$$
for every function $g$ that is a difference of support functions of smooth 
bodies. As any $g\in C^2$ may be written in this manner by Lemma 
\ref{lem:c2}, the conclusion follows.
\end{proof}

One of the consequences of Proposition \ref{prop:smoothlocal} is that for 
the characterization of extremals of the Alexandrov-Fenchel inequalities, 
all smooth bodies are indistinguishable. This conclusion is a variant of 
\cite[Theorem 7.6.7]{Sch14}.

\begin{cor}
\label{cor:smoothsilly}
Let $\mathcal{C}=(C_1,\ldots,C_n)$ 
be convex bodies
in $\mathbb{R}^n$ so that $C_1,\ldots,C_m$ are smooth
and
$\V_n(B,B,\mathcal{C})>0$. Let
$\mathcal{C}'=(C_1',\ldots,C_m',C_{m+1},\ldots,C_n)$
for smooth bodies $C_1',\ldots,C_m'$.
Then $\supp S_{B,\mathcal{C}}=\supp 
S_{B,\mathcal{C}'}$, and for any difference of support functions $f$ we 
have $S_{f,\mathcal{C}}=0$ if and only if $S_{f,\mathcal{C}'}=0$.
\end{cor}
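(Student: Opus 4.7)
The plan is to swap the smooth entries one at a time. Proceeding by induction on the number of slots in which $\mathcal{C}$ and $\mathcal{C}'$ differ, it suffices to handle the case that $\mathcal{C}$ and $\mathcal{C}'$ agree except in a single position $r\in[m]$, where $C_r$ and $C_r'$ are both smooth. Since smooth bodies are full-dimensional, the dimensionality conditions of Lemma \ref{lem:dim} for the positivity of $\V_n(B,B,\cdot)$ and $\V_n(C_r,C_r,\cdot)$ depend only on the common non-smooth entries of $\mathcal{C}_{\backslash r}$. Hence the hypothesis $\V_n(B,B,\mathcal{C})>0$ propagates to $\V_n(B,B,\mathcal{C}')>0$ and $\V_n(C_r,C_r,\mathcal{C}')>0$, as well as to their analogues with the roles of $\mathcal{C},\mathcal{C}'$ reversed.

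For the equivalence $S_{f,\mathcal{C}}=0\Leftrightarrow S_{f,\mathcal{C}'}=0$, by symmetry it suffices to establish the forward implication. Assume $S_{f,\mathcal{C}}=0$. Proposition \ref{prop:smoothlocal} applied to the smooth slot $r$ yields $S_{f,f,\mathcal{C}_{\backslash r}}=0$. Integrating $h_{C_r'}$ against these two vanishing measures and invoking \eqref{eq:mixvolarea} together with the symmetry of mixed volumes, we obtain
\begin{align*}
\V_n(f,C_r,\mathcal{C}') &= \tfrac{1}{n}\int h_{C_r'}\,dS_{f,\mathcal{C}}=0,\\
\V_n(f,f,\mathcal{C}') &= \tfrac{1}{n}\int h_{C_r'}\,dS_{f,f,\mathcal{C}_{\backslash r}}=0.
\end{align*}
Lemma \ref{lem:afeq}(a), applied to the collection $\mathcal{C}'$ with reference body $L=C_r$ (using $\V_n(C_r,C_r,\mathcal{C}')>0$ from the first step), then yields $S_{f,\mathcal{C}'}=0$.

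For the support equality, suppose for contradiction that there exists $u\in\supp S_{B,\mathcal{C}'}\setminus\supp S_{B,\mathcal{C}}$. Choose a nonnegative $C^2$ function $\varphi:S^{n-1}\to\mathbb{R}$ with $\varphi(u)>0$ that vanishes on an open neighborhood of the closed set $\supp S_{B,\mathcal{C}}$. By Lemma \ref{lem:c2}, $\varphi$ is a difference of support functions, so Lemma \ref{lem:suppeq} gives $S_{\varphi,\mathcal{C}}=0$; the equivalence just established then gives $S_{\varphi,\mathcal{C}'}=0$. Pairing with $h_B\equiv 1$ and using symmetry together with \eqref{eq:mixvolarea},
$$\int \varphi\,dS_{B,\mathcal{C}'}=n\V_n(\varphi,B,\mathcal{C}')=\int h_B\,dS_{\varphi,\mathcal{C}'}=0.$$
On the other hand, $\varphi$ is continuous, nonnegative, with $\varphi(u)>0$ and $u\in\supp S_{B,\mathcal{C}'}$, which forces the integral to be strictly positive; contradiction. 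The reverse inclusion follows by exchanging the roles of $\mathcal{C}$ and $\mathcal{C}'$.

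The crux of the argument is Proposition \ref{prop:smoothlocal}, which upgrades the condition $S_{f,\mathcal{C}}=0$ to the much stronger $S_{f,f,\mathcal{C}_{\backslash r}}=0$; this additional piece of information is precisely what allows the smooth body $C_r$ to be \emph{plugged out} by integration against $h_{C_r'}$ and then the new smooth body $C_r'$ to be \emph{plugged in} via the equality-case criterion of Lemma \ref{lem:afeq}(a). This plug-and-play mechanism is unavailable for non-smooth bodies, where Proposition \ref{prop:smoothlocal} genuinely fails, which is why the smoothness hypothesis cannot be dropped in the corollary.
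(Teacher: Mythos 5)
Your proof is correct and follows essentially the same route as the paper: reduce to swapping a single smooth slot, use Proposition \ref{prop:smoothlocal} to obtain the additional vanishing $S_{f,f,\mathcal{C}_{\backslash r}}=0$, pair both vanishing measures with $h_{C_r'}$ to verify the hypotheses of Lemma \ref{lem:afeq}(a) for the collection $\mathcal{C}'$, and deduce the support equality via a nonnegative $C^2$ test function vanishing on $\supp S_{B,\mathcal{C}}$. The only cosmetic difference is that you phrase the support step as a contradiction argument with a function vanishing on a neighborhood, where the paper argues directly with a function vanishing only on the (closed) support set; both are valid.
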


\begin{proof}
It suffices to prove the case $m=1$, as the general case then 
follows by applying the result repeatedly.
Note also that as smooth bodies are 
full-dimensional, $\V_n(B,B,\mathcal{C})>0$ if and only if
$\V_n(K,K,\mathcal{C}')>0$ for any smooth body $K$.

Let $f$ be a difference of support functions such that 
$S_{f,\mathcal{C}}=0$. By integrating the mixed area measures in 
Proposition \ref{prop:smoothlocal} (with $r=1$) against $h_{C_1'}$, we 
obtain $\V_n(f,C_1,\mathcal{C}')=0$ and $\V_n(f,f,\mathcal{C}')=0$. Thus 
Lemma \ref{lem:afeq} implies $S_{f,\mathcal{C}'}=0$. The converse 
implication follows by reversing the roles of $C_1,C_1'$.

Now note that $x\not\in\supp S_{B,\mathcal{C}}$ holds if and only if
there is a nonnegative $C^2$ function $f$ such that $f(x)>0$ and
$f(u)=0$ for all $u\in \supp S_{B,\mathcal{C}}$. Suppose this is the case.
Then $S_{f,\mathcal{C}}=0$
by Lemma \ref{lem:suppeq}, so $S_{f,\mathcal{C}'}=0$ as well. Integrating 
against $h_B$ and using the symmetry of mixed volumes yields
$\int f \,dS_{B,\mathcal{C}'}=0$, and thus
$f(u)=0$ for $u\in\supp S_{B,\mathcal{C}'}$ as $f$ is nonnegative and 
continuous. It follows that $x\not\in\supp S_{B,\mathcal{C}'}$. 
The converse
implication follows again by reversing the roles of $C_1,C_1'$.
\end{proof}

\subsection{Quermassintegrals}

\emph{Quermassintegrals} of a convex body $K$, defined by
$$
	W_i(K) := \V_n(\underbrace{K,\ldots,K}_{n-i},
	\underbrace{B,\ldots,B}_i),
$$
play a special role in convexity and in integral geometry; see, e.g., 
\cite[\S 6.4]{Gru07}. The Alexandrov-Fenchel inequality implies that 
quermassintegrals form a log-concave sequence, that is, $W_i(K)^2\ge 
W_{i-1}(K)\,W_{i+1}(K)$. Even in this very special case, the extremal 
bodies $K$ have been characterized only in the presence of symmetry 
assumptions \cite[Theorem 7.6.20]{Sch14}. In this section, we will settle 
this problem as a special case of a much more general result.

In the remainder of this section, we fix the following setting. Let 
$m\in[n-2]$, and let $M$ be any convex 
body in $\mathbb{R}^n$ such that $\dim M \ge m+2$. We further let 
$C_{m+1},\ldots,C_{n-2}$ be any smooth convex bodies in $\mathbb{R}^n$, 
and denote by
\begin{equation}
\label{eq:querc}
	\mathcal{C} = (\underbrace{M,\ldots,M}_m,C_{m+1},\ldots,C_{n-2}).
\end{equation}
The main result of this section will characterize the extremal 
bodies $K,L$ such that $\V_n(K,L,\mathcal{C})^2=
\V_n(K,K,\mathcal{C})\,\V_n(L,L,\mathcal{C})$. The reason we are able to 
do this for a general convex body $M$ (rather than a polytope) relies
on two observations. First, note that the gluing argument of section
\ref{sec:supercrit} works \emph{verbatim} for general convex bodies, as 
long as a local Alexandrov-Fenchel inequality is available. We may 
therefore use the gluing argument together with Proposition 
\ref{prop:smoothlocal} to reduce to the case $m=n-2$. The latter case, 
known as \emph{Minkowski's quadratic inequality}, was settled in complete 
generality in \cite{SvH19}, which enables us to conclude the proof.

Before we formulate this result precisely, let us provide a geometric 
characterization of the support of $S_{B,\mathcal{C}}$ in the present 
setting.

\begin{defn}
A vector $u\in S^{n-1}$ is called an \emph{$r$-extreme normal vector} of a 
convex body $M$ in $\mathbb{R}^n$ if there do not exist linearly 
independent normal vectors $u_1,\ldots,u_{r+2}$ at a boundary point of $M$ 
such that $u=u_1+\cdots+u_{r+2}$.
\end{defn}

For example, if $M$ is a polytope, then $u$ is an $r$-extreme normal 
vector of $M$ if and only if it is an outer normal of a face of $K$ of 
dimension at least $n-1-r$.

\begin{lem}
\label{lem:quasupp}
Let $m\in[n-2]$, let $M$ be a convex body in $\mathbb{R}^n$ with $\dim
M\ge m$, let $C_{m+1},\ldots,C_{n-2}$ be smooth convex bodies in
$\mathbb{R}^n$, and let
$\mathcal{C}$ be as in \eqref{eq:querc}. Then
$$
	\supp S_{B,\mathcal{C}}=\mathrm{cl}\{u\in S^{n-1}:u\mbox{ is an }
	(n-1-m)\mbox{-extreme normal vector of }M\}.
$$
\end{lem}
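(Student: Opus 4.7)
The plan is to reduce the computation of $\supp S_{B,\mathcal{C}}$ to the support of one of the classical area measures of a single convex body $M$, whose geometry is well understood in terms of $r$-extreme normal vectors. The reduction is enabled by Corollary~\ref{cor:smoothsilly}, which asserts that, provided $\V_n(B,B,\mathcal{C})>0$, we may freely replace each of the smooth bodies $C_{m+1},\ldots,C_{n-2}$ by any other smooth bodies without changing $\supp S_{B,\mathcal{C}}$.

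The first step is to verify that $\V_n(B,B,\mathcal{C})>0$ so that Corollary~\ref{cor:smoothsilly} applies. By Lemma~\ref{lem:dim}, it suffices to check that every subcollection of $k$ of the bodies $(B,B,M,\ldots,M,C_{m+1},\ldots,C_{n-2})$ spans a subspace of dimension at least $k$. Since $B$ and each $C_i$ are full-dimensional (the latter because smoothness presupposes nonempty interior), the only nontrivial case concerns subcollections consisting solely of copies of $M$; such a subcollection of size $j\le m$ spans $\dim M\ge m\ge j$, as required. Once positivity is established, I would apply Corollary~\ref{cor:smoothsilly} to replace each $C_i$ ($i>m$) by the Euclidean ball $B$ (which is smooth), which reduces the claim to
$$
\supp S_{\underbrace{B,\ldots,B}_{n-1-m},\underbrace{M,\ldots,M}_{m}}
= \mathrm{cl}\{u\in S^{n-1}:u\text{ is an }(n-1-m)\text{-extreme normal vector of }M\}.
$$

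The measure on the left-hand side is precisely the classical $m$-th area measure $S_m(M,\cdot)$ of Schneider (up to a positive normalizing constant). Its support was determined explicitly in \cite[Theorem 4.5.3]{Sch14}: for any convex body $K$ in $\mathbb{R}^n$, the support of $S_j(K,\cdot)$ coincides with the closure of the set of $(n-1-j)$-extreme normal vectors of $K$. Setting $j=m$ and $K=M$ gives the lemma. The proof is therefore essentially a two-step argument in which the only nontrivial content is supplied by Corollary~\ref{cor:smoothsilly} (whose proof rests on the smooth local Alexandrov--Fenchel inequality of Proposition~\ref{prop:smoothlocal}) together with the classical support theorem for area measures; no further obstacle is anticipated.
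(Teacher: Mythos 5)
Your proof is correct and follows essentially the same route as the paper's: the paper also checks $\V_n(B,B,\mathcal{C})>0$ via Lemma~\ref{lem:dim}, invokes Corollary~\ref{cor:smoothsilly} to replace the smooth factors by $B$, and then cites Schneider's classical support theorem for area measures (the paper references Schneider~1975, Satz~4; your pointer to \cite[Theorem 4.5.3]{Sch14} is the same result in the book). The only minor difference is that you spell out the dimensional check for $\V_n(B,B,\mathcal{C})>0$ in more detail; the substance of the argument is identical.
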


\begin{proof}
That $\V_n(B,B,\mathcal{C})>0$ follows from $\dim M\ge m$ and
Lemma \ref{lem:dim}. Therefore, by Corollary \ref{cor:smoothsilly}, we may 
assume without loss of 
generality that $C_{m+1}=\cdots=C_{n-2}=B$. In the latter case, the result
was proved in \cite[Satz 4]{Sch75}.
\end{proof}

We can now formulate the main result of this section.

\begin{thm}
\label{thm:quer}
Let $m\in[n-2]$, let $M$ be a convex body in $\mathbb{R}^n$ with $\dim 
M\ge m+2$, let $C_{m+1},\ldots,C_{n-2}$ be smooth convex bodies in
$\mathbb{R}^n$, and let 
$\mathcal{C}$ be as defined in \eqref{eq:querc}.
Then for any convex bodies $K,L$ in $\mathbb{R}^n$ so
that $\V_n(K,L,\mathcal{C})>0$, we have
$$
	\V_n(K,L,\mathcal{C})^2 =
	\V_n(K,K,\mathcal{C})\,
	\V_n(L,L,\mathcal{C})
$$
if and only if there exist $a>0$ and $v\in\mathbb{R}^n$ so that
$K$ and $aL+v$ have the same supporting hyperplanes in all
$(n-1-m)$-extreme normal directions of $M$.
\end{thm}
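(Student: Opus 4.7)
The \emph{if} direction is routine. By Lemma \ref{lem:quasupp}, the set of $(n-1-m)$-extreme normal directions of $M$ is dense in $\supp S_{B,\mathcal{C}}$, so if $h_K - a h_L$ coincides with a linear function $\langle v,\cdot\rangle$ on that set, continuity extends the identity to all of $\supp S_{B,\mathcal{C}}$; Lemmas \ref{lem:lineq}, \ref{lem:suppeq}, and \ref{lem:simpleeq} then deliver equality in Theorem \ref{thm:af}.

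My plan for the \emph{only if} direction is to induct on $n \geq m+2$. The base case $n = m+2$ reduces to Minkowski's quadratic inequality $\V_n(K,L,M,\ldots,M)^2 = \V_n(K,K,M,\ldots,M)\,\V_n(L,L,M,\ldots,M)$ for the single reference body $M$ of dimension $\geq n$, and the corresponding extremal characterization was settled for arbitrary convex bodies in \cite{SvH19}; via Lemma \ref{lem:quasupp} this matches the statement of Theorem \ref{thm:quer} exactly when $m = n-2$. For the inductive step $n > m+2$, I would first observe that $\mathcal{C}$ is supercritical (Definition \ref{defn:supercrit}): a sub-collection of size $k$ formed only from copies of $M$ has dimension $\geq \dim M \geq m+2 \geq k+2$, while any sub-collection meeting $\{C_{m+1},\ldots,C_{n-2}\}$ has dimension $n$. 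Given equality, Lemma \ref{lem:simpleeq} supplies $a > 0$ with $S_{f,\mathcal{C}} = 0$ for $f := h_K - a h_L$. Choose any $r \in \{m+1,\ldots,n-2\}$ (possible since $n > m+2$); the body $C_r$ is smooth and $\V_n(B,B,\mathcal{C}) > 0$ by Lemma \ref{lem:dim}, so Proposition \ref{prop:smoothlocal} applies directly and yields $S_{f,f,\mathcal{C}_{\backslash r}} = 0$. This replaces the delicate Theorem \ref{thm:localaf} of the polytope setting and lets $f$ itself play the role of the function $g$ constructed in Lemma \ref{lem:superpproj}.

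I would then run the gluing argument of section \ref{sec:supercrit} essentially verbatim. Let $E := \sspan(M - M)$ and set $U := S^{n-1}$ if $\dim M = n$, and $U := S^{n-1} \setminus E$ otherwise; in either case $U$ has full measure in $S^{n-1}$ and $\dim \proj_{u^\perp} M = \dim M \geq m+2$ for every $u \in U$. For such $u$, integrating the two mixed-area identities against $h_{[0,u]}$ and applying Corollary \ref{cor:segproj} shows, via Lemma \ref{lem:afeq} in $u^\perp$, that
$$S_{\proj_{u^\perp} f,\, \proj_{u^\perp} \mathcal{C}_{\backslash r}} = 0$$
for the collection of $m$ copies of $\proj_{u^\perp} M$ together with the smooth projected bodies $\proj_{u^\perp} C_j$ ($j \in \{m+1,\ldots,n-2\}\setminus\{r\}$), which still satisfies the hypotheses of Theorem \ref{thm:quer} in ambient dimension $n-1$. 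The inductive hypothesis therefore supplies $s(u) \in u^\perp$ with $\proj_{u^\perp} f = \langle s(u),\cdot\rangle$ on $\supp S_{[0,u], B, \mathcal{C}_{\backslash r}}$, by Lemma \ref{lem:quasupp} in $u^\perp$. The arguments of Lemma \ref{lem:supercommon}, Corollary \ref{cor:superauv}, and Lemmas \ref{lem:superconst} and \ref{lem:superglue} then patch these $s(u)$ into a single $v \in \mathbb{R}^n$ with $f(x) = \langle v, x\rangle$ on $\supp S_{B,B,\mathcal{C}_{\backslash r}}$; Lemma \ref{lem:maxsupp} upgrades this to $\supp S_{B,\mathcal{C}} = \supp S_{B, C_r, \mathcal{C}_{\backslash r}} \subseteq \supp S_{B, B, \mathcal{C}_{\backslash r}}$, and Lemma \ref{lem:quasupp} converts back to the claimed equality of supporting hyperplanes.

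The principal obstacle is verifying that the overlap and dimensionality arguments of section \ref{sec:supercrit}, in particular Lemma \ref{lem:supercommon}, transfer without change to the present non-polytope setting; this is straightforward because those arguments depend only on the supercriticality of $\mathcal{C}$ and positivity of mixed volumes through Lemma \ref{lem:dim}, never on the polytope assumption on the reference bodies. The decisive conceptual simplification relative to the proof of Theorem \ref{thm:main} is that Proposition \ref{prop:smoothlocal} produces the local Alexandrov-Fenchel identity in its strongest possible form, with no modification of $f$ outside $\supp S_{B,\mathcal{C}}$, thereby sidestepping the entire finite-dimensional reduction developed in sections \ref{sec:matrix}--\ref{sec:localaf}.
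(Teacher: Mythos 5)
Your argument is correct and follows the paper's own proof almost exactly: both induct on $n$, invoke the \cite{SvH19} characterization of Minkowski's quadratic inequality as the base case $n=m+2$, use Proposition \ref{prop:smoothlocal} with a smooth $C_r$ in place of Theorem \ref{thm:localaf} to get $S_{f,f,\mathcal{C}_{\backslash r}}=0$ directly (so $g\equiv f$), and then run the gluing machinery of section \ref{sec:supercrit} verbatim, finishing via Lemmas \ref{lem:superglue}, \ref{lem:maxsupp}, and \ref{lem:quasupp}. The only cosmetic differences are that the paper fixes $r=n-2$ rather than an arbitrary $r\in\{m+1,\ldots,n-2\}$, and your definition of $U$ is a bit more conservative than the $U$ of Lemma \ref{lem:superumeasure1} (you discard $S^{n-1}\cap\sspan M$ whenever $\dim M<n$, whereas the paper only needs to when $\dim M=m+2$), but both choices have full measure and serve the same purpose.
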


\begin{proof}
By Lemmas \ref{lem:simpleeq} and \ref{lem:quasupp}, the conclusion is 
equivalent to the statement that for any differences of support 
functions $f$, we have $S_{f,\mathcal{C}}=0$ if and only if
there exists $s\in\mathbb{R}^n$ so that $f(x)=\langle s,x\rangle$
for all $s\in\supp S_{B,\mathcal{C}}$. The \emph{if} direction follows 
from Lemmas \ref{lem:lineq} and \ref{lem:suppeq}, so it remains
to prove the \emph{only if} direction.

We will fix $m\ge 1$, and prove this statement by induction on $n$.
The base case of the induction, $n=m+2$, is the main result of 
\cite{SvH19}. We now suppose $n>m+2$, and assume the induction hypothesis 
that the conclusion has been proved in dimension $n-1$. We aim to show 
that the conclusion then also holds in dimension $n$.

To this end, let $f$ be a difference of support functions such that 
$S_{f,\mathcal{C}}=0$, and let $r=n-2$. Then $C_r$ is smooth, so 
$S_{f,f,\mathcal{C}_{\backslash r}}=0$ by Proposition 
\ref{prop:smoothlocal}. Moreover, $\mathcal{C}$ is supercritical
as $\dim M\ge m+2$. The argument of section 
\ref{sec:supercrit} now applies \emph{verbatim} in the present setting 
with $\mathcal{P}\leftarrow\mathcal{C}$ and $g\equiv f$ (indeed, that 
$\mathcal{P}$ are polytopes was used in section \ref{sec:supercrit} only 
to apply the local Alexandrov-Fenchel inequality). In particular, it 
follows from Lemma \ref{lem:superglue} that there exists 
$s\in\mathbb{R}^n$ so that $f(x)=\langle s,x\rangle$ for all $x\in\supp 
S_{B,B,\mathcal{C}_{\backslash r}}$, and the conclusion now follows from 
Lemma \ref{lem:maxsupp}.
\end{proof}

When specialized to quermassintegrals, we obtain the following. In the 
case that $K$ is centrally symmetric, this result was proved in 
\cite[Theorem 7.6.20]{Sch14}.

\begin{defn}
A convex body $K$ in $\mathbb{R}^n$ is a \emph{$(n-1-i)$-tangential body 
of a ball} if there exist $a>0$, $v\in\mathbb{R}^n$ so that
$aB+v\subseteq K$, and such that $aB+v$ and $K$ have the same supporting
hyperplanes in all $i$-extreme normal directions of $K$.
\end{defn}

\begin{cor}
\label{cor:quer}
Let $K$ be any convex body in $\mathbb{R}^n$, and let $i\in[n-1]$. Then we 
have equality $W_i(K)^2=W_{i-1}(K)\,W_{i+1}(K)$ if and only if either
$\dim K<n-i$, or $K$ is an $(n-1-i)$-tangential body of a   
ball.
\end{cor}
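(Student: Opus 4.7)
The plan is to derive the corollary as a direct application of Theorem~\ref{thm:quer} after suitable rewriting. Setting
\[
	\mathcal{C} := (\underbrace{K,\ldots,K}_{n-i-1},\underbrace{B,\ldots,B}_{i-1})
\]
and $L := B$, multilinearity of mixed volumes gives $\V_n(K,K,\mathcal{C}) = W_{i-1}(K)$, $\V_n(K,L,\mathcal{C}) = W_i(K)$, and $\V_n(L,L,\mathcal{C}) = W_{i+1}(K)$, so that $W_i(K)^2 = W_{i-1}(K)W_{i+1}(K)$ is precisely the Alexandrov-Fenchel equality for these bodies.

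I will first dispose of the degenerate dimensional ranges via Lemma~\ref{lem:dim}. If $\dim K < n-i$, both $W_i(K)$ and $W_{i-1}(K)$ vanish because the number of $K$-entries in each exceeds $\dim K$, so equality holds trivially, matching the first branch of the conclusion. If $\dim K = n-i$, then $W_i(K) > 0$ while $W_{i-1}(K) = 0$ (the $n-i+1$ copies of $K$ in $W_{i-1}$ cannot span enough dimensions), so the Alexandrov-Fenchel inequality is strict; correspondingly $K$ is not full-dimensional and cannot be an $(n-1-i)$-tangential body of a ball (which requires $aB+v\subseteq K$), so neither side of the equivalence holds.

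In the main case $\dim K \ge n-i+1$ with $1 \le i \le n-2$, I will apply Theorem~\ref{thm:quer} with $m = n-i-1 \in [n-2]$, $M = K$, and $C_{m+1} = \cdots = C_{n-2} = B$ smooth. The hypothesis $\dim M \ge m+2$ becomes $\dim K \ge n-i+1$ and is assumed, and $\V_n(K,L,\mathcal{C}) = W_i(K) > 0$ by Lemma~\ref{lem:dim}. Theorem~\ref{thm:quer} then yields $a > 0$ and $v \in \mathbb{R}^n$ such that $K$ and $aB+v$ have the same supporting hyperplanes at all $(n-1-m) = i$-extreme normal directions of $K$, which is precisely the supporting-hyperplane clause of the tangential body definition. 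The edge case $i = n-1$ falls outside Theorem~\ref{thm:quer} since $m = 0$; there the reference collection is $(B,\ldots,B)$, and iterating Proposition~\ref{prop:smoothlocal} together with the classical two-dimensional Minkowski equality forces $K$ to be a ball, hence a trivial $0$-tangential body of a ball since every direction is $(n-1)$-extreme.

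The step I expect to be the main obstacle is upgrading Theorem~\ref{thm:quer}'s conclusion --- agreement of supporting hyperplanes at $i$-extreme normals only --- to the inclusion $aB+v \subseteq K$ required by the tangential body definition. I plan to handle this via a classical argument: the supporting-hyperplane equality at an $i$-extreme normal $u$ certifies that the halfspace $\{x : \langle u,x\rangle \le h_K(u)\}$ contains $aB+v$; intersecting over a sufficiently rich subset of such directions (the $0$-extreme normals alone suffice to recover $K$ as the intersection of its supporting halfspaces, and these lie inside the closure of $i$-extreme normals) yields $aB+v \subseteq K$, after possibly replacing $a$ by $\min_u(h_K(u) - \langle v,u\rangle)$ and using compatibility with the existing equality on $\supp S_{B,\mathcal{C}}$ (which equals the closure of $i$-extreme normals by Lemma~\ref{lem:quasupp}). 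The converse direction is immediate from Lemmas~\ref{lem:lineq}, \ref{lem:suppeq}, and~\ref{lem:simpleeq} together with Lemma~\ref{lem:quasupp}.
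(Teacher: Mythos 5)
Your reduction to Theorem~\ref{thm:quer} and the handling of the degenerate dimensional ranges match the paper's proof exactly. But the step you yourself flag as ``the main obstacle'' contains a genuine gap, and your treatment of the $i=n-1$ edge case is more circuitous than needed.

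On the main obstacle: your plan to obtain $aB+v\subseteq K$ by intersecting supporting halfspaces at $0$-extreme normals invokes \cite[Theorem 2.2.6]{Sch14}, which asserts that a \emph{full-dimensional} body is the intersection of its regular supporting halfspaces. You never establish $\dim K=n$, and without it the intersection of those halfspaces may be strictly larger than $K$. The paper handles this first: if $K\subset w^\perp$ for some $w\in S^{n-1}$, then $w$ and $-w$ are both $i$-extreme normals of $K$, so the hyperplane agreement would force $aB+v\subset w^\perp$, which is impossible; hence $\dim K=n$, and only then does the halfspace argument go through. Your suggestion to ``replace $a$ by $\min_u(h_K(u)-\langle v,u\rangle)$'' also does not work: changing $a$ breaks the equality $h_K(u)=a+\langle v,u\rangle$ at the $i$-extreme directions, and $a$ is already pinned down by the equality case (Lemma~\ref{lem:simpleeq}), so it is not a free parameter. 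On $i=n-1$: you are right that $m=n-i-1=0$ is outside the hypotheses of Theorem~\ref{thm:quer}, but rather than re-running Proposition~\ref{prop:smoothlocal} and a two-dimensional base case, simply apply Theorem~\ref{thm:quer} with $M=B$ and $m=n-2$; then $\dim M=n\ge m+2$ holds, and the conclusion (agreement on $1$-extreme normals of $B$, which is all of $S^{n-1}$ since $B$ is smooth) forces $K=aB+v$, matching the $0$-tangential body case directly.
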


\begin{proof}
By Lemma \ref{lem:dim}, we have $W_i(K)=0$ if and only if $\dim K<n-i$;
in this case equality always holds. On the other hand,
if $\dim K=n-i$, then $W_i(K)>0$ and $W_{i-1}(K)=0$, so equality cannot 
hold; and $K$ cannot be a tangential body, as $aB+v\subseteq 
K$ implies that any tangential body satisfies $\dim K=n$.
Finally, if $\dim K \ge n-i+1$, Theorem \ref{thm:quer} implies that 
equality $W_i(K)^2=W_{i-1}(K)\,W_{i+1}(K)$ holds if and only if there 
exist $a>0$ and $v\in\mathbb{R}^n$ so that $aB+v$ and $K$ have the same 
supporting hyperplanes in all $i$-extreme normal directions of $K$.

It remains to show that the latter condition implies
\emph{a fortiori} that $aB+v\subseteq K$. Indeed, if $K\subset w^\perp$ 
for some $w\in S^{n-1}$, then both $w$ 
and $-w$ are $i$-extreme, so $aB+v\subset w^\perp$ as well. As that cannot 
be, we must have $\dim K=n$. But then \cite[Theorem 2.2.6]{Sch14} implies 
that $K$ is the intersection of its regular (and thus $i$-extreme)
supporting halfspaces. As these also support $aB+v$, it follows 
that $aB+v\subseteq K$. 
\end{proof}

\subsection{Zonoids}

A \emph{zonotope} is a polytope that is the Minkowski sum of a finite 
number of segments. A convex body $Z$ is called a \emph{zonoid} if it is a 
limit of zonotopes. For simplicity, we assume by convention that all 
zonoids are symmetric $Z=-Z$ (this entails no loss of generality for 
our purposes, as any zonoid is symmetric up to translation).
Then $Z$ is a zonoid if and only if \cite[Theorem 3.5.3]{Sch14}
$$
	h_Z(x) = \int |\langle u,x\rangle|\,\rho(du)
$$
for some even finite measure on $\rho$ on $S^{n-1}$, called the 
\emph{generating measure} of $Z$. 

When the reference bodies in the 
Alexandrov-Fenchel inequality are zonoids, their additive structure 
enables an inductive approach to the analysis of the extremals 
that is very special to this case. Such 
arguments were exploited by Schneider \cite{Sch88} to characterize the 
extremals for full-dimensional zonoids under additional symmetry 
assumptions. In this section, we will fully characterize the extremals for 
any supercritical collection of zonoids (the analysis of the critical case 
is more delicate, and is omitted here in the interest of space). In fact, 
the proof of the following more general result will present no additional
difficulties.

\begin{thm}
\label{thm:zonoid}
Let $m\in[n-2]$, let $C_1,\ldots,C_m$ be convex bodies in $\mathbb{R}^n$ 
such that each $C_i$ is either a zonoid or a smooth body, and let 
$P_{m+1},\ldots,P_{n-2}$ be any polytopes in $\mathbb{R}^n$. Assume that
$\mathcal{C} := (C_1,\ldots,C_m,P_{m+1},\ldots,P_{n-2})$
is supercritical. Then for any convex bodies 
$K,L$ in $\mathbb{R}^n$ so that $\V_n(K,L,\mathcal{C})>0$, we have
$$
	\V_n(K,L,\mathcal{C})^2 =
	\V_n(K,K,\mathcal{C})\,
	\V_n(L,L,\mathcal{C})
$$
if and only if there exist $a>0$ and $v\in\mathbb{R}^n$ so that
$K$ and $aL+v$ have the same supporting hyperplanes in all
directions in $\supp S_{B,\mathcal{C}}$.
\end{thm}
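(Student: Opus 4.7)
The \emph{if} direction is immediate from Lemmas \ref{lem:lineq} and \ref{lem:suppeq}, so I focus on the \emph{only if} direction. My plan is to run the inductive proof of Theorem \ref{thm:schneider} from section \ref{sec:supercrit} essentially unchanged: the gluing arguments there (Lemmas \ref{lem:supercommon}--\ref{lem:superglue} together with Lemma \ref{lem:lowmaxsupp}) never invoke the polytope assumption, and apply verbatim to any supercritical collection of convex bodies. The only ingredient that uses the polytope structure is Lemma \ref{lem:superpproj}, whose proof rests on Theorem \ref{thm:localaf}. Thus it suffices to establish, for any difference of support functions $f$ with $S_{f,\mathcal{C}}=0$ and some suitable index $r$, the existence of a difference of support functions $g$ satisfying $g=f$ on $\supp S_{B,\mathcal{C}}$, $S_{g,\mathcal{C}}=0$, and $S_{g,g,\mathcal{C}_{\backslash r}}\le 0$; everything else is already in place.

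As a preliminary reduction, note that supercriticality gives $\V_n(B,B,\mathcal{C})>0$ by Lemma \ref{lem:dim}, so Corollary \ref{cor:smoothsilly} lets us replace any smooth body in $\mathcal{C}$ by $B$ without altering $\supp S_{B,\mathcal{C}}$ or the condition $S_{f,\mathcal{C}}=0$. Since $B$ is itself a zonoid, we may assume every $C_i$ with $i\le m$ is a zonoid. If at least one such $C_i$ is smooth --- which is automatic whenever any original body was smooth, as it was then converted into a copy of $B$ --- choose $r$ to be such an index. Proposition \ref{prop:smoothlocal}, which has no polytope hypothesis, then yields $S_{f,f,\mathcal{C}_{\backslash r}}=0$ directly, so $g=f$ works, and the argument of section \ref{sec:supercrit} closes the induction without modification.

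The remaining case is the pure-zonoid one, in which every $C_i$ is a non-smooth zonoid so $\mathcal{C}$ is a mixture of non-smooth zonoids and polytopes. Here I approximate each $C_i$ by a sequence of zonotopes $Z_i^{(l)}\to C_i$ in Hausdorff distance with $\aff Z_i^{(l)}=\aff C_i$, which is achievable by discretizing the generating measure of $C_i$ while preserving the rank of its support; then $\mathcal{P}^{(l)}:=(Z_1^{(l)},\ldots,Z_m^{(l)},P_{m+1},\ldots,P_{n-2})$ inherits supercriticality and consists entirely of polytopes, so Theorem \ref{thm:localaf} applies to it. The main obstacle is the limit passage: since $(K,L)$ is an equality case only for $\mathcal{C}$, the function $f$ generally fails to satisfy $S_{f,\mathcal{P}^{(l)}}=0$. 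The plan is to first perturb $f$ to a nearby $f^{(l)}$ lying in the kernel of $S_{\cdot,\mathcal{P}^{(l)}}$ (using the finite-dimensional representation from Corollary \ref{cor:augfindim} and the continuity of that kernel under Hausdorff perturbations of $\mathcal{P}^{(l)}$), apply Theorem \ref{thm:localaf} to produce the corresponding $g^{(l)}$, then use the representation $g^{(l)}=h_{Q^{(l)}}-c_l h_{P^{(l)}}$ from Corollary \ref{cor:augfindim} together with a compactness argument on the polytopes $Q^{(l)}$ to extract a Hausdorff-convergent subsequence, and finally transfer the three defining properties of $g^{(l)}$ to the limit $g$ via the weak continuity of mixed area measures (Lemma \ref{lem:cont}). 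Once $g$ is in hand, the gluing argument of section \ref{sec:supercrit} completes the induction.
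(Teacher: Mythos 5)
Your first branch (some $C_i$ smooth) is reasonable and parallels the paper's proof of Theorem~\ref{thm:quer}, but note that the ``verbatim'' re-run of section~\ref{sec:supercrit} must install the present theorem in dimension $n-1$ as the induction hypothesis (applied to $\proj_{u^\perp}\mathcal{C}_{\backslash r}$), and removing the smooth index $r$ may land you back in the pure-zonoid case; so correctness hinges entirely on your second branch.

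That branch has a genuine gap. You anticipate having to perturb $f$ to satisfy $S_{f^{(l)},\mathcal{P}^{(l)}}=0$, and then lean on a ``continuity of the kernel'' that you do not prove and that is not automatic: kernels of the linearly parameterised operators $S_{\cdot,\mathcal{P}^{(l)}}$ need not be lower semicontinuous in $\mathcal{P}^{(l)}$, and nothing in your outline shows that a nontrivial extremal $f$ of $\mathcal{C}$ is approximable by extremals of the polytopal $\mathcal{P}^{(l)}$. The missing observation is that \emph{no perturbation is needed}. The paper isolates this as a lemma: if $Z$ is a zonoid with generating measure $\rho$, $(Z,\mathcal{K})$ is supercritical, and $S_{f,Z,\mathcal{K}}=0$, then $S_{f,[-u,u],\mathcal{K}}=0$ for every $u\in\supp\rho$. (Proof: $0=\V_n(f,f,Z,\mathcal{K})=\int\V_n(f,f,[-u,u],\mathcal{K})\,\rho(du)$, each integrand is $\le 0$ by Alexandrov--Fenchel since $\V_n(f,Z,[-u,u],\mathcal{K})=0$ and $\V_n(Z,Z,[-u,u],\mathcal{K})>0$, so it vanishes $\rho$-a.e.\ and hence on $\supp\rho$ by continuity; then apply Lemma~\ref{lem:afeq}.) Building $C_i^j$ from directions in $\supp\rho_i$ as you do, and replacing $C_1,\ldots,C_m$ one at a time (the partially replaced collection stays supercritical for large $j$ since $\aff C_i^j = \aff C_i$), multilinearity yields $S_{f,\mathcal{C}^j}=0$ \emph{exactly}, for the original $f$. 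With that in hand you can discard Theorem~\ref{thm:localaf}, the compactness argument, and the section~\ref{sec:supercrit} machinery entirely: Theorem~\ref{thm:schneider} applied to each polytope collection $\mathcal{C}^j$ gives $f(x)=\langle s_j,x\rangle$ on $\supp S_{B,\mathcal{C}^j}$, the $s_j$ stabilize because the supports are nested and supercriticality forces uniqueness of $s_j$ via Lemma~\ref{lem:sbpsupp}, and the limit is taken using Lemma~\ref{lem:cont}. The paper's argument for Theorem~\ref{thm:zonoid} therefore involves no induction on $n$ at all; that directness is unavailable to you without the zonoid lemma above or an equivalent observation exploiting the additive structure of zonoids.
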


It must be emphasized that in contrast to the settings of Theorems 
\ref{thm:main} and \ref{thm:quer}, we have not given a geometric 
characterization of $\supp S_{B,\mathcal{C}}$ in the general setting of 
Theorem \ref{thm:zonoid}. This problem is in fact not yet fully settled 
\cite[Conjecture 7.6.14]{Sch14}, and its analysis is outside the scope of 
this paper. However, we will provide such a characterization in 
Proposition \ref{prop:zonsupp}
for the case $m=n-2$ that all reference bodies are zonoids 
(or smooth), completing an analysis due to Schneider \cite{Sch88}.

The basis for the proof of Theorem \ref{thm:zonoid} is a kind of analogue 
of Corollary \ref{cor:smoothsilly}.
Analogous arguments may be found in \cite[Lemma 7.4.7]{Sch14} and
in \cite[\S 4]{Sch88}.

\begin{lem}
\label{lem:subzonoid}
Let $\mathcal{K}=(K_1,\ldots,K_{n-3})$ be
convex bodies in $\mathbb{R}^n$ and $Z$ be a zonoid 
with generating measure $\rho$. Assume $(Z,\mathcal{K})$ is 
supercritical. Then any difference of support functions $f$ so that
$S_{f,Z,\mathcal{K}}=0$ satisfies $S_{f,[-u,u],\mathcal{K}}=0$
for all $u\in\supp\rho$.
\end{lem}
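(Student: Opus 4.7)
The plan is to apply Lemma~\ref{lem:afeq}(\emph{a}) to the collection $\mathcal{C}_u:=([-u,u],\mathcal{K})$ with $L=Z$, which delivers $S_{f,[-u,u],\mathcal{K}}=0$ once one verifies its three hypotheses: $\V_n(f,Z,[-u,u],\mathcal{K})=0$, $\V_n(f,f,[-u,u],\mathcal{K})=0$, and $\V_n(Z,Z,[-u,u],\mathcal{K})>0$. The first identity will be immediate from the hypothesis $S_{f,Z,\mathcal{K}}=0$; the third will follow directly from supercriticality; and the second, which is the heart of the matter, will be obtained by an averaging argument using the Alexandrov--Fenchel inequality.

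The starting point is the Minkowski-integral representation $h_Z=\int h_{[-u,u]}\,\rho(du)$, which together with multilinearity of mixed volumes gives
\[
\V_n(g,f,Z,\mathcal{K})=\int \V_n(g,f,[-u,u],\mathcal{K})\,\rho(du)
\]
for every difference of support functions $g$. Taking $g=h_{[-v,v]}$ and using $S_{f,Z,\mathcal{K}}=0$ yields $\V_n(f,Z,[-v,v],\mathcal{K})=0$ for every $v\in S^{n-1}$, while taking $g=f$ and using $\int f\,dS_{f,Z,\mathcal{K}}=n\V_n(f,f,Z,\mathcal{K})=0$ gives the integral identity $\int \V_n(f,f,[-u,u],\mathcal{K})\,\rho(du)=0$. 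Next, supercriticality of $(Z,\mathcal{K})$ furnishes $\dim(Z+\sum_{i\in I}K_i)\ge |I|+3$ for every $I\subseteq[n-3]$ and $\dim(\sum_{i\in I}K_i)\ge |I|+2$ for every nonempty $I$. A routine check of the subset dimensional conditions of Lemma~\ref{lem:dim} for the $n$-body collection $(Z,Z,[-u,u],K_1,\ldots,K_{n-3})$ then yields $\V_n(Z,Z,[-u,u],\mathcal{K})>0$ uniformly in $u\in S^{n-1}$: the critical case is the full subcollection $\{Z,Z,[-u,u]\}\cup\{K_i\}_{i\in I}$ of size $3+|I|$, whose span equals that of $Z+[-u,u]+\sum_{i\in I}K_i$ and therefore has dimension at least $|I|+3$ by supercriticality, irrespective of whether $u\in\sspan Z$.

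With this positivity in hand, Lemma~\ref{lem:3af}(\emph{b}) applied to $\mathcal{C}_u$ with $L=Z$ yields
\[
0=\V_n(f,Z,[-u,u],\mathcal{K})^2\ge \V_n(f,f,[-u,u],\mathcal{K})\cdot\V_n(Z,Z,[-u,u],\mathcal{K}),
\]
so $\V_n(f,f,[-u,u],\mathcal{K})\le 0$ pointwise on $S^{n-1}$. Combined with the vanishing of $\int \V_n(f,f,[-u,u],\mathcal{K})\rho(du)$, this forces $\V_n(f,f,[-u,u],\mathcal{K})=0$ for $\rho$-almost every $u$; continuity of the map $u\mapsto \V_n(f,f,[-u,u],\mathcal{K})$ (Lemma~\ref{lem:cont}) extends the vanishing to the closed set $\supp\rho$. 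All three hypotheses of Lemma~\ref{lem:afeq}(\emph{a}) therefore hold for every $u\in\supp\rho$, and the lemma concludes $S_{f,[-u,u],\mathcal{K}}=0$.

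The main point of care is the uniform positivity of $\V_n(Z,Z,[-u,u],\mathcal{K})$: since $\supp\rho\subseteq\sspan Z$, the segment $[-u,u]$ lies in the span of $Z+\sum_{i\in I}K_i$ in the tight cases, so dimensional collapse would be unavoidable if supercriticality were weakened to mere criticality. It is precisely the ``two units above criticality'' strength of the hypothesis that provides the surplus dimension needed to absorb the additional segment beside two copies of $Z$, and this is what permits the Alexandrov--Fenchel route above.
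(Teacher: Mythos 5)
Your proof is correct and follows essentially the same route as the paper's: use the generating-measure representation to get $\int\V_n(f,f,[-u,u],\mathcal{K})\,\rho(du)=\V_n(f,f,Z,\mathcal{K})=0$, note $\V_n(f,Z,[-u,u],\mathcal{K})=0$ and $\V_n(Z,Z,[-u,u],\mathcal{K})>0$ (the latter from supercriticality via Lemma~\ref{lem:dim}), apply Alexandrov--Fenchel to make the integrand nonpositive, deduce it vanishes on $\supp\rho$ by continuity, and close off with Lemma~\ref{lem:afeq}. The only cosmetic difference is that you derive $\V_n(f,Z,[-v,v],\mathcal{K})=0$ via the $h_Z$-integral decomposition when it follows directly from $S_{f,Z,\mathcal{K}}=0$ and \eqref{eq:mixvolarea}, but the argument is sound as written.
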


\begin{proof}
Let $f$ be a difference of support functions such that
$S_{f,Z,\mathcal{K}}=0$. Then 
$$
	\int \V_n(f,f,[-u,u],\mathcal{K})\,\rho(du) =
	\V_n(f,f,Z,\mathcal{K})=0,
$$
where we used $h_{[-u,u]}(x)=|\langle u,x\rangle|$ and the definition
of $\rho$. On the other hand, for any $u\in S^{n-1}$, we have 
$\V_n(f,Z,[-u,u],\mathcal{K})=0$ by \eqref{eq:mixvolarea} and
$\V_n(Z,Z,[-u,u],\mathcal{K})>0$ by supercriticality and Lemma 
\ref{lem:dim}. Thus $\V_n(f,f,[-u,u],\mathcal{K})\le 0$ for any $u\in 
S^{n-1}$ by Lemma \ref{lem:3af}. We may therefore conclude that 
$$
	\V_n(f,f,[-u,u],\mathcal{K}) = 0
	\quad\mbox{for all }u\in\supp\rho,
$$
where we used that $u\mapsto \V_n(f,f,[-u,u],\mathcal{K})$ is continuous 
by Lemma \ref{lem:cont}. In particular, it follows from Lemma 
\ref{lem:afeq} that $S_{f,[-u,u],\mathcal{K}}=0$ for every 
$u\in\supp\rho$.
\end{proof}

We can now complete the proof of Theorem \ref{thm:zonoid}.

\begin{proof}[Proof of Theorem \ref{thm:zonoid}]
By Corollary 
\ref{cor:smoothsilly}, we may assume without loss of generality that each 
$C_i$ that is a smooth body satisfies $C_i=B$. But as $B$ is a zonoid, we 
can assume in the remainder of the proof that $C_1,\ldots,C_m$ are zonoids 
with generating measures $\rho_1,\ldots,\rho_m$, respectively. We also fix 
a difference of support functions $f$ so that $S_{f,\mathcal{C}}=0$; by 
Lemmas \ref{lem:simpleeq}, \ref{lem:lineq}, and \ref{lem:suppeq}, it 
suffices to prove that there exists $s\in\mathbb{R}^n$ so that 
$f(x)=\langle s,x\rangle$ for all $s\in\supp S_{B,\mathcal{C}}$.

For each $i\in[m]$, we may choose (for example, using the law of large 
numbers) a sequence $\{u_i^j\}_{j\ge 1}\subseteq\supp\rho_i$ so that the 
zonotopes
$$
	C_i^j := j^{-1}\{[-u_i^1,u_i^1]+\cdots+[-u_i^j,u_i^j]\}
$$
satisfy $C_i^j\to C_i$ as $j\to\infty$ in Hausdorff distance.
In particular, $\aff C_i^j=\aff C_i$ for sufficiently large $j$, so 
that $\mathcal{C}^j:=(C_1^j,\ldots,C_m^j,P_{m+1},\ldots,P_{n-2})$ is 
supercritical for sufficiently large $j$. Using linearity of mixed 
area measures and applying Lemma~\ref{lem:subzonoid} repeatedly, it 
follows that $S_{f,\mathcal{C}^j}=0$ for sufficiently large $j$. 
Therefore, as $\mathcal{C}^j$ consists entirely of polytopes,
Theorem \ref{thm:schneider} yields for every sufficiently 
large $j$ a vector $s_j\in\mathbb{R}^n$ so that
$f(x)=\langle s_j,x\rangle$ for all $x\in\supp S_{B,\mathcal{C}^j}$.

Now note that by linearity of mixed area measures, $\supp S_{B,\mathcal{C}^j}$ 
is increasing in $j$. In particular, we have
$\langle s_j,x\rangle=f(x)=\langle s_{j+1},x\rangle$ for all
$x\in\supp S_{B,\mathcal{C}^j}$ and sufficiently large $j$. It follows 
from Lemma \ref{lem:sbpsupp} and supercriticality that $s_j=s_{j+1}$ for 
all sufficiently large $j$. Thus we have shown that there exists
$s\in\mathbb{R}^n$ such that
$$
	f(x)=\langle s,x\rangle\quad\mbox{for all }x\in\supp S_{B,\mathcal{C}^j}
$$
holds for all sufficiently large $j$. To conclude, note that
$$
	\int |f(x)-\langle s,x\rangle|\,S_{B,\mathcal{C}}(dx) =
	\lim_{j\to\infty}
	\int |f(x)-\langle s,x\rangle|\,S_{B,\mathcal{C}^j}(dx) = 0
$$
by Lemma \ref{lem:cont}, so that $f(x)=\langle s,x\rangle$ for all
$x\in\supp S_{B,\mathcal{C}}$ as well.
\end{proof}

\begin{rem}
We exploited Lemma \ref{lem:subzonoid} above to approximate zonoids 
$C_i$ by zonotopes $C_i^j$. However, one could also attempt to use Lemma 
\ref{lem:subzonoid} as a replacement for the local Alexandrov-Fenchel 
inequality: by Remark \ref{rem:lowdimma}, it implies that for any extremal $f$ of the 
Alexandrov-Fenchel inequality with reference bodies $(Z,\mathcal{K})$ and 
$u\in\supp\rho$, the projection $\proj_{u^\perp}f$ is extremal in 
$u^\perp$ with reference bodies $\proj_{u^\perp}\mathcal{K}$. Such an 
argument was used by Schneider in \cite{Sch88}. The difficulty with this 
approach is that $\proj_{u^\perp}\mathcal{K}$ need not be supercritical 
for $\rho$-a.e.\ $u$ if we only assume that $\mathcal{K}$ is supercritical. 
On the other hand, this method works well when all the bodies in 
$\mathcal{K}$ are full-dimensional, and yields some more general results 
in this case (for example, an analogue of Theorem \ref{thm:quer} where 
some of the bodies $C_i$ are zonoids).
\end{rem}

We now revisit the problem of characterizing $\supp S_{B,\mathcal{C}}$ 
geometrically. When $\mathcal{C}$ are polytopes, such a characterization 
is given in Lemma \ref{lem:supp} in terms of their faces. It has been 
conjectured by Schneider that $\supp S_{B,\mathcal{C}}$ is characterized 
in general by a local analogue of Lemma \ref{lem:supp}, in which the faces 
are replaced by certain ``tangent spaces'' of the convex bodies 
$\mathcal{C}$. Let us recall the relevant notions.

A convex body $C$ in $\mathbb{R}^n$ associates to each of its boundary 
points a cone of outer normal vectors. These cones generate a 
partition of $\mathbb{R}^n$ into relatively open convex cones, which are 
the \emph{touching cones} of $C$. We denote by $T(C,u)$ the unique 
touching cone of $C$ that contains $u\in\mathbb{R}^n$. One may think of 
$T(C,u)^\perp$ as the ``tangent space'' of $C$ with outer normal vector 
$u$. In analogy with Lemma \ref{lem:supp}, we define:

\begin{defn}
\label{defn:bcextreme}
Let $\mathcal{C}=(C_1,\ldots,C_{n-1})$ be convex bodies in $\mathbb{R}^n$. 
Then $u\in S^{n-1}$ is a \emph{$\mathcal{C}$-extreme normal 
direction} if there are segments $I_i\subset T(C_i,u)^\perp$ for
$i\in[n-1]$ with linearly independent directions.
\end{defn}

The above notions are due to Schneider (see \cite[\S 2.2]{Sch14} for 
equivalent definitions). In particular, Schneider has conjectured 
\cite[Conjecture 7.6.14]{Sch14} that $\supp S_{\mathcal{C}}$ always 
coincides with the closure of the set of $\mathcal{C}$-extreme normal 
directions. It is readily verified that this conjecture agrees with the 
special cases of Lemmas \ref{lem:supp} and \ref{lem:quasupp}. We will 
presently verify this conjecture for the case $m=n-2$ of Theorem 
\ref{thm:zonoid}. This is essentially proved in \cite{Sch88},
up to a minor observation.

\begin{prop}
\label{prop:zonsupp}
Let $\mathcal{C}=(C_1,\ldots,C_{n-2})$ be convex bodies in $\mathbb{R}^n$
such that each $C_i$ is either a zonoid or a smooth body. Then
$$
	\supp S_{B,\mathcal{C}} =
	\mathrm{cl}\{u\in S^{n-1}:
	u\mbox{ is a }(B,\mathcal{C})\mbox{-extreme normal direction}\}.
$$
\end{prop}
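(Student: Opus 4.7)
The plan is to reduce the problem to the case where every $C_i$ is a zonoid, and then to leverage the polytope characterization of Lemma \ref{lem:supp} by approximating each zonoid with zonotopes drawn from the support of its generating measure. The reduction rests on two complementary facts: first, by Corollary \ref{cor:smoothsilly}, replacing any smooth $C_i$ by $B$ does not change $\supp S_{B,\mathcal{C}}$; second, any smooth body $K$ has touching cone $T(K,u)=\mathbb{R}_+u$ and hence $T(K,u)^\perp=u^\perp$, so such a replacement also leaves the set of $(B,\mathcal{C})$-extreme directions unchanged. Since $B$ is itself a zonoid, we may thus assume throughout that each $C_i$ is a zonoid with generating measure $\rho_i$.

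The key tool is the standard identity $T(Z,u)^\perp=\sspan(\supp\rho\cap u^\perp)$ valid for any zonoid $Z$ with generating measure $\rho$, which is implicit in Schneider's work \cite{Sch88} and follows from the representation $h_{F(Z,u)}(x)=\int_{\supp\rho\cap u^\perp}|\langle v,x\rangle|\,\rho(dv)+\langle y,x\rangle$ for a suitable translation vector $y$. With this in hand, both inclusions can be handled by zonotope approximation. For the inclusion of extreme directions in $\supp S_{B,\mathcal{C}}$, given a $(B,\mathcal{C})$-extreme $u$ one selects linearly independent directions $e_i\in T(C_i,u)^\perp$, expresses them using vectors from $\supp\rho_i\cap u^\perp$, and uses these to build zonotopes $C_i'$ whose mixed area measure $S_{B,\mathcal{C}'}$ places positive mass at $u$ by Lemma \ref{lem:supp}; one then infers that $u\in\supp S_{B,\mathcal{C}}$ via the monotonicity built into the linearity of mixed area measures and Lemma \ref{lem:maxsupp}. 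For the reverse inclusion, one approximates each $C_i$ in Hausdorff distance by zonotopes $C_i^j$ whose generating segments lie in $\supp\rho_i$ (exactly as in the proof of Theorem \ref{thm:zonoid}). By Lemma \ref{lem:cont} one obtains $\supp S_{B,\mathcal{C}}\subseteq\overline{\bigcup_j\supp S_{B,\mathcal{C}^j}}$, and on each zonotope Lemma \ref{lem:supp} provides segments $I_i\subset F(C_i^j,u)$ with linearly independent directions; these segments are translates of $[-v,v]$ for certain $v\in\supp\rho_i\cap u^\perp$, so their directions lie in $T(C_i,u)^\perp$, proving that $u$ is $(B,\mathcal{C})$-extreme.

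The main obstacle, and the reason the paper attributes this result to \cite{Sch88} "up to a minor observation," is to pin down cleanly the relationship between touching cones of zonoids and faces of approximating zonotopes; this is essentially the content of Schneider's analysis. The remaining novel input is the handling of smooth bodies via the first-paragraph reduction, which is a direct consequence of Corollary \ref{cor:smoothsilly} together with the triviality of touching cones for smooth bodies. No genuinely new ideas beyond Schneider's framework and the smooth-body reduction appear necessary.
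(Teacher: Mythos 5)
Your first paragraph (reduction to zonoids via Corollary~\ref{cor:smoothsilly} together with the fact that smooth bodies have trivial touching cones) is exactly the paper's. Your second paragraph takes a genuinely different route: the paper proves that extreme directions lie in $\supp S_{B,\mathcal{C}}$ by induction on $n$, using a projection fact from \cite{Sch88} (an extreme $v$ remains $(\proj_{u^\perp}B,\proj_{u^\perp}\mathcal{C}_{\backslash j})$-extreme for a suitable $j$ and $u\in v^\perp\cap\supp\rho_j$) and then lifting back to dimension $n$ via Lemma~\ref{lem:mixture}; you instead build auxiliary zonotopes $\mathcal{C}'$ with generators in $\supp\rho_i\cap u^\perp$, apply Lemma~\ref{lem:supp}, and then replace the zonotopes by the zonoids one at a time, with no induction. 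That blueprint is sound but your key step — passing from $\supp S_{B,\mathcal{C}'}$ to $\supp S_{B,\mathcal{C}}$ — is mis-justified. Lemma~\ref{lem:maxsupp} only lets you replace the body in the \emph{extra} slot by $B$; it says nothing about replacing a reference body. ``Monotonicity from linearity'' also fails: $C_i'$ is not a Minkowski summand of $C_i$, since the atoms $v_{ij}\in\supp\rho_i$ you pick typically have $\rho_i(\{v_{ij}\})=0$. The tool you actually need is Lemma~\ref{lem:mixture}, applied to $S_{B,C_i,\mathcal{C}'_{\backslash i}}=\int S_{B,[-v,v],\mathcal{C}'_{\backslash i}}\,\rho_i(dv)$ with the weak continuity of $v\mapsto S_{B,[-v,v],\mathcal{C}'_{\backslash i}}$ from Lemma~\ref{lem:cont}; this gives $\supp S_{B,[-v_{ij},v_{ij}],\mathcal{C}'_{\backslash i}}\subseteq\supp S_{B,C_i,\mathcal{C}'_{\backslash i}}$, hence $\supp S_{B,\mathcal{C}'}\subseteq\supp S_{B,C_i,\mathcal{C}'_{\backslash i}}$ by linearity, and one iterates over $i$. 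That lemma is precisely the ``minor observation'' the paper had in mind — not the touching-cone analysis, which is entirely Schneider's.

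A secondary inaccuracy: the identity $T(Z,u)^\perp=\sspan(\supp\rho\cap u^\perp)$ is correct but does \emph{not} follow from the face representation $h_{F(Z,u)}(x)=\int_{\supp\rho\cap u^\perp}|\langle v,x\rangle|\,\rho(dv)+\langle y,x\rangle$. That formula identifies the linear span of $F(Z,u)$, which equals the span of the support of $\rho$ restricted (as a measure) to $u^\perp$; this can be strictly smaller than $\sspan(\supp\rho\cap u^\perp)$. For instance, take $Z$ a solid cylinder in $\mathbb{R}^3$ (generating measure $=$ uniform on a great circle plus atoms at the poles) and $u$ an equatorial direction: then $F(Z,u)$ is a one-dimensional segment, while both $T(Z,u)^\perp=u^\perp$ and $\sspan(\supp\rho\cap u^\perp)$ are two-dimensional. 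So the touching-cone formula requires a direct normal-fan argument, not just the face's support function, and your proof would need to invoke the precise statement from \cite{Sch88} rather than the sketch you gave.
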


In the proof we will need the following simple measure-theoretic fact.

\begin{lem}
\label{lem:mixture}
Let $X,Y$ be Polish spaces, $\rho_x$ be a finite measure on $Y$ 
for each $x\in X$, and $\eta$ be a finite measure on $X$.
Assume $x\mapsto\rho_x$ is weakly continuous. Then the measure
$\mu := \int \rho_x\,\eta(dx)$ on $Y$ satisfies
$\supp\rho_x\subseteq\supp\mu$ for every $x\in\supp\eta$.
\end{lem}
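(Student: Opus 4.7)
The plan is to prove the contrapositive in a quantitative form: if $y\in\supp\rho_x$ for some $x\in\supp\eta$, then every open neighborhood $U$ of $y$ satisfies $\mu(U)>0$, which is exactly what it means for $y$ to lie in $\supp\mu$.

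First I would fix such $x\in\supp\eta$ and $y\in\supp\rho_x$, and let $U$ be an arbitrary open neighborhood of $y$. By definition of support, $\rho_x(U)>0$. The key step is to exploit the weak continuity of $x'\mapsto\rho_{x'}$ together with the Portmanteau theorem: for any weakly convergent net of finite measures $\rho_{x'}\to\rho_x$, the inequality $\liminf \rho_{x'}(U)\ge\rho_x(U)$ holds for every open set $U$. In other words, the map $x'\mapsto\rho_{x'}(U)$ is lower semicontinuous on $X$. Consequently there exists an open neighborhood $V\subseteq X$ of $x$ such that
$$
	\rho_{x'}(U)\ge \tfrac{1}{2}\rho_x(U) \quad\mbox{for all }x'\in V.
$$

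Next, since $x\in\supp\eta$ and $V$ is an open neighborhood of $x$, we have $\eta(V)>0$. Putting the pieces together,
$$
	\mu(U) = \int \rho_{x'}(U)\,\eta(dx')
	\ge \int_V \rho_{x'}(U)\,\eta(dx')
	\ge \tfrac{1}{2}\rho_x(U)\,\eta(V) > 0,
$$
which shows $y\in\supp\mu$ and completes the argument.

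I expect no real obstacles: the only subtlety is whether the measure $\mu$ is well defined and whether the integral above can be manipulated in the usual way, which requires that $x'\mapsto\rho_{x'}(A)$ be measurable for Borel $A$. Since $Y$ is Polish, weak continuity of $x'\mapsto\rho_{x'}$ (together with finiteness) implies measurability of $x'\mapsto\rho_{x'}(A)$ for every Borel $A$ (by a standard monotone class argument starting from continuous bounded functions), so $\mu$ is a well-defined finite measure and the restriction of the integral to $V$ used above is legitimate.
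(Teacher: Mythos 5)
Your proof is correct and takes essentially the same approach as the paper: both fix $x\in\supp\eta$, $y\in\supp\rho_x$, use weak continuity to produce an open neighborhood $V\ni x$ on which the relevant quantity stays bounded below, and then invoke $\eta(V)>0$. The only cosmetic difference is that you work with $\rho_{x'}(U)$ directly via the Portmanteau lower-semicontinuity for open sets, while the paper works with $\int f\,d\rho_{x'}$ for a continuous $f$ with $f(y)>0$ (so that continuity of the map is immediate from the definition of weak convergence) and finishes with a short Urysohn-type observation.
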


\begin{proof}
Let $x\in\supp\eta$ and $y\in\supp\rho_x$. Let
$f:Y\to[0,1]$ be a continuous function so that $f(y)>0$. Then 
$\varepsilon:=\int f\,d\rho_x>0$. As $x\mapsto \int f\,d\rho_x$ is 
continuous, there is an open neighborhood $W\ni x$ so that 
$\int f\,d\rho_z\ge\frac{\varepsilon}{2}$ for $z\in W$. Thus $\int f\,d\mu 
\ge \frac{\varepsilon}{2}\eta(W) >0$.
As this holds for any function $f$ as above, it follows that $y\in 
\supp\mu$. 
\end{proof}

We can now complete the proof of Proposition \ref{prop:zonsupp}.

\begin{proof}[Proof of Proposition \ref{prop:zonsupp}]
When $\V_n(B,B,\mathcal{C})=0$, there exist no 
$(B,\mathcal{C})$-extreme directions by Lemma \ref{lem:dim}, and the 
conclusion is trivial. When $\V_n(B,B,\mathcal{C})>0$, 
Corollary \ref{cor:smoothsilly} shows that $\supp 
S_{B,\mathcal{C}}$ is unchanged if each smooth $C_i$  is 
replaced by $B$. On the other hand, for smooth $C_i$ we have 
$T(C_i,u)=T(B,u)=\mathop{\mathrm{pos}}u$ for all $u$, so the 
$(B,\mathcal{C})$-extreme directions are also unchanged by this 
replacement. Thus we may assume that $C_1,\ldots,C_{n-2}$ are 
zonoids with generating measures $\rho_1,\ldots,\rho_{n-2}$. 

That $\supp S_{B,\mathcal{C}}\subseteq 
\mathrm{cl}\{(B,\mathcal{C})\mbox{-extreme directions}\}$ is shown in 
\cite[Proposition 3.8]{Sch88}. We will prove the converse inclusion by 
induction on $n$. For $n=3$, the conclusion is a special case of Lemma 
\ref{lem:quasupp}. From now on, we assume that the claim has been proved 
in dimension $n-1$, and show that the result follows in dimension $n$.

Let $v\in S^{n-1}$ be a $(B,\mathcal{C})$-extreme normal direction.
Then it is shown in \cite[p.\ 125]{Sch88} that there exists
$j\in[n-2]$ and $u\in v^\perp\cap\supp\rho_j$ so that $v$ is also
a $(\proj_{u^\perp}B,\proj_{u^\perp}\mathcal{C}_{\backslash j})$-extreme 
normal direction (as defined in $u^\perp$). It remains to show that
the latter are included in $\supp S_{B,\mathcal{C}}$. To this end, note 
that
$$
        S_{B,\mathcal{C}} =
        \int S_{B,[-u,u],\mathcal{C}_{\backslash j}}\,
        \rho_j(du),
$$
and that $u\mapsto S_{B,[-u,u],\mathcal{C}_{\backslash j}}$ is continuous
by Lemma \ref{lem:cont}. Thus
$\supp S_{B,[-u,u],\mathcal{C}_{\backslash j}}\subseteq
\supp S_{B,\mathcal{C}}$ for every $u\in\supp\rho_j$ by Lemma 
\ref{lem:mixture}. But the induction hypothesis and Remark 
\ref{rem:lowdimma} imply that $v\in \supp 
S_{\proj_{u^\perp}B,\proj_{u^\perp}\mathcal{C}_{\backslash j}} =
\supp S_{B,[-u,u],\mathcal{C}_{\backslash j}}$. Thus we have shown that 
any $(B,\mathcal{C})$-extreme normal direction is contained in
$\supp S_{B,\mathcal{C}}$, and the conclusion follows as the latter is 
a closed set.
\end{proof}

\section{Application to combinatorics of partially ordered sets}
\label{sec:stanley}

A sequence $N_1,\ldots,N_n$ of positive numbers is \emph{log-concave} if 
$N_i^2\ge N_{i-1}N_{i+1}$ for all $i\in\{2,\ldots,n-1\}$. It was noticed 
long ago that log-concave sequences arise in a surprisingly broad range of 
combinatorial problems \cite{Sta89}. One explanation for this phenomenon 
appears in the work of Stanley \cite{Sta81}, who observed that if one can 
represent the relevant combinatorial quantities in terms of mixed volumes, 
log-concavity arises as a consequence of the Alexandrov-Fenchel 
inequality. This provides a common mechanism for the emergence of 
log-concavity in several combinatorial problems that appear to be 
otherwise unrelated. In recent years, it has been realized that this idea 
extends to a much broader setting: even for combinatorial problems that 
may not be represented in terms of classical convexity, one may often 
develop algebraic analogues of the Alexandrov-Fenchel inequality that 
explain the emergence of log-concavity. Such ideas have led to a series of 
recent breakthroughs in combinatorics due to Huh et al.\ \cite{Huh18}.

As was explained in the introduction, one may view the Alexandrov-Fenchel 
inequality as a generalized isoperimetric inequality. In particular, 
associated to any instance of the Alexandrov-Fenchel inequality is a 
corresponding extremal problem: what bodies minimize the left-hand side in 
Theorem \ref{thm:af} when the right-hand side is fixed? One may 
analogously associate to any log-concave sequence of combinatorial 
quantities $(N_i)$ a corresponding extremal problem: what combinatorial 
objects achieve equality $N_i=N_{i+1}N_{i-1}$ for a given $i$? This 
question was already 
posed by Stanley in \cite{Sta81}. Despite deep advances in understanding 
log-concavity through Alexandrov-Fenchel type inequalities, the analysis 
of the associated extremal problems appears to be inaccessible by 
currently known methods.

In this section, we will show how the theory developed in this paper makes 
it possible to settle such extremal problems in Stanley's original 
setting. For sake of illustration, we focus on one particular example from 
\cite{Sta81} that arises in the combinatorics of partially ordered sets; 
other combinatorial applications of the Alexandrov-Fenchel inequality may 
be investigated analogously. Whether the theory of this paper has 
analogues outside convexity is an intriguing question (cf.\ section 
\ref{sec:discussion}).

\subsection{Linear extensions and extremal posets}

Let $\alpha:=\{x,y_1,\ldots,y_{n-1}\}$ be a partially ordered set 
(poset) that will be fixed throughout this section. We denote by $N_i$ the 
number of order-preserving bijections $\sigma:\alpha\to[n]$ such that
$\sigma(x)=i$; that is, $N_i$ is the number of linear extensions 
of the partial order of $\alpha$ for which $x$ has rank $i$. The 
following was conjectured by Chung, Fishburn, and Graham \cite{CFG80}.

\begin{thm}[Stanley \cite{Sta81}]
\label{thm:stanley}
The sequence $N_1,\ldots,N_n$ is log-concave.
\end{thm}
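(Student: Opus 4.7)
The plan is to follow Stanley's strategy of realizing the rank counts $N_i$ as mixed volumes of two explicit convex polytopes in $\mathbb{R}^{n-1}$, so that log-concavity reduces to a single application of the Alexandrov-Fenchel inequality (Theorem \ref{thm:af}).

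First I would construct the relevant polytopes. Partition $\alpha' := \alpha \setminus \{x\}$ as $A \sqcup B \sqcup C$, where $A := \{y : y < x\}$, $B := \{y : y > x\}$, and $C$ are the elements incomparable to $x$. In coordinates $(b_y)_{y \in \alpha'}$ on $\mathbb{R}^{n-1}$, define
\[
K := \{b \in [0,1]^{\alpha'} : b_y \le b_z \text{ if } y <_\alpha z,\ b_y = 1 \text{ for } y \in B\},
\]
\[
L := \{b \in [0,1]^{\alpha'} : b_y \le b_z \text{ if } y <_\alpha z,\ b_y = 0 \text{ for } y \in A\}.
\]
Geometrically, $K$ is the face of the order polytope of $\alpha$ obtained by placing $x$ at the top (so all $y \in B$ are forced to $1$), and $L$ is the dual face with $x$ at the bottom.

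Second, the combinatorial heart of the argument is the identity
\[
N_i \;=\; (n-1)!\; \V_{n-1}\bigl(\underbrace{K,\ldots,K}_{i-1},\underbrace{L,\ldots,L}_{n-i}\bigr).
\]
To prove it I would introduce, for $t \in [0,1]$, the sliced polytope
\[
\Omega(t) := \{b \in [0,1]^{\alpha'} : b_y \le b_z \text{ if } y <_\alpha z,\ b_y \le t \text{ for } y \in A,\ b_y \ge t \text{ for } y \in B\}
\]
and verify directly from the definitions of $K,L$ the Minkowski identity $\Omega(t) = tK + (1-t)L$. Computing $\Vol_{n-1}(\Omega(t))$ in two ways then forces the identity: on one hand, expanding via \eqref{eq:volpoly} gives $\sum_{i=1}^n \binom{n-1}{i-1}\V_{n-1}(K^{i-1},L^{n-i})\,t^{i-1}(1-t)^{n-i}$; on the other hand, the canonical triangulation of the order polytope $\mathcal{O}(\alpha) \subset \mathbb{R}^\alpha$ into simplices $\Delta_\sigma$ indexed by linear extensions $\sigma$ (each of $n$-volume $1/n!$) restricts at $b_x = t$ to pieces of $\Omega(t)$ of $(n-1)$-volume $t^{i-1}(1-t)^{n-i}/((i-1)!(n-i)!)$ whenever $\sigma(x)=i$, summing to $\sum_i N_i\,t^{i-1}(1-t)^{n-i}/((i-1)!(n-i)!)$. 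Since the Bernstein basis $\{t^{i-1}(1-t)^{n-i}\}_{i=1}^{n}$ is linearly independent, matching coefficients yields the claimed formula.

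Third, the identity immediately reduces log-concavity $N_i^2 \ge N_{i-1}N_{i+1}$ to
\[
\V_{n-1}(K^{i-1}, L^{n-i})^2 \;\ge\; \V_{n-1}(K^{i-2}, L^{n-i+1})\,\V_{n-1}(K^{i}, L^{n-i-1}),
\]
which is a direct instance of Theorem \ref{thm:af} applied in $\mathbb{R}^{n-1}$ with the two varying bodies $K,L$ and the $n-3$ reference bodies taken to be $i-2$ copies of $K$ together with $n-i-1$ copies of $L$. The main obstacle is the geometric slicing identity of the second step: one must check that the simplices of the canonical triangulation of $\mathcal{O}(\alpha)$ cut out of $\Omega(t)$ exactly the product-of-simplices slabs with the claimed volumes. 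Once that combinatorial dictionary is established, log-concavity of $(N_i)$ is immediate from the Alexandrov-Fenchel inequality.
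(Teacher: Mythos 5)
Your proposal follows exactly the same route as the paper: represent $N_i$ as $(n-1)!\,\V_{n-1}(K,\ldots,K,L,\ldots,L)$ for the two order polytope slices $K,L$ and then invoke the Alexandrov-Fenchel inequality. The only difference is that the paper simply cites Stanley \cite{Sta81} for the mixed-volume identity, whereas you sketch its derivation via the Minkowski decomposition $\Omega(t)=tK+(1-t)L$ and the canonical triangulation of the order polytope; both renditions are correct, and they are not genuinely different approaches.
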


\begin{proof}
The poset $\alpha$ defines polytopes $K,L$ in $\mathbb{R}^{n-1}$ by
\begin{align*}
	K &:= \{
	t\in[0,1]^{n-1}:t_j\le t_k\mbox{ if }y_j\le y_k,~
	t_j=1\mbox{ if }y_j>x\},\\
	L &:= \{
	t\in[0,1]^{n-1}:t_j\le t_k\mbox{ if }y_j\le y_k,~
	t_j=0\mbox{ if }y_j<x\}.
\end{align*}
Let us denote
$$
	\mathcal{K}_l :=
	(\underbrace{K,\ldots,K}_{l}),\qquad\quad
	\mathcal{L}_m :=
	(\underbrace{L,\ldots,L}_{m}).
$$
Then it is shown in \cite[Theorem 3.2]{Sta81} that
$$
	N_i = (n-1)!\,\V_{n-1}(\mathcal{K}_{i-1},\mathcal{L}_{n-i}).
$$
The conclusion is now immediate by the Alexandrov-Fenchel inequality.
\end{proof}

The extremal question associated to Theorem \ref{thm:stanley} is: given 
$i\in\{2,\ldots,n-1\}$, which posets $\alpha$ attain equality 
$N_i^2=N_{i+1}N_{i-1}$? The proof of Theorem \ref{thm:stanley} 
reduces this question to a special case of Theorem \ref{thm:main}. To 
obtain a result of combinatorial interest, however, one must deduce from 
the geometric conditions of Theorem \ref{thm:main} a combinatorial 
characterization of the corresponding poset $\alpha$. We presently state 
the resulting theorem, whose proof will occupy the remainder of this 
section.

As in Theorem \ref{thm:main}, we must distinguish between trivial and 
nontrivial extremals. When $N_i=0$, log-concavity implies that we always 
have $N_i^2=N_{i+1}N_{i-1}$ for trivial reasons. Let us first characterize 
when this happens. In the following, we denote by 
$\alpha_{\mathrm{R}z}:=\{y\in\alpha:y\mathrm{R}z\}$ for any relation 
$\mathrm{R}\in\{<,\le,>,\ge\}$ of $\alpha$.

\begin{lem}[Trivial extremals]
\label{lem:trivexst}
For any $i\in[n]$, we have
$$
	N_i=0\qquad\mbox{if and only if}\qquad
	|\alpha_{<x}|>i-1
	\quad\mbox{or}\quad 
	|\alpha_{>x}|>n-i.
$$
\end{lem}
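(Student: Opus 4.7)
The plan is to prove the two implications separately, the forward direction being immediate and the reverse requiring an explicit construction. For the forward direction, I would observe that any order-preserving bijection $\sigma\colon\alpha\to[n]$ with $\sigma(x)=i$ must send every element of $\alpha_{<x}$ to a position in $\{1,\ldots,i-1\}$ and every element of $\alpha_{>x}$ to a position in $\{i+1,\ldots,n\}$. The contrapositive immediately yields the ``if'' direction: $N_i=0$ whenever $|\alpha_{<x}|>i-1$ or $|\alpha_{>x}|>n-i$.

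For the converse, assuming $|\alpha_{<x}|\le i-1$ and $|\alpha_{>x}|\le n-i$, I would construct a single linear extension of $\alpha$ with $x$ of rank $i$, which suffices to show $N_i>0$. Writing $A:=\alpha_{<x}$, $B:=\alpha_{>x}$, and $C:=\alpha\setminus(A\cup B\cup\{x\})$ for the elements incomparable to $x$, one has $|A|+|B|+|C|=n-1$, so the numbers $r:=i-1-|A|$ and $s:=n-i-|B|$ are both nonnegative and sum to $|C|$. The natural idea is to place $A$ together with some subset $C_1\subseteq C$ of size $r$ in positions $1,\ldots,i-1$, place $x$ at position $i$, and place $B$ together with $C_2:=C\setminus C_1$ in positions $i+1,\ldots,n$, filling each block by an arbitrary linear extension of the corresponding induced subposet of $\alpha$.

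The main step — and essentially the only nontrivial one — is the choice of $C_1$: I would take $C_1$ to be an \emph{order ideal} of $C$ (viewed as a subposet of $\alpha$) of cardinality $r$. Such an ideal exists because any finite poset admits order ideals of every size between $0$ and its cardinality, built up inductively by repeatedly adjoining a minimal element of the complement. With this choice fixed, verifying that the resulting bijection is order-preserving on all of $\alpha$ reduces to a short case analysis of pairs $y>z$ with $y$ placed before position $i$ and $z$ after: the mixed cases involving $A$ or $B$ are ruled out by transitivity through $x$ (for instance, $y\in A$, $z\in C_2$, $y>z$ would force $z<x$ and hence $z\in A$, a contradiction), while the remaining case $y\in C_1$, $z\in C_2$ with $y>z$ is precisely what the order-ideal property of $C_1$ excludes. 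The only genuine insight is that $C_1$ must be chosen to be a down-set of $C$ rather than an arbitrary subset; once this is recognized, the rest is a bookkeeping exercise.
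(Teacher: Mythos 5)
Your proof is correct, but it takes a genuinely different route from the paper's. The paper deliberately deduces the vanishing criterion from the mixed-volume representation $N_i = (n-1)!\,\V_{n-1}(\mathcal{K}_{i-1},\mathcal{L}_{n-i})$ together with the dimensionality criterion for positivity of mixed volumes (Lemma \ref{lem:dim}) and the dimension computations $\dim K = n-1-|\alpha_{>x}|$, $\dim L = n-1-|\alpha_{<x}|$, $\dim(K+L)=n-1$ from Lemma \ref{lem:dimkl}; the point, as the paper notes just before the proof, is that ``while a direct combinatorial proof is a simple exercise, we find it instructive to show how it arises from the mixed volumes.'' You have supplied exactly that direct combinatorial proof: the forward implication by a pigeonhole count on $\alpha_{<x}$ and $\alpha_{>x}$, and the reverse implication by an explicit construction of a linear extension, with the correct identification that the set $C_1$ of incomparable elements sent below $x$ must be a down-set of $\alpha_{\not\sim x}$ (and the standard existence of down-sets of every prescribed cardinality). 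Your argument is self-contained and elementary; the paper's argument is a corollary of the geometric machinery that drives the rest of Section~\ref{sec:stanley} and serves to illustrate how combinatorial facts are read off from mixed-volume dimensionality. Both are valid, and either could stand in the paper.
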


This simple lemma, which will be proved in section \ref{sec:order}, is 
intuitively obvious: it states that no linear extension of $\alpha$ can 
give $x$ rank $i$ if there are $i$ elements of $\alpha$ that are smaller 
than $x$ (as these elements must have smaller rank than $x$), or 
analogously if there are $n-i+1$ elements larger than $x$. This statement 
has an easy direct proof. In contrast, 
the characterization of the nontrivial extremals is not obvious. The 
following theorem is the main result of this section.

\begin{thm}[Nontrivial extremals]
\label{thm:exst}
Let $i\in\{2,\ldots,n-1\}$ be such that $N_i>0$. Then the following are
equivalent:
\begin{enumerate}[a.]
\item $N_i^2=N_{i+1}N_{i-1}$.
\item $N_i=N_{i+1}=N_{i-1}$.
\item Every linear extension
$\sigma:\alpha\to[n]$ with $\sigma(x)=i$
assigns ranks $i-1$ and $i+1$
to elements of $\alpha$ that are incomparable to $x$.
\item $|\alpha_{<y}|>i$ for all $y\in\alpha_{>x}$, and
$|\alpha_{>y}|>n-i+1$ for all $y\in\alpha_{<x}$.
\end{enumerate}
\end{thm}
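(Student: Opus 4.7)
The plan is to combine Stanley's mixed volume representation with Theorem \ref{thm:main}. Recall from the proof of Theorem \ref{thm:stanley} that $N_i = (n-1)!\,\V_{n-1}(\mathcal{K}_{i-1},\mathcal{L}_{n-i})$ for order polytopes $K, L \subset \mathbb{R}^{n-1}$ built from $\alpha$, so that (a) is a nontrivial equality case of the Alexandrov-Fenchel inequality in dimension $n-1$ with reference bodies $\mathcal{P} := (K,\ldots,K,L,\ldots,L)$ consisting of $i-2$ copies of $K$ and $n-i-1$ copies of $L$. Since $N_i > 0$, the mixed volume $\V_{n-1}(K,L,\mathcal{P})$ is positive, so Theorem \ref{thm:main} applies. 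I would organize the proof as the cycle (b) $\Rightarrow$ (a) $\Rightarrow$ (d) $\Rightarrow$ (c) $\Rightarrow$ (b), with the step (a) $\Rightarrow$ (d) being the only one that uses the geometric main result; the other three are purely combinatorial.

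The combinatorial implications proceed as follows. The direction (b) $\Rightarrow$ (a) is immediate. For (d) $\Rightarrow$ (c), take any linear extension $\sigma$ with $\sigma(x) = i$ and set $y = \sigma^{-1}(i+1)$; the relation $y < x$ would force $\sigma(y) < \sigma(x)$, while $y > x$ would yield $\alpha_{<y} \subseteq \sigma^{-1}(\{1,\ldots,i\})$ and hence $|\alpha_{<y}| \le i$, contradicting (d). So $y$ is incomparable to $x$, and the symmetric argument handles rank $i-1$. For (c) $\Rightarrow$ (b), the swap $\sigma \mapsto \sigma \circ (\sigma^{-1}(i),\sigma^{-1}(i+1))$ is well defined on linear extensions with $\sigma(x) = i$ by (c), and injects into the set of linear extensions with $x$ at rank $i+1$, so $N_i \le N_{i+1}$; the analogous swap at rank $i-1$ gives $N_i \le N_{i-1}$. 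Combined with $N_i^2 \ge N_{i-1} N_{i+1}$ from Theorem \ref{thm:stanley} and $N_i > 0$, these inequalities force $N_{i-1} = N_i = N_{i+1}$.

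For (a) $\Rightarrow$ (d), Theorem \ref{thm:main} produces $a > 0$, $v \in \mathbb{R}^{n-1}$, and $\mathcal{P}$-degenerate pairs $(M_j, N_j)$, $j = 1,\ldots,m$, such that $K + N_1 + \cdots + N_m$ and $aL + v + M_1 + \cdots + M_m$ have the same supporting hyperplanes in every $(B,\mathcal{P})$-extreme normal direction. I would test this identity against the coordinate directions $\pm e_j$, which correspond geometrically to the facets $\{t_j = 0\}$ and $\{t_j = 1\}$ of the ambient cube $[0,1]^{n-1}$ and to the boundary constraints that distinguish $K$ from $L$. Using Lemma \ref{lem:supp} together with the explicit facial structure of the order polytopes $K$ and $L$, I would identify which $\pm e_j$ lie in $\supp S_{B,\mathcal{P}}$ in poset-theoretic terms (roughly, in terms of minimal and maximal elements of $\alpha \setminus (\alpha_{<x} \cup \{x\})$ and $\alpha \setminus (\alpha_{>x} \cup \{x\})$), and then read off the resulting identities between $h_K$ and $h_{aL+v}$, expecting them to translate directly into the quantitative bounds on $|\alpha_{<y_j}|$ and $|\alpha_{>y_j}|$ comprising condition (d).

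The hardest step will be to classify and control the degenerate pairs. By Lemma \ref{lem:twodegs}, each $(M_j,N_j)$ is supported (up to translation) in a subspace $\mathcal{L}_{\alpha_j}$ corresponding to a maximal critical subset $\alpha_j$ of the index set of $\mathcal{P}$, and interpreting these maximal sets in terms of chains and antichains of $\alpha$ is the combinatorial heart of the argument. I expect that the maximal sets arise from \emph{rigid blocks} of the order polytopes whose coordinates are forced in every linear extension, so that the degenerate pairs can only affect coordinate directions $e_j$ for which $y_j$ is tightly constrained relative to $x$. Once this dictionary is in place, the failure of (d) at some $y_j$ comparable to $x$ should produce a direction in $\supp S_{B,\mathcal{P}}$ in which the supporting hyperplanes of $K$ and $aL+v$ must differ by an amount that no admissible combination of degenerate pairs can absorb, yielding the required contradiction and closing the cycle.
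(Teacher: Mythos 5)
Your combinatorial implications $d\Rightarrow c\Rightarrow b\Rightarrow a$ are correct and essentially identical to the paper's (the paper writes the map $\sigma\mapsto\Pi_{i,i\pm 1}\sigma$ rather than $\sigma\mapsto\sigma\circ(\sigma^{-1}(i),\sigma^{-1}(i\pm 1))$, but these are the same bijection). The gap is in your plan for $a\Rightarrow d$. You invoke Theorem~\ref{thm:main} in full generality and anticipate that ``the hardest step will be to classify and control the degenerate pairs,'' sketching a dictionary between maximal critical sets and ``rigid blocks'' of order polytopes. In fact no degenerate pairs appear. Once you know $N_{i-1},N_i,N_{i+1}>0$ (which follows from $N_i>0$ and $N_i^2=N_{i-1}N_{i+1}$), Lemma~\ref{lem:trivexst} gives $|\alpha_{<x}|\le i-2$ and $|\alpha_{>x}|\le n-i-1$, whence by Lemma~\ref{lem:dimkl} one has $\dim K\ge i$, $\dim L\ge n-i+1$, and $\dim(K+L)=n-1$; this makes $\mathcal{P}=(\mathcal{K}_{i-2},\mathcal{L}_{n-i-1})$ \emph{supercritical}, so by Lemma~\ref{lem:supercrit} there are no $\mathcal{P}$-degenerate functions at all, and Corollary~\ref{cor:schneider} applies directly: $K$ and $aL+v$ have the same support function on $\supp S_{B,\mathcal{P}}$. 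Missing this observation sends you down a much harder path that the hypothesis has already closed off.

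A second, more technical omission: testing the identity $h_K=h_{aL+v}$ only against the vertex directions $\pm e_j$ is not enough to extract condition $d$. Before you can read off $|\alpha_{<y_j}|>i$ from the fact that certain $\pm e_j$ cannot lie in $\supp S_{B,\mathcal{P}}$, you must first pin down $a=1$ and $v_j=0$ for $y_j\in\alpha_{<x}\cup\alpha_{>x}$, and this requires the edge directions $e_{jk}=(e_j-e_k)/\sqrt 2$ associated to the covering relations of the poset, together with chain arguments that propagate the values of $v$ along covering chains through an element incomparable to $x$ (to force $a=1$) and along chains entirely inside $\alpha_{>x}$ or $\alpha_{<x}$ (to force $v_j=0$ there). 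Only then does the support criterion in the directions $\pm e_j$ for $y_j\in(\alpha_{>x})^\downarrow$ and $(\alpha_{<x})^\uparrow$ translate into condition $d$. Your proposal does not mention the edge directions or the chain argument, and without them the ``read off the resulting identities'' step underdetermines $(a,v)$ and cannot produce the contradiction you need.
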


The formulation of condition $d$ of Theorem \ref{thm:exst} was derived by 
the authors from the analysis of the associated Alexandrov-Fenchel 
inequality (section \ref{sec:stanleypf}). Once the correct statement has 
been realized, however, it is straightforward to find a direct proof of 
the easy directions $d\Rightarrow c\Rightarrow b\Rightarrow a$ of Theorem 
\ref{thm:exst}. 

\begin{proof}[Proof of Theorem \ref{thm:exst}, $d\Rightarrow c\Rightarrow
b\Rightarrow a$]
Fix $i\in\{2,\ldots,n-1\}$ such that $N_i>0$, and assume that 
condition $d$ holds. We first show that this implies condition $c$. 
Indeed, suppose to the contrary that $\sigma(y)=i-1$ for some 
$y\in\alpha_{<x}$; 
then we have $|\alpha_{>y}|\le
|\{z\in\alpha:\sigma(z)>\sigma(y)\}|=n-i+1$, contradicting
condition $d$. We can analogously rule out that $\sigma(y)=i+1$ for
some $y\in\alpha_{>x}$.

We now show that condition $c$ implies $b$.
Denote by $\mathcal{N}_i$ the set of linear extensions 
$\sigma:\alpha\to[n]$ with $\sigma(x)=i$ (so that $N_i=|\mathcal{N}_i|$). 
We further denote by $\mathcal{N}_i^\pm:=\{\Pi_{i,i\pm 1}\sigma:
\sigma\in\mathcal{N}_i\}$, where $\Pi_{i,j}$ denotes the permutation of 
$[n]$ that exchanges $i,j$. Condition $c$ now implies 
$\mathcal{N}_i^\pm\subseteq\mathcal{N}_{i\pm 1}$, so that
$N_i=|\mathcal{N}_i^\pm|\le |\mathcal{N}_{i\pm 1}|=N_{i\pm 1}$. Thus
$$
	N_i^2\ge N_{i+1}N_{i-1} \ge N_iN_{i-1} \ge N_i^2,
$$
where the first inequality follows from Theorem \ref{thm:stanley}.
As $N_i>0$, condition $b$ follows readily. This completes 
the proof, as the implication $b\Rightarrow a$ is trivial.
\end{proof}

Condition $c$ of Theorem \ref{thm:exst} identifies one particular 
combinatorial mechanism that gives rise to equality in Theorem 
\ref{thm:stanley}. It is far from obvious, however, why this suffices to 
yield a complete solution to the extremal problem. The hard part of 
Theorem \ref{thm:exst} is to show that this is the \emph{only} mechanism 
that gives rise to equality, which will be proved in section 
\ref{sec:stanleypf} below.

Let us emphasize that the combinatorial characterization of the equality 
cases of Stanley's inequality has strong consequences beyond the solution 
of the extrmal problem itself: it provides detailed information on the 
shape of the log-concave sequences $N_1,\ldots,N_n$ that can arise in 
Theorem \ref{thm:stanley}. For example, the implication $a\Rightarrow b$ 
of Theorem \ref{thm:exst} shows that such sequences cannot contain any 
3-term geometric progressions. As any positive log-concave sequence is 
unimodal, it follows from Lemma \ref{lem:trivexst} and Theorem 
\ref{thm:exst} that one can decompose $[n]=I_1\cup\cdots\cup I_5$ into 
consecutive (possibly empty) intervals $I_k$ so that the sequence 
$N_1,\ldots,N_n$ has the form that is illustrated in Figure 
\ref{fig:stshape}. Furthermore, Lemma \ref{lem:trivexst} and Theorem 
\ref{thm:exst} enable us to compute the length of each interval 
explicitly for any given poset.
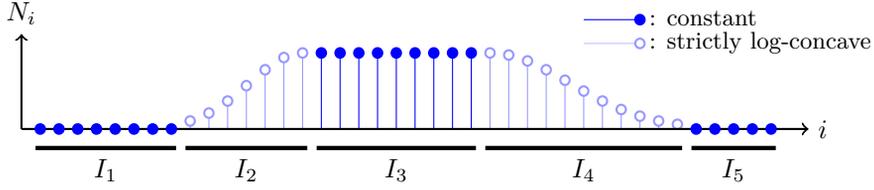
\begin{figure}
\centering
\begin{tikzpicture}[scale=.25]

\foreach \i in {9,...,15}
{
	\draw[color=blue!40!white] (\i,{4*exp(-(\i-15)^2/16)}) -- (\i,0);
	\draw[color=blue!40!white,fill=white,thick] (\i,{4*exp(-(\i-15)^2/16)}) circle (0.25);
}

\foreach \i in {16,...,24}
{
	\draw[color=blue,fill=blue,thick] (\i,4) circle (0.25);
	\draw[color=blue] (\i,4) -- (\i,0);
}

\foreach \i in {25,...,35}
{
	\draw[color=blue!40!white] (\i,{4*exp(-(25-\i)^2/36)}) -- (\i,0);
	\draw[color=blue!40!white,fill=white,thick] (\i,{4*exp(-(25-\i)^2/36)}) circle (0.25);
}

\draw[thick,->] (0,0) -- (0,5) node[above] {$N_i$};
\draw[thick,->] (0,0) -- (42,0) node[right] {$i$};

\draw[ultra thick] (.75,-1) -- (8.25,-1) node[midway,below] {$I_1$};
\draw[ultra thick] (8.75,-1) -- (15.25,-1) node[midway,below] {$I_2$};
\draw[ultra thick] (15.75,-1) -- (24.25,-1) node[midway,below] {$I_3$};
\draw[ultra thick] (24.75,-1) -- (35.25,-1) node[midway,below] {$I_4$};
\draw[ultra thick] (35.75,-1) -- (40.25,-1) node[midway,below] {$I_5$};

\foreach \i in {1,...,8}
{
	\draw[color=blue,fill=blue,thick] (\i,0) circle (0.25);
}

\foreach \i in {36,...,40}
{
	\draw[color=blue,fill=blue,thick] (\i,0) circle (0.25);
}

\draw[color=blue] (30,5.75) -- (33,5.75);
\draw[color=blue,fill=blue,thick] (33,5.75) circle (0.25);
\draw (33,5.85) node[right] {\small : constant};

\draw[color=blue!25!white] (30,4.5) -- (33,4.5);
\draw[color=blue!40!white,fill=white,thick] (33,4.5) circle (0.25);
\draw (33,4.55) node[right] {\small : strictly log-concave};

\end{tikzpicture}
\caption{Structure of the sequence $N_1,\ldots,N_n$.\label{fig:stshape}}
\end{figure}

\begin{example}
For any $k,l,r,s,t\ge 1$, consider the poset 
$$
	\alpha=\{x,(y_i)_{i\in[k]},(z_i)_{i\in[l]},
	(u_i)_{i\in[r-1]},(v_i)_{i\in[s-1]},(w_i)_{i\in[t+1]}\}
$$
defined by the following relations:
\begin{align*}
	y_1<\cdots<\mbox{}&y_k<x<z_1<\cdots<z_l, &
	y_k<w_1<\cdots<w_{t+1}<\mbox{}&z_1, \\
	&y_k<u_1<\cdots<u_{r-1}, &
	v_1<\cdots<v_{s-1}<\mbox{}&z_1.
\end{align*}
Then $|\alpha_{<x}|=k$, $|\alpha_{>x}|=l$, 
$\min_{y\in\alpha_{<x}}|\alpha_{>y}|=l+r+t+1$, and
$\min_{y\in\alpha_{>x}}|\alpha_{<y}|=k+s+t+1$.
Therefore, Lemma \ref{lem:trivexst} and
Theorem \ref{thm:exst} imply that the sequence $N_1,\ldots,N_{|\alpha|}$
has the form that is illustrated in Figure \ref{fig:stshape} with
$$
	|I_1|=k,\qquad
	|I_2|=s,\qquad
	|I_3|=t,\qquad
	|I_4|=r,\qquad
	|I_5|=l.
$$
Thus any decomposition $[n]=I_1\cup\cdots\cup I_5$ with 
$I_1,\ldots,I_5\ne\varnothing$ is achievable.
This example is readily modified to construct situations where some $I_k$
may be empty.
\end{example}

\subsection{Order polytopes}
\label{sec:order}

The convex bodies $K,L$ that appear in the proof of Theorem 
\ref{thm:stanley} are examples of order polytopes. Before we proceed to 
the proof of Theorem \ref{thm:exst}, we must recall some basic properties of 
such polytopes.

We fix once and for all the poset $\alpha:=\{x,y_1,\ldots,y_{n-1}\}$, and 
define $\bar\alpha:=\alpha\backslash\{x\}$. For any $\beta\subseteq\alpha$ 
(with the induced partial order) and $z\in\alpha$, we will denote by 
$\beta_{\mathrm{R}z}:=\{y\in\beta: y\mathrm{R}z\}$ for 
$\mathrm{R}\in\{<,\le,>,\ge\}$, by $\beta_{\not\sim z}$ the set of 
elements of $\beta$ that are not comparable to $z$, and by 
$\beta^\uparrow$ ($\beta^\downarrow$) the set of maximal (minimal) 
elements of $\beta$. 

For $y,z\in\beta$, we say that $z$ \emph{covers}
$y$ in $\beta$ if $z\in(\beta_{>y})^\downarrow$. Moreover,
$\beta\subseteq\alpha$ is called an \emph{upper} (\emph{lower}) set in 
$\alpha$ if
$\alpha_{>y}\subseteq\beta$ ($\alpha_{<y}\subseteq\beta$) for every
$y\in\beta$.

Now consider $\beta\subseteq\bar\alpha$. By a slight abuse of notation, we 
will define the subspace $\mathbb{R}^\beta := 
\{t\in\mathbb{R}^{n-1}:t_i=0\mbox{ for } y_i\not\in\beta\}$.
The \emph{order polytope} $O_\beta$ is defined as
$$
	O_\beta :=
	\{t\in\mathbb{R}^\beta:t_j\in[0,1],\mbox{ and }
	t_j\le t_k\mbox{ if }y_j\le y_k,\mbox{ for all }
	y_j,y_k\in\beta\}.
$$
The following basic facts may be found in \cite[\S 1]{Sta86}.

\begin{lem}
\label{lem:order}
For any $\beta\subseteq\bar\alpha$, we have
$\dim O_\beta = |\beta|$. The $(|\beta|-1)$-dimensional faces of 
$O_\beta$ are precisely the following subsets of $O_\beta$:
\begin{enumerate}[1.]
\item $O_\beta\cap\{t_j=0\}$ for $y_j\in\beta^\downarrow$.
\item $O_\beta\cap\{t_j=1\}$ for $y_j\in\beta^\uparrow$.
\item $O_\beta\cap\{t_j=t_k\}$ for $y_j,y_k\in\beta$ such that
$y_k$ covers $y_j$ in $\beta$.
\end{enumerate}
\end{lem}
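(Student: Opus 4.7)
The plan is to prove Lemma \ref{lem:order} by the standard method for order polytopes, in three steps: establish the dimension, list the defining inequalities and determine which are redundant, and verify that each candidate facet has the correct dimension.

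First, to compute $\dim O_\beta$, I would fix any linear extension $y_{\pi(1)}<y_{\pi(2)}<\cdots<y_{\pi(|\beta|)}$ of the partial order induced on $\beta$, and consider the point $t^\star\in\mathbb{R}^\beta$ with $t^\star_{\pi(i)}=i/(|\beta|+1)$. Then every defining inequality of $O_\beta$ (namely $0\le t_j$, $t_j\le 1$, and $t_j\le t_k$ whenever $y_j\le y_k$ in $\beta$) holds strictly at $t^\star$, so a sufficiently small $\mathbb{R}^\beta$-ball around $t^\star$ is contained in $O_\beta$. This shows $\dim O_\beta=\dim\mathbb{R}^\beta=|\beta|$.

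Second, I would determine which defining inequalities are irredundant. The polytope $O_\beta$ is cut out by the finite collection of inequalities above, so each of its facets is defined by some subcollection whose removal would enlarge the polytope; equivalently, the facets correspond to inequalities not implied by the remaining ones. By transitivity, one checks: (a) if $y_j\not\in\beta^\downarrow$, pick $y_i<y_j$ with $y_i\in\beta^\downarrow$, whence $t_j\ge t_i\ge 0$ is implied; (b) symmetrically, if $y_j\not\in\beta^\uparrow$, the inequality $t_j\le 1$ is implied; (c) if $y_k$ does not cover $y_j$, pick $y_l\in\beta$ with $y_j<y_l<y_k$, whence $t_j\le t_l\le t_k$ is implied. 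This shows that the only candidate facets are those of types (1)--(3).

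Third, I would verify that each candidate face actually has dimension $|\beta|-1$ by exhibiting a point in its relative interior at which the corresponding inequality is the only active one. For type (1) with $y_j\in\beta^\downarrow$: choose a linear extension of $\beta$ with $\pi(1)=j$ and set $t_{\pi(i)}=(i-1)/|\beta|$, so $t_j=0$ and every other defining inequality is strict. Type (2) is analogous via $t_{\pi(i)}=i/|\beta|$ with $\pi(|\beta|)=$ the index of $y_j\in\beta^\uparrow$. For type (3) with $y_k$ covering $y_j$ in $\beta$, choose a linear extension of $\beta$ in which $y_j$ and $y_k$ occupy consecutive positions (which is possible precisely because no element of $\beta$ lies strictly between them), and then set $t$ so that $t_j=t_k$ equals the average of the values assigned by $t^\star$ to those two ranks, keeping all other coordinates as in $t^\star$; every other inequality then holds strictly. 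In each case the constructed face contains a relatively open subset of the hyperplane $\{t_j=c\}$ or $\{t_j=t_k\}$ intersected with $\mathbb{R}^\beta$, which has dimension $|\beta|-1$.

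The main obstacle is not conceptual but bookkeeping: one must be careful in step three to check that at the constructed relative-interior points, no inequality other than the single designated one is active, which requires the right choice of linear extension; the existence of such an extension in case (3) is where the covering hypothesis is used essentially.
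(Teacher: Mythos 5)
The paper does not prove this lemma itself; it cites it to Stanley's \emph{Two poset polytopes} \cite[\S~1]{Sta86}, so there is no in-paper proof to compare against. Your argument is a correct, self-contained proof, and it recovers essentially the standard route (and the one in Stanley's paper): compute the dimension by exhibiting an interior point from a linear extension; observe that the inequalities $t_j\ge 0$ for non-minimal $y_j$, $t_j\le 1$ for non-maximal $y_j$, and $t_j\le t_k$ for non-covering pairs are implied by the rest, so any facet must lie in one of the hyperplanes of types (1)--(3); then, for each of these, produce a point at which that inequality alone is tight, which shows the corresponding intersection has dimension $|\beta|-1$ and hence is a facet. Two small points worth making explicit if you write this up formally: in step two, to conclude that no redundant inequality can define a facet you should note that all the bounding hyperplanes in play here are pairwise distinct, so passing to the irredundant subsystem is unambiguous; and in step three, type (3), the existence of a linear extension of $\beta$ in which $y_j$ and $y_k$ are adjacent is exactly the statement that identifying $y_j$ with $y_k$ still yields a poset, which holds if and only if nothing of $\beta$ lies strictly between them --- this is where the covering hypothesis enters, as you correctly flag.
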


In the sequel, we denote by $e_1,\ldots,e_{n-1}$ the coordinate
basis of $\mathbb{R}^{n-1}$, and we let
$1_\beta:=\sum_{y_j\in\beta}e_j$ for any $\beta\subseteq\bar\alpha$.
We can now formulate the following basic fact.

\begin{lem}
\label{lem:upperlower}
If $\beta\subseteq\bar\alpha$ is a lower set in $\bar\alpha$, then
$$
	O_{\bar\alpha\backslash\beta} = \{ t\in[0,1]^{n-1}:
	t_j\le t_k\mbox{ if }y_j\le y_k,~t_j=0\mbox{ if }y_j\in\beta\}.
$$
Analogously, if $\beta\subseteq\bar\alpha$ is an upper set in 
$\bar\alpha$, then
$$
	O_{\bar\alpha\backslash\beta} + 1_\beta = \{ t\in[0,1]^{n-1}:
	t_j\le t_k\mbox{ if }y_j\le y_k,~t_j=1\mbox{ if }y_j\in\beta\}.
$$
\end{lem}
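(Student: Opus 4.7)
The plan is to verify both set equalities by a direct unwinding of definitions, with the lower (respectively, upper) set hypothesis doing exactly the work of eliminating the order constraints that ``cross'' between $\beta$ and $\bar\alpha\setminus\beta$. In both cases, the two sides clearly lie in $[0,1]^{n-1}$ and have the prescribed values on the coordinates indexed by $\beta$, so the content of the lemma is that the remaining order inequalities match on both sides.

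First I would handle the lower set case. A point $t\in O_{\bar\alpha\setminus\beta}\subseteq \mathbb{R}^{\bar\alpha\setminus\beta}\cap[0,1]^{n-1}$ automatically satisfies $t_j=0$ for $y_j\in\beta$, and by definition $t_j\le t_k$ whenever $y_j\le y_k$ with $y_j,y_k\in\bar\alpha\setminus\beta$. To see $t$ lies in the right-hand side it remains to verify $t_j\le t_k$ when at least one of $y_j,y_k$ is in $\beta$. If $y_j\in\beta$ this is automatic from $t_j=0\le t_k$. The case $y_j\in\bar\alpha\setminus\beta$, $y_k\in\beta$ with $y_j<y_k$ cannot occur, because $\beta$ being a lower set in $\bar\alpha$ forces $y_j\in\bar\alpha_{<y_k}\subseteq\beta$, a contradiction. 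The reverse inclusion is immediate: any $t$ in the right-hand side automatically lies in $\mathbb{R}^{\bar\alpha\setminus\beta}$ and satisfies the constraints defining $O_{\bar\alpha\setminus\beta}$ by restriction.

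Next I would handle the upper set case by a translation argument. Writing $s=t+1_\beta$ with $t\in O_{\bar\alpha\setminus\beta}$, we have $s_j=1$ for $y_j\in\beta$ and $s_j=t_j\in[0,1]$ otherwise, so $s\in[0,1]^{n-1}$ with the prescribed boundary values on $\beta$. The inequalities $s_j\le s_k$ for $y_j\le y_k$ are verified by cases: both indices in $\beta$ gives $1\le 1$; both outside gives the defining constraint of $O_{\bar\alpha\setminus\beta}$; $y_j\in\bar\alpha\setminus\beta$ and $y_k\in\beta$ gives $s_j\le 1=s_k$; and the case $y_j\in\beta$, $y_k\in\bar\alpha\setminus\beta$ with $y_j<y_k$ cannot occur because $\beta$ is an upper set in $\bar\alpha$, forcing $y_k\in\bar\alpha_{>y_j}\subseteq\beta$, a contradiction. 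Conversely, given any $s$ in the right-hand side, the point $t:=s-1_\beta$ is supported on $\bar\alpha\setminus\beta$, lies in $[0,1]^{n-1}$, and inherits the relevant order constraints, so $t\in O_{\bar\alpha\setminus\beta}$.

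I do not foresee any substantive obstacle; the lemma is a routine translation between two presentations of the same polytope. The one point to keep in mind is that ``lower'' and ``upper set in $\bar\alpha$'' mean that the downward (resp.\ upward) closure is taken within $\bar\alpha$, and this is precisely what eliminates the cross-comparisons between $\beta$ and $\bar\alpha\setminus\beta$ in the two case analyses above.
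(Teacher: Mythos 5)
Your proof is correct and takes essentially the same approach as the paper: both arguments reduce to the observation that the lower (resp.\ upper) set hypothesis makes the ``crossing'' order constraints between $\beta$ and $\bar\alpha\setminus\beta$ either trivially satisfied or vacuous, so the two descriptions of the polytope coincide. You spell out the two inclusions and the case analysis slightly more explicitly than the paper does, but the underlying argument is identical.
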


\begin{proof}
Suppose $\beta$ is a lower set and $t\in[0,1]^{n-1}$ satisfies $t_j=0$ for 
all $y_j\in\beta$. Then $t_j\le t_k$ holds trivially for $y_j\le y_k$ 
with $y_j\in\beta$. On the other hand, as $\beta$ is a lower set, $y_j\le 
y_k$ with $y_j\not\in\beta$ implies $y_k\not\in\beta$. Thus the only 
nontrivial constraints $t_j\le t_k$ for $y_j\le y_k$ are those that appear 
in the definition of $O_{\bar\alpha\backslash\beta}$, which concludes the 
proof when $\beta$ is a lower set. The proof when
$\beta$ is an upper set is analogous. 
\end{proof}

In the remainder of this section, we will define the polytopes $K,L$ as in 
the proof of Theorem \ref{thm:stanley}. By Lemma \ref{lem:upperlower}, we 
can write equivalently
$$
	K = O_{\bar\alpha\backslash\alpha_{>x}} + 1_{\alpha_{>x}},
	\qquad\quad
	L = O_{\bar\alpha\backslash\alpha_{<x}},
$$
where we used that $\alpha_{>x}$ is an upper set and
$\alpha_{<x}$ is a lower set in $\bar\alpha$.

\begin{lem}
\label{lem:dimkl}
$\dim K = n-1-|\alpha_{>x}|$, $\dim L=n-1-|\alpha_{<x}|$, and
$\dim(K+L)=n-1$.
\end{lem}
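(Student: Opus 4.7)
The plan is to compute each dimension directly from the order-polytope structure given by Lemma \ref{lem:order}, using only the fact that $\dim O_\beta = |\beta|$ together with the disjointness $\alpha_{>x}\cap\alpha_{<x}=\varnothing$.

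For the first two identities, I would simply note that by Lemma \ref{lem:order}, $\dim O_\beta=|\beta|$ for every $\beta\subseteq\bar\alpha$. Since dimension is translation-invariant, applying this to $\beta=\bar\alpha\backslash\alpha_{>x}$ gives $\dim K=|\bar\alpha\backslash\alpha_{>x}|=n-1-|\alpha_{>x}|$, and applying it to $\beta=\bar\alpha\backslash\alpha_{<x}$ gives $\dim L=n-1-|\alpha_{<x}|$. These are essentially free.

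The only substantive step is $\dim(K+L)=n-1$. The key observation is that $O_\beta\subseteq\mathbb{R}^\beta$ and $\dim O_\beta=|\beta|=\dim\mathbb{R}^\beta$, so $\aff O_\beta=\mathbb{R}^\beta$ as affine subspaces. Consequently the linear direction space of $\aff K$ is $\mathbb{R}^{\bar\alpha\backslash\alpha_{>x}}$ and the direction space of $\aff L$ is $\mathbb{R}^{\bar\alpha\backslash\alpha_{<x}}$. Since $\dim(K+L)$ equals the dimension of the sum of these two direction spaces, I need only check that
$$
\mathbb{R}^{\bar\alpha\backslash\alpha_{>x}}+\mathbb{R}^{\bar\alpha\backslash\alpha_{<x}}=\mathbb{R}^{\bar\alpha}.
$$
This follows because the sum of these coordinate subspaces is the coordinate subspace indexed by $(\bar\alpha\backslash\alpha_{>x})\cup(\bar\alpha\backslash\alpha_{<x})=\bar\alpha\backslash(\alpha_{>x}\cap\alpha_{<x})$, and the intersection $\alpha_{>x}\cap\alpha_{<x}$ is empty since no element of the poset can be both strictly greater and strictly less than $x$.

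There is no real obstacle here; the lemma is a direct bookkeeping exercise once the order-polytope description of $K$ and $L$ from Lemma \ref{lem:upperlower} and the face/dimension count of Lemma \ref{lem:order} are in hand. The only thing one must be careful about is distinguishing the ambient subspaces $\mathbb{R}^{\bar\alpha\backslash\alpha_{>x}}$ and $\mathbb{R}^{\bar\alpha\backslash\alpha_{<x}}$ of the two order polytopes before translating $K$, and observing that their sum spans all of $\mathbb{R}^{n-1}=\mathbb{R}^{\bar\alpha}$ precisely because of the disjointness of $\alpha_{>x}$ and $\alpha_{<x}$.
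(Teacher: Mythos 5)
Your proof is correct and follows essentially the same route as the paper's: invoke Lemma~\ref{lem:order} to see that $K$ (up to translation) and $L$ are full-dimensional in the coordinate subspaces $\mathbb{R}^{\bar\alpha\backslash\alpha_{>x}}$ and $\mathbb{R}^{\bar\alpha\backslash\alpha_{<x}}$, then count $|(\bar\alpha\backslash\alpha_{>x})\cup(\bar\alpha\backslash\alpha_{<x})|=|\bar\alpha|$ using disjointness of $\alpha_{>x}$ and $\alpha_{<x}$. You spell out the direction-space bookkeeping a bit more explicitly than the paper does, but the argument is the same.
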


\begin{proof}
Lemma \ref{lem:order} shows that $K-1_{\alpha_{>x}}$ is a full-dimensional
polytope in $\mathbb{R}^{\bar\alpha\backslash\alpha_{>x}}$, and that
$L$ is a full-dimensional polytope in 
$\mathbb{R}^{\bar\alpha\backslash\alpha_{<x}}$. It follows that
$\dim K = |\bar\alpha\backslash\alpha_{>x}|$,
$\dim L=|\bar\alpha\backslash\alpha_{<x}|$, and
$\dim(K+L) = |(\bar\alpha\backslash\alpha_{>x})\cup
(\bar\alpha\backslash\alpha_{<x})|=|\bar\alpha|$.
\end{proof}

Let us finally prove Lemma \ref{lem:trivexst}. While a direct 
combinatorial proof is a simple exercise, we find it 
instructive to show how it arises from the mixed volumes.

\begin{proof}[Proof of Lemma \ref{lem:trivexst}]
The representation in the proof of Theorem \ref{thm:stanley} 
shows that $N_i=0$ if and only if 
$\V_{n-1}(\mathcal{K}_{i-1},\mathcal{L}_{n-i})=0$.
By Lemma \ref{lem:dim}, this is the case if and only if either 
$\dim(K+L)<n-1$, $\dim(K)<i-1$, or $\dim(L)<n-i$. The conclusion now 
follows immediately from Lemma \ref{lem:dimkl}.
\end{proof}

\subsection{Combinatorial characterization of the extremals}
\label{sec:stanleypf}

We now turn to the proof of Theorem \ref{thm:exst}. We have already proved 
the implications $d\Rightarrow c\Rightarrow b\Rightarrow a$; the remainder 
of this section is devoted to the proof of the implication $a\Rightarrow d$.

We begin by reducing the problem to an extremal case of the 
Alexandrov-Fenchel inequality in $\mathbb{R}^{n-1}$. While the polytopes 
$K,L\subset\mathbb{R}^{n-1}$ generally have empty interior, it turns out 
that nontrivial extremals can only arise in the present setting from a 
supercritical case of the Alexandrov-Fenchel inequality.

\begin{lem}
\label{lem:staf}
Let $i\in\{2,\ldots,n-1\}$ be such that $N_i>0$ and $N_i^2=N_{i+1}N_{i-1}$.
Then $|\alpha_{<x}|+1 < i < n-|\alpha_{>x}|$, and
there exist $a>0$ and $v\in\mathbb{R}^{n-1}$ so that
$$
	h_K(x)=h_{aL+v}(x) \quad\mbox{for all}\quad 
	x\in\supp S_{B,\mathcal{K}_{i-2},\mathcal{L}_{n-i-1}}.
$$
\end{lem}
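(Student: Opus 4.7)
The plan is to recognize the equality $N_i^2 = N_{i+1}N_{i-1}$ as an equality case of the Alexandrov-Fenchel inequality in $\mathbb{R}^{n-1}$ with reference collection $\mathcal{C} := (\mathcal{K}_{i-2},\mathcal{L}_{n-i-1})$, verify that $\mathcal{C}$ is supercritical, and then invoke Corollary \ref{cor:schneider}.

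First I will establish the strict dimensional inequalities $|\alpha_{<x}|+1 < i < n-|\alpha_{>x}|$. From the representation $N_j = (n-1)!\,\V_{n-1}(\mathcal{K}_{j-1},\mathcal{L}_{n-j})$ and Lemma~\ref{lem:trivexst}, we know $N_j>0$ iff $|\alpha_{<x}|\le j-1$ and $|\alpha_{>x}|\le n-j$. Suppose $|\alpha_{<x}| = i-1$; then $N_{i-1}=0$, and the assumed equality forces $N_i^2=N_{i+1}N_{i-1}=0$, contradicting $N_i>0$. An identical argument using $N_{i+1}$ rules out $|\alpha_{>x}| = n-i$. Hence $|\alpha_{<x}|\le i-2$ and $|\alpha_{>x}|\le n-i-1$, which is the first assertion.

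Next I will check that $\mathcal{C}$ is a supercritical collection of $n-3$ convex bodies in $\mathbb{R}^{n-1}$. Every non-empty sub-collection has the form $(\mathcal{K}_j,\mathcal{L}_l)$ with $k:=j+l\ge 1$, $j\le i-2$, $l\le n-i-1$. By Lemma~\ref{lem:dimkl}, its Minkowski sum has dimension $\dim K=n-1-|\alpha_{>x}|$ if $l=0$, $\dim L=n-1-|\alpha_{<x}|$ if $j=0$, and $n-1$ otherwise. In the two pure cases, the strict inequalities established in the first step give $\dim K\ge i\ge k+2$ and $\dim L\ge n-i+1\ge k+2$; in the mixed case $k\le n-3$, whence $n-1\ge k+2$. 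Thus $\dim(\sum_{r}C_r)\ge k+2$ in every case, which is supercriticality.

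Finally, the representation in the proof of Theorem~\ref{thm:stanley} turns $N_i^2=N_{i+1}N_{i-1}$ with $N_i>0$ into
\[
\V_{n-1}(K,L,\mathcal{C})^2 = \V_{n-1}(K,K,\mathcal{C})\,\V_{n-1}(L,L,\mathcal{C}),\qquad \V_{n-1}(K,L,\mathcal{C})>0.
\]
Applying Corollary~\ref{cor:schneider} in $\mathbb{R}^{n-1}$ (valid since $n-1\ge 2$ and $\mathcal{C}$ is supercritical) furnishes $a>0$ and $v\in\mathbb{R}^{n-1}$ such that $K$ and $aL+v$ share their supporting hyperplanes in all $(B,\mathcal{C})$-extreme normal directions. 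By Lemma~\ref{lem:supp} the latter set coincides with $\supp S_{B,\mathcal{C}}$, yielding the desired identity $h_K(x)=h_{aL+v}(x)$ on $\supp S_{B,\mathcal{K}_{i-2},\mathcal{L}_{n-i-1}}$.

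There is essentially no obstacle here: this lemma is purely bookkeeping, its role being to position the extremal poset problem as an instance to which the supercritical case of the main theorem applies. The genuine difficulty lies not in this step but in the ensuing combinatorial translation of the geometric equality $h_K = h_{aL+v}$ on $\supp S_{B,\mathcal{C}}$ into the structural condition $d$ of Theorem~\ref{thm:exst}.
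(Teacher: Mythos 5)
Your proof is correct and follows essentially the same path as the paper's: use the equality plus $N_i>0$ to force $N_{i\pm 1}>0$, read off the strict dimensional inequalities from Lemma~\ref{lem:trivexst}, verify supercriticality via Lemma~\ref{lem:dimkl}, and invoke Corollary~\ref{cor:schneider}. The only difference is that you spell out the supercriticality check for all sub-collections explicitly, whereas the paper compresses this into a one-line appeal to $\dim K\ge i$, $\dim L\ge n-i+1$, $\dim(K+L)=n-1$.
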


\begin{proof}
We note first that $N_i>0$ and $N_i^2=N_{i+1}N_{i-1}$ imply that
$N_{i-1}>0$ and $N_{i+1}>0$. It therefore follows from Lemma 
\ref{lem:trivexst} that $|\alpha_{<x}|+1 < i < n-|\alpha_{>x}|$.
Moreover, this implies by Lemma \ref{lem:dimkl} that
$\dim K \ge i$ and $\dim L \ge n-i+1$, and $\dim(K+L)=n-1$. Thus the 
collection $(\mathcal{K}_{i-2},\mathcal{L}_{n-i-1})$ is supercritical.

Now note that by the mixed volume representation in the proof of Theorem 
\ref{thm:stanley}, $N_i>0$ and $N_i^2=N_{i+1}N_{i-1}$ imply
$\V_{n-1}(K,L,\mathcal{K}_{i-2},\mathcal{L}_{n-i-1})>0$ and
$$
	\V_{n-1}(K,L,\mathcal{K}_{i-2},\mathcal{L}_{n-i-1})^2 =
	\V_{n-1}(K,K,\mathcal{K}_{i-2},\mathcal{L}_{n-i-1})\,
	\V_{n-1}(L,L,\mathcal{K}_{i-2},\mathcal{L}_{n-i-1}).
$$
The conclusion therefore follows from Corollary \ref{cor:schneider}.
\end{proof}

To exploit Lemma \ref{lem:staf}, there are two distinct difficulties: we 
must gain some understanding of which vectors lie in $\supp 
S_{B,\mathcal{K}_{i-2},\mathcal{L}_{n-i-1}}$, and we must understand how 
to exploit the fact that the supporting hyperplanes of $K$ and $aL+v$ 
coincide in these directions. We begin by addressing the first issue.

\begin{lem}
\label{lem:stsuppkl}
Let $i\in\{2,\ldots,n-1\}$ be such that $N_i>0$. Then the following hold:
\begin{enumerate}[a.]
\item $-e_j\in \supp S_{B,\mathcal{K}_{i-2},\mathcal{L}_{n-i-1}}$
for $y_j\in\bar\alpha^\downarrow$.
\item $-e_j\in \supp S_{B,\mathcal{K}_{i-2},\mathcal{L}_{n-i-1}}$
for $y_j\in(\alpha_{>x})^\downarrow$ such that 
$|\alpha_{<y_j}| \le i$.
\item $e_j\in \supp S_{B,\mathcal{K}_{i-2},\mathcal{L}_{n-i-1}}$
for $y_j\in\bar\alpha^\uparrow$.
\item $e_j\in \supp S_{B,\mathcal{K}_{i-2},\mathcal{L}_{n-i-1}}$
for $y_j\in(\alpha_{<x})^\uparrow$ such that 
$|\alpha_{>y_j}| \le n-i+1$.
\item $e_{jk}:=\frac{e_j-e_k}{\sqrt{2}}\in \supp 
S_{B,\mathcal{K}_{i-2},\mathcal{L}_{n-i-1}}$
when the following conditions are all satisfied:
\begin{enumerate}[\rm (i)]
\item $y_k$ covers $y_j$ in $\bar\alpha$; 
\item if $y_j\in\alpha_{<x}$, $y_k\in\bar\alpha\backslash\alpha_{<x}$ then
$y_k\in(\bar\alpha\backslash\alpha_{<x})^\downarrow$;
\item if $y_k\in\alpha_{>x}$, $y_j\in\bar\alpha\backslash\alpha_{>x}$, 
then $y_j\in(\bar\alpha\backslash\alpha_{>x})^\uparrow$.
\end{enumerate}
\end{enumerate}
\end{lem}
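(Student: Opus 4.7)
The plan is to verify each membership claim via Lemma \ref{lem:supp}, applied in $\mathbb{R}^{n-1}$ to the collection $(\mathcal{K}_{i-2},\mathcal{L}_{n-i-1})$ of $n-3$ polytopes. By condition (c) of that lemma, showing $u \in \supp S_{B,\mathcal{K}_{i-2},\mathcal{L}_{n-i-1}}$ reduces to the three dimensional inequalities
\[
\dim F(K,u) \ge i-2, \qquad \dim F(L,u) \ge n-i-1, \qquad \dim\bigl(F(K,u)+F(L,u)\bigr) \ge n-3.
\]
By Lemma \ref{lem:faces} one has $F(K,u) = F(O_{\bar\alpha\setminus\alpha_{>x}},u) + 1_{\alpha_{>x}}$ and $F(L,u) = F(O_{\bar\alpha\setminus\alpha_{<x}},u)$, so each face is (up to translation) a face of an order polytope; Lemma \ref{lem:order} identifies the facets of any order polytope as being cut out by equations $t_j=0$, $t_j=1$, or $t_j=t_k$, which will let us compute these faces explicitly in each case.

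In cases (a)--(d) the direction $u = \pm e_j$ is axial. Depending on whether $y_j$ lies in $\alpha_{>x}$, in $\alpha_{<x}$, or is incomparable to $x$, the face $F(K,u)$ (and analogously $F(L,u)$) is either the full polytope --- when $t_j$ is already pinned by the Minkowski shift $1_{\alpha_{>x}}$ or absent from the defining subposet --- or a genuine facet of codimension one. Reading dimensions off from Lemma \ref{lem:dimkl}, and using $|\alpha_{<x}| \le i-1$ and $|\alpha_{>x}| \le n-i$ supplied by $N_i > 0$ via Lemma \ref{lem:trivexst}, the first two dimensional inequalities reduce to elementary counts in subposets of $\bar\alpha$, and the third follows because the free coordinates of $F(K,u)$ and $F(L,u)$ together cover $\bar\alpha \setminus \{y_j\}$. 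The auxiliary hypotheses $|\alpha_{<y_j}| \le i$ in (b) and $|\alpha_{>y_j}| \le n-i+1$ in (d) are exactly what is needed so that, after pinning the coordinate $t_j$ together with all dominated (resp.\ dominating) coordinates in the opposing subposet, the surviving dimension is still at least $n-i-1$ (resp.\ $i-2$); a direct bookkeeping shows $|\alpha_{<x}| + |\{y_l \le y_j : y_l \in \bar\alpha\setminus\alpha_{<x}\}| = |\alpha_{<y_j}|$ in case (b), and symmetrically in case (d).

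Case (e) requires more care because $u = e_{jk}$ is diagonal and the relevant facet equation is $t_j = t_k$. When both $y_j$ and $y_k$ belong to an underlying subposet $\gamma$, condition (i) propagates the covering relation from $\bar\alpha$ to $\gamma$, so $F(O_\gamma, e_{jk}) = O_\gamma \cap \{t_j = t_k\}$ is genuinely a facet by Lemma \ref{lem:order}. When exactly one of $y_j, y_k$ has been removed from the subposet (because it lies in $\alpha_{>x}$ or $\alpha_{<x}$), the maximum of $t_j - t_k$ is attained instead on a coordinate facet of the form $\{t_j = 1\}$ or $\{t_k = 0\}$; conditions (ii) and (iii) are calibrated precisely so that this alternative facet is itself a genuine facet of the surviving order polytope, rather than a degenerate or lower-dimensional face. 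With these facet descriptions in hand, the inequalities (A) and (B) for case (e) again follow from Lemma \ref{lem:dimkl}.

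I expect the main technical obstacle to be the verification of the sum inequality $\dim(F(K,e_{jk}) + F(L,e_{jk})) \ge n-3$ in case (e). Both faces lie in hyperplanes of the form $\{t_j - t_k = \mathrm{const}\}$, so a careless count could undershoot by one, and one must track the free coordinates of each face through the four subcases --- according to which of $\alpha_{>x}, \alpha_{<x}$, or neither contains each of $y_j, y_k$ --- to see that any coordinate missing from one face is supplied by the other. Conditions (i)--(iii) turn out to be tight in exactly this sense: each is needed to preclude a collapse that would make the sum's dimension fall below $n-3$. Though intricate, the verification is mechanical, and once carried out it completes the proof of all five assertions.
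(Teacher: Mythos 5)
Your proposal is correct and follows essentially the same route as the paper: reduce membership in $\supp S_{B,\mathcal{K}_{i-2},\mathcal{L}_{n-i-1}}$ to the three dimension inequalities $\dim F(K,u)\ge i-2$, $\dim F(L,u)\ge n-i-1$, $\dim F(K+L,u)\ge n-3$ via Lemma \ref{lem:supp}, then compute the faces $F(K,u)$ and $F(L,u)$ explicitly using the order-polytope structure (Lemmas \ref{lem:upperlower} and \ref{lem:order}) and count free coordinates case by case. The paper's proof supplies the detailed computations your sketch defers, in particular the explicit identifications $F(K,-e_j)=K\cap\{t_j=1_{y_j\in\alpha_{>x}}\}$, $L\cap\{t_j=0\}=O_{\bar\alpha\setminus\bar\alpha_{\le y_j}}$ in part~(b), and the three-way subcase analysis for $e_{jk}$, but the overall strategy and the role of each hypothesis (including the extra bounds in (b), (d) and conditions (i)--(iii) in (e)) match yours.
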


\begin{proof}
As $N_i>0$, Lemma \ref{lem:trivexst} implies $|\alpha_{<x}|+1 \le i \le 
n-|\alpha_{>x}|$. Moreover, by Lemma \ref{lem:supp}, we have
$u\in \supp S_{B,\mathcal{K}_{i-2},\mathcal{L}_{n-i-1}}$ if and only if
$$
	\dim F(K,u) \ge i-2,\quad
	\dim F(L,u) \ge n-i-1,\quad
	\dim F(K+L,u) \ge n-3.
$$
The latter condition will be verified in each part of the lemma.

\medskip

\textbf{Part $\bm a$.} We begin by noting that for any $j\in[n-1]$,
$$
	F(K,-e_j) = K\cap\{t_j=1_{y_j\in\alpha_{>x}}\},\qquad
	F(L,-e_j) = L\cap\{t_j=0\}.
$$
Indeed, it is readily seen by the definitions of $K,L$ that $h_K(-e_j) = 
-\inf_{t\in K}t_j=-1_{y_j\in\alpha_{>x}}$ and $h_L(-e_j)=-\inf_{t\in 
L}t_j=0$, so the claim follows from \eqref{eq:facedef}.

Now let $y_j\in\bar\alpha^\downarrow$. We claim that the following hold:
\begin{enumerate}[1.]
\item $F(K,-e_j)$ is a full-dimensional polytope in 
$\mathbb{R}^{\bar\alpha\backslash(\alpha_{>x}\cup\{y_j\})}+1_{\alpha_{>x}}$; 
and
\item 
$F(L,-e_j)$ is a full-dimensional polytope in 
$\mathbb{R}^{\bar\alpha\backslash(\alpha_{<x}\cup\{y_j\})}$.
\end{enumerate}
Indeed, recall first that 
$K=O_{\bar\alpha\backslash\alpha_{>x}}+1_{\alpha_{>x}}$ by Lemma 
\ref{lem:upperlower}. If $y_j\in\alpha_{>x}$, then $F(K,-e_j)=K$ and
the first claim follows directly from Lemma \ref{lem:order}.
On the other hand, if $y_j\in\bar\alpha\backslash\alpha_{>x}$, then 
$F(K,-e_j) = K\cap\{t_j=0\} = O_{\bar\alpha\backslash\alpha_{>x}}\cap
\{t_j=0\} + 1_{\alpha_{>x}}$. But then $y_j\in\bar\alpha^\downarrow$ 
implies $y_j\in(\bar\alpha\backslash\alpha_{>x})^\downarrow$, and 
the first claim follows again from Lemma \ref{lem:order}. The proof of the 
second claim is completely analogous.

To conclude the proof of part $a$, it suffices to note that the above 
claims imply
\begin{align*}
	\dim F(K,-e_j) &= |\bar\alpha\backslash(\alpha_{>x}\cup\{y_j\})|
	\ge n-2-|\alpha_{>x}|\ge i-2, \\
	\dim F(L,-e_j) &= |\bar\alpha\backslash(\alpha_{<x}\cup\{y_j\})|
	\ge n-2-|\alpha_{<x}|\ge n-i-1, \\
	\dim F(K+L,-e_j) &=
	|\bar\alpha\backslash(\alpha_{>x}\cup\{y_j\})
	\cup
	\bar\alpha\backslash(\alpha_{<x}\cup\{y_j\})|
	=
	|\bar\alpha\backslash\{y_j\}| = n-2,
\end{align*}
where we used $|\alpha_{<x}|+1 \le i \le
n-|\alpha_{>x}|$ and $\alpha_{>x}\cap \alpha_{<x}=\varnothing$.

\medskip

\textbf{Part $\bm b$.} 
Let $y_j\in (\alpha_{>x})^\downarrow$ with $|\alpha_{<y_j}|\le i$.
We already showed in part $a$ that
$$
	F(K,-e_j) = K,\qquad\quad
	F(L,-e_j) = L\cap\{t_j=0\}
$$
and that 
$F(K,-e_j)$ is a full-dimensional polytope in
$\mathbb{R}^{\bar\alpha\backslash\alpha_{>x}}+1_{\alpha_{>x}}$
(the proofs of these facts for $y_j\in\alpha_{>x}$ did not use the
assumption of part $a$).

Now note that as $y_j\in\alpha_{>x}$, we have
$y_i\le y_j$ for all $y_i\in\alpha_{<x}$. It follows that
$$
	L\cap\{t_j=0\} =
	\{ t\in[0,1]^{n-1}:
        t_i\le t_k\mbox{ if }y_i\le y_k,~t_i=0\mbox{ if }y_i\le y_j\}
	=
	O_{\bar\alpha\backslash \bar\alpha_{\le y_j}},
$$
where we used Lemma \ref{lem:upperlower} and
that $\bar\alpha_{\le y_j}$ is a lower set in $\bar\alpha$. It therefore
follows from Lemma \ref{lem:order} that
$F(L,-e_j)$ is a full-dimensional polytope in
$\mathbb{R}^{\bar\alpha\backslash \bar\alpha_{\le y_j}}$.

To conclude the proof of part $b$, note that
as $i\le n-|\alpha_{>x}|$, we have
$$
	\dim F(K,-e_j) = |\bar\alpha\backslash\alpha_{>x}|
	= n-1-|\alpha_{>x}| \ge i-1.
$$
On the other hand, as $y_j\in\alpha_{>x}$, we have
$|\bar\alpha_{\le y_j}|=|\alpha_{<y_j}|\le i$, so that
$$
	\dim F(L,-e_j) = |\bar\alpha\backslash \bar\alpha_{\le y_j}|
	= n-1 - |\bar\alpha_{\le y_j}| \ge n-i-1.
$$
Finally, we note that 
$$
	\dim F(K+L,-e_j) =
	|\bar\alpha\backslash\alpha_{>x}\cup
	\bar\alpha\backslash \bar\alpha_{\le y_j}|
	=
	|\bar\alpha\backslash\{y_j\}|=n-2,
$$
where we used that $\alpha_{>x}\cap \bar\alpha_{\le y_j}=\{y_j\}$ as 
$y_j\in(\alpha_{>x})^\downarrow$.

\medskip

\textbf{Parts $\bm c$ and $\bm d$.} The proofs are completely
analogous to those of parts $a$ and $b$.

\medskip

\textbf{Part $\bm e$.} We begin by noting that 
$$
	F(K,e_{jk}) = K\cap\{t_j=t_k\},\qquad\quad
	F(L,e_{jk}) = L\cap\{t_j=t_k\}
$$
whenever $y_j<y_k$. Indeed, as $y_j<y_k$ implies $t_j\le t_k$ for any 
$t\in K$, it follows readily from the definition of $K$ that
$h_K(e_{jk}) = 2^{-1/2}\sup_{t\in K} (t_j-t_k) = 0$. That
$h_L(e_{jk})=0$ follows analogously, and the claim now follows from 
\eqref{eq:facedef}.

Now suppose $y_j,y_k$ satisfy conditions (i)--(iii) of part $e$.
We claim the following.
\begin{enumerate}[1.]
\item $F(K,e_{jk})\subset \mathbb{R}^{\bar\alpha\backslash\alpha_{>x}}+
1_{\alpha_{>x}}$ with $\dim F(K,e_{jk})\ge 
|\bar\alpha\backslash\alpha_{>x}|-1$.
\item $F(L,e_{jk})\subset \mathbb{R}^{\bar\alpha\backslash\alpha_{<x}}$ 
with $\dim F(L,e_{jk})\ge |\bar\alpha\backslash\alpha_{<x}|-1$.
\end{enumerate}
Indeed, as
$K=O_{\bar\alpha\backslash\alpha_{>x}}+1_{\alpha_{>x}}$ by Lemma 
\ref{lem:upperlower}, the first
part of the first claim is immediate. For the second part of the first 
claim, we consider three cases.
\begin{enumerate}[$\bullet$]
\itemsep\abovedisplayskip
\item If $y_j,y_k\in\alpha_{>x}$, then $F(K,e_{jk})=K$, so
$\dim F(K,e_{jk})=|\bar\alpha\backslash\alpha_{>x}|$ by Lemma 
\ref{lem:order}.
\item If $y_j\in\bar\alpha\backslash\alpha_{>x}$ and
$y_k\in\alpha_{>x}$, then 
$F(K,e_{jk})=
O_{\bar\alpha\backslash\alpha_{>x}}\cap\{t_j=1\}+1_{\alpha_{>x}}$, and 
(iii) states that
$y_j\in (\bar\alpha\backslash\alpha_{>x})^\uparrow$. Thus 
$\dim F(K,e_{jk}) = |\bar\alpha\backslash\alpha_{>x}|-1$
by Lemma \ref{lem:order}.
\item If $y_j,y_k\in \bar\alpha\backslash\alpha_{>x}$, then
$F(K,e_{jk})=O_{\bar\alpha\backslash\alpha_{>x}}\cap\{t_j=t_k\}+1_{\alpha_{>x}}$,
and (i) implies that
$y_k$ covers $y_j$ in $\bar\alpha\backslash\alpha_{>x}$. Thus
$\dim F(K,e_{jk}) = |\bar\alpha\backslash\alpha_{>x}|-1$
by Lemma \ref{lem:order}.
\end{enumerate}
This proves the first claim. The proof of the second claim is 
completely analogous (using condition (ii) rather than condition (iii)).

To conclude the proof of part $e$, note that the above claims imply
\begin{align*}
	\dim F(K,e_{jk}) &\ge 
	n-2-|\alpha_{>x}| \ge i-2,
\\	\dim F(L,e_{jk}) &\ge 
	n-2-|\alpha_{<x}| \ge n-i-1
\end{align*}
as $|\alpha_{<x}|+1 \le i \le n-|\alpha_{>x}|$. On the other hand, we have
\begin{align*}
	\dim F(K+L,e_{jk}) &\ge
	\dim F(K,e_{jk}) +
	\dim F(L,e_{jk}) -
	|\bar\alpha\backslash\alpha_{>x}\cap
	\bar\alpha\backslash\alpha_{<x}|
	\\ &\ge 
	|\bar\alpha\backslash\alpha_{>x}
	\cup \bar\alpha\backslash\alpha_{<x}| - 2
	= n-3,
\end{align*}
where we used that $\alpha_{>x}\cap\alpha_{<x}=\varnothing$.
The proof is complete.
\end{proof}

From this point onwards we place ourselves in the setting of Lemma 
\ref{lem:staf}. In particular, we will assume without further comment that
$i\in\{2,\ldots,n-1\}$ with $N_i>0$ and
$N_i^2=N_{i+1}N_{i-1}$, and that $a>0$, 
$v\in\mathbb{R}^{n-1}$ have been fixed so that
$$
	h_K(x)=h_{aL+v}(x) \quad\mbox{for all}\quad
        x\in\supp S_{B,\mathcal{K}_{i-2},\mathcal{L}_{n-i-1}}.
$$
Let us begin by formulating a first consequence of Lemma \ref{lem:stsuppkl}.

\begin{lem}
\label{lem:sthelper}
The following hold.
\begin{enumerate}[a.]
\item $v_j=0$ for $y_j\in \bar\alpha^\downarrow\backslash\alpha_{>x}$.
\item $v_j=1-a$ for $y_j\in\bar\alpha^\uparrow\backslash\alpha_{<x}$.
\item $v_j=v_k$ whenever conditions {\rm (i)--(iii)} of Lemma 
\ref{lem:stsuppkl} are all satisfied.
\end{enumerate}
\end{lem}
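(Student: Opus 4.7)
The plan is to evaluate the identity $h_K = h_{aL+v}$ at each of the explicit vectors whose membership in $\supp S_{B,\mathcal{K}_{i-2},\mathcal{L}_{n-i-1}}$ is provided by Lemma \ref{lem:stsuppkl}. Since $h_{aL+v}(u) = a\,h_L(u) + \langle v,u\rangle$, each such evaluation yields a single linear equation involving the components of $v$ and $a$, from which the three parts of the lemma fall out by inspection.

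For part $a$, I would apply Lemma \ref{lem:stsuppkl}(a) to obtain $-e_j \in \supp S_{B,\mathcal{K}_{i-2},\mathcal{L}_{n-i-1}}$ when $y_j \in \bar\alpha^\downarrow \setminus \alpha_{>x}$. Using the representations $K = O_{\bar\alpha\setminus\alpha_{>x}} + 1_{\alpha_{>x}}$ and $L = O_{\bar\alpha\setminus\alpha_{<x}}$ from section \ref{sec:order}, together with $y_j \notin \alpha_{>x}$ and the fact that $0 \in O_\beta$ for any $\beta$, a direct computation gives $h_K(-e_j) = 0$ and $h_L(-e_j) = 0$. The equality $h_K(-e_j) = a\,h_L(-e_j) + \langle v,-e_j\rangle$ therefore reads $0 = -v_j$.

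For part $b$, the analogous argument uses Lemma \ref{lem:stsuppkl}(c) to put $e_j$ in the support when $y_j \in \bar\alpha^\uparrow \setminus \alpha_{<x}$. Here $y_j$ is still maximal in $\bar\alpha\setminus\alpha_{>x}$ and in $\bar\alpha\setminus\alpha_{<x}$, so by Lemma \ref{lem:order} the maximum of $t_j$ on each order polytope is attained at $t = 1_{\bar\alpha\setminus\alpha_{>x}}$ and $t = 1_{\bar\alpha\setminus\alpha_{<x}}$ respectively. This yields $h_K(e_j) = 1$ and $h_L(e_j) = 1$, so the evaluation becomes $1 = a + v_j$.

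For part $c$, conditions (i)--(iii) deliver $e_{jk} = (e_j - e_k)/\sqrt{2}$ in the support by Lemma \ref{lem:stsuppkl}(e). Since $y_j < y_k$ forces $t_j \le t_k$ for all $t \in K$ and all $t \in L$, the supremum of $t_j - t_k$ over either body is exactly $0$, so $h_K(e_{jk}) = h_L(e_{jk}) = 0$. The identity thus reads $0 = (v_j - v_k)/\sqrt{2}$, giving $v_j = v_k$. No step here poses a genuine obstacle: the entire content is packaged into Lemma \ref{lem:stsuppkl}, and the facial structure of order polytopes recalled in Lemma \ref{lem:order} together with the explicit formulas for $K$ and $L$ makes each support-function evaluation essentially immediate.
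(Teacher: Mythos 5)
Your proof is correct and follows essentially the same route as the paper: evaluate $h_K = a\,h_L + \langle v,\cdot\rangle$ at the explicit support vectors supplied by Lemma \ref{lem:stsuppkl} and read off the resulting linear constraints on $a$ and $v$. The only minor imprecision is in part $b$, where the maximality claim for $y_j$ in $\bar\alpha\setminus\alpha_{>x}$ fails in the subcase $y_j\in\alpha_{>x}$ (as then $y_j\notin\bar\alpha\setminus\alpha_{>x}$), but $h_K(e_j)=1$ still holds trivially there since the $j$th coordinate is identically $1$ on $K$.
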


\begin{proof}
For part $a$, note that $-e_j\in \supp 
S_{B,\mathcal{K}_{i-2},\mathcal{L}_{n-i-1}}$
by Lemma \ref{lem:stsuppkl}, so that
$h_K(-e_j) = ah_L(-e_j)-v_j$. But as $y_j\not\in\alpha_{>x}$,
we have  $h_K(-e_j)=h_L(-e_j)=0$ as in the proof of part $a$ of
Lemma \ref{lem:stsuppkl}, so the conclusion follows.

The argument for part $b$ is analogous: we have
$e_j\in \supp  
S_{B,\mathcal{K}_{i-2},\mathcal{L}_{n-i-1}}$
by Lemma \ref{lem:stsuppkl}, so that
$h_K(e_j) = ah_L(e_j)+v_j$. But as $y_j\not\in\alpha_{<x}$,
it follows readily that $h_K(e_j)=h_L(e_j)=1$, and the conclusion follows.

Finally, for part $c$, we have $e_{jk}\in \supp
S_{B,\mathcal{K}_{i-2},\mathcal{L}_{n-i-1}}$
by Lemma \ref{lem:stsuppkl}, so that
$h_K(e_{jk}) = ah_L(e_{jk})+2^{-1/2}(v_j-v_k)$. But condition (i) implies
$h_K(e_{jk}) =h_L(e_{jk})=0$ as in the proof of part $e$ of
Lemma \ref{lem:stsuppkl}, so the conclusion follows.
\end{proof}

We can now use Lemma \ref{lem:sthelper} as a basic step to
compute $a$ and $v$.

\begin{cor}
\label{cor:sta1}
$a=1$.
\end{cor}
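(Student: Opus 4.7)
My plan is to deduce $a=1$ by producing a chain $y_{j_0}\prec y_{j_1}\prec\cdots\prec y_{j_m}$ of covers in $\bar\alpha$ with $y_{j_0}\in\bar\alpha^\downarrow\setminus\alpha_{>x}$, $y_{j_m}\in\bar\alpha^\uparrow\setminus\alpha_{<x}$, such that each consecutive pair $(y_{j_l},y_{j_{l+1}})$ satisfies conditions (i)--(iii) of Lemma \ref{lem:stsuppkl}(e). By Lemma \ref{lem:sthelper}(a) and (b) we have $v_{j_0}=0$ and $v_{j_m}=1-a$; iterating Lemma \ref{lem:sthelper}(c) along the chain yields $v_{j_0}=v_{j_1}=\cdots=v_{j_m}$, forcing $0=1-a$ and hence $a=1$.

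Both endpoint sets are non-empty. The strict inequalities $|\alpha_{<x}|+1<i<n-|\alpha_{>x}|$ from Lemma \ref{lem:staf} imply $\alpha_{<x},\alpha_{>x}\subsetneq\bar\alpha$, so $\bar\alpha$ must contain minimal elements outside $\alpha_{>x}$ and maximal elements outside $\alpha_{<x}$ (otherwise the entire poset $\bar\alpha$ would collapse into $\alpha_{<x}$ or $\alpha_{>x}$, contradicting the permissible range of $i$).

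The chain is constructed greedily starting from any $y_{j_0}\in\bar\alpha^\downarrow\setminus\alpha_{>x}$, choosing at each step a cover of the current element so as to satisfy (ii) and (iii). Condition (ii) is nontrivial only at covers from $\alpha_{<x}$ into $B:=\bar\alpha\setminus\alpha_{<x}$, where it demands landing in $B^\downarrow$; condition (iii) is its mirror at covers into $\alpha_{>x}$ from $A:=\bar\alpha\setminus\alpha_{>x}$, demanding departure from $A^\uparrow$. To arrange these, I would first climb as far as possible within $\alpha_{<x}$ (resp.\ $A$) before exiting (resp.\ entering) the boundary, and reroute when necessary through an incomparable element in $B^\downarrow$ (resp.\ $A^\uparrow$), whose availability is secured by the dimensional slack in Lemma \ref{lem:staf}'s inequalities.

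The main obstacle will be giving a clean combinatorial argument that this greedy-with-rerouting procedure always terminates with a valid chain reaching $\bar\alpha^\uparrow\setminus\alpha_{<x}$; this requires a careful inductive analysis of the poset, but once established, the application of Lemma \ref{lem:sthelper} is immediate and yields the desired equality $a=1$.
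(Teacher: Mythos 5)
Your overall strategy (build a cover chain between $\bar\alpha^\downarrow\setminus\alpha_{>x}$ and $\bar\alpha^\uparrow\setminus\alpha_{<x}$ along which Lemma~\ref{lem:sthelper} propagates, then compare endpoints) is the right one, and your use of Lemma~\ref{lem:sthelper}(a),(b),(c) at the two ends and interior is exactly what the paper does. However, your proposal has a genuine gap precisely where you flag it: you never actually establish that the ``greedy-with-rerouting'' procedure produces a chain whose consecutive covers satisfy (ii) and (iii) of Lemma~\ref{lem:stsuppkl}(e). Starting from a minimal element (which may well lie inside $\alpha_{<x}$) and climbing is problematic because when the chain exits $\alpha_{<x}$ you need the cover to land in $(\bar\alpha\setminus\alpha_{<x})^\downarrow$, and likewise on entering $\alpha_{>x}$ you need the departing element to lie in $(\bar\alpha\setminus\alpha_{>x})^\uparrow$; these are nontrivial constraints, and ``availability is secured by the dimensional slack'' is an assertion, not an argument. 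In particular, it is not clear how the chain escapes a maximal element of $\alpha_{<x}$ whose only cover in $\alpha$ is $x$ itself.

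The idea that unlocks the proof — and that is missing from your proposal — is to observe first that $\alpha_{\not\sim x}\ne\varnothing$: if every element were comparable to $x$ then $|\alpha_{<x}|+|\alpha_{>x}|=n-1$, and Lemma~\ref{lem:trivexst} would force $N_i=0$ for all $i\ne|\alpha_{<x}|+1$, contradicting $N_{i-1},N_{i+1}>0$. One then anchors the chain at some $y_{j_0}\in\alpha_{\not\sim x}$ and extends it both upward and downward by a greedy cover choice that prefers $x$-incomparable elements whenever possible. Since everything above $y_{j_0}$ is $\notin\alpha_{\le x}$ and everything below it is $\notin\alpha_{\ge x}$, condition (ii) is vacuous on the upper half and (iii) on the lower half; on the upper half, if the greedy step is forced into $\alpha_{>x}$, the preference rule together with transitivity shows the current element is maximal in $\bar\alpha\setminus\alpha_{>x}$, which is exactly (iii) (and symmetrically for (ii) on the lower half). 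The chain provably ends at the required minimal and maximal elements, and Lemma~\ref{lem:sthelper} then gives $0=v_{j_{-s}}=\cdots=v_{j_t}=1-a$. Without the observation that an $x$-incomparable element exists and serves as the pivot, a one-directional construction such as yours does not close.
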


\begin{proof}
We first note that $\alpha_{\not\sim x}\ne\varnothing$. Indeed, if every 
element of $\alpha$ were comparable to $x$, then 
$|\alpha_{<x}|+|\alpha_{>x}|=n-1$, which implies by Lemma 
\ref{lem:trivexst} that $N_i=0$ whenever $i\ne|\alpha_{<x}|+1$.
The latter contradicts $N_i^2=N_{i+1}N_{i-1}>0$.

Fix any $y_{j_0}\in \alpha_{\not\sim x}$. We construct a chain 
$y_{j_{-s}}<\cdots < y_{j_{i}}<y_{j_{i+1}}<\cdots<y_{j_t}$ according 
to the following algorithm. For the upper part of the chain, we 
iteratively choose $y_{j_{i+1}}\in (\alpha_{>y_{j_i}})^\downarrow$ for 
$i\ge 0$ under the constraint that we select $y_{j_{i+1}}\in 
\alpha_{\not\sim x}$ whenever possible. The chain is extended until 
a maximal element of $\alpha$ is reached. The lower part of the chain is 
constructed by iteratively choosing $y_{j_{i-1}}\in 
(\alpha_{<y_{j_i}})^\uparrow$ for $i\le 0$ under the constraint that we 
select $y_{j_{i-1}}\in \alpha_{\not\sim x}$ whenever possible. The chain 
is extended until a minimal element of $\alpha$ is reached.

We claim that this construction ensures the following properties:
\begin{enumerate}[1.]
\item $y_{j_{-s}}\in\bar\alpha^\downarrow\backslash\alpha_{>x}$
and $y_{j_{t}}\in\bar\alpha^\uparrow\backslash\alpha_{<x}$.
\item 
Conditions
(i)--(iii) of Lemma \ref{lem:stsuppkl} 
hold with $j=j_i$, $k=j_{i+1}$ for all $i$.
\end{enumerate}
Indeed, it is impossible that $y_{j_i}\in\alpha_{\le x}$ for some
$i\ge 0$, or that $y_{j_i}\in\alpha_{\ge x}$ for some
$i\le 0$, as that would violate $y_{j_0}\in\alpha_{\not\sim x}$.
Thus the first claim follows as $y_{j_{-s}}$ is minimal and
$y_{j_t}$ is maximal by construction.
To prove the second claim, consider first $i\ge 0$, so that
$y_{j_i},y_{j_{i+1}}\not\in\alpha_{\le x}$. Then (i) holds as
$y_{j_{i+1}}$ covers $y_{j_i}$ in $\bar\alpha$ by construction, 
and (ii) holds automatically. Finally, as we chose 
$y_{j_{i+1}}\in\alpha_{\not\sim x}$ whenever possible, we can only have
$y_{j_i}\in\bar\alpha\backslash\alpha_{>x}$ and 
$y_{j_{i+1}}\in\alpha_{>x}$ if
$y_{j_i}\in(\bar\alpha\backslash\alpha_{>x})^\uparrow$, which establishes 
condition (iii). The proof of the second claim for $i\le 0$ is completely 
analogous.

To conclude the proof, we observe that the above claims and
Lemma \ref{lem:sthelper} imply that
$v_{j_{-s}}=0$, $v_{j_t}=1-a$, and $v_{j_i}=v_{j_{i+1}}$ for all $i$.
Thus $a=1$.
\end{proof}

\begin{cor}
\label{cor:stv0}
$v_j=0$ for all $y_j\in\alpha_{<x}\cup\alpha_{>x}$.
\end{cor}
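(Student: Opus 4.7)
The plan is to combine the three statements of Lemma \ref{lem:sthelper} with Corollary \ref{cor:sta1} ($a=1$) via a simple chain-propagation argument. Given $y_j\in \alpha_{>x}$, I would construct a saturated chain $y_j=y_{j_0}<y_{j_1}<\cdots<y_{j_r}$ in $\bar\alpha$ by iteratively choosing $y_{j_{l+1}}$ to be any element covering $y_{j_l}$ in $\bar\alpha$; the chain terminates when $y_{j_r}$ is maximal in $\bar\alpha$. The key combinatorial observation is that this chain automatically stays inside $\alpha_{>x}$: if $y_{j_l}\in\alpha_{>x}$ then any $y$ covering $y_{j_l}$ in $\bar\alpha$ satisfies $y>y_{j_l}>x$, hence $y\in\alpha_{>x}$. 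Consequently $y_{j_r}\in\bar\alpha^\uparrow\cap\alpha_{>x}\subseteq \bar\alpha^\uparrow\setminus\alpha_{<x}$, so Lemma \ref{lem:sthelper}$b$ and Corollary \ref{cor:sta1} yield $v_{j_r}=1-a=0$.

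For each link $y_{j_l}<y_{j_{l+1}}$, both endpoints lie in $\alpha_{>x}$, so the conditionals (ii) and (iii) in Lemma \ref{lem:stsuppkl}$e$ are vacuous, and (i) holds by construction; thus Lemma \ref{lem:sthelper}$c$ gives $v_{j_l}=v_{j_{l+1}}$. Propagating equality along the chain gives $v_j=v_{j_r}=0$. The case $y_j\in\alpha_{<x}$ is completely symmetric: I would build a chain $y_{j_{-s}}<\cdots<y_{j_0}=y_j$ downward in $\bar\alpha$, using that $y<y_{j_l}<x$ forces $y\in\alpha_{<x}$ to keep the chain inside $\alpha_{<x}$, and conclude $y_{j_{-s}}\in\bar\alpha^\downarrow\cap\alpha_{<x}\subseteq\bar\alpha^\downarrow\setminus\alpha_{>x}$, so Lemma \ref{lem:sthelper}$a$ gives $v_{j_{-s}}=0$, which then propagates upward by Lemma \ref{lem:sthelper}$c$.

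There is no real obstacle here; the only point requiring care is the verification that the chain remains trapped in $\alpha_{>x}$ (respectively $\alpha_{<x}$), which is immediate from transitivity of the partial order together with the disjointness $\alpha_{>x}\cap\alpha_{<x}=\varnothing$. The usefulness of Corollary \ref{cor:stv0} will emerge in the subsequent step toward condition $d$ of Theorem \ref{thm:exst}: combining $v_j=0$ with the direct computation $h_K(-e_j)=-1$ and $h_L(-e_j)=0$ for $y_j\in\alpha_{>x}$ shows that the support cannot contain $-e_j$ for any $y_j\in(\alpha_{>x})^\downarrow$, which by Lemma \ref{lem:stsuppkl}$b$ forces $|\alpha_{<y_j}|>i$ for every minimal element of $\alpha_{>x}$, and an analogous argument handles $\alpha_{<x}$.
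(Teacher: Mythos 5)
Correct, and essentially identical to the paper's own argument: build a monotone chain from the given $y_j$ to a maximal (resp.\ minimal) element of $\bar\alpha$, observe by transitivity that the chain is trapped in $\alpha_{>x}$ (resp.\ $\alpha_{<x}$) so that the covering conditions (i)--(iii) of Lemma~\ref{lem:stsuppkl}(e) hold and Lemma~\ref{lem:sthelper}(c) propagates $v$ along each link, and anchor the chain endpoint with Lemma~\ref{lem:sthelper}(a) or (b) together with $a=1$ from Corollary~\ref{cor:sta1}. No gaps.
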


\begin{proof}
Fix any $y_{j_0}\in\alpha_{>x}$, and construct an increasing chain 
$y_{j_0}<\cdots<y_{j_t}$ by iteratively choosing $y_{j_{i+1}}\in 
(\alpha_{>y_{j_i}})^\downarrow$ until a maximal element is reached. Then 
$y_{j_i}\in\alpha_{>x}$ for all $i$, so 
$y_{j_t}\in\bar\alpha^\uparrow\backslash\alpha_{<x}$ and 
conditions (i)---(iii) of Lemma \ref{lem:stsuppkl} hold with $j=j_i$, 
$k=j_{i+1}$ for all $i$. Thus Lemma \ref{lem:sthelper} and Corollary 
\ref{cor:sta1} imply that $v_{j_t}=0$ and $v_{j_i}=v_{j_{i+1}}$ for all 
$i$. We have therefore shown that $v_j=0$ for any $y_j\in\alpha_{>x}$. The 
proof for the case $y_j\in\alpha_{<x}$ follows in a completely analogous 
manner by constructing a decreasing chain from any 
$y_{j_0}\in\alpha_{<x}$.
\end{proof}

We are now ready to complete the proof of Theorem \ref{thm:exst}.

\begin{proof}[Proof of Theorem \ref{thm:exst}, $a\Rightarrow d$]
Fix $i\in\{2,\ldots,n-1\}$ such that $N_i>0$ and $N_i^2=N_{i+1}N_{i-1}$.
Then Lemma \ref{lem:staf} and Corollaries \ref{cor:sta1} and 
\ref{cor:stv0} imply that
$$
	h_K(x)=h_{L}(x)+\langle v,x\rangle \quad\mbox{for all}\quad
        x\in\supp S_{B,\mathcal{K}_{i-2},\mathcal{L}_{n-i-1}}
$$
holds for a vector $v\in\mathbb{R}^{n-1}$ with
$v_j=0$ for all $y_j\in\alpha_{<x}\cup\alpha_{>x}$.

Now consider any $y_j\in\alpha_{>x}$. Then we claim that 
$-e_j\not\in\supp S_{B,\mathcal{K}_{i-2},\mathcal{L}_{n-i-1}}$. Indeed, if 
$-e_j$ did lie in the support, then we would have $h_K(-e_j)=h_L(-e_j)$.
But this entails a contradiction, as we showed in the
proof of part $a$ of Lemma
\ref{lem:stsuppkl} that $h_K(-e_j)=-1$ and $h_L(-e_j)=0$. It follows by a 
completely analogous argument that
$e_j\not\in\supp S_{B,\mathcal{K}_{i-2},\mathcal{L}_{n-i-1}}$ when
$y_j\in\alpha_{<x}$.

On the other hand, Lemma \ref{lem:stsuppkl} implies that
$-e_j\in\supp S_{B,\mathcal{K}_{i-2},\mathcal{L}_{n-i-1}}$ when
$y_j\in(\alpha_{>x})^\downarrow$ such that $|\alpha_{<y_j}|\le i$.
Consequently, we have shown that $y_j$ satisfying the latter condition 
cannot exist, that is, $|\alpha_{<y}|>i$ whenever
$y\in(\alpha_{>x})^\downarrow$. As $\alpha_{<y}$ can only increase if
we increase $y$, it follows that
$|\alpha_{<y}|>i$ for all $y\in\alpha_{>x}$. By applying the same
reasoning to $e_j$ for $y_j\in(\alpha_{<x})^\uparrow$, it 
follows in a completely analogous manner that 
$|\alpha_{>y}|>n-i+1$ for all $y\in\alpha_{<x}$.
\end{proof}

\section{Discussion and open questions}
\label{sec:discussion}

The main results of this paper completely settle the extremals of the 
Alexandrov-Fenchel inequality for convex polytopes. Analogous extremal 
problems also arise, however, in other situations where Alexandrov-Fenchel 
type inequalities appear. The aim of the final section of this paper is to 
briefly discuss a number of basic open questions in this direction that 
arise from our results.

\subsection{General convex bodies}

The Alexandrov-Fenchel inequality (Theorem \ref{thm:af}) applies to any 
reference bodies $C_1,\ldots,C_{n-2}$. While the main results of this 
paper require that the reference bodies are polytopes, the statements of 
our main results (Theorems \ref{thm:main} and \ref{thm:ultimatemain}) make 
sense for general convex bodies. One may therefore conjecture that the 
statements of our main results extend \emph{verbatim} to the general 
setting. This conjecture is due to Schneider \cite{Sch85} for 
full-dimensional bodies, to which our results add detailed predictions on 
the lower-dimensional cases.

While we have made essential use of the polytope assumption in this paper, 
it should be emphasized that many of our arguments are already completely 
general: neither the gluing arguments nor the statement of the local 
Alexandrov-Fenchel inequality (Theorem \ref{thm:localaf}) rely 
fundamentally on the polytope assumption, a fact that we already exploited 
in section \ref{sec:extensions}. The main obstacle to extending the theory 
of this paper to general convex bodies therefore lies in the proof of 
Theorem \ref{thm:localaf}: if such a result could be proved in the general 
setting, this would essentially complete the extremal characterization for 
general convex bodies.

\subsubsection{The local Alexandrov-Fenchel inequality}

Let us now briefly recall how the polytope structure was used in the proof 
of Theorem \ref{thm:localaf}.

In sections \ref{sec:matrix}--\ref{sec:augment}, we used the combinatorial 
structure of polytopes to reduce Theorem \ref{thm:localaf} to a 
finite-dimensional problem. However, the actual proof of the local 
Alexandrov-Fenchel inequality in section \ref{sec:localaf} does not make 
direct use of the geometry of polytopes. What is really exploited here is 
that the mixed area measures $S_{g,\mathcal{P}}$ and 
$S_{g,g,\mathcal{P}_{\backslash r}}$ are supported in a finite set 
$\{u_i\}_{i\in[N]}$, which has two key consequences: the existence problem 
of Theorem \ref{thm:localaf} can be formulated as the solution of a system 
of linear equations; and the masses $S_{g,\mathcal{P}}(\{u_i\})$ satisfy 
Alexandrov-Fenchel inequalities $S_{g,\mathcal{P}}(\{u_i\})^2\ge 
S_{g,g,\mathcal{P}_{\backslash 
r}}(\{u_i\})\,S_{P_r,P_r,\mathcal{P}_{\backslash r}}(\{u_i\})$ by Lemma 
\ref{lem:mapoly}.

In principle, however, one may conjecture that similar objects could be 
defined directly for general convex bodies in suitable functional-analytic 
framework. For example, it was shown in \cite{SvH19} that for any convex 
bodies $\mathcal{C}=(C_1,\ldots,C_{n-2})$ in $\mathbb{R}^n$, there exists 
a self-adjoint operator $\mathscr{A}_{\mathcal{C}}$ on the 
Hilbert space $L^2(S_{B,\mathcal{C}})$ so that
$$
	\V_n(K,L,\mathcal{C}) = 
	\langle h_K,\mathscr{A}_{\mathcal{C}}h_L
	\rangle_{L^2(S_{B,\mathcal{C}})}
$$
for any convex bodies $K,L$ in $\mathbb{R}^n$. The operator 
$\mathscr{A}_{\mathcal{C}}$ may be viewed as an infinite-dimensional 
analogue of the Alexandrov matrix $\mathrm{A}$ (in the sense of Corollary 
\ref{cor:augprobrep}), and one might therefore attempt to use such objects 
as a replacement for the finite-dimensional computations in the proof of 
Theorem \ref{thm:localaf}.
The problem with this construction, however, is that not only 
$\mathscr{A}_{\mathcal{C}}$ but also the underlying space 
$L^2(S_{B,\mathcal{C}})$ depends on $\mathcal{C}$, while the proof of 
Theorem \ref{thm:localaf} requires us to consider several such objects 
simultaneously. For example, an analogue 
``$(\mathscr{A}_{\mathcal{C}}f)^2\ge 
\mathscr{A}_{f,\mathcal{C}_{\backslash r}}f\cdot 
\mathscr{A}_\mathcal{C}h_{C_r}$'' of the Alexandrov-Fenchel inequality for 
$S_{g,\mathcal{P}}(\{u_i\})$ does not make sense in this form, as the 
operators $\mathscr{A}_{\mathcal{C}}$ and 
$\mathscr{A}_{f,\mathcal{C}_{\backslash r}}$ are defined on different spaces.

A basic question in this context is therefore whether one may construct 
analogues of the self-adjoint operators $\mathscr{A}_{\mathcal{C}}$ for 
different choices of $\mathcal{C}$ on the same space, and whether these 
operators satisfy an appropriate analogue of the Alexandrov-Fenchel 
inequality. Such analytic questions on the structure of mixed volumes are 
in principle completely independent from the study of the extremals. The 
development of such a functional-analytic framework could, however, 
provide a foundation for the extension of the proof of Theorem 
\ref{thm:localaf} to the general setting.

\subsubsection{Supports of mixed area measures}

The main results of this paper characterize the extremal functions $f$ 
such that $S_{f,\mathcal{C}}=0$ on the support of $S_{B,\mathcal{C}}$. To 
obtain a fully geometric interpretation of these results, however, they 
must be combined with a geometric characterization of $\supp 
S_{B,\mathcal{C}}$. The latter is elementary in the setting of polytopes 
(Lemma \ref{lem:supp}), but remains open in general.

The fundamental conjecture in this direction, due to Schneider 
\cite{Sch85}, is that a local analogue of Lemma \ref{lem:supp} (as 
formulated after Definition \ref{defn:bcextreme}) remains valid in the 
general setting. Let us emphasize, however, that the characterization of 
$\supp S_{B,\mathcal{C}}$ played essentially no role in the proofs of our 
main results: this problem appears to be essentially orthogonal to the 
theory developed in this paper.

\subsection{Algebraic analogues}

Beyond its fundamental role in convex geometry, the Alexandrov-Fenchel 
inequality has deep connections with other areas of mathematics. It was 
realized in the 1970s by Teissier and Khovanskii (cf.\ 
\cite{BZ88,Ful93,Gro90}) 
that the Alexandrov-Fenchel inequality has natural analogues in algebraic 
and complex geometry. More recently, it has been realized that such 
connections extend even further: various combinatorial problems, which 
cannot be expressed directly in terms of convex or algebraic geometry, 
nonetheless fit within a general algebraic framework in which analogues of 
the Alexandrov-Fenchel inequality hold \cite{Huh18}.

The rich algebraic theory surrounding the Alexandrov-Fenchel inequality 
raises the intriguing question whether our results might extend to a 
broader context. This question arises, for example, if we aim to develop 
combinatorial applications as in section \ref{sec:stanley} in situations 
that cannot be formulated in convex geometric terms. Related questions in 
algebraic geometry date back to Teissier \cite{Tei82,BFJ09}.

It may not be entirely obvious, however, how to even formulate algebraic 
analogues of the main results of this paper. The aim of this section is to 
sketch how our main results may be expressed in algebraic terms, which 
could (conjecturally) carry over to analogues of the Alexandrov-Fenchel 
inequality outside convexity. To the best of our knowledge, such problems 
are at present almost entirely open.

\subsubsection{Polytope algebra}

In order to describe our main results algebraically, we must first recall 
some aspects of the polytope algebra due to McMullen \cite{McM93,Tim99}.

Let us fix as in section \ref{sec:matrix} a simple polytope $P$ in 
$\mathbb{R}^n$. Define the linear space
$$
	\mathrm{D}:=\{h_Q-h_R:Q,R\mbox{ strongly isomorphic to }P\}.
$$
For $f,g\in\mathrm{D}$, we will write $f\sim g$ if $f-g$ is a linear 
function, and denote by $[f]$ the equivalence class of $f$ with respect to 
this equivalence relation.

The polytope algebra generated by $P$ is a graded algebra
$$
	\mathrm{A}(P) = \bigoplus_{k=0}^n \mathrm{A}^k
$$
with $\mathrm{A}^1\simeq\mathrm{D}/\mathord\sim$ and
$\mathrm{A}^n\simeq\mathbb{R}$. Moreover, the 
(commutative) multiplication of $\mathrm{A}(P)$ has the property that
$F\cdot G\in\mathrm{A}^{k+l}$ for $F\in\mathrm{A}^k$, $G\in\mathrm{A}^l$, 
and 
$$
	[f_1]\cdot\ldots\cdot[f_n] = 
	n!\,\V_n(f_1,\ldots,f_n)
$$
for any $f_1,\ldots,f_n\in\mathrm{D}$. By \eqref{eq:mixvolarea}, one may 
therefore view $[f_1]\cdot\ldots\cdot[f_{n-1}]\in\mathrm{A}^{n-1}$ as 
the algebraic formulation of the mixed area measure 
$(n-1)!\,S_{f_1,\ldots,f_{n-1}}$.

Several different 
notions of positivity are defined by convex cones in $\mathrm{A}^1$:
\begin{enumerate}[$\bullet$]
\item $\mathrm{Amp}:=\{[h_Q]:Q\mbox{ is strongly isomorphic to }P\}$
(the ample cone).
\item $\mathrm{Nef}:=\mathop{\mathrm{cl}}(\mathrm{Amp})=\{[h_Q]:
Q\mbox{ is homothetic to a summand of }P\}$ (the nef cone).
\item $\mathrm{Big}:=\{[f]:f\in\mathrm{D},~f>0\}$ (the big cone).
\item $\overline{\mathrm{Eff}}:=\mathop{\mathrm{cl}}(\mathrm{Big}) =
\{[f]:f\in\mathrm{D},~f\ge 0\}$ (the pseudoeffective cone).
\end{enumerate}
The terminology used here is borrowed from algebraic geometry \cite{Laz04}.
In these terms, two classical algebraic properties admit a familiar 
interpretation in convexity \cite{Huh18}. The \emph{Hodge-Riemann 
relation} of degree one states that
$$
	(\eta\cdot L_0\cdot\ldots\cdot L_{n-2})^2
	\ge
	(\eta\cdot\eta\cdot L_1\cdot\ldots\cdot L_{n-2})\,
	(L_0\cdot L_0\cdot\ldots\cdot L_{n-2})
$$
for any $\eta\in\mathrm{A}^1$ and $L_0,\ldots,L_{n-2}\in\mathrm{Amp}$. 
This is nothing other than the Alexandrov-Fenchel inequality for polytopes 
strongly isomorphic to $P$. On the other hand, the \emph{hard Lefschetz 
theorem} of degree one states that
$$
	\eta\cdot L_1\cdot \ldots\cdot L_{n-2}=0
	\quad\mbox{if and only if}\quad\eta=0
$$
for any $\eta\in\mathrm{A}^1$ and $L_0,\ldots,L_{n-2}\in\mathrm{Amp}$. 
This statement is equivalent to the fact (which was proved in the original 
work of Alexandrov \cite{Ale37}) that $S_{f,\mathcal{P}}=0$ if and only if 
$f$ is a linear function when $f=h_K-h_L$ and $K,L,P_1,\ldots,P_{n-2}$ are 
strongly isomorphic polytopes; in other words, it states that the 
Alexandrov-Fenchel inequality has no nontrivial extremals in the strongly 
isomorphic setting.

With this algebraic language in hand, one may describe various notions of 
convexity in algebraic terms. To give one further example, if we associate 
to any $\eta\in\mathrm{Nef}$ a numerical dimension $\dim\eta := 
\max\{k:\eta^{\cdot k}\ne 0\}$ (cf.\ \cite{Leh13}), then it follows from 
Lemma \ref{lem:dim} that $[h_Q]\in\mathrm{Nef}$ satisfies $\dim[h_Q]=\dim 
Q$. One can therefore readily describe notions such as critical sets, 
supercriticality, etc.\ algebraically.

\subsubsection{Extremals}

While the Hodge-Riemann inequality extends readily to the setting that 
$L_1,\ldots,L_{n-2}\in\mathrm{Nef}$ by continuity, this is not the case 
for the hard Lefschetz theorem. Indeed, the main results of this paper are 
concerned with the case that $\mathcal{P}=(P_1,\ldots,P_{n-2})$ are 
summands of $P$ (see section \ref{sec:matrix}), and it is precisely in 
this case that nontrivial extremals appear. Our results may therefore 
be viewed as refined forms of the hard Lefschetz theorem for nef 
classes.

To express our results algebraically, we must understand the algebraic 
meaning of the condition $f(x)=0$ for all $x\in\supp S_{B,\mathcal{P}}$. 
To this end, let us make the following observation (we freely use the
notation of sections \ref{sec:matrix}--\ref{sec:augment} in the proof).

\begin{lem}
$f\in\mathrm{D}$ satisfies $f(x)=0$ for all $x\in\supp S_{B,\mathcal{P}}$
if and only if there exist $f_1,f_2\in\mathrm{D}$, $f_1,f_2\ge 0$ so that
$f=f_1-f_2$ and $S_{f_1,\mathcal{P}}=S_{f_2,\mathcal{P}}=0$.
\end{lem}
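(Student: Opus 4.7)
My plan is to handle both directions via the identity
\[
  \int f_i\,dS_{B,\mathcal{P}} \;=\; n\,\V_n(h_B,f_i,\mathcal{P}) \;=\; \int h_B\,dS_{f_i,\mathcal{P}},
\]
which follows from \eqref{eq:mixvolarea} and the symmetry of mixed volumes. The ``if'' direction is then immediate: $S_{f_i,\mathcal{P}}=0$ forces $\int f_i\,dS_{B,\mathcal{P}}=0$, and since each $f_i$ is continuous and nonnegative on $S^{n-1}$, it must vanish pointwise on $\supp S_{B,\mathcal{P}}$; thus $f=f_1-f_2=0$ on this support.

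For the ``only if'' direction, I reduce the problem to constructing a single auxiliary function $g\in\mathrm{D}$ with $g\ge 0$, $g\ge f$, and $g=0$ on $\supp S_{B,\mathcal{P}}$. Indeed, setting $f_1:=g$ and $f_2:=g-f$ then gives elements of $\mathrm{D}$ that are nonnegative, satisfy $f_1-f_2=f$, and both vanish on $\supp S_{B,\mathcal{P}}$, so Lemma \ref{lem:suppeq} yields $S_{f_1,\mathcal{P}}=S_{f_2,\mathcal{P}}=0$. To construct $g$, I would apply Lemma \ref{lem:generate} to realize the vector $z\in\mathbb{R}^N$ defined by $z_j:=0$ for $j\in V$ and $z_i:=\max(f(u_i),0)$ for $i\notin V$ as $z=\mathrm{h}_Q-a\mathrm{h}_P$ for some polytope $Q$ strongly isomorphic to $P$ and some $a\in\mathbb{R}$; regardless of the sign of $a$, the resulting function can be rewritten as a difference of support functions of two polytopes strongly isomorphic to $P$ (after adjusting by a large enough multiple of $h_P$), so $g\in\mathrm{D}$ and $g(u_i)=z_i$ for all $i\in[N]$.

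The verification that this $g$ satisfies the three required properties is a rigidity argument on the normal fan of $P$, and is the only step that uses simplicity of $P$. On each maximal normal cone of $P$, which by simplicity is simplicial with extreme rays $u_{i_1},\ldots,u_{i_n}$ being $n$ linearly independent facet normals, both $g$ and $f$ are linear and $1$-homogeneous; hence $g\ge 0$ (respectively $g-f\ge 0$) on the cone reduces to checking $z_{i_k}\ge 0$ (respectively $z_{i_k}\ge f(u_{i_k})$) at the $n$ rays, both of which hold by the definition of $z$, using crucially that $f(u_j)=0$ for $j\in V$ by hypothesis together with Lemma \ref{lem:suppsbp}. Finally, each edge $e_{ij}$ with $(i,j)\in E$ connects two active vertices $u_i,u_j\in V$, lies inside a common simplicial cone of the normal fan, and carries the linear function $g$ with boundary values $g(u_i)=g(u_j)=0$, so $g\equiv 0$ on $e_{ij}$; since $\supp S_{B,\mathcal{P}}=\bigcup_{(i,j)\in E}e_{ij}$ by Lemma \ref{lem:suppsbp}, this yields $g=0$ on the support. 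I do not anticipate a serious obstacle: the finite-dimensional freedom from Lemma \ref{lem:generate}, together with the vanishing of $f$ at active vertices and the simplicity of $P$, makes the construction essentially forced.
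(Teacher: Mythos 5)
Your proof is correct and takes essentially the same approach as the paper's: both reduce the nontrivial direction to a pointwise, coordinate-wise construction on the vertices $u_i$ using Lemma~\ref{lem:generate}, then propagate the desired nonnegativity (and vanishing on $\supp S_{B,\mathcal{P}}$) by linearity of elements of $\mathrm{D}$ on the simplicial normal cones of the simple polytope $P$. The only cosmetic difference is packaging: you realize $f_1=g$ from Lemma~\ref{lem:generate} on the vector $(f(u_i)_+)_i$ and set $f_2:=g-f$, while the paper realizes $f_1$ and $f_2$ both directly from Lemma~\ref{lem:generate} applied to $(f(u_i)_+)_i$ and $(f(u_i)_-)_i$ respectively; these produce the same pair since $\mathrm{D}$ is a linear space and $g-f$ has the same vertex values as the second construction. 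One very small inaccuracy in your framing: simplicity of $P$ is used not only in the cone-rigidity step but also already in Lemma~\ref{lem:generate} itself, though since that lemma is proved independently this is not a gap.
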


\begin{proof}
For the \emph{if} direction, note that
$S_{f_i,\mathcal{P}}=0$ implies $\int f_i\,dS_{B,\mathcal{P}}=
\int h_B\,dS_{f_i,\mathcal{P}}=0$.
As $f_i\ge 0$, it follows that $f_i(x)=0$ for all $x\in\supp 
S_{B,\mathcal{P}}$ and $i=1,2$.

For the \emph{only if} direction, suppose $f(x)=0$ for all $x\in\supp 
S_{B,\mathcal{P}}$. Applying Lemma \ref{lem:generate} to the vectors 
$z_1=(f(u_i)_+)_{i\in[N]}$ and $z_2=(f(u_i)_-)_{i\in[N]}$, we obtain 
$f_1,f_2\in\mathrm{D}$ so that $f_1(u_i),f_2(u_i)\ge 0$ and 
$f(u_i)=f_1(u_i)-f_2(u_i)$ for all $i\in[N]$, and $f_1(u_i)=f_2(u_i)=0$ 
for $i\in V$. As any function in $\mathrm{D}$ is linear on the normal 
cones of $P$, it follows that $f_1,f_2\ge 0$ and $f=f_1-f_2$ everywhere, 
and that $f_1=f_2=0$ on $\supp S_{B,\mathcal{P}}$ (by Lemma 
\ref{lem:suppsbp}). The conclusion follows from Lemma \ref{lem:suppeq}.
\end{proof}

For sake of illustration, let us consider the simplest setting where 
$L_1,\ldots,L_{n-2}$ are big and nef, that is, $L_i=[h_{P_i}]$ for 
full-dimensional polytopes $P_1,\ldots,P_{n-2}$. The conclusion of Theorem 
\ref{thm:schneider} may then be reformulated as follows.

\begin{cor}
\label{cor:nef}
Let $L_1,\ldots,L_{n-2}\in\mathrm{Nef}\cap\mathrm{Big}$ and
$\eta\in\mathrm{A}^1$. Then $\eta\cdot L_1\cdot\ldots\cdot L_{n-2}=0$
if and only if $\eta=\eta_1-\eta_2$ for some
$\eta_1,\eta_2\in\overline{\mathrm{Eff}}$ so
that $\eta_i\cdot L_1\cdot\ldots\cdot L_{n-2}=0$, $i=1,2$.
\end{cor}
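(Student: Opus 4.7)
The plan is to prove Corollary \ref{cor:nef} by a direct translation between the polytope algebra and convex geometry, applying Theorem \ref{thm:schneider} in the middle. The dictionary is set up as follows: fix a simple polytope $P$ so that $L_i = [h_{P_i}]$ for full-dimensional summands $P_i$ of $P$, and write $\eta = [f]$ for some $f \in \mathrm{D}$. For any $g \in \mathrm{D}$ one computes
$$
\eta \cdot L_1 \cdot \ldots \cdot L_{n-2} \cdot [g] = n!\,\V_n(f,g,h_{P_1},\ldots,h_{P_{n-2}}) = (n-1)!\int g\,dS_{f,\mathcal{P}},
$$
where $\mathcal{P} = (P_1,\ldots,P_{n-2})$. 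Since $\mathrm{A}(P)$ satisfies Poincaré duality (equivalently, since $\mathrm{D}$ is rich enough to separate signed measures on $S^{n-1}$ via Lemma \ref{lem:c2}), the condition $\eta \cdot L_1 \cdot \ldots \cdot L_{n-2} = 0$ in $\mathrm{A}^{n-1}$ is equivalent to $S_{f,\mathcal{P}} = 0$. Similarly, $\eta_i \in \overline{\mathrm{Eff}}$ means $\eta_i = [f_i]$ for some $f_i \in \mathrm{D}$ with $f_i \ge 0$.

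With this dictionary in hand, the \emph{if} direction is immediate from linearity of multiplication. For the \emph{only if} direction, the key observation is that $L_i \in \mathrm{Big}$ forces each $P_i$ to be full-dimensional, so $\dim(\sum_{i\in\alpha} P_i) = n \ge |\alpha|+2$ for every nonempty $\alpha \subseteq [n-2]$. Thus $\mathcal{P}$ is supercritical, and Theorem \ref{thm:schneider} applies: if $\eta \cdot L_1 \cdot \ldots \cdot L_{n-2} = 0$, then $S_{f,\mathcal{P}} = 0$, and there exists $s \in \mathbb{R}^n$ with $f(x) = \langle s,x\rangle$ for all $x \in \supp S_{B,\mathcal{P}}$. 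The function $\tilde f := f - \langle s,\cdot\rangle$ therefore vanishes on $\supp S_{B,\mathcal{P}}$, and the lemma preceding the corollary decomposes $\tilde f = f_1 - f_2$ with $f_1,f_2 \in \mathrm{D}$, $f_1,f_2 \ge 0$, and $S_{f_1,\mathcal{P}} = S_{f_2,\mathcal{P}} = 0$. Setting $\eta_i := [f_i] \in \overline{\mathrm{Eff}}$ and using that linear functions represent the zero class in $\mathrm{A}^1$, we obtain $\eta = [f] = [\tilde f] = [f_1] - [f_2] = \eta_1 - \eta_2$, while $\eta_i \cdot L_1 \cdot \ldots \cdot L_{n-2} = 0$ by the translation back to $S_{f_i,\mathcal{P}} = 0$.

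The main obstacle—such as it is—is purely bookkeeping: the preceding lemma produces $f_1, f_2$ as differences of support functions of polytopes strongly isomorphic to the chosen simple polytope $P$ (via Lemma \ref{lem:generate}), so they genuinely lie in $\mathrm{D}$ and represent classes in $\overline{\mathrm{Eff}}$ rather than merely in some abstract completion. Aside from checking that the simple polytope $P$ can be chosen large enough to accommodate all the $P_i$ and a representative of $\eta$ simultaneously (which is automatic by Lemma \ref{lem:simple}), the proof is a direct dictionary lookup: the content of the corollary is entirely inherited from the supercritical extremal characterization proved in section \ref{sec:supercrit}.
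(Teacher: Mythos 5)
Your proof is correct and is essentially the paper's intended argument: the corollary is the direct translation of Theorem \ref{thm:schneider} and the lemma immediately preceding it through the dictionary between $\mathrm{A}(P)$ and mixed area measures, and your reduction (bigness $\Rightarrow$ full-dimensionality $\Rightarrow$ supercriticality, then apply Theorem \ref{thm:schneider}, then decompose via the preceding lemma) is exactly right. One minor inaccuracy: the parenthetical appeal to Lemma \ref{lem:c2} is misplaced---$\mathrm{D}$ contains only support-function differences of polytopes strongly isomorphic to $P$, not all $C^2$ functions, so it certainly does not separate arbitrary signed measures on $S^{n-1}$; the nondegeneracy you actually need is either McMullen's Poincar\'e duality for $\mathrm{A}(P)$ (which you cite as the primary justification, and which is correct) or, more concretely, the observation that Lemma \ref{lem:generate} makes the evaluation map $f\mapsto(f(u_i))_{i\in[N]}$ surjective from $\mathrm{D}$, after which Corollary \ref{cor:augprobrep} and Proposition \ref{prop:qgraph} identify vanishing of $\V_n(g,f,\mathcal{P})$ for all $g\in\mathrm{D}$ with $\ker\A$, i.e.\ with $S_{f,\mathcal{P}}=0$.
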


One may analogously reformulate the result of Theorem \ref{thm:main} 
in algebraic terms to characterize any $\eta\in\mathrm{A}^1$ and 
$L_1,\ldots,L_{n-2}\in\mathrm{Nef}$ such that $\eta\cdot 
L_1\cdot\ldots\cdot L_{n-2}=0$ (in Definition \ref{defn:deg}, one may then 
replace $B$ by any ample class).

From the perspective of convex geometry, there is of course nothing new in 
the present formulation. The point of the algebraic formulation is, 
however, that the same algebraic structures carry over to other 
mathematical problems \cite{Huh18}. The statement of Corollary 
\ref{cor:nef} (for example) therefore gives rise to natural conjectures on 
what analogues of the results of this paper might look like in other 
contexts.

\begin{example}
A structure similar to $\mathrm{A}(P)$ arises in algebraic geometry: here 
$\mathrm{D}$ is the space of divisors, $\mathrm{A}^k$ is the space of 
$k$-cycles modulo numerical equivalence, and $\,\cdot\,$ is the 
intersection product on a projective variety \cite{Ful98}. The ample, nef, 
big, and pseudoeffective cones are described in \cite{Laz04}. One might 
therefore ask whether a result such as Corollary \ref{cor:nef} carries 
over to this setting, at least in sufficiently nice situations. To the 
best of our knowledge this question is entirely open, except for toric 
varieties which admit a precise correspondence with convex geometry 
\cite{Ful93,Ew96} (for which such a conclusion follows from the results of 
this paper).
\end{example}

\begin{example}
A structure similar to $\mathrm{A}(P)$ arises in the theory of 
mixed discriminants \cite{Tim98}. This is a much simpler 
setting: for example, here $\mathrm{Amp}=\mathrm{Big}$ is the cone 
of positive definite matrices, so Corollary \ref{cor:nef} reduces to the 
hard Lefschetz theorem. On the other hand, it was shown by Panov 
\cite{Pan85} that this setting admits degenerate extremals 
in complete analogy to Theorem \ref{thm:main}. This setting 
therefore provides an example outside convexity in which 
analogous structures appear.
\end{example}

\SkipTocEntry\subsection*{Acknowledgments}

This work was supported in part by NSF grant DMS-1811735 and by the Simons 
Collaboration on Algorithms \& Geometry. We thank Karim Adiprasito, Swee 
Hong Chan, June Huh, J\'anos Koll\'ar, Igor Pak, Greta Panova, Rolf 
Schneider, and Amir Yehudayoff for helpful comments.

\SkipTocEntry


\end{document}